\newtheorem{thm}{Theorem}[section]
\newtheorem{thmbis}{Theorem}
\newtheorem*{thm*}{Theorem}
\newtheorem{dfn}[thm]{Definition} 
\newtheorem*{dfn*}{Definition}
\newtheorem{cor}[thm]{Corollary}
\newtheorem*{cor*}{Corollary}
\newtheorem{prop}[thm]{Proposition} 
\newtheorem*{prop*}{Proposition} 
\newtheorem*{properties*}{Properties} 
\newtheorem{propbis}[thmbis]{Proposition} 
\newtheorem{lem}[thm]{Lemma} 
\newtheorem*{lem*}{Lemma} 
\newtheorem{claim}[thm]{Claim} 
\newtheorem*{claim*}{Claim} 
\newtheorem*{fact*}{Fact}
\newtheorem*{qst*}{Question}
\newtheorem*{pb*}{Problem}
\theoremstyle{remark}
 \newtheorem*{const*}{Construction}
\newtheorem*{rem*}{Remark}
\newtheorem{rem}[thm]{Remark}
\newtheorem*{example*}{Example}
\newtheorem{example}[thm]{Example}
\newlength{\espaceavantspecialthm}
\newlength{\espaceapresspecialthm}
\newenvironment{specialthm*}[1]{
\vskip\espaceavantspecialthm \noindent \textbf{#1} \itshape}%
{\normalfont \vskip \espaceapresspecialthm}
\newlength{\espaceavantenonce}
\newlength{\espaceapresenonce}
\newcommand{\fontetitreun}[1]{\textbf{#1}} 
\newcommand{\fontetitredeux}[1]{\textit{#1}} 
\newenvironment{enonce1*}[1]{
\vskip\espaceavantenonce \noindent \fontetitreun{#1} \itshape}%
{\normalfont \vskip \espaceapresenonce}
\newenvironment{enonce2*}[1]{
\vskip\espaceavantenonce \noindent \fontetitredeux{#1} }%
{\vskip \espaceapresenonce}
\edef\@tempa#1#2{\def#1{\mathaccent\string"\noexpand\accentclass@#2 }}
\@tempa\rond{017}
\newcommand{\es}{\emptyset}
\renewcommand{\phi}{\varphi} 
\newcommand{\m} {^{-1}} 
\newcommand{\eps} {\varepsilon}
\newcommand {\ra} {\rightarrow}
\newcommand {\onto} {\twoheadrightarrow}
\newcommand {\xra} {\xrightarrow}
\newcommand{\actson}{\curvearrowright}
\newcommand{\ol}[1]{\overline{#1}}
\newcommand{\normal} {\vartriangleleft}
\renewcommand{\subsetneq}{\varsubsetneq}
\newcommand{\dunion}{\sqcup}
\newcommand{\Dunion}{\bigsqcup} 
\newcommand{\ie} {i.e.\ }
\newcommand {\cala} {{\mathcal {A}}}   
\newcommand {\calb} {{\mathcal {B}}}   
\newcommand {\calc} {{\mathcal {C}}}   
\newcommand {\cald} {{\mathcal {D}}}   
\newcommand {\cale} {{\mathcal {E}}}   
\newcommand {\calf} {{\mathcal {F}}}   
\newcommand {\calh} {{\mathcal {H}}}
\newcommand {\calk} {{\mathcal {K}}}   
\newcommand {\call} {{\mathcal {L}}}   
\newcommand {\calm} {{\mathcal {M}}}
\newcommand {\calp} {{\mathcal {P}}}   
\newcommand {\calr} {{\mathcal {R}}}   
\newcommand {\cals} {{\mathcal {S}}}   
\newcommand {\calt} {{\mathcal {T}}}   
\newcommand {\calu} {{\mathcal {U}}}   
\newcommand {\calv} {{\mathcal {V}}}
\newcommand {\calz} {{\mathcal {Z}}}
\newcommand {\bbH} {{\mathbb {H}}}
\newcommand {\bbR} {{\mathbb {R}}}
\newcommand {\bbZ} {{\mathbb {Z}}}   
\newcommand{\grp}[1]{\langle #1 \rangle}
\newcommand{\Isom} {\mathop{\mathrm{Isom}}}
\newcommand{\Fix}{\mathop{\mathrm{Fix}}}
\newcommand{\Out} {\mathop{\mathrm{Out}}}
\newcommand{\Aut} {\mathop{\mathrm{Aut}}}
\newcommand{\Zc}{{\mathcal{Z}}}
\newcommand{\Zmax}{{\mathcal{Z}_{\mathrm{max}}}}
\newcommand{\Rt}{$\R$-tree}
\newcommand {\F} {{\mathbb {F}}}
\newcommand {\N} {{\mathbb {N}}} 
\newcommand {\Z} {{\mathbb {Z}}}
\newcommand {\R} {{\mathbb {R}}}
\newcommand {\Q} {{\mathbb {Q}}}
\newcommand{\tco}{T_{co}}
\newcommand{\inc}{\subset}
\newcommand{\bo}{\partial}
\newcommand{\VPC} {\mathrm{VPC}}
\newcommand{\AH}{\ensuremath{(\cala,\calh)}}
\newcommand{\Dco}{\cald_{\mathrm{co}}}
\newcommand{\Tco}{\tco}
\newcommand{\Om}{\Omega}
\newcommand{\wrt}{with respect to\ }
\newcommand\elli{{\mathrm{ell}}}
\newcommand\smally{smally\ }
\newcommand{\Comm} {\mathop{\mathrm{Comm}}}
\newcommand{\Inc}{\mathrm{Inc}}
\newcommand{\Inch}{\Inc^\calh}
\renewcommand {\cale} {{\mathcal {A}}_\infty}  
\newcommand {\cla} {{[A]}}  
\newcommand{\peripheral}{incidence}
\newcommand{\Peripheral}{Incidence}
\renewcommand{\labelenumi}{(\theenumi)}
\begin{document}

\title{JSJ decompositions of groups}
\author{Vincent Guirardel, Gilbert Levitt}
\date{}

\maketitle

\begin{abstract}

%
%
%
%
%
%
%

This is an account of the  theory of JSJ decompositions of finitely generated groups, as developed in the last twenty years or so.

We give a simple general definition of JSJ decompositions (or rather of their Bass-Serre trees),
as maximal universally elliptic trees.
  In general, there is no preferred JSJ decomposition, and
the right object to consider is the whole set of JSJ decompositions,
 which forms a contractible space: the JSJ deformation space (analogous to Outer Space).

We prove that   JSJ decompositions exist
for any finitely presented group, without any   assumption  on edge groups.  When edge groups are slender, we describe flexible vertices of JSJ decompositions as quadratically hanging extensions of 2-orbifold groups.

Similar results hold in the presence of acylindricity, in particular for splittings of torsion-free CSA groups over abelian groups, and splittings of relatively
hyperbolic groups over virtually cyclic or parabolic subgroups. Using trees of cylinders, we obtain   canonical
JSJ trees (which are invariant under automorphisms).

  We introduce a variant in which the property of being universally elliptic is replaced by the more restrictive and rigid property
of being universally compatible.
This yields a canonical compatibility JSJ tree, not just
  a deformation space.
We show that it exists for any finitely presented group.

We give many examples, and we work throughout with relative decompositions (restricting to trees where certain subgroups are elliptic).
\end{abstract}

\section*{Introduction }

JSJ decompositions first appeared in 3-dimensional topology with the theory of the characteristic submanifold
by {\bf J}aco-{\bf S}halen and {\bf J}ohannson \cite{JaSh_JSJ,Johannson_JSJ} (the terminology JSJ was popularized by Sela). We start with a quick review (restricting to manifolds without boundary). 

\subsection*{From 3-manifolds to groups}
Let $M$ be a closed orientable 3-manifold. Given a  finite collection of disjoint embedded 2-spheres, one may cut $M$ open along them, and glue balls to the boundary of the pieces to make them boundaryless. This expresses $M$ as a connected sum of closed manifolds $M_i$. 
The prime decomposition theorem (Kneser-Milnor) asserts that one may choose the spheres so that each $M_i$ is 
either irreducible ($M_i\ne S^3$, and every embedded 2-sphere bounds a ball)
or homeomorphic to $S^2\times S^1$; moreover, up to a permutation, the summands $M_i$ are uniquely determined up to homeomorphism. 

On the group level, one obtains a  decomposition of $G=\pi_1(M)$ as a free product $G=G_1*\dots*G_p*\F_q$, with $G_i$ the fundamental group of an irreducible $M_i$ and  $\F_q$ a free group of rank $q$ coming from the $S^2\times S^1$ summands.  This 
  decomposition is
  a \emph{Grushko decomposition}\index{Grushko decomposition, deformation space}  of $G$, in the following sense: each $G_i$ is  freely indecomposable (it cannot be written as a non-trivial free product), non-trivial, and not isomorphic to $\Z$ (non-triviality of $G_i$ is guaranteed by the Poincar\'e conjecture, proved by Perelman).   Any finitely generated group has a Grushko decomposition, with $q$ well-defined and the $G_i$'s well-defined up to conjugacy (and a permutation).

The prime decomposition implies that one should focus on irreducible manifolds. Since all spheres bound balls, one  now considers embedded tori. In order to avoid trivialities (such as a torus bounding a tubular neighborhood   of a curve), tori should be incompressible: the embedding $T^2\to M$ induces an injection on fundamental groups. 

The theory of the characteristic submanifold now says that, given an irreducible $M$,  there exists 
a finite family $\calt $ of disjoint non-parallel incompressible tori such that each component $N_j$ of the manifold (with boundary) obtained by cutting $M$ along $\calt$  is  either   atoroidal (every incompressible torus is boundary parallel) or a Seifert fibered space,   
\ie a $3$-manifold having a singular fibration  by circles over a 2-dimensional surface $\Sigma$ (better viewed as a 2-dimensional orbifold).\footnote{Thurston's geometrization conjecture, whose proof was completed by Perelman, asserts that each $N_j$ has a geometric structure. In particular, every atoroidal $N_j$ with infinite fundamental group is hyperbolic (its interior  admits a complete metric with finite volume and constant curvature $-1$).}  Moreover, any incompressible torus may be isotoped to be disjoint from $\calt$. 

With groups in mind, let us point out an important feature of this decomposition. If two incompressible tori cannot be made disjoint by an isotopy, they may be isotoped to be contained in a Seifert piece. 
Conversely,  in a Seifert fibered space, 
preimages  of intersecting simple curves on $\Sigma$ are intersecting tori.

We thus see that the presence of  intersecting tori in $M$ forces some surface $\Sigma$ to appear. One of the remarkable facts about JSJ theory for groups is that a similar phenomenon occurs. For instance, if a  finitely generated one-ended 
    group   admits two   
splittings over $\Z$ that ``intersect'' each other in an essential way,\footnote{  More precisely: the edge  groups of each splitting should be hyperbolic in the Bass-Serre tree of the other splitting.} 
 then it must contain the fundamental group of a compact surface,  
 attached to the rest of the group along the boundary 
(see Theorem \ref{slenderintro} below and Section \ref{Fujpap}, in particular Proposition \ref{prop_RN}).

We also point out that the family $\calt$ mentioned above is unique up to isotopy. On the other hand, a family of spheres defining the prime decomposition is usually not unique. 
Similarly, the Grushko decompositions of a group usually form a large \emph{outer space}\index{outer space} \cite{CuVo_moduli,GL1}, whereas one may often construct canonical splittings of one-ended groups, which are in particular invariant under automorphisms   (see  Theorems \ref{dur1} and \ref{dur2}  
 below).

These topological ideas were carried over  to group theory by Kropholler \cite{Kro_JSJ}
for some Poincar\'e duality groups of dimension at least $3$, and by Sela for torsion-free hyperbolic groups \cite{Sela_structure}.
Constructions of JSJ decompositions were  given in more general settings by many authors
\index{Rips-Sela}\index{Dunwoody-Sageev}\index{Fujiwara-Papasoglu}\index{Scott-Swarup}\index{Bowditch}
(Rips-Sela \cite{RiSe_JSJ}, Bowditch \cite{Bo_cut}, Dunwoody-Sageev \cite{DuSa_JSJ}, Fujiwara-Papasoglu \cite{FuPa_JSJ},
Dunwoody-Swenson \cite{DuSw_algebraic}, Scott-Swarup \cite{ScSw_regular+errata}, Papasoglu-Swenson \cite{PaSw_boundaries}\dots). 
This 
has had a vast influence and range of applications, 
from the isomorphism problem and the structure of the group of automorphisms of hyperbolic groups, 
to diophantine geometry over groups,
  and many others.

In this group-theoretical context, one has a finitely generated group $G$ and a class of subgroups $\cala$ (such as cyclic groups, abelian groups,  ...), 
and one tries to understand splittings   (i.e.\   graph of groups decompositions)  of $G$ over groups in $\cala$.
The family of tori $\calt$ of the $3$-manifold is replaced by a splitting  of $G$ over groups in $\cala$.
The   authors construct   a
 splitting 
 enjoying a long list of properties, rather  specific to each case.

Our first goal is to give   a simple general \emph{definition}  of JSJ decompositions
 stated by means of a  universal maximality property,  together with general 
  \emph{existence} and \emph{uniqueness} statements   in terms of \emph{deformation spaces} (see below).
  The JSJ decompositions constructed in \cite{RiSe_JSJ,Bo_cut,DuSa_JSJ,FuPa_JSJ} are JSJ decompositions in our sense (see Subsection \ref{autres}).

The regular neighbourhood\index{regular neighbourhood} of \cite{ScSw_regular+errata} 
is of a different nature.
In  \cite{DuSw_algebraic,ScSw_regular+errata}, one looks at almost invariant sets rather than splittings, 
in closer analogy to
a $3$-manifold situation  where one
 wants to understand all \emph{immersed} tori, not just the embedded ones.
One obtains a canonical 
splitting of $G$ rather than just a canonical deformation space.
See Parts \ref{partacyl} and \ref{chap_compat} for canonical splittings, and \cite{GL5} for the   relation between \cite{ScSw_regular+errata} and   usual JSJ decompositions.

\subsection*{Definition of JSJ decompositions}

To motivate the definition, let us  first consider free decompositions\index{free splitting} of a group $G$,
\ie decompositions of $G$ as the fundamental group of  a graph of groups with trivial edge groups, 
or equivalently   actions of $G$ on a simplicial  tree $T$ with trivial edge stabilizers.

Let $G=G_1*\dots * G_p *\F_q$ be a Grushko decomposition,\index{Grushko decomposition, deformation space}   as defined   above ($G_i$ is non-trivial, not $\Z$, freely indecomposable). One may view $G$ as the fundamental group of  one of the 
graphs of groups
 pictured on Figure \ref{fig_grushko}.   
The corresponding Bass-Serre trees
have  trivial edge stabilizers, and the vertex stabilizers
 are precisely the conjugates of the $G_i$'s; we call a tree with these properties a \emph{Grushko tree} (if $G$ is freely indecomposable, Grushko trees are points).

 \begin{figure}[htbp]
   \centering
   \includegraphics{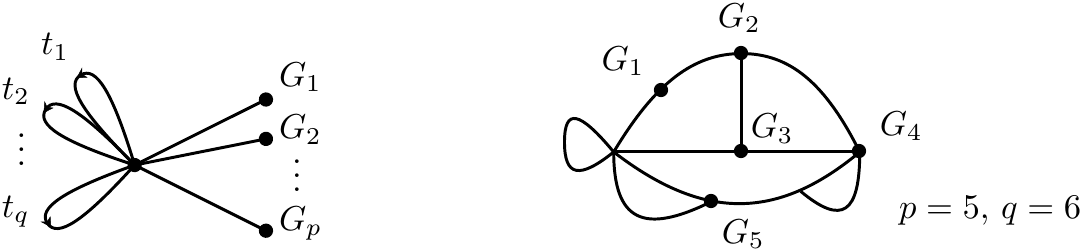}
   \caption{Graph of groups decompositions corresponding to  two Grushko trees.}
   \label{fig_grushko}
 \end{figure}

Since the $G_i$'s are freely indecomposable,  Grushko trees  $T_0$ have the following  maximality property: if $T$ is any tree on which $G$ acts with trivial edge stabilizers, $G_i$ fixes a point in $T$, and  therefore $T_0$  \emph{dominates}\index{domination} $T$ in the sense that there is a $G$-equivariant map $T_0\to T$.
 In other words, 
among free decompositions of $G$,  a Grushko  tree 
$T_0$ is as  far  as possible from the trivial  tree (a point):
its vertex stabilizers are as small as possible (they are conjugates of the $G_i$'s). This maximality property does not determine $T_0$ uniquely, as it is shared by all Grushko trees; we will come back to this key fact later, when discussing uniqueness.

When more general decompositions are allowed,   for instance when one considers  splittings over $\bbZ$, there may not exist a   tree with the same maximality property.
 The  fundamental example is the following. 
Consider an   orientable  closed surface $\Sigma$, and  two simple closed curves $c_1,c_2$ in $\Sigma$ with non-zero intersection number. 
Let $T_i$ be the Bass-Serre tree of the 
 associated 
splitting of $G=\pi_1(\Sigma)$ over $\bbZ\simeq\pi_1(c_i)$.
Since $c_1$ and $c_2$ have positive intersection number, $\pi_1(c_1)$ is hyperbolic in $T_2$ (it does  not fix a point) and vice-versa.
Using  the fact that $\pi_1(\Sigma)$ is freely indecomposable, it is an easy exercise to check that there is no splitting of $\pi_1(\Sigma)$
which dominates both $T_1$ and $T_2$. In this case 
  there is no hope of having a  
  splitting over cyclic groups  similar to $T_0$ above.

To overcome this difficulty, one restricts to  \emph{universally elliptic} splittings, defined  as follows.

We   consider   trees with an action of a finitely generated group $G$, and we require that   edge stabilizers be in $\cala$ (a given family of subgroups of $G$, closed under conjugating and taking subgroups); we call such a tree an   \emph{$\cala$-tree}.\index{0AT@$\cala $-tree} By Bass-Serre theory, this corresponds to splittings of $G$ over groups in $\cala$. Unless otherwise indicated, all trees are assumed to be $\cala$-trees.

\begin{dfn*}
 An   $\cala$-tree is \emph{universally elliptic}\index{universally elliptic subgroup, tree} if its edge stabilizers are elliptic in every $\cala$-tree.
\end{dfn*}

Recall that $H$ is \emph{elliptic}\index{elliptic element or subgroup} in $T$ if it fixes a point in $T$ (in terms of graphs of groups, $H$ is contained in a conjugate of a vertex group). Free decompositions are universally elliptic, but the trees $T_1,T_2$ introduced above are not.

\begin{dfn*}
  A \emph{JSJ decomposition} (or JSJ tree)\index{JSJ decomposition, tree, deformation space} of $G$ over $\cala$ is an $\cala$-tree $T$ such that: 
  \begin{itemize}
  \item $T$ is universally elliptic;
  \item $T$ dominates  any other universally elliptic tree $T'$. 
  \end{itemize}

We   call the quotient graph of groups $\Gamma=T/G$ a \emph {JSJ splitting}, or a JSJ decomposition.
\end{dfn*}

  Recall  that $T$ dominates $T'$ if there is an equivariant map $T\to T'$; equivalently, any group which is elliptic in $T$ is also elliptic in $T'$. 
The second condition   in the definition 
is a maximality condition
expressing that vertex stabilizers of $T$ are as small as possible (they are elliptic in every universally elliptic tree).

If $\cala$ consists of all subgroups with a given property (being cyclic, abelian, slender, ...), we refer to, say, cyclic trees, cyclic JSJ decompositions when working over $\cala$. 

When $\cala$ only contains the trivial group, JSJ trees are the same as  Grushko trees. If $G=\pi_1(\Sigma)$ as above, and $\cala$ is the family of cyclic subgroups, the JSJ decomposition is trivial (the point is the only JSJ tree).

\subsection*{Existence.}

\emph{JSJ trees do not always exist:} the   finitely generated inaccessible group $D$ 
constructed by Dunwoody \cite{Dun_inaccessible} has no JSJ tree over finite groups, and $D\times\Z$ is a one-ended group with no JSJ decomposition over virtually cyclic subgroups. On the other hand, it follows rather easily from Dunwoody's accessibility   \cite{Dun_accessibility}
that a finitely presented group has JSJ decompositions 
over any class $\cala$ of subgroups (we emphasize that   no assumption on $\cala$, such as smallness,  is needed).

\begin{thmbis}[Theorem \ref{thm_exist_mou}] \label{existmouintro}
Let $\cala$ be an arbitrary family  of subgroups of $G$, stable under taking subgroups and under conjugation.
If $G$ is finitely presented,  
 it has  a JSJ decomposition    over $\cala$. In fact, there exists  a  JSJ tree whose edge and vertex stabilizers are finitely generated.
\end{thmbis}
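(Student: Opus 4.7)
The plan is to construct the JSJ tree as a maximal element in the set of universally elliptic $\cala$-trees, partially ordered by domination. Two ingredients are needed: (a) universally elliptic trees admit common refinements that remain universally elliptic; and (b) an ascending chain of strict refinements must terminate. Finite presentability of $G$ is used only in (b), through Dunwoody's accessibility.

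For (a), given universally elliptic $\cala$-trees $T_1$ and $T_2$, I would form their standard common refinement $T_1\vee T_2$ by blowing up each vertex $v$ of $T_1$ to the minimal subtree of $T_2$ for the stabilizer $G_v$. The construction is well-defined because universal ellipticity of $T_1$ ensures that every edge stabilizer of $T_1$ has a (unique) fixed point in the corresponding blow-up, so the edges of $T_1$ reattach canonically. Each edge stabilizer of $T_1\vee T_2$ is contained in an edge stabilizer of $T_1$ or of $T_2$; it hence lies in $\cala$ by closure under taking subgroups, and remains elliptic in every $\cala$-tree. Thus $T_1\vee T_2$ is a universally elliptic $\cala$-tree dominating both $T_1$ and $T_2$, so the set of universally elliptic $\cala$-trees is directed.

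Starting from the trivial tree (which is vacuously universally elliptic), I would iteratively replace $T$ by $T\vee T'$ whenever some universally elliptic $T'$ is not dominated by $T$. The main obstacle, and the place where finite presentability is essential, is showing that this process terminates. Here I would invoke Dunwoody's accessibility: fixing a finite presentation 2-complex $K$ for $G$, every reduced $\cala$-tree can be resolved by a $G$-invariant pattern of disjoint tracks on the universal cover of $K$, and the number of orbits of tracks (hence of edges of the reduced tree) is bounded in terms of the 2-cells of $K$ once edge stabilizers are universally elliptic. Along our refinement chain the number of orbits of edges of the associated reduced trees strictly increases at every nontrivial step, so the chain must stabilize in finitely many steps at a universally elliptic $\cala$-tree dominating every universally elliptic $\cala$-tree, i.e.\ at a JSJ tree. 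Finally, because this tree is produced from finitely many orbits of tracks on the compact 2-complex $K$ via finitely many common refinements, its edge stabilizers are finitely generated, and then the quotient graph of groups is finite, so the vertex stabilizers are finitely generated as well.
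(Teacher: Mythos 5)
Your step (a) is exactly the paper's mechanism (standard refinements of universally elliptic trees remain universally elliptic, cf.\ Proposition \ref{prop_refinement} and Lemma \ref{lem_sup}), but step (b) contains a genuine gap. Dunwoody's accessibility does \emph{not} bound the number of orbits of edges of (reduced) universally elliptic $\cala$-trees when $\cala$ is arbitrary: the track argument only bounds the number of pairwise \emph{non-parallel} tracks in the compact part of the presentation complex. Parallel tracks still resolve distinct edges, typically with nested, strictly decreasing stabilizers, so the trees in your refinement chain can keep changing deformation space forever; their edge counts are not controlled by the $2$-cells of $K$ once you drop all smallness/finiteness hypotheses on $\cala$ (the listed accessibility results in Subsection \ref{accessi} all require finite, small, or acylindrical edge stabilizers). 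The paper makes this failure explicit in the remark following Proposition \ref{prop_accessibility}: for $G=A*B$ with nested cyclic subgroups $C_1\supsetneq C_2\supsetneq\dots$ of $A$, the iterated amalgams $T_k$ refine each other and lie in pairwise distinct deformation spaces. So your chain need not terminate, and the secondary claim that the reduced edge count strictly increases at each nontrivial step is also unjustified when passing between deformation spaces.

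What the track bound actually yields, and what the paper uses (Proposition \ref{prop_accessibility}, after Fujiwara--Papasoglu), is different: given the (possibly non-stabilizing) chain $T_1\leftarrow T_2\leftarrow\dots$, one resolves each $T_k$ by a pattern in the presentation complex; the bound on non-parallel tracks shows that for $k$ large the resolving dual trees change only by subdivision, so a single tree $S$, whose edge stabilizers are generated by fundamental groups of compact track components (hence finitely generated, and then vertex stabilizers too), admits morphisms to all $T_k$ and therefore dominates the whole chain. Since its edge stabilizers fix edges in the universally elliptic trees $T_k$, the tree $S$ is itself a universally elliptic $\cala$-tree, and it dominates every tree in a countable cofinal family of universally elliptic trees (one reduces to the countable set $\calu$ of universally elliptic trees with finitely generated edge and vertex stabilizers via Corollary \ref{cor_fg}); hence $S$ is a JSJ tree. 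In short, the JSJ tree is obtained as a resolution dominating a possibly infinite ascending chain, not as the terminal element of a chain that stabilizes; your argument is missing this limiting step, and without it the construction does not close.
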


In Part \ref{partacyl} we shall present a different way of constructing JSJ decompositions, based on Sela's acylindrical accessibility,\index{acylindrical accessibility} which applies in some more general situations   (the existence of such JSJ decompositions   for limit groups is mentioned  
and used in \cite{Sela_diophantine1}, and we give a complete proof).
 We will give more details later in this introduction, but we mention  two typical results here. A group is   \emph{CSA} if maximal abelian subgroups are malnormal, \emph{small} if it has no  free non-abelian subgroup (see Subsection \ref{pti} for variations).

\begin{thmbis}[Theorem \ref{JSJ_CSA}] \label{mou2}
Let $G$ be a torsion-free finitely generated CSA group. There is a JSJ decomposition of $G$ over abelian subgroups.
\end{thmbis}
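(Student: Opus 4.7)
Since $G$ is only assumed finitely generated (not finitely presented), Theorem~\ref{existmouintro} does not apply directly. The plan is to replace Dunwoody's accessibility by Sela's acylindrical accessibility, which is the route announced in Part~\ref{partacyl}. The strategy breaks into an acylindricity input, an accessibility input, and a maximality argument.

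The first and main step is to establish that abelian splittings of a torsion-free CSA group are acylindrical in a suitable sense. Recall that in a torsion-free CSA group commutation is transitive: if $a,b,c$ are non-trivial and $[a,b]=[b,c]=1$, then $[a,c]=1$, and each maximal abelian subgroup $M$ is its own normalizer (malnormality). I would use this to argue that if $g\ne1$ pointwise fixes an edge-path $e_1,e_2$ meeting at a vertex $v$, then $g$ commutes with every element of the abelian stabilizers $G_{e_1}$ and $G_{e_2}$, so $G_{e_1}\cup G_{e_2}\cup\{g\}$ generates a single abelian subgroup, contained in a unique maximal abelian $M$. Malnormality of $M$ then forces a strong incidence relation between $e_1$ and $e_2$ (their stabilizers lie in a common abelian vertex group). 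After passing to the tree of cylinders (cf.\ the abstract, and Part~\ref{partacyl}), or equivalently collapsing edges between commensurable abelian subgroups, the resulting tree $\hat T$ is $k$-acylindrical for a uniform $k$ (typically $k=2$), and the collapse does not affect which subgroups are universally elliptic.

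The second step is to apply Sela's acylindrical accessibility to this cleaned-up picture: for a finitely generated group, reduced $k$-acylindrical $\cala$-trees have a bounded number of orbits of edges. This gives a complexity bound for (reduced forms of) all universally elliptic abelian trees of $G$, as soon as one knows that the universal ellipticity of edge groups passes through the cylinder construction.

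The third step is the standard maximality argument. Pick a universally elliptic abelian $\cala$-tree $T_0$ of maximal complexity among those bounded by Step~2. For any other universally elliptic abelian tree $T'$, one refines $T_0$ by blowing up each vertex of $T_0$ using the action of its stabilizer on $T'$; the refinement $\hat T$ is still abelian and universally elliptic. By maximality of $T_0$ the refinement cannot strictly increase complexity, so vertex stabilizers of $T_0$ are elliptic in $T'$, i.e., $T_0$ dominates $T'$. Thus $T_0$ is a JSJ tree over abelian subgroups. The main obstacle is the acylindricity input of Step~1: one has to show that the CSA hypothesis, together with the tree of cylinders or an analogous reduction, truly converts an arbitrary abelian splitting into a uniformly acylindrical one while preserving the relevant universal ellipticity, and that the accessibility bound survives.
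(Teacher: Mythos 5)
Your outline does follow the paper's route (tree of cylinders for the commutation relation to get uniform $(2,1)$-acylindricity, then acylindrical accessibility, then maximality), and Step~1 is essentially correct: commutative transitivity and malnormality of maximal abelian subgroups do make the collapsed tree of cylinders $2$-acylindrical (this is Proposition~\ref{tcsmdom} with $C=1$, after reducing to the relatively one-ended case via Lemma~\ref{oneend}, which you need anyway since cylinders require infinite edge stabilizers). But Steps~2--3 have a genuine gap. Acylindrical accessibility only bounds the number of orbits of edges of acylindrical trees; it does not bound the domination order. There exist infinite strictly increasing chains of deformation spaces of $2$-acylindrical one-edge splittings (the nested cyclic subgroups example at the beginning of Section~\ref{jsjac}), so ``pick a universally elliptic tree of maximal complexity'' does not terminate the problem: the refinement $\hat T$ of $T_0$ dominating $T'$ is in general not acylindrical, so your complexity bound says nothing about it, and even if its acylindrical representative had the same number of edge orbits it could lie in a strictly larger deformation space, so ``complexity does not increase'' does not give ``$T_0$ dominates $T'$''. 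This is exactly why the paper's proof of Theorem~\ref{thm_JSJacyl} is not a counting argument but a limiting one: one takes a cofinal nested sequence of universally elliptic trees, passes to acylindrical representatives, and studies their length functions; if they stay bounded the limit is a simplicial tree dominating everything (collapse the non-universally-elliptic edges to get the JSJ tree), and if they blow up one rescales, invokes Culler--Morgan compactness and Paulin's convergence, and uses the structure theory of stable actions on $\R$-trees (Sela's acylindrical accessibility in the form of \cite{Gui_actions}) to produce a splitting over a finite or infinite-index subgroup that contradicts (relative) one-endedness. None of this appears in your proposal.

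The second issue is the one you flag as ``the main obstacle'' but do not resolve: the tree of cylinders does \emph{not} stay in the deformation space of $T$, and the change is not just about edge groups. Non-cyclic abelian subgroups (the $\Z^2$ of Figure~\ref{fig_ex}) become elliptic in $T_c$, so ellipticity of vertex groups, hence domination, hence which tree is ``maximal'', is altered; the correct statement is only that $T$ \emph{smally dominates} $T_c^*$ (Definition~\ref{sm}). Consequently the acylindrical machinery directly produces a JSJ tree $T_r$ \emph{relative to all non-cyclic abelian subgroups}, not the abelian JSJ tree itself. The paper recovers the absolute statement (Theorem~\ref{JSJ_CSA}, via Theorem~\ref{thm_smallyacyl} and Lemma~\ref{lem_rel_vs_abs}) by proving that $T_r$ is universally elliptic for non-relative abelian splittings and then refining $T_r$ at its abelian vertices, using that a group which is small in $(\cala,\calh)$-trees admits at most one non-trivial universally elliptic deformation space (Corollary~\ref{sma2}). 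Your sentence ``the collapse does not affect which subgroups are universally elliptic'' glosses over precisely this point, and without the relative-to-absolute step the argument only yields the relative JSJ decomposition.
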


\begin{thmbis}[Theorem \ref{thm_JSJr}] \label{mou3}
Let $G$ be hyperbolic relative to a finite family of finitely generated small subgroups. If $\cala$ is either  the family of all virtually cyclic subgroups of $G$, or the family of all small subgroups, there is 
a JSJ decomposition of $G$ over $\cala$.
\end{thmbis}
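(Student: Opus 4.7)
The plan is to deduce this theorem from acylindrical accessibility, as developed systematically in Part \ref{partacyl}. Since $G$ is not assumed to be finitely presented, Theorem \ref{existmouintro} does not apply directly; instead, the strategy parallels that for Theorem \ref{mou2}: one controls the complexity of universally elliptic $\cala$-trees via a uniform acylindricity bound, then extracts a JSJ by maximality.

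First I would work \emph{relatively}. Fix a finite system $\calp$ of conjugacy class representatives of the (finitely generated, small) parabolic subgroups of $G$, and restrict attention to $\cala$-trees in which every subgroup in $\calp$ is elliptic; since parabolics are themselves small, this restriction is the natural one in both cases $\cala=\,$virtually cyclic and $\cala=\,$small. Next I would establish uniform acylindricity: in a relatively hyperbolic group with small parabolic subgroups, two-ended subgroups and small subgroups are almost malnormal in a controlled way, namely non-commensurable conjugates of such a subgroup intersect in a finite set of bounded order. This forces any reduced $\cala$-tree relative to $\calp$ to be $k$-acylindrical, for a uniform constant $k=k(G,\calp)$. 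Sela's acylindrical accessibility (in the form proved in Part \ref{partacyl}) then gives a finite upper bound on the number of orbits of edges of any reduced $k$-acylindrical $\cala$-tree relative to $\calp$.

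I would then pick a reduced universally elliptic $\cala$-tree $T_0$ (relative to $\calp$) of maximum complexity, and verify that it is a JSJ. Given any other universally elliptic $\cala$-tree $T'$, form the standard refinement of $T_0$ by blowing up each vertex of $T_0/G$ using the action of its stabilizer on its minimal subtree in $T'$. The resulting tree is again an $\cala$-tree and is universally elliptic, since its edge stabilizers come either from $T_0$ or from $T'$. By maximality of $T_0$ among reduced universally elliptic $\cala$-trees, the blow-up must be trivial, \ie every vertex stabilizer of $T_0$ is elliptic in $T'$; hence $T_0$ dominates $T'$, as required. The main obstacle is the uniform acylindricity step, particularly in the case $\cala=\,$small: small subgroups of $G$ may be varied, and one must use relative hyperbolicity carefully to bound their commensurators and intersections uniformly. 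Once this is achieved, the maximality and refinement argument is a now-standard deformation-space manipulation.
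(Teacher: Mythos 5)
Your key step---that every reduced $\cala$-tree relative to $\calp$ is $k$-acylindrical for a uniform constant $k=k(G,\calp)$---is false, and this is exactly the difficulty that Part \ref{partacyl} of the paper is designed to get around. Almost malnormality of maximal elementary subgroups controls intersections of \emph{non-commensurable} edge stabilizers, but it says nothing about chains of edges with commensurable stabilizers: already in a one-ended hyperbolic group (no parabolics at all) a reduced splitting such as $A_1*_{\langle c^2\rangle}\langle c\rangle *_{\langle c^3\rangle}A_2$ contains a segment of length $2$ whose pointwise stabilizer $\langle c^6\rangle$ is infinite, and iterating this construction gives reduced virtually cyclic trees with arbitrarily long segments having infinite stabilizer. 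So the acylindricity constant depends on the tree (on the size of its cylinders), not only on $(G,\calp)$. The paper's remedy is to replace an arbitrary tree $T$ by its tree of cylinders $T_c^*$ for co-elementarity, which \emph{is} uniformly $(2,C)$-acylindrical (Proposition \ref{tcsmdom}) but may lie in a different deformation space, since small non-virtually-cyclic subgroups become elliptic; dealing with that discrepancy is precisely what the ``small domination'' machinery of Theorem \ref{thm_smallyacyl} is for.

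Even granting a uniform acylindricity bound, the ``maximal complexity'' argument does not yield a JSJ. Acylindrical accessibility bounds the number of edge orbits of acylindrical trees, but it does not exclude an infinite strictly ascending chain (for domination) of deformation spaces of universally elliptic trees all of the same complexity --- see the nested $C_k$ example at the beginning of Section \ref{jsjac}; the introduction explicitly warns that the proof ``involves much more than a bound on the complexity of acylindrical splittings''. Accordingly, a universally elliptic tree $T_0$ of maximal complexity need not dominate every universally elliptic tree, and your verification does not force it to: the standard refinement of $T_0$ dominating $T'$ need not be acylindrical, and after reducing it may have no more edge orbits than $T_0$, so maximality gives no contradiction and nothing makes the blow-up trivial. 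The paper instead takes a cofinal sequence of universally elliptic trees, replaces each by an acylindrical representative, and passes to a limit of length functions: if these stay bounded, the limit is a simplicial tree which, after collapsing edges whose stabilizers are not universally elliptic, is a JSJ tree; if not, the rescaled limit $\R$-tree is analyzed via the structure theorem for stable actions to produce a splitting over a subgroup of order at most $2C$, contradicting relative one-endedness. Finally, your opening reduction to trees relative to $\calp$ changes the statement: Theorem \ref{mou3} asserts the existence of the \emph{absolute} JSJ over $\cala$, which must dominate universally elliptic trees in which parabolic subgroups act hyperbolically; passing from the relative JSJ $T_r$ to the absolute one requires proving that $T_r$ is universally elliptic with respect to \emph{all} $\cala$-trees (Lemma \ref{lem_Tr}) and then refining it at vertices with small stabilizer (Lemma \ref{lem_rel_vs_abs}, using Corollary \ref{sma2}), steps your proposal does not address.
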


\subsection*{Uniqueness.}

 \paragraph{JSJ trees are not unique\dots} Returning to the example of free decompositions, one obtains trees 
with the same maximality property as $T_0$ by precomposing the action of $G$ on $T_0$ with any automorphism of $G$. One may also change the topology of the quotient graph $T_0/G$ (see Figure \ref{fig_grushko}). The canonical object is   not a single tree, but the set of all Grushko trees (trees with trivial edge stabilizers, and non-trivial vertex stabilizers conjugate to the $G_i$'s), a  {deformation space}.

\begin{dfn*}[Deformation space \cite{For_deformation}]
The \emph{deformation space}\index{deformation space} $\cald$ of a 
tree   $T$ is the set of 
trees 
$T'$ such that $T$ dominates $T'$ and $T'$ dominates $T$. Equivalently, two 
trees are in the same deformation space
 if and only if they have the same elliptic subgroups. 
\end{dfn*}

More generally,  given a family of subgroups   $\widetilde\cala\inc\cala$, 
one considers deformation spaces over $\widetilde\cala$ by restricting to trees in $\cald$ with edge stabilizers in $\widetilde\cala$.

For instance, Culler-Vogtmann's outer space\index{outer space} (the set of free actions of $\F_n$ on trees) is a deformation space. Just like outer space, any deformation space   may be viewed as a complex in a natural way, and it is contractible (see \cite{Clay_contractibility,GL2}).

If $T$ is a JSJ tree, another  tree $T'$ is a JSJ tree if and only  if  $T'$ is universally elliptic,  $T$ dominates $T'$, and $T'$ dominates $T$. 
In other words, $T'$ should belong to the   deformation space of $T$ over $\cala_\elli$, where $\cala_\elli$ is the family of universally elliptic groups in $\cala$.

\begin{dfn*}
If non-empty, the set of all JSJ trees   is a deformation space over $\cala_{\elli}$ called the \emph{JSJ deformation space} (of $G$ over $\cala$). We denote it by $\cald_{JSJ}$.
\end{dfn*}

The canonical object is therefore not a particular JSJ decomposition, but the   JSJ deformation space.
 For instance, the JSJ deformation space of $\F_n$ over any $\cala$ is outer space    (see Subsection \ref{free}). 

It is a general fact that two trees belong to the same deformation space if and only if one can pass from one to the other by applying a finite sequence of moves of certain types, see \cite{For_deformation,GL2,For_splittings,ClFo_Whitehead}  and Remark \ref{moves} (this may be viewed as a connectedness statement, but as mentioned above deformation spaces are actually contractible). 
The statements about uniqueness of the JSJ decomposition up to certain moves which appear in \cite{RiSe_JSJ,DuSa_JSJ,FuPa_JSJ}, as well as the non-uniqueness results of \cite{For_uniqueness},  are special cases of this general fact. 

Another general fact is the following: two trees belonging to the same deformation space over $\cala$ have the same vertex stabilizers, provided one restricts to groups not in $\cala$. It thus makes sense to study vertex stabilizers of JSJ trees. We do so in   Part \ref{part_QH} (see below).

  If $\cala$ is invariant under the group of automorphisms of $G$ (in particular if $\cala$ is defined by restricting the isomorphism type), then so is
the deformation space $\cald_{JSJ}$.  As in the case of outer space, precomposing actions on trees with automorphisms of $G$  yields an action  of  
 $\Aut(G)$ on $\cald_{JSJ}$, a  contractible complex. This action factors through
an action of $\Out(G)$,
thus providing  information  about  $\Out(G)$
\cite{CuVo_moduli,McCulloughMiller_symmetric,GL1,Clay_deformation}. 

We stress once again that, in general, the canonical object associated to $G$ and $\cala$ is the deformation space consisting of all 
JSJ trees, and it may be quite large. 
\paragraph{\dots but sometimes there is a canonical JSJ tree.}
In some 
nice situations 
one can construct a  \emph{canonical} 
JSJ tree $T$ in $\cald_{JSJ}$.

By canonical,\index{canonical tree} we essentially mean ``defined in a natural, uniform way''. 
In particular, given any isomorphism $\alpha:G\ra G'$ sending $\cala$ to $\cala'$,
canonicity implies that there is  a unique $\alpha$-equivariant isomorphism $H_\alpha:T\ra T'$ between the canonical JSJ trees. 
Applying this with $G'=G$ (assuming that $\cala$ is invariant under automorphisms), one gets an action of $\Aut(G)$ on $T$.  
The canonical tree $T$ is a fixed point for the action of $\Out(G)$ on the JSJ deformation space.

The existence of such a canonical splitting
gives 
precise information about $\Out(G)$, see \cite{Sela_structure,Lev_automorphisms,GL4} for applications. 
A particularly nice example,  due to   Bowditch\index{Bowditch} \cite{Bo_cut}, is the  construction of a canonical JSJ decomposition of a one-ended hyperbolic group
over virtually cyclic 
subgroups,  from the structure of local cut points in  the Gromov boundary. 

This is not a consequence of the sole fact that $G$ is one-ended, and that one considers splittings over virtually cyclic groups:
the cyclic splittings of $G=\F_n\times \bbZ$ are in one-to-one correspondence with the free splittings of $\F_n$,
and the JSJ deformation space of $G$ is  the
outer space of $\F_n$.
Generalized Baumslag-Solitar groups\index{generalized Baumslag-Solitar group} are other striking examples where a strong non-uniqueness occurs, with surprising algebraic consequences
like the fact  (due to Collins-Levin \cite{CollinsLevin_automorphisms})   
  that the group $\Out(BS(2,4))$ of outer automorphisms of the Baumslag-Solitar group\index{Baumslag-Solitar group} $BS(2,4)=\grp{a,t\mid ta^2t\m=a^4}$
is not finitely generated
\cite{For_splittings,Clay_deformation}.

A method to  produce 
a canonical tree from a deformation space will be  given 
in Part \ref{partacyl},
using a construction called the \emph{tree of cylinders}\index{tree of cylinders} \cite{GL4}. 
 In particular, it yields canonical JSJ decompositions of one-ended  CSA groups\index{CSA group} and relatively hyperbolic groups\index{relatively hyperbolic group} 
(see Theorems \ref{dur1} and  \ref{dur2} below).
The compatibility JSJ decomposition introduced below also   yields a canonical tree.

\subsection*{Description: Quadratically hanging vertex groups.}

  As mentioned above, Grushko trees have a strong maximality property: their
vertex stabilizers 
are elliptic in any free splitting of $G$.
This does not hold any longer when one   considers  JSJ decompositions over infinite groups, in particular cyclic groups:
 a vertex stabilizer $G_v$ of a JSJ tree may fail to be elliptic in some splitting (over the chosen family $\cala$).

  If this happens, we say that the
vertex stabilizer $G_v$ (or the corresponding vertex $v$,  or the vertex group of the quotient graph of groups) is
\emph{flexible}.\index{flexible vertex, group, stabilizer}
The other stabilizers  (which are elliptic in every splitting  over $\cala$)  are called \emph{rigid}.\index{rigid vertex, group, stabilizer}
 In particular, all vertices of    Grushko trees are rigid  (because their stabilizers are freely indecomposable).
On the other hand, in the example of $G=\pi_1(\Sigma)$, the unique vertex stabilizer $G_v=G$  is  flexible.

Because all JSJ decompositions lie in the same deformation space over $\cala_{\elli}$, they have the same flexible vertex stabilizers (and the same  rigid vertex stabilizers not in $\cala$).

The essential feature of JSJ theory is the description of  flexible vertices,
in particular  the fact that   flexible vertex stabilizers are often ``surface-like'' 
  (\cite{RiSe_JSJ,DuSa_JSJ,FuPa_JSJ};
  \index{Rips-Sela}\index{Dunwoody-Sageev}\index{Fujiwara-Papasoglu}  see below, and
  Theorem \ref{thm_description_slender}, for more precise statements).
   In other words, the example of   trees $T_1,T_2$ given above using intersecting curves on a surface is often  the only source of flexible vertices.
  
This is formalized through the notion of \emph{quadratically hanging} (QH) groups,\index{QH, quadratically hanging} a terminology due to Rips and Sela \cite{RiSe_JSJ}.\index{Rips-Sela}

For cyclic splittings of a torsion-free group, a  vertex group $G_v$
is QH if it may be viewed as the fundamental group of a (possibly non-orientable) compact surface with  boundary $\Sigma$, in such a way 
  that any incident edge group is  trivial or contained (up to conjugacy) in a boundary subgroup,\index{boundary subgroup} i.e.\ the fundamental group $B=\pi_1(C)$ of a boundary component $C$ of $\Sigma$. 
The terminology  ``{quadratically hanging}'' describes the way in which $\pi_1(\Sigma)$ is attached to the rest of the group,
since boundary subgroups are generated by elements which  are quadratic words in a suitable basis of the free group $\pi_1(\Sigma)$.

In a more general setting,
one extends this notion as follows: $G_v$ is an extension  $1\to F\to G_v\to \pi_1(\Sigma)\to1$, where
$\Sigma$ is  a compact hyperbolic $2$-orbifold\index{orbifold} (usually with boundary), and $F$ is an arbitrary   group    called the \emph{fiber}.\index{fiber (of a QH subgroup)} The    
  condition on the attachment is  that the image of any incident edge group
  in $\pi_1(\Sigma)$ is finite or contained in a 
  boundary subgroup 
  (see Section \ref{sec_QH} for details).

Recall that a group is \emph{slender}\index{slender group}  if all its subgroups are finitely generated.

\begin{thmbis}[\cite{FuPa_JSJ}, see Corollary \ref{cor_slender}] \label{slenderintro}
Let $\cala$ be the class of all slender subgroups of a finitely presented group $G$.
  Let $G_v$ be a flexible vertex group   of a    JSJ decomposition of $G$ over $\cala$. Then $G_v$   is either slender  
or QH with slender fiber.
\end{thmbis}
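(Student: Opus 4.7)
My plan is to reduce the problem to a study of splittings of $G_v$ itself, and then extract a 2-orbifold structure from the necessary crossing of these splittings, in the spirit of the Fujiwara-Papasoglu argument.

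First, I set up the relative splittings of $G_v$. Let $T$ be a JSJ tree with flexible vertex $v$, stabilizer $G_v$; assume $G_v$ is not slender. By flexibility, there exists an $\cala$-tree $S$ in which $G_v$ does not fix a point. Because $T$ is universally elliptic, the incident edge groups $G_{e_i}$ at $v$ (which are subgroups of $G_v$, slender, in $\cala$) are elliptic in $S$. Restricting to the minimal $G_v$-invariant subtree of $S$, one obtains a non-trivial action of $G_v$ on a tree over slender subgroups in which every $G_{e_i}$ is elliptic; call such an action a \emph{relative $\cala$-tree} of $G_v$. Flexibility of $v$ thus translates into the existence of non-trivial relative $\cala$-trees of $G_v$.

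Second, I observe that pairs of relative $\cala$-trees of $G_v$ must cross. Suppose, toward contradiction, that some non-trivial relative $\cala$-tree $S_v$ of $G_v$ has its edge stabilizers elliptic in every other relative $\cala$-tree of $G_v$. Then edge stabilizers of $S_v$ would be elliptic in every $\cala$-tree of $G$ (restrict to $G_v$, get a relative $\cala$-tree, apply hypothesis), so refining $T$ at $v$ by $S_v$ would produce a universally elliptic $\cala$-tree of $G$ strictly dominating $T$, violating the JSJ property. Hence any such $S_v$ must \emph{cross} another one, i.e. have edge stabilizers that are hyperbolic in some other relative splitting — the direct analog of the intersecting-curves situation in $\pi_1(\Sigma)$.

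Third, I convert crossing splittings into a QH structure. Using finite presentability of $G$ and finite generation of $G_v$ (granted by Theorem \ref{existmouintro}), I take an $\bbR$-tree limit of a sequence of pairwise crossing relative $\cala$-trees and apply the Rips machine for actions with slender arc stabilizers (Bestvina-Feighn, Sela). In the slender setting the output decomposes into simplicial, axial, and surface components, and slenderness of edge stabilizers rules out exotic Levitt components. The surface components assemble into a QH subgroup $Q \subset G_v$, fitting into $1 \to F \to Q \to \pi_1(\Sigma) \to 1$ with $\Sigma$ a compact hyperbolic 2-orbifold, and $F$ slender because it embeds into an arc stabilizer of the limit $\bbR$-tree. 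Finally, maximality of $T$ as a JSJ forces $Q = G_v$ (otherwise one could refine at $v$ by a splitting dual to a boundary component of $\Sigma$ outside $Q$, contradicting the JSJ property exactly as in the second step), with each incident edge group mapping into a boundary subgroup.

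The main obstacle is the third step: extracting an actual 2-orbifold from the combinatorics of crossing splittings. This requires the full machinery of Rips-theory for actions with slender arc stabilizers (including accessibility results to guarantee the limit $\bbR$-tree has a well-behaved decomposition), or alternatively the direct Fujiwara-Papasoglu analysis of patterns on 2-complexes. The secondary subtle point is identifying and controlling the fiber $F$ — showing that it is slender, rather than merely a subgroup of something slender, and that it is normal in $G_v$ — which is where the precise form of the Rips decomposition in the slender case is essential.
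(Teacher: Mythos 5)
Your first two steps are sound and match the paper's reduction (flexibility of $v$ means exactly that $G_v$ splits relative to $\Inch_v$ but not over any universally elliptic subgroup; this is Corollary \ref{cor_flex}, and the problem becomes Theorem \ref{thm_totally_flex} about totally flexible groups). The genuine gap is your third step. You propose to take an $\bbR$-tree limit of crossing relative splittings and run the Rips machine "for actions with slender arc stabilizers", but nothing guarantees that such a limit is stable or that its arc (and tripod) stabilizers are slender: a limit of trees with slender edge stabilizers can have arc stabilizers that are ascending unions, far from slender, and the structure theorems you invoke (Bestvina--Feighn, Sela, or \cite{Gui_actions}) all require stability hypotheses or explicit bounds on unstable arc and tripod stabilizers. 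In this paper that control is available only in the acylindrical setting of Part \ref{partacyl}, where $(k,C)$-acylindricity of the approximating trees is exactly what bounds tripod and unstable arc stabilizers in the limit (see Lemma \ref{stab} in the proof of Theorem \ref{thm_JSJacyl}); for the slender JSJ of a finitely presented group no such uniform acylindricity is available, which is precisely why the paper (following Fujiwara--Papasoglu) avoids $\bbR$-trees altogether and instead builds the core $\calc(T_1,T_2)\subset T_1\times T_2$ of two simplicial trees fully hyperbolic with respect to each other, proves it is a pseudo-surface (Propositions \ref{prop_minuscule_hh} and \ref{prop_surface}), and reads off the QH structure from its regular neighborhood (Proposition \ref{prop_RN}). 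Your parenthetical claim that slenderness "rules out exotic Levitt components" is also unjustified; thin components are not excluded by slenderness of arc stabilizers.

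There is a second, smaller but real, gap at the end: even granting a surface piece $Q\subset G_v$ from some limiting argument, "maximality of $T$ forces $Q=G_v$" does not follow as you state it, because the QH piece you would obtain only encloses the particular splittings fed into the limit, not all relative splittings of $G_v$. The paper closes this by constructing a \emph{filling pair}: a tree $U$ maximal for domination (Lemma \ref{lem_Zorn_minuscule}, which is where relative finite presentation of $G_v$ enters, via Proposition \ref{Gvrelfp} and Dunwoody accessibility) together with a partner $V$ fully hyperbolic with respect to it (Proposition \ref{prop_exist_hh}), and then shows the regular neighborhood $RN(U,V)$ is a single point, so that $G_v$ itself is QH; the fiber is then visibly slender and normal because it is the kernel of the action on the core, an extension datum built into Proposition \ref{prop_RN}, not an arc stabilizer of a limit tree. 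As written, your proposal would need either a proof of stability and slenderness of arc stabilizers for the limiting action, or a replacement for the filling-pair/regular-neighborhood mechanism; without one of these the argument does not go through.
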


One may  replace $\cala$ by  a subfamily, provided that  it  satisfies a suitable stability condition.\index{stability condition} In particular, $\cala$ may be the family of cyclic subgroups, virtually cyclic subgroups, polycyclic subgroups. 
Failure of this stability condition explains why the result does not apply to JSJ decompositions over abelian groups  in general (see Subsection \ref{ab}). On the other hand, non-abelian flexible vertex groups in Theorem \ref{mou2} are QH with trivial fiber, and in Theorem \ref{mou3} non-small flexible vertex groups are QH with finite fiber.

Theorem \ref{slenderintro} says that flexible subgroups of the JSJ decomposition are QH, but one can say more:
they are maximal in the following sense.

\begin{propbis}[see Corollary \ref{cor_qhe2}]\label{prop_qhe_intro}
Let $G$ be one-ended, 
and let $\cala$ be the class of all virtually cyclic groups.
Let $Q$ be a QH vertex  stabilizer with finite fiber in an arbitrary $\cala$-tree.

Then $Q$ is contained in a QH   vertex stabilizer of any cyclic JSJ decomposition of $G$.
\end{propbis}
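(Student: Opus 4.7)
Let $T$ be the given $\cala$-tree in which $Q$ fixes a vertex $v_0$, associated to a $2$-orbifold $S$ via an extension $1\to F\to Q\to \pi_1(S)\to 1$ with $F$ finite, and let $T_J$ be a cyclic JSJ tree. One may assume $\pi_1(S)$ is non-elementary, for otherwise $Q$ is virtually cyclic and the conclusion is essentially vacuous. The plan is first to prove that $Q$ is elliptic in $T_J$, giving $Q\subseteq G_v$ for some vertex $v$ of $T_J$, and then to prove that $G_v$ is flexible in $T_J$, whence QH by Theorem~\ref{slenderintro}.

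For the ellipticity step, I would argue by contradiction. If $Q$ has no fixed vertex in $T_J$, restrict $T_J$ to its minimal $Q$-invariant subtree. The $Q$-edge stabilizers there, being intersections of $Q$ with virtually cyclic stabilizers of $T_J$, are virtually cyclic; passing through $Q\twoheadrightarrow \pi_1(S)$ and using the surface--tree dictionary, the induced non-trivial $\pi_1(S)$-action is dual to a non-empty essential multicurve $\Gamma$ on $S$. Pick a component $\gamma$ of $\Gamma$ and a simple closed curve $c$ on $S$ with $i(\gamma,c)>0$; the splitting of $Q$ dual to $c$ can be used to refine $T$ by blowing up $v_0$, since every boundary subgroup of $Q$ lies in the stabilizer of one side of $c$ (as $c$ is not boundary parallel). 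The resulting $G$-tree $T'$ is still an $\cala$-tree, the new edge stabilizer being the virtually cyclic preimage of $\pi_1(c)$ in $Q$. By universal ellipticity of $T_J$, every edge stabilizer $G_e$ of $T_J$ is elliptic in $T'$. Choosing $e$ so that $G_e\cap Q$ projects onto a finite-index subgroup of $\pi_1(\gamma)$ (such $e$ exists because $\gamma$ is a component of the multicurve dual to the $Q$-action on $T_J$), one concludes that $\pi_1(\gamma)$ is elliptic in the subtree $T_c\subset T'$, contradicting $i(\gamma,c)>0$.

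For the QH conclusion, once $Q\subseteq G_v$ is established, pick any essential non-boundary simple closed curve $c$ on $S$; the corresponding $\cala$-tree $T'$ is one in which $G_v$ has no global fixed point, so $G_v$ is not universally elliptic and must therefore be flexible in $T_J$. Since $G$ is one-ended and the family of virtually cyclic subgroups is slender and satisfies the stability condition required in Theorem~\ref{slenderintro}, the flexible vertex $G_v$ is either slender or QH with slender fiber; non-elementarity of $Q\subseteq G_v$ rules out the slender case, so $G_v$ is QH as required.

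The main obstacle is the ellipticity step: one must correctly identify the multicurve $\Gamma$ dual to the $Q$-action on $T_J$ (carefully handling the possibly non-trivial action of the finite fiber $F$) and verify that the blow-up $T'$ is a genuine $\cala$-tree, in particular checking that the reattachment of edges incident to $v_0$ is unambiguous. Once these bookkeeping points are handled, the contradiction is a direct application of universal ellipticity together with the intersection-number/translation-length correspondence for surface-group actions on trees.
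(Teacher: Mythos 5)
Your skeleton (prove $Q$ elliptic in the JSJ tree $T_J$, then identify the vertex group containing it via the description of flexible vertices) is the same as the paper's, and your route to ellipticity — read off the multicurve dual to the $Q$-action on its minimal subtree in $T_J$, blow up $T$ at $v_0$ along a crossing curve $c$, and play universal ellipticity of $T_J$ against that blow-up — is a reasonable variant of the paper's argument (which instead assigns to each geodesic $\gamma$ the unique point of $T_J$ fixed by $Q_{\tilde\gamma}$, proves a crossing lemma, and concludes by the filling property). But there is a genuine gap at the end: you have no argument when the underlying orbifold $\Sigma$ contains no essential simple closed geodesic, i.e.\ when it is one of the small orbifolds of Proposition \ref{smallorb} (pair of pants, sphere with three cone points, disc with two cone points, \dots). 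These have non-elementary fundamental group, so your reduction ``one may assume $\pi_1(S)$ non-elementary'' does not dispose of them. For such $\Sigma$ there is no curve $c$ at all, the vertex group $G_v\supseteq Q$ may perfectly well be universally elliptic (rigid), and your deduction that $G_v$ is flexible, hence QH by Theorem \ref{slenderintro}, collapses. The paper treats this case separately: when $G_v$ is universally elliptic, Remark \ref{rem_qhe} shows that $Q=G_v$ and that $v$ is itself a QH vertex of $T_J$, i.e.\ that the incident edge groups at $v$ are extended boundary subgroups of $Q$; this needs its own argument (a refinement of $T_J$ dominating $T$ and an analysis of the edges adjacent to $v$), not a one-line appeal to flexibility.

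There is also a gap in the ellipticity step itself. Your identification of the $Q$-minimal subtree of $T_J$ with a tree dual to a multicurve invokes Proposition \ref{dual}, which requires the (preimages of the) boundary subgroups of $\pi_1(\Sigma)$ to be elliptic in $T_J$. This is not automatic: $T$ is an arbitrary $\cala$-tree, so its incident edge groups at $v_0$ are not universally elliptic, and nothing you say forces the boundary subgroups to be elliptic in $T_J$. If some boundary subgroup were hyperbolic in $T_J$, the induced splitting of $\pi_1(\Sigma)$ could be dual to arcs (a free splitting), your component $\gamma$ would not exist, and the contradiction evaporates. This is precisely where one-endedness is used in the paper: every boundary component is used (otherwise Lemma \ref{lem_arcbis} yields a splitting of $G$ over a finite group), and folding the corresponding incident edge of $T$ gives a splitting of $G$ over the preimage $\hat B$ of the boundary subgroup, so that Lemma \ref{pass} makes $\hat B$ elliptic in the JSJ deformation space. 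With that ingredient added (and the easy remark that the finite fiber acts trivially on the minimal subtree), your contradiction argument does go through whenever $\Sigma$ contains a geodesic; without it, and without the rigid case above, the proof is incomplete.
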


 This does not hold without the one-endedness assumption: non-abelian free groups contain  many non-trivial QH subgroups.

Our proof of Theorem \ref{slenderintro} is based on the approach by Fujiwara and Papasoglu using products of trees \cite{FuPa_JSJ},  but with several simplifications. In particular, we do not  have to construct a group enclosing more than two splittings. 

The characteristic property of slender groups is that, whenever they act  on  a tree with no fixed point, there is an invariant  line. Using these lines, one may construct subsurfaces in the product of two trees and this explains (at least philosophically) the appearance of surfaces, hence of QH vertex groups, in Theorem \ref{slenderintro}. This is the content of Proposition \ref{prop_RN}.

This approach does not work if edge groups are not slender (unless there is acylindricity, as in Theorem \ref{dur2}  below), and the following problem is open.

\begin{pb*}
  Describe flexible vertices of   JSJ decompositions of a finitely presented group over small subgroups.\\
\end{pb*}

\subsection*{Relative decompositions}
For many applications, it is important not to consider all $\cala$-trees, but only
those in which subgroups of $G$ belonging to  a given family $\calh$   are elliptic (\ie every $H\in\calh$ fixes a point in the tree).
We say that such a tree is \emph{relative to $\calh$},\index{relative tree or splitting} and we call it an  $(\cala,\calh)$-tree.\index{0AH@$(\cala ,\calh )$-tree}
Working in a relative setting is important for applications (see e.g.\ \cite[Section 9]{Sela_diophantine1}, \cite{Pau_theorie,Perin_elementary},
 and Theorems \ref{dur1} and \ref{dur2}  below), 
and also needed in our proofs.  In the proof of Theorem \ref{slenderintro}  describing flexible vertex groups $G_v$ of slender JSJ decompostions,  for instance, we do not work with splittings of $G$, but rather with splittings of $G_v$ relative to incident edge groups.

Definitions extend naturally to relative trees:
a tree is universally elliptic if its edge groups are elliptic in every $(\cala,\calh)$-tree, and
 a JSJ decomposition of $G$ over $\cala$ relative to $\calh$ is
an $(\cala,\calh)$-tree which is universally elliptic and maximal for domination.

The   theorems stated above remain true, but  $\calh$ must be a finite family of finitely generated subroups in Theorems \ref{existmouintro}  and \ref{slenderintro}, and one must take $\calh$ into account when defining QH vertices (see Definition \ref{dfn_qh}).  

In this text we consistently work in a relative setting, so   the reader does not have to take it as an act of faith that   arguments also work for relative trees. For simplicity, though, we limit ourselves to the non-relative case  in this introduction (except in Theorems \ref{dur1} and \ref{dur2} where obtaining 
 canonical trees definitely requires working in a relative setting).

\subsection*{Acylindricity}

We have explained how Dunwoody's accessibility may be used to construct the JSJ deformation space, which  contains no preferred tree in general. In  Part \ref{partacyl} we use a different approach, based on the trees of cylinders of \cite{GL4} and on Sela's acylindrical accessibility\index{acylindrical accessibility}  \cite{Sela_acylindrical}, which  yields more precise results  when applicable. Unlike Dunwoody's accessibility, acylindrical accessibility  only  requires finite generation of $G$ (see Subsection \ref{accessi}).

Let $G$ be a one-ended group, and $T$ any tree with (necessarily infinite) virtually cyclic edge stabilizers.
Say that two edges of $T$ are equivalent if their stabilizers are commensurable (i.e.\ have infinite intersection).
One easily checks that equivalence classes are connected subsets of $T$, which we call \emph{cylinders}.\index{cylinder in a tree} Two distinct cylinders intersect in at most one point. 
Dual to this partition of $T$ into subtrees is another tree $T_c$ called \emph{the tree of cylinders}\index{tree of cylinders} (it is sometimes necessary to use a collapsed tree of cylinders, see Definition \ref{treecyl}, but we neglect this here).

This construction
works with other
equivalence relations among infinite edge stabilizers.
Here are  two   examples:   $G$ is a relatively hyperbolic group, and the equivalence relation is co-elementarity\index{co-elementary subgroups} among  infinite elementary subgroups  ($A\sim B$ if and only if $\grp{A,B}$ is elementary, \ie virtually cyclic or parabolic), or $G$ is a torsion-free CSA group and the relation is commutation among non-trivial abelian subgroups.

\begin{figure}[htbp]
  \centering
\includegraphics{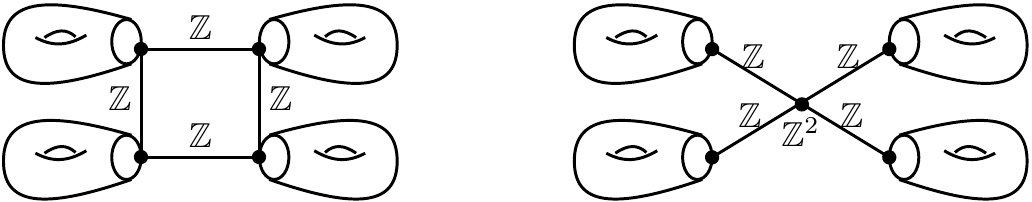}
  \caption{A JSJ splitting of a toral relatively hyperbolic group and its tree of cylinders.}
  \label{fig_ex}
\end{figure}

Here is  an example (which already appears in \cite{GL4}). Let $T$ be the Bass-Serre tree of the graph of groups $\Gamma$ pictured on the left of Figure \ref{fig_ex}  (all punctured tori have the same boundary subgroup, equal to the edge groups of $\Gamma$). The fundamental group of $\Gamma$ is a torsion-free one-ended group $G$ (which is toral relatively hyperbolic and CSA),  and $T$ is a
cyclic JSJ decomposition of $G$. 
In this case all previous equivalence relations on the set of edge stabilizers  of $T$  (commensurability, co-elementarity, commutation) reduce to equality, and the quotient graph of groups of the tree of cylinders $T_c$ is pictured on the right of Figure \ref{fig_ex}; there is a new vertex group, isomorphic to $\Z^2$.

There are three main benefits in passing from  the JSJ tree $T$ to $T_c$. First, two trees in the same deformation space   always have the same tree of cylinders. In particular,     the tree $T_c$ is invariant under all automorphisms of $G$  (whereas  $T$   is only invariant up to deformation). 
Second, $T_c$ is acylindrical:\index{acylindrical tree, splitting} all segments of length 3 have trivial stabilizer (whereas $T$ contains   lines with infinite cyclic pointwise stabilizer).  Third,   trees of cylinders enjoy nice compatibility properties, which will be important later. 
There is a small price to pay in order to replace $T$ by the  better tree $T_c$, namely changing the deformation space by  creating a new vertex group $\Z^2$.

This example is typical of the method we use in Part \ref{partacyl}  to prove Theorems \ref{mou2} and \ref{mou3}. Using trees of cylinders, we show that one may associate to any tree $T$ an acylindrical  tree $T^*$ in such a way that groups elliptic in $T$ are also elliptic in $T^*$, and groups elliptic in $T^*$ but not in $T$ are \emph{small}\index{small, small in $(\cala ,\calh )$-trees}  ($T^*$ is smally dominated by $T$ in the sense of Definition \ref{sm}). 
 Applying acylindrical accessibility to trees $T^*$ is   one of the key ingredients in our construction 
of JSJ decompositions, but the proof involves much more than a bound on the complexity of acylindrical
splittings.

In the example, the tree $T$ is a cyclic JSJ tree, but the preferred tree $T_c$ is in  a slightly different deformation space; it is  
 a   JSJ tree relative to
the $\Z^2$  subgroup. In general, we show:
 
 \begin{thmbis}[Theorem \ref{JSJ_CSA}] 
 \label{dur1}
Let $G$ be a torsion-free finitely generated one-ended CSA group.\index{CSA group} There is a
 canonical JSJ tree  over abelian subgroups\index{abelian tree} relative to all non-cyclic abelian subgroups. Its non-abelian flexible vertex stabilizers are QH with trivial fiber.
\end{thmbis}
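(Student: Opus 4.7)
The plan is to combine three ingredients developed earlier in the introduction: the existence result for JSJ decompositions of CSA groups (Theorem \ref{mou2}), the tree of cylinders construction announced in Part \ref{partacyl}, and the QH description of flexible vertices (Theorem \ref{slenderintro}).

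First, I would establish existence of a JSJ deformation space $\cald_{JSJ}$ for $G$ over the family $\cala$ of abelian subgroups, relative to the family $\calh$ of non-cyclic abelian subgroups of $G$. This is a relative version of Theorem \ref{mou2} and should follow from the same acylindrical-accessibility-based argument, using that in a torsion-free CSA group every non-cyclic abelian subgroup lies in a unique maximal abelian subgroup, which is malnormal; consequently the constraint of being elliptic on $\calh$ is compatible with the ellipticity analysis underlying the acylindrical estimate, and does not inflate the complexity bound.

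Second, I would produce the canonical tree by taking the tree of cylinders. On the set of non-trivial abelian subgroups of $G$, commutation defines an equivalence relation (thanks to CSA) whose classes are exactly the maximal abelian subgroups. Apply this to the edge stabilizers of an arbitrary $T\in\cald_{JSJ}$; the resulting (possibly collapsed) tree of cylinders $T_c$ depends only on $\cald_{JSJ}$, hence is $\Aut(G)$-invariant. Its cylinder vertex stabilizers are precisely the maximal abelian subgroups of $G$ that appear via edge stabilizers of $T$, so every non-cyclic abelian subgroup is automatically elliptic in $T_c$. I would then verify that $T_c$ is itself universally elliptic and maximal among $(\cala,\calh)$-trees with this property, by combining general features of trees of cylinders with the universal ellipticity of $T$. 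This step, especially arguing that domination survives the cylinder surgery after appropriate collapsing, is the main technical obstacle: one has to check both that new cylinder vertex groups remain in $\cala$ (abelian) and that no $(\cala,\calh)$-tree escapes the maximality of $T_c$.

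Third, I would pin down the flexible vertex stabilizers. A non-abelian vertex stabilizer $G_v$ of $T_c$ is also a vertex stabilizer of $T$, and any splitting of $G_v$ in $\cala$ relative to its incident edge groups induces a corresponding splitting of $G$ in $\cala$ relative to $\calh$; so $G_v$ inherits its flexibility from $T$. Applying the relative slender-JSJ description (Theorem \ref{slenderintro} specialized to the abelian-CSA setting, as alluded to after its statement), such a flexible $G_v$ is a QH extension $1\to F\to G_v\to \pi_1(\Sigma)\to 1$ with abelian fiber $F$. Since $F$ is a normal abelian subgroup of the non-abelian group $G_v$, it lies in some maximal abelian subgroup $A$; for any $g\in G_v$ we have $F=gFg^{-1}\subset A\cap gAg^{-1}$, and malnormality of $A$ forces $g\in A$ as soon as $F\ne 1$, which would make $G_v$ abelian. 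Hence $F=1$, completing the description of non-abelian flexible vertices as QH with trivial fiber.
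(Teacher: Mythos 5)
Your overall architecture (a relative JSJ space obtained via acylindricity, the tree of cylinders to get a canonical tree, then a QH description with the fiber killed by CSA) is the same as the paper's: your Step 2 is essentially Proposition \ref{tcsmdom} together with Lemma \ref{lem_basicTc}, and your malnormality argument for triviality of the fiber is correct. The genuine gap is in Step 3: Theorem \ref{slenderintro} (i.e.\ Corollary \ref{cor_slender}/Theorem \ref{thm_description_slender}) cannot be invoked here. Its hypotheses require $G$ to be finitely presented relative to a \emph{finite} family $\calh$ of \emph{finitely generated} subgroups, and require the edge groups to be slender and to satisfy a stability condition; in the present setting $G$ is only finitely generated, $\calh$ is the infinite family of all non-cyclic abelian subgroups (possibly infinitely generated), and abelian subgroups of a finitely generated CSA group need not be finitely generated, hence need not be slender (nor can you reduce to finitely generated abelian edge groups via Proposition \ref{loc}, which again needs relative finite presentation). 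The paper even warns (Subsection \ref{ab}) that flexible vertices of abelian JSJ decompositions are not QH in general, because the stability condition fails for abelian groups; in the CSA case the QH description is obtained not from Theorem \ref{slenderintro} but from the description part of Theorem \ref{thm_smallyacyl}, proved in Subsection \ref{sec_JSJ_acyl_desc} by rerunning the Fujiwara--Papasoglu core/regular-neighborhood argument with acylindrical accessibility replacing Dunwoody's accessibility, precisely so as to drop finite presentability and allow arbitrary families $\calh$ and infinitely generated incident edge data. Without that argument (or a substitute), your Step 3 is unjustified.

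A secondary weakness is Step 1. Existence of the relative JSJ space is not just a matter of an acylindrical complexity bound that ``is not inflated'': the paper stresses that the existence proof (Theorem \ref{thm_JSJacyl}) is a limiting argument with $\R$-trees, using Sela's acylindrical accessibility together with the structure theory of stable actions, and much more than a bound on acylindrical splittings. Moreover, the hypothesis that makes this machinery applicable in the relative setting is exactly the fact you only establish in Step 2: relative to all non-cyclic abelian subgroups, the (collapsed) tree of cylinders of any tree is $2$-acylindrical and lies in the \emph{same} deformation space (the ``furthermore'' of Proposition \ref{tcsmdom}); so that input must come before, not after, the existence step. Finally, note that citing Theorem \ref{mou2} is borderline circular, since it is the same body statement (Theorem \ref{JSJ_CSA}) as the theorem you are proving; what you may legitimately borrow is its method, which is the acylindrical construction just described.
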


The CSA property (maximal abelian subgroups are malnormal)\index{CSA group}  holds for any torsion-free hyperbolic group $\Gamma$, and any $\Gamma$-limit group.\index{GA@$\Gamma$-limit group}
If $\Gamma$ is a hyperbolic group with torsion, $\Gamma$ and all $\Gamma$-limit groups have the weaker property of being 
 $K$-CSA   (for some integer $K$), as defined in
 Subsection \ref{aKcsa}.
Theorem \ref{dur1}   generalizes to this setting (see Theorem \ref{JSJ_KCSA}) and implies the following result:
\begin{thmbis} \label{thmKCSA}
 Let $\Gamma$ be a hyperbolic group, and let $G$ be a one-ended $\Gamma$-limit group.
Then $G$ has a
 canonical JSJ tree over  virtually abelian subgroups relative to all  virtually abelian subgroups which are not virtually cyclic. 
Its  flexible vertex stabilizers   are   virtually abelian or QH with finite fiber.
\end{thmbis}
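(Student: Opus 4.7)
The plan is to deduce this statement as a direct application of Theorem \ref{JSJ_KCSA} (the $K$-CSA generalization of Theorem \ref{dur1}) once we have verified the two ingredients needed as input: that $G$ is $K$-CSA for some integer $K$, and that $G$ is finitely generated and one-ended. The latter is assumed; for the former I would invoke the result of Subsection \ref{aKcsa} saying that whenever $\Gamma$ is a hyperbolic group (possibly with torsion), every $\Gamma$-limit group is $K$-CSA for the same constant $K=K(\Gamma)$. This reduces the theorem to the construction carried out for $K$-CSA groups.

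With $G$ $K$-CSA and one-ended in hand, the plan is to follow the trees-of-cylinders route outlined in Part \ref{partacyl}. First I would form the JSJ deformation space of $G$ over the class $\cala$ of virtually abelian subgroups, relative to the finite collection $\calh$ of non-virtually-cyclic virtually abelian subgroups (which are the natural replacements for the non-cyclic abelian subgroups appearing in Theorem \ref{dur1}). Existence of this relative JSJ deformation space follows from the acylindricity-based construction used for Theorem \ref{mou2}, because the commensurability equivalence relation on infinite virtually cyclic subgroups is a $K$-CSA-admissible relation giving rise to acylindrical trees of cylinders. Next, I would pass from an arbitrary JSJ tree $T$ in this deformation space to its (collapsed) tree of cylinders $T_c$; since two trees in the same deformation space have the same tree of cylinders, $T_c$ is invariant under the automorphisms of $G$ preserving $\calh$, hence canonical.

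It remains to describe the flexible vertex stabilizers of $T_c$. Here I would use the version of Theorem \ref{slenderintro} adapted to the acylindrical $K$-CSA setting: a flexible vertex stabilizer $G_v$ of $T_c$ is either itself a group in the edge-group class (\ie virtually abelian, coming from the tree-of-cylinders construction) or a QH vertex; the stability condition holds for virtually abelian subgroups in a $K$-CSA group, so Theorem \ref{slenderintro} (more precisely Theorem \ref{thm_description_slender}) applies. The fiber of the QH extension is forced to be finite, because $G$ is $K$-CSA and the fiber is normal in a vertex group containing a non-abelian surface group; in the torsion-free CSA case this forces the fiber to be trivial, and the argument for $K$-CSA gives finiteness instead.

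The main obstacle I anticipate is not the reduction step itself but verifying that the trees-of-cylinders machinery behaves well in the presence of torsion: one must choose the right equivalence relation on edge stabilizers (commensurability among infinite virtually cyclic groups) so that the resulting $T_c$ has edge stabilizers still in $\cala$, that elements of $\calh$ remain elliptic in $T_c$, and that the new vertex stabilizers created by collapsing cylinders are virtually abelian rather than just small. Granting the $K$-CSA analogues of the structural statements of Section \ref{partacyl}, which handle exactly these points, the theorem follows.
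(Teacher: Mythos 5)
Your top-level reduction is exactly the paper's proof of this statement: being $K$-CSA is a universal property (Proposition \ref{prop_univ}), so by Corollary \ref{lg} there is a $K$ such that every $\Gamma$-limit group is $K$-CSA, and the theorem is then the case $\calh=\es$ of Theorem \ref{JSJ_KCSA}, the canonical tree being the collapsed tree of cylinders $(T_a)_c^*$, which is invariant because trees of cylinders depend only on the deformation space. If you take Theorem \ref{JSJ_KCSA} as given, nothing more is needed.

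The problems are in how you propose to justify that theorem. First, the equivalence relation you name, commensurability among infinite virtually cyclic subgroups, is the relation used for virtually cyclic splittings (Subsection \ref{sec_VC}); for splittings over virtually abelian groups the edge stabilizers may be virtually $\Z^2$, where that relation is not even defined, and commensurability is not admissible on infinite virtually abelian subgroups since nesting (e.g.\ $\Z\subset\Z^2$) does not imply commensurability, so axiom (2) of Definition \ref{eqrel} fails. The paper instead uses virtual commutation, $A\sim B$ if and only if $\grp{A,B}$ is virtually abelian (equivalently $M(A)=M(B)$), whose admissibility is Lemma \ref{coadm}. Second, you invoke Theorem \ref{thm_description_slender} for the description of flexible vertices, but that theorem requires $G$ to be finitely presented relative to a \emph{finite} family of finitely generated subgroups; neither hypothesis is available here: a $\Gamma$-limit group need not be finitely presented, and the relative family (all virtually abelian subgroups that are not virtually cyclic) is infinite, not finite as you assert, with possibly infinitely generated members. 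This is precisely why the paper proves Theorem \ref{JSJ_KCSA} through Corollary \ref{synthese} and Theorem \ref{thm_smallyacyl}: both existence and the QH-with-finite-fiber description of flexible vertices are obtained from acylindrical accessibility applied to the $(2,K)$-acylindrical collapsed trees of cylinders (Subsection \ref{sec_JSJ_acyl_desc}), rather than from the Dunwoody-accessibility-based arguments of Section \ref{Fujpap}. With those two corrections (virtual commutation as the admissible relation, and Theorem \ref{thm_smallyacyl} in place of Theorem \ref{thm_description_slender}), your outline coincides with the paper's argument.
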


We have a similar statement for relatively hyperbolic groups.\index{relatively hyperbolic group}

\begin{thmbis}[Theorem \ref{thm_JSJr}]
\label{dur2}
Let $G$ be one-ended and hyperbolic relative to a finite family of finitely generated small subgroups. If $\cala$ is either  the family of all virtually cyclic subgroups of $G$, or the family of all small subgroups, there is 
a canonical
JSJ tree  over $\cala$ relative to all parabolic subgroups. Its   flexible vertex stabilizers are small or QH with finite fiber.
\end{thmbis}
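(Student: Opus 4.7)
The plan is to build the canonical tree as the tree of cylinders of a JSJ tree in the appropriate relative deformation space, and then to describe its flexible vertices by combining acylindrical accessibility with a product-of-trees argument in the spirit of Fujiwara--Papasoglu.

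\textbf{Existence and canonicity.} The relative form of Theorem~\ref{mou3} produces a JSJ tree $T$ of $G$ over $\cala$ relative to the family $\calh$ of maximal parabolic subgroups. Edge stabilizers of $T$ lie in $\cala$, so they are small; when infinite, each is contained in a unique maximal elementary subgroup of $G$, either virtually cyclic or parabolic. On the set of infinite elementary subgroups I put the co-elementarity relation $A\sim B$ iff $\langle A,B\rangle$ is elementary, and I form the (possibly collapsed) tree of cylinders $T_c$ of $T$, as in the example of Figure~\ref{fig_ex} and~\cite{GL4}. Stabilizers of the new cylinder vertices are maximal elementary subgroups: parabolic ones are already elliptic in $T$, so $T_c$ remains relative to $\calh$, and non-parabolic ones are virtually cyclic, hence in $\cala$. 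One checks as in~\cite{GL4} that $T_c$ is an $(\cala,\calh)$-tree, is universally elliptic, and dominates every universally elliptic $(\cala,\calh)$-tree; thus $T_c$ is a JSJ tree. Since the tree of cylinders depends only on the deformation space of $T$, it is independent of the choice of $T$ in $\cald_{JSJ}$, and in particular invariant under $\Aut(G)$.

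\textbf{Acylindricity.} The centralizer in $G$ of any infinite elementary subgroup is again elementary, which is exactly what makes $T_c$ acylindrical: the pointwise stabilizer of a segment of length greater than some universal $k$ is finite. Sela's acylindrical accessibility then bounds the complexity of $T_c$ and of the splittings of each of its vertex groups relative to incident edges, and this uniform bound is the input that drives the description of flexible vertices.

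\textbf{Flexible vertices.} Let $G_v$ be a flexible vertex stabilizer of $T_c$, and consider splittings of $G_v$ over $\cala$ relative to its incident edge groups in which $G_v$ does not act elliptically. If $G_v$ is small we are in the first case of the theorem. Otherwise two such splittings must ``cross'', and following the product-of-trees construction behind Theorem~\ref{slenderintro} (Proposition~\ref{prop_RN}), a crossing produces a $2$-orbifold piece inside $G_v$; the acylindricity of $T_c$ forces its fiber to be finite, since the fiber must embed into the intersection of two distinct maximal elementary subgroups along a long segment. The main obstacle, and the technical heart of the argument, is the step from ``$G_v$ contains a QH subgroup with finite fiber'' to ``$G_v$ \emph{is} QH with finite fiber'' in a way that accounts for \emph{every} flexible splitting. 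I plan to achieve this by iterating the crossing construction, using acylindrical accessibility to bound the complexity of the resulting orbifold, and the Rips machine in its relatively hyperbolic form to identify $G_v$ with an extension of a $2$-orbifold group by a finite fiber, with incident edge groups mapping into boundary subgroups, as required.
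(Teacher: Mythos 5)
Your construction of the canonical tree is essentially the paper's: take a JSJ tree $T$ over $\cala$ relative to the parabolics, pass to its (collapsed) tree of cylinders for co-elementarity, check that the cylinder stabilizers (maximal elementary subgroups) are already elliptic in $T$ -- parabolic ones by relativity, virtually cyclic ones because they contain an edge stabilizer with finite index -- so that the tree of cylinders stays in the same deformation space, and invoke the fact that the tree of cylinders depends only on the deformation space (Proposition \ref{tcsmdom}, Lemma \ref{acl}, Corollary \ref{thm_JSJrcourt2}). One repair: you cannot quote ``the relative form of Theorem \ref{mou3}'' for existence, since that is the statement being proved; either use Theorem \ref{thm_exist_mou_rel} (legitimate here, because $G$ is finitely presented relative to its finitely generated parabolics), or the acylindricity-based existence result (Theorem \ref{thm_smallyacyl}), which is what the paper does and which needs no relative finite presentation.

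The genuine gap is the second assertion, and you flag it yourself: the passage from ``$G_v$ contains a QH subgroup with finite fiber'' to ``$G_v$ \emph{is} QH with finite fiber'' is only ``planned'', by iterating the crossing construction and appealing to ``the Rips machine in its relatively hyperbolic form''. As written this is not an argument, and it points in the wrong direction: iterating regular neighbourhoods so as to enclose more and more splittings is precisely the delicate part of Fujiwara--Papasoglu that the paper is organized to avoid, and Rips theory for $\bbR$-trees enters the paper only in the \emph{existence} proof (the limiting argument in Theorem \ref{thm_JSJacyl}), not in identifying flexible vertices. What is actually needed (Subsections \ref{tfg}, \ref{preuve}, \ref{sec_JSJ_acyl_desc}) is: reduce to a totally flexible vertex group $Q=G_v$ with its induced structure $\Inch_v$ (which may contain infinitely generated groups, so the finitely-presented machinery of Theorem \ref{thm_totally_flex} is not directly available); observe that total flexibility, smallness of edge groups, and Lemma \ref{vc} applied to the uniformly acylindrical trees $T^*_c$ force \emph{every} edge group of \emph{every} splitting of $Q$ to be $C$-virtually cyclic, so the stability condition $(SC)$ holds with finite fibers; replace Dunwoody accessibility by acylindrical accessibility to choose a $(k,C)$-acylindrical tree $U$ with the maximal number of edge orbits; produce a single $V$ fully hyperbolic with respect to $U$ (Proposition \ref{prop_exist_hh}) and prove that the regular neighbourhood $RN(U,V)$ of this one pair is a point, by comparing numbers of commensurability classes of edge stabilizers of $U$, of $U\vee RN(U,V)$, and of an acylindrical model, using the maximality of $U$. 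Triviality of $RN(U,V)$ \emph{is} the statement that $Q$ is QH, with no third splitting and no iteration. Note also that your sketch uses acylindricity of the single tree $T_c$ to control arbitrary splittings of its vertex groups; what the argument really needs is that every $(\cala,\calh)$-tree smally dominates a uniformly $(2,C)$-acylindrical tree (its tree of cylinders, Proposition \ref{tcsmdom}), which is the hypothesis of Theorem \ref{thm_smallyacyl}. Until the ``contains QH $\Rightarrow$ is QH'' step is carried out along some such lines, the description of flexible vertices remains unproved.
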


The trees   produced by these theorems are defined in  a uniform, natural way, and are  canonical 
 (as discussed above). In particular, they are
 invariant under automorphims.  
When  $G$ is  a one-ended hyperbolic group, the  canonical  JSJ tree (non-relative in this case)
coincides with the tree constructed by Bowditch\index{Bowditch}  \cite{Bo_cut} using the topology of $\bo G$.

\subsection*{Compatibility JSJ}

A \emph{refinement}\index{refinement} $\hat T$ of a tree $T$ is a tree obtained by blowing up vertices of $T$ (beware  that in \cite{FuPa_JSJ} 
a refinement of $T$ is what we call a tree dominating $T$; on the other hand, their elementary unfoldings are refinements in our sense). 
There is a map   $p$ from $ \hat T$  to $ T $, so $\hat T$ dominates  $T$, but this map is very special: it maps any segment   $[x,y]$ of $\hat T$ onto the segment $[p(x),p(y)]$ of $T$ (in particular, it does not fold). We call such a map a \emph{collapse map,}\index{collapse map} as it is obtained by collapsing certain edges to points.

If a tree $T_1$ is universally elliptic, then given any tree $T_2$ there is a refinement $\hat T_1$ of $T_1$  which dominates $T_2$: there is an equivariant map $f: \hat T_1\to T_2$ (see Proposition \ref{prop_refinement}).
  If $T_2$ has finitely generated edge stabilizers, one may obtain $T_2$ from $\hat T_1$ by a finite sequence of folds and collapses \cite{BF_bounding}.
In this sense, one can \emph{read} $T_2$ from $T_1$.
In particular, one may read any tree $T_2$ from any JSJ tree $T_1$. 

In general, the map $f: \hat T_1\to T_2$ is not a collapse map (there are folds). We say that $T_1$ and $T_2$ are \emph{compatible}\index{compatible trees} if there exists a refinement $ \hat T_1$ with a collapse map $f: \hat T_1\to T_2$. In other words, 
 $T_1,T_2$ are  {compatible} when they have a common refinement.
This implies that edge stabilizers of each tree are elliptic in the other, but is much more restrictive.

For instance,  if $\Sigma$ is a compact surface with boundary, free splittings of $\pi_1(\Sigma)$ dual to properly embedded arcs are always elliptic with respect to each other (edge groups are trivial), but they are compatible only if the arcs are disjoint (up to isotopy).
On the other hand,
splittings of a  hyperbolic  surface group associated to  two   simple closed geodesics  $\gamma,\gamma'$ are compatible if and only if $\gamma$ and $\gamma'$  are disjoint or equal; in this specific case, compatibility is equivalent to the trees being elliptic with respect to each other. 

In Part \ref{chap_compat} we introduce another type of JSJ decomposition, 
which encodes compatibility of splittings rather than  ellipticity. The new feature is that, except in degenerate cases,  it will lead to a  canonical 
tree $\Tco$ (not just a deformation space). As above, we fix a family $\cala$   and all trees are assumed to be  $\cala$-trees.

We say that a  tree  $T$ (or the corresponding  graph of groups $\Gamma$) is  \emph{universally compatible}\index{universally compatible tree} if it is compatible with  every tree. One may then  obtain any tree from $T$ by refining and collapsing, and view any one-edge splitting of $G$ as coming from an edge of $\Gamma$ or from a splitting of a vertex group of $\Gamma$. 

  \begin{dfn*}  
  The \emph{compatibility JSJ deformation space}\index{compatibility JSJ deformation space}\index{0DC@$\Dco$: compatibility JSJ space} $\Dco$ is the maximal deformation space (for domination) containing a  
  universally compatible tree (such a maximal deformation space, if it exists, is unique).
\end{dfn*}

In other words, $\Dco$  contains a universally compatible tree $T$, and $T$ dominates all universally compatible trees.

\begin{thmbis}[Theorem \ref{thm_exists_compat}]\label{intro1}
  Let $G$ be finitely presented, and let $\cala$ be any conjugacy-invariant class of subgroups of $G$, stable under taking subgroups.
Then the compatibility JSJ deformation space $\Dco$ of $G$ over $\cala$ exists.
\end{thmbis}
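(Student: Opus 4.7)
The plan is to produce a universally compatible $\cala$-tree whose deformation space dominates every universally compatible tree; by the uniqueness built into the definition of $\Dco$, this deformation space is $\Dco$. First I would note that the trivial tree (a point with vertex group $G$) is universally compatible: for any $\cala$-tree $T'$, the tree $T'$ itself is a common refinement, since collapsing all edges of $T'$ yields the point. So the class $\calu$ of universally compatible $\cala$-trees is nonempty.

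\textbf{Paragraph 2 (Closure under standard common refinements).}
The key lemma is that $\calu$ is directed by refinement. Given $T_1,T_2\in\calu$, universal compatibility of $T_1$ ensures that $T_1$ and $T_2$ are compatible, so the standard common refinement $T_1\vee T_2$ exists. Its edge stabilizers are intersections of edge stabilizers of one $T_i$ with vertex stabilizers of the other, hence are subgroups of edge stabilizers of some $T_i$; since $\cala$ is closed under taking subgroups, $T_1\vee T_2$ is an $\cala$-tree. To check $T_1\vee T_2\in\calu$, take an arbitrary $\cala$-tree $T'$: universal compatibility of $T_1$ yields a common refinement $S$ of $T_1$ and $T'$, which is again an $\cala$-tree by the same argument; then universal compatibility of $T_2$ applied to the $\cala$-tree $S$ yields a tree $S'$ refining $T_2$ and $S$, hence refining each of $T_1,T_2,T'$. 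A single tree refining $T_1,T_2,T'$ refines both $T_1\vee T_2$ and $T'$, proving that $T_1\vee T_2$ is compatible with $T'$.

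\textbf{Paragraph 3 (Maximal element via accessibility).}
Now I would greedily build an ascending chain in $\calu$ under refinement: start with the trivial tree and, as long as the current tree $T_n\in\calu$ admits a strict refinement $T'\in\calu$, replace it by $T_n\vee T'\in\calu$, which strictly refines $T_n$. The main point is that this process terminates. Since $G$ is finitely presented, one invokes Dunwoody's accessibility, in the same form used to establish Theorem~\ref{existmouintro}: after reducing the tree at each stage so that edge and vertex stabilizers are finitely generated, the complexity of the quotient graph of groups is bounded by a constant depending only on a presentation of $G$. Thus after finitely many steps the process stops at a tree $T\in\calu$ admitting no strict universally compatible refinement.

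\textbf{Paragraph 4 (Conclusion and main obstacle).}
Given any other $T'\in\calu$, Paragraph~2 provides $T\vee T'\in\calu$ refining $T$; by maximality $T\vee T'$ must lie in the same deformation space as $T$, so $T$ dominates $T'$. Hence the deformation space $\cald$ of $T$ contains a universally compatible tree and dominates every deformation space containing one, so $\cald=\Dco$. The main obstacle is Paragraph~3: the accessibility argument must be adapted to handle arbitrary $\cala$ (with no smallness hypothesis) and the specific refinement moves coming from standard common refinements. One must carefully reduce trees at each step to keep edge and vertex stabilizers finitely generated, and invoke Dunwoody's accessibility for finitely presented groups in a form that bounds the number of orbits of edges. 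Once these accessibility bounds are established, the remaining steps are essentially formal, assembled from the closure lemma of Paragraph~2 and the directed-set argument of Paragraph~4.
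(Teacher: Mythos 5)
Your Paragraphs 1, 2 and 4 are fine (Paragraph 2 is essentially Assertion (2) of Proposition \ref{prop_lcm}), but Paragraph 3 contains the real gap, and it is precisely the point where the theorem is delicate. Dunwoody's accessibility, in the form used for Theorem \ref{existmouintro} (Proposition \ref{prop_accessibility}), does \emph{not} bound the complexity of arbitrary $\cala$-trees, nor does it force a sequence of refinements to stabilize: it only produces one tree $S$ with morphisms to the $T_k$ for $k$ large. The Remark following Proposition \ref{prop_accessibility} gives an explicit sequence of refinements $T_k$ (with cyclic edge groups!) lying in pairwise distinct deformation spaces, so your greedy chain of universally compatible refinements has no reason to terminate after finitely many steps; and for arbitrary $\cala$, with no smallness hypothesis, there is no a priori bound on the number of orbits of edges of a reduced $\cala$-tree. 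Replacing a tree at each stage by one with finitely generated stabilizers (via a morphism $S\to T$) does not help either: a morphism is not a collapse, so the new tree need not be compatible with the old one, let alone universally compatible, and the chain structure is destroyed.

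The paper's proof accepts that the chain may be infinite and takes a limit instead. One reduces to a countable cofinal family via Lemma \ref{lem_Zorn}, forms the increasing lcm's $T_k=S_0\vee\dots\vee S_k$ (your Paragraph 2), and then proves Proposition \ref{prop_lim_compat}: using the dominating tree $S$ furnished by Dunwoody's accessibility, one collapses each $T_k$ to a tree $\ol T_k$ in the same deformation space whose number of orbits of edges is uniformly bounded, shows the length functions $\ell_k$ are non-decreasing and bounded (Forester's bound inside the deformation space of $S$), and passes to the limit length function, which is integer-valued, hence the length function of a simplicial tree $T$ dominating every $T_k$. The crucial remaining ingredient, absent from your proposal, is that \emph{a limit of universally compatible trees is universally compatible} (Corollary \ref{cofer}), which rests on the $\bbR$-tree characterization of compatibility by length functions (Theorem \ref{prop_comp}); one must also check that edge stabilizers of the limit lie in $\cala$, and dispose of the reducible case via Corollary \ref{coll}. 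Without some substitute for this limiting argument, your maximality step in Paragraph 4 has nothing to apply to.
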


Although existence of the usual JSJ deformation space  is a fairly direct   consequence of accessibility, proving 
existence of  the compatibility JSJ deformation space is more delicate.
Among other things, we use a limiting argument, and we need to know that a limit of universally compatible trees is universally compatible. This is best expressed in terms of \Rt s (see the appendix).

 As mentioned above,   the deformation space $\Dco$  contains a
 canonical element $\Tco$, 
 except  in degenerate cases. Recall that a tree $T$ is \emph{irreducible}\index{irreducible tree, deformation space} if   $G$ acts on $T$ with no fixed point, no fixed end, and no invariant line; we say that a deformation space $\cald$ is irreducible if some (equivalently, every) $T\in\cald$ is irreducible. 
 
 \begin{thmbis}[Corollary \ref{corpref}]\label{intro1.5}  
 If $\Dco$ exists and is irreducible, it contains a canonical
tree\index{canonical tree} $\Tco$, the \emph{compatibility JSJ tree}.\index{compatibility JSJ tree}\index{0TC@$\Tco$: compatibility JSJ tree} 
In particular, if 
 $\cala$ is invariant under automorphisms of $G$, 
 so is $\Tco$.
 \end{thmbis}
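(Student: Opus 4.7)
The plan is to identify $\Tco$ as the most refined universally compatible tree within $\Dco$, constructed by iteratively taking common refinements of pairs.

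First, I would verify that the set of universally compatible trees in $\Dco$ is closed under common refinements. Given $T_1, T_2 \in \Dco$, each is universally compatible, so in particular they are compatible with each other and admit a standard common refinement $\widehat{T}_{12}$. To see that $\widehat{T}_{12}$ is itself universally compatible, I would take an arbitrary $\cala$-tree $T'$ and produce a common refinement of $\widehat{T}_{12}$ and $T'$ out of the pairwise common refinements of $T_i$ and $T'$, via an associativity-type argument for common refinements developed earlier in Part \ref{chap_compat}. Since $\widehat{T}_{12}$ dominates $T_1 \in \Dco$, maximality of $\Dco$ then forces $\widehat{T}_{12} \in \Dco$.

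Next, I would use the irreducibility hypothesis to extract a maximal element. Iterating the previous step produces a chain of ever-more-refined universally compatible trees inside $\Dco$. The irreducibility of $\Dco$ ensures that such a chain cannot be infinite: the absence of a fixed point, fixed end, or invariant line for $G$ imposes a complexity bound on simplicial trees in $\Dco$ (under an appropriate normalization, e.g.\ restricting to reduced trees). Hence the process stabilizes at a maximal element $\Tco \in \Dco$, which dominates every universally compatible tree in $\Dco$ and is unique up to equivariant isomorphism.

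Canonicity of $\Tco$ is then intrinsic. Any isomorphism $\alpha : G \to G'$ with $\alpha(\cala) = \cala'$ carries $\Dco$ to the corresponding $\Dco'$ and preserves the property of being universally compatible, hence sends $\Tco$ to the corresponding canonical tree for $(G', \cala')$. Specializing to $G' = G$ when $\cala$ is $\Aut(G)$-invariant yields the claimed action of $\Aut(G)$ on $\Tco$. The main obstacle is the complexity bound ensuring that the refinement process terminates: irreducibility is essential, since reducible deformation spaces can accommodate infinite refinement chains (e.g.\ because of a fixed end), and the would-be limit $\Tco$ would then be a genuine \Rt{} rather than a simplicial one. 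This is precisely the degenerate case the statement excludes.
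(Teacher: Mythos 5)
Your first step is fine and matches the paper's machinery: pairwise compatible trees have a common refinement (Proposition \ref{prop_flag}), the lcm of universally compatible trees is again universally compatible (Proposition \ref{prop_lcm}(2)), and it stays in $\Dco$ either by your domination argument or by Proposition \ref{prop_lcm}(3). The gap is in the termination step, which is where all the real content of Corollary \ref{corpref} lies. You assert that irreducibility bounds the complexity of trees in $\Dco$ ``under an appropriate normalization, e.g.\ restricting to reduced trees'', and that the chain of iterated refinements therefore stabilizes. But the iterated common refinements $\widehat T_{12}$, $\widehat T_{123},\dots$ need not be reduced in any sense, and a deformation space may contain trees with arbitrarily many orbits of edges (one can refine at a vertex using a splitting over a chain of nested groups of $\cala$ without changing the elliptic subgroups), so membership in $\Dco$ by itself bounds nothing; nor can you replace each $\widehat T_{1\dots k}$ by a reduced tree of $\Dco$ mid-argument, since collapsing destroys the refinement relations your induction uses. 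Note also that the universally compatible trees of $\Dco$ may be infinite in number — only the \emph{reduced} ones are finitely many — so a stabilization argument running over all of them is not available, and a tree refining all of them is not what is (or needs to be) produced.

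What is actually needed is the lemma preceding Corollary \ref{corpref}: an irreducible deformation space contains only finitely many reduced universally compatible trees. Its proof is not a soft consequence of irreducibility: one shows that the partial lcm's $S_k=T_1\vee\dots\vee T_k$ of distinct reduced universally compatible trees are reduced in the sense of \cite{BF_bounding}, because every edge of $S_k$ is a surviving edge and an irreducible deformation space is non-ascending (Proposition 7.1 of \cite{GL2}); the bound on the number of orbits of edges of BF-reduced trees in a deformation space (\cite[Proposition 4.2]{GL2}) then forces $S_k$ to stabilize, so the $T_i$ are finite in number. With this finiteness, $\Tco$ is simply \emph{defined} as the lcm of the reduced universally compatible trees of $\Dco$ (Corollary \ref{corpref} and Definition \ref{deftco}); this is manifestly choice-free, so the equivariance statement follows as you say, with no iteration or limit argument. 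So your overall strategy is the intended one, but the crucial finiteness input is asserted exactly where it requires the surviving-edge/non-ascending machinery of \cite{GL2}.
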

 
 This is because a deformation space $\cald$ containing an irreducible universally compatible tree has a preferred element.
 
 We develop an analogy with arithmetic, viewing a refinement of $T$ as a multiple of $T$, and one-edge splittings as primes. We define the \emph{greatest common divisor (gcd)}\index{gcd of two trees} of two trees, the \emph{least common multiple (lcm)}\index{lcm of pairwise compatible trees} of a family of pairwise compatible trees, and $\Tco$ is the lcm of the reduced universally compatible trees contained in $\Dco$ (see Subsection \ref{defspa} for the definition of  ``reduced'').

This tree $\Tco$ is similar to the canonical tree $T_{SS}$ constructed by Scott and Swarup \cite{ScSw_regular+errata},\index{Scott-Swarup}
which has the property of being
{compatible} with
almost-invariant sets 
(Scott and Swarup
use the word \emph{enclosing}, which generalizes compatibility).
In general, $\Tco$ dominates $T_{SS}$, but    it may be non-trivial when $T_{SS}$ is trivial (this happens for instance for the Baumslag-Solitar group\index{Baumslag-Solitar group} $BS(m,n)$ when none of $m,n$ divides the other). See \cite{GL5} for the relation between $T_{SS}$ and JSJ decompositions.

Being invariant under automorphisms sometimes forces $\Tco$ to be trivial (a point). 
This happens for instance when $G$ is free. On the other hand, we give simple examples with $\Tco$ non-trivial:  certain  virtually free groups,   generalized Baumslag-Solitar groups,\index{generalized Baumslag-Solitar group} Poincar\'e duality groups... Trees of cylinders\index{tree of cylinders} also provide many examples. In particular, the
 canonical trees of Theorems \ref{dur1} and \ref{dur2} are very closely related to $\Tco$.

\subsection*{Contents of the paper} 

For the reader's convenience we now describe the detailed contents of each section. This includes a few results not directly related to JSJ decompositions, which are of independent interest: 
 relative finite presentation (\ref{fingv}), small orbifolds (\ref{liste}), orbifolds with finite mapping class group (\ref{fmg}), $K$-CSA groups (\ref{aKcsa}), compatibility and length functions  (\ref{sec_compatible_via_longueurs}), arithmetic of trees (\ref{sec_arith}).

This   is meant as a description, so statements may be imprecise or incomplete.

$\bullet$  In the preliminary section (Section \ref{prel}), we collect basic facts about groups acting on trees, and we define $(\cala,\calh)$-trees (trees with edge stabilizers in $\cala$ relative to $\calh$), collapse maps and refinements, compatibility, domination,  deformation spaces. We discuss slenderness and smallness of subgroups, one-endedness, and we recall the main accessibility results. We also define and discuss relative finite generation and presentation (relative finite presentation of vertex groups is studied in Subsection 4.2.2).

$\bullet$ Section \ref{de}   starts with a very useful fact:  
if all edge stabilizers of $T_1$ are elliptic in $T_2$, there is a refinement of $T_1$ which dominates $T_2$.  After defining universal ellipticity, we define JSJ trees and the JSJ deformation space by a maximality property, as explained above. We prove the existence of JSJ decompositions under a finite presentability assumption, first in the non-relative case and then in general. This relies on a version of Dunwoody's accessibility due to Fujiwara-Papasoglu  \cite{FuPa_JSJ}
which we state and prove.  We also explain why the JSJ decompositions constructed by 
 Rips-Sela \cite{RiSe_JSJ},  Dunwoody-Sageev \cite{DuSa_JSJ}, Fujiwara-Papasoglu \cite{FuPa_JSJ} are JSJ decompositions in our sense.

$\bullet$ Section \ref{exa} is devoted to simple examples. We first consider Grushko decompositions (over the trivial group) and Stallings-Dunwoody decompositions (over finite groups), explaining how to interpret them as JSJ decompositions. We also consider small groups, and locally finite trees (such as those associated to cyclic splittings of generalized Baumslag-Solitar groups). All these examples of JSJ decompositions only have rigid vertices. At the end of the section we work out an example where the JSJ decomposition has QH flexible vertices.

$\bullet$ Section \ref{usef} contains various useful technical results. Given a vertex of    a graph of groups or of a tree, we define the incident edge groups  and we point out that any splitting of the vertex group which is relative to the incident edge groups extends to a splitting of $G$. 
Given a universally elliptic splitting of $G$, one may obtain a JSJ decomposition of $G$ from relative JSJ decompositions of vertex groups. In particular, one may usually restrict to one-ended groups when studying JSJ decompositions.

$\bullet$ Section \ref{sec_QH} is devoted to QH groups. 
We first study 2-dimensional hyperbolic  orbifolds $\Sigma$, in particular the relation between splittings of $\pi_1(\Sigma)$ and simple closed geodesics on $\Sigma$. 
We classify orbifolds $\Sigma$ with no essential simple closed geodesic (their groups  do  not split over a cyclic subgroup relative to the fundamental groups of boundary components, so they do  not appear in QH vertices of JSJ decompositions); when $\Sigma$ is a surface, only the pair of pants (thrice-punctured sphere) occurs, 
but the classification is more complicated for singular, possibly non-orientable, orbifolds. We also classify orbifolds with finite mapping class group.

We then define QH subgroups, and study their basic properties (universal ellipticity of the fiber, used boundary components, existence of simple geodesics, universally elliptic subgroups). In particular, we show that, if $Q$ is a QH vertex group in a JSJ decomposition over slender groups, and $G$ acts on a tree with slender edge stabilizers, then either $Q$ fixes a point or its action on its minimal subtree is dual to a family of simple closed geodesics of $\Sigma$. We also prove a more general version  of Proposition \ref{prop_qhe_intro} showing that, under suitable assumptions, any QH vertex stabilizer $Q$ of any tree is elliptic in JSJ trees.

Using a filling construction, we give examples of possible peripheral structures of QH vertex groups, and we show that flexible vertex groups of JSJ decompositions over abelian subgroups do not have to be QH (the filling construction was introduced in Section \ref{usef}, in order to provide an alternative construction of relative splittings). 

$\bullet$ In Section \ref{Fujpap} we study flexible vertices of JSJ decompositions over slender groups;
 in particular, we prove Theorem \ref{slenderintro}. For completeness we also describe slender flexible subgroups. 
We   allow edge stabilizers which are only ``slender in trees'': whenever $G$ acts on a tree, they fix a point or leave a line invariant. This is useful when   working in a relative setting, as groups in $\calh$ are automatically slender in trees.

To prove Theorem \ref{slenderintro}, we follow the approach by Fujiwara-Papasoglu \cite{FuPa_JSJ}, with simplifications. In particular,  the a priori knowledge that JSJ decompositions exist allows us to reduce to totally flexible groups. Following  \cite{FuPa_JSJ}, we define the core, and the enclosing group of two splittings (which we call their regular neighborhood). We then construct a filling pair of splittings, and we show that their regular neighborhood is the required QH group. For technical reasons we replace Fujiwara-Papasoglu's  notion of minimal splittings     by the slightly stronger notion of  \emph{minuscule splittings}. 

In Theorem  \ref{slenderintro} the family $\cala$ does not have to be the family of all slender subgroups of $G$. The theorem holds if $\cala$ is a family of slender groups satisfying one of two stability conditions $(SC $) and $(SC_\calz$); the familly of  cyclic (resp.\  virtually cyclic) subgroups satisfies $(SC_\calz$) (resp.\ $(SC $)). 
These  conditions ensure that the regular neighborhood of two splittings with edge groups in $\cala$ also has edge groups in $\cala$.

$\bullet$ Section \ref{sec_cyl} is devoted to the tree of cylinders. Given an admissible equivalence relation on the set $\cale$ of infinite groups in $\cala$, one may associate a tree of cylinders $T_c$ to any tree $T$ with stabilizers in $\cale$. This tree only depends on the deformation space of $T$. We give conditions ensuring that $T_c$ is acylindrical and smally dominated by $T$ (this means in particular that groups elliptic in $T_c$ but not in $T$ are small). We also study the compatibility properties of $T_c$.

$\bullet$ In Section \ref{jsjac} we show that JSJ decompositions exist, and non-small flexible vertex groups are QH with finite fiber, under the assumption that one may associate to any tree $T$ an acylindrical tree $T^*$ smally dominated by $T$. We first construct a relative tree as in Theorems \ref{dur1} and \ref{dur2}, and we refine it in order to get the required JSJ tree.

$\bullet$  This is applied in Section \ref{exam}, with $T^*$ the tree of cylinders, and used to prove Theorems \ref{dur1} and \ref{dur2}. We study torsion-free CSA groups, relatively groups, as well as cyclic splittings of commutative transitive groups. We introduce $K$-CSA groups, for $K$ an integer, which are better suited than CSA groups to study groups with torsion,
 and we prove Theorem \ref{thmKCSA}. 
We also discuss a slightly different type of JSJ decompositions of one-ended hyperbolic groups, where edge groups are required to be maximal virtually cyclic subgroups with infinite center.

$\bullet$ In Section \ref{compt} we define universal compatibility and we show Theorem \ref{intro1} (existence of the compatibility JSJ deformation space $\Dco$). We then construct the compatibility JSJ tree $\Tco$ and we give examples. In particular, we use the compatibility properties of the tree of cylinders to identify $\Dco$ for abelian splittings of    CSA groups, elementary splittings of relatively hyperbolic groups, and cyclic splittings of  commutative transitive groups. 

$\bullet$ In the appendix we view trees as metric rather than combinatorial objects, and we actually consider \Rt s.
We first give simple proofs of two standard results: a tree is determined by its length function (\cite{AlperinBass_length}, \cite{CuMo}), the axes topology agrees with the equivariant Gromov-Hausdorff topology \cite{Pau_Gromov}. We then  study compatibility (which is defined for \Rt s using collapse maps): we prove that two \Rt s are compatible if and only if the sum of their length functions is a length function (i.e.\ it comes from some \Rt). In particular, compatibility is a closed property in the space of trees (this is used in the proof of Theorem \ref{intro1}).

Using the core introduced by the first author \cite{Gui_coeur}, we then show that a finite family of pairwise compatible \Rt s   has a common refinement. Going back to simplicial trees, we  develop an   analogy with basic arithmetics: we define the prime factors  of a tree $T$ (the one-edge splittings corresponding to edges of the quotient graph $T/G$), we show that trees are square-free, and we define gcd's and lcm's. 

We conclude by some remarks about combining   JSJ theory and Rips theory to describe small actions on \Rt s. This gives another, more general, approach to the main result of \cite{Gui_reading}.

\subsection*{What is new} 

JSJ theory was developed by several people, but to the best of our knowledge the following 
is original material:

$\bullet$
The definition of JSJ decompositions as a deformation space satisfying a maximality property. 

$\bullet$
The systematic study of relative JSJ decompositions.

$\bullet$
Most of   Section \ref{usef}. 

$\bullet$ The classification of small orbifolds and orbifolds with finite mapping class group in Section \ref{sec_QH}.

$\bullet$ The stability conditions $(SC $) and $(SC_\calz)$ (but compare \cite{DuSa_JSJ}), and the description of slender flexible groups. 

$\bullet$ Everything from Subsection \ref {sec_Tc_smally} on, with the exception of \ref{marb} and \ref{fred}.   In particular 
the detailed proof of  existence of the JSJ decomposition under acylindricity  assumptions (compare \cite{Sela_hopf,Sela_diophantine1}), the compatibility JSJ tree,   the arithmetic of trees.

$\bullet$ The $K$-CSA property (but see also \cite{ReiWei_MR}).

\subsection*{Acknowledgements}

The first author acknowledges support from the Institut universitaire de France, ANR project ANR-11-BS01-013, and membership to the Henri Lebesgue center 11-LABX-0020.
The second author acknowledges support from
ANR-07-BLAN-0141-01 and ANR-2010-BLAN-116-03.

\tableofcontents

\part{Preliminaries}   

\setcounter{section}{0}
\section{Preliminaries}  \label{prel}

 In   this paper, $G$ will always be a finitely generated group. Sometimes finite presentation will be needed, for instance
to prove existence of JSJ decompositions in full generality.

\subsection{Basic notions and notations} \label{basic}

Two subgroups $H,H'$ of a group $G$ are \emph{commensurable}\index{commensurable}
  if $H\cap H'$ has finite index in $H$ and in $H'$.
The \emph{commensurator}\index{commensurator} of $H$ (in $G$) is the set of elements $g\in G$ such that $gHg\m$ is commensurable to $H$.

We denote by $\F_n$ the free group on $n$ generators.  \index{0Fn@$\F_n$, the free group on $n$ generators}

A group $H$ is \emph{virtually cyclic}\index{virtually cyclic}  if it has  a cyclic subgroup of finite index:  $H$ is finite or has a finite index subgroup isomorphic to $\bbZ$  (infinite virtually cyclic groups are characterized as being two-ended).
 There are two types of infinite virtually cyclic groups: those with infinite center map onto 
$\bbZ$ with finite kernel $K$, those with finite center map onto   the \emph{infinite dihedral group} $D_\infty=(\bbZ/2)*(\bbZ/2)$ with finite kernel $K$  (see \cite{ScottWall}).
\index{dihedral group}
The kernel $K$ is the unique maximal finite normal subgroup of $H$.
In Subsection \ref{sec_Tc_acyl} we shall say that $H$ is $C$-virtually cyclic for some constant $C\geq 1$ if $K$ has cardinality at most $C$.
\index{virtually cyclic@ $C$-virtually cyclic}\index{0CV@ $C$-virtually cyclic}

A group $G$ is \emph{relatively hyperbolic}\index{relatively hyperbolic group} with respect to  a family of finitely generated subgroups $\calp=\{P_1,\dots,P_n\}$
if it acts properly by isometries on a proper Gromov hyperbolic space with an invariant collection
 of disjoint horoballs,   the action is cocompact on the complement of these horoballs,
and the stabilizers of the horoballs are exactly the conjugates of the $P_i$'s
\cite{Bow_relhyp,Farb_relatively,Osin_relatively,Hruska_quasiconvexity}.
A subgroup is called \emph{parabolic}\index{parabolic} if it is conjugate  to a subgroup of some  $P_i$,
\emph{elementary}\index{elementary subgroup} if it is parabolic  or virtually cyclic.
Non-elementary subgroups contain free subgroups acting by hyperbolic isometries.  
 
 A group is \emph{CSA}\index{CSA group} (for \emph{conjugately separated abelian}) if its maximal abelian subgroups are malnormal.
For example, torsion-free hyperbolic groups are CSA.
See Subsection \ref{aKcsa} for a generalisation in presence of torsion.

\subsection{Trees} \label{tre}

We consider actions of $G$ on simplicial trees $T$. We identify two  trees if there is an equivariant isomorphism between them.
 In the appendix we will view trees as metric spaces and work with  \Rt s, but in most of the paper trees are considered as combinatorial objects.   
Still, it  is   useful to think of a tree as a geometric object, for instance to define the arc between any two points, or the \emph{midpoint} of an edge.  See  \cite{Serre_arbres,ScottWall,DicksDunwoody_groups,Sh_dendrology,Sh_introduction,Chi_book} 
for basic facts about trees. 

Given two points 
$a,b\in T$, there is a unique \emph{segment}\index{segment} $[a,b]$ joining them; it is  \emph{degenerate}\index{degenerate segment} if $a=b$. 
  Non-degenerate segments are also called \emph{arcs}\index{arc}  (they are homeomorphic to $[0,1]$).
If $A,B$ are disjoint  simplicial    subtrees, the \emph{bridge}\index{bridge} 
between them is the unique segment $I=[a,b]$ such that $A\cap I=\{a\}$ and $B\cap I=\{b\}$.

We
always assume that $G$ acts without inversion (if $e=vw$ is an edge, no element of $G$ interchanges $v$ and $w$), and often that $T$ has no \emph{redundant vertex}\index{redundant vertex}:
if a vertex has valence $2$, it is the unique fixed point of some element of $G$. We denote 
by $V(T)$\index{0V@$V(T)$: vertex set of $T$} and $E(T)$\index{0E@$E(T)$: edge set of $T$} the set of vertices and (non-oriented) edges of $T$ respectively, by  
 $G_v$ or $G_e$\index{0G@$G_v,G_e$: vertex or edge stabilizer/group} the stabilizer of a vertex $v$ or an edge $e$,  by $G_x$ the stabilizer of an arbitrary point $x\in T$. 

By Bass-Serre theory, the action of $G$ on $T$ can be viewed as a \emph{splitting}\index{splitting} of $G$ as a marked graph of groups,\index{graph of groups}
\ie an isomorphism between $G$ and the fundamental group of a graph of groups $\Gamma$.
A  \emph{one-edge splitting}\index{one-edge splitting} (when $\Gamma$ has one edge) is an amalgam $G=A*_CB$ or an HNN-extension $G=A*_C$. We also denote by  $G_v$ or $G_e$   the groups carried by    vertices and edges of $\Gamma$.

The action $G\actson T$ is \emph{trivial}\index{trivial action} if $G$ fixes a point (hence a vertex since we assume that there is no inversion), \emph{minimal}\index{minimal action} if there is no proper $G$-invariant subtree.

Unless otherwise indicated, all trees 
are endowed with a \emph{minimal action of $G$ without inversion} (we allow the trivial case when  $T$ is a point). Finite generation of $G$ implies that the quotient graph $\Gamma=T/G$ is finite.  Note, however, that the restriction of the action to a subgroup of $G$ does not have to be minimal.

An element  $g$ or a subgroup  $H$ 
of $G$ is \emph{elliptic}\index{elliptic element or subgroup} in $T$ if it fixes a point. This is equivalent to $g$ or $H$ being contained in a conjugate of a vertex group of $\Gamma$. We    denote by $\Fix  g$ or $\Fix  H$ its fixed point set in $T$. If $H$ is elliptic, any $H$-invariant subtree meets $\Fix  H$.
 If $H_1\inc H_2\inc G$ with $H_1$ of finite index in $H_2$, then $H_1$ is elliptic if and only if $H_2$ is. 
 
Finite groups, and groups with Kazhdan's property (T), have   \emph{Serre's property (FA)}:\index{0FA@(FA): Serre's property}
 there is a fixed point in every tree on which they act. 
 
 If $H$ is finitely generated, it is elliptic if and only if all its elements are. This follows from \emph{Serre's lemma}\index{Serre's property (FA), Serre's lemma} 
\cite[6.5, corollaire 2]{Serre_arbres}: if $g_1,\dots, g_n$, as well as all products $g_ig_j$, are elliptic, then $\langle g_1,\dots,g_n\rangle$ is an elliptic subgroup. 
 
An element $g$ which is not elliptic is \emph{hyperbolic}\index{hyperbolic element, subgroup}, it  has a unique \emph{axis} 
$A(g)$\index{0A@$A(g)$: axis of $g$}\index{axis} on which it acts by translation.  We also denote by $A(g)$ the \emph{characteristic set}\index{characteristic set} of $g$: its fixed point set if it is elliptic, its axis if it is hyperbolic.

Translation lengths and length functions will only be used in Subsections \ref{unac} and \ref{exdur}, we discuss them in the appendix (for \Rt s). 

We will need to consider the restriction of the action of $G$ to subgroups. Let therefore $H$ be an arbitrary group  (possibly infinitely generated) acting on a tree $T$. We only assume that the action is not trivial (there is no global fixed point).

If $H$
contains no hyperbolic element,   it is not  finitely generated by Serre's lemma,  and it fixes a unique end of $T$: there is a ray $\rho$ such that each finitely generated subgroup of $H$ fixes a subray of $\rho$ (an \emph{end}\index{end of a tree} of $T$ is an equivalence class of rays, with $\rho,\rho'$ equivalent if their intersection is a ray).   In this case there are $H$-invariant subtrees, but no minimal one.

Assume now that  $H$ contains a hyperbolic element (we sometimes say that $H$ \emph{acts hyperbolically},\index{hyperbolic element, subgroup} or is hyperbolic); 
  by Serre's lemma, this always holds if $H$ is finitely generated and acts non-trivially.
Then there is a \emph{unique minimal $H$-invariant subtree} $\mu_T(H)$, namely the union of axes of hyperbolic elements of $H$. \index{0M@$\mu_T(H)$: minimal $H$-invariant subtree}\index{minimal subtree} 

The  tree $T$ (with the action of $H$) is \emph{irreducible}\index{irreducible tree, deformation space} if there exist  two hyperbolic elements $g,h\in H$ whose axes are disjoint or intersect in a finite segment.
Suitable powers of $g$ and $h$ then generate a non-abelian free group $\F_2\inc H$   acting freely on $T$.
 It follows that  $T$ is irreducible if and only if there exist two hyperbolic elements $g,h$ whose commutator $[g,h]$ is hyperbolic.

If $T$ is not irreducible, then  $H$ preserves a line of $T$  or fixes a unique   end (recall that  by assumption there is no global fixed point, and note that there is an invariant line if there are two fixed ends). 

There are two types of   non-trivial actions on a line. If orientation is preserved, the action is by translations and both ends of the line are invariant. All points and edges have the same stabilizer. If not, there are reflections. The action is said to be \emph{dihedral},\index{dihedral action} it factors through an action of the infinite dihedral group $D_\infty =\Z/2*\Z/2$. There is no invariant end. All edges have the same stabilizer, but vertex stabilizers may contain the edge stabilizer with index 2.

If $H$ fixes an end of $T$, there is an associated homomorphism $\chi:H\to\Z$ measuring how much an element $h$ pushes towards the end. More precisely, for $h\in H$, one defines $\chi(h)$ as the difference between the number of edges in   $\rho\setminus(\rho\cap h\rho)$ and the number of edges in $h\rho\setminus(\rho\cap h\rho)$, with $\rho$ any ray going to the end. 

The map $\chi$ is non-trivial if and only if $H$ contains a hyperbolic element.
 In this case the quotient graph of groups $\Gamma$ is homeomorphic to a circle, and one may orient the circle so that  the inclusion $G_e\to G_v$ is onto whenever $e=vw$ is a positively oriented edge. When $\Gamma$ is a single edge, it defines 
 an ascending HNN extension
 and $\chi$ is (up to sign) the exponent sum of the stable letter.

To sum up:

\begin{prop} \label{trois}
If $H$ acts on a tree, one of the following holds:
\begin{enumerate}
\item there is a global fixed point;
\item    there are hyperbolic elements $h\in H$, and $T$ contains   a unique minimal   $H$-invariant subtree $\mu_T(H)$; 
\item   $H$ is infinitely generated  and fixes a unique end. \qed
   \end{enumerate}
\end{prop}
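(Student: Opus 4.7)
The plan is to carry out a trichotomy according to whether $H$ fixes a point of $T$ and whether $H$ contains a hyperbolic element. If $H$ has a global fixed point we are in case (1), so from now on I assume the action of $H$ is non-trivial.

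Suppose first that $H$ contains some hyperbolic element $g_0$, which will put us in case (2). Define $\mu$ to be the union of the axes $A(g)$ over all hyperbolic $g\in H$; it is obviously $H$-invariant and nonempty. To see that $\mu$ is connected (hence a subtree), I would show that for any two hyperbolic elements $g,h\in H$ the bridge $[A(g),A(h)]$ is itself contained in the axis of some hyperbolic element of $H$: for suitable integers $m,n$ the element $g^mh^n$ (or $g^mh^{-n}$, depending on the relative orientations of the two axes) is hyperbolic with axis containing this bridge, which is a standard fact in the theory of group actions on trees (see e.g.\ \cite{Serre_arbres}). Since any $H$-invariant subtree must contain the axis of every hyperbolic element of $H$, $\mu$ is contained in every $H$-invariant subtree and is therefore the unique minimal one, $\mu_T(H)$.

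Now assume $H$ has neither a global fixed point nor a hyperbolic element. First, $H$ cannot be finitely generated: if it were, generated by $g_1,\dots,g_n$, then each $g_i$ and each product $g_ig_j$ would be elliptic by hypothesis, and Serre's lemma (recalled in the excerpt) would force $H$ to be elliptic, contradicting (1). Write $H=\bigcup_n H_n$ as an increasing union of finitely generated subgroups; each $H_n$ is elliptic with fixed set $F_n=\Fix(H_n)$, and the $F_n$ form a decreasing family of nonempty subtrees with empty total intersection (otherwise $H$ would fix a common point). Fixing a base point $x\in T$, for $n$ large one has $x\notin F_n$, and the nearest-point projections $p_n\in F_n$ satisfy $p_n\in[x,p_{n+1}]$ because $F_{n+1}\subset F_n$. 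Thus the $p_n$ lie on a single ray emanating from $x$, and this ray is necessarily infinite (otherwise the $p_n$ would stabilise to a common point of all $F_n$). It defines an end $\xi$ fixed by every $H_n$, and hence by $H$. Uniqueness of $\xi$ follows at once: if $H$ fixed two ends it would preserve the line between them, and any non-trivial isometric action of $H$ on a line produces a hyperbolic element (either directly by translations or via a product of two reflections in the dihedral case), while a trivial action would give a global fixed point. This puts us in case (3).

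The main subtlety is the construction of the fixed end in case (3) from the nested decreasing family of fixed subtrees; the remainder is a straightforward organisation of Serre's lemma together with the standard description of the minimal invariant subtree as the union of axes.
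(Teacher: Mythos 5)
Your proof is correct and follows essentially the same route as the paper, which establishes this proposition in the discussion immediately preceding it: Serre's lemma forces infinite generation when there is no hyperbolic element and no fixed point, the minimal invariant subtree is the union of axes of hyperbolic elements, and the fixed end comes from the nested fixed-point sets of finitely generated subgroups. The only cosmetic caveat is that writing $H=\bigcup_n H_n$ as a countable increasing union implicitly assumes $H$ is countable (automatic here since $H$ is a subgroup of the finitely generated group $G$); for a general group the same projection argument runs over the directed family of all finitely generated subgroups.
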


\begin{prop} \label{cinq}
If $H$ acts minimally on a tree $T$ (i.e.\ there is no proper $H$-invariant subtree), 
there are five possibilities:
\begin{enumerate}
\item   $T$ is a point (trivial action);
\item $T$ 
 is a line, and $H$ acts by translations; the action factors through $\Z$;
\item  $T$ 
 is a line, and some $h\in H$ reverses orientation (dihedral action);   the action factors through    $D_\infty =\Z/2*\Z/2$;
\item  there is a unique invariant end, but no global fixed point;    the quotient graph of groups is homeomorphic to a circle;
\item $T$ is irreducible (this implies $\F_2\inc H$).
\end{enumerate}
In all cases except (1), some $h\in H$ is hyperbolic. \qed
\end{prop}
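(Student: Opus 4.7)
The plan is to deduce Proposition \ref{cinq} from Proposition \ref{trois} by ruling out the outcomes incompatible with minimality and then subdividing the surviving case according to the classical trichotomy for actions on trees. Proposition \ref{trois} provides three mutually exclusive alternatives: (a) $H$ fixes a point, (b) $H$ contains a hyperbolic element and admits a unique minimal invariant subtree $\mu_T(H)$, and (c) $H$ is infinitely generated and fixes a unique end without any hyperbolic element. Minimality immediately turns (a) into case (1) of the proposition (the non-empty subtree $\Fix(H)$ equals $T$) and in (b) forces $T=\mu_T(H)$; this also establishes the closing assertion that some $h\in H$ is hyperbolic in cases (2)--(5).

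To exclude outcome (c) under minimality, I use the translation homomorphism $\chi\colon H\to\Z$ associated with the invariant end $\xi$ (described just before Proposition \ref{trois}). The absence of hyperbolic elements forces $\chi\equiv 0$, so the horofunction $b\colon T\to\Z$ based at $\xi$ is literally $H$-invariant. Then any sublevel set $\{b\le c\}$ with $c$ chosen strictly less than $\sup_{V(T)} b$ is a non-empty proper $H$-invariant subtree, contradicting minimality. Hence (c) cannot occur.

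With $T=\mu_T(H)$ and a hyperbolic element at hand, I invoke the classical trichotomy for non-trivial actions on a tree: either $T$ is irreducible (giving case (5) directly), or $H$ preserves a line, or $H$ fixes a unique end. A preserved line is a non-empty $H$-invariant subtree, so minimality forces it to equal $T$; on this line the no-inversion convention leaves only two possibilities, namely translations (the action factors through $\Z$, case (2)) or a dihedral action (factoring through $D_\infty$, case (3)). The remaining alternative---a unique invariant end $\xi$---is case (4).

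The main obstacle is the unproven part of case (4): showing that the quotient graph of groups $\Gamma=T/H$ is homeomorphic to a circle. I would orient every edge of $T$ toward $\xi$. Since $H$ preserves $\xi$, this orientation is $H$-invariant and descends to $\Gamma$. Each vertex of $T$ is the tail of exactly one oriented edge (the first edge of its unique ray to $\xi$), so every vertex of $\Gamma$ has out-degree one, and the assignment $[v]\mapsto$ its out-edge is a bijection $V(\Gamma)\to E(\Gamma)$. Minimality forbids leaves in $\Gamma$: a leaf would be the tail of its only (out-)edge and would lift to valence-one vertices of $T$, whose orbit could be pruned to produce a proper $H$-invariant subtree. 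For finitely generated $H$ the graph $\Gamma$ is finite, and a finite connected graph in which every vertex has out-degree one and no vertex is a leaf is a single cycle, i.e.\ a circle, as required.
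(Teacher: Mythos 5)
Your overall route — deduce everything from Proposition \ref{trois}, kill the ``fixed end, no hyperbolic element'' case by minimality, then split the surviving case via the trichotomy irreducible / invariant line / unique fixed end — is the standard argument, and it is essentially what the paper leaves implicit (the proposition is stated with no written proof; the relevant facts are in the discussion preceding Proposition \ref{trois}). Cases (1), (2), (3), (5) are fine, with one small slip in (1): minimality applied to $\Fix(H)$ only gives $\Fix(H)=T$; what you should apply it to is a single fixed point, which is already an invariant subtree, so $T$ is that point. Your horoball argument excluding case (c) is correct, and so is the assertion that a leaf of $\Gamma$ would lift to valence-one vertices of $T$: since every vertex of $T$ has exactly one outgoing edge, the unique edge-end at a leaf must be the tail-end of the out-edge and the corresponding edge group equals the vertex group, which is exactly the prunable situation.

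The genuine gap is in case (4): you prove that $\Gamma=T/H$ is a circle only after assuming $H$ finitely generated in order to get $\Gamma$ finite. But the proposition is stated for an arbitrary $H$ — the paper insists, just before Proposition \ref{trois}, that $H$ may be infinitely generated, precisely because these statements are applied to restrictions of the action to arbitrary subgroups — and case (4) does occur with $H$ infinitely generated: take the strictly ascending HNN extension $H=\langle A,t\mid tat^{-1}=\phi(a)\rangle$ with $A=\bigoplus_{i\in\N}\Z/2$ and $\phi(e_i)=e_{2i}$; its abelianization is infinitely generated, yet its Bass-Serre tree is a minimal action with a unique invariant end and $t$ hyperbolic. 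For infinite $\Gamma$ your combinatorial conclusion fails: a connected graph with out-degree one at every vertex and no leaves need not be a circle (a directed cycle with an infinite coherently oriented ray feeding into it satisfies both conditions). So finiteness of $\Gamma$ cannot be assumed; it has to be extracted from minimality.

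Here is one way to close the gap. Since some element is hyperbolic, the union of the axes of hyperbolic elements is a non-empty invariant subtree (it is $\mu_T(H)$), so minimality gives that every vertex of $T$ lies on the axis of some hyperbolic $g$. Let $f$ be your equivariant ``move one edge toward $\xi$'' map and $\bar f$ the induced map on vertices of $\Gamma$. Since $\xi$ is an end of the axis of $g$, a vertex $v$ on that axis satisfies $g^{\pm1}v=f^{\ell}(v)$ with $\ell=|\chi(g)|>0$, hence $\bar f^{\ell}[v]=[v]$: every vertex of $\Gamma$ lies on a finite directed $\bar f$-cycle. As every edge of $\Gamma$ is the out-edge of its tail, all edges of $\Gamma$ are edges of these cycles, so $\Gamma$ is a disjoint union of finite directed cycles; being connected, it is a single finite cycle, i.e.\ a circle. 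This argument needs no finite generation and also renders the leaf/counting discussion unnecessary.
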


In particular:
\begin{cor} \label{pasirr}
 If $H$ acts on $T$, and $T$ is not irreducible, then there is a fixed point, or a  unique fixed end, or a  unique  invariant line.   
\end{cor}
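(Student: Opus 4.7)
The plan is to reduce the statement to Propositions \ref{trois} and \ref{cinq} by passing to a minimal invariant subtree. First I would dispatch the easy case where $H$ contains no hyperbolic element: Proposition \ref{trois} then says either $H$ has a global fixed point, or $H$ is infinitely generated and fixes a unique end of $T$. Both possibilities are already among the three conclusions allowed by the corollary, so nothing more is needed in this case.

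The substantive case is when $H$ contains a hyperbolic element. Then Proposition \ref{trois} provides a unique minimal $H$-invariant subtree $T_0 := \mu_T(H)$, and I would apply Proposition \ref{cinq} to the minimal action $H \actson T_0$. Case (1) is ruled out because $H$ contains a hyperbolic element, and case (5) is ruled out because the axes of hyperbolic elements in $T_0$ coincide with their axes in $T$, so irreducibility of $T_0$ would contradict the non-irreducibility hypothesis on $T$. Thus one of cases (2), (3), (4) holds: $T_0$ is a line with $H$ acting by translations, $T_0$ is a line with dihedral action, or $T_0$ has a unique $H$-invariant end.

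The remaining work is to transfer these features from $T_0$ to $T$ and check uniqueness. For cases (2) and (3), I would argue that any $H$-invariant line $L \subset T$ is in particular an $H$-invariant subtree, hence contains $T_0$; since both are lines, $L = T_0$, giving a unique invariant line in $T$. For case (4), suppose for contradiction that $H$ fixed two distinct ends of $T$; the bi-infinite line joining them would be an $H$-invariant subtree, so would contain $T_0$, forcing $T_0$ itself to be contained in a line. By minimality $T_0$ would then be a subline on which $H$ acts by translations or dihedrally, contradicting the fact that we are in case (4). Hence the end fixed in $T_0$ is the unique $H$-fixed end of $T$.

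I do not anticipate a real obstacle; the only step requiring any care is the uniqueness transfer from $T_0$ to $T$ in case (4), where one must rule out the existence of a second fixed end using minimality of $T_0$ rather than argue directly in $T$.
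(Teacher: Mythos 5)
Your proof is correct; the existence part coincides with the paper's (both simply invoke Propositions \ref{trois} and \ref{cinq}), but your uniqueness argument runs along a different line. The paper argues directly in $T$, with no appeal to minimal subtrees: if $H$ fixes two ends, the line joining them is invariant, and if $H$ preserves two distinct lines $\ell_1,\ell_2$ then $H$ actually fixes a point (the midpoint of the bridge if the lines are disjoint, the midpoint of $\ell_1\cap\ell_2$ if this intersection is a finite segment, and its origin if it is a ray). This yields a slightly stronger fact --- two invariant lines force a global fixed point, with no hypothesis about hyperbolic elements --- and is what makes the three alternatives exclusive up to fixed points. You instead pass to $T_0=\mu_T(H)$ and use the classification of minimal actions: any invariant line, or the line spanned by two fixed ends, is an invariant subtree, hence contains $T_0$, and comparing with the case of Proposition \ref{cinq} realized by the action of $H$ on $T_0$ gives uniqueness. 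This is equally valid and quite clean; two small points deserve a word. First, you use (implicitly) that when $H$ contains a hyperbolic element every nonempty $H$-invariant subtree contains $\mu_T(H)$, because it contains the axis of each hyperbolic element --- standard, but it is the hinge of your argument. Second, in your case (4) the phrase ``$T_0$ would then be a subline'' is best sharpened: since $T_0$ contains the bi-infinite axis of a hyperbolic element, the containment $T_0\subset L$ forces $T_0=L$, and a minimal action of $H$ on a line is of type (2) or (3) of Proposition \ref{cinq}, contradicting type (4); with that wording the contradiction is airtight.
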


\begin{proof}
 Existence follows from the propositions. We prove  the uniqueness statements. If there are two invariant ends, the line joining them is invariant. We show that there is a fixed point if there exist two invariant lines  $\ell_1,\ell_2$.

 If $\ell_1$ and $\ell_2$ are disjoint, the midpoint of the bridge between them is fixed. If their intersection is a segment of finite length, its midpoint is fixed. If the intersection is a ray, its origin is a fixed point.
\end{proof}

\subsection{$(\cala,\calh)$-trees,  one-endedness} \label{AH}

Besides $G$, we usually  also  fix a  (nonempty)  family $\cala$\index{0A@$\cala$: allowed edge stabilizers} of subgroups of $G$ which is  stable under
conjugation and under taking subgroups. 
An
\emph{$\cala$-tree}\index{0AT@$\cala$-tree} is a tree $T$ whose edge stabilizers belong to $\cala$. We  often  say
that
$T $, or the corresponding splitting\index{splitting over $\cala$} of $G$,  is
\emph{over $\cala$}, or over groups in $\cala$. We  \index{cyclic tree}\index{abelian tree}
\index{slender group}
say cyclic tree (abelian 
 tree, slender tree, \dots) when $\cala$ is the family of cyclic (abelian,  
 \dots) subgroups.

We    also   fix an arbitrary   set $\calh$\index{0H@$\calh$: relative structure}
 of subgroups of $G$, and we restrict to  
$\cala$-trees   
$T$
 such that each $H\in\calh$ is elliptic in $T$  (in terms of graphs of groups, $H$ is contained in a conjugate of a vertex group;  if $H$ is not finitely generated, this is stronger than requiring that every $h\in H$ be elliptic). We call such a tree an \emph{$(\cala, \calh)$-tree},\index{0AH@$(\cala,\calh)$-tree}
 or a tree over $\cala$ \emph{relative to $\calh$.}\index{relative tree or splitting}
 The set of $(\cala, \calh)$-trees does not change if we replace a group of $\calh$ by a conjugate, or if we enlarge $\calh $ by making it invariant under conjugation. 
 
 If $G$ acts non-trivially on an $(\cala,\calh)$-tree, we say that $G$
  \emph{splits over $\cala$ (or over a group of $\cala$) relative to $\calh$}.
  
  The group $G$ is \emph{freely indecomposable relative to $\calh$}\index{freely indecomposable relative to $\calh$} if it does not split over the trivial group relative to $\calh$. Equivalently (unless $G=\Z$ and $\calh$ is trivial), one cannot write $G=G_1*G_2$ with $G_1,G_2$ non-trivial, and every group in $\calh$ contained in a conjugate of $G_1$ or $G_2$. 
  
  One says that $G$ is \emph{one-ended relative to $\calh$}\index{one-ended relative to $\calh$} if 
$G$ does not split over a finite group relative to $\calh$ (when $\calh$ is empty, this is equivalent to $G$ being finite or one-ended 
 by a theorem of Stallings, see e.g.\ \cite{ScottWall}).

\subsection{Maps between trees, compatibility, deformation spaces} \label{comp}

\subsubsection{Morphisms, collapse maps, refinements,   compatibility}\label{sec_morphisms}

Maps between trees will always be $G$-equivariant,  send vertices to vertices 
  and edges to edge paths (maybe a point).\index{map between trees}
 By minimality of the actions, they are always surjective; 
each
 edge of $T'$ is contained in the image of
an edge of $T$.
Any edge stabilizer $G_{e'}$ of $T'$ contains an edge stabilizer $G_e$ of $T$. 
Also note that any edge or vertex stabilizer of $T$ is contained in a vertex stabilizer of $T'$.

 We  mention  two particular classes of maps.
 
A map $f:T\ra T'$ between two trees is a \emph{morphism}\index{morphism between trees} 
if and only if one may subdivide
$T$  
so that $f$ maps each edge onto an edge;   equivalently,
no edge of $T$ is collapsed to a point. 
Folds are examples of morphisms (see   \cite{Stallings_topology,BF_bounding}).\index{fold} If  
$f$ is a morphism, any edge stabilizer of $T$ is contained in an edge stabilizer of $T'$.

A \emph{collapse}\index{collapse map} map $f:  T\to T'$ is a map obtained by collapsing  certain edges to points, 
  followed by an isomorphism
 (by equivariance, the set of collapsed edges is $G$-invariant).
Equivalently, $f$ \emph{preserves alignment}:\index{alignment preserving map}
 the image of any arc $[a,b]$ is a point or  the arc $[f(a),f(b)]$. Another characterization is that the preimage of every subtree is a subtree.  
 In terms of graphs of groups, one obtains $T'/G$ by collapsing edges in $T/G$.   If $T'$ is irreducible, so is $T$. If $T$ is irreducible, one easily checks that  $T'$ is  trivial (a point) or irreducible  (compare Lemma \ref{po}).

A tree $T'$ is a \emph{collapse}\index{collapse of a tree} of $T$ if there is a collapse map  $T\ra T'$; conversely, we say that $T$ \emph{refines}\index{refinement} $T'$. 
In terms of graphs of groups, one passes from $\Gamma=T/G$ to $\Gamma'=T'/G$ by collapsing edges;
  for each vertex $v$  of $\Gamma'$, 
the vertex group $G_{v}$ is the fundamental group of the graph of groups $\Gamma_{v}$ occurring as the preimage of $v$ in $\Gamma$.

Conversely, suppose that   $v$ is a vertex  of a splitting $\Gamma'$, and $\Gamma_{v}$ is  a splitting of $G_{v}$ in which   incident edge groups are elliptic. One may then refine $\Gamma'$ at $v$ using $\Gamma_{v}$, so as to obtain a splitting $\Gamma$ whose edges are those of $\Gamma'$ together with those of $\Gamma_{v}$ (see Lemma \ref{extens}). Note that $\Gamma$ is not uniquely defined because there is flexibility in the way edges of $\Gamma'$ are attached to vertices of $\Gamma_{v}$;  this is   discussed in    \cite[Section 4.2]{GL_vertex}.

  Two trees $T_1,T_2$ are \emph{compatible}\index{compatible trees} if they have a \emph{common refinement}\index{common refinement}: there exists a tree $\hat T$
with   collapse maps $g_i:\hat T\to T_i$. There is such a $\hat T$ with the additional property that no edge of $\hat T$ gets collapsed in both $T_1$ and $T_2$ (this is discussed in  Subsection \ref{sec_arith}).

\subsubsection{Domination, deformation spaces  \cite{For_deformation, GL2} }
\label{defspa}

A  tree $T_1$ \emph{dominates}\index{domination}
  a tree $T_2$ if there is an equivariant map   $f:T_1\ra T_2$ from $T_1$ to $T_2$.
  We call $f$
a \emph{domination map}.\index{domination}
Equivalently, $T_1$ dominates $T_2$ if every vertex stabilizer of $T_1$ fixes a point in $T_2$: every subgroup which is elliptic in $T_1$ is also elliptic in $T_2$.
  In particular, every refinement of $T_1$ dominates $T_1$. 
Beware that domination is defined by considering ellipticity of subgroups, not just of elements (this may make a difference if vertex stabilizers are not finitely generated).

 Deformation spaces are defined by saying that two  trees   belong to the same \emph{deformation space}\index{deformation space} $\cald$   if   they have the same elliptic subgroups (\ie each one dominates the other).  When we restrict to $\cala$-trees, we say that $\cald$ is a deformation space over $\cala$. If a tree in $\cald$ is irreducible, so are all others, and we say that $\cald$ is \emph{irreducible}.\index{irreducible tree, deformation space} 

 For instance, all   trees with a free action of $\F_n$ belong to the same deformation space $\cald=CV_n$, Culler-Vogtmann's outer space\index{outer space} \cite{CuVo_moduli}. Note, however, that only finitely many trees  are compatible with a given $T\in CV_n$. 

We have defined deformation spaces as combinatorial objects, but (just like outer space) they may be viewed as geometric objects (see e.g.\ \cite{GL2}).
We will not use this point of view.

A deformation space $\cald$  {dominates}\index{domination} a space $\cald'$ if trees in $\cald$ dominate those of $\cald'$.  Every deformation space dominates 
the deformation space of the trivial tree, which  is called the \emph{trivial deformation space.}\index{trivial deformation space} It is the only deformation space in which $G$ is elliptic.

A tree $T$ is \emph{reduced}\index{reduced tree} \cite{For_deformation} if no proper collapse of $T$ lies in the same deformation space as $T$ (this is different from being reduced in the sense of \cite{BF_bounding}).
 Observing that the inclusion from $G_v$ into $G_u*_{G_e}G_v$ is onto if and only if the inclusion $G_e\to G_u$ is, one sees that $T$ is reduced if and only if, whenever
 $e=uv$ is an edge with $G_e=G_u$, then $u$ and $v$ belong to the same orbit (i.e.\ $e$ projects to a loop in $\Gamma=T/G$). Another characterization is that,
  for any edge $uv$ 
such that $\grp{G_u,G_v}$ is elliptic, there exists a hyperbolic element $g\in G$ sending $u$ to $v$
(in particular the edge maps to a loop in $\Gamma$).

If $T$ is not reduced, one obtains a reduced tree $T'$ in the same deformation space by  collapsing  certain  orbits of edges ($T'$ is not uniquely defined in general).

\subsection{Slenderness, smallness} \label{sec_slender}

\subsubsection{Slenderness}

A group $H$ is \emph{slender}\index{slender group} if $H$ and all its subgroups are finitely generated.
Examples of slender groups include finitely generated virtually abelian groups, finitely generated virtually nilpotent groups,
and virtually polycyclic groups. A slender group cannot contain a non-abelian free group.

Slender groups have the   characteristic property that, whenever they act on a tree,   they fix a point 
or there is  an invariant line.

\begin{lem}[\cite{DuSa_JSJ},\index{Dunwoody-Sageev} Lemma 1.1]\label{lem_slender}
 Let $H$ be a slender group acting on a tree $T$.
If $H$ does not fix a point,   there is a  unique $H$-invariant line $\ell\subset T$.
\end{lem}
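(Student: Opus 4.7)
The plan is to follow the structure of Proposition~\ref{cinq} and rule out all cases except the two line cases. Slender groups are finitely generated, so by Proposition~\ref{trois}, since $H$ has no global fixed point, $H$ must contain a hyperbolic element and have a unique minimal invariant subtree $\mu=\mu_T(H)$. The action of $H$ on $\mu$ is minimal and non-trivial, so it falls under one of cases (2)--(5) of Proposition~\ref{cinq}. The strategy is to show that cases (4) and (5) contradict slenderness, so that $\mu$ must be a line.

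Cases (2) and (3) directly produce an $H$-invariant line, namely $\mu$ itself. Case (5) is easy to eliminate: an irreducible action forces $\F_2\hookrightarrow H$ (as stated in the proposition), but $\F_2$ contains infinitely generated subgroups (e.g.\ the kernel of $\F_2\to\Z$), hence is not slender. This contradicts the assumption that every subgroup of $H$ is finitely generated.

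The main obstacle is case (4), the ``ascending'' case with a unique invariant end. I would use the homomorphism $\chi:H\to\Z$ defined earlier in the excerpt and set $K=\ker\chi$. Since $K$ is a subgroup of the slender group $H$, it is finitely generated. For any $k\in K$ and any ray $\rho$ going to the invariant end, $k\rho$ is also a ray to the same end, so $k\rho\cap\rho$ is a subray; as $\chi(k)=0$, the two complementary arcs have the same length, which forces $k$ to fix the common initial vertex of that subray. Hence every element of $K$ fixes a subray of $\rho$, and two such subrays always intersect. Applying Serre's lemma to a finite generating set of $K$ (or directly observing that finitely many subrays of $\rho$ share a subray), $K$ fixes pointwise a subray of $\rho$. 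Thus $\Fix (K)$ is a non-empty subtree, and it is $H$-invariant because $K$ is normal in $H$. By minimality of $\mu$, the intersection $\Fix (K)\cap \mu$, which contains a subray of $\rho\subset\mu$, must equal $\mu$. So $K$ acts trivially on $\mu$, and the induced minimal action of $H/K\hookrightarrow\Z$ on $\mu$ is either trivial (giving a global fixed point for $H$, contradiction) or a line with translation action. The latter makes $\mu$ a line with both ends invariant under $H$, contradicting the uniqueness of the invariant end in case (4).

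For uniqueness, any $H$-invariant line $\ell$ is in particular an $H$-invariant subtree, and therefore contains $\mu$ by minimality; since $\mu$ is itself a line, $\ell=\mu$. The delicate point in the whole argument is case (4): one must exploit both the normality of $K=\ker\chi$ in $H$ and the fact that finite generation of $K$ (a consequence of slenderness) promotes ``each element fixes a subray'' into ``$K$ collectively fixes a subray,'' which is what makes the contradiction with minimality work.
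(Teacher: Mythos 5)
Your proof is correct and takes essentially the same route as the paper's: irreducibility is ruled out because a slender group contains no $\F_2$, and the fixed-end case is handled by observing that $\ker\chi$ is finitely generated (slenderness), hence elliptic by Serre's lemma, with fixed point set $H$-invariant by normality, so that the action factors through the cyclic group $H/\ker\chi$ and produces the invariant line. Your uniqueness argument via minimality of $\mu$ is a harmless variant of the paper's appeal to Corollary \ref{pasirr}.
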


Since $H$ is finitely generated, there is a minimal subtree  $\mu_T(H)$, and with the terminology
 of Proposition \ref{cinq},  only cases (1), (2), (3) are possible  for  $\mu_T(H)$. 

\begin{proof}
The action of $H$ cannot be irreducible since $H$ does not contain $\F_2$. If there is no fixed point and no invariant line, there is a fixed end, and the associated $\chi:H\to\Z$ is non-trivial because $H$ is finitely generated. Each element of $\ker\chi$ fixes a ray going to the fixed end. Being finitely generated,   $\ker\chi$ is elliptic  by Serre's lemma. Its fixed point set is a subtree $T_0$, which is $H$-invariant because $\ker\chi$ is normal in $H$. The action of $H$ on $T_0$ factors through an action of the cyclic group $H/\ker\chi$, so $T_0$ contains an $H$-invariant line.
Uniqueness follows from Corollary \ref{pasirr}.
\end{proof}

It is convenient to use this lemma to define a weaker notion for subgroups of $G$. We say that a subgroup $H\inc G$ (possibly infinitely generated)  is \emph{slender in $(\cala,\calh)$-trees}\index{slender in $(\cala,\calh)$-trees} if, whenever $G$ acts on  an $(\cala,\calh)$-tree $T$, there is a point fixed by $H$ or an $H$-invariant line. In particular, any slender group,  any group contained in a group of $\calh$,     any group with property (FA), 
is slender in $(\cala,\calh)$-trees.

The following lemma will be used in Subsection \ref{slt}.
\begin{lem} \label{extslen}
  Let $A\normal A'$ be subgroups of $G$, with $A'/A$   slender 
and $A$ slender in  $(\cala,\calh)$-trees.
Then $A'$ is slender in  $(\cala,\calh)$-trees.
\end{lem}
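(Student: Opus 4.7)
The plan is to show that for any $(\cala,\calh)$-tree $T$, the subgroup $A'$ either fixes a point in $T$ or preserves a line, which is exactly the condition that $A'$ is slender in $(\cala,\calh)$-trees. Since $A$ is slender in $(\cala,\calh)$-trees, its action on $T$ falls into one of two cases, which I treat separately; in each, I exploit normality of $A$ in $A'$ to promote information about $A$ to information about $A'$.

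First I would handle the case where $A$ does not fix a point. Then $A$ preserves an invariant line $\ell \subset T$, and this line is unique: if $A$ had two invariant lines without a fixed point, Corollary \ref{pasirr} together with Proposition \ref{cinq} applied to the minimal $A$-subtree would force a fixed point, a contradiction. For any $a' \in A'$, the translate $a'\ell$ is again $A$-invariant, since for $a \in A$ the normality of $A$ in $A'$ gives
\[
a(a'\ell) \;=\; a'\bigl((a')^{-1} a a'\bigr)\ell \;=\; a'\ell,
\]
the element $(a')^{-1} a a' \in A$ preserving $\ell$. Uniqueness then forces $a'\ell = \ell$, and so $A'$ preserves $\ell$.

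Next I would treat the case where $A$ fixes a point. Let $F = \Fix(A) \subset T$, a non-empty subtree. The same normality calculation shows $F$ is $A'$-invariant: for $a' \in A'$, $x \in F$, and $a \in A$ one has $a(a'x) = a'((a')^{-1} a a')x = a'x$, since $(a')^{-1} a a' \in A$ fixes $x$. Now $A$ acts trivially on $F$, so the action of $A'$ on $F$ factors through an action of $A'/A$ on the simplicial tree $F$. Since $A'/A$ is slender as a group, Lemma \ref{lem_slender} gives either a fixed point or an invariant line for $A'/A$ in $F$, which immediately yields the same alternative for $A'$ acting on $T$.

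The argument is essentially mechanical once the two cases are set up. The only subtleties worth checking carefully are the uniqueness of the $A$-invariant line in the first case (so that $A'$ must stabilize it setwise) and the fact that $F$ is a genuine subtree on which Lemma \ref{lem_slender} applies; both follow from the structure theorems of Propositions \ref{trois}--\ref{cinq} and Corollary \ref{pasirr}. There is no real obstacle.
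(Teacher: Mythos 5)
Your proof is correct and follows essentially the same route as the paper: split on whether $A$ is elliptic, use normality of $A$ in $A'$ to make the fixed-point set (resp.\ the unique invariant line) $A'$-invariant, and in the elliptic case let the slender quotient $A'/A$ act on $\Fix(A)$ and apply Lemma \ref{lem_slender}. The extra justifications you supply (uniqueness of the line via Corollary \ref{pasirr}, the explicit conjugation computation) are exactly the points the paper leaves implicit.
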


\begin{proof}
Let $T$ be an  $(\cala,\calh)$-tree. If $A$ is elliptic, its fixed point set is an $A'$-invariant subtree  because $A$ is normal in $A'$. The action of 
  $A'/A$ on this subtree  fixes a point or leaves a line invariant,   so the same is true for  $A'$. If $A$ is not elliptic, it   preserves
a unique line, which is $A'$-invariant since  $A\normal A'$. 
\end{proof}

\subsubsection{Smallness}\label{pti}

One  defines an abstract group as being \emph{small}\index{small, small in $(\cala,\calh)$-trees} if it does not contain $\F_2$. Such a group cannot act irreducibly on a tree. As above, we   use trees to give a weaker  definition   for subgroups of $G$.  In particular, we want groups of $\calh$ to be small.

Given a tree $T$ on which $G$ acts, we say (following \cite{BF_bounding} and \cite{GL2}) that a subgroup $H<G$ is \emph{small in $T$} if its action on $T$ is not irreducible. As mentioned above, $H$ then fixes a point, or an end, or leaves  a line invariant (see Corollary \ref{pasirr}). We say that $H$ is
\emph{small in $(\cala,\calh)$-trees}\index{small, small in $(\cala,\calh)$-trees} if  it is small in    every $(\cala,\calh)$-tree  on which $G$ acts. 
Every subgroup not containing $\F_2$, and every group contained in a group of $\calh$,  
is small in $(\cala,\calh)$-trees. Moreover, $H$ is small in $(\cala,\calh)$-trees if and only if all its finitely generated subgroups are.

 \subsection{Accessibility} \label{accessi}

Constructions of JSJ decompositions are based on accessibility\index{accessibility} theorems stating that, given  suitable $G$ and $\cala$, there is 
an a priori 
bound for the number of orbits of edges of $\cala$-trees,  
under the assumption that there is no redundant vertex (if $v$ has valence 2, it is the unique fixed point of some $g\in G$).
This holds in particular:
\begin{enumerate} 
\item if $G$ is finitely generated and all groups in $\cala $ are finite with bounded order \cite{Linnell};

\item if $G$ is finitely presented and all groups in $\cala $ are finite \cite{Dun_accessibility};
\item if $G$ is finitely presented, all groups  in $\cala $ are small, and the trees are  \emph{reduced} in the sense of \cite{BF_bounding};
\item if $G$ is finitely generated and the trees are $k$-acylindrical for some $k$ \cite{Sela_acylindrical};\index{acylindrical accessibility}
\item if $G$ is finitely generated and the trees are $(k,C)$-acylindrical  \cite{Weidmann_accessibility} (\cite{Delzant_accessibilite} for finitely presented groups). 
\end{enumerate}

 A tree is \emph{$k$-acylindrical}\index{acylindrical tree, splitting} (resp.\  \emph{$(k,C)$-acylindrical}) if 
 the pointwise  stabilizer of any segment of length $>k$ is trivial 
(resp.\  has
  order $\le C$). 
 
In this paper, we use a version of Dunwoody's accessibility given in \cite{FuPa_JSJ} (see Proposition \ref{prop_accessibility}).
 In Section \ref{jsjac} we use acylindrical accessibility.

\subsection{Relative finite generation and presentation}\label{sec_prelim_relatif}

As mentioned above, we always assume that $G$ is finitely generated, or finitely presented. 
  However, these properties
  are not always inherited by vertex groups. We therefore consider 
\emph{relative}\index{relative finite generation, presentation} finite generation (or presentation), which behave better in that respect (see Subsection \ref{fingv}).

  Let $G$ be a group with  a finite family  of subgroups $\calh=\{H_1,\dots,H_p\}$.

\begin{dfn}
One says that $G$ is \emph{finitely generated relative to $\calh$}
if there exists a finite set $\Om\subset G$ 
such that $G$ is generated by $\Om \cup H_1\dots\cup H_p$. Such a subset $\Om $ is a \emph{relative generating set.}\index{relative generating set}
\end{dfn}

Clearly, if $G$ is finitely generated, then it is finitely generated relative to any $\calh$.  If the $H_i$'s are finitely generated, relative finite generation is equivalent to finite generation.

By adding the conjugators to $\Om $, one sees that relative finite generation does not change if one replaces the subgroups $H_i$ by conjugate subgroups.

As in Subsection \ref{tre}, we have:
\begin{prop}  \label{arbtf}
Suppose that $G$ is finitely generated relative to $\calh$, and   acts   on a tree $T$    relative to $\calh$. If there is no global fixed point,   then $G$ contains hyperbolic elements,   there is a   unique  minimal invariant subtree $\mu_T(G)$, and the  quotient  $\mu_T(G)/G$ is a finite graph.  \qed 
\end{prop}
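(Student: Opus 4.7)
The plan is to combine the trichotomy of Proposition~\ref{trois} with a Helly-type argument to first produce hyperbolic elements, and then to finitize the quotient using a convex hull built from the relative generating set.

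\emph{Step 1 (hyperbolic elements exist).} I argue by contradiction: assume $G$ contains no hyperbolic element. Since $G$ has no global fixed point, Proposition~\ref{trois}(3) forces $G$ to fix a unique end $\epsilon$ of $T$. The key observation is that for every elliptic subgroup $H\le G$ fixing $\epsilon$, the fixed-point subtree $\Fix(H)$ contains a ray to $\epsilon$: pick $p\in\Fix(H)$ and let $\rho$ be the ray from $p$ to $\epsilon$; for any $h\in H$, $h\rho$ is a ray from $h(p)=p$ to $h(\epsilon)=\epsilon$, so $h\rho=\rho$ pointwise. I apply this to each $\omega\in\Omega$ (elliptic by the contradiction hypothesis, fixing $\epsilon$ since $G$ does) and to each $H_i$ (elliptic by the relativity hypothesis), producing finitely many subtrees each containing a ray to $\epsilon$. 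Any two such subtrees share a common subray, so they pairwise intersect; Helly's property for subtrees of a tree then yields a point fixed by $\Omega\cup H_1\cup\cdots\cup H_p$, hence by $G$, contradicting the absence of a global fixed point.

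\emph{Step 2 (minimal subtree).} With a hyperbolic element now guaranteed, Proposition~\ref{trois}(2) immediately gives existence and uniqueness of $\mu_T(G)$.

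\emph{Step 3 (finite quotient).} For each $i$, since $H_i$ is elliptic on $T$ and $\mu_T(G)$ is an $H_i$-invariant subtree, the closest-point projection onto $\mu_T(G)$ is $H_i$-equivariant, so it sends $\Fix(H_i)$ into $\Fix(H_i)\cap\mu_T(G)$; pick $x_i$ in this non-empty set. Fix a vertex $v_0\in\mu_T(G)$ and let $K\subset\mu_T(G)$ be the convex hull of the finite set $\{v_0\}\cup\{\omega(v_0):\omega\in\Omega\}\cup\{x_i:1\le i\le p\}$, a finite subtree. Any $g\in G$ may be written as $g=s_1\cdots s_k$ with $s_j\in\Omega\cup H_1\cup\cdots\cup H_p$; setting $g_j=s_1\cdots s_j$, the translates $g_{j-1}(K)$ and $g_j(K)$ share a point in each case: either $g_j(v_0)=g_{j-1}(\omega(v_0))$ when $s_j=\omega\in\Omega$, or $g_{j-1}(x_i)=g_j(x_i)$ when $s_j\in H_i$ (since $s_j$ fixes $x_i$). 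Hence $\bigcup_j g_j(K)$ is connected and links $v_0$ to $g(v_0)$, showing that $G\cdot K$ is a $G$-invariant subtree of $\mu_T(G)$; by minimality $G\cdot K=\mu_T(G)$, and $\mu_T(G)/G$ is a quotient of the finite graph $K$, hence finite.

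The main obstacle is Step~1, where one must rule out the degenerate case of Proposition~\ref{trois}(3); both the relativity hypothesis (to supply fixed subtrees of the $H_i$'s) and the finiteness of $\Omega$ (to make Helly applicable) are essential, reflecting that without them $G$ may genuinely fix an end without fixing any point.
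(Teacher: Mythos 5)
Your proof is correct, and since the paper states Proposition \ref{arbtf} without proof (it is presented as the relative analogue of the standard facts recalled in Subsection \ref{tre}), your argument — excluding the fixed-end case of Proposition \ref{trois} by intersecting rays to the fixed end coming from the relative generators and the $H_i$'s, then sweeping the minimal subtree by translates of a finite convex hull spanned by $v_0$, $\Omega\cdot v_0$ and fixed points $x_i$ of the $H_i$'s — is exactly the standard route the authors are implicitly invoking. The only cosmetic point is that in Step 3 the letters $s_j$ should range over $\Omega^{\pm1}\cup H_1\cup\dots\cup H_p$; the same intersection argument still applies, since $g_{j-1}(v_0)=g_j(\omega(v_0))$ when $s_j=\omega^{-1}$ with $\omega\in\Omega$.
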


Recall that $T$ is relative to $\calh$ if every $H_i$ is elliptic in $T$.

 We now consider relative finite presentation\index{relative finite generation, presentation} (see \cite{Osin_relatively}). Note that, 
if $\Om $ is a relative finite generating set, then the natural morphism   ${\F(\Om) }*H_1*\dots *H_p\ra G$ is an epimorphism (with $\F(\Om)$ the free group on $\Om$).

\begin{dfn}\label{dfn_relfp}
One says that $G$ is \emph{finitely presented relative to $\calh$}
if there exists a finite relative generating set $\Om \subset G$, 
such that the kernel of the epimorphism  $\F(\Om)*H_1*\dots *H_p\onto G$
is normally generated by a finite subset $\calr\subset \F(\Om)*H_1*\dots *H_p$.
\end{dfn}

 In particular, any group which is   hyperbolic   with respect to a finite family $\calh$ is finitely presented relative to $\calh$ 
\cite{Osin_relatively}.

One easily checks that relative finite presentation does not depend on the choice of $\Om$,
and is not affected if one replaces the subgroups $H_i$ by conjugate subgroups.
If $G$ is finitely presented, then it is finitely presented relative to any
finite collection of \emph{finitely generated} subgroups.
Note, however, that the free group $\F_2$ is \emph{not} finitely presented relative to an infinitely generated free subgroup $H_1$ (only finitely many generators of $H_1$ may appear in $\calr$).
 Conversely, if $G$ is finitely presented relative to any
finite collection of \emph{finitely presented} subgroups, then $G$ is finitely presented.

 The following lemma will be used in Subsection \ref{fingv}.

\begin{lem} \label{menage}
Suppose that $G$ is finitely presented relative to $\{H_1,\dots,H_p\}$, and $H_p\inc H_i$  for some  $i<p$. 
Then $G$  is finitely presented relative to $\{H_1,\dots,H_{p-1}\}$.
\end{lem}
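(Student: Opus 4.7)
The approach is to observe that, since $H_p\subset H_i$, any occurrence of an element of $H_p$ in a given finite relative presentation can be rewritten as the corresponding element of $H_i$, so the $H_p$ free factor becomes redundant.

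More concretely, I would start from a finite relative generating set $\Om\subset G$ and a finite set of relators $\calr\subset \F(\Om)*H_1*\dots*H_p$ witnessing that $G$ is finitely presented relative to $\{H_1,\dots,H_p\}$. Using the universal property of free products, I then define a homomorphism
\[
\phi:\F(\Om)*H_1*\dots*H_p\longrightarrow \F(\Om)*H_1*\dots*H_{p-1}
\]
which is the identity on $\F(\Om)$ and on each free factor $H_j$ with $j<p$, and which sends the free factor $H_p$ into the factor $H_i$ via the given inclusion $H_p\subset H_i$. This map is surjective, since $\F(\Om)$ and $H_1,\dots,H_{p-1}$ already lie in its image.

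The next step is to compare the natural epimorphisms $\pi_1:\F(\Om)*H_1*\dots*H_p\onto G$ and $\pi_2:\F(\Om)*H_1*\dots*H_{p-1}\onto G$. In $G$, an element $h\in H_p$ and its image inside $H_i$ represent the same element, so $\pi_1=\pi_2\circ\phi$. A short diagram chase, using surjectivity of $\phi$, then gives $\ker\pi_2=\phi(\ker\pi_1)$: the inclusion $\supset$ is the standard fact that the image of a normal subgroup under a surjection is normal, while for $\subset$ any $x\in\ker\pi_2$ is of the form $\phi(y)$, and $\pi_1(y)=\pi_2(\phi(y))=1$ forces $y\in\ker\pi_1$. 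Combined with the fact that the image of a normal closure under a surjection is the normal closure of the image, this yields $\ker\pi_2=\langle\langle\phi(\calr)\rangle\rangle$.

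Since $\phi(\calr)\subset \F(\Om)*H_1*\dots*H_{p-1}$ is a finite set, this exhibits $\Om$ and $\phi(\calr)$ as a finite relative presentation of $G$ relative to $\{H_1,\dots,H_{p-1}\}$, which is what we want. I do not expect any serious obstacle; the only thing that requires some care is to keep track of the two incarnations of elements of $H_p$ (as a separate free factor on the left-hand side, and as a subgroup of the $H_i$ factor on the right-hand side), which the map $\phi$ handles cleanly.
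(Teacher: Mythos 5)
Your proof is correct, but it takes a genuinely different route from the paper's. The paper's proof first shows that $H_p$ is finitely generated: it rewrites the relators of $\calr$ so that the only letters from the $H_p$ factor appearing in them are finitely many elements $x_1,\dots,x_q$, each identified with an element $y_j\in H_i$; this exhibits $G$ as an amalgam of $\langle \Om,H_1,\dots,H_{p-1}\rangle$ with $H_p$ over the finitely generated group $H=\langle x_1,\dots,x_q\rangle$, and the hypothesis $H_p\inc H_i$ forces $H_p=H$; the lemma is then deduced by Tietze transformations. You instead fold the free product directly: since $\phi$ is onto and $\pi_1=\pi_2\circ\phi$, one gets $\ker\pi_2=\phi(\ker\pi_1)$, and the image under the surjection $\phi$ of the normal closure of $\calr$ is the normal closure of the finite set $\phi(\calr)$, which gives the relative presentation at once (surjectivity of $\pi_2$, needed in the definition, also follows since $\pi_1$ is onto). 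Your argument is shorter and avoids any appeal to amalgams or to finite generation of $H_p$; the paper's argument, in exchange, produces the non-obvious byproduct that $H_p$ is automatically finitely generated under these hypotheses, though that extra information is not used elsewhere. One small correction of wording: the inclusion $\phi(\ker\pi_1)\inc\ker\pi_2$ is a consequence of the identity $\pi_1=\pi_2\circ\phi$, not of the fact that images of normal subgroups under surjections are normal; the latter fact (in the form that a surjection carries a normal closure to the normal closure of the image) is what justifies your final equality, where you do invoke it correctly.
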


\begin{proof}
We show that $H_p$ is finitely generated. The lemma then follows by applying Tietze transformations.

Let $x_1,\dots,x_q$ be the set of elements of $H_p$ which appear as a letter in one of the relators of $\calr$ 
(expressed as elements of $\F(\Om)*H_1*\dots *H_p$). Each $x_j$ is equal to an element $y_j\in H_i$. We define a new finite set of relators   by replacing each $x_j$ by $y_j$  and  adding the relations $x_j=y_j$. 
This new presentation expresses $G$ as the amalgam of $\langle \Om ,H_1,\dots,H_{p-1}\rangle $ with $H_p$ over a finitely generated group $H$. The inclusion $H_p\inc H_i$ now implies $H_p=H$.
\end{proof}

 Suppose that  a finitely generated group $G$ splits as a finite graph of groups. It is well-known 
  (see for instance \cite{Cohen_combinatorial}, Lemma 8.32 p.\ 218)
 that vertex groups are finitely generated if   one assumes that  edge groups are   finitely generated, but this is false  in general without this assumption.
However, vertex groups  are always
  \emph{finitely generated  relative to the incident edge groups}, and  there is a similar statement for relative finite presentation (see  Subsection \ref{fingv}).

\part{The   JSJ deformation space}

 We start this  part
 by introducing standard refinements: if edge stabilizers of $T_1$ are elliptic in $T_2$, there is a tree $\hat T_1$ which refines $T_1$ and dominates $T_2$. We then define the JSJ deformation space, and we show that it exists under some finite presentability assumption. We give examples in cases when there is no flexible vertex (flexible vertices are the subject of Part \ref{part_QH}). We conclude this part by collecting a few useful facts. In particular, given a tree $T$, we discuss finite presentation of vertex stabilizers, and we relate their splittings relative to incident edge groups to splittings of $G$. We also explain why one may usually restrict to one-ended groups when studying JSJ decompositions. 

We fix a   finitely generated group $G$,   a family $\cala$ of subgroups of $G$  (closed under conjugating and taking subgroups), and another family $\calh$. All trees will be  minimal $(\cala,\calh)$-trees  (see Subsection \ref{AH}). 
Whenever we construct a new tree (for instance in   Propositions \ref{prop_refinement} and \ref{prop_accessibility}) we check that it is  a  minimal  $(\cala,\calh)$-tree.

\section{Definition and existence} \label{de}

\subsection{Standard refinements}
\label{sec_blowup}

Let $T_1,T_2$ be trees. 

\begin{dfn}[Ellipticity of trees] \label{et}
  $T_1$ is \emph{elliptic}\index{elliptic with respect to a tree} with respect to $T_2$ if every edge stabilizer of $T_1$ 
fixes a point in $T_2$.
\end{dfn}

 Note that $T_1$ is  {elliptic} with respect to $T_2$ whenever  there is a refinement $\Hat T_1$  of $T_1$ that dominates $T_2$  (see Subsection \ref{comp} for the difference between refinement and domination):
edge stabilizers of $T_1$ are elliptic in $\Hat T_1$, hence in $T_2$.
 We show a converse statement.

\begin{prop}\label{prop_refinement} \index{refinement}
If $T_1$ is elliptic with respect to $T_2$, there is  a tree $\Hat T_1$ with maps $p:\Hat T_1\ra T_1$ and $f:\Hat T_1\ra T_2$ such that:
\begin{enumerate}
\item $p$ is a collapse map; 
\item for each $v\in T_1$, the  restriction of $f$ to the subtree $Y_v=p\m(v)$ is injective.
\end{enumerate}
In particular:
\renewcommand{\theenumi}{(\roman{enumi})}
\renewcommand{\labelenumi}{\theenumi}
\begin{enumerate}
\item \label{it_ref} $\Hat T_1$ is a refinement of $T_1$ that dominates $T_2$;
\item \label{it_Ge} the stabilizer of any edge of $\Hat T_1$ fixes an edge in $T_1$ or in $T_2$;
\item \label{it_Ge2} every edge stabilizer of $T_2$ contains an edge stabilizer of $\Hat T_1$;
\item \label{it_ell} a subgroup of $G$ is elliptic in $\Hat T_1$ if and only if it is elliptic in both $T_1$ and
$T_2$.
\end{enumerate}

\end{prop}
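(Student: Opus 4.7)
The plan is to construct $\hat T_1$ as a \emph{blow-up} of $T_1$, replacing each vertex $v$ by a $G_v$-invariant subtree $Y_v \subset T_2$ and reattaching the incident edges at fixed points of their stabilizers.

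By the ellipticity hypothesis, $\Fix(G_e) \subset T_2$ is a non-empty subtree for each $e \in E(T_1)$. Choose a fixed point for one representative of each $G$-orbit of edges and extend $G$-equivariantly to obtain an assignment $e \mapsto x_e \in \Fix(G_e)$ (since the action on $T_2$ is without inversion, $x_e$ may be taken to be a vertex). For each $v \in V(T_1)$, let $Y_v$ be the smallest $G_v$-invariant subtree of $T_2$ containing $\{x_e : e \ni v\}$; then $Y_{gv} = g Y_v$. Form $\hat T_1$ by taking a disjoint copy of each $Y_v$ (indexed by $V(T_1)$) and, for each edge $e = vw \in E(T_1)$, attaching a new edge $\tilde e$ between the copy of $x_e$ in $Y_v$ and the copy of $x_e$ in $Y_w$. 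Since $T_1$ is connected and each $Y_v$ is a tree, $\hat T_1$ is a tree carrying a natural $G$-action; since $T_1$ has no inversion, neither does $\hat T_1$. Define $p:\hat T_1 \to T_1$ by collapsing each $Y_v$ to $v$ (a collapse map by construction) and $f:\hat T_1 \to T_2$ by the inclusion $Y_v \hookrightarrow T_2$ on blown-up vertices and $f(\tilde e) = x_e$ on each new edge; then $f|_{Y_v}$ is injective.

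The four consequences follow. (i) is immediate. For (ii), an edge of $\hat T_1$ either lies inside some copy of $Y_v \subset T_2$ (so its stabilizer fixes an edge of $T_2$) or is a new edge $\tilde e$ whose stabilizer is $G_v \cap G_w \cap \Stab_{T_2}(x_e) = G_e$, which fixes $e$ in $T_1$. For (iii), $f(\hat T_1) = \bigcup_v Y_v$ is a non-empty $G$-invariant subtree of $T_2$, hence equals $T_2$ by minimality; thus every $e' \in E(T_2)$ already appears as an edge of some $Y_v$, and the corresponding edge in the copy of $Y_v$ inside $\hat T_1$ has the same stabilizer as $e'$. For (iv), the forward direction follows since $p$ and $f$ are domination maps; conversely, if $H$ is elliptic in both $T_1$ and $T_2$, then $H \inc G_v$ for some $v$, and since $H$ preserves $Y_v$ as a subtree of $T_2$, the nearest-point projection in $T_2$ of any $H$-fixed point onto $Y_v$ is also $H$-fixed, providing a fixed point of $H$ in the copy of $Y_v$ inside $\hat T_1$.

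The main subtlety is to conform to the standing convention that all trees be minimal $(\cala,\calh)$-trees. Edge stabilizers of $\hat T_1$ lie in $\cala$ by (ii); every $H \in \calh$ is elliptic in $\hat T_1$ by applying (iv) to its ellipticity in both $T_1$ and $T_2$; and if $\hat T_1$ happens not to be minimal, one passes to its unique minimal $G$-invariant subtree, after which (i)--(iv) are preserved because the restrictions of $p$ and $f$ still fulfill the required properties.
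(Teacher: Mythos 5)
Your construction is essentially the paper's: blow up each vertex $v$ of $T_1$ into a $G_v$-invariant subtree of $T_2$, attach the edges of $T_1$ at equivariantly chosen points fixed by the edge stabilizers, pass to the minimal subtree, and verify (i)--(iv) using the same standard facts (in particular that an elliptic subgroup fixes a point in any invariant subtree); the only cosmetic difference is that you glue both ends of $\tilde e$ at a single point $x_e$, whereas the paper allows two fixed points $p_v,p_w$ and maps $e$ onto $[p_v,p_w]$. One minor inaccuracy: in (iii) the edge of the copy of $Y_v$ corresponding to $e'$ has stabilizer $G_v\cap G_{e'}$ rather than necessarily all of $G_{e'}$, but containment is all that is needed, and after passing to the minimal subtree (iii) is in any case the standard fact, recorded in the paper's preliminaries, that any equivariant map between minimal trees is surjective and every edge stabilizer of the target contains an edge stabilizer of the source.
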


Assertions \ref{it_Ge} and \ref{it_ell} guarantee that $\Hat T_1$ is an $(\cala,\calh)$-tree   since $T_1$ and $T_2$ are.

\begin{rem}\label{rem_fold}
   If edge stabilizers of $T_2$ are finitely generated, then $T_2$ can be obtained from
$\Hat T_1$ by a finite number of collapses and folds \cite{BF_bounding}.
\end{rem}

\begin{proof}
We construct $\Hat T_1$ as follows.

For each vertex $v\in V(T_1)$, with stabilizer $G_v$, choose any
$G_v$-invariant subtree $\Tilde Y_v$ of $T_2$ (for instance, $\Tilde Y_v$ can be a minimal $G_v$-invariant subtree,
or the whole of  $ T_2$).
For each   edge $e=vw\in E(T_1)$, 
choose     vertices $p_v\in \Tilde Y_{v}$ and $p_w\in \Tilde Y_w$ fixed by $G_e$; this is possible
because
$G_e$ is elliptic in
$T_2$ by assumption, so has a fixed point in any  $G_v$-invariant subtree.   We make these choices $G$-equivariantly.

We can now define a  tree $\Tilde T_1$
by blowing up each vertex $v$ of $T_1$ into $\Tilde Y_v$, and attaching     edges   of $T_1$ using the
points $p_v$.   Formally, we consider the disjoint union $( \Dunion_{v\in V(T_1)}\Tilde Y_v) \cup (\Dunion _{e\in E(T_1)}e)$, and for
each edge $ e=vw$ of
$T_1$ we identify $v$ with $p_v\in \Tilde Y_v$ and $w$ with $p_w\in\Tilde  Y_w$.
We define $\Tilde p:\Tilde T_1\ra T_1$ by sending $Y_v$ to $v$, and sending $e\in E(T_1)$ to itself.  We also define a map 
 $\Tilde f:\Tilde T_1\ra T_2$  as equal to the inclusion $\Tilde Y_v\hookrightarrow T_2$   on $\Tilde Y_v$, and   sending the edge $e=vw$ to the segment $[p_v,p_w]\subset T_2$.

In general, $\Tilde T_1$ may fail to be minimal, so we define $\Hat T_1\subset \Tilde T_1$
as the unique minimal $G$-invariant subtree $\mu_{ \Tilde T_1}(G)$ (the action of $G$ on $\Hat T_1$ is non-trivial unless $T_1$ and $T_2$ are both points).
We then  define $p$ and $f$ as the restrictions of $\Tilde p$ and $\Tilde f$ to $\Hat T_1$.
These maps clearly satisfy the  first two requirements.

Let us check that the other properties follow.
Assertion \ref{it_ref} is clear.

If $e$ is an edge of $\Hat T_1$ that is not collapsed by $p$, then $G_e$ fixes an edge of $T_1$.
Otherwise, $f$ maps $e$ injectively to a non-degenerate segment of $T_2$, so $G_e$ fixes an edge in $T_2$, and Assertion \ref{it_Ge} holds.

 Assertion \ref{it_Ge2} is true for any surjective map $f$ between   trees.

To prove  the non-trivial direction of Assertion \ref{it_ell}, assume that $H$ is elliptic in $T_1$ and $T_2$. Then $H\subset G_v$ for some $v\in T_1$,
so $H$ preserves the subtree $Y_v\subset \Hat T_1$. 
Since $f$ is
 injective in restriction to $Y_v$, it is enough to prove
that $H$ fixes a point in $f(Y_v)$. This holds because $H$ is elliptic in $T_2$.
\end{proof}

\begin{rem}\label{rem_bas}
One may think of this construction in terms   of graphs of groups, as follows. Starting from the graph of group $\Gamma_1=T_1/G$,
one replaces each vertex $v\in \Gamma_1$ by the graph of groups $\Lambda_v$ dual to the action of $G_v$ on its minimal subtree
in $T_2$, and one attaches each edge $e$ of $\Gamma_1$ incident to $v$ onto a vertex of $\Lambda_v$ whose group contains a conjugate of $G_e$.

 Since any vertex group $G_v$ of $\Gamma_1$ is  finitely generated relative to its incident edge groups (see Lemma \ref{relfg}), and   these edge groups are elliptic in $T_2$,
there is a minimal $G_v$-invariant subtree in $T_2$  by Lemma \ref{arbtf}. Thus, one may require that $G_v$  act minimally on the preimage $Y_v\inc \Hat T_1$.

\end{rem}

\begin{dfn}[Standard refinement]\label{dfn_std_blowup}\index{standard refinement}
Any tree $\Hat T_1$
  as in   Proposition \ref{prop_refinement} 
 will be called a
 \emph{standard refinement of $T_1$ dominating $T_2$.}

\end{dfn}

In general, there is no uniqueness of standard refinements.
However, by Assertion \ref{it_ell}   of Proposition \ref{prop_refinement}, all standard refinements belong to the same deformation space, which is 
the
lowest deformation space dominating the deformation spaces containing $T_1$ and $T_2$ respectively. If $T_1$ dominates $T_2$ (resp.\ $T_2$ dominates $T_1$),
then
$\hat T_1$ is in the same deformation space as $T_1$ (resp.\ $T_2$).
Moreover,  there is some
symmetry: if $T_2$ also happens to be elliptic with respect to $T_1$,  then   any standard refinement $\Hat T_2$ of $T_2$ dominating $T_1$ is in the
same deformation space as $\Hat T_1$.

\begin{lem} \label{cor_Zor} \ 
\begin{enumerate}
\item  
If $T_1$ refines $T_2$ and does not belong to the same deformation space, some $g\in G$ is hyperbolic in $T_1$ and elliptic in $T_2$.
\item

If  $T_1$ is elliptic with respect to  $T_2$, and every $g\in G$ which is elliptic in $T_1$ is also elliptic in $T_2$, then $T_1$ dominates $T_2$.  

\item
If $T_1$ is elliptic with respect to  $T_2$, but 
 $T_2$ is not elliptic with respect to $T_1$, 
 then $G$ splits over a group    which has infinite index in an edge stabilizer of $T_2$.
\end{enumerate}
\end{lem}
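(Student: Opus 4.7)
The plan is to prove the three parts in order, using Proposition \ref{prop_refinement} and the standard refinement as the main tool, with part (1) serving as the key input for part (2).

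For part (1), I will let $\pi\colon T_1\to T_2$ be the collapse map. Since $T_1$ refines $T_2$ it dominates $T_2$, so the hypothesis of distinct deformation spaces forces some vertex stabilizer $G_v$ of $T_2$ to fail to be elliptic in $T_1$. The subtree $Y_v:=\pi^{-1}(v)$ of $T_1$ is $G_v$-invariant, and a closest-point projection argument shows that $G_v$ has no fixed point in $Y_v$ either. The crucial geometric point is that $Y_v/G_v$ is a finite subgraph of the finite quotient $T_1/G$, so $G_v$ acts cocompactly on $Y_v$. I will then invoke the Bass--Serre structure on the finite graph of groups $Y_v/G_v$: after reducing, it must contain at least one edge (otherwise $G_v$ would be a vertex group and hence elliptic), and this edge produces a hyperbolic element of $G_v$, either by ping-pong (non-loop edge) or as a stable letter (loop edge). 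Such a $g$ is hyperbolic in $T_1$ while fixing $v\in T_2$, hence elliptic in $T_2$.

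For part (2), I plan to apply Proposition \ref{prop_refinement} to build a standard refinement $\hat T_1$ of $T_1$ that dominates $T_2$, and then show that $\hat T_1$ lies in the same deformation space as $T_1$. This implies that $T_1$ dominates $\hat T_1$, and therefore also $T_2$. Supposing $\hat T_1$ and $T_1$ are in different deformation spaces, part (1) applied to the refinement $\hat T_1\to T_1$ produces $g$ hyperbolic in $\hat T_1$ and elliptic in $T_1$. The hypothesis then forces $g$ elliptic in $T_2$, and applying Proposition \ref{prop_refinement}\ref{it_ell} to $\langle g\rangle$ (elliptic in both $T_1$ and $T_2$) makes $g$ elliptic in $\hat T_1$, a contradiction.

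For part (3), I again form the standard refinement $\hat T_1$. The hypothesis supplies an edge stabilizer $G_e$ of $T_2$ not elliptic in $T_1$, hence not elliptic in $\hat T_1$ either. By Proposition \ref{prop_refinement}\ref{it_Ge2} there exists an edge $e^*$ of $\hat T_1$ with $G_{e^*}\subseteq G_e$. Because $G_e$ has no fixed point on $\hat T_1$, no $G_e$-orbit of vertices can be finite --- a finite orbit would span a finite $G_e$-invariant subtree, on which the action would admit a fixed vertex in the absence of inversions --- so no $G_e$-orbit of edges is finite either. In particular $[G_e:G_{e^*}]=\lvert G_e\cdot e^*\rvert=\infty$. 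Collapsing in $\hat T_1$ all edges outside the $G$-orbit of $e^*$ and passing to the minimal subtree yields an $(\cala,\calh)$-splitting of $G$ with edge group $G_{e^*}$, a group of infinite index in $G_e$.

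The main obstacle lies in part (1): producing a single hyperbolic element from a potentially infinitely generated $G_v$, which is outside the reach of Serre's lemma. The cocompactness of the action of $G_v$ on $Y_v$ afforded by finite generation of $G$ is what licenses the Bass--Serre reduction argument and rules out the pathological ``fixed end'' case of Proposition \ref{trois}.
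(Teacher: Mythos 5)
Your proofs of parts (2) and (3) follow essentially the same route as the paper: a standard refinement $\hat T_1$ dominating $T_2$ from Proposition \ref{prop_refinement}, then part (1) plus Assertion \ref{it_ell} for (2), and Assertion \ref{it_Ge2} for (3). The only real divergence is in part (1), and a small one in (3). For (1), the paper factors the collapse $T_1\to T_2$ into one-orbit collapses and analyzes a single collapsed orbit of edges $e=uv$ directly (if $u\sim v$, or if $G_e\ne G_u$ and $G_e\ne G_v$, one exhibits a hyperbolic element becoming elliptic; otherwise the collapse preserves the deformation space by Subsection \ref{defspa}). You instead localize at a vertex stabilizer $G_v$ of $T_2$ that is not elliptic in $T_1$, observe that $G_v$ acts cocompactly on $Y_v=\pi^{-1}(v)$ (the orbit argument showing $Y_v/G_v$ injects into $T_1/G$ is correct), and extract a hyperbolic element from a reduced finite graph-of-groups decomposition of this action. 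Both arguments ultimately rest on the same two standard facts (elementary collapses with $G_e=G_u$, $u\not\sim v$ preserve elliptic subgroups; a splitting admitting no further such collapse and having at least one edge contains hyperbolic elements), so the content is the same; your version has the virtue of making explicit why the possibly infinitely generated $G_v$ cannot fall into the fixed-end case of Proposition \ref{trois}, while the paper's one-orbit-at-a-time reduction is shorter. In (3), you derive infinite index of $G_{e^*}$ in $G_e$ by counting the $G_e$-orbit of $e^*$, whereas the paper uses that ellipticity is a commensurability invariant (finite index would make $G_e$ elliptic); both are valid, and your final collapse to a one-edge splitting over $G_{e^*}$ matches the intended conclusion.
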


 Recall that 
all trees   are assumed to be $(\cala,\calh)$-trees; 
  the splitting obtained in (3) is   relative to $\calh$.

\begin{proof} 
One needs only   prove the first assertion when $T_2$ is obtained from $T_1$ by collapsing the orbit of an edge $e=uv$.
If $u$ and $v$ are in the same orbit, or if $G_e\neq G_u$ and $G_e\neq G_v$, then some hyperbolic element of $T_1$
becomes elliptic in $T_2$. Otherwise, $T_1$ and $T_2$ are in the same deformation space  (see Subsection \ref{defspa}).

For the second assertion, assume that $T_1$ does not dominate $T_2$,  and let     $\Hat T_1$  be a standard refinement 
of $T_1$ dominating $T_2$. It
does not belong to the same deformation space as $T_1$. 
Since it is a refinement of $T_1$, we have just seen that some $g\in G$ is elliptic in $T_1$ and hyperbolic in $\Hat T_1$. By assumption $g$ is elliptic in $T_2$, contradicting  Assertion \ref{it_ell}  of Proposition
\ref{prop_refinement}.

For (3) (which is Remark 2.3 of \cite{FuPa_JSJ}),
let $\hat T_1$ be as in Proposition \ref{prop_refinement}.  Let $G_e$ be an edge stabilizer of $T_2$ which is
not elliptic in $T_1$.
It contains an edge stabilizer $J$ of  
$\hat T_1$.  Since $J$ is elliptic in $T_1$ and $G_e$ is not, the index of
$J$ in $G_e$ is infinite.
\end{proof}

\subsection{Universal ellipticity }

\begin{dfn}[Universally elliptic]
A subgroup $H\subset G$ is \emph{universally elliptic}\index{universally elliptic subgroup, tree}   if 
it is elliptic in every  
\AH-tree.
A tree $T$ is \emph{universally elliptic}\index{universally elliptic subgroup, tree} if its edge stabilizers are universally elliptic, 
\ie if $T$ is elliptic with respect to every 
\AH-tree. 
\end{dfn}

 When we need to be specific, we say universally elliptic over $\cala$ relative to $\calh$, or $(\cala,\calh)$-universally elliptic. Otherwise we just say universally elliptic, recalling that all trees are assumed to be $(\cala,\calh)$-trees.

Groups with Serre's property (FA), in particular finite groups,  
are universally elliptic. If $H$ is universally elliptic and $H'$ contains $H$ with finite index, then $H'$ is universally elliptic.

\begin{lem}\label{lem_sup} Consider  two trees $T_1,T_2$.
\begin{enumerate}
\item If $T_1$ is universally elliptic,
then some refinement  of $T_1$ dominates $T_2$.
\item   If $T_1$ and $T_2 $ are universally elliptic, any standard refinement  $\Hat T_1$ of $T_1$ dominating $T_2$
 is  universally elliptic.  In particular, there is a universally elliptic tree $\hat T_1$ dominating both $T_1$ and $T_2$. 

\item If $T_1$ and $T_2 $ are universally elliptic and have the same elliptic elements, they belong to the same deformation space.
\end{enumerate}
\end{lem}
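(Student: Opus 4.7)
All three assertions follow quite directly from Proposition \ref{prop_refinement} and Lemma \ref{cor_Zor}, so the proof will be short; the main task is just to extract the right item from each.

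For (1), I would observe that universal ellipticity of $T_1$ means, by definition, that every edge stabilizer of $T_1$ is elliptic in every $(\cala,\calh)$-tree, and in particular in $T_2$. Thus $T_1$ is elliptic with respect to $T_2$ in the sense of Definition \ref{et}, and Proposition \ref{prop_refinement}\ref{it_ref} directly produces a standard refinement $\hat T_1$ of $T_1$ dominating $T_2$.

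For (2), I would apply Proposition \ref{prop_refinement}\ref{it_Ge} to the standard refinement $\hat T_1$: every edge stabilizer of $\hat T_1$ is contained in an edge stabilizer of $T_1$ or of $T_2$. Both $T_1$ and $T_2$ are universally elliptic by hypothesis, so the conclusion will follow from the trivial remark that being universally elliptic passes to subgroups (a subgroup of an elliptic group is elliptic at the same fixed point). Hence every edge stabilizer of $\hat T_1$ is universally elliptic, so $\hat T_1$ itself is. The ``in particular'' clause is then just a combination of (1) with this observation, applied to the pair $T_1,T_2$.

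For (3), I would use Lemma \ref{cor_Zor}(2) symmetrically. Since $T_1$ is universally elliptic, it is elliptic with respect to $T_2$; together with the hypothesis that the elliptic elements of $T_1$ are elliptic in $T_2$, Lemma \ref{cor_Zor}(2) gives that $T_1$ dominates $T_2$. Reversing the roles of $T_1$ and $T_2$ gives the reverse domination. A tree dominates another if and only if every subgroup elliptic in the first is elliptic in the second, so mutual domination is exactly the statement that $T_1$ and $T_2$ lie in the same deformation space.

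There is no real obstacle here: the substance has all been done in Proposition \ref{prop_refinement} and Lemma \ref{cor_Zor}. The only point worth being a little careful about is to make sure, in (2), that one really only needs containment of edge stabilizers (not equality) to inherit universal ellipticity, and, in (3), to recall that passing from elliptic elements to elliptic subgroups is exactly what Lemma \ref{cor_Zor}(2) provides.
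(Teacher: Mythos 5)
Your proof is correct and follows essentially the same route as the paper, which deduces (1) and (2) from assertions (i) and (ii) of Proposition \ref{prop_refinement} and (3) from the second assertion of Lemma \ref{cor_Zor}. The only additions you make (that universal ellipticity passes to subgroups, and the symmetric application of Lemma \ref{cor_Zor}(2)) are exactly the routine points the paper leaves implicit.
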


\begin{proof}   The first two assertions follow directly from  
Assertions  (i) and (ii)  of Proposition \ref{prop_refinement}.
The last one follows from the second assertion of Lemma \ref{cor_Zor}.
\end{proof}

The following lemma will be used in Subsection \ref{exdur}. 

\begin{lem} \label{lem_Zorn} 
Let $(T_i )_{i \in I}$ be any family of   trees. There exists a countable subset
$J\subset I$ such that, if 
$T$ is elliptic  with respect to  every $T_i$ ($i\in I$),
and $T$ dominates every $T_{j}$ for $j \in J$, 
then $T$ dominates $T_i $ for all $i\in I$.
\end{lem}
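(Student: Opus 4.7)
The key idea is to reduce domination to a condition on elements via Lemma \ref{cor_Zor}(2), and then exploit the fact that $G$ is countable. Since $G$ is finitely generated it has only countably many elements, and for each element we only need to ``record'' one tree in which it is hyperbolic, if such a tree exists.

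More precisely, I would proceed as follows. For each $g \in G$, consider the (possibly empty) set $\mathrm{Hyp}(g)=\{i\in I: g\text{ is hyperbolic in }T_i\}$. Whenever $\mathrm{Hyp}(g)\ne\emptyset$, use the axiom of choice to pick some index $i(g)\in\mathrm{Hyp}(g)$, and define
\[
J=\{i(g): g\in G\text{ with }\mathrm{Hyp}(g)\ne\emptyset\}\subset I.
\]
Since $G$ is countable, so is $J$.

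Now suppose $T$ is a tree that is elliptic with respect to every $T_i$ ($i\in I$) and dominates every $T_j$ for $j\in J$. Fix an arbitrary $i\in I$; I must show $T$ dominates $T_i$. By Lemma \ref{cor_Zor}(2), it suffices to check that every element $g\in G$ which is elliptic in $T$ is also elliptic in $T_i$. So suppose, for contradiction, that some $g$ is elliptic in $T$ but hyperbolic in some $T_{i_0}$. Then $\mathrm{Hyp}(g)\ne\emptyset$, so $i(g)$ is defined and lies in $J$. Since $T$ dominates $T_{i(g)}$, the element $g$ is elliptic in $T_{i(g)}$; but by construction $g$ is hyperbolic in $T_{i(g)}$, a contradiction. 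Hence every $g$ elliptic in $T$ is elliptic in every $T_i$, and the conclusion follows.

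There is no real obstacle here; the one point to be careful about is that Lemma \ref{cor_Zor}(2) requires only that elliptic \emph{elements} (not elliptic subgroups) be preserved, which is what allows the countability argument to work cleanly. If the criterion instead required preservation of ellipticity for arbitrary subgroups, the argument would break down since $G$ has uncountably many subgroups in general.
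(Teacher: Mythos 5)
Your proof is correct and is essentially the paper's own argument: choose, for each element $g\in G$ that is hyperbolic in some $T_i$, one witnessing index, let $J$ be the (countable) set of chosen indices, and conclude via Lemma \ref{cor_Zor}(2). Your closing remark about the criterion involving elements rather than subgroups is exactly the point that makes the countability argument work.
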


\begin{proof} Since $G$ is countable, we can   find a countable  $J$ such that, if an element $g\in G$ is
hyperbolic in some $T_i$, then it is hyperbolic in some $T_{j}$ with $j \in J$. 
If $T$ dominates every $T_{j }$ for $j \in J$, any $g$ which is elliptic in $T$ is elliptic in every $T_i$. 
By  (2) of Lemma  \ref{cor_Zor}, the tree $T$ dominates every $T_i$. 
\end{proof}

For many purposes, it is enough to consider one-edge splittings, \ie
trees with only one orbit of edges.\index{one-edge splitting}

\begin{lem}\label{lem_oneed}
 Let $S$ be a
 tree. \begin{enumerate}
\item
$S$ is universally elliptic if and only if
it is elliptic with respect to every one-edge splitting.
\item
$S$ dominates every universally elliptic 
tree if and only if it dominates every universally elliptic one-edge
splitting.
\end{enumerate}
\end{lem}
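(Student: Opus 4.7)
The plan is to reduce both parts of the lemma to a single claim: given any $(\cala,\calh)$-tree $T$ with edge orbit representatives $e_1,\dots,e_n$ (finitely many, since $G$ is finitely generated and $T$ is minimal) and any subgroup $H\inc G$, if $H$ is elliptic in each one-edge collapse $T_i$ of $T$ (obtained by collapsing every edge orbit except that of $e_i$), then $H$ is elliptic in $T$. Each such $T_i$ is itself a one-edge $(\cala,\calh)$-tree: its edge stabilizers lie among those of $T$, and every $K\in\calh$ remains elliptic after the collapse. Both forward directions of the lemma are immediate, since every one-edge splitting is an $(\cala,\calh)$-tree.

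To prove the claim, let $p_i\colon T\to T_i$ be the collapse map and, for each $i$, pick a vertex $x_i\in T_i$ fixed by $H$; set $Y_i=p_i\m(x_i)\inc T$. Each $Y_i$ is an $H$-invariant subtree of $T$ whose edges all lie outside the orbit of $e_i$. The argument then splits into two cases. If some pair $Y_i,Y_j$ is disjoint, then $H$ preserves the unique bridge $[a_i,a_j]$ between them; since $h(a_i)\in h(Y_i)=Y_i$ whereas $a_j\in Y_j$, and these subtrees are disjoint, $H$ cannot swap $a_i$ and $a_j$, so it fixes them both, giving a fixed vertex of $T$. If instead the $Y_i$ pairwise intersect, Helly's property for finitely many subtrees of a tree yields $\bigcap_i Y_i\neq\es$; this intersection contains no edge at all (its edges would have to avoid every orbit of $T$), so it reduces to a single vertex, which is fixed by $H$.

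With the claim in hand, part (1) follows by applying it to $H=G_e$ for each edge stabilizer of $S$, since the hypothesis of (1) directly secures ellipticity of $G_e$ in each $T_i$. For part (2), observe that when $T$ is universally elliptic, every $T_i$ is also universally elliptic, since its edge stabilizers sit inside those of $T$. The hypothesis of (2) then gives that $S$ dominates each $T_i$, so every vertex stabilizer $G_v$ of $S$ is elliptic in every $T_i$; the claim delivers $G_v$ elliptic in $T$, and since domination is equivalent to ellipticity of all vertex stabilizers in the target, $S$ dominates $T$.

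The main obstacle is to handle subgroups $H$ that may be infinitely generated: a direct appeal to Serre's lemma on f.g.\ subgroups would only cover the case where $H$ contains a hyperbolic element, whereas the Helly-plus-bridge dichotomy works uniformly for arbitrary subgroups and uses only that $T/G$ is finite. A minor technical point is how to treat collapses $T_i$ for which the $G$-action becomes trivial after collapsing; one may safely set $Y_i=T$ in that case, so such $T_i$ impose no additional constraint on the common intersection.
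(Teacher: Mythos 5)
Your argument is correct, and it works for arbitrary (possibly infinitely generated) subgroups $H$, which is indeed the point where a naive appeal to Serre's lemma would fail. It differs from the paper's proof mainly in its combinatorial mechanism. The paper reduces to a two-collapse statement: for any $G$-invariant partition $E(T)=E_1\dunion E_2$ with collapses $T_1,T_2$, ellipticity of $H$ in both $T_1$ and $T_2$ forces ellipticity in $T$; this is proved by taking the preimage $Y\subset T$ of a vertex of $T_1$ fixed by $H$, observing that $Y$ is $H$-invariant and embeds into $T_2$, and using the standard fact that an invariant subtree meets the fixed-point set of an elliptic subgroup. The lemma then follows by induction on the number of edge orbits of $T$, and domination is handled by applying this to vertex stabilizers, exactly as you do. Your version replaces the induction by a one-shot argument: you form all $n$ one-edge collapses at once, take the $n$ invariant preimage subtrees $Y_i$, and combine them via Helly's property for subtrees, with a separate bridge argument when two of the $Y_i$ are disjoint. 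The paper's inductive route avoids your case split entirely (in the two-collapse setting the preimage embeds into the other collapse, so disjointness never arises), while your route avoids the induction and makes the finiteness of $E(T)/G$ do the work directly; both correctly verify that the one-edge collapses $T_i$ are $(\cala,\calh)$-trees with edge stabilizers among those of $T$, which is what makes the reductions in (1) and (2) legitimate. One small remark: your cautionary case of a collapse $T_i$ with trivial action cannot actually occur, since the orbit of $e_i$ survives and a collapse of a minimal tree is minimal, so every $T_i$ is a genuine one-edge splitting; your convention $Y_i=T$ is harmless but unnecessary.
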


\begin{proof} For the non-trivial direction, one proves that $S$ is elliptic with respect to $T$ (resp.\ dominates $T$)
by induction on the  number of orbits of
edges of $T$, using the following lemma.
\end{proof}

\begin{lem}
Let $T$ be a  tree, and $H$ a subgroup of $G$.
  Let $E_1\dunion E_2$ be a partition of $E(T)$ into two $G$-invariant sets.
Let $T_1,T_2$ be the trees obtained from $T$ by collapsing $E_1$ and $E_2$ respectively.
\begin{enumerate}
\item If a subgroup $H$ is elliptic in $T_1$ and $T_2$, then $H$ is elliptic in $T$.
\item If a tree $T'$ dominates $T_1$ and $T_2$, then it dominates $T$.
\end{enumerate}
\end{lem}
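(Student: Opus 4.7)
The plan is to reduce (2) to (1) immediately: by the characterization of domination recalled in Subsection \ref{defspa}, $T'$ dominates a tree $T''$ if and only if every vertex stabilizer of $T'$ is elliptic in $T''$. So if $T'$ dominates $T_1$ and $T_2$, each vertex stabilizer $G_v$ of $T'$ is elliptic in both; applying (1) to $H=G_v$ shows that $G_v$ is elliptic in $T$, and therefore $T'$ dominates $T$.

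For (1), I let $q_i \colon T \to T_i$ denote the collapse maps (so $q_i$ crushes each edge of $E_i$ to a point). Since $H$ is elliptic in $T_i$ and $G$ acts without inversions, the fixed-point set of $H$ in $T_i$ is a non-empty subtree containing a vertex $x_i$. I set
\[
Y_i \;=\; q_i^{-1}(x_i) \;\subset\; T.
\]
Then $Y_i$ is an $H$-invariant, non-empty, connected subtree of $T$, and all its edges lie in $E_i$: indeed it is a connected component of the subforest of $T$ with edge set $E_i$, since these are precisely the edges crushed by $q_i$.

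The proof now splits on whether $Y_1$ and $Y_2$ meet. If $Y_1 \cap Y_2 \neq \emptyset$, this intersection is an $H$-invariant subtree of $T$; since any edge it contains would lie in $E_1 \cap E_2 = \emptyset$, it reduces to a single vertex, which is fixed by $H$. Otherwise, let $I = [a,b]$ be the unique bridge between $Y_1$ and $Y_2$, with $a\in Y_1$ and $b \in Y_2$. Uniqueness of the bridge forces every $h \in H$ to preserve $I$, so $h$ either fixes $I$ pointwise (in particular fixing $a$) or exchanges $a$ and $b$. In the second case, the midpoint $m$ of $I$ is $h$-fixed; the standing no-inversion hypothesis on the $G$-action rules out $m$ being the interior of an edge (else $h$ would invert that edge), so $m$ is a vertex. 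Elements fixing $I$ pointwise also fix $m$, so in this case $H$ fixes $m$. In all situations, $H$ is elliptic in $T$.

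The main technical step is the bridge analysis, where the key ingredient is the no-inversion assumption: without it, the midpoint of a bridge of odd length could be interior to an edge and would not be a vertex of $T$. Everything else is formal, and once (1) is established, (2) follows from the characterization of domination via elliptic vertex stabilizers with no further work.
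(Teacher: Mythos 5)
Your proof is correct. Part (2) is exactly the paper's reduction: apply (1) to the vertex stabilizers of $T'$ and use the characterization of domination by ellipticity of vertex stabilizers. For part (1) you take a mildly different route from the paper. The paper uses only one preimage: it lets $Y\subset T$ be the preimage of a vertex $x_1\in T_1$ fixed by $H$, notes that all edges of $Y$ lie in $E_1$, so the collapse map $T\to T_2$ restricts to an embedding of $Y$ into $T_2$; since $H$ is elliptic in $T_2$ and any $H$-invariant subtree meets $\Fix H$, the group $H$ fixes a point of the image of $Y$, hence of $Y\subset T$. You instead work entirely inside $T$ with both preimages $Y_1,Y_2$ and an intersection-or-bridge dichotomy; this is equally valid and arguably more self-contained, at the cost of an extra case analysis. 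One remark: the swap case in your bridge argument is vacuous, because $Y_1$ and $Y_2$ are each $H$-invariant and disjoint, so any $h\in H$ sends the endpoint $a\in Y_1$ of the bridge into $Y_1$ and hence fixes $a$ (and likewise $b$); consequently the no-inversion hypothesis is not really "the key ingredient" of the bridge analysis, as you claim in your closing paragraph — its only role (in both your proof and the paper's) is the standard one of guaranteeing that an elliptic subgroup fixes a vertex rather than a midpoint.
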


\begin{proof}
Let $x_1\in T_1$ be a vertex fixed by $H$. Let $Y\subset T$ be its preimage under the collapse map $T\ra T_1$. It is a subtree.
Now $Y$ is $H$-invariant and embeds into $T_2$.
Since $H$ is elliptic in $T_2$, it fixes a point in $Y$, so is elliptic in $T$.  One shows (2) 
by applying (1) to the vertex stabilizers of $T'$.
\end{proof}

\subsection{The JSJ deformation space} \label{jsjdf}

Having  fixed $\cala$, 
  we define $\cala_\elli\subset \cala$\index{0AE@$\cala_\elli$: universally elliptic groups in $\cala$} 
as the set of   groups in $\cala$ which are
universally elliptic (over $\cala$ relative to $\calh$); $\cala_\elli$ is stable under conjugating and taking subgroups. 
A tree is universally elliptic if and only if it is an $\cala_\elli$-tree.

\begin{dfn}[JSJ deformation space]\label{prop_def}
If there exists a  deformation space $\cald_{JSJ}$\index{0DJSJ@$\cald_{JSJ}$: the JSJ deformation space} 
of $(\cala_\elli,\calh)$-trees which is maximal for  
domination, it is unique  by   
the second assertion of Lemma \ref{lem_sup}.
It  is called \emph{the JSJ deformation space}\index{JSJ decomposition, tree, deformation space} of $G$ over $\cala$ relative to $\calh$.

Trees in $\cald_{JSJ}$ are called \emph{JSJ trees}\index{JSJ decomposition, tree, deformation space}  (of $G$ over $\cala$ relative to $\calh$). 
They are precisely those
trees $T $ which are universally
elliptic, and   which dominate every universally elliptic tree.  We also say that trees $T\in\cald_{JSJ}$, and  the associated graphs of groups $\Gamma=T/G$, are \emph{JSJ decompositions}.\index{JSJ decomposition, tree, deformation space}
\end{dfn}

We will show that the JSJ deformation space exists if $G$ is finitely presented (Theorems \ref{thm_exist_mou}  and \ref{thm_exist_mou_rel}), or in the presence of acylindricity (see Section \ref{jsjac}). See Subsection \ref{Dunw} for an example where there is no JSJ deformation space.

   In general there are
many JSJ trees, but they  all belong to    the same deformation space
and therefore have a lot in common (see Section 4 of \cite{GL2}). In
particular \cite[Corollary 4.4]{GL2},  they have the same vertex stabilizers, except possibly  for 
vertex stabilizers in $\cala_{\elli}$.

\begin{rem} \label{moves}
There are results saying that two trees belong to the same deformation space $\cald$ if and only if one can pass from one to the other by a finite sequence of moves of a certain type (\cite{For_deformation}, see also \cite{ClFo_Whitehead}); in particular, if $\cald$ is non-ascending as defined in   \cite[Section 7]{GL2}, 
for instance when all groups in $\cala$ are finite, any two reduced trees in $\cald$ may be joined by a finite sequence of slide moves. These results may be interpreted as saying that a JSJ tree is unique up to certain moves. This is the content of the uniqueness statements of
\cite{Sela_structure,DuSa_JSJ,For_uniqueness,FuPa_JSJ}.
\end{rem}

\begin{dfn} [Rigid and flexible vertices] Let $H=G_v$ be a vertex stabilizer   of  a JSJ  tree $T$ (or a vertex group  of the graph of groups $\Gamma=T/G$).
We say that $H$ is \emph{rigid}\index{rigid vertex, group, stabilizer} 
if it is universally elliptic,
\emph{flexible}\index{flexible vertex, group, stabilizer} if it is not. 
We also say that the vertex $v$ is rigid  (flexible). If $H$ is flexible, we say that it  is a  flexible  subgroup of $G$
(over $\cala$ relative to $\calh$).
 \end{dfn} 

The definition of  flexible subgroups of $G$ does not
depend on the choice of the  JSJ tree $T$.
The heart of JSJ  theory is to understand   flexible groups.   They will be discussed in
Part  \ref{part_QH}.

We record the following simple facts  for future reference.

\begin{lem}\label{lem_rafin}
 Let  $T$  be a JSJ tree, and $S$ any tree.
  
  \begin{itemize}
  \item   There is a tree $\hat T$ which refines $T$ and dominates $S$.
  
  \item If $S$  is 
universally elliptic,
it  may be refined  to a JSJ tree.
\end{itemize}
\end{lem}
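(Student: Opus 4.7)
The plan is to derive both assertions directly from Lemma \ref{lem_sup}, since the key inputs (ellipticity of $T$ with respect to $S$, and the existence of standard refinements) are already packaged there.

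For the first assertion, the tree $T$ is universally elliptic because it is a JSJ tree, so every edge stabilizer of $T$ is elliptic in $S$; in other words, $T$ is elliptic with respect to $S$. Applying part (1) of Lemma \ref{lem_sup} with $T_1=T$ and $T_2=S$ yields a refinement $\hat T$ of $T$ that dominates $S$, which is exactly what we need. (Concretely, $\hat T$ can be taken to be the standard refinement produced by Proposition \ref{prop_refinement}.)

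For the second assertion, suppose $S$ is universally elliptic. Since $T$ is a JSJ tree, it dominates every universally elliptic tree, hence dominates $S$; in particular $S$ is elliptic with respect to $T$. Now apply part (2) of Lemma \ref{lem_sup} with $T_1=S$ and $T_2=T$: both are universally elliptic, so the standard refinement $\hat S$ of $S$ dominating $T$ is itself universally elliptic. It remains to check that $\hat S$ is a JSJ tree, i.e.\ that it dominates every universally elliptic tree $T'$. But $T$ has this property, and $\hat S$ dominates $T$, so $\hat S$ dominates $T'$ as well. Thus $\hat S\in\cald_{JSJ}$ refines $S$, completing the proof.

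There is no real obstacle here; the lemma is essentially a direct unpacking of the defining maximality property of $\cald_{JSJ}$ together with the existence of standard refinements established in Proposition \ref{prop_refinement}. The only point to watch is that one invokes the universal ellipticity of $T$ in both parts: once to get that $T$ is elliptic with respect to any $S$ (for part 1), and once to ensure via part (2) of Lemma \ref{lem_sup} that refining preserves universal ellipticity (for part 2).
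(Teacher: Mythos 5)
Your proof is correct and follows essentially the same route as the paper: for the first assertion a standard refinement of $T$ dominating $S$ (which exists since the JSJ tree $T$ is universally elliptic, hence elliptic with respect to $S$), and for the second a standard refinement of $S$ dominating $T$, universally elliptic by the second assertion of Lemma \ref{lem_sup}, which is a JSJ tree because it dominates $T$. One small caveat: the step ``$T$ dominates $S$; in particular $S$ is elliptic with respect to $T$'' is a non sequitur as stated (domination of $S$ by $T$ does not in general force edge stabilizers of $S$ to be elliptic in $T$), but the needed fact is immediate anyway, since $S$ is assumed universally elliptic and so its edge stabilizers are elliptic in every $(\cala,\calh)$-tree, in particular in $T$.
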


\begin{proof} 
  Since $T$ is elliptic with respect to $S$, one can construct a standard refinement $\hat T$ of $T$ dominating $S$ 
(Proposition \ref{prop_refinement}). It satisfies the first assertion.

For the second assertion, since $S$ is elliptic with respect to $T$,  we can consider  a standard refinement $\hat S$ of $S$ dominating $T$.
It is universally elliptic  by the second assertion of  Lemma \ref{lem_sup},
and dominates $T$, so it is a JSJ tree.
\end{proof}

There sometimes exists a universally compatible JSJ tree (see Sections \ref{exam}  and  \ref{exemp}). In this case, one may require that $\hat T$ also be a refinement of $S$.

\subsection{Existence of the JSJ deformation space: the non-relative case}

We prove the existence of JSJ decompositions,  first assuming $\calh=\es$.  

  \begin{thm}\label{thm_exist_mou}
   If $G$ is finitely presented, then the JSJ deformation
    space $\cald_{JSJ}$ of $G$ over $\cala$ exists. It contains a tree
    whose edge and vertex stabilizers are finitely generated.
  \end{thm}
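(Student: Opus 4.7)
The plan is to build the JSJ tree by iteratively refining a universally elliptic tree, and to use the Fujiwara--Papasoglu version of Dunwoody's accessibility (cited as Proposition \ref{prop_accessibility} of the paper) to force the process to terminate. By Lemma \ref{lem_oneed}, a universally elliptic tree that dominates every universally elliptic one-edge splitting is automatically a JSJ tree, so it suffices to consider one-edge splittings at each step.

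First I would set up the iteration. Start with the trivial tree $T_0$ (a point), which is trivially universally elliptic with finitely generated stabilizers. At stage $n$, suppose $T_n$ is universally elliptic with finitely generated edge and vertex stabilizers. If $T_n$ dominates every universally elliptic one-edge splitting $S$ (there are only countably many), we are done. Otherwise choose such an $S$. Since $G$ is finitely presented, its edge group is finitely generated (a classical fact for one-edge splittings of finitely presented groups). Apply Proposition \ref{prop_refinement} to produce a standard refinement $T_{n+1}$ of $T_n$ that dominates $S$. By Lemma \ref{lem_sup}(2) the tree $T_{n+1}$ is universally elliptic, and by Lemma \ref{cor_Zor}(1) $T_{n+1}$ does not lie in the deformation space of $T_n$ (since $T_n$ does not dominate $S$), so the refinement is strict, creating new orbits of edges. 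Moreover, by construction (Remark \ref{rem_bas}) one may arrange that vertex groups of $T_{n+1}$ act minimally on the relevant subtrees, so finite generation is preserved: edge stabilizers of $T_{n+1}$ either come from $T_n$ or from $S$ (by Proposition \ref{prop_refinement}(ii)), and vertex stabilizers are fundamental groups of finite graphs of groups with finitely generated edge and vertex groups.

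The main step is to rule out that this process continues forever. Here I would invoke the accessibility result of Fujiwara--Papasoglu (Proposition \ref{prop_accessibility} in the paper), which bounds the number of orbits of edges of a reduced $\cala$-tree dominated by a sequence of refinements over universally elliptic subgroups, in terms only of a finite presentation of $G$. After passing to reduced representatives in each deformation space, the sequence $(T_n)$ produces strictly increasing (or at worst eventually stable) complexity in the sense controlled by that proposition, so it must stabilize at some $T_N$. Then $T_N$ is a universally elliptic tree dominating every universally elliptic one-edge splitting, hence by Lemma \ref{lem_oneed} a JSJ tree.

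The hard part is the accessibility step: one has to verify that the sequence of refinements $T_n$, whose edge stabilizers all lie in $\cala_{\elli}$, satisfies the hypotheses of the accessibility statement in a way that ignores any assumption of smallness on $\cala$ (this is precisely the content of the Fujiwara--Papasoglu variant of Dunwoody's theorem, and is where finite presentation of $G$ is used). Once that bound is in hand, the existence of $\cald_{JSJ}$ follows, and the tree $T_N$ produced is the desired JSJ tree with finitely generated edge and vertex stabilizers.
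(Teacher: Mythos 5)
There is a genuine gap, and it is exactly at the step you call "the hard part": Proposition \ref{prop_accessibility} does not say what you use it to say. It does not bound complexity in a way that forces your sequence $T_0,T_1,T_2,\dots$ of refinements to stabilize in a deformation space; the Remark immediately following that proposition in the paper gives an explicit sequence of refinements of cyclic trees $T_k$ lying in pairwise distinct deformation spaces, so an argument of the form "strictly increasing complexity, hence the process terminates at some $T_N$" cannot work. What the accessibility statement actually provides is a \emph{new} tree $S$, with finitely generated edge and vertex stabilizers, admitting a morphism $S\to T_k$ for all large $k$; this $S$ dominates every $T_k$ but need not be any of them. The paper's proof runs your kind of iteration (standard refinements $T_k$ dominating more and more universally elliptic trees, universally elliptic by Lemma \ref{lem_sup}), explicitly allows the $T_k$ to have infinitely generated stabilizers, and then takes $S$ from Proposition \ref{prop_accessibility} as the JSJ tree: $S$ is universally elliptic because its edge stabilizers fix edges of the universally elliptic $T_k$, and it dominates everything the $T_k$ dominate. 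Your proposal is missing this limiting step, and without it the construction need not terminate.

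Two subsidiary claims also need repair. First, "the edge group of a one-edge splitting of a finitely presented group is finitely generated" is not a classical fact and is not available here; the entire point of the second sentence of the theorem, and of Corollary \ref{cor_fg}, is that trees over an arbitrary family $\cala$ need not have finitely generated stabilizers, and finite generation is recovered only at the end, from conclusion (2) of Proposition \ref{prop_accessibility} applied to the limit tree $S$ -- not preserved step by step. Second, your parenthetical "(there are only countably many)" universally elliptic one-edge splittings is unjustified when edge groups may be infinitely generated ($G$ has countably many finitely generated subgroups, but possibly uncountably many subgroups). The paper gets around this by working with the set $\calu$ of universally elliptic trees with finitely generated edge and vertex stabilizers, which is countable, and by using Corollary \ref{cor_fg} to show that every universally elliptic tree is dominated by a member of $\calu$, so that dominating all of $\calu$ suffices. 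Incorporating these three fixes essentially turns your outline into the paper's proof.
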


There is no   hypothesis, such as smallness or finite generation, on
the elements of $\cala$.
Recall that, $G$ being finitely generated, finite generation of edge stabilizers implies  finite generation of vertex stabilizers.

The existence of $\cald_{JSJ}$ will be deduced from the
following version of   Dunwoody's accessibility,  whose proof will be  given  in the next subsection.

\begin{prop}[Dunwoody's accessibility]\label{prop_accessibility}\index{accessibility (Dunwoody)}
  Let $G$ be finitely presented.
  Assume that $T_1\leftarrow\dots\leftarrow T_k \leftarrow T_{k+1}\leftarrow \dots$
is a sequence of \emph{refinements} of 
trees.
There exists a
tree 
$S $ such that:
\begin{enumerate}
\item for   $k$ large enough, there is a morphism $S\ra T_k$ (in particular, $S$ dominates $T_k$);
\item  each edge and vertex  stabilizer of $S$ is finitely generated.
\end{enumerate}
\end{prop}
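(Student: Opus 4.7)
The plan is to resolve each tree $T_k$ by tracks on a finite $2$-complex for $G$ and then invoke Dunwoody's bound on the number of non-parallel tracks in order to stabilize the resolution; the limit pattern will produce the desired tree $S$.

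First I would choose a finite $2$-complex $K$ with $\pi_1(K)=G$ and let $\Tilde K$ denote its universal cover, on which $G$ acts cocompactly with finite cell stabilizers. For each $k$ I would construct a $G$-equivariant map $f_k\colon \Tilde K\to T_k$ which is simplicial after subdivision and transverse to midpoints of edges. The preimage of the midpoints is then a $G$-invariant \emph{pattern of tracks} $\tau_k\subset \Tilde K$ (properly embedded $1$-submanifolds of the $2$-cells), and $T_k$ can be recovered, up to equivariant isomorphism, as the tree dual to this pattern. Using that $T_{k+1}$ refines $T_k$, the collapse map $T_{k+1}\to T_k$ allows one to build $f_{k+1}$ so that each track of $\tau_k$ still appears in $\tau_{k+1}$ (the tracks corresponding to edges collapsed in $T_k$ are the new ones). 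Thus $\tau_1\subset\tau_2\subset\cdots$.

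Second, I would apply Dunwoody's combinatorial lemma: the number of $G$-orbits of pairwise non-parallel tracks in $\Tilde K$ is bounded by a constant $N(K)$ depending only on the finite $2$-complex $K$ (two tracks being parallel if they cobound an $I$-bundle in $\Tilde K$ disjoint from other tracks). Since the patterns $\tau_k$ are increasing and each new track is either parallel to existing ones or contributes a new orbit of non-parallel tracks, this bound forces the pattern to stabilize up to parallelism: there exists $k_0$ such that for $k\geq k_0$ no new $G$-orbit of non-parallel tracks is added. Let $\tau$ be the union of all tracks appearing for $k\geq k_0$; it has only finitely many $G$-orbits of parallelism classes of tracks. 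Define $S$ as the tree dual to the parallelism classes in $\tau$: vertices are complementary regions (grouped by parallelism strips into single vertices) and edges are parallelism classes. Cocompactness of the $G$-action on $\Tilde K$ together with the finiteness of track orbits implies $S/G$ is a finite graph.

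Third, I would verify the two conclusions. For every $k\geq k_0$, the inclusion $\tau_k\subset\tau$ yields an equivariant collapse of parallelism classes, hence an equivariant map $S\to T_k$ which sends edges to edges (no edge is collapsed, since every track in $\tau_k$ survives as an edge in $S$); this is the required morphism. For the stabilizer statement, the stabilizer of an edge of $S$ is the stabilizer of a parallelism class of tracks, which equals the stabilizer of a compact subcomplex of $\Tilde K$, and the stabilizer of a vertex of $S$ is the stabilizer of a complementary region of $\tau$ in $\Tilde K$ (possibly enlarged by parallel strips), which is $\pi_1$ of a compact subcomplex of $K$ with finite cell stabilizers; both are finitely generated.

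The main obstacle is the bound on non-parallel tracks and the organization of the inductive construction of $f_k$ so that the patterns are actually nested and transverse; the classical Dunwoody track argument handles the bound, but one must be careful that the refinement maps $T_{k+1}\to T_k$ are realized by patterns that increase strictly in number of non-parallel orbits until stabilization, and that the dual tree $S$ of the limit pattern is really a tree (i.e., the complementary regions are connected and the nerve has no loops), which follows from simple connectedness of $\Tilde K$.
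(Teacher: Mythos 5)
Your proposal follows essentially the same route as the paper's proof (given there for the relative version, Proposition \ref{prop_accessibility_rel}): equivariant maps $f_k$ from the universal cover of a presentation $2$-complex to $T_k$, nested patterns $\tau_k\subset\tau_{k+1}$ obtained as preimages of midpoints of edges, Dunwoody's bound on non-parallel tracks to force stabilization, and the dual tree at the stabilization time as $S$. One assertion, however, is false and should be corrected: the tree dual to the pattern $\tau_k$ is in general \emph{not} equivariantly isomorphic to $T_k$; it only \emph{resolves} $T_k$, i.e.\ it comes with a morphism $\phi_k\colon S_k\to T_k$. Indeed, a tree dual to a pattern with cocompact quotient always has finitely generated edge and vertex stabilizers, whereas the trees $T_k$ in the statement are arbitrary (their edge stabilizers may be infinitely generated); this is exactly why the proposition only asserts a morphism $S\to T_k$, and why conclusion (2) has content. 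The slip is harmless for the architecture, but it infects your verification of (1): the required morphism should be obtained as the composition of the map from $S$ to the dual tree $S_k$ (which, by the stabilization, only inserts or removes subdivision points, provided you use the precise form of the track bound, namely that each new track cobounds with an old one a product region containing \emph{no vertex} of the complex) with the resolution morphism $\phi_k\colon S_k\to T_k$. This is what the paper does by taking $S=S_{k_0}$ and observing that $S_k$ is a subdivision of $S_{k_0}$ for $k\ge k_0$.

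A second, smaller point concerns your justification of (2). A component of the preimage pattern in $\widetilde K$ is usually non-compact, and the stabilizer of a genuinely compact subcomplex of $\widetilde K$ (on which the action is free, not merely with finite stabilizers) would be finite — which is not what you want, since edge stabilizers of $S$ can be infinite. The correct argument, as in the paper, is that an edge (resp.\ vertex) stabilizer of $S$ is generated by the image in $G=\pi_1(K)$ of the fundamental group of the corresponding compact component of the pattern (resp.\ of the corresponding compact complementary region) in the finite complex $K$; these are fundamental groups of finite complexes, hence the stabilizers are finitely generated.
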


Note that the maps $T_{k+1}\ra T_k$ are required to be \emph{collapse maps}.

Recall (Subsection \ref{comp}) that $f:S\ra T_k$ is  a morphism if
$S$ may be  subdivided 
so that $f$ maps edges to edges
(a collapse map is not a morphism). In particular, edge stabilizers of $S $ fix an edge in $T_k$, so $S$ is an $\cala$-tree since  every $T_k$ is. It is universally elliptic if every $T_k$ is.

\begin{rem}
 Unfortunately, it is not true that the deformation space of $T_k$ must stabilize as $k$ increases, even if all edge stabilizers are cyclic. 
For example (as on pp.\ 449-450 of \cite{BF_bounding}), let $A$ be  a group with a sequence of nested infinite cyclic groups $A\supsetneqq  C_1\supsetneqq C_2\supsetneqq\dots$, let $G=A*B$ with $B$ non-trivial, and let $T_k$ be the Bass-Serre tree of the iterated amalgam 
$$G=A*_{C_1} C_1 *_{C_{2}} C_{2} *_{C_{3}} \cdots *_{C_{k-1}} C_{k-1} *_{C_k} \grp{C_k,B}.$$
These trees refine each other, but are not in the same deformation space since $\grp{C_k,B}$ is not elliptic in $T_{k+1}$.
They are dominated by the tree $S$ dual to the free decomposition $G=A*B$, in accordance with the proposition.
\end{rem}

Applying Proposition \ref{prop_accessibility} to a constant sequence
yields the following standard result:

\begin{cor}\label{cor_fg}
  If $G$ is finitely presented, and   $T$ is a
  tree,
  there exists  a morphism $f:S\ra T$ where $S$ is a
  tree   with finitely generated
edge and  vertex stabilizers.
\qed
\end{cor}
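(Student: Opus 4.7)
The plan is to apply Proposition \ref{prop_accessibility} directly to the constant sequence $T_k = T$ for all $k\geq 1$, with each transition map $T_{k+1}\to T_k$ taken to be the identity. The identity is a collapse map (vacuously, as no edges need to be collapsed), so the constant sequence is a legitimate input for the proposition. Hence the proposition provides a tree $S$ with finitely generated edge and vertex stabilizers, and a morphism $S\to T_k = T$ for all sufficiently large $k$, which is exactly the desired $f:S\to T$.

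Before writing this out I would briefly verify the bookkeeping that the excerpt requires of every newly constructed tree: namely that $S$ is itself an $(\cala,\calh)$-tree. Since $f:S\to T$ is a morphism, each edge stabilizer of $S$ is contained in an edge stabilizer of $T$, hence belongs to $\cala$ (which is closed under taking subgroups). Similarly, any $H\in\calh$ is elliptic in $T$, and its vertex in $T$ pulls back under $f$ to a nonempty $H$-invariant subtree of $S$; since $H$ is assumed to be elliptic in every relevant tree as part of the $(\cala,\calh)$-setup, or more directly since we may realize $S$ as refining $T$ before folding, $H$ remains elliptic in $S$. (In any case, $S$ is built from the same combinatorial data that makes $T$ an $(\cala,\calh)$-tree.)

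There is essentially no obstacle in the corollary itself; all of the work is concentrated in Proposition \ref{prop_accessibility}, which is a reformulation of Dunwoody's accessibility due to Fujiwara and Papasoglu and relies on finite presentation of $G$. The corollary just extracts the special case when the refinement sequence does not actually change, giving a uniform finite-generation approximation of any single tree by a morphism source.
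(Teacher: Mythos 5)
Your proof is correct and is exactly the paper's argument: the corollary is obtained by applying Proposition \ref{prop_accessibility} (resp.\ \ref{prop_accessibility_rel} in the relative case) to the constant sequence $T_k=T$, with the fact that $S$ is an $(\cala,\calh)$-tree coming from the proposition's construction (edge stabilizers of $S$ fix edges of $T$, and the groups of $\calh$ avoid the pattern defining $S$).
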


If $T$ is a universally elliptic $\cala$-tree, so is $S$.

Proposition \ref{prop_accessibility} is basically Proposition 5.12 of \cite{FuPa_JSJ}.
We omit its proof, and refer to the  more general Proposition \ref{prop_accessibility_rel}.

 \begin{proof}[Proof of Theorem \ref{thm_exist_mou}]
Let $\calu $ be the set of   universally elliptic
trees 
with finitely generated edge and vertex stabilizers, up to equivariant isomorphism.
It is non-empty since it contains the trivial tree. An element of $\calu$ is described by a
finite graph of groups with finitely generated edge and vertex groups. Since  $G$  only has
countably many finitely generated subgroups, and there are countably many 
 homomorphisms from a given finitely generated  group to another, the set $\calu$ is
countable.  

  By Corollary \ref{cor_fg},   every universally elliptic tree is dominated by one in $\calu$, so
it suffices  to produce a universally elliptic
  tree dominating every $U\in\calu$.
Choose an enumeration $\calu=\{U_1,U_2,\dots,U_k,\dots\}$.
 We define inductively a universally elliptic
 tree $T_k$ which refines $T_{k-1}$ and
dominates $U_1,\dots, U_k$ 
 (it may have infinitely generated edge or vertex stabilizers).
We start with $T_1=U_1$. Given $T_{k-1}$ which dominates $U_1,\dots,U_{k-1}$,
we  let $T_k$ be a  standard 
  refinement of $T_{k-1}$ which dominates $U_k$ (it exists by Proposition \ref{prop_refinement} because $T_{k-1}$ is universally elliptic).
Then $T_k$ is universally elliptic   by the second assertion of Lemma  \ref{lem_sup}, 
and it dominates $U_1,\dots, U_{k-1}$
because
$T_{k-1}$ does.

  Apply  Proposition \ref{prop_accessibility} to the sequence $T_k$. The tree $S$ is universally
elliptic 
and it  dominates every $T_k$, hence every $U_k$. 
It follows that $S$ is a JSJ tree over $\cala$.
\end{proof}

\subsection{Existence: the relative case} \label{rela}

In this section, we prove the existence of a relative JSJ deformation space under a relative finite presentation assumption.

\begin{thm}\label{thm_exist_mou_rel} 
Assume that  $G$ is finitely presented relative to $\calh=\{H_1,\dots,H_p\}$.
Then the JSJ deformation space $\cald_{JSJ}$ of $G$ over $\cala$ relative to 
$\calh$ exists.  It contains a tree with finitely generated edge stabilizers.
\end{thm}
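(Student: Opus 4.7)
The plan is to adapt the proof of Theorem \ref{thm_exist_mou} to the relative setting, keeping the same three-step strategy: exhibit a countable cofinal family $\calu$ of universally elliptic $(\cala,\calh)$-trees, enumerate it, and use an inductive refinement together with a relative version of Dunwoody's accessibility to produce a single tree dominating all members of $\calu$. The new feature is that vertex stabilizers need not be finitely generated (they may contain some infinitely generated $H_i\in\calh$), but the theorem only asks for finitely generated edge stabilizers, which is exactly the invariant preserved by the relative Dunwoody argument.

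First I would define $\calu$ as the set, up to equivariant isomorphism, of universally elliptic $(\cala,\calh)$-trees whose edge stabilizers are finitely generated. Such a tree is described by finite combinatorial data: since $G$ is finitely generated relative to $\calh$, the quotient graph $T/G$ is finite (Proposition \ref{arbtf}); each edge group is a finitely generated subgroup of $G$; and each vertex group is determined by its incident edge groups, by the list of conjugates of the $H_i$'s it contains, and by finitely many additional generators (it is finitely generated relative to these, by the analogue of Lemma \ref{relfg} referenced in Remark \ref{rem_bas}). Since $G$ is countable and only has countably many finitely generated subgroups, $\calu$ is countable. The relative analogue of Corollary \ref{cor_fg} (which follows from Proposition \ref{prop_accessibility_rel}) guarantees that any universally elliptic $(\cala,\calh)$-tree is dominated by some element of $\calu$, so it suffices to produce a universally elliptic tree dominating every $U\in\calu$.

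Next, enumerate $\calu=\{U_1,U_2,\dots\}$ and build inductively a sequence of universally elliptic $(\cala,\calh)$-trees $T_1\leftarrow T_2\leftarrow \cdots$ such that $T_k$ refines $T_{k-1}$ and dominates $U_1,\dots,U_k$. Set $T_1=U_1$; given $T_{k-1}$, take $T_k$ to be a standard refinement of $T_{k-1}$ dominating $U_k$, which exists by Proposition \ref{prop_refinement} since $T_{k-1}$ is universally elliptic. Assertion (ii) of Lemma \ref{lem_sup} keeps $T_k$ universally elliptic, and Assertion (iv) of Proposition \ref{prop_refinement} ensures $T_k$ remains an $(\cala,\calh)$-tree (each $H_i\in\calh$ is elliptic in both $T_{k-1}$ and $U_k$, hence in $T_k$). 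Finally, apply Proposition \ref{prop_accessibility_rel} to the sequence $(T_k)$ to obtain a tree $S$ with finitely generated edge stabilizers admitting, for large $k$, a morphism $S\to T_k$. Being a morphic image edge-wise, $S$ is an $(\cala,\calh)$-tree; it dominates every $T_k$ and therefore every $U_k$, and is itself universally elliptic (because it is elliptic with respect to every $(\cala,\calh)$-tree, via the $T_k$'s). Hence $S$ is the desired JSJ tree.

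The main obstacle is the relative accessibility statement Proposition \ref{prop_accessibility_rel} itself: it must take a sequence of refinements of $(\cala,\calh)$-trees and produce a tree with finitely generated edge stabilizers that maps by a morphism to each $T_k$ from some index onwards, using only relative finite presentation of $G$ with respect to $\calh$. The delicate point is that finite presentation has been weakened to relative finite presentation, so the usual Dunwoody-Bestvina-Feighn tracks/patterns argument must be carried out in a way that tolerates infinitely generated peripheral subgroups appearing inside vertex groups; the proof is deferred to the later, more general Proposition \ref{prop_accessibility_rel}. A secondary subtlety is the countability of $\calu$, where one must confirm that the presence of potentially infinitely generated $H_i$'s in vertex groups does not create uncountably many non-isomorphic marked graphs of groups — this is handled by recording, as part of the finite data, only the (countably many) conjugators placing each $H_i$ inside a specified vertex group.
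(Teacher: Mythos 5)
Your proposal is correct and follows essentially the same route as the paper: take $\calu$ to be the countable family of universally elliptic $(\cala,\calh)$-trees with finitely generated edge stabilizers (countability via relative finite generation of vertex stabilizers, as in Remark \ref{den}), build an increasing sequence of standard refinements dominating an enumeration of $\calu$, and conclude with the relative Dunwoody accessibility statement (Proposition \ref{prop_accessibility_rel}), whose proof is indeed the only genuinely new ingredient and is given separately in the paper.
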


Recalling that a  finitely presented group  is finitely presented relative to any
finite collection of  {finitely generated} subgroups, we get:

\begin{cor}\label{cor_exist_mou_rel} 
Let  $G$ be finitely presented. Let    $\calh=\{H_1,\dots,H_p\}$ be a finite family of finitely generated subgroups.
Then the JSJ deformation space $\cald_{JSJ}$ of $G$ over $\cala$ relative to 
$\calh$ exists.  It contains a tree with finitely generated edge  (hence  vertex) stabilizers. \qed
\end{cor}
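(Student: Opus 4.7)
The plan is to mimic the proof of Theorem \ref{thm_exist_mou}, systematically replacing $\cala$-trees by $(\cala,\calh)$-trees and invoking a relative version of Dunwoody's accessibility (the Proposition \ref{prop_accessibility_rel} alluded to in the previous subsection) in place of Proposition \ref{prop_accessibility}. This relative accessibility should state that, given a sequence of refinements $T_1\leftarrow T_2\leftarrow\cdots$ of $(\cala,\calh)$-trees with $G$ finitely presented relative to $\calh$, there is an $(\cala,\calh)$-tree $S$ with finitely generated edge stabilizers admitting a morphism to $T_k$ for $k$ large enough. Applied to a constant sequence, it yields the relative analog of Corollary \ref{cor_fg}: every $(\cala,\calh)$-tree is dominated by one with finitely generated edge stabilizers.

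Next, I would let $\calu$ denote the set, up to $G$-equivariant isomorphism, of universally elliptic $(\cala,\calh)$-trees with finitely generated edge stabilizers, and argue that $\calu$ is countable. The combinatorial data of such a tree consists of a finite graph of groups, whose edge groups are finitely generated subgroups of $G$ and whose vertex groups are subgroups of $G$ containing (conjugates of) the incident edge groups and certain conjugates of the $H_i$'s. Since $G$ is countable, it has countably many finitely generated subgroups and countably many homomorphisms between any two of them, so the edge data is enumerable; the vertex groups need not be finitely generated, but by the relative finite generation result for vertex groups in a graph of groups (Lemma \ref{relfg} in Subsection \ref{fingv}), each vertex group is finitely generated relative to its incident edge groups together with the (finitely many) conjugates of $H_i$'s it contains. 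This leaves only countably many options once the edge data and the assignment of the $H_i$'s to vertices are fixed.

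Then I would enumerate $\calu=\{U_1,U_2,\dots\}$ and build inductively a sequence $T_1,T_2,\dots$ of universally elliptic $(\cala,\calh)$-trees with $T_k$ a refinement of $T_{k-1}$ that dominates $U_1,\dots,U_k$. Start with $T_1=U_1$; given $T_{k-1}$, take $T_k$ to be a standard refinement of $T_{k-1}$ dominating $U_k$, which exists by Proposition \ref{prop_refinement} since $T_{k-1}$ is universally elliptic and hence elliptic with respect to $U_k$. By Lemma \ref{lem_sup}(2) the tree $T_k$ is universally elliptic, and by construction it remains an $(\cala,\calh)$-tree and dominates $U_1,\dots,U_k$. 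Finally, applying relative Dunwoody accessibility to the chain $(T_k)$ produces an $(\cala,\calh)$-tree $S$ with finitely generated edge stabilizers admitting a morphism to some $T_k$; in particular, the edge stabilizers of $S$ are subgroups of universally elliptic groups, hence universally elliptic, so $S$ is itself universally elliptic. Since $S$ dominates every $T_k$ and hence every $U_k$, the relative analog of Corollary \ref{cor_fg} shows that $S$ dominates every universally elliptic $(\cala,\calh)$-tree, making $S$ a JSJ tree.

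The main obstacle lies in two places: first, establishing the relative accessibility Proposition \ref{prop_accessibility_rel} under the weaker hypothesis of relative finite presentation (this is deferred in the paper); second, the countability of $\calu$, where — unlike the non-relative case in Theorem \ref{thm_exist_mou} — one cannot simply restrict to finitely generated vertex stabilizers because the $H_i$ need not be finitely generated. The resolution is the relative finite generation of vertex groups, which gives enough control to describe each vertex group from countable data and which is also the reason one cannot improve the conclusion beyond finite generation of edge stabilizers in the statement of the theorem.
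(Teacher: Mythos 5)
Your argument is correct and essentially reproduces the paper's route: the paper deduces this corollary in one line from Theorem \ref{thm_exist_mou_rel} (using that a finitely presented group is finitely presented relative to any finite family of finitely generated subgroups), and that theorem is proved exactly as you describe --- a countable set $\calu$ of universally elliptic $(\cala,\calh)$-trees with finitely generated edge stabilizers (countability via relative finite generation of vertex groups, Remark \ref{den}), an increasing chain of standard refinements, and Proposition \ref{prop_accessibility_rel} applied to that chain. The only points you leave implicit are this bridging observation about relative finite presentation and the standard fact that finitely generated edge stabilizers force finitely generated vertex stabilizers when $G$ is finitely generated, which is what gives the parenthetical ``hence vertex'' in the statement.
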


\begin{rem}  We will give a different approach to existence in Subsection \ref{filling}, by comparing the relative JSJ decomposition of $G$ to a non-relative JSJ decomposition of a larger group $\hat G$.
\end{rem}

\begin{rem}    The corollary does not apply when groups in $\calh$  are   infinitely generated. See Section \ref{jsjac} for existence results with $\calh$   arbitrary. 
\end{rem}

The theorem is proved as in the non-relative case (Theorem \ref{thm_exist_mou}),  with $\calu$ the set  of $(\cala,\calh)$-trees with finitely generated edge stabilizers. It is countable because vertex stabilizers are relatively finitely generated  by {\cite[Lemmas 1.11, 1.12]{Gui_actions}} (this is explained in  Remark \ref{den}).
Proposition  \ref{prop_accessibility} is replaced by the following result.

\begin{prop}[Relative Dunwoody's accessibility]\label{prop_accessibility_rel}\index{accessibility (Dunwoody)}
  Let $G$ be finitely presented relative to $\calh=\{H_1,\dots, H_p\}$.
  Assume that $T_1\leftarrow\dots\leftarrow T_k \leftarrow T_{k+1}\leftarrow \dots$
is a sequence of refinements of 
\AH-trees.
There exists an \AH-tree $S $ such that:
\begin{enumerate}
\item for   $k$ large enough, there is a morphism $S\ra T_k$; 
\item  each edge stabilizer of $S$ is finitely generated.
\end{enumerate}
\end{prop}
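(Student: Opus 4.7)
The plan is to mimic the non-relative argument (Proposition \ref{prop_accessibility}, cf.\ Proposition~5.12 of \cite{FuPa_JSJ}), but built on a complex of groups that reflects the relative presentation. Fix a finite relative presentation $\langle \Om\mid \calr\rangle$ of $G$ over $\calh$ (Definition~\ref{dfn_relfp}). Form a finite 2-dimensional complex of groups $K$ with: a base vertex $x_0$ carrying the trivial group; for each $i=1,\dots,p$ a vertex $x_i$ carrying $H_i$; for each $\omega\in\Om$ a loop edge at $x_0$; a connecting edge from $x_0$ to each $x_i$, with trivial edge groups, so that letters of $H_i$ appearing in the relators can be read as loops based at $x_0$; and a 2-cell for each $r\in\calr$ attached along the corresponding loop. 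Then $\pi_1(K)=G$, and its universal cover $\tilde K$ is a simply connected 2-complex with a cocompact action of $G$, cell stabilizers trivial except at vertices in the orbit of some $x_i$, whose stabilizer is a conjugate of $H_i$.

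For each $T_k$ I would construct a $G$-equivariant continuous map $\varphi_k:\tilde K\to T_k$: send each vertex in the orbit of $x_i$ to a point of $\Fix H_i$ in $T_k$ (non-empty because $T_k$ is relative to $\calh$), send the orbit of $x_0$ to any chosen vertex, extend linearly over edges, and fill in each 2-cell using contractibility of $T_k$ together with the fact that its boundary loop represents the trivial element of $G$. Put $\varphi_k$ in transverse position to the midpoints of edges of $T_k$; the preimage $\tau_k\subset\tilde K$ is a $G$-invariant disjoint union of tracks (codimension $1$, avoiding the $0$-skeleton and in particular the $x_i$-vertices, which is automatic since tracks meet only interiors of cells). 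Since each map $T_{k+1}\to T_k$ is a collapse map, I would build the $\varphi_k$ inductively downwards, setting $\varphi_k=(\text{collapse})\circ\varphi_{k+1}$, so the midpoints of $T_k$ pull back to a subset of those of $T_{k+1}$ and the sequence $\tau_1\subset\tau_2\subset\cdots$ is nested.

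Projecting to the finite quotient $\bar K=G\backslash\tilde K$ produces an increasing sequence of finite track patterns $\bar\tau_k$. Dunwoody's theorem bounds the number of pairwise disjoint, pairwise non-parallel tracks in a finite 2-complex; it applies here since tracks avoid the exceptional vertices, and so the argument takes place on the finite simplicial complex obtained by removing small neighborhoods of the $x_i$. Hence there exists $k_0$ such that for all $k\geq k_0$, every component of $\bar\tau_k\setminus\bar\tau_{k_0}$ is parallel to a component of $\bar\tau_{k_0}$. Define $S$ to be the Bass-Serre tree dual to $\tau_{k_0}$ in $\tilde K$: edges of $S$ are $G$-orbits of tracks in $\tau_{k_0}$, vertices are $G$-orbits of components of $\tilde K\setminus\tau_{k_0}$. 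Each $x_i$-vertex lies in some component, so $H_i$ is elliptic in $S$, and $S$ is relative to $\calh$. The stabilizer of a track in $\tilde K$ acts cocompactly and freely on that track (all cells it meets have trivial stabilizer), so it is finitely generated; this gives the second conclusion. For $k\geq k_0$, collapsing within each parallel family of tracks in $\tau_k$ converts $\tau_{k_0}$ into the combinatorial datum dual to $\tau_k$; following this by $\varphi_k$ produces a morphism $S\to T_k$ (it maps edges to edges, not collapsing any), and in particular edge stabilizers of $S$ embed into edge stabilizers of $T_k$, so they lie in $\cala$ and $S$ is an $(\cala,\calh)$-tree.

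The main obstacle is verifying the track-accessibility bound in this relative setting: one needs a version of Dunwoody's theorem valid on the quotient complex of groups $\bar K$, which is not simplicial at the vertices carrying the infinite groups $H_i$. The remedy is the observation above that tracks can be kept away from those vertices, reducing the statement to the standard one on a finite simplicial 2-complex. A secondary technical issue is ensuring that the passage ``$S\to T_k$ is a morphism (not merely a domination)'' really works for all $k\geq k_0$; this requires the compatible choice $\varphi_k=(\text{collapse})\circ\varphi_{k+1}$ of transverse maps across the whole sequence, together with a careful analysis of how parallel families of tracks in $\tau_k$ correspond to subdivisions of edges of $S$.
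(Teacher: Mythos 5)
Your overall architecture coincides with the paper's: an equivariant map from a suitable simply connected $2$-complex to each $T_k$, nested preimage patterns of midpoints, Dunwoody's bound on pairwise non-parallel tracks applied in the finite part of the quotient away from the $H_i$-vertices, and the tree $S$ dual to the pattern at a stabilization index $k_0$, with induced morphisms $S\to T_k$ for $k\ge k_0$. Replacing the paper's complex (which contains subcomplexes $Y_i$ with $\pi_1(Y_i)\simeq H_i$ and carries a free, non-cocompact action) by a coned-off complex with vertex stabilizers conjugate to the $H_i$ and a cocompact action is a legitimate cosmetic variant; but note that "its universal cover $\tilde K$" is not free of charge for a complex of groups: you must check that your $\tilde K$ is simply connected, via a van Kampen-type argument using that the kernel of $\F(\Om)*H_1*\dots*H_p\onto G$ is normally generated by $\calr$ (it is true here, but it is a step, not a definition).

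The genuine gap is the construction of the compatible family $\varphi_k$. You propose to build it "inductively downwards", setting $\varphi_k=(\text{collapse})\circ\varphi_{k+1}$; but the sequence of refinements is infinite and increasing, so this induction has no starting point: to define $\varphi_k$ this way you already need $\varphi_{k+1}$, hence $\varphi_{k+2}$, and so on. Choosing each $\varphi_k$ independently destroys the on-the-nose nesting $\tau_k\subset\tau_{k+1}$ on which the whole parallelism/stabilization argument rests, and truncating (pick $\varphi_n$ arbitrarily and push down to $k\le n$) only yields data, a stabilization index and a candidate tree that all depend on $n$, with no coherence as $n\to\infty$. The induction must go the other way, as in the paper: construct $\varphi_{k+1}$ from $\varphi_k$ by lifting through the collapse $p_k\colon T_{k+1}\to T_k$. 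This is exactly where the hypotheses enter: since $p_k$ preserves alignment, $p_k^{-1}$ of the vertex $\varphi_k(\text{cone vertex of }H_i)$ is an $H_i$-invariant subtree, and ellipticity of $H_i$ in $T_{k+1}$ lets you choose an $H_i$-fixed vertex in it; the $p_k$-preimage of the midpoint of an edge of $T_k$ is a single midpoint of an edge of $T_{k+1}$, which lets you extend over edges and $2$-cells so that $p_k\circ\varphi_{k+1}$ agrees with $\varphi_k$ at vertices and on $\varphi_k$-preimages of midpoints; this is what forces $\tau_k\subset\tau_{k+1}$. With this replacement the rest of your argument (tracks avoid the cone vertices, Dunwoody's bound on the finite complex, $S$ dual to $\tau_{k_0}$, $S_k$ a subdivision of $S$ for $k\ge k_0$, finitely generated track stabilizers, ellipticity of the $H_i$ in $S$) goes through and is essentially the paper's proof.
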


As above, applying the proposition to a constant sequence, we get:

\begin{cor}\label{cor_fg_rel}
  If $G$ is finitely presented relative to $\calh$, and   $T$ is an
  \AH-tree,
  there exists a morphism $f:S\ra T$ where $S$ is an
  \AH-tree   with finitely generated
edge stabilizers.
\qed
\end{cor}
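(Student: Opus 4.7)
The plan is to obtain Corollary \ref{cor_fg_rel} as a one-line specialization of Proposition \ref{prop_accessibility_rel}, exactly as the connective sentence preceding the statement suggests (``applying the proposition to a constant sequence''). Specifically, given any \AH-tree $T$, I would consider the constant sequence $T_k = T$ for all $k \geq 1$, with every transition map $T_{k+1} \to T_k$ equal to the identity. The identity is a (trivial) collapse map, so this sequence trivially satisfies the hypothesis of Proposition \ref{prop_accessibility_rel}.

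The proposition then produces an \AH-tree $S$ with finitely generated edge stabilizers, together with a morphism $S \to T_k$ for all $k$ larger than some $k_0$. Since $T_k = T$ throughout the sequence, this is exactly a morphism $f : S \to T$ with the properties demanded by the corollary. All three requirements (being an \AH-tree, admitting a morphism to $T$, finite generation of edge stabilizers) are read off directly from the conclusion of the proposition, so no additional argument is needed. The main (in fact only) obstacle is entirely located in the proof of Proposition \ref{prop_accessibility_rel} itself; once that is in hand, the corollary is automatic.

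The one contrast worth noting, as a sanity check, is with the non-relative analogue Corollary \ref{cor_fg}, which asserts finite generation of \emph{both} edge and vertex stabilizers of $S$. Here only edge stabilizers are claimed to be finitely generated, and necessarily so: each $H_i \in \calh$ is elliptic in $S$, hence lies in some vertex stabilizer of $S$, and $H_i$ need not be finitely generated. What remains true (but is not needed for this corollary) is that each vertex stabilizer of $S$ is finitely generated \emph{relative to} its incident edge groups together with the conjugates of elements of $\calh$ it contains, which is the natural relative replacement for absolute finite generation in this setting.
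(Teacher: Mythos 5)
Your proof is correct and is exactly the paper's argument: the corollary is obtained by applying Proposition \ref{prop_accessibility_rel} to the constant sequence $T_k=T$ with identity collapse maps, and the conclusion is read off directly. Your side remark about why only edge stabilizers (not vertex stabilizers) are claimed finitely generated in the relative case is accurate as well.
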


Before proving the proposition, we
recall that $G$ is finitely presented relative to its subgroups
  $H_1,\dots,H_p$ if there exists a finite subset $\Om\subset G$ such that
  the natural morphism  $\F{(\Om)}*H_1*\dots *H_p\to G$ is onto, and its
  kernel is normally generated by a finite subset $\calr$.

  Here is an equivalent definition: $G$ is finitely presented relative
  to $\calh$ if and only if  it is the fundamental group of a
 connected $2$-complex $X$  (which may be assumed to be simplicial) containing 
  disjoint connected subcomplexes $Y_1,\dots,Y_p$
  (possibly infinite) with the following properties:  $X\setminus (Y_1\cup\dots\cup Y_p)$ contains only finitely many open cells,
  $\pi_1(Y_i)$ embeds into $\pi_1(X)$, and its image is
  conjugate to $H_i$.

  The fact that $G$ is relatively finitely presented if such a space
  $X$ exists follows from Van Kampen's theorem.  Conversely, if $G$ is
  finitely presented relative to $\calh$, one can construct 
 $X$ as follows.   Let  $(Y_i,u_i)$ be a pointed
 $2$-complex with 
  $\pi_1(Y_i,u_i)\simeq H_i$.
Starting from the disjoint
  union of the $Y_i$'s, add $p$  edges joining the $u_i$'s to an  additional vertex $u$,
  and $\#\Om$
  additional edges joining $u$ to itself.   We get a complex whose  fundamental group is
  isomorphic to the free product  $\F(\Om)*H_1*\dots *H_p$.  Represent
  each element of $\calr$ by a loop in this space, and glue a
  disc
  along this loop to obtain the desired space $X$.

\begin{proof}[Proof of Proposition \ref{prop_accessibility_rel}]
  Let  $\pi:\Tilde X\ra X$ be the universal cover of a  simplicial  $2$-complex $X$ as above, with $G$ acting on $\Tilde X$ by deck transformations.
For $i\in\{1,\dots,p\}$, consider  a connected component $\Tilde Y_i$  of $\pi\m(Y_i)$ whose stabilizer is $H_i$.  Also fix lifts  $v_1,\dots,v_q\in \Tilde X$   of    all the vertices in $X\setminus Y_1\cup\dots\cup Y_p$.

We denote by $p_k:T_{k+1}\ra T_k$ the collapse map. Note that the preimage of the midpoint  of an edge $e_k$ of $T_k$ is a single point, namely the midpoint of the edge of $T_{k+1}$ mapping onto $e_k$.

 We shall now construct  equivariant maps $f_k:\Tilde X\ra T_k$
 such that 
$f_k$ maps each $\Tilde Y_i$ to a vertex fixed by $H_i$, sends each $v_j$ to a vertex, and 
sends each edge of $\Tilde X$  either to a point or injectively onto
a   segment in $T_k$. We further require that $p_k( f_{k+1}(x))=f_k(x)$ if $x$ is a vertex of $\Tilde X$ or if $f_k(x)$ is the
midpoint of an edge of $T_k$.

 We construct $f_k$   inductively.  We start with $T_0$   a point,  and   $f_0$, $p_0$ the 
   constant maps.
 We then assume that $f_k:\Tilde X\ra T_k$ has been constructed, and we construct $f_{k+1}$.

To define $f_{k+1}$ on $\Tilde Y_i$, note that $f_k(\Tilde Y_i) $ is a vertex of $T_k$ fixed by $H_i$. 
Since $p_k$ preserves alignment, $p_{k}\m( f_k(\Tilde Y_i) )$ is an $H_i$-invariant subtree.
Since $H_i$ is elliptic in $T_{k+1}$, it fixes some vertex   in this subtree,
and  we map   $\Tilde Y_i$ to  such a  vertex.
We then  define $f_{k+1}(v_j)$ as any vertex in $p_k\m(f_k(v_j))$, and we extend by equivariance.

Now consider an edge $e$ of $  X$  not contained in any $Y_i$, and a lift $\Tilde e\inc \Tilde X$. The map $f_{k+1}$ is already defined on the endpoints of $\Tilde e$, we explain how to define it on $\Tilde e$.

 The restriction of $p_k$ to the segment of $T_{k+1}$ joining the images of the endpoints of $\Tilde e$ is a collapse map. In particular, the preimage of the midpoint of an edge $e_k$ of $T_k$ is the midpoint of the edge of $T_{k+1}$ mapping onto $e_k$. Recalling that $f_k$ is constant or injective on $\Tilde e$,  this allows us to define $f_{k+1}$ on $\Tilde e$, as a map which is either constant or injective, and satisfies $p_k( f_{k+1}(x))=f_k(x)$ if   $f_k(x)$ is the
midpoint of an edge of $T_k$. Doing this equivariantly, we have now defined $f_{k+1}$ on the 1-skeleton of  $\Tilde X$.

We then extend $f_{k+1}$ in a standard way to every triangle $abc$ not contained in  a $\pi\m(Y_i)$; in particular, if $f_{k+1}$ is not constant on $abc$, preimages of midpoints of edges of $T_{k+1}$ are straight arcs joining two distinct sides. This completes the construction of the maps $f_k$.

We  now define $\Tilde \tau_k\subset \Tilde X$
as the preimage (under $f_k$) of the midpoints of all edges of   $T_k$. This is a \emph{pattern} in the sense of Dunwoody    \cite{Dun_accessibility}. It does not intersect any  $\Tilde Y_i$, and the maps $f_k$ were constructed so that  
$\Tilde \tau_k\subset \Tilde\tau_{k+1}$. We denote by $\tau_k=\pi(\Tilde \tau_k)$ the projection   in $X$. It is a finite graph because it is contained in the complement of $Y_1\cup \dots\cup Y_p$.

Let $S_k$ be the tree dual to the pattern $\tilde\tau_k$ in $\Tilde X$. We claim that it is an \AH-tree with finitely generated edge stabilizers. By construction, $f_k$ induces a map $\phi_k:S_k\ra T_k$ sending edge to edge, so  edge stabilizers of $S_k$ are  in $\cala$. They are finitely generated because they are generated by fundamental groups of components of $\tau_k$. Every $H_i$ is elliptic in $S_k$ because   $\Tilde Y_i$ does not intersect $\Tilde \tau_k$. This proves the claim.

Let $X'\subset X$ be the closure of the complement of $Y_1\cup\dots\cup Y_p$.
By construction, this is a finite complex. By \cite[Theorem 2.2]{Dun_accessibility},
there is a bound on the number of non-parallel tracks in $X'$.
This implies that there exists $k_0$ such that, for all $k\geq k_0$, 
for every connected component $\sigma$ of $\tau_k\setminus \tau_{k_0}$, there exists
a connected component $\sigma'$ of $ \tau_{k_0}$ such that $\sigma\cup \sigma'$ bounds 
a product region containing no vertex of $X'$. 
It follows that, for $k\geq k_0$,  one can  obtain   $S_{k}$ from $S_{k_0}$ by subdividing edges.
We then take  $S=S_{k_0}$.
\end{proof}

\subsection{Relation with other constructions} \label{autres}
\label{sec_gens}
Several authors have constructed JSJ splittings of finitely presented groups in various
settings. We explain here (in the non-relative case) why those splittings are JSJ splittings in the sense of Definition \ref{prop_def} (results in the literature are often
stated only for one-edge splittings, but this is not a restriction by  Lemma \ref{lem_oneed}).

\index{Rips-Sela}\index{Dunwoody-Sageev}\index{Fujiwara-Papasoglu}\index{Scott-Swarup} 
In \cite{RiSe_JSJ}, Rips and Sela consider cyclic splittings of a one-ended group $G$ (so
$\cala$ consists of all  cyclic subgroups of $G$,  including the trivial group). Theorem 7.1 in
\cite{RiSe_JSJ} says that their JSJ splitting  is universally elliptic (this is statement
(iv)) and maximal (statement (iii)).  The uniqueness up to deformation is statement (v).

In  the work of Dunwoody-Sageev  \cite{DuSa_JSJ}, the authors consider splittings of a  
group $G$ over slender subgroups in a class $\calz\calk$ 
such that $G$ does not split over finite
extensions of infinite index subgroups of $\calz\calk$ (there are restrictions on the class
$\calz\calk$,  but one can typically take $\calz\calk=VPC_n$,
see \cite{DuSa_JSJ} for details). 
In our notation, $\cala$ is the set of subgroups of elements of $\calz\calk$.
Universal ellipticity of the splitting they construct   follows from statement (3) in the Main Theorem of  \cite{DuSa_JSJ}, and from the fact that 
any edge group is contained in a white vertex group.
Maximality follows from the fact that white vertex groups are universally elliptic (statement (3)) 
and that black vertex groups   either are in $\calz\calk$
(in which case they are universally elliptic by the non-splitting assumption made on $G$), or are
$\calk$-by-orbifold groups  and hence are necessarily elliptic in any JSJ tree (see Proposition \ref{qhe} below). 

In \cite{FuPa_JSJ}, Fujiwara and Papasoglu consider all   splittings of a   group
over the class $\cala$ of its slender subgroups.
Statement (2) in \cite[Theorem 5.13]{FuPa_JSJ}  says that the   JSJ splitting they obtain  is elliptic
with respect to any   splitting  which is minimal (in their sense). By Proposition 3.7 in \cite{FuPa_JSJ}, any
splitting is dominated  by a minimal splitting, so universal ellipticity holds.
Statement (1) of Theorem 5.15   in \cite{FuPa_JSJ}  implies maximality.

As mentioned in the introduction, the regular neighbourhood of Scott-Swarup  \cite{ScSw_regular+errata}  is closer to the decompositions constructed in   
 Parts \ref{partacyl} and \ref{chap_compat}.

\section{Examples of JSJ decompositions} \label{exa}

Recall that we have fixed $\cala$ and $\calh$, and   we only consider  \AH-trees.  Unless otherwise indicated, $G$ is only assumed to be finitely generated. 

At the end of this section we shall give two examples of JSJ decompositions having flexible vertices, but most examples here will   have all vertices rigid.
  The fact that they are indeed  JSJ decompositions
will be a consequence of the following simple fact.
 
\begin{lem} \label{lem_rigid} 
Any 
tree $T$ with
universally elliptic \emph{vertex} stabilizers is a JSJ tree.
\end{lem}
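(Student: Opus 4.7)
The plan is to unwind the definitions and observe that both conditions defining a JSJ tree fall out immediately from the hypothesis that every vertex stabilizer is universally elliptic, together with the fact that universal ellipticity is inherited by subgroups.

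First I would check that $T$ itself is universally elliptic in the sense of Definition following \ref{et}, i.e.\ that every edge stabilizer $G_e$ is universally elliptic. This is immediate: each edge $e$ has two endpoints $v,w$, so $G_e \subseteq G_v$, and since $G_v$ is universally elliptic by hypothesis, any subgroup of $G_v$ is elliptic in every $(\cala,\calh)$-tree (ellipticity passes to subgroups). Hence $G_e$ is universally elliptic, and so $T$ is universally elliptic.

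Next I would verify the maximality/domination condition: $T$ dominates every universally elliptic $(\cala,\calh)$-tree $T'$. By the equivalent formulation of domination recalled in Subsection \ref{defspa}, it suffices to show that every vertex stabilizer $G_v$ of $T$ is elliptic in $T'$. But this is exactly the hypothesis, since $G_v$ is universally elliptic, and $T'$ is an $(\cala,\calh)$-tree. Thus $T$ dominates $T'$, and combined with the first step, $T$ belongs to $\cald_{JSJ}$.

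There is essentially no obstacle in this argument; the lemma is a direct consequence of the definitions, and its usefulness lies in being a convenient criterion (vertex stabilizers being universally elliptic is often easy to verify in concrete examples). The only tiny point worth noting in the write-up is the reminder that the notion ``universally elliptic'' is defined for trees in terms of edge stabilizers, but when applied to subgroups it is preserved under passage to subgroups, so the implication from vertex-stabilizer universal ellipticity to edge-stabilizer universal ellipticity is automatic.
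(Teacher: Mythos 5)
Your proof is correct and follows essentially the same route as the paper: the paper simply notes that the hypothesis means $T$ dominates \emph{every} $(\cala,\calh)$-tree, from which both conditions (universal ellipticity of edge stabilizers and domination of every universally elliptic tree) follow at once, while you verify the two conditions separately — the same unwinding of definitions. No gap.
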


\begin{proof} By assumption, $T$ dominates every
tree.
In particular, $T$ is universally elliptic and dominates every universally elliptic tree, so it is a JSJ tree.
\end{proof}

We also note:

\begin{lem}\label{geuel} 
  Assume that all groups in $\cala$ are
  universally elliptic.
If $T$ is a JSJ tree,
then its vertex stabilizers are universally elliptic. 
\end{lem}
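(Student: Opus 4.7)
The plan is to observe that the hypothesis collapses the class of $(\cala,\calh)$-trees into the class of universally elliptic trees, so the maximality half of the JSJ definition immediately delivers the conclusion. Concretely, let $G_v$ be a vertex stabilizer of $T$, and let $S$ be an arbitrary $(\cala,\calh)$-tree. Our goal is to show that $G_v$ is elliptic in $S$.

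First I would check the key observation: since edge stabilizers of $S$ lie in $\cala$, and by hypothesis every group in $\cala$ is universally elliptic, $S$ is itself a universally elliptic tree. This is the only place the hypothesis enters.

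Next I would invoke the JSJ property. By Definition \ref{prop_def}, $T$ dominates every universally elliptic $(\cala,\calh)$-tree; in particular $T$ dominates $S$. Equivalently, every vertex stabilizer of $T$ is elliptic in $S$. Hence $G_v$ is elliptic in $S$. Since $S$ was an arbitrary $(\cala,\calh)$-tree, $G_v$ is universally elliptic, which is what we had to prove. There is no real obstacle here: the content of the lemma is essentially the tautology that, under the assumption, ``universally elliptic tree'' and ``$(\cala,\calh)$-tree'' coincide, so the maximality clause of the JSJ definition forces $T$ to dominate \emph{all} trees, making every vertex stabilizer universally elliptic.
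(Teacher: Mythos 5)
Your argument is correct: since edge stabilizers of any $(\cala,\calh)$-tree $S$ lie in $\cala$, the hypothesis makes $S$ universally elliptic, and the maximality clause of Definition \ref{prop_def} then forces $T$ to dominate $S$, so every vertex stabilizer of $T$ is elliptic in $S$. The one sanity check worth making explicit — and you essentially do — is that domination is by definition equivalent to "every subgroup elliptic in $T$ is elliptic in $S$," so ellipticity of vertex stabilizers follows immediately.

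Your route is slightly different from, and more direct than, the paper's. The paper argues by contradiction: assuming some $G_v$ is flexible, it picks a tree $T'$ in which $G_v$ is not elliptic, builds a standard refinement $\Hat T$ of $T$ dominating $T'$ via Proposition \ref{prop_refinement} (which applies because $T$, being universally elliptic, is elliptic with respect to $T'$), observes that $\Hat T$ is universally elliptic by the assumption on $\cala$, and then uses JSJ maximality to get $T$ dominating $\Hat T$, hence $G_v$ elliptic in $\Hat T$ and so in $T'$ — a contradiction. Both proofs pivot on the same observation (the hypothesis on $\cala$ turns the relevant tree into a universally elliptic one, to which maximality applies); you apply it directly to the arbitrary tree $S$, whereas the paper applies it to the auxiliary refinement $\Hat T$. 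Your version dispenses with the refinement machinery altogether and is the cleaner argument for this particular lemma; the paper's phrasing mirrors the refinement technique it uses repeatedly elsewhere, but nothing in the statement requires it here.
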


This applies in particular to splittings over finite groups. 
 
\begin{proof}
If a vertex stabilizer $G_v$ of $ T$ is flexible,   consider $T'$ such that
$G_{v}$  is not elliptic in $T$. Since $T$ is a JSJ decomposition, it is universally elliptic,
so one can consider  a standard refinement $\Hat T$ of $T$ dominating $T'$.
By our assumption on $\cala$, the
tree $\Hat T$ is universally elliptic, so by definition of the JSJ deformation space $T$ dominates $\Hat T$.
This implies that $G_v$ is elliptic in $\Hat T$, hence in $T'$, a contradiction.
\end{proof}

\subsection{Free groups} \label{free}

Let $G=\F_n$ be a finitely generated free group,
  let $\cala$ be arbitrary, 
  and $\calh=\es$.
Then the JSJ deformation space of $\F_n$ over $\cala$ is the space of free actions (unprojectivized Culler-Vogtmann's 
outer space \cite{CuVo_moduli}).\index{outer space}

More generally, if $G$ is virtually free and 
$\cala$ contains all finite subgroups, then $\cald_{JSJ}$ is the space of trees with finite vertex
stabilizers.

\subsection{Free splittings: the Grushko deformation space} \label{freep}
Let $\cala$ consist only of the trivial subgroup
of $G$, and $\calh=\es$.   Thus $\cala$-trees are trees with trivial edge stabilizers, also called \emph{free splittings}.\index{free splitting}
Then the JSJ deformation space exists, it
is the outer space introduced in \cite{GL1} (see  \cite{CuVo_moduli} when $G=\F_n$, and \cite{McCulloughMiller_symmetric} when no free factor of $G$ is $\bbZ$). We call it the \emph{Grushko deformation space}.\index{Grushko decomposition, deformation space} 
It consists of trees $T$ such that
edge stabilizers are trivial, and vertex stabilizers are freely indecomposable and different from $\Z$ (one often considers 
$\bbZ$ as freely decomposable since it splits   as an HNN extension over the trivial group). 

 Denoting by 
$G=G_1*\dots*G_p*\F_q $   a  decomposition of $G$ given by Grushko's theorem (with $G_i$ non-trivial and
freely indecomposable, $G_i\ne\Z$, and $\F_q$ free), the quotient graph of groups $T/G$ 
 is homotopy equivalent to a wedge of $q$ circles; 
 it has one vertex 
 with group $G_i$ for each $i$,  and all other vertex groups are trivial  (see Figure \ref{fig_grushko} in the introduction). 

If $\calh\ne\es$, the JSJ deformation space is the Grushko deformation space relative to $\calh$. Edge stabilizers of JSJ trees are trivial, groups in $\calh$ fix a point, and vertex stabilizers are freely indecomposable relative to their subgroups which are conjugate to a group in $\calh$.

\subsection{Splittings over finite groups:  the Stallings-Dunwoody  deformation space}
\label{Dunw}
If $\cala$ is the set
of finite subgroups of $G$, and $\calh=\es$, we call   
the JSJ deformation space the \emph{Stallings-Dunwoody  deformation space}.\index{Stallings-Dunwoody deformation space} 
 It is the set of  trees whose edge groups are finite and whose vertex
groups have 0 or 1 end (one deduces from  Stallings's theorem that a
tree is maximal for domination if and only if its vertex stabilizers have at most one end).

The JSJ deformation space exists if $G$ is finitely
presented by Dunwoody's original accessibility result \cite{Dun_accessibility}. If $G$ is only finitely generated, it exists if and only if $G$ is accessible. In particular, the inaccessible group constructed in \cite{Dun_inaccessible} has no JSJ decomposition over finite groups. 

\begin{rem} \label{rem_linn}
Even if $G$ is inaccessible, there is a JSJ deformation space over $\cala$ if $\cala$ is a family of finite subgroups of bounded order.  The reason is that 
Proposition \ref{prop_accessibility} remains true, because  
 $T_{k+1}$ is just a subdivision of  $T_k$ for $k$ large by Linnell's accessibility 
\cite{Linnell}. 
\end{rem}

If $\calh\ne\es$, the JSJ deformation space is the Stallings-Dunwoody deformation space relative to $\calh$. Edge stabilizers are finite, groups in $\calh$ fix a point, and vertex stabilizers are one-ended  relative to their subgroups which are conjugate to a group in $\calh$
  (one-endedness is in the sense of Subsection \ref{AH}: they do not split over finite subgroups
relative to their subgroups which are conjugate to a group in $\calh$).

As above, 
the relative JSJ  space exists  if $G$ is finitely generated  and  $\cala$ consists of finite groups with bounded order (and $\calh$ is arbitrary).
If $\cala$ contains finite groups of arbitrary large  order, the JSJ space exists if   relative accessibility holds.

\subsection{Splittings of small  groups}\label{sec_G_small}

Recall that $G$ is small  in $(\cala,\calh)$-trees if $G$ has no irreducible action on a tree: 
there   always is a fixed point, or a fixed end, or an invariant line (see Corollary \ref{pasirr} and   Subsection \ref{pti}). \index{small, small in $(\cala ,\calh )$-trees}
  This is in particular the case if $G$ is small, \ie contains no non-abelian free group. 
If $G$ acts with a fixed end or an invariant line,
then every vertex stabilizer has a subgroup of index at most $2$ fixing an edge  (the index is 2 if $G$ acts dihedrally on  a line).

 \begin{lem} \label{sma}
 If $G$ is  small in $(\cala,\calh)$-trees,
 and $T$ is
a  non-trivial universally elliptic tree, then $T$ dominates every
tree. 
\end{lem}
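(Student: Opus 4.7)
The plan is to upgrade the universal ellipticity of the edge stabilizers of $T$ to universal ellipticity of its \emph{vertex} stabilizers. Once this is done, Lemma \ref{lem_rigid} gives the conclusion at once: $T$ is then a JSJ tree and in particular dominates every \AH-tree.

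To carry this out, I would apply Proposition \ref{cinq} to the minimal action of $G$ on the non-trivial tree $T$. Case (1) is excluded by non-triviality, and case (5) is excluded by the hypothesis that $G$ is small in \AH-trees. So the action falls under (2), (3), or (4). I claim that in each of these three cases, every vertex stabilizer of $T$ contains some edge stabilizer of $T$ with index at most $2$. This is immediate in case (2), where all vertex and edge stabilizers coincide with the kernel of $G\to\bbZ$. In the dihedral case (3), all edge stabilizers equal a common subgroup $N$, and each vertex stabilizer contains $N$ with index at most $2$, as recorded in the discussion of Proposition \ref{cinq}. In case (4), for each vertex $v$ of $T$ the unique edge $e$ of $T$ incident to $v$ and pointing toward the fixed end satisfies $G_e=G_v$: any element of $G_v$ preserves the unique ray from $v$ to the fixed end, hence fixes $e$. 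Equivalently, by the statement recalled in Subsection \ref{tre}, the positively oriented edge issuing from $v$ has $G_e\to G_v$ onto.

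Since ellipticity is preserved under passage to finite-index supergroups (Subsection \ref{tre}), the universal ellipticity of the edge stabilizers of $T$ propagates to universal ellipticity of its vertex stabilizers. The only step requiring any care is case (4); this is however a direct consequence of the uniqueness of the ray from a given vertex to a given end in a tree, together with the structural description of minimal actions with a unique fixed end recalled just before Proposition \ref{cinq}. No finite presentability or accessibility input is needed, in contrast with the general existence results of Section \ref{de}.
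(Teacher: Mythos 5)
Your proof is correct and follows essentially the same route as the paper: since the non-trivial action is not irreducible, every vertex stabilizer contains an edge stabilizer with index at most $2$, hence is universally elliptic, and domination of every tree follows (the paper states this in one line, citing the fact recalled just before the lemma, while you verify it case by case via Proposition \ref{cinq} and invoke Lemma \ref{lem_rigid}). No gaps.
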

 
 \begin{proof}
Since the action of $G$ on $T$ is non-trivial but is not irreducible, every vertex stabilizer   
contains an edge stabilizer with index at most 2.
It follows that every vertex stabilizer is universally elliptic, so $T$ dominates every
tree. 
\end{proof}

\begin{cor}  \label{sma2}
If $G$ is  small in $(\cala,\calh)$-trees, 
  there is at most one non-trivial deformation space containing a universally elliptic 
 tree. \qed
\end{cor}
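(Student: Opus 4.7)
The plan is to derive the corollary almost immediately from Lemma \ref{sma}. Suppose, for contradiction, that $\cald_1$ and $\cald_2$ are two distinct non-trivial deformation spaces, each containing some universally elliptic tree. Pick representatives $T_1 \in \cald_1$ and $T_2 \in \cald_2$, both universally elliptic and both non-trivial (every tree in a non-trivial deformation space is non-trivial, by the definition of a deformation space via common elliptic subgroups, since the trivial deformation space is the only one in which $G$ itself is elliptic).

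The key step is then a double application of Lemma \ref{sma}. Since $G$ is small in $(\cala,\calh)$-trees and $T_1$ is a non-trivial universally elliptic tree, Lemma \ref{sma} yields that $T_1$ dominates every $(\cala,\calh)$-tree; in particular, $T_1$ dominates $T_2$. Symmetrically, applied to $T_2$, the same lemma shows that $T_2$ dominates $T_1$. By the definition of deformation spaces in Subsection \ref{defspa}, two trees that dominate each other lie in the same deformation space, so $\cald_1 = \cald_2$, contradicting our assumption.

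I do not anticipate any real obstacle: the entire content of the corollary is packaged into Lemma \ref{sma}, and the only thing to check is the symmetric use of domination together with the characterization of deformation spaces via mutual domination. The one minor point worth stating explicitly in the write-up is the observation that any tree in a non-trivial deformation space is itself non-trivial, so that Lemma \ref{sma} genuinely applies to both $T_1$ and $T_2$.
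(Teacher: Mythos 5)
Your proof is correct and is exactly the argument the paper intends by appending \qed to the corollary: apply Lemma \ref{sma} to a non-trivial universally elliptic tree in each of the two deformation spaces, conclude mutual domination, and hence that the spaces coincide. The remark that any tree in a non-trivial deformation space is itself non-trivial is the right small point to make explicit.
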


In this situation, the JSJ deformation space always exists:
if there is a deformation space as in the corollary, it is the JSJ space; otherwise, the JSJ space is trivial. 

Consider for instance  (non-relative) cyclic splittings of  solvable Baumslag-Solitar groups\index{Baumslag-Solitar group}   $BS(1,n)=\grp{a,t\mid tat\m=a^n}$. 
If $n=1$ (so   $G\simeq\Z^2$), 
there are infinitely many deformation spaces (corresponding to epimorphisms $G\ra\bbZ$) and there is no non-trivial universally elliptic tree. 
If $n=-1$ (Klein bottle group), there are   exactly two  non-trivial deformation spaces: one contains the  Bass-Serre tree of 
the HNN extension $\grp{a,t\mid tat\m=a\m}$, the other contains the tree associated to  the amalgam $\grp{t}*_{t^2=v^2}\grp{v}$, with $v=ta$.
None of these trees is universally elliptic ($t$ and $v$ are hyperbolic in the HNN extension, and $a$ is hyperbolic in the amalgam).

  Thus for $n=\pm 1$ (when $G$ is $\Z^2$ or the Klein bottle group) the cyclic JSJ deformation space of $BS(1,n)$ is the trivial one, and $G$ is flexible (see Subsection \ref{pslflex}  for generalizations of these examples).
If $n\ne\pm1$, the JSJ space is non-trivial, as we shall now see.

 \subsection{Generalized Baumslag-Solitar groups}\label{gbs}

Let $G$ be a generalized Baumslag-Solitar group,\index{generalized Baumslag-Solitar group}
\ie a finitely generated group which acts on  a tree $T$ 
with all  vertex and edge stabilizers infinite cyclic.
Let $\cala$ be the set of cyclic subgroups of $G$ (including the trivial subgroup), and $\calh=\es$.
Unless $G$ is   isomorphic to  $\Z$, $\Z^2$, or   the   Klein bottle group,
  the deformation space of $T$  is  the JSJ deformation space \cite{For_uniqueness}.

Here is a short proof (the arguments are contained in \cite{For_uniqueness}). We show
  that every vertex stabilizer $H$ of $T$ is universally elliptic.
  The commensurator\index{commensurator} of $H$ is $G$ because
  the intersection of any pair of vertex stabilizers has finite index in both of them.
If $H$ acts hyperbolically in a
tree $T'$, 
its commensurator $G$ preserves its axis,
so $T'$ is a line. Edge stabilizers being cyclic,
vertex stabilizers are virtually cyclic, hence cyclic since $G$ is torsion-free. 
This implies that
$G$ is  $\Z$, $\Z^2$, or  a Klein bottle group.

\subsection{Locally finite trees} \label{lf}

We generalize the previous example to locally finite trees with small edge stabilizers. 

We suppose that 
$G$ acts irreducibly on a  locally finite tree $T$ with small edge stabilizers 
(local finiteness
is equivalent to edge stabilizers having finite index in neighboring vertex stabilizers; in particular,
vertex stabilizers are small).
In \cite[Lemma 8.5]{GL2}, we proved that all such trees $T$ belong
to the same deformation space. This happens to be the JSJ deformation space.

\begin{prop}  Suppose that all groups of $\cala$ are  small in $(\cala,\calh)$-trees.\index{small, small in $(\cala ,\calh )$-trees} 
Then any locally finite irreducible
tree $T$ belongs to the JSJ deformation space.
\end{prop}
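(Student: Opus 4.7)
Plan. My plan is to apply Lemma~\ref{lem_rigid}: it suffices to show that every vertex stabilizer $G_v$ of $T$ is universally elliptic, for then $T$ will dominate every $(\cala,\calh)$-tree and hence belong to the JSJ deformation space.

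First I would extract two consequences of local finiteness. For each edge $e=vw$ of $T$, $G_e$ has finite index in both $G_v$ and $G_w$, so consecutive vertex stabilizers along $T$ are commensurable. By connectedness of $T$, all edge and vertex stabilizers of $T$ are pairwise commensurable; in particular $gG_vg\m = G_{gv}$ is commensurable with $G_v$ for every $g\in G$, so the commensurator of $G_v$ in $G$ is all of $G$. Moreover, $G_v$ is small in $(\cala,\calh)$-trees: since $G_e\in\cala$ is small and $[G_v:G_e]<\infty$, any finitely generated subgroup $H\subset G_v$ has $H\cap G_e$ of finite index in $H$ and small in $(\cala,\calh)$-trees, so an irreducible action of $H$ on an $(\cala,\calh)$-tree would restrict to one of $H\cap G_e$.

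Next I would assume for contradiction that $G_v$ acts non-elliptically on some $(\cala,\calh)$-tree $T'$. By smallness and Proposition~\ref{cinq}, $G_v$ contains a hyperbolic element in $T'$ and either preserves a unique invariant line $\ell$ or fixes a unique end $\xi$. The commensurator identity then propagates this structure to $G$: for every $g\in G$, the finite-index intersection $G_v\cap gG_vg\m$ is small with a hyperbolic element, so has a uniquely determined invariant line (resp.\ fixed end), which must coincide both with $G_v$'s and with $gG_vg\m$'s (namely $g\ell$ or $g\xi$). Hence $g\ell=\ell$ (resp.\ $g\xi=\xi$) for every $g\in G$, so $G$ preserves $\ell$ (resp.\ fixes $\xi$) in $T'$; in the line case, minimality of $T'$ forces $T'=\ell$.

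Finally I would derive a contradiction with the irreducibility of $T$. In the line case, the edge stabilizer $K$ of $T'=\ell$ lies in $\cala$ and is normal in $G$ of index at most $2$, with $G/K$ embedded in $\bbZ$ or $D_\infty$ and hence small. If $K$ is elliptic in $T$, then by normality and minimality $K$ acts trivially on $T$, so the irreducible $G$-action on $T$ factors through the small group $G/K$, contradicting that a small group cannot act irreducibly. Otherwise $K$ is non-elliptic in $T$; but $K\in\cala$ is small in $(\cala,\calh)$-trees, so $K$'s action on $T$ has a unique invariant line or fixed end, which by normality is $G$-invariant, again contradicting irreducibility. The end-fixing case runs the same dichotomy using the character $\chi:G\to\bbZ$ measuring the push toward $\xi$ and the normal subgroup $\ker\chi$. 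The main obstacle, and the step requiring the most care, will be precisely this end-fixing case, since $\ker\chi$ need not itself lie in $\cala$ and one must argue instead using an appropriate substitute such as the normal core of an edge stabilizer of $T'$.
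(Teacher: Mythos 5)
Your overall strategy is the paper's: reduce via Lemma~\ref{lem_rigid} to showing that every vertex stabilizer of $T$ is universally elliptic, use local finiteness to get that $G$ commensurates $G_v$ and that $G_v$ is small in $(\cala,\calh)$-trees, promote the unique invariant line or fixed end of $G_v$ in $T'$ to a $G$-invariant one, and then run a dichotomy on a normal subgroup acting on the irreducible tree $T$. The line case is essentially correct, with two harmless slips: the kernel $K$ of the action on the line $T'=\ell$ has \emph{infinite} index (the quotient is $\Z$ or $D_\infty$), so ``normal of index at most $2$'' is wrong but unused (you only need $K\in\cala$, $K$ normal, and $G/K$ small); and the finite-index intersection $G_v\cap gG_vg\m$ need not contain a hyperbolic element when $G_v$ is infinitely generated and fixes an end, though the coincidence of the canonical end/line for commensurable non-elliptic subgroups can be justified directly (this is what the paper asserts when it says any finite-index subgroup preserves the same unique end or line).

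The genuine gap is exactly where you flag it: the end-fixing case, and your proposed substitute does not close it. The normal subgroup $N$ you feed into the dichotomy must satisfy \emph{both} requirements: if $N$ is elliptic in $T$ then $G/N$ (through which $G$ then acts) must be small, and if $N$ is not elliptic in $T$ then $N$ must be small in $(\cala,\calh)$-trees so that its unique end or line in $T$ is $G$-invariant. The normal core of an edge stabilizer of $T'$ meets the second requirement (it lies in $\cala$) but fails the first: it may well be trivial, and $G$ modulo it is not small, so the elliptic branch yields no contradiction with irreducibility of $T$. What you are missing is that $N=\ker\chi$ (the paper uses $G'=[G,G]\subseteq\ker\chi$) already meets the second requirement, even though it need not lie in $\cala$: since $G$ fixes the end $\xi$ of $T'$, every element of $\ker\chi$ is elliptic and pointwise fixes a ray toward $\xi$; finitely many such rays intersect in a ray, so every finitely generated subgroup of $\ker\chi$ pointwise fixes a ray, hence is contained in an edge stabilizer of $T'$, which lies in $\cala$ and is therefore small in $(\cala,\calh)$-trees by hypothesis. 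Since smallness in $(\cala,\calh)$-trees is detected on finitely generated subgroups (Subsection~\ref{pti}), $\ker\chi$ is small in $(\cala,\calh)$-trees, and your dichotomy then closes the end case exactly as in the line case; this is precisely the paper's argument.
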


\begin{proof} We show that every vertex stabilizer $H$ of  $T$ is universally elliptic.  Since $T$ is locally finite, $H$ contains an edge stabilizer with finite index, so is small (for simplicity we do not write ``small in $(\cala,\calh)$-trees'' in this proof).   By way of contradiction, assume that $H$
is not elliptic in some
tree $T' $.  Being small, $H$ fixes a unique end of $T'$ or preserves a unique line  (see Corollary \ref{pasirr}).   Any finite index subgroup of $H$ preserves the same unique end or line.

As in the previous subsection, local finiteness implies that 
  $G$ commensurates $H$, so $G$ preserves the $H$-invariant end or  line  of $T'$ (in particular, $T'$ is not
irreducible).
We now define a small normal subgroup $G'\inc G$.
If $G$ does not act dihedrally on  a line, there is a fixed end and we let $G'=[G,G]$ be the commutator subgroup.
It  is small because
any finitely generated subgroup pointwise fixes a ray of
$T'$, so is contained in an edge stabilizer $G_e\in\cala$.  If $G$ acts dihedrally, we let $G'$ be the kernel of the action, so that $G/G'$ is an infinite dihedral group.

 Consider the action of the normal subgroup $G'$ on $T$. 
If it is elliptic,   its fixed point set  is $G$-invariant, so by minimality
the action of $G$ factors through the action of an abelian  or dihedral group; this contradicts the irreducibility of
$T$.  Otherwise  $G'$ preserves a unique end or  line; this end or  line is
$G$-invariant because
$G'$ is normal, again    contradicting the irreducibility of $T$.
\end{proof}

 \subsection{RAAGs}

Let $\Delta$ be a finite graph. The associated right-angled Artin group $A_\Delta$ (RAAG, also called graph group, or partially commutative group)\index{RAAG} is the group presented as follows: there is one generator $a_v$ per vertex, and a relation $a_va_w=a_wa_v$ if there is an edge between $v$ and $w$. See  \cite{Charney_introduction} for an introduction.

The decomposition of $\Delta$ into connected components induces a decomposition of $A_\Delta$ as a free product of freely indecomposable RAAGs (which may be infinite cyclic), so to study JSJ decompositions of $A_\Delta$ one may assume that $\Delta$ is connected (see Corollary \ref{cor_unbout} below). 
M.\ Clay  \cite{Clay_when} determines the (non-relative) cyclic JSJ decomposition of $A_\Delta$; he   gives a characterization of RAAGs with non-trivial cyclic JSJ decomposition, and he shows that there is no flexible vertex.   See \cite{GrHu_abelian} for   abelian splittings of RAAGs relative to the generators.

\subsection{Parabolic splittings}
\label{parab}

Assume that $G$ is hyperbolic relative to a family of finitely generated subgroups $\calh=\{H_1,\dots,H_p\}$.   
Recall  that a subgroup of $G$ is
\emph{parabolic}\index{parabolic} if it is contained in a conjugate of an $H_i$. 
We let $\cala$ be the family of parabolic subgroups.

JSJ trees over parabolic subgroups, relative to $\calh$ (equivalently, to $\cala$), exist  by Theorem \ref{thm_exist_mou_rel} because $G$ is finitely presented relative to $\calh$ \cite{Osin_relatively}. 
Parabolic subgroups are universally elliptic (all splittings  are relative to $\calh$!), so
 JSJ trees
do not have flexible vertices by Lemma \ref{geuel}.
See \cite{Bo_peripheral}, where such a JSJ decomposition is related to the cut points of the boundary of $G$.

See Subsection \ref{sec_relh} for the case when virtually cyclic groups are added to $\cala$.

\subsection{Non-rigid examples} \label{exflex}

\index{flexible vertex, group, stabilizer}
Unlike the previous subsections, we now consider examples with flexible vertices. This may be viewed as an introduction to Part \ref{part_QH}.  We consider    cyclic splittings with $\calh=\es$. 

Suppose that $G$ is the fundamental group of a closed orientable hyperbolic surface $\Sigma$. Any simple closed geodesic $\gamma$ on $\Sigma$ defines a  dual cyclic splitting (an amalgam or an HNN-extension, depending on whether $\gamma$ separates or not). A non-trivial  element $g\in G$, represented by an immersed closed geodesic $\delta$, is elliptic in the splitting dual to $\gamma$ if and only if $\delta\cap\gamma=\es$ or $\delta=\gamma$.
Since any $\delta$ meets transversely some simple $\gamma$,  this 
shows that $1$ is the only universally elliptic element of $G$, so the JSJ decomposition of $G$ is trivial and its vertex is flexible. 
Similar considerations apply to 
splittings of fundamental groups of compact hyperbolic surfaces with boundary which are  relative to
the fundamental groups  of boundary components (the pair of pants is special because it    contains no essential simple geodesic,  see Subsection \ref{orb}).

The content of Section \ref{Fujpap} is that this example is somehow a universal example. 

\begin{figure}[htbp]
  \centering
  \includegraphics{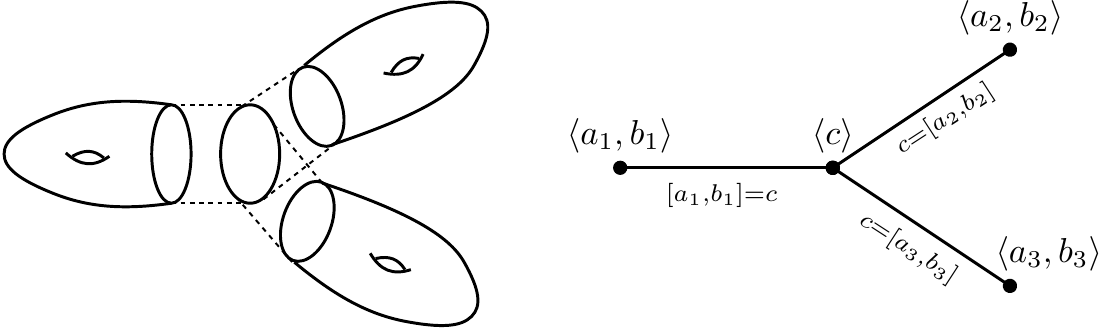}
  \caption{Three punctured tori attached along their boundaries, and the corresponding JSJ decomposition.}
  \label{fig_3tores}
\end{figure}
Now suppose that $G$ is the fundamental group of the space pictured on   Figure \ref{fig_3tores} (consisting of three punctured tori attached along their boundaries). A presentation of $G$ is $$\langle a_1,b_1,a_2,b_2,a_3,b_3\mid [a_1,b_1] = [a_2,b_2]  =[a_3,b_3]\rangle.$$  It is the fundamental group of a graph of groups $\Gamma$ with one central vertex $v$ and three terminal vertices $v_i$. All edges, as well as $v$, carry the same cyclic group  $C=\langle c\rangle$. We claim that 
 \emph{$\Gamma$ is a JSJ decomposition of $G$, with 3 flexible vertices $v_i$.}

 Let us first show that $\Gamma$
is universally elliptic, \ie that 
 $C$ is universally elliptic. Using Lemma \ref{lem_oneed}, we
consider a one-edge cyclic splitting $\Lambda$ of $G$  in which $C$ is not elliptic, and we argue towards a contradiction. The group $G_{12}$ generated by $G_{v_1}$ and $G_{v_2}$ is the fundamental group of a closed surface (of genus 2), in particular it is one-ended. It follows that the edge group   $\grp{a}$ of $\Lambda$ is  non-trivial  
and has non-trivial intersection with some conjugate of $G_{12}$, so  some $a^k$ lies in a conjugate of $G_{12}$. 

We now consider the action of  $a^k$ on the Bass-Serre tree $T$ of $\Gamma$.
If $a^k$ fixes an edge in $T$, then $\grp{a^k}$ has finite index in some conjugate of $C$, so $C$ is elliptic in $\Lambda$, a contradiction.
Denote by  $A({a^k})$ the characteristic set of $a^k$ in $T$, \ie its unique fixed point or its axis.
Then $A({a^k})$ is contained in the minimal subtree of a conjugate of $G_{12}$,  so    contains no lift of the vertex $v_3$.
Permuting indices   shows that  all vertices 
in
$A({a^k})$ are lifts of $v$, so 
 $A({a^k})$   is a single point (a lift of $v$).
This implies that  $\grp{a^k}$ has finite index in some conjugate of $C$, which is again a contradiction.

To prove maximality, consider    a universally elliptic tree $S$  dominating $T$.  If the domination is strict (\ie $S$ and $T$ are in different deformation spaces), some $G_{v_i}$ is non-elliptic in $S$.
Being universally elliptic, $C$ is elliptic in $S$, so by a standard fact (see Proposition \ref{dual}) the action of $G_{v_i}$ on its minimal subtree in $S$ is dual to an essential 
simple closed curve  $\gamma_i
\inc \Sigma_i$ (there is only one curve because $\Sigma_i$ is a punctured torus). Considering the splitting of $G$ dual to
a curve $\gamma\inc\Sigma_i$  intersecting   $\gamma_i$ non-trivially shows that $S$ is not universally elliptic, a contradiction.

\section{A few useful facts} \label{usef}

 In this section, we first describe the behavior of the JSJ deformation space when we change the class $\cala$ of allowed edge groups.
We then introduce the \peripheral{} 
structure inherited by a vertex group of a graph of groups,
and we relate JSJ decompositions of $G$ to JSJ decompositions of   vertex groups relative to their \peripheral{} structure. We also discuss relative finite presentation of vertex groups. 
Finally, we give an alternative construction of relative JSJ decompositions, obtained by embedding   $G$ into a larger group.

\subsection{Changing edge  groups}\label{ceg}

We fix two families of subgroups $\cala$ and $\calb$, with $\cala\inc\calb$, 
and we   compare   
JSJ splittings over $\cala$ and over $\calb$   (universal ellipticity, and all   JSJ decompositions,   are relative to some fixed family $\calh$). 
   For  example:

$\bullet$ $\cala$ consists of the finitely generated abelian subgroups of $G$, and $\calb$ consists of
the slender subgroups. This will be useful to describe the abelian JSJ decomposition in
Subsection \ref{ab}. 

$\bullet$ groups in $\calb$ are locally slender (their finitely generated subgroups are slender), and $\cala$ is the family of slender subgroups.

$\bullet$  $G$ is relatively hyperbolic, $\cala $ is the family of parabolic
subgroups, and $\calb$ is the family of elementary subgroups (subgroups which are parabolic or virtually cyclic).

$\bullet$ $\cala$ consists of the trivial group, or   the finite subgroups of $G$ (see
Corollary \ref{cor_unbout}).

There are now two notions of universal ellipticity, so we  shall distinguish between $\cala$-universal ellipticity (being elliptic in all   $(\cala,\calh)$-trees) and $\calb$-universal ellipticity   (being elliptic in all $(\calb,\calh)$-trees).
 
Of course, $\calb$-universal ellipticity implies $\cala$-universal ellipticity.

Recall that two trees are \emph{compatible} if they have a common refinement. 

\begin{prop}\label{prop_embo}
 Assume   $\cala\subset\calb$. Let $T_\calb$ be a JSJ tree over $\calb$. 
 \begin{enumerate}
 \item \label{it_embo1}
  If there is a JSJ tree over $\cala$, there is one  which is compatible with $T_\calb$. It may be
  obtained    by refining $T_\calb$, and then collapsing all edges whose
stabilizer is not in $\cala$.
\item \label{it_embo2} If   
every $\cala$-universally elliptic $\cala$-tree is $\calb$-universally
elliptic, the tree 
  $\bar T _\calb$ obtained  from $T_\calb$ by 
collapsing all edges whose stabilizer is not in $\cala$  is a JSJ tree over $\cala$.
\end{enumerate}
\end{prop}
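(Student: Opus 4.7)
The plan is to deduce both parts from a single observation about equivariant maps into $\cala$-trees: if $h\colon T\to U$ is an equivariant map between trees and $U$ is an $\cala$-tree, then for every edge $e$ of $T$ whose stabilizer $G_e$ does \emph{not} lie in $\cala$, one has $h(\iota(e))=h(\tau(e))$. Indeed, $G_e$ fixes both endpoints $\iota(e)$ and $\tau(e)$, hence fixes $h(\iota(e))$ and $h(\tau(e))$, and therefore fixes pointwise the geodesic between them in $U$; if that geodesic contained an edge $\varepsilon$ of $U$, then $G_e\subset G_\varepsilon\in\cala$ would force $G_e\in\cala$ by stability under subgroups, a contradiction. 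So the geodesic is trivial and $h$ factors through the tree obtained from $T$ by collapsing every edge whose stabilizer lies in $\calb\setminus\cala$. This is the workhorse of both assertions.

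I would first handle assertion~(2), to which the observation applies directly. Let $U$ be any $\cala$-universally elliptic $\cala$-tree. By the standing hypothesis, $U$ is also $\calb$-universally elliptic, so $T_\calb$—being a JSJ tree over $\calb$—dominates $U$. The observation then shows that the domination map $T_\calb\to U$ factors through $\bar T_\calb$, so $\bar T_\calb$ dominates $U$. On the other hand, $\bar T_\calb$ is visibly an $\cala$-tree, and its edge stabilizers (being edge stabilizers of $T_\calb$) are $\calb$-universally elliptic, hence $\cala$-universally elliptic. Thus $\bar T_\calb$ is a JSJ tree over $\cala$.

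For assertion~(1), I would fix any JSJ tree $T'_\cala$ over $\cala$, which exists by hypothesis. Since $T_\calb$ is $\calb$-universally elliptic and $T'_\cala$ is a $\calb$-tree (as $\cala\subset\calb$), the edge stabilizers of $T_\calb$ are elliptic in $T'_\cala$, and Proposition~\ref{prop_refinement} produces a standard refinement $\hat T$ of $T_\calb$ dominating $T'_\cala$. Part~(ii) of that proposition gives a dichotomy for edge stabilizers of $\hat T$: each either fixes an edge of $T_\calb$ (so lies in $\calb$ and is $\calb$-universally elliptic) or fixes an edge of $T'_\cala$ (so lies in $\cala$ and is $\cala$-universally elliptic); in either case it is $\cala$-universally elliptic. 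Let $T_\cala$ denote the tree obtained from $\hat T$ by collapsing all edges with stabilizer outside $\cala$. By construction $T_\cala$ is an $\cala$-tree, and since both $T_\cala$ and $T_\calb$ are collapses of the common refinement $\hat T$, they are compatible. The above dichotomy also shows that $T_\cala$ is $\cala$-universally elliptic.

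It remains to prove that $T_\cala$ dominates every $\cala$-universally elliptic $\cala$-tree $U$. Composing the domination maps $\hat T\to T'_\cala\to U$ yields an equivariant map $h\colon\hat T\to U$. By the dichotomy, an edge of $\hat T$ with stabilizer outside $\cala$ must fix an edge of $T_\calb$ (the alternative would put its stabilizer in $\cala$), so its stabilizer lies in $\calb\setminus\cala$. The key observation now applies to $h$ and forces it to factor through $T_\cala$, which therefore dominates $U$. The main point to verify carefully is this dichotomy on edge stabilizers of the standard refinement, which is exactly what allows the observation to be applied a second time; beyond that, everything reduces to routine manipulations with Proposition~\ref{prop_refinement} and the definition of the JSJ deformation space.
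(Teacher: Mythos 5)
Your proof is correct and follows essentially the same route as the paper: a standard refinement via Proposition \ref{prop_refinement}, the dichotomy of its assertion (ii) to get $\cala$-universal ellipticity, and the observation that an equivariant map into an $\cala$-tree is constant on any edge whose stabilizer is not in $\cala$ (the paper phrases this as such a stabilizer fixing a unique point), which forces the factorization through the collapsed tree. The only differences are cosmetic: you treat (2) before (1) and verify domination of each universally elliptic $\cala$-tree $U$ by factoring the composed map $\hat T\to T'_\cala\to U$, where the paper factors the single map to $T'_\cala$ and invokes transitivity of domination.
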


Note that (\ref{it_embo2}) applies  if $\cala$ consists of finite groups (more generally, of groups with Serre's property (FA)).

\begin{proof} Let $T_\cala$ be a JSJ tree over $\cala$. 
  Since $\calb\supset \cala$, the tree $T_\calb$ is elliptic with respect to $T_\cala$.
Let $\hat T_\calb$ be a standard refinement of $T_\calb$ dominating $T_\cala$.
 Consider an edge $e $ of
$\hat T_\calb$ whose stabilizer is not in $\cala$. Then $G_e$ fixes a unique
point of $T_\cala$,   so any equivariant map   from $\hat
T_\calb$ to
$T_\cala$ 
is constant on $e$. It  follows that the tree obtained from $\hat T_\calb$ by
collapsing all edges whose stabilizer is not in $\cala$  dominates
$T_2$. Being $\cala$-universally
elliptic by Assertion \ref{it_Ge} of Lemma \ref{prop_refinement}, it  is a JSJ tree over $\cala$. 

For (\ref{it_embo2}), first note that $\bar T_\calb$ is an $\cala$-universally
elliptic $\cala$-tree. If $T'$ is another   one, it is $\calb$-universally
elliptic, hence dominated by $T_\calb$. As above, any map from $T_\calb$ to $T'$ factors through $\bar T_\calb$, so $\bar T_\calb$ is a JSJ tree over $\cala$.
\end{proof} 

\begin{prop} \label{loc}
Let $\cala\inc\calb$. Assume that every  finitely generated group in $\calb$ belongs to $\cala$. 
  If $G$ is finitely presented relative to $\calh$,
then any JSJ tree over $\cala$ (relative to $\calh$) is a JSJ tree over $\calb$ (relative to $\calh$). 
\end{prop}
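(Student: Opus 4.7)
The plan is to verify that a JSJ tree $T$ over $\cala$ satisfies both requirements defining a JSJ tree over $\calb$: that it is $\calb$-universally elliptic, and that it dominates every $\calb$-universally elliptic $(\calb,\calh)$-tree. Note that $T$ is automatically a $(\calb,\calh)$-tree since $\cala\subset\calb$. The crucial tool is the relative version of Dunwoody's accessibility, Corollary \ref{cor_fg_rel}: given any $(\calb,\calh)$-tree $T'$, there exists a morphism $S\to T'$ where $S$ is a $(\calb,\calh)$-tree with \emph{finitely generated} edge stabilizers. By the hypothesis that every finitely generated group in $\calb$ belongs to $\cala$, such a tree $S$ is in fact an $(\cala,\calh)$-tree. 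This is the mechanism that lets us transfer statements between the two settings.

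For $\calb$-universal ellipticity, let $T'$ be any $(\calb,\calh)$-tree and pick a morphism $f:S\to T'$ as above, so that $S$ is an $(\cala,\calh)$-tree. Since $T$ is $\cala$-universally elliptic, every edge stabilizer of $T$ fixes a point in $S$; applying $f$, such a stabilizer is elliptic in $T'$. As $T'$ was arbitrary, $T$ is $\calb$-universally elliptic.

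For maximality, let $T'$ be a $\calb$-universally elliptic $(\calb,\calh)$-tree, and again pick a morphism $f:S\to T'$ with $S$ a $(\calb,\calh)$-tree with finitely generated edge stabilizers, so $S$ is an $(\cala,\calh)$-tree. Since $f$ is a morphism, each edge stabilizer of $S$ is contained in some edge stabilizer of $T'$, which is $\calb$-universally elliptic; universal ellipticity passes to subgroups, and $\calb$-universal ellipticity implies $\cala$-universal ellipticity (every $(\cala,\calh)$-tree being a $(\calb,\calh)$-tree). Thus $S$ is an $\cala$-universally elliptic $(\cala,\calh)$-tree, so the JSJ tree $T$ dominates $S$. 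Composing with $f$ shows that $T$ dominates $T'$, as required.

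The main (and really the only) obstacle is to recognize that the locality hypothesis on $\cala\subset\calb$ combines precisely with relative Dunwoody accessibility to replace an arbitrary $\calb$-tree by an $\cala$-tree without losing the relevant ellipticity/domination data. Once that observation is in place, both conditions defining a JSJ tree over $\calb$ follow by the same two-line argument with the auxiliary tree $S$ sandwiched in between. Finite presentability of $G$ relative to $\calh$ is used only through Corollary \ref{cor_fg_rel}.
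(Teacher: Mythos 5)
Your proof is correct and follows the paper's own route: the paper likewise invokes Corollary \ref{cor_fg_rel} to replace an arbitrary $(\calb,\calh)$-tree by an $(\cala,\calh)$-tree $S$ with finitely generated edge stabilizers mapping onto it, and then says "the proposition easily follows." Your write-up simply spells out that last step (checking $\calb$-universal ellipticity and maximality via the intermediate tree $S$), and does so accurately.
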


\begin{proof}
  By  Corollary \ref{cor_fg_rel}, for every $\calb$-tree $T$ (as always,  
  relative to $\calh$), 
 there is  a morphism $S\ra T$ with $S$ a
tree 
with finitely generated edge stabilizers. 
Edge stabilizers of $S$ fix an edge in $T$, so $S$ is an $\cala$-tree.
The proposition easily follows.
\end{proof}

This applies in particular if $\calb$ is the family of all groups  which are locally in $\cala$ (their finitely generated subgroups are in $\cala$). For instance, 
 $ \calb$ may be the family of all locally cyclic  (resp.\  locally abelian,  resp.\ locally slender) subgroups, and $\cala$   the family of cyclic (resp.\  finitely generated abelian, resp.\ slender) subgroups.

\subsection{\Peripheral{} structures of vertex groups}

\index{incidence structure} 
Given a vertex stabilizer $G_v$ of a tree, it is useful to consider splittings of $G_v$ relative to incident edge stabilizers, as they extend to splittings of $G$ (Lemma \ref{extens}). In this subsection we give   definitions  and we  show that $G_v$ is finitely presented relative to incident edge groups if $G$ is finitely presented and edge stabilizers are finitely generated (Proposition \ref{Gvrelfp}).

\subsubsection{Definitions}\label{sec_def_periph}

\label{blowup}
Let $T$ be a tree  (minimal, relative to $\calh$, with edge stabilizers in $\cala$).  Let $v$ be a vertex, with stabilizer $G_v$.

\begin{dfn} [Incident edge groups $\Inc_v$]  \label{dfn_incv}
Given a vertex $v$ of a tree $T$, there are finitely many $G_v$-orbits of edges with origin $v$. 
We choose representatives $e_i$ and we define $\Inc_v$ (or $\Inc_{G_v}$) as the family of stabilizers $G_{e_i}$. 
We call $\Inc_v$ the set of \emph{incident edge groups.}\index{incident edge groups}  It is a finite family of subgroups of $G_v$, each well-defined up to conjugacy. 
\end{dfn}

Alternatively, one can define $\Inc_v$ from the quotient graph of groups $\Gamma=T/G$ as 
the image in $G_v$ of the groups  carried by all oriented edges with origin $v$. 
The peripheral structure, defined in \cite{GL2} and studied  in Subsection \ref{periph}, is a more sophisticated invariant
derived from $\Inc_v$;  unlike $\Inc_v$, it does not change when we replace $T$ by another tree in the same deformation space.

\begin{dfn}[Restriction  $\calh_{|G_v}$] \label{indu}\index{0HG@$\calh_{\vert G_v}$}\index{restriction of $\calh$} 
 
Given $v$, consider the family of conjugates of groups in $\calh$ that fix $v$ and no other vertex of $T$.
We define the \emph{restriction} $\calh_{|G_v}$ by choosing a representative for each $G_v$-conjugacy class in this family.
\end{dfn}

\begin{dfn}[$\Inch_{v}$] \label{indup} \index{0INC@$\Inc_v$, $\Inch_{v}$, $\Inc_{\vert Q}$, $\Inch_{\vert Q} $: incident structures} 
We define $\Inch_{v}=\Inc_v\cup \calh_{|G_v}$.  We will sometimes write  $\Inc_{ | Q}$ and $\Inch_{ | Q}$ rather than $\Inc_{v}$ and $\Inch_{v}$, with $Q=G_v$.
\end{dfn}

We also view   $\calh_{|G_v}$ and $\Inch_{v}$ as  families of subgroups of $G_v$, each well-defined up to conjugacy.

\begin{rem} \label{ashv}
 We emphasize that $\calh_{|G_v}$ only contains groups having $v$ as their unique fixed point. Two such groups are conjugate in $G_v$ if they are conjugate in $G$. In particular, 
the number   of $G_v$-conjugacy classes of groups in $\calh_{|G_v}$ is bounded by the number of     $G$-conjugacy classes of groups in $\calh$.

Also note that any subgroup of $G_v$ which is conjugate to  a group of $\calh$ is contained (up to conjugacy in $G_v$) in a group belonging to $\Inch_{v}$, so is elliptic in any splitting of $G_v$ relative to $\Inch_{v}$. 
\end{rem}

\subsubsection{Finiteness properties}
\label{fingv}

 Assume that $G$ is the fundamental group of a finite graph of groups.
It is well-known that, if   $G$ and all edge groups are finitely generated (resp.\ finitely presented), then so are all vertex groups. 

The goal of this subsection is to extend these results to relative finite generation (resp.\ finite presentation),\index{relative finite generation, presentation} as defined in Subsection \ref{sec_prelim_relatif}. This is not needed if all groups in $\cala$ are assumed to be finitely presented.

\begin{lem}[{\cite[Lemmas 1.11, 1.12]{Gui_actions}}]\label{relfg}
   If a finitely generated group $G$ acts on  a tree $T$,  
then every vertex stabilizer $G_v$   is finitely generated relative to the incident edge groups.
 
More generally, if $G$ is finitely generated relative to $\calh$, and $T$ is a
tree relative to $\calh$,
then $G_v$ is finitely generated  relative to $\Inch_v$.  \qed
\end{lem}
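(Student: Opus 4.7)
My plan is to prove the general (relative) statement, which includes the non-relative case ($\calh = \emptyset$). Fix a finite relative generating set $\Omega$, so that $G = \langle \Omega \cup H_1 \cup \cdots \cup H_p\rangle$. Since $T$ is relative to $\calh$, each $H_i$ is elliptic, so I can pick $v_i \in \Fix(H_i)$ at minimum distance from $v$. A short tree-geometry argument (for $h \in H_i$ the midpoint of $[v, hv]$ is equidistant from $v$ and $hv$, so it lies in $\Fix(h)$ and has distance at least $d(v,v_i)$ from $v$) gives $d(v, hv) \le 2 d(v, v_i)$. Setting $R = \max\bigl(\max_{\omega \in \Omega} d(v, \omega v),\, 2\max_i d(v, v_i)\bigr)$, every generator (in $\Omega^{\pm 1}$ or in some $H_i$) displaces $v$ by at most $R$.

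The core step is a loop-reduction argument. Given $g \in G_v$, write $g = s_1 \cdots s_k$ with each $s_j$ a generator, and form the sequence $v_j = s_1 \cdots s_j \cdot v$, a closed polygonal loop in $T$ with $v_0 = v_k = v$ and each step of length $\le R$. Since $T$ is a tree, any closed edge path must backtrack: some index $j$ satisfies $v_{j-1} = v_{j+1}$ (with the degenerate case $v_{j-1} = v_j$ meaning $s_j$ itself fixes $v_{j-1}$). In the non-degenerate case, $s_j s_{j+1}$ fixes $v_{j-1}$, so $s_j s_{j+1} = g_{j-1}\, h\, g_{j-1}^{-1}$ for some $h \in G_v$, where $g_{j-1} = s_1 \cdots s_{j-1}$. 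Substituting this factorization into the word shortens the associated loop; iterating drives the total path length $\sum_j d(v_{j-1}, v_j)$ strictly downward, giving a well-founded induction that terminates with $g$ expressed as a product of conjugates $g_m\, h_m\, g_m^{-1}$ with $h_m \in G_{v_m}$.

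To conclude that $G_v$ is finitely generated relative to $\Inch_v$, one controls the finitely many $h_m$'s that arise. All vertices $v_m$ appearing in reductions lie in the $R$-ball $B$ around $v$, and because $G$ is finitely generated relative to $\calh$ and $T$ is relative to $\calh$, the quotient $T/G$ is finite, hence $B$ meets only finitely many $G_v$-orbits of vertices. For each conjugate $g_m\, h\, g_m^{-1}$, one follows the geodesic $[v, v_m]$ edge by edge: each edge stabilizer encountered is (a conjugate in $G_v$ of) a group in $\Inc_v$, which lets us decompose $g_m\, h\, g_m^{-1}$ modulo $\Inc_v$ into boundedly many pieces; conjugates of some $H_i$ that fix $v$ alone are absorbed into $\calh_{|G_v}$, while those fixing an edge at $v$ sit inside $\Inc_v$ already. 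Thus $G_v$ is generated by $\Inch_v$ together with a finite set consisting of: (a) elements of $\Omega$ lying in $G_v$, (b) products $\omega^{-1} \omega'$ for generators with $\omega v = \omega' v$, and (c) finitely many conjugators handling $H_i$-contributions.

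The main obstacle is the finiteness bookkeeping in the last step: verifying that the conjugates $g_m\, h\, g_m^{-1}$ that can appear during reduction fall into finitely many cosets modulo $\langle \Inch_v\rangle$, independently of the word representing $g$. This rests on cocompactness of the action (from relative finite generation plus relativity of $T$), which bounds the geometric data in $B$, combined with the identification of edge stabilizers along $[v, v_m]$ with conjugates of groups in $\Inc_v$. The backtracking step itself is pure tree geometry; extracting a finite generating set modulo $\Inch_v$ from the reduction is the technical heart of the argument.
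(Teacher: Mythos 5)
The paper does not prove this lemma at all --- it is quoted from \cite{Gui_actions} (Lemmas 1.11 and 1.12) --- so there is no internal proof to compare yours with; judged on its own terms, your argument has genuine gaps. The core reduction step fails as stated: your points $v_j=s_1\cdots s_j\cdot v$ form a closed \emph{polygonal} loop whose steps are geodesics of length up to $R$, not single edges, and for such loops it is false that some $j$ must satisfy $v_{j-1}=v_{j+1}$ (or $v_{j-1}=v_j$). Take a tripod with center $m$ and endpoints $a,b,c$ at distance $1$ from $m$ and the loop $a\to b\to c\to a$: it is closed, every step has length $2$, and there is no degenerate step and no backtrack. Such configurations do arise (three elliptic generators with pairwise distinct fixed-point sets), so the induction never gets started. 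Moreover, even when a backtrack $v_{j+1}=v_{j-1}$ does occur, the algebra is not what you write: cancelling the common prefix gives $s_js_{j+1}v=v$, i.e.\ $s_js_{j+1}\in G_v$ itself, not that $s_js_{j+1}$ fixes $v_{j-1}$ or equals $g_{j-1}hg_{j-1}^{-1}$ with $h\in G_v$. The correct geometric version of this idea reduces at a farthest \emph{edge} of $T$ crossed by the loop and pairs up its crossings, producing elements of edge stabilizers (conjugates of groups in the incidence structure) rather than stabilizers of the points $v_j$; that is a genuinely different reduction which you have not carried out.

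The second gap is the finiteness bookkeeping, which you yourself call the technical heart but do not do, and the one finiteness assertion you make is wrong on two counts. The intermediate vertices $v_j$ need not lie in the $R$-ball $B$ around $v$ (the loop may wander far before returning), and cocompactness (Proposition \ref{arbtf}) only gives that $B$ meets finitely many $G$-orbits of vertices, not finitely many $G_v$-orbits: for $G=A*B$ with $B$ infinite and $v$ the vertex with $G_v=A$ in the Bass--Serre tree, the $A$-orbits of vertices at distance $2$ from $v$ are in bijection with the non-trivial elements of $B$. So the claim that the surviving conjugates $g_mh_mg_m^{-1}$ fall into finitely many cosets modulo $\langle \Inch_v\rangle$ is not established, and this is exactly where the finite set of extra generators must come from. (A minor slip besides: your midpoint argument for $d(v,hv)\le 2d(v,v_i)$ is stated backwards; what is needed is $d(v,\Fix h)\le d(v,v_i)$ because $v_i\in\Fix h$, though the inequality itself follows from the triangle inequality through $v_i$.) A route that does work is via Bass--Serre normal forms: write each element of $\Omega$, and a conjugator $c_i$ carrying $H_i$ into a vertex group, in normal form; only finitely many vertex-group letters occur; let $A_w\le G_w$ be generated by the incident edge groups, these letters, and the relevant conjugates of the $H_i$; then $A_w$ together with the stable letters generate $G$, and the normal-form theorem forces $A_w=G_w$, which is the assertion of the lemma.
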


\begin{rem}\label{den}
 If $\calh=\{H_1,\dots,H_p\}$, then the family $\calh_{|G_v}$ consists of at most $p$ groups, each conjugate to some $H_i$. It follows that the set of vertex stabilizers of $(\cala,\calh)$-trees with finitely generated edge stabilizers is countable. This was used in the proof of Theorem \ref{thm_exist_mou_rel}. 
\end{rem}
 
There is a similar statement for relative finite presentation.

\begin{prop}\label{Gvrelfp}
    If $G$ is finitely presented, and $T$ is a  
    tree with finitely generated edge stabilizers,
then every vertex stabilizer $G_v$   is finitely presented relative to the incident edge groups.
 
More generally, if $G$ is finitely presented relative to $\calh$, and $T$ is a
tree relative to $\calh$  with finitely generated edge stabilizers,
then $G_v$  is finitely presented relative to $\Inch_v$. 
\end{prop}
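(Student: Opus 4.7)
The plan is to proceed by induction on the number $q$ of orbits of edges of $T$, reducing to the one-edge case $q=1$. For the inductive step, collapse the orbit of one edge of $T/G$ to obtain a tree $T'$ with $q-1$ edge orbits; the induction hypothesis provides relative finite presentations of the vertex stabilizers of $T'$. A vertex $v'$ of $T'$ arising as the image of two distinct adjacent vertices $u,w\in T$ has stabilizer $G_{v'} = G_u *_{G_e} G_w$ (or $G_u *_{G_e}$ if the collapsed edge was a loop). Applying the one-edge case (the already proven base case) to this amalgam or HNN splitting of $G_{v'}$ then yields the relative finite presentations of $G_u$ and $G_w$ in their $T$-incidence structures, after routine bookkeeping of relative families using Lemma \ref{menage} to discard relative groups that are conjugated into larger ones.

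For the base case $q=1$, we have $G = A *_C B$ or $G = A *_C$ with $C = G_e$ finitely generated, and, since each $H_i\in\calh$ is elliptic in $T$, we may assume up to replacing $H_i$ by a conjugate that each $H_i$ is contained in $A$ or in $B$. Using the topological characterization of relative finite presentation from Subsection \ref{sec_prelim_relatif}, represent $G$ as $\pi_1(X)$ with $X$ a connected simplicial 2-complex containing pairwise disjoint connected subcomplexes $Y_1,\dots,Y_p$ realizing $\calh$, such that $X\setminus\bigcup Y_i$ has finitely many open cells. As in the proof of Proposition \ref{prop_accessibility_rel}, build a $G$-equivariant simplicial map $f : \tilde X \to T$ sending each chosen lift $\tilde Y_i$ (with stabilizer $H_i$) to a vertex of $T$ and each edge of $\tilde X$ either to a vertex or injectively onto a segment. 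The preimage $\tilde\tau = f^{-1}(\mathrm{midpoints})$ is a $G$-invariant codimension-one pattern disjoint from $\bigcup g\tilde Y_i$, and its projection $\tau = \tilde\tau/G$ contains only finitely many cells. Let $\tilde X^v$ denote the union of connected components of $\tilde X\setminus\tilde\tau$ lying above the vertex $v$ fixed by $A$; then $\tilde X^v$ is $A$-invariant with stabilizer exactly $A$.

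The final and main step is to enlarge $X$ by attaching finitely many cells so that: (i) each component $\tilde\sigma$ of $\tilde\tau$ bordering $\tilde X^v$ is connected with $\pi_1(\tilde\sigma)$ equal to the corresponding conjugate of $C$, and (ii) each $\tilde Y_i$ with $H_i\subset A$ embeds $\pi_1$-injectively into $\tilde X^v$ with image $H_i$. Since $C$ is finitely generated, this can be arranged by choosing a finite generating set of $C$, realizing each generator as a loop on the $A$-side and on the $B$-side of the pattern, and attaching finitely many 1-cells and 2-cells (cylinders joining these loops through $\tau$) to achieve $\pi_1(\tilde\sigma) = C$; similar care suffices for the $Y_i$'s. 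Then $X_v := \tilde X^v/A$ is a connected 2-complex with $\pi_1(X_v) = A$, containing pairwise disjoint connected subcomplexes realizing exactly the groups of $\Inch_v = \{C\}\cup\calh_{\vert A}$, and with finitely many open cells outside these subcomplexes. This is the required relative finite presentation of $A$.

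The main obstacle is step (i) above: the naive pattern component $\tilde\sigma$ coming from $f$ need not have fundamental group equal to $C$, and ensuring that it does requires a controlled graph-of-spaces style modification of $X$. Finite generation of $C$ is essential here, as it bounds the number of additional cells needed; if $C$ were only relatively finitely generated, a more delicate argument (iterating the construction) would be needed, which is why the analogous statement for relative finite presentation requires finitely generated edge stabilizers.
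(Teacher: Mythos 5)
Your overall strategy (a relative presentation $2$-complex, an equivariant map to $T$, a Dunwoody pattern) is the paper's, and your reduction to the one-edge case by induction on edge orbits is harmless bookkeeping; but the heart of your argument has a genuine gap, in two places. First, the identification $\pi_1(X_v)=A$ for $X_v=\tilde X^v/A$ is not justified and is false in general: $\tilde X^v$ need be neither connected nor simply connected, so the most one can say is that the stabilizer of a component of $\tilde X^v$ --- equivalently the \emph{image} in $G$ of $\pi_1$ of the corresponding piece of $X$ --- is contained in $A$. This is why the paper works with images throughout: its Lemma \ref{relfp_topo} asserts relative finite presentation of the image of $\pi_1(U)$ in $G$, and its proof requires gluing possibly infinitely many discs along the boundary pieces to reduce to the $\pi_1$-injective case. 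You cannot arrange, with finitely many cells, that $\pi_1$ of a track is literally $C$ or that $\pi_1(X_v)$ is literally $A$, and you do not need to --- but you do need some version of that lemma, which is absent from your argument.

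Second, and more seriously, the step you yourself flag as the ``main obstacle'' is exactly where the real work lies, and attaching cylinders along loops representing generators of $C$ does not do it. The problem is not only that an individual track may carry too small a group: the dual tree $S$ of the pattern typically has several orbits of tracks mapping to the same edge orbit of $T$, i.e.\ the induced morphism $S\to T$ folds, and then the stabilizer of a component of $\tilde X\setminus\tilde\tau$ over $v$ is a vertex stabilizer of $S$, in general a proper subgroup of $A$. Your cell attachments enlarge the group carried by one track but provide no mechanism for merging distinct tracks lying over the same edge of $T$, and no bound on how many modifications are needed. The paper resolves precisely this point by writing $S\to T$ as a finite composition of folds --- this is where finite generation of edge stabilizers enters, via \cite{BF_bounding} (or via \cite{LP} when $\calh=\es$) --- and realizing each fold by equivariantly gluing one square along an edge path in a complementary component; after finitely many such gluings the dual tree equals $T$, and only then do Lemma \ref{relfp_topo} and the clean-up Lemma \ref{menage} yield the relative finite presentation of $G_v$ over $\Inch_v$. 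Without an argument of this kind your construction is not shown to terminate in a complex whose pattern realizes $T$, so the final claim that $X_v$ presents $A$ relative to $\Inch_v$ is unsupported.
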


We will  use the following fact.

\begin{lem}\label{relfp_topo}
  Let $X$ be a cellular complex, and let $U\subset X$ be a compact connected subcomplex.
Let $G=\pi_1(X)$, and let $H<G$ be the image of $\pi_1(U)$ in $G$.

Then $H$ is finitely presented relative to the image of the fundamental groups of the connected components
of the topological boundary $\partial U$.

More generally, if $U$ is not compact but there exist finitely many connected disjoint subcomplexes
 $Z_1,\dots,Z_r\subset U$, disjoint from $\partial U$, such that $U\setminus (Z_1\cup\dots\cup Z_r)$ has compact closure,
then $H$ is finitely presented relative to the  images of the fundamental groups of $Z_1,\dots,Z_r$ and of the connected components
of $\partial U$.
\end{lem}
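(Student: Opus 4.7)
My plan is to use the topological characterization of relative finite presentation recalled just after Definition \ref{dfn_relfp}: a group is finitely presented relative to $\{H_1',\dots,H_p'\}$ if and only if it is the fundamental group of a connected 2-complex containing disjoint connected subcomplexes $Y_1',\dots,Y_p'$ (possibly infinite) with $\pi_1(Y_i')$ embedding as a conjugate of $H_i'$, and whose complement has only finitely many open cells. So the task is to exhibit such a 2-complex $Y$ realizing $H$ together with the prescribed relative subgroups. Throughout, write $K_i:=\mathrm{im}(\pi_1(\partial_iU)\to G)$, $\phi_i\colon\pi_1(\partial_iU)\twoheadrightarrow K_i$ for the natural surjection, $N_i:=\ker\phi_i$, and $j_i\colon\pi_1(\partial_iU)\to\pi_1(U)$ for the map induced by inclusion.

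For the first (compact) assertion I construct $Y$ by attaching to $U$, along each boundary component $\partial_iU$, a connected 2-complex $Y_i$ satisfying $\partial_iU\subset Y_i$, $\pi_1(Y_i)=K_i$, and inducing the surjection $\phi_i$ on $\pi_1$. Such $Y_i$ exists by a standard construction: take any connected 2-complex with fundamental group $K_i$, pick a cellular map from $\partial_iU$ realizing $\phi_i$, and glue via the mapping cylinder. Setting $Y:=U\cup\bigsqcup_iY_i$, the $Y_i$'s are disjoint connected subcomplexes, and $Y\setminus\bigsqcup_iY_i=U\setminus\partial U$ has only finitely many open cells since $U$ is compact. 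There is a natural homomorphism $\Phi\colon\pi_1(Y)\to H$ coming from $\pi_1(U)\to G$ on the $U$-part and from the inclusions $K_i\hookrightarrow G$ on each $Y_i$-part, and $\Phi$ is manifestly surjective.

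The main issue is showing $\Phi$ is an isomorphism. By Van Kampen applied one gluing at a time (each $\partial_iU$ being connected),
\[
\pi_1(Y)\;=\;\pi_1(U)*_{\pi_1(\partial_1U)}K_1*_{\pi_1(\partial_2U)}\cdots *_{\pi_1(\partial_pU)}K_p,
\]
where each amalgamation is along a map that is surjective on the $K_i$-side. A standard pushout identity for amalgamation along a surjection then reduces $\pi_1(Y)$ to $\pi_1(U)/\langle\!\langle\,j_i(N_i):i=1,\dots,p\,\rangle\!\rangle$, so the question becomes whether
\[
\ker\bigl(\pi_1(U)\to G\bigr)\;=\;\langle\!\langle\,j_i(N_i)\,\rangle\!\rangle \quad\text{inside }\pi_1(U).
\]
The inclusion $\supset$ is immediate. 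The reverse inclusion is the main obstacle; I would establish it by a standard Van Kampen diagram analysis. Pick $w\in\ker(\pi_1(U)\to G)$, realize it by a null-homotopy $f\colon D^2\to X$, put $f$ in general position with respect to $\partial U$ so that $f^{-1}(\partial U)$ is a disjoint family of embedded circles in the interior of $D^2$, and observe the circles partition $D^2$ into regions mapping alternately into $U$ and into $V:=\overline{X\setminus U}$, the outer region mapping into $U$. An induction on the number of circles (innermost-first: an innermost disc mapping into $U$ bounds a circle already trivial in $\pi_1(U)$ and can be removed; an innermost disc mapping into $V$ bounds a circle representing an element of some $N_i$) expresses $w$, read off along the outer $U$-region, as a product of $\pi_1(U)$-conjugates of elements of $\bigcup_ij_i(N_i)$. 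Finiteness of the relative presentation then follows: $U$ compact makes $\pi_1(U)$ finitely presented and each $\partial_iU$ compact makes $\pi_1(\partial_iU)$ finitely generated, so one may take as finite relative generating set the generators of a finite presentation of $\pi_1(U)$, and as finite relator list the relators of that presentation together with one identification $j_i(c_{i,k})\,\phi_i(c_{i,k})^{-1}$ per generator $c_{i,k}$ of each $\pi_1(\partial_iU)$.

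For the second assertion I apply the first part to the compact subcomplex $U^\flat:=\overline{U\setminus(Z_1\cup\cdots\cup Z_r)}$; its topological boundary in $X$ is the disjoint union of the $\partial_iU$ and compact subcomplexes $\partial Z_j\subset Z_j$, so the first part presents $H^\flat:=\mathrm{im}(\pi_1(U^\flat)\to G)$ finitely relative to $\{K_i\}\cup\{\mathrm{im}(\pi_1(\partial Z_j)\to G)\}$. To pass from $H^\flat$ to $H$, I enlarge the 2-complex produced by the first part: to each $Z_j$ I attach the 2-cells needed to kill $\ker(\pi_1(Z_j)\to G)$, producing a subcomplex $Z_j^+\supset Z_j$ with $\pi_1(Z_j^+)=L_j:=\mathrm{im}(\pi_1(Z_j)\to G)$ and realizing the natural surjection $\pi_1(Z_j)\twoheadrightarrow L_j$. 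The resulting 2-complex $Y$ has disjoint subcomplexes $Y_i$ (for the boundary, with $\pi_1=K_i$) and $Z_j^+$ (with $\pi_1=L_j$), and complement $\overline{U\setminus\bigcup Z_j}\setminus\partial U$ with finitely many open cells. The same pushout-plus-null-homotopy analysis, now applied to a null-homotopy in $X$ put in general position with respect to $\partial U\sqcup\bigsqcup\partial Z_j$, gives $\pi_1(Y)=H$; circles of the null-homotopy bounding discs mapping inside $Z_j$ contribute elements of $\ker(\pi_1(Z_j)\to G)$, which are absorbed into the $L_j$-amalgamation. Finally, augmenting the finite relative presentation of $H^\flat$ by finitely many identification relators coming from generators of each $\pi_1(\partial Z_j)$ yields the required finite relative presentation of $H$.
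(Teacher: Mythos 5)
Your strategy is sound and reaches the right conclusion, but it is genuinely different from the paper's, which is much shorter: the paper reduces to the special case where each $\pi_1(B_i)\to\pi_1(X)$ is injective by gluing (possibly infinitely many) discs to the boundary components so as to kill the kernels $N_i$ (this changes neither $G$ nor $H$); in the resulting complex the boundary subcomplexes are $\pi_1$-injective, so the standard graph-of-groups (Bass--Serre) argument gives that $\pi_1(U')$ injects into $\pi_1(X')$, whence $H\simeq\pi_1(U')=\pi_1(U)/\langle\langle j_i(N_i)\rangle\rangle$, and $U'$ itself, with the enlarged (possibly infinite) boundary subcomplexes, witnesses relative finite presentation via the topological characterization of Definition \ref{dfn_relfp}. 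Your key identity $\ker(\pi_1(U)\to G)=\langle\langle\, j_i(N_i)\,\rangle\rangle$ is exactly equivalent to that special-case injectivity; you prove it by a direct disc-diagram analysis instead of quoting Bass--Serre, and you extract an explicit finite relative presentation. The paper's route turns the hard step into a quotable standard fact; yours is more hands-on and yields an explicit presentation, at the cost of carrying out the diagram argument.

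Two points in your write-up need repair. First, the deferred ``standard Van Kampen diagram analysis'' is where all the content sits: in a cellular complex you must first arrange a bicollar (e.g.\ replace $X$ by the double mapping cylinder of $\partial U\hookrightarrow U$, $\partial U\hookrightarrow V$) before claiming that $f^{-1}(\partial U)$ is a family of circles with adjacent regions on opposite sides; and your innermost-first induction does not run as stated, because an innermost disc mapping into $V$ cannot simply be ``removed'' -- its boundary loop is in general not null-homotopic in $U$ or in $\partial U$. The fix is easy and shortens the argument: read $w$ off along the \emph{outermost} circles only; each outermost circle bounds a subdisc of $D^2$, hence maps to a loop in some $B_i$ that is null-homotopic in $X$, i.e.\ an element of $N_i$ (here it is essential that $N_i$ is the kernel of the map to $G$, not to $\pi_1(V)$), and the outer region maps into $U$, so the conjugators lie in $\pi_1(U)$. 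Second, the detour through $U^\flat$ in the non-compact case is both unnecessary and incorrect as stated: $U^\flat$ may be disconnected, its frontier need not be $\partial U$ together with the frontiers of the $Z_j$, and $H^\flat$ can be a proper subgroup of $H$ (it misses the ``stable letters'' arising from loops travelling through the $Z_j$'s), so ``augmenting the finite relative presentation of $H^\flat$ by identification relators'' does not present $H$. The complex you actually describe at the end does the job, though: take $U$ with the mapping cylinders $Y_i$ attached along $\partial U$ and each $Z_j$ capped off to $Z_j^+$ with $\pi_1(Z_j^+)=L_j$; the complement of the distinguished subcomplexes has finitely many open cells because $\overline{U\setminus\bigcup_j Z_j}$ is compact, the same kernel analysis (transversality with respect to $\partial U$ only -- the $Z_j$'s are irrelevant there, since the images of $\ker(\pi_1(Z_j)\to G)$ already lie in $\ker(\pi_1(U)\to G)$) gives $\pi_1(Y)\simeq H$ with $K_i$ and $L_j$ embedded, and the topological characterization then yields the conclusion directly. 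So drop the $U^\flat$ step and argue on that complex.
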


\begin{proof}
 
Denote by $B_1,\dots,B_r$ the connected components of $\partial U$.  
In the special case when each map $\pi_1(B_i)\to\pi_1(X)$ is injective, a standard argument shows that   the map $\pi_1(U)\to\pi_1(X)$ is also injective, so   $H\simeq \pi_1(U)$ is finitely presented, and finitely presented  relative to its   finitely generated subgroups $\pi_1(B_i)$. One reduces to the special case by gluing (possibly infinitely many) discs to the $B_i$'s.
The proof of the second assertion is similar.
\end{proof}

\begin{proof}[Proof of Proposition \ref{Gvrelfp}]

Assume that  $G$ is   finitely presented relative to $\{H_1,\dots,H_p\}$.
As in  the proof of Proposition \ref{prop_accessibility_rel}, we consider a $2$-complex $X$ and disjoint subcomplexes $Y_1,\dots,Y_p$ with $\ol{X\setminus (Y_1,\dots,Y_p)}$ compact and 
  $\pi_1(Y_i)\simeq H_i$. We let $\pi:\Tilde X\ra X$ be the universal cover, and   $\Tilde Y_i$  a connected component of $\pi\m(Y_i)$ whose stabilizer is $H_i$.
Let  $f:\Tilde X\ra T$ be an equivariant map which sends vertex to vertex, sends
each $\Tilde Y_i$ to a vertex fixed by $H_i$, is constant or  injective
on each edge,
and is standard on each triangle. 

Consider the pattern $\tilde \tau\inc \tilde X$ obtained as the preimage of all midpoints of edges of $T$, and its projection $\tau\inc X$. Let $W\inc X$ be the closure   of the complement of a regular neighborhood of $\tau$. 

Let $S$ be the   tree dual to $\tilde \tau$. 
First suppose that $S=T$. Applying Lemma \ref{relfp_topo} to the component $U$ of $W$ corresponding to $v$ shows that $G_v$ is finitely presented relative to the family $\calh'_v$ consisting of  $\Inc_v$ and those $H_i$'s for which $Y_i$ is contained in $U$. If $\calh\ne\es$ this may not be quite the required family $\Inch_v$: we have to remove $H_i$ if it fixes an edge of $T$ (see Subsection \ref{sec_def_periph}). But such an $H_i$ is contained (up to conjugacy in $G_v$) in a group belonging to $\Inch_v$, and we can use Lemma \ref{menage}.

Now suppose $S\ne T$, and recall that edge stabilizers of $T$ are finitely generated. When $\calh=\es$ we may  use \cite[Theorem 0.6]{LP}, saying that $T$ is geometric,  to construct $\tilde X$ and  $f$ such that $S=T$. In general, we may write the induced map $g:S\to T$ as a finite composition of folds (see the proposition on page 455 of   \cite{BF_bounding}). It therefore suffices to show that, 
given a factorization $g= h\circ\rho$ with $\rho$ a fold,   we may change $\tilde X$ to a new complex $\tilde X_1$, with $f_1: \tilde X_1\to T$, so that  the associated map   $g_1$ equals $h$.

Let $e$, $e'$ be adjacent edges of $S$ which are folded by $\rho$. They are dual to components 
 of $\tilde\tau$ adjacent to a component $Z$ of $\tilde X\setminus \tilde\tau$. 
Let $ab$ and $a'b'$ be edges of $\tilde X$ mapping to $e$ and $e'$ in $S$ respectively, with $a$ and $a'$ in $Z$ (subdivide $X$ if needed). Let $\gamma$ be an edge path joining $a$ to $a'$ in $Z$. Note that $\gamma$ is mapped to a single vertex in $S$, and that $b$, $b'$ have the same image in $T$. 

Now glue a square $[0,1]\times[0,1]$  to $\tilde X$, with vertical edges glued to $e$, $e'$ and the edge $[0,1]\times\{0\}$ glued to $\gamma$ (the edge $[0,1]\times\{1\}$ is free). The map $f:\tilde X\to T$ extends to the square, with all vertical arcs $\{t\}\times[0,1]$ mapped to the edge  $f(e)=f(e')$.  
Doing this gluing $G$-equivariantly yields the desired $\tilde X_1$. 
\end{proof}

\subsection{JSJ decompositions of vertex groups}\label{sec_JSJ_vx}

 Given an $(\cala, \calh)$-tree $T$,
we compare   splittings  of $G$ and relative splittings of vertex stabilizers $G_v$.

Recall that $\Inch_v$ is the family of incident edge stabilizers together with $\calh_{|G_v}$ (see  Subsection \ref{sec_def_periph}), and 
  that
  $G_v$ is 
finitely generated relative to $\Inch_v$. In particular, whenever $G_v$ acts on an $(\cala, \calh)$-tree $S$ relative to $\Inc_v$ with no global fixed point, there is
a unique minimal $G_v$-invariant subtree $\mu_S(G_v)\inc S$ by Proposition \ref{arbtf}. 
We view  $\mu_S(G_v)$ as a tree with an action of $G_v$; if $G_v$ is elliptic in $S$, we let $\mu_S(G_v)$ be any   fixed point.
 
 \begin{dfn}[$\cala_v$]
  We denote by  $\cala_v$\index{0AV@$\cala_v$: groups of $\cala$ in $G_v$}    the family consisting of subgroups of $G_v$ belonging to $\cala$. All splittings of $G_v$ will be over groups in $\cala_v$.
\end{dfn}

\begin{lem} \label{extens} 
Let $G_v$ be a vertex stabilizer of an $(\cala, \calh)$-tree $T$.
 Any splitting of $G_v$
relative to $\Inch_v$ extends (non-uniquely)  to a splitting of $G$
relative to $\calh$.

 More precisely, given an $(\cala_v,\Inch_v)$-tree $S_v$,
there exist an $(\cala,\calh)$-tree $\hat T$ and 
a collapse map  $p:\hat T\to T$ such that $p\m(v)$ 
is $G_v$-equivariantly isomorphic to $S_v$. 
\end{lem}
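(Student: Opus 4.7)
The plan is to construct $\hat T$ by a blow-up construction: replace each vertex in the $G$-orbit of $v$ by an equivariant copy of $S_v$, and re-attach each edge of $T$ incident to the orbit at an appropriate vertex of $S_v$. The collapse map $p$ will then crush each copy of $S_v$ back to the corresponding vertex of $T$.

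More precisely, first I would choose attachment points. For each edge $e$ of $T$ with endpoint $v$, the stabilizer $G_e$ lies in $\Inc_v$, hence is elliptic in $S_v$, so I can pick a vertex $x_e \in S_v$ fixed by $G_e$. Doing this for one representative in each $G_v$-orbit of edges at $v$, and then transporting by $G_v$, gives a $G_v$-equivariant assignment $e \mapsto x_e$ for all edges at $v$. Extending by the $G$-action to all edges incident to the orbit $G\cdot v$ (using $x_{ge} := g\cdot x_e$, which is well-defined because $gx_e = x_e$ whenever $g\in G_e$), I get a $G$-equivariant family of attachment points.

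Then I would define $\hat T$ formally by taking the disjoint union $\bigl(\bigsqcup_{v'\in G\cdot v} S_{v'}\bigr) \sqcup (T\setminus (G\cdot v))$, where each $S_{v'}$ is a copy of $S_v$ carrying the induced action of $G_{v'}$, and glue each half-edge of $T$ incident to $v'$ to the chosen attachment point in $S_{v'}$. Alternatively, and more transparently, one can work with graphs of groups: take $\Gamma = T/G$, replace the image $\bar v$ of $v$ by the graph of groups $S_v/G_v$, and attach each incident edge of $\Gamma$ at the vertex of $S_v/G_v$ corresponding to its chosen attachment point. The Bass-Serre tree of the resulting graph of groups is $\hat T$, with the obvious collapse map $p:\hat T\to T$ and $p^{-1}(v)=S_v$ as $G_v$-trees.

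It remains to verify that $\hat T$ is an $(\cala,\calh)$-tree. Edge stabilizers of $\hat T$ are either edge stabilizers of $T$ (hence in $\cala$) or edge stabilizers of copies of $S_v$ (hence in $\cala_v \subseteq \cala$). For the $\calh$-condition, let $H\in\calh$; it is elliptic in $T$, fixing some vertex $w$. If $w\notin G\cdot v$, then $H$ fixes the corresponding vertex of $\hat T$. If $w=gv$, then $g^{-1}Hg \subseteq G_v$ is conjugate to a group in $\calh$; by Remark \ref{ashv}, it is contained up to $G_v$-conjugacy in a group of $\Inch_v$, hence elliptic in $S_v$, so $H$ is elliptic in $g\cdot S_v \subseteq \hat T$. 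Minimality is either automatic or achieved by passing to the unique minimal $G$-invariant subtree; one can either observe that the construction already produces a minimal tree (since the attachment points are by choice needed in order to carry edges) or note that Proposition \ref{prop_refinement} permits replacing $\hat T$ by its minimal subtree. The only choice was the attachment points $x_e$, which is precisely the non-uniqueness of the refinement.
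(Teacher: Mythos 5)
Your construction is exactly the one the paper uses: it refers back to the blow-up construction of Proposition \ref{prop_refinement} with $Y_v=S_v$ (attachment points fixed by incident edge stabilizers, chosen equivariantly), and handles the $\calh$-condition by the same observation as your appeal to Remark \ref{ashv}. So the proposal is correct and essentially identical to the paper's proof, just written out in more detail.
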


We say that $\hat T$ is obtained by \emph{refining $T$ at $v$ using $S_v$}.\index{refining $T$ at a vertex $v$}
 More generally, one may choose a splitting for each orbit of vertices of $T$, and refine $T$ using them. 
Any refinement of $T$ may be obtained by this construction (possibly with non-minimal trees $S_v$).

\begin{proof}
We construct $\hat T$ as in the proof of Proposition \ref{prop_refinement} , with $Y_v=S_v$. It is relative to $\calh$ because any group in $\calh$ which is conjugate to 
a subgroup of $G_v$ is conjugate to a subgroup of a group 
belonging to $\Inch_{v}$. Non-uniqueness comes from the fact that there may be several ways of attaching
edges of $T$
to $S_v$ (see \cite[Section 4.2]{GL_vertex}).
\end{proof}

\begin{lem}\label{lem_passage}
 Let $ G_v$ be a vertex stabilizer   of a universally elliptic tree $T$.  
\begin{itemize}
\item  The groups in $\Inch_v$ are elliptic in every $(\cala,\calh)$-tree $S$.
\item
 A subgroup  $H< G_v$ is $(\cala,\calh)$-universally elliptic\index{universally elliptic subgroup, tree} (as a subgroup of $G$)
if and only if it is $(\cala_v,\Inch_v)$-universally  elliptic (as a subgroup of $G_v$).
\end{itemize}
\end{lem}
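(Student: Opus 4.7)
The plan is to handle the first bullet directly, then establish the two implications of the second bullet by respectively extending splittings of $G_v$ to splittings of $G$ (for $\Rightarrow$) and restricting actions of $G$ to the minimal $G_v$-invariant subtree (for $\Leftarrow$). For the first bullet, any group in $\Inc_v$ is a stabilizer $G_e$ of an edge $e$ of $T$, hence lies in $\cala$; since $T$ is universally elliptic, $G_e$ is elliptic in every $(\cala,\calh)$-tree $S$. A group in $\calh_{|G_v}$ is by definition conjugate to some $H_i\in\calh$, and every group in $\calh$ is elliptic in $S$ by definition of an $(\cala,\calh)$-tree, so its $G$-conjugates are too.

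For the $\Rightarrow$ direction of the second bullet, let $S_v$ be an $(\cala_v,\Inch_v)$-tree for $G_v$. Apply Lemma \ref{extens} to obtain an $(\cala,\calh)$-tree $\hat T$ with a collapse map $p\colon\hat T\to T$ such that $p^{-1}(v)$ is $G_v$-equivariantly isomorphic to $S_v$. Since $H$ is $(\cala,\calh)$-universally elliptic as a subgroup of $G$, it fixes some point $x\in\hat T$. As $H\subset G_v$, the subtree $p^{-1}(v)$ is $H$-invariant, and the closest-point projection of $x$ onto $p^{-1}(v)$ is then a point of $p^{-1}(v)\simeq S_v$ fixed by $H$. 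Hence $H$ is elliptic in $S_v$.

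For the $\Leftarrow$ direction, let $S$ be any $(\cala,\calh)$-tree. Restrict the action to $G_v$. By the first bullet (already established), every group in $\Inch_v$ is elliptic in $S$. Since $G_v$ is finitely generated relative to $\Inch_v$ by Lemma \ref{relfg}, Proposition \ref{arbtf} applied to the $G_v$-action on $S$ gives either a global fixed point of $G_v$ (in which case $H$ is trivially elliptic in $S$) or a unique minimal $G_v$-invariant subtree $S_v=\mu_S(G_v)$. Its edge stabilizers are subgroups of edge stabilizers of $S$, so lie in $\cala_v$; and each $K\in\Inch_v$ is contained in $G_v$ and elliptic in $S$, hence fixes a point in the $K$-invariant subtree $S_v$, again by closest-point projection. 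Thus $S_v$ is an $(\cala_v,\Inch_v)$-tree, the hypothesis yields that $H$ is elliptic in $S_v$, and a fortiori in $S$.

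The only mildly non-trivial ingredient is the projection argument used twice above, namely that an elliptic subgroup acting on a tree fixes a point in every invariant subtree; the rest is routine translation between the action of $G$ on $T$-refinements and the action of $G_v$ on its minimal subtrees. I do not anticipate any significant obstacle beyond checking minimality of $S_v$ and that the ellipticity axioms carry over between $S$ and $S_v$, both of which follow from this standard fact.
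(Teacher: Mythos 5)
Your proof is correct and takes essentially the same route as the paper's: the ``only if'' direction is exactly the paper's appeal to Lemma \ref{extens} (with the standard fact that an elliptic subgroup fixes a point in any invariant subtree making the projection step explicit), and the ``if'' direction is the paper's argument via the minimal $G_v$-invariant subtree $\mu_S(G_v)$ obtained from Lemma \ref{relfg} and Proposition \ref{arbtf}, using the first bullet to see that this subtree is an $(\cala_v,\Inch_v)$-tree.
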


The second assertion says that $H$ is 
elliptic in every $(\cala,\calh)$-tree on which $G$ acts if and only if it is 
elliptic in every $(\cala_v,\Inc_v\cup \calh_{|G_v})$-tree on which $G_v$ acts. If this holds, we simply say that $H$ is universally elliptic.

\begin{proof}
The first assertion is clear: $S$ is relative to $\calh_{|G_v}$,
and also to $\Inc_v$ because $T$ is universally elliptic.

Suppose that $H$ is $(\cala_v,\Inch_v)$-universally  elliptic, as a subgroup of $G_v$.
Let $S$ be any $(\cala,\calh)$-tree.
It is relative to $\Inch_v$ by the first assertion. 
Since $G_v$ is finitely generated relative to $\Inch_v$, it has a minimal subtree $S_v\subset S$ by Proposition \ref{arbtf}.
The action of $G_v$ on $S_v$ is an $(\cala_v,\Inch_v)$-tree, so by  assumption $H$ fixes a point in $S_v$, hence in $S$.

 We have proved the ``if'' direction  in the second assertion, and 
the converse   follows from Lemma \ref{extens}.
\end{proof}

\begin{cor}\label{cor_flex}
  Let $G_v$ be a  vertex stabilizer of a JSJ tree $T_J$.  
  
\begin{enumerate}
\item
$G_v$ does not split over a universally elliptic subgroup relative to $  \Inch_v$.
\item
$G_v$ is flexible\index{flexible vertex, group, stabilizer} if it splits relative to $ \Inch_v$, rigid otherwise.
\end{enumerate}
\end{cor}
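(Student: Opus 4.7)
The plan is to derive both parts from Lemmas \ref{extens} and \ref{lem_passage}, using the defining maximality property of a JSJ tree.

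For part (1), I will argue by contradiction. Suppose $G_v$ admits a non-trivial splitting over a universally elliptic subgroup, relative to $\Inch_v$; equivalently, there is a non-trivial $(\cala_v,\Inch_v)$-tree $S_v$ whose edge stabilizers are $(\cala_v,\Inch_v)$-universally elliptic. By Lemma \ref{extens}, I can refine $T_J$ at $v$ using $S_v$ to produce an $(\cala,\calh)$-tree $\hat T$ with a collapse map $p:\hat T\to T_J$ whose fiber over $v$ is $S_v$. The edge stabilizers of $\hat T$ are either edge stabilizers of $T_J$ (universally elliptic by hypothesis on $T_J$) or edge stabilizers of $S_v$, which are $(\cala,\calh)$-universally elliptic by Lemma \ref{lem_passage}. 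Hence $\hat T$ is universally elliptic, so $T_J$ dominates it; thus $T_J$ and $\hat T$ lie in the same deformation space and share the same elliptic subgroups. But $G_v$ is elliptic in $T_J$ while it acts non-trivially on $p\m(v)=S_v\inc \hat T$, which is the desired contradiction.

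For part (2), I will prove the equivalence ``$G_v$ flexible $\iff$ $G_v$ splits relative to $\Inch_v$.'' For the forward direction, assume $G_v$ is flexible. Then there exists an $(\cala,\calh)$-tree $S$ in which $G_v$ is not elliptic. By Lemma \ref{lem_passage}, every group in $\Inch_v$ is elliptic in $S$, so the minimal $G_v$-invariant subtree $\mu_S(G_v)\inc S$ is an $(\cala_v,\Inch_v)$-tree on which $G_v$ acts non-trivially; hence $G_v$ splits relative to $\Inch_v$. For the converse, assume $G_v$ splits relative to $\Inch_v$, and let $S_v$ be a non-trivial $(\cala_v,\Inch_v)$-tree. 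Refining $T_J$ at $v$ using $S_v$ (Lemma \ref{extens}) produces an $(\cala,\calh)$-tree $\hat T$ in which $G_v$ acts non-trivially on $p\m(v)$, so $G_v$ is not elliptic in $\hat T$, and is therefore flexible.

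Neither step is especially subtle; the main point to verify carefully is the universal ellipticity of $\hat T$ in part (1), which is exactly where Lemma \ref{lem_passage} is essential: an edge stabilizer of $S_v$ is a priori only $(\cala_v,\Inch_v)$-universally elliptic inside $G_v$, and one must upgrade this to $(\cala,\calh)$-universal ellipticity inside $G$ in order to compare $\hat T$ with $T_J$ via the JSJ maximality property. Once that equivalence is in hand, both statements are immediate consequences of the refinement construction.
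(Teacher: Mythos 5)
Your proof is correct and follows essentially the same route as the paper: part (1) is the same refine-via-Lemma \ref{extens}, upgrade edge stabilizers via Lemma \ref{lem_passage}, and invoke JSJ maximality to force the two trees into one deformation space; for part (2) the paper simply cites Lemma \ref{lem_passage} with $H=G_v$, and your two directions (minimal subtree of $G_v$ in $S$, and refinement via Lemma \ref{extens}) just unfold the proof of that lemma in this special case. The only step you leave implicit — that $G_v$, acting non-trivially on the invariant subtree $p\m(v)=S_v$, cannot be elliptic in $\hat T$ — is the standard projection-to-a-subtree argument and is not a real gap.
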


\begin{proof}
If there is a splitting as in (1), we may use it to refine $T_J$ to a universally elliptic tree (see Lemma \ref{extens}). This tree must be in the same deformation space as $T_J$, so the splitting of $G_v$ must be trivial. (2) follows from Lemma \ref{lem_passage} applied with $H=G_v$.
\end{proof}

\begin{prop}\label{prop_JSJ_sommets}
  Let $T$ be a universally elliptic $(\cala,\calh)$-tree.
\begin{enumerate}

\item
Assume that every vertex stabilizer $G_v$ of $T$ has a JSJ tree $T_v$ relative to $ \Inch_v$. One can then refine\index{refinement} $T$ using these decompositions so as to obtain a JSJ tree of $G$ relative to $\calh$. 

\item Conversely,
if $T_J$ is a JSJ tree of $G$ relative to $\calh$, and $G_v$ is a vertex stabilizer of $T$,  one obtains a JSJ tree for $G_v$ relative to $ \Inch_v$ by considering 
the action of  $G_v$ on its minimal subtree\index{minimal subtree} $T_v=\mu_{T_J}(G_v)$ in $T_J$ (with $T_v$ a point if $G_v$ is elliptic).
\end{enumerate}
\end{prop}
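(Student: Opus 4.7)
The plan is to treat both parts by transferring universal ellipticity back and forth between $G$ and the vertex stabilizers $G_v$ via Lemma \ref{lem_passage}, and by using Lemma \ref{extens} to convert splittings of $G_v$ relative to $\Inch_v$ into splittings of $G$ relative to $\calh$.

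For (1), I would construct a candidate JSJ tree $\hat T$ by picking one representative from each $G$-orbit of vertices of $T$ and refining $T$ at each such $v$ using $T_v$ (Lemma \ref{extens}). Universal ellipticity of $\hat T$ splits into two cases for its edges: those that project to edges of $T$ have universally elliptic stabilizers by hypothesis, while those lying in the preimage of a vertex $v$ have stabilizers that are $(\cala_v,\Inch_v)$-universally elliptic in $G_v$, hence $(\cala,\calh)$-universally elliptic in $G$ by Lemma \ref{lem_passage}. For maximality, let $S$ be any universally elliptic $(\cala,\calh)$-tree; I must show that every vertex stabilizer of $\hat T$ is elliptic in $S$. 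Such a stabilizer is a vertex stabilizer of some $T_v$ (as a $G_v$-tree). Since $T$ is universally elliptic and $S$ is $\calh$-relative, the groups in $\Inch_v$ are elliptic in $S$, so $G_v$ has a minimal (possibly degenerate) subtree $S_v\subset S$ which is an $(\cala_v,\Inch_v)$-tree. Its edge stabilizers, being universally elliptic in $G$, are universally elliptic in $G_v$ (Lemma \ref{lem_passage} again), so $S_v$ is universally elliptic for $G_v$. Since $T_v$ is a JSJ tree of $G_v$ relative to $\Inch_v$, $T_v$ dominates $S_v$, giving the required ellipticity.

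For (2), first note that $T_v=\mu_{T_J}(G_v)$ is an $(\cala_v,\Inch_v)$-tree: its edge stabilizers are edge stabilizers of $T_J$ (so lie in $\cala_v$), and each group in $\Inch_v$ is elliptic in $T_J$ (for $\Inc_v$ because $T$ is universally elliptic; for $\calh_{|G_v}$ because $T_J$ is $\calh$-relative) and, being a subgroup of $G_v$, fixes a point in $T_v$. Universal ellipticity of $T_v$ over $(\cala_v,\Inch_v)$ is immediate from Lemma \ref{lem_passage}. For maximality, given any $(\cala_v,\Inch_v)$-universally elliptic tree $S_v$, apply Lemma \ref{extens} to refine $T$ at $v$ using $S_v$, obtaining an $(\cala,\calh)$-tree $\hat T$ with collapse map $p:\hat T\to T$ and $p^{-1}(v)\cong S_v$ as a $G_v$-tree. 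The edge stabilizers of $\hat T$ are universally elliptic (those from $T$ by hypothesis, those from $S_v$ by Lemma \ref{lem_passage}), so $T_J$ dominates $\hat T$.

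The one point that requires care, and which I regard as the only real technical step, is transferring ellipticity from $\hat T$ back to $S_v$. For this I would use the following observation: if $H<G_v$ fixes a point $x\in\hat T$, then since $G_v$ preserves the subtree $p^{-1}(v)\cong S_v$ and $H\subset G_v$, the nearest-point projection of $x$ onto $p^{-1}(v)$ (which is $x$ itself when $x\in p^{-1}(v)$, and the endpoint of the bridge otherwise) is an $H$-fixed point of $S_v$. Applying this to vertex stabilizers of $T_v$, which are automatically elliptic in $T_J$ and hence in $\hat T$, shows that $T_v$ dominates $S_v$, completing the proof that $T_v$ is a JSJ tree for $G_v$ relative to $\Inch_v$.
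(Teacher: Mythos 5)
Your proof is correct and follows essentially the same route as the paper: refine via Lemma \ref{extens}, transfer universal ellipticity between $G$ and $G_v$ with Lemma \ref{lem_passage}, and use minimal subtrees for maximality. The "technical step" you isolate at the end is just the standard fact, already recorded in the preliminaries, that an elliptic subgroup fixes a point in any invariant subtree (here $p^{-1}(v)\cong S_v$), which is exactly what the paper uses implicitly in its chain "elliptic in $T_J$, hence in $\hat T$, hence in $S_v$".
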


 \begin{proof} 
To prove (1), let $\Hat T$ be the tree obtained by refining $T$ using the $T_v$'s as in Lemma  \ref{extens}.
It is relative to $\calh$, 
and universally elliptic by Lemma \ref{lem_passage}  since 
its edge stabilizers are edge stabilizers of $T$ or of some $T_v$.
To show maximality, we consider  another  universally elliptic $(\cala,\calh)$-tree $T'$,
 and we show that any vertex stabilizer $H$ of $\Hat T$ is elliptic in $T'$. It is a vertex stabilizer of  some $T_v$, with $v$ a vertex of $T$.
If $G_v$ is not elliptic in $T'$, 
its minimal subtree $Y_v$ is a universally elliptic  $(\cala_v, \Inch_v)$-tree.   Since $T_v$ is a JSJ tree, it  dominates $Y_v$  so $H$ is elliptic in $T'$.
This proves (1).

Now let $T_J$
and $T_v\subset T_J$ be as in (2).
By Lemma \ref{lem_passage} $T_v$  is relative to $  \Inch_v$, 
and
it is $(\calh_v,\Inch_v)$-universally elliptic because its edge stabilizers are contained in edge stabilizers of $T_J$. 
To prove maximality of $T_v$,  consider another tree $S_v$ with an action of $G_v$ which is relative to $  \Inch_v$  and universally elliptic. Use it to refine $T$ to a tree $\Hat T$ as in Lemma \ref{extens}. As above,
$\Hat T$ is relative to $\calh$,
and 
  universally elliptic by Lemma \ref{lem_passage}.
Being a JSJ tree,  $T_J$ dominates $\Hat T$. Vertex stabilizers of $T_v$ are elliptic in $T_J$, hence in $\Hat T$, hence in $S_v$, so $T_v$ dominates $S_v$. This proves (2).
 \end{proof}
 
 The following corollary says that one may usually restrict to one-ended\index{one-ended relative to $\calh $} groups when studying JSJ decompositions.
 
\begin{cor}\label{cor_unbout} Suppose that $\cala$ contains all finite subgroups of $G$.

 If $\calh=\es$, 
refining a Grushko decomposition\index{Grushko decomposition, deformation space} of $G$ using  JSJ decompositions of free factors
yields a JSJ decomposition of $G$.

Similarly,
refining a Stallings-Dunwoody\index{Stallings-Dunwoody deformation space} decomposition of $G$  using   JSJ decompositions of the vertex groups $G_v$ 
yields a JSJ decomposition of $G$.

  If $\calh\ne\es$, one must use relative Grushko or Stallings-Dunwoody decompositions, 
and JSJ decompositions of vertex groups $G_v$ relative to $\calh_{|G_v}$.
  
  Every flexible subgroup of $G$ is a flexible subgroup of  some  $G_v$. 
\end{cor}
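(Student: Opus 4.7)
The plan is to deduce this corollary directly from Proposition \ref{prop_JSJ_sommets}, after checking that any (relative) Grushko or Stallings-Dunwoody decomposition is a universally elliptic $(\cala,\calh)$-tree. For a Grushko decomposition, edge stabilizers are trivial; in particular they lie in $\cala$ and are elliptic in every tree. For a Stallings-Dunwoody decomposition, edge stabilizers are finite: by hypothesis they lie in $\cala$, and finite groups have Serre's property (FA), so they are universally elliptic. Hence in both cases the given tree $T$ is a universally elliptic $(\cala,\calh)$-tree.

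Next, for each vertex stabilizer $G_v$ of $T$, we assume we have a JSJ tree $T_v$ of $G_v$ over $\cala_v$ relative to $\Inch_v$ (which when $\calh=\es$ reduces to a JSJ tree of $G_v$ relative to $\Inc_v$, and in general extends this with $\calh_{|G_v}$; note that by Remark \ref{ashv} any conjugate of a group of $\calh$ lying in $G_v$ is already elliptic in any such tree, so relative to $\Inch_v$ is the same as relative to $\Inc_v$ plus $\calh_{|G_v}$). By Lemma \ref{extens} one may refine $T$ at every orbit of vertices using these trees $T_v$, producing an $(\cala,\calh)$-tree $\hat T$. Part (1) of Proposition \ref{prop_JSJ_sommets} then asserts exactly that $\hat T$ is a JSJ tree of $G$ over $\cala$ relative to $\calh$, which yields the first two assertions of the corollary (and their relative analogues).

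For the last assertion, let $H$ be a flexible subgroup of $G$. As observed after Definition \ref{prop_def}, all JSJ trees share the same vertex stabilizers (up to conjugacy), so $H$ is, up to conjugation, a vertex stabilizer of the specific JSJ tree $\hat T$ produced above. By construction of $\hat T$ as a refinement of $T$ by the trees $T_v$, every vertex stabilizer of $\hat T$ is a vertex stabilizer of some $T_v$; hence $H$ is (conjugate to) a vertex stabilizer of some $T_v$, with $v$ a vertex of $T$. Moreover, $H$ being flexible in $G$ means exactly that $H$ is not $(\cala,\calh)$-universally elliptic; by Lemma \ref{lem_passage} this is equivalent to $H$ not being $(\cala_v,\Inch_v)$-universally elliptic as a subgroup of $G_v$. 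Therefore $H$ is a flexible vertex stabilizer of the JSJ tree $T_v$ of $G_v$, i.e.\ a flexible subgroup of $G_v$, as claimed.

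There is no real obstacle here: the proof is essentially bookkeeping, combining Proposition \ref{prop_JSJ_sommets} with Lemma \ref{lem_passage}. The only point that requires the hypothesis on $\cala$ is that Grushko and Stallings-Dunwoody trees are genuinely $(\cala,\calh)$-trees and are universally elliptic, which relies on $\cala$ containing all finite subgroups and on property (FA) for finite groups.
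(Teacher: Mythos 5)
Your proof is correct and follows essentially the same route as the paper, which likewise applies Proposition \ref{prop_JSJ_sommets} with $T$ the Grushko or Stallings--Dunwoody tree (universally elliptic since its edge stabilizers are trivial or finite) and deduces the flexibility assertion from Corollary \ref{cor_flex}, whose content is exactly your invocation of Lemma \ref{lem_passage}. The only point the paper makes explicit that you leave implicit is that incident edge groups here are trivial or finite, hence elliptic in every splitting of $G_v$, so the JSJ of $G_v$ relative to $\Inch_v$ coincides with the decomposition named in the statement (absolute when $\calh=\es$, relative to $\calh_{|G_v}$ otherwise).
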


As mentioned  in Subsection \ref{Dunw},  Stallings-Dunwoody decompositions only  exist under some accessibility assumption.

 \begin{proof} 
This follows from the proposition,   applied with $T$ in the Grushko or Stallings-Dunwoody deformation space:   finite groups are universally elliptic, and every splitting of $G_v$ is relative to $\Inc_v$. The assertion about flexible subgroups follows from Corollary \ref{cor_flex}.
\end{proof}

\subsection{Relative JSJ decompositions through fillings}\label{filling}

  Fix   a finitely
presented group $G$ and a family $\cala$. In Subsection \ref{rela} we have shown the existence of  the JSJ deformation space of  $G$  relative to a finite set  $\calh=\{H_1,\dots,H_p\}$  of
finitely generated subgroups.  
We now give an alternative construction, using     (absolute) JSJ
decompositions  of another group $\hat G$ obtained by a \emph{filling} construction.  This construction will be used in Subsections \ref{periph} and \ref{ab} to provide examples of flexible groups.

\begin{figure}[htbp]
  \centering
  \includegraphics{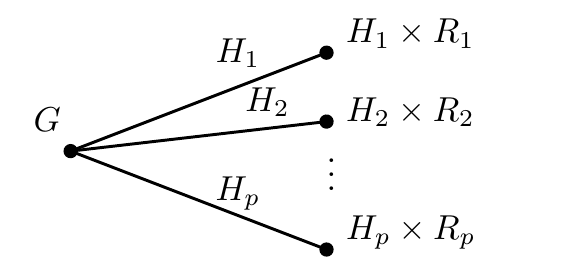}
  \caption{The group $\hat G$ obtained by the \emph{filling} construction.}
  \label{fig_fill}
\end{figure}

\paragraph{The filling construction.}\index{filling construction}
Let $G$  and $\calh$ be as above. 
For $i\in\{1,\dots,p\}$, we let $R_i$ be  a  non-trivial finitely presented group with property (FA),
and we define a group
$\hat G$ by amalgamating  $G$ with  $K_i=H_i\times R_i$ over $H_i$ for $i=1,\dots,p$ (see Figure \ref{fig_fill});  in other words, $\hat G=((G*_{H_1}K_1)*\dots)*_{H_p} K_p$. 
It  is finitely presented. We denote by $T$ the Bass-Serre tree of this amalgam, and by $v$ the vertex of $T$ with stabilizer $G$.   The stabilizer of an edge  with origin $v$ is conjugate  to 
    one of the $H_i$'s in $G$.

Fix a family $\calb$ of
subgroups of $\hat G$ such that
$\calb_{v}=\cala$ and $R_i\notin \calb$,  for instance 
the family of subgroups of $\Hat G$ having a  conjugate in $\cala$ (note that  two subgroups of $G$ which are conjugate in $\hat G$ are also conjugate in $G$  because $H_i$ is central in $K_i$,  so this family $\calb$
  induces $\cala$).

A subgroup of $G$ is $(\cala,\calh)$-universally elliptic if it is elliptic in all
splittings of $G$ over $\cala$ relative to $\calh$. 
A subgroup of $\hat G$ is $\calb$-universally elliptic
if it is elliptic in all
splittings of $\hat G$
with edge groups in $\calb$ 
(splittings of $\hat G$ are non-relative).

\begin{lem} \label{lem_relu} $H_i\times R_i$ is
$\calb$-universally elliptic. 
A subgroup $J\subset G$ is $(\cala,\calh) $-universally elliptic
if and only if $J$ (viewed as a subgroup of $\hat G$) is
$\calb $-universally elliptic.
\end{lem}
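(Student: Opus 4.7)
For the first assertion, I would exploit property (FA) for $R_i$. In any $\calb$-tree $\hat S$ for $\hat G$, the fixed point set $\Fix(R_i)$ is a nonempty subtree. Since $\calb$ is closed under taking subgroups (as is the example $\calb$ given, the family of subgroups of $\hat G$ having a conjugate in $\cala$) and $R_i\notin\calb$, no edge stabilizer of $\hat S$ can contain $R_i$. Hence $\Fix(R_i)$ is a single vertex $v$. Because $H_i$ centralizes $R_i$ inside $K_i=H_i\times R_i$, the group $H_i$ preserves $\Fix(R_i)=\{v\}$ and therefore $K_i$ fixes $v$, proving the claim.

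For the forward direction of the second assertion, assume $J\subset G$ is $(\cala,\calh)$-universally elliptic and let $\hat S$ be any $\calb$-tree for $\hat G$. By the first assertion each $K_i$, and hence each $H_i$, is elliptic in $\hat S$. If $G$ is not elliptic in $\hat S$, apply Proposition \ref{arbtf} (which applies since $G$ is finitely generated and each $H_i$ is elliptic) to get a minimal $G$-invariant subtree $S_G\subset\hat S$. Its edge stabilizers are subgroups of $G$ lying in $\calb$, hence in $\calb_v=\cala$. Moreover, each $H_i$ is elliptic on the $H_i$-invariant subtree $S_G$, since elliptic subgroups meet any invariant subtree. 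Thus $S_G$ is an $(\cala,\calh)$-tree, so $J$ fixes a point in $S_G\subset\hat S$.

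For the backward direction, assume $J$ is $\calb$-universally elliptic and let $T'$ be an $(\cala,\calh)$-tree for $G$. I would construct a $\calb$-tree $\hat T'$ for $\hat G$ having $T'$ as a $G$-invariant subtree. Start from the Bass-Serre tree $T$ of the amalgam $\hat G=G*_{H_1}K_1*\cdots*_{H_p}K_p$; since $T'$ is relative to $\calh$, every $H_i$ is elliptic in $T'$, so Lemma \ref{extens} applies and we may refine $T$ at the $G$-vertex by $T'$. The resulting tree $\hat T'$ has edges of two kinds: edges coming from $T'$, whose stabilizers lie in $\cala\subset\calb$ (via $\calb_v=\cala$), and edges coming from $T$, whose stabilizers are conjugates of the $H_i$, which lie in $\calb$ in the intended setting. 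Hence $\hat T'$ is a $\calb$-tree, so $J$ fixes some point $x\in\hat T'$. Because $J\subset G$ preserves the embedded copy of $T'$ inside $\hat T'$, the closest-point projection of $x$ to this copy is also fixed by $J$, giving a fixed point of $J$ in $T'$.

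The main obstacle I expect is the backward direction: one must ensure that the conjugates of $H_i$ lie in $\calb$, so that $\hat T'$ genuinely qualifies as a $\calb$-tree. This is automatic in the applications of Subsections \ref{periph} and \ref{ab} because there $H_i\in\cala$; more generally, it suffices to take $\calb$ to contain (conjugates of) the $H_i$, which is compatible with the required conditions $\calb_v=\cala$ and $R_i\notin\calb$.
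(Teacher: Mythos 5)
Your first assertion and your forward direction are correct and are essentially the paper's own argument (the paper compresses both directions into an appeal to Lemma \ref{lem_passage}, applied to $\hat G$ and the Bass-Serre tree $T$ of the amalgam, whose incident edge groups at $v$ are the $H_i$ up to conjugacy; your forward direction just re-proves the relevant ``if'' half by passing to the minimal $G$-invariant subtree).

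The gap is in the backward direction, and you have located it without actually closing it. In the generality of the lemma the $H_i$ are arbitrary finitely generated subgroups, not assumed to lie in $\cala$. Since $\calb$ is closed under conjugation and under taking subgroups and $\calb_v=\cala$, a conjugate of $H_i$ belongs to $\calb$ if and only if $H_i\in\cala$. Hence your proposed remedy --- enlarging $\calb$ so that it contains the conjugates of the $H_i$ --- is \emph{not} compatible with $\calb_v=\cala$: it forces $H_i\in\cala$, i.e.\ it silently adds the hypothesis $\calh\subset\cala$, which holds in the applications of Subsections \ref{periph} and \ref{ab} but not in general (the construction is also meant as an alternative route to relative JSJ decompositions for arbitrary $\calh$). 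Consequently the refinement $\hat T'$ you build may have edges coming from $T$ whose stabilizers are not in $\calb$, it is then not a $\calb$-tree, and you cannot conclude that $J$ is elliptic in it.

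The missing step is the one the paper performs explicitly in the proposition following the lemma: after refining $T$ at $v$ using $T'$ (Lemma \ref{extens}), collapse \emph{all} edges of $\hat T'$ coming from $T$. The collapsed tree $T''$ has edge stabilizers conjugate in $\hat G$ to edge stabilizers of $T'$, hence lying in $\calb$ (as $\cala=\calb_v\subset\calb$ and $\calb$ is conjugation-invariant), so $T''$ is a genuine $\calb$-tree and $J$ fixes a point in it. Since $J\subset G=G_v$ also fixes the vertex $v$ of $T$, and $\hat T'$ collapses onto both $T$ and $T''$ along complementary sets of edge orbits, $J$ is elliptic in $\hat T'$ (a subgroup elliptic in both collapses is elliptic in the common refinement, as in the lemma following Lemma \ref{lem_oneed}); then your closest-point projection onto the $J$-invariant copy of $T'$ inside $\hat T'$ finishes the argument. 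With this modification your proof is complete; in the special case $\calh\subset\cala$ your original argument already works as written.
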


\begin{proof}
Consider any $\calb$-tree.
The group $R_i$ fixes a point by property (FA), which is
unique since
$R_i\notin \calb$. This point is also fixed by $H_i$ since $H_i$
commutes with $R_i$. This proves the first assertion.  Since $T$ is $\calb $-universally elliptic, and $\calh$ is the family of incident edge groups at $v$, the second assertion follows from Lemma \ref{lem_passage}.
\end{proof}

 Being finitely presented, $\hat G$ has a  JSJ decomposition $T_J$ over $\calb$ 
 by Theorem \ref{thm_exist_mou}. We let $T_G=\mu_{T_J}(G)$  be the minimal $G$-invariant subtree (a point if $G$ is elliptic in $T_J$).
 
  \begin{prop} The tree $T_G$ is a JSJ tree of $G$ over $\cala$ relative to $\calh$.
 \end{prop}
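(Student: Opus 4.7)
The plan is to verify that $T_G$ satisfies the three defining conditions of Definition \ref{prop_def}: it is an $(\cala,\calh)$-tree, it is $(\cala,\calh)$-universally elliptic, and it dominates every $(\cala,\calh)$-universally elliptic tree of $G$. My main tool throughout is the closest-point projection: since $T_G$ is a $G$-invariant subtree of $T_J$, any subgroup $H$ of $G$ which fixes a point $y \in T_J$ also fixes its closest-point projection $\pi(y) \in T_G$, because $\pi(y)$ is uniquely characterized and $T_G$ is $H$-invariant. In particular, a subgroup of $G$ is elliptic in $T_G$ if and only if it is elliptic in $T_J$.

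First I would check the $(\cala,\calh)$-tree conditions. Edge stabilizers in the $G$-action are intersections $G_e \cap G$, with $G_e \in \calb$ an edge stabilizer of $T_J$; these lie in $\calb$ by stability under taking subgroups, and since they are subgroups of $G$ they belong to $\calb_v = \cala$. For each $H_i \in \calh$, Lemma \ref{lem_relu} gives that $H_i \times R_i$, and therefore $H_i$, is $\calb$-universally elliptic, so $H_i$ is elliptic in $T_J$, hence in $T_G$ by the projection observation. Next I would check universal ellipticity: edge stabilizers of $T_G$ are contained in edge stabilizers of $T_J$, which are $\calb$-universally elliptic since $T_J$ is a JSJ tree over $\calb$; by the equivalence in Lemma \ref{lem_relu}, they are also $(\cala,\calh)$-universally elliptic.

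The main step is maximality. Given an $(\cala,\calh)$-universally elliptic tree $S$ of $G$, I would apply Lemma \ref{extens} to refine $T$ at the vertex $v$ using $S$, obtaining a tree $\hat S$ of $\hat G$; this is legitimate because $S$ is relative to $\calh$, which is exactly the family of incident edge groups $\Inc_v$. I then verify that $\hat S$ is $\calb$-universally elliptic: its edge stabilizers are either edge stabilizers of $S$ (which are $(\cala,\calh)$-universally elliptic by hypothesis, hence $\calb$-universally elliptic by Lemma \ref{lem_relu}), or edge stabilizers of $T$, which are conjugates of the $H_i$'s (again $\calb$-universally elliptic by Lemma \ref{lem_relu}). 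Since $T_J$ is a JSJ tree of $\hat G$ over $\calb$, it dominates $\hat S$, so every subgroup elliptic in $T_J$ is elliptic in $\hat S$. Now take a subgroup $H < G$ elliptic in $T_G$: by the projection observation it is elliptic in $T_J$, hence in $\hat S$. Since the embedded copy of $S$ in $\hat S$ (namely $p^{-1}(v)$ for the collapse map $p \colon \hat S \to T$) is $G$-invariant, and $H < G$, a second application of the projection observation, this time inside $\hat S$, shows that $H$ fixes a point of $S$. This proves that $T_G$ dominates $S$, which completes the argument.

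The main obstacle is really just bookkeeping: organizing the two places where the ``invariant subtree plus closest-point projection'' trick is applied (first inside $T_J$ to compare ellipticity in $T_J$ and in $T_G$, and then inside $\hat S$ to pass from ellipticity in $\hat S$ back to ellipticity in $S$), and being careful that $\hat S$ genuinely qualifies as a $\calb$-universally elliptic tree of $\hat G$ so that the JSJ hypothesis on $T_J$ can be invoked. Conceptually, the filling construction behaves as a two-way dictionary between $(\cala,\calh)$-splittings of $G$ and $\calb$-splittings of $\hat G$, with universal ellipticity translated in both directions by Lemma \ref{lem_relu}.
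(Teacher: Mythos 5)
Your overall strategy is the same as the paper's (use the filling dictionary of Lemma \ref{lem_relu}, refine $T$ at the vertex $v$ by the given universally elliptic tree $S$, invoke the JSJ property of $T_J$ over $\calb$, then come back down to $G$ by projections to invariant subtrees), and the first two verifications as well as the final projection step are fine. But there is a genuine gap at the pivotal point: you invoke the maximality of $T_J$ against the refined tree $\hat S$, and for that $\hat S$ must be a universally elliptic \emph{$\calb$-tree}, i.e.\ its edge stabilizers must lie in $\calb$. The edges of $\hat S$ coming from $T$ have stabilizers conjugate to the $H_i$'s, and since $\calb$ is chosen so that $\calb_v=\cala$, a group $H_i\le G$ belongs to $\calb$ only if $H_i\in\cala$. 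Nothing in the hypotheses puts $\calh$ inside $\cala$ --- indeed the whole point of relative JSJ theory is to allow $\calh$ to contain groups not in $\cala$ --- so in general $\hat S$ is not an admissible tree for the JSJ space of $\hat G$ over $\calb$, and "$T_J$ dominates $\hat S$" is not justified. You flagged the need to check that $\hat S$ "qualifies", but your check only addresses $\calb$-universal ellipticity of its edge stabilizers, not their membership in $\calb$, which is the condition that actually fails. (If every $H_i$ happens to lie in $\cala$ your argument goes through, but that is a special case.)

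The repair is exactly the extra step the paper takes: collapse all edges of $\hat S$ coming from $T$ to obtain a tree $T'$, whose edge stabilizers are edge stabilizers of $S$, hence lie in $\cala\subset\calb$; $T'$ is then a genuine $\calb$-tree, universally elliptic by Lemma \ref{lem_relu}, so $T_J$ dominates $T'$. To descend back to $S$, note that a subgroup $H<G$ is elliptic in $T$ (it fixes $v$), so $H$ is elliptic in $\hat S$ if and only if it is elliptic in $T'$, and ellipticity in $\hat S$ gives ellipticity in $S\cong p^{-1}(v)$ by your projection argument (or directly, as in the paper, because for $H<G$ ellipticity in $T'$ is equivalent to ellipticity in $S$). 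With this modification your proof coincides with the paper's.
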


\begin{proof}
The tree $T_G$ has edge stabilizers in $\calb_v=\cala$, is relative to $\calh$ and   $(\cala,\calh)$-universally elliptic by Lemma \ref{lem_relu}. We have to show that it dominates any universally elliptic $(\cala,\calh)$-tree $S_G$. Use $S_G$ to refine $T$ at $v$ into a tree $\hat T$, as in Lemma \ref{extens}. The tree $\hat T$ has two types of edges, those coming from $T$ and those in the $\hat G$-orbit of $S_G$. 

Define a new tree $T'$ by collapsing all edges of $\hat T$ coming from $T$, and note that a subgroup of $G$ is elliptic in $T'$ if and only if it is elliptic in $S_G$.  
  Indeed, a subgroup $H$  of $\hat G$ is elliptic in $\hat T$ if and only if it is elliptic in both   $T'$ and   $T$; for $H<G$ (hence elliptic in $T$), being elliptic in $T'$ is equivalent to being elliptic in $\hat T$, i.e.\ in $S_G$.

 The tree $T'$ is a $\calb$-tree, and it is universally elliptic by Lemma \ref{lem_relu}, so it is dominated by $T_J$. Vertex stabilizers of $T_G$ are then elliptic in $T'$, hence in $S_G$, so $T_G$ dominates $S_G$.
\end{proof}

\part{Flexible vertices} \label{part_QH}

\index{flexible vertex, group, stabilizer}
 Flexible vertex groups   of   JSJ decompositions are most important, as understanding 
 their splittings conditions the understanding of the splittings of $G$.
 The key result  is that, in many cases, flexible vertex groups are ``surface-like''.

 For instance, first consider cyclic splittings of a torsion-free group $G$.
We will see that, if $G_v$ is a flexible vertex group of a
JSJ decomposition  $\Gamma=T/G$,
then $G_v$ may be viewed as $\pi_1(\Sigma)$, with $\Sigma$ a compact (possibly non-orientable) surface. Moreover, incident edge groups are trivial or contained (up to conjugacy) in a boundary subgroup, i.e.\ the fundamental group $B=\pi_1(C)$ of a boundary component $C$ of $\Sigma$. Boundary subgroups being generated by elements which  are quadratic words in a suitable basis of the free group $\pi_1(\Sigma)$, Rips and Sela called the  subgroup $G_v$
\emph{quadratically hanging} (QH).\index{QH, quadratically hanging}
 
If $\Sigma$ is not too simple, there are infinitely many isotopy classes of essential two-sided simple closed curves. Each such curve defines a cyclic splitting of $\pi_1(\Sigma)$ relative to incident edge groups, which extends to a cyclic splitting of $G$ by Lemma \ref{extens}.  Two curves which cannot be made disjoint by an isotopy define two splittings with are not elliptic with respect to each other, and this makes $G_v$  flexible
 (see Corollary \ref{cor_flex}). It turns out  that this construction is basically  the only   source of 
 flexible vertices.

 If $G$ is allowed to have torsion, or if non-cyclic edge groups are allowed,   the definition of ``surface-like'' must be adapted. 
 First,  $\Sigma$ may be a 2-dimensional orbifold rather than a surface. Second, $G_v$ is not always equal to $\pi_1(\Sigma)$; it only maps onto $\pi_1(\Sigma)$, with a possibly  non-trivial kernel $F$ (called the fiber).  
 
 Various authors have called such groups $G_v$   hanging surface groups,
hanging Fuchsian groups,
hanging $K$-by-orbifold groups, $VPC$-by-Fuchsian type vertex groups... We choose to extend Rips and Sela's initial terminology of QH groups,  to emphasize the way in which $G_v$ is attached to the rest of the group $G$ (in Rips-Sela   \cite{RiSe_JSJ}, $F$ is trivial and $\Sigma$ has no mirror,  see Theorem \ref{thm_RiSe}; on the other hand, we insist that a QH group be based on  a hyperbolic orbifold, not on a Euclidean one).

In Section \ref{sec_QH} we formalize the definition of QH vertices, and prove general properties of such vertices. 
In Section \ref{Fujpap} we show that, indeed, flexible vertices of the JSJ deformation space over nice classes of slender subgroups are QH.

As before,  we   fix a family $\cala$ closed  under conjugating and passing to subgroups, and another family $\calh$.
All trees are   assumed to be 
$(\cala,\calh)$-trees.

\section{Quadratically hanging  vertices}\label{sec_QH}

 In this section, after preliminaries about $2$-orbifold groups, we give a  definition of QH subgroups.
 We study their basic properties, in particular we relate their splittings to families of simple geodesics on the underlying orbifold $\Sigma$. 

We then show that, under natural hypotheses, any QH subgroup has to be elliptic in the JSJ deformation space. 

In Subsection \ref{periph}
we give
examples of possible incident edge groups for QH vertex groups of a JSJ decomposition.
This will be relevant in     Section \ref{Fujpap}, where we   show that flexible subgroups of the   slender JSJ decomposition   are 
QH.
We also show that flexible subgroups of abelian JSJ decompositions do not have to be QH.

 \subsection{2-orbifolds and their splittings}  \label{orb}

\subsubsection{ Hyperbolic 2-orbifolds}\index{orbifold}
Most compact 2-dimensional orbifolds $\Sigma$, including all those that will concern us, are Euclidean or hyperbolic 
(we refer to \cite{Scott_geometries} and \cite[Ch.\ 13]{Thurston_notes}
for basic facts about orbifolds).\index{orbifold}
Euclidean orbifolds whose fundamental group is not virtually cyclic have empty boundary, so they can only arise from flexible vertices in a trivial way. For instance, in the case of cyclic splittings of torsion-free groups, $\Z^2=\pi_1(T^2)$ and the Klein bottle group may appear as flexible vertex groups (see Subsections \ref{sec_G_small} and  \ref{pslflex}), but only as free factors (all incident edge groups must be trivial). 

 We therefore restrict to hyperbolic orbifolds: 
  $\Sigma$ is a compact 2-dimensional orbifold, equipped with  a hyperbolic metric with totally geodesic boundary. It
is the quotient of a convex subset  $\Tilde \Sigma\subset\bbH^2$
by a proper discontinuous group 
of isometries 
$G_\Sigma\inc \text{Isom}(\bbH^2)$ (isometries may reverse orientation); we denote by $p:\Tilde\Sigma\to\Sigma$ the quotient map. By definition, the (orbifold) fundamental group of $\Sigma$ is $\pi_1(\Sigma)=G_\Sigma$.
We may also view $\Sigma$ as the quotient of a compact orientable hyperbolic
 surface $ \Sigma_0$ with geodesic boundary 
by a finite group of isometries $\Lambda$. A point of $\Sigma$ is \emph{singular} if its preimages in 
$\Tilde \Sigma$ (or in $\Sigma_0$) have non-trivial stabilizer.

\begin{figure}[htbp]
  \centering
  \includegraphics{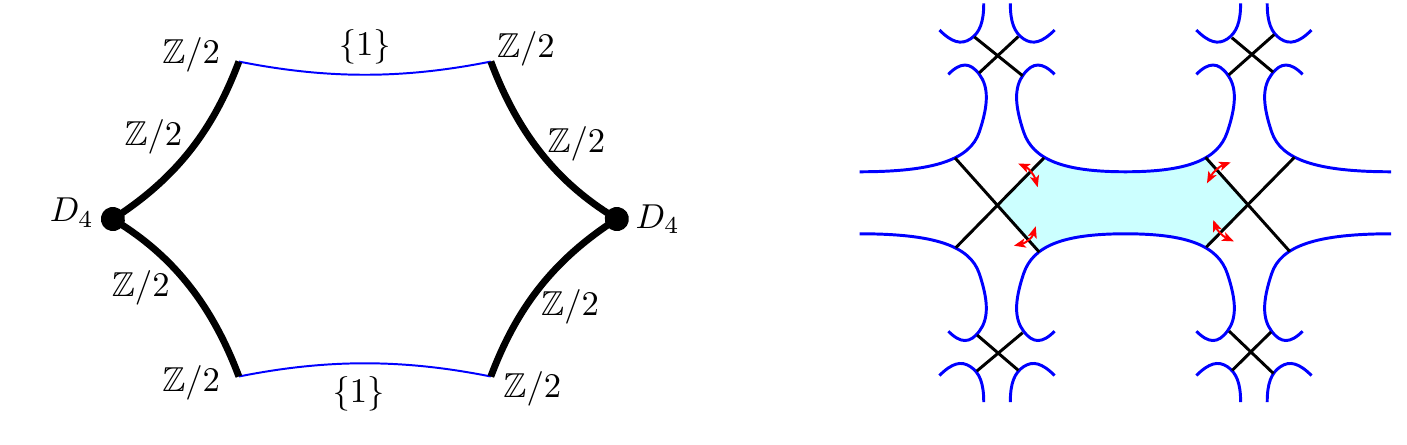}
 \caption{An orbifold with 4 mirrors (in bold), 2 boundary components, and 2 corner reflectors carrying $D_4$. 
Its fundamental group is the Coxeter group generated by reflections over 4  sides of a right-angled hexagon in $\bbH^2$.}
  \label{fig_mirrors}
\end{figure}

If we forget the orbifold structure, $\Sigma$ is homeomorphic to a surface   $\Sigma_{top}$ (a disc in Figure \ref{fig_mirrors}). 
  The boundary of $\Sigma_{top}$ comes from the boundary $\partial \Tilde \Sigma$, and from  {mirrors} corresponding to reflections in $G_\Sigma$ (see below).
We define the \emph{boundary}   \index{boundary (in an orbifold)}
$\partial\Sigma$ of $\Sigma$ as the image of $\partial \Tilde \Sigma$ in $\Sigma$ (thus excluding mirrors).
Equivalently, it is the image of $\bo\Sigma_0$.
Each component $C$ of $\bo\Sigma$ is either a component of $\bo\Sigma _{top}$ (a circle) or an arc contained in $\bo\Sigma _{top}$.
The  (orbifold) fundamental group of $C$ is $\Z$ or an infinite dihedral group $D_\infty$   accordingly.  
A   \emph{boundary subgroup}\index{boundary subgroup} is a subgroup   $B\inc \pi_1(\Sigma)$ 
which is conjugate to  the fundamental group of a component $C$ of $\partial \Sigma $.
Equivalently, it is 
the setwise stabilizer of a connected component of $\partial \Tilde \Sigma$.

The  closure of the complement of $\bo\Sigma$ in $\bo\Sigma_{top}$ is a union of \emph{mirrors}:\index{mirror}
a mirror is the image of a component of the fixed point set of an orientation-reversing element of  $\pi_1(\Sigma)$  in $\Tilde \Sigma$.
Equivalently, a mirror is the image of a component of the fixed point set  of an orientation-reversing element of  $\Lambda$ in $\Sigma_0$.
Each mirror is itself a circle or  an arc  contained in $\bo\Sigma _{top}$.
Mirrors may be adjacent in $\bo\Sigma_{top}$, whereas boundary components of $\Sigma$ are disjoint.

Singular points not contained in mirrors are
  \emph{conical points};\index{conical point}
   the stabilizer of their    preimages  in $\bbH^2$   is a finite cyclic group consisting of orientation-preserving maps (rotations). Points belonging to two mirrors are \emph{corner reflectors};\index{corner reflector} the associated stabilizer is a finite dihedral group  $D_{2r}$ of order $2r$.

As in the case of surfaces, hyperbolic orbifolds may be characterized in terms of their Euler characteristic (see \cite{Thurston_notes,Scott_geometries}). 

\begin{dfn}[Euler characteristic] \label{defeuler}\index{Euler characteristic}
The  Euler characteristic $\chi(\Sigma)$  is defined as the Euler characteristic of the underlying topological surface $\Sigma_{top}$, minus contributions coming from the singularities:
a conical point of order $q$ (with isotropy group $\Z/q\Z$) contributes $1-\frac1q$,   a corner reflector with isotropy group the dihedral group $D_{2r}$ of order $2r$ contributes $\frac12(1-\frac1r)$, and a point adjacent to a mirror and a component of $\bo\Sigma$ contributes $\frac14$.
\end{dfn}

 \begin{prop}\label{euler}
 A compact 2-dimensional orbifold $\Sigma$ is hyperbolic if and only if  $\chi(\Sigma)<0$. \qed
 \end{prop}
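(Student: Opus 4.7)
The plan is to treat the two implications separately, using Gauss--Bonnet for the forward direction and a combination of passing to a manifold cover with uniformization for the reverse direction.

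For the ``only if'' direction, I would invoke the orbifold version of the Gauss--Bonnet theorem. If $\Sigma$ carries a hyperbolic metric with totally geodesic boundary, then the geodesic curvature of $\bo\Sigma$ vanishes, the Gaussian curvature is identically $-1$, and the usual boundary/corner corrections are absorbed into the orbifold Euler characteristic defined in \ref{defeuler} (the $1-\frac1q$, $\frac12(1-\frac1r)$ and $\frac14$ contributions match the angle defects at cone points, corner reflectors, and mirror-boundary intersections respectively). Gauss--Bonnet then reads $2\pi\chi(\Sigma)=-\mathrm{Area}(\Sigma)$, and positivity of the area forces $\chi(\Sigma)<0$. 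Alternatively, one can lift to the orientable double cover of $\Sigma_0$, apply the classical Gauss--Bonnet formula there (where the contributions from mirrors are replaced by genuine boundary geodesic curvature), and divide by the degree.

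For the ``if'' direction, the plan is to produce a hyperbolic structure by first constructing one on a finite manifold cover and then averaging. Using the standard fact that any compact $2$-orbifold is finitely covered by an orientable surface with boundary $\Sigma_0$ (this can be done by hand from the topological classification of $2$-orbifolds: take an orientable double cover to remove mirrors and then pass to a further finite cover that kills the remaining finite isotropy), one writes $\Sigma=\Sigma_0/\Lambda$ with $\Lambda$ finite. Multiplicativity of the Euler characteristic under finite orbifold coverings, $\chi(\Sigma_0)=|\Lambda|\,\chi(\Sigma)$, shows that $\chi(\Sigma_0)<0$. By uniformization (or by direct construction for a surface with boundary), $\Sigma_0$ admits a hyperbolic metric with totally geodesic boundary, and the corresponding Teichm\"uller space of such metrics is non-empty.

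The main step, and the one requiring real input, is the equivariant descent: one needs a hyperbolic metric on $\Sigma_0$ that is invariant under the finite group $\Lambda$, so that it descends to the required structure on $\Sigma$. Here I would use that the Teichm\"uller space $\mathcal{T}(\Sigma_0)$ of marked hyperbolic structures with geodesic boundary is contractible and that $\Lambda$ acts on it by a finite group of homeomorphisms; any fixed-point theorem for finite group actions on contractible finite-dimensional spaces (or simply Cartan's fixed point theorem applied to the Weil--Petersson metric, or the averaging procedure in the space of conformal structures via the Douady--Earle extension) produces a $\Lambda$-invariant point, i.e.\ a $\Lambda$-invariant hyperbolic metric. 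Quotienting yields the hyperbolic structure on $\Sigma$. This averaging/fixed-point step is the only non-elementary ingredient; the rest is bookkeeping of Euler characteristics and topological classification.
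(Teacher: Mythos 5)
The paper itself gives no argument for this proposition: it is stated with a \qed and attributed to the classical literature (Thurston's notes, Scott), where the standard proof of the ``if'' direction is a direct construction of the hyperbolic structure (decompose $\Sigma$ into triangles or polygonal pieces, realize them as hyperbolic polygons with angles $\pi/q$ at cone points and right angles along mirrors, and glue, e.g.\ via Poincar\'e's polygon theorem). Your ``only if'' direction via orbifold Gauss--Bonnet (or lifting to $\Sigma_0$ and dividing by the degree) is correct and unproblematic.

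The converse as you set it up has a genuine gap, and it is the first step, not the averaging. The assertion that \emph{any} compact $2$-orbifold is finitely covered by a surface is false (teardrops and spindles with distinct cone orders, and their mirrored analogues, are bad orbifolds); what is true is that orbifolds with $\chi(\Sigma)\le 0$ are good, but in the standard development (Thurston, Ch.~13) this goodness is \emph{deduced from} the existence of the geometric structure, i.e.\ from the very statement you are proving, so quoting it here is circular unless you supply an independent proof. Your sketch ``take an orientable double cover to remove mirrors and then pass to a further finite cover that kills the remaining finite isotropy'' hides exactly the hard point: the orientation double cover turns corner reflectors into cone points, and producing a finite cover killing the residual isotropy amounts to constructing a homomorphism from $\pi_1(\Sigma)$ to a finite group that is injective on all local groups (equivalently a torsion-free finite-index subgroup). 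This is a real theorem (it can be done combinatorially, or via Selberg's lemma --- but the latter again presupposes the linear, i.e.\ geometric, structure), and it is not ``bookkeeping''. Without it your reduction to $\Sigma_0$ does not get off the ground.

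A secondary problem is the justification of the equivariant descent: there is no ``fixed-point theorem for finite group actions on contractible finite-dimensional spaces'' in that generality (Floyd--Richardson-type examples rule it out), and Cartan's theorem does not apply directly to the incomplete Weil--Petersson metric. This step is nonetheless repairable, and more easily than you suggest: since $\Lambda$ acts on $\Sigma_0$ by honest homeomorphisms (not merely mapping classes, so no Nielsen realization is needed), you can average any Riemannian metric over $\Lambda$ to get an invariant conformal structure and then invoke uniqueness of the hyperbolic metric with totally geodesic boundary in a conformal class (double along the boundary and uniformize); uniqueness forces $\Lambda$-invariance, and the metric descends. So the averaging half of your plan can be fixed, but the proof as a whole still rests on the unproved (and, as stated, false) covering claim; the classical direct construction cited by the paper avoids this entirely.
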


\subsubsection{Curves and splittings} \label{cid}

We now generalize the fact that any essential 2-sided simple closed curve on a surface $\Sigma$ defines a cyclic splitting of $\pi_1(\Sigma)$.

Let $\Sigma$ be a hyperbolic 2-orbifold as above. A \emph{closed geodesic}\index{geodesic (in an orbifold)!closed} 
$\gamma\subset \Sigma$ is the image in $\Sigma$ of 
a bi-infinite geodesic $\Tilde \gamma\subset \Tilde \Sigma$ whose image in $\Sigma$ is compact.
It is \emph{simple} \index{geodesic (in an orbifold)!simple} 
if  $h\Tilde \gamma$ and $\Tilde\gamma$ are equal or disjoint for all $h\in \pi_1(\Sigma)$. 
If $\Tilde \gamma\not\subset\bo\tilde\Sigma$, we say that  $\gamma$ is an \emph{essential simple closed geodesic} \index{geodesic (in an orbifold)!essential simple closed} 
in $\Sigma$ (possibly one-sided). For brevity we often just call $\gamma$ a geodesic.

If  $\gamma$ is an essential simple closed geodesic, then  $p\m(\gamma)$, the orbit of $\Tilde\gamma$ under $\pi_1(\Sigma)$, is a family of disjoint geodesics. There is a simplicial tree $T_\gamma$ dual to this family: vertices of $T_\gamma$ are components of $\tilde\Sigma\setminus p\m(\gamma)$, edges   are components of $p\m(\gamma)$. The group $\pi_1(\Sigma)$ acts non-trivially on $T_\gamma$, but there are inversions if $\gamma$ is one-sided; in this case  we subdivide edges so as to get an action without inversions  
(this may viewed 
as replacing $\gamma$  by the boundary of a regular neighborhood, 
a connected $2$-sided simple $1$-suborbifold  bounding a M\"obius band). 

We have thus associated to $\gamma$ a one-edge splitting of $\pi_1(\Sigma)$, which is clearly relative to the boundary subgroups. We call it the  \emph{splitting dual to the essential simple closed geodesic $\gamma$}\index{splitting!dual to a family of geodesics}\index{dual splitting} (or the splitting determined by $\gamma$). Its edge group (well-defined up to conjugacy) is the subgroup 
 $H_{\gamma}\subset 
 G_\Sigma$ consisting of elements which preserve $\Tilde\gamma$ and each of the   half-spaces bounded by    $\Tilde \gamma$.
It  is   isomorphic to $\bbZ$ or $D_\infty$  (the infinite dihedral group).

 More generally, there is a splitting dual to any family $\call$ of disjoint essential simple closed geodesics $\gamma_i$. For simplicity, we sometimes just say that the splitting is dual to a family of geodesics.

 Recall that a group is small if it does not contain $\F_2$. The next result says that this construction yields all small splittings of $\pi_1(\Sigma)$ relative to the boundary subgroups  (note that all small subgroups of $\pi_1(\Sigma)$ are virtually cyclic). 
\begin{rem}\label{rem_small}
  Also note that   any subgroup of $\pi_1(\Sigma)$ that 
  preserves a line or an end in such a   splitting is virtually
  cyclic. 
\end{rem}

\begin{prop} \label{dual}  Let $\Sigma$ be a compact hyperbolic 2-orbifold. Assume that $\pi_1(\Sigma)$ acts on a tree $T$ non-trivially, without inversions, minimally, with small edge stabilizers, and
with all boundary subgroups elliptic. 

Then $T$ is  equivariantly isomorphic to the  Bass-Serre tree of the splitting dual
  to a family $\call$   of disjoint essential
simple closed geodesics  of $\Sigma$.

If  edge stabilizers are not assumed to be small,
$T$ is still dominated by a 
tree dual to a family of geodesics.
\end{prop}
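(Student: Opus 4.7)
My plan is to resolve the action on $T$ by a transverse pattern in the universal cover $\tilde\Sigma$ and then straighten its components to geodesics. First, I build a $\pi_1(\Sigma)$-equivariant continuous map $f : \tilde\Sigma \to T$. Using that finite subgroups have Serre's property (FA), I send each orbit of cone points, corner reflectors and mirror vertices to a fixed point of its (finite) isotropy group in $T$; by hypothesis, every boundary subgroup is elliptic, so each orbit of boundary vertices can be sent to a fixed point of its boundary subgroup. After triangulating $\Sigma_{top}$ compatibly with the singular structure, I lift to a $\pi_1(\Sigma)$-invariant triangulation of $\tilde\Sigma$, map the remaining orbits of vertices to arbitrarily chosen points of $T$, extend over edges by sending each to an edge-path in $T$, and extend simplicially over 2-cells in general position with respect to midpoints of edges of $T$.

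Second, I form the preimage $\tilde P \subset \tilde\Sigma$ of the midpoints of edges of $T$. By construction $\tilde P$ is a $\pi_1(\Sigma)$-invariant Dunwoody-type pattern (a disjoint union of properly embedded arcs and circles), disjoint from the singular set and from $\bo\Tilde\Sigma$. The tree $T_P$ dual to $\tilde P$ comes equipped with a $\pi_1(\Sigma)$-equivariant morphism $T_P\to T$ (after a one-step subdivision to remove inversions coming from one-sided components), so the stabilizer of each component of $\tilde P$ embeds in an edge stabilizer of $T$.

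Third, I invoke the smallness hypothesis: each component stabilizer is then virtually cyclic. A virtually cyclic subgroup of $\pi_1(\Sigma)$ that preserves a properly embedded 1-submanifold of $\tilde\Sigma$ stabilizes a unique bi-infinite geodesic in $\tilde\Sigma$, and one can equivariantly replace each component of $\tilde P$ by this geodesic representative while preserving disjointness and simplicity of projections. Inessential components (bounding a disc with no singular points) can be discarded, using minimality of the action to ensure that no orbit of edges of $T$ is lost. The resulting family $\call$ of disjoint essential simple closed geodesics has dual tree equivariantly isomorphic to $T$: the morphism $T_P\to T$ (now after straightening) has no folds, because two distinct geodesic components of the lifted family $p\m(\call)$ cannot be identified by a further equivariant homotopy. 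For the final assertion, the same construction without smallness still yields a pattern, an equivariant morphism $T_P\to T$, and, after geodesic straightening, a family $\call$ of disjoint essential simple closed geodesics whose dual tree dominates $T$: each vertex stabilizer of the dual tree is the stabilizer of a component of $\Tilde\Sigma\setminus\Tilde P$, and these are carried by $f$ into a subtree that collapses to a point in $T$, hence are elliptic in $T$.

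The main obstacle will be to perform the geodesic straightening equivariantly and compatibly with the orbifold structure (mirrors, cone points, corner reflectors) while preserving both disjointness and the combinatorial type of the dual tree, and to check carefully that the resulting morphism $T_P\to T$ has no folds in the small case, so that one indeed obtains an isomorphism rather than merely a domination. Once this is in place, ellipticity of all boundary and finite subgroups forces the orbits of edges and the rigid vertex stabilizers of the two trees to agree.
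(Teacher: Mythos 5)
Your route differs from the paper's: the paper does not reprove the resolution theorem at all — for an orientable surface it simply cites Morgan--Shalen (Theorem III.2.6 of \cite{MS_valuationsI}), and for an orbifold it passes to the finite orientable cover $\Sigma_0$ with $\Sigma=\Sigma_0/\Lambda$, applies that theorem to the $\pi_1(\Sigma_0)$-action, and observes that the resulting family of geodesics is $\Lambda$-invariant and projects to $\Sigma$. You instead try to prove the resolution statement directly on the orbifold via a Dunwoody pattern in $\widetilde\Sigma$. That is a legitimate alternative strategy, but the step that yields the first assertion --- that the straightened dual tree is equivariantly \emph{isomorphic} to $T$, not merely that it dominates $T$ --- is exactly the step you do not prove. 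Your stated reason (``two distinct geodesic components cannot be identified by a further equivariant homotopy'') does not address what a fold of the morphism $T_P\to T$ actually is: two edges of $T_P$ sharing a vertex being sent to the same edge of $T$, i.e.\ two pattern components adjacent to a common complementary region whose images are midpoints in the same orbit. This happens, for example, for parallel copies of the same curve, and ruling it out after removing parallel and inessential pieces is precisely where minimality and smallness of edge stabilizers must be used; it is the substance of the cited Morgan--Shalen argument, not a one-line remark. As written, your construction establishes only the second (domination) statement, and you yourself list the missing step as an ``obstacle'' rather than resolving it.

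There is also an orbifold-specific gap created by working directly on $\Sigma$ instead of on the cover $\Sigma_0$: a pattern component whose stabilizer is finite but nontrivial --- e.g.\ a circle encircling a cone point, or an arc cutting off a disc meeting the mirror locus --- is not freely homotopic to a geodesic, yet it is not covered by your straightening step (which handles infinite virtually cyclic stabilizers) nor by your discarding step (which only treats circles bounding discs with no singular points). Eliminating such components requires a further equivariant homotopy of $f$ across the singular locus, using that the finite isotropy groups are elliptic in $T$; this must be carried out, not assumed. A smaller point: to guarantee that the pattern misses $\partial\widetilde\Sigma$ you should send each entire boundary line to a single point fixed by its boundary subgroup $B$, not merely each orbit of boundary vertices to some point of $\mathrm{Fix}(B)$ --- since $\mathrm{Fix}(B)$ may be a nondegenerate subtree, different vertex orbits could otherwise go to different points and the pattern would meet the boundary. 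These are exactly the complications the paper's covering-space reduction avoids.
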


\begin{rem} 
  In this statement, we assume that $T$ has no redundant vertex, and we do not allow multiple parallel simple closed curves (because
we consider geodesics).
If $T$ is allowed to have redundant vertices, then 
it is only isomorphic
to a \emph{subdivision} of the tree dual to a family of geodesics.
\end{rem}

\begin{proof} 
When $\Sigma$ is an orientable surface, this follows from 
Theorem III.2.6 of \cite{MS_valuationsI}.
If $\Sigma$ is a  $2$-orbifold, we consider a covering  surface $\Sigma_0$ as above. 
The action of $\pi_1(\Sigma_0)$ on $T$ is dual to a family of  
closed geodesics on $\Sigma_0$. 
This family is $\Lambda$-invariant and projects to the required family on $\Sigma$. 
The action of $\pi_1(\Sigma)$ on $T$ is dual to this family.

The second statement follows from standard arguments (see the proof of   \cite[Theorem III.2.6]{MS_valuationsI}).
\end{proof}

\begin{cor} \label{courbe}
 $\pi_1(\Sigma)$ has a non-trivial splitting relative to the boundary subgroups if and only  if $\Sigma$ contains an essential
simple closed geodesic. \qed
\end{cor}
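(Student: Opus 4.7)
My plan is to deduce both directions directly from Proposition \ref{dual} and the construction of dual splittings described in Subsection \ref{cid}, so the proof is short.

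For the ``if'' direction, I would start from an essential simple closed geodesic $\gamma\subset\Sigma$ and simply invoke the construction of the dual tree $T_\gamma$: lift $\gamma$ to a family $p^{-1}(\gamma)$ of disjoint geodesics in $\Tilde\Sigma$, take the dual Bass-Serre tree (subdividing if $\gamma$ is one-sided), and observe as in Subsection \ref{cid} that the resulting action of $\pi_1(\Sigma)$ is non-trivial, has a single orbit of edges, and is relative to the boundary subgroups (each component of $\partial\Tilde\Sigma$ lies in a single complementary region, so its stabilizer fixes a vertex). So this direction is just a matter of recalling the construction.

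For the ``only if'' direction, I would unpack the hypothesis: a non-trivial splitting relative to the boundary subgroups gives an action of $\pi_1(\Sigma)$ on a non-trivial, minimal, inversion-free tree $T$ in which every boundary subgroup is elliptic. This is exactly the setup of Proposition \ref{dual}, except that the edge stabilizers of $T$ are not assumed small. I would then apply the second (weaker) conclusion of that proposition: there exists a tree $T'$ dual to some family $\mathcal{L}$ of disjoint essential simple closed geodesics of $\Sigma$ which \emph{dominates} $T$.

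The only thing to check is that $\mathcal{L}$ is non-empty, and this is where to be slightly careful. If $\mathcal{L}$ were empty, then $T'$ would be a point (the dual tree to the empty family), so the equivariant map $T'\to T$ would send everything to a point of $T$ fixed by all of $\pi_1(\Sigma)$, contradicting the minimality and non-triviality of $T$. Hence $\mathcal{L}$ contains at least one essential simple closed geodesic, as required. The argument requires no further input beyond Proposition \ref{dual} and Subsection \ref{cid}, and there is no real obstacle; the only point to state carefully is the elementary observation that a tree dominating a non-trivial minimal tree is itself non-trivial.
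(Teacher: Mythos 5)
Your proof is correct and follows exactly the route the paper intends: the paper states this corollary with no separate argument because it is immediate from the dual-splitting construction of Subsection \ref{cid} (the ``if'' direction) and the second, weaker conclusion of Proposition \ref{dual} (the ``only if'' direction), with the same elementary observation that a point cannot dominate a non-trivial tree. Nothing is missing.
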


 Orbifolds with no   essential simple closed geodesic are classified in the next subsection.

Proposition \ref{dual} implies in particular  that $\pi_1(\Sigma)$ is one-ended relative to its boundary subgroups. This does not remain true if we set one boundary component aside.

\begin{lem} \label{arc}
Let $C$ be a boundary component of a compact hyperbolic 2-orbifold $\Sigma$. 
There exists a non-trivial splitting of $\pi_1(\Sigma)$ over $\{1\}$ or $\Z/2\Z$ 
relative to the fundamental groups $B_k$  of all boundary components distinct from $C$.
\end{lem}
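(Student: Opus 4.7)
The plan is to produce a properly embedded essential simple arc $\alpha\subset\Sigma$ whose endpoints lie on $C$ (or one on $C$ and one on a mirror met perpendicularly), whose interior is disjoint from $\partial\Sigma$, from all mirrors, and from all conical points and corner reflectors. Cutting $\Sigma$ along such an $\alpha$ yields a one-edge splitting of $\pi_1(\Sigma)$ whose edge group is the orbifold fundamental group of $\alpha$: trivial if both endpoints lie on $C$, and $\Z/2\Z$ if one endpoint lies on a mirror (the reflection in the mirror fixes the relevant lift of $\alpha$ in $\tilde\Sigma$). Disjointness of $\alpha$ from $\bigsqcup_k B_k$ makes this splitting automatically relative to the $B_k$.

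To construct $\alpha$, I would pass to the orientable covering surface, writing $\Sigma=\Sigma_0/\Lambda$ as in Subsection \ref{orb}, with $\Sigma_0$ a compact orientable hyperbolic surface and $\Lambda$ a finite isometry group. Let $C_0,\Omega_0\subset\partial\Sigma_0$ be the $\Lambda$-invariant preimages of $C$ and $\bigsqcup_k B_k$ respectively, so that $\partial\Sigma_0=C_0\sqcup\Omega_0$ and $C_0\neq\emptyset$. Since $\chi(\Sigma_0)<0$, the surface $\Sigma_0$ is neither a disc nor an annulus, so it contains essential properly embedded simple arcs with both endpoints on $C_0$ and interior disjoint from $\partial\Sigma_0$ (a standard fact about surfaces with boundary). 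I would select $\tilde\alpha$ among such arcs to be a geodesic of minimal length. A standard uncrossing argument then shows that any two $\Lambda$-translates of $\tilde\alpha$ are either equal or disjoint, because a transverse intersection between $\tilde\alpha$ and $g\cdot\tilde\alpha$ could be resolved to strictly shorten the total length, contradicting minimality; a small perturbation further ensures that the interior of $\tilde\alpha$ avoids the (finitely many, isolated) fixed points of orientation-preserving elements of $\Lambda$.

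The projection $\alpha=p(\tilde\alpha)$ is then a properly embedded simple suborbifold in $\Sigma$. The edge group of the resulting one-edge splitting of $\pi_1(\Sigma)$ equals the stabilizer of a lift $\tilde{\tilde\alpha}\subset\tilde\Sigma$ in $\pi_1(\Sigma)$; since $\pi_1(\Sigma_0)$ acts freely on $\tilde\Sigma$, this stabilizer embeds in $\Lambda$ and so is finite. A nontrivial element stabilizing a geodesic arc setwise must either rotate it about an interior fixed point (excluded by construction, as this would force a conical point on $\alpha$) or reflect it through a perpendicular geodesic bisector; the latter produces the $\Z/2\Z$ case, with one endpoint of $\alpha$ on the corresponding mirror of $\Sigma$. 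Non-triviality of the splitting follows from essentiality of $\tilde\alpha$ (it bounds no disc with a subarc of $C_0$), and the relativity condition from disjointness of $\alpha$ from $\bigsqcup_k B_k$. The main obstacle is arranging for the interior of $\tilde\alpha$ to avoid the one-dimensional fixed loci of orientation-reversing elements of $\Lambda$ (which project to mirrors of $\Sigma$); this is handled by enlarging the class of allowed arcs to include those with an endpoint on such a locus and selecting a shortest representative in the enlarged class, so that if no arc with both endpoints on $C_0$ works, one obtains an arc with a mirror endpoint and edge group $\Z/2\Z$.
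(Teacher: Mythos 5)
Your overall strategy (split along an arc based at $C$, with trivial edge group if it misses the singular locus and $\Z/2\Z$ if it ends on a mirror) is the same as the paper's, but the equivariant shortest-geodesic implementation has gaps at exactly the points where the lemma is delicate. First, the uncrossing step is asserted too quickly: surgering a transverse intersection of $\tilde\alpha$ with $g\tilde\alpha$ produces arcs that may be inessential, so minimality is not directly contradicted; one needs the further observation that an inessential surgered spike would lift to two distinct geodesics perpendicular to the same boundary geodesic of the universal cover and meeting in $\bbH^2$, which is impossible. That is fixable. Less fixable is the ``small perturbation'' used to push the interior of $\tilde\alpha$ off elliptic fixed points: after perturbing, $\tilde\alpha$ is no longer a minimal geodesic, and the disjointness of its $\Lambda$-translates (the whole point of the construction) is lost. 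In fact that case is harmless — if the minimal geodesic meets an order-2 fixed point it is preserved by the half-turn and the edge group is still $\Z/2\Z$ — but your argument neither allows for this nor proves it, and your classification of setwise stabilizers of a geodesic arc also omits a reflection fixing the arc pointwise, i.e.\ the case where $\tilde\alpha$ lies inside a reflection axis and $\alpha$ is contained in a mirror (where, in symmetric configurations with a right-angled corner reflector at the midpoint, the stabilizer can even have order $4$).

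The most serious gap is the mirror fallback. Enlarging the class to arcs with one endpoint on the reflection locus and taking a shortest representative does not produce the desired splitting: when a mirror is adjacent to $C$, arcs from $C$ to that mirror can be arbitrarily short and cut off a piece whose orbifold group equals the edge group $\Z/2\Z$, so the associated splitting is trivial; your minimization in the enlarged class is then either ill-posed or lands on exactly these trivial configurations, and you give no essentiality criterion and no argument that a good arc in the enlarged class exists when the primary construction fails. This is precisely the point the paper's (purely topological) proof addresses: it works with arcs in $\Sigma_{top}$, and in the exceptional cases where $\Sigma_{top}$ is a disc or an annulus it uses $\chi(\Sigma)<0$ to produce a mirror \emph{not adjacent to} $C$, which is what guarantees non-triviality of the resulting $\Z/2\Z$ splitting. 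Your proposal contains no substitute for that existence step, so the argument does not close in the presence of mirrors.
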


\begin{proof}
Any arc $\gamma$ properly embedded in $\Sigma_{top}$ and with endpoints on $C$ defines a free splitting of $\pi_1(\Sigma)$  relative to the groups  $B_k$. In most cases one can choose $\gamma$ so that this splitting is non-trivial. We study the exceptional cases:
$\Sigma_{top}$ is   a disc or an annulus, and $\Sigma$ has no conical point. 
 
If $\Sigma_{top}$ is   a disc, its boundary circle consists of components of $\bo\Sigma$ and mirrors. 
 Since $\Sigma$ is hyperbolic, there must be a mirror $M$ not adjacent to $C$ 
(otherwise $\bo\Sigma_{top}$ would consist of $C$ and one or two mirrors, or two boundary components and two mirrors, and  $\chi(\Sigma)$ would not be negative). 
An arc $\gamma$ with one endpoint on $C$ and the other on $M$ defines a  splitting over $\Z/2\Z$, which is non-trivial because $M$ is not adjacent to $C$.

If $\Sigma_{top}$ is   an annulus, there are two cases. If  $C$ is an arc, one can find an arc $\gamma$ from $C$ to $C$ as in the general case. If $C$ is a  circle in $\bo\Sigma_{top}$, the other circle contains a mirror  $M$ (otherwise $\Sigma$ would be  a regular annulus) and an arc $\gamma$ from $C$ to $M$ yields a splitting over $\Z/2\Z$.
\end{proof}

\begin{rem}\label{arcb}
 If the splitting constructed is over $K=\Z/2\Z$,  then $\Sigma$ contains a mirror and   $K$ is contained in an infinite dihedral
  subgroup (generated by $K$ and a conjugate).
\end{rem}

\begin{dfn}[Filling geodesics]\label{dfn_fill} 
Let   $\Sigma$ be a compact hyperbolic  $2$-orbifold,  and let $\calc$ be a non-empty collection of  (non-disjoint)  essential simple closed  geodesics in $\Sigma$.
We say that $\calc$ \emph{fills}\index{geodesic (in an orbifold)!filling the orbifold} $\Sigma$ if the following equivalent conditions hold:
\begin{enumerate}
\item For every essential simple closed geodesic $\alpha$ in $\Sigma$, there exists $\gamma\in \calc$ that intersects $\alpha$ non-trivially  
 (with $\gamma\neq \alpha$).
\item For every element $g\in\pi_1(\Sigma)$ of infinite order that is not conjugate into a  boundary subgroup,
 there exists $\gamma\in \calc$ such that $g$ acts hyperbolically in the splitting of $\pi_1(\Sigma)$ dual to $\gamma$.
\item  The full preimage $\Tilde \calc$ of $\calc$ in the universal covering   $\Tilde\Sigma$
is connected.
\end{enumerate}
\end{dfn}

The equivalence between these conditions is well known. We include a proof for completeness.
\begin{proof}
$(2)\Rightarrow(1)$ is clear,  using a $g$ representing $\alpha$.

To prove $(1)\Rightarrow (3)$, consider  a connected component  $\calc_0$ of $\Tilde \calc$, and  its convex hull $A$ in $\Tilde \Sigma\subset \bbH^2$. 
If $\Tilde \calc$ is not connected, 
then $A\neq\Tilde \Sigma$; indeed, $A$ is contained in a half space bounded by a geodesic in  $\Tilde \calc \setminus \calc_0$.
Let  $\alpha$ be a connected component of the boundary of $A$  in $\Tilde \Sigma$, a bi-infinite geodesic.
We note that $\alpha$ cuts no geodesic of $\Tilde \calc$. Indeed, if $\gamma\in\Tilde\calc$ cuts $\alpha$, then $\gamma\notin \calc_0$,
so all elements of $\calc_0$ are disjoint from $\gamma$, and  $A$ is contained in a half-space bounded by $\gamma$, contradicting    $\alpha\subset \bar A$.
Now $\alpha$ projects to a simple closed geodesic in $\Sigma$ (if $g\alpha$ did intersect $\alpha$  transversely, then $A$ would be contained in
the intersection of the half spaces bounded by $\alpha$ and $g\alpha$, a contradiction).
Assumption $(1)$ ensures that $\alpha$ is a boundary component of $\Tilde \Sigma$. This implies that  $A=\Tilde \Sigma$, and that $\Tilde \calc$ is connected.

To prove $(3)\Rightarrow(2)$, consider $g\in\pi_1(\Sigma)$ of infinite order, and let $A(g)$ be its axis in $\Tilde \Sigma$.  If (2) does not hold, 
it intersects no geodesic in $\Tilde \calc$. By connectedness, $\Tilde \calc$ is contained in one of 
the half-spaces bounded by $A(g)$. It follows that the convex hull $A$ of $\Tilde \calc$ is properly contained in $\Tilde \Sigma$.
Since $A$ is  $\pi_1(\Sigma)$-invariant, this is a contradiction.
\end{proof}

\begin{cor}\label{lem_fill}
If $\Sigma$ contains at least one essential simple closed geodesic,
then the set of simple closed geodesics fills $\Sigma$.
\end{cor}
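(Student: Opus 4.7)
Plan: Given an essential simple closed geodesic $\alpha$ in $\Sigma$, I will exhibit an essential simple closed geodesic $\gamma \neq \alpha$ meeting $\alpha$ non-trivially, verifying condition (1) of Definition \ref{dfn_fill}. The strategy is topological: cut $\Sigma$ along a two-sided representative of $\alpha$ (along $\alpha$ itself if two-sided, otherwise along the boundary of a regular neighborhood) to obtain a hyperbolic $2$-orbifold $\hat\Sigma$ with new boundary components arising from $\alpha$, then construct a simple closed curve in $\Sigma$ crossing $\alpha$ by joining these $\alpha$-boundary components via essential arcs in $\hat\Sigma$, and finally straighten to the desired geodesic $\gamma$.

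The case analysis is governed by the topology of $\alpha$. If $\alpha$ is two-sided and non-separating, $\hat\Sigma$ is connected and contains two distinct $\alpha$-boundary components; any embedded arc joining them closes up in $\Sigma$ to a simple closed curve meeting $\alpha$ transversely in a single point, hence with algebraic intersection $\pm 1$ and in particular non-trivial in $\pi_1(\Sigma)$. In the remaining cases (two-sided separating, or one-sided) each component of $\hat\Sigma$ has a single $\alpha$-boundary $C_i$. An Euler-characteristic computation via Proposition \ref{euler} shows each such component is hyperbolic: the non-hyperbolic alternatives (topological disc or annulus without cone points) would force $\alpha$ to be null-homotopic, parallel to a boundary component of $\Sigma$, or equal to a mirror of $\Sigma$, all contradicting essentialness. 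Lemma \ref{arc} applied to such a component with $C = C_i$ then produces a non-trivial free splitting of its fundamental group relative to the remaining boundary subgroups, coming from an essential arc $\eta_i$ with both endpoints on $C_i$ (outside of the exceptional mirror-ending subcase discussed below); in the one-sided variant the Möbius-band component additionally contributes its natural core-crossing arc. Concatenating arcs $\eta_i$ with matching endpoints on $\alpha$ yields a simple closed curve in $\Sigma$ crossing $\alpha$ transversely; in the separating case it represents an element of $\pi_1(\Sigma)=\pi_1(\Sigma_1)*_{\pi_1(\alpha)}\pi_1(\Sigma_2)$ lying in neither factor, hence hyperbolic in the Bass--Serre tree dual to $\alpha$ and non-trivial. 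Straightening to a geodesic $\gamma$ in its free homotopy class, geometric intersection with $\alpha$ is preserved in minimal position, so $\gamma \neq \alpha$ and $\gamma \cap \alpha \neq \emptyset$.

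The principal obstacle is the exceptional case of Lemma \ref{arc} where some $\Sigma_i$ is a topological disc or annulus without conical points, forcing the arc $\eta_i$ to terminate on a mirror $M$ of $\Sigma_i$ rather than on $C_i$ and producing a splitting over $\Z/2\Z$. I plan to resolve this by passing to the orientable double cover of $\Sigma$: the mirror $M$ lifts to an ordinary curve fixed by the covering involution, the arc $\eta_i$ doubles across $M$ to yield an arc with both endpoints on a preimage of $C_i$, and projecting back gives the required crossing curve in $\Sigma$. A secondary verification is that the exceptional $\Sigma_i$ arise only in configurations compatible with this mirror-doubling, which can be checked from the finite list of low-complexity orbifolds with $\chi \geq 0$ and at least one boundary component.
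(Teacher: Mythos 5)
Your overall strategy (verify condition (1) of Definition \ref{dfn_fill} by producing, for each essential simple closed geodesic $\alpha$, a second simple closed geodesic crossing it) is exactly the reduction the paper makes; but note that the paper then stops: Corollary \ref{lem_fill} is deduced from Lemma \ref{courbes}, which is quoted from \cite{Gui_reading} with no proof given in this text. So you are in effect supplying a proof of that cited lemma, and the parts of your argument devoted to honest two-sided interior circle geodesics (crossing arc in the non-separating case, Bass--Serre hyperbolicity of the glued-up curve in the separating case) are sound in outline.

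The genuine gap is that your trichotomy (two-sided non-separating / two-sided separating / one-sided) is the \emph{surface} case analysis, and it does not cover everything this paper calls an essential simple closed geodesic. By the definitions of Subsection \ref{cid}, these include geodesics passing through cone points of order $2$ and geodesic arcs meeting mirrors perpendicularly --- precisely the geodesics whose dual splitting has edge group $D_\infty$ --- and the corollary is invoked later (e.g.\ in Theorem \ref{qhe}) for orbifolds with mirrors, so these $\alpha$ cannot be ignored. For such an $\alpha$ the regular-neighbourhood piece you propose to cut along is a \emph{Euclidean} orbifold (a disc with two cone points of order $2$, a rectangle with two mirror sides, an annulus one of whose boundary circles is a mirror: all have $\chi=0$), so your assertion that ``each such component is hyperbolic'' fails and Lemma \ref{arc} cannot be applied to that piece; these cases need a construction you have not given. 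Two further steps also need repair. First, your dichotomy ``non-hyperbolic $\Rightarrow$ disc or annulus without cone points'' is off: the exceptional cases in the proof of Lemma \ref{arc} are \emph{hyperbolic} orbifolds with mirrors, while the genuinely non-hyperbolic possibilities (disc with one cone point, disc with two order-$2$ points, M\"obius band, annulus with a circular mirror) are missing from your list --- they are excluded for a circle geodesic $\alpha$ by uniqueness of the geodesic representative in a free homotopy class, not by the reasons you state (and ``equal to a mirror'' does not by itself contradict essentialness as defined here). Second, you use without justification that the embedded crossing curve straightens to a \emph{simple} geodesic; in an orbifold the straightening can degenerate (onto a one-sided geodesic, a dihedral arc, or a mirror), which is harmless once the class is hyperbolic in the tree dual to $\alpha$, but it must be argued, and your double-cover fix for the mirror-ending arcs is only a sketch: to project back to an embedded curve you need an involution-invariant curve upstairs, which you have not produced.
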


Using the first definition of filling, this follows immediately from the following lemma.

\begin{lem}[Lemma 5.3 of \cite{Gui_reading}] \label{courbes}
If $\gamma_0$ is any essential  simple closed geodesic, 
 there exists another essential simple closed geodesic $\gamma_1$  
intersecting $\gamma_0$ non-trivially. \qed
\end{lem}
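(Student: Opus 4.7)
The plan is to construct an embedded simple closed curve $c$ on $\Sigma$ that meets $\gamma_0$ transversely with minimal intersection number at least one; the hyperbolic geodesic representative $\gamma_1$ of the free homotopy class of $c$ will then be a simple essential closed geodesic intersecting $\gamma_0$ non-trivially, since geometric intersection numbers are homotopy invariants realized by geodesic representatives in any hyperbolic 2-orbifold.

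I would first reduce to the case when $\gamma_0$ is two-sided by replacing it with the boundary of a regular neighborhood if one-sided; this is an essential two-sided simple closed curve, and a transverse simple closed geodesic meeting it also meets $\gamma_0$. Cutting $\Sigma$ along $\gamma_0$ then yields an orbifold $\Sigma'$ (possibly disconnected) with two new boundary components $C_+$ and $C_-$ inherited from $\gamma_0$, and $\chi(\Sigma')=\chi(\Sigma)<0$, so every component of $\Sigma'$ is hyperbolic. If $\gamma_0$ is non-separating then $\Sigma'$ is connected, and any embedded simple arc from $C_+$ to $C_-$ glues back to a simple closed curve $c$ on $\Sigma$ meeting $\gamma_0$ exactly once (so with minimal intersection $1$ by a homological parity argument). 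If $\gamma_0$ is separating with $\Sigma'=\Sigma_1\sqcup\Sigma_2$, the plan is instead to build, in each $\Sigma_i$, an embedded simple arc $\alpha_i$ with both endpoints on $C_i$ that is not isotopic rel endpoints into $C_i$, and to paste $\alpha_1$ and $\alpha_2$ along $\gamma_0$ via two subarcs to form a simple closed curve $c$ crossing $\gamma_0$ twice. The non-boundary-parallelism of each $\alpha_i$ rules out bigons between $c$ and $\gamma_0$ on either side, so the intersection count $2$ is minimal in the homotopy class of $c$.

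The main obstacle is producing the arcs $\alpha_i$ in the separating case, where various degenerate orbifold configurations must be handled. To obtain $\alpha_i$ I would apply Lemma \ref{arc} to $\Sigma_i$ with distinguished boundary component $C_i$, yielding a non-trivial splitting of $\pi_1(\Sigma_i)$ over $\{1\}$ or $\Z/2$ relative to all boundary subgroups of $\Sigma_i$ other than $\pi_1(C_i)$. Inspecting the construction in the proof of Lemma \ref{arc}, this splitting is realized by an embedded arc in $\Sigma_i$ with one endpoint on $C_i$ and the other endpoint either on $C_i$ (free edge case) or on a mirror not adjacent to $C_i$ (the $\Z/2$ case of Remark \ref{arcb}); in the latter case the arc together with its mirror image assembles into an embedded simple arc $\alpha_i$ from $C_i$ to $C_i$. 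In either case, non-triviality of the splitting forces $\alpha_i$ not to cobound a disc with a subarc of $C_i$, hence not to be boundary-parallel. Once this is in hand, the curve $c$ is essential — not contractible by the bigon argument, and not boundary-parallel since $\gamma_0$ lies in the interior of $\Sigma$ and is disjoint from $\partial\Sigma$ — so its hyperbolic geodesic representative $\gamma_1$ is an essential simple closed geodesic distinct from $\gamma_0$ and meeting it non-trivially, which is the desired conclusion.
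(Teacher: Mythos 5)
Your argument is essentially a surface-topology proof, and the paper offers nothing internal to compare it with: Lemma \ref{courbes} is stated with a \qed and a citation to Lemma 5.3 of \cite{Gui_reading}. Judged on its own, the proposal has a genuine scope gap. In this orbifold setting an ``essential simple closed geodesic'' is any simple geodesic with $\tilde\gamma\not\subset\partial\tilde\Sigma$ and compact image; besides one- and two-sided circles in the interior, this includes geodesic arcs meeting mirrors perpendicularly at both endpoints, arcs terminating at cone points of order $2$, and circular mirrors (this is exactly how such objects are used in the proof of Proposition \ref{smallorb} and in Subsection \ref{fmg}), and Corollary \ref{lem_fill} applies the lemma to all of them. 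Your construction presupposes that $\gamma_0$ is an embedded circle in the interior: the one-sided/two-sided dichotomy, cutting along $\gamma_0$, and the homological parity argument do not apply to the arc-type and mirror-type geodesics. For an arc between order-$2$ cone points one can salvage the reduction via the boundary of a regular neighborhood (a genuine two-sided circle), but for a mirror-to-mirror arc the frontier of a regular neighborhood is not a closed curve, and for a circular mirror the curve does not lie in the interior of $\Sigma_{top}$ at all; these cases are simply not covered.

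There is also a concrete failure inside the case you do treat. When Lemma \ref{arc} returns its $\Z/2$ option, the arc it produces runs from $C_i$ to a mirror not adjacent to $C_i$ (Remark \ref{arcb}); inside the orbifold there is no ``mirror image'' to assemble with -- the mirror is part of $\partial\Sigma_{top}$ and the arc just terminates there -- so you never obtain an arc from $C_i$ to $C_i$, and in the separating case your closed curve $c$ is not produced. Pasting the two pieces across $\gamma_0$ instead yields an arc from mirror to mirror crossing $\gamma_0$; this can serve as $\gamma_1$, since such arcs are essential simple closed geodesics here, but that is a different argument needing its own essentialness and crossing justification, which you do not give. Finally, several surface facts you invoke need their orbifold versions spelled out: the bigon criterion must deal with bigons containing cone points, ``essential'' must also exclude curves bounding a disc with a single cone point or homotopic into a mirror, and the geodesic representative of an embedded curve may degenerate (to a one-sided core or to an arc traversed twice) -- harmless for the conclusion, but currently asserted rather than argued. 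A cleaner route for the crossing claim is to observe that your element $[c]$ is hyperbolic in the Bass--Serre tree of the splitting dual to $\gamma_0$ (a cyclically reduced length-two word in the amalgam, or an element with nonzero image under the $\Z$-valued character in the HNN case), which is exactly the condition in Definition \ref{dfn_fill}(2).
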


\subsubsection{Small orbifolds} \label{liste}

\index{orbifold!small orbifolds}\index{small orbifold}
 It is well-known that the pair of pants is the only compact hyperbolic surface containing no essential  simple closed geodesic. In this subsection we classify hyperbolic 2-orbifolds which do not contain an essential geodesic. Their fundamental groups do not split relative to the boundary subgroups (Corollary \ref{courbe}),  and they do not appear in flexible vertex groups of JSJ decompositions (see Subsection \ref{quah}). 

As above, we work with compact orbifolds with geodesic boundary, but we could equally well consider orbifolds with cusps. 

\begin{figure}[htbp]
  \centering
\includegraphics{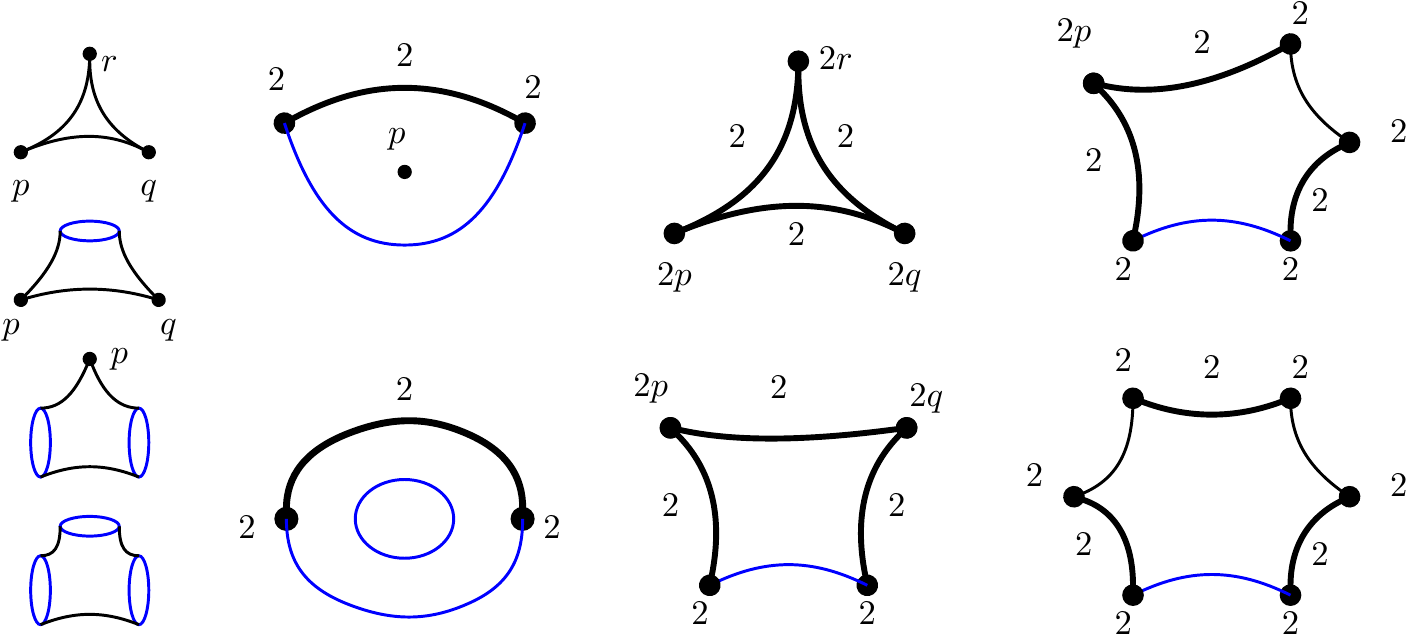}
  \caption{Orbifolds with no splittings (mirrors in bold, labels $=$ isotropy size).}
  \label{fig_rigid_orbifolds}
\end{figure}

\begin{prop} \label{smallorb}
 A compact hyperbolic 2-orbifold  with geodesic boundary contains no  simple closed essential geodesic if and only if it belongs to  the following list (see Figure \ref{fig_rigid_orbifolds}):
  \begin{enumerate}
  \item a sphere with
  3 conical points, a disc with 2 conical points, an annulus with 1 conical point, a pair of pants ($\Sigma$ has no mirror);
  
  \item a disk whose boundary circle is the union of a single mirror with a single boundary segment,   with
 exactly one conical point;

 \item an annulus with one mirror and no conical point; 
  \item a disk whose boundary circle is the union of three mirrors, together with at most $3$ boundary segments (no conical point).  \end{enumerate}
\end{prop}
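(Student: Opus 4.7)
The plan is to prove each direction separately, with the bulk of the work in showing that any compact hyperbolic $2$-orbifold outside the list carries an essential simple closed geodesic. For the ``if'' direction, I would verify by direct inspection that each orbifold on the list has no essential simple closed geodesic. In every case the underlying topological surface is a sphere, disc, or annulus with only a few distinguished features (cone points, mirrors, boundary segments). Any candidate simple closed curve is either null-homotopic, parallel to a boundary component (so its lift lies in $\bo\tilde\Sigma$ and is excluded from being essential), or encircles a single singular feature (a cone point or a small disc cut off by a mirror arc). In the last case the curve represents a finite-order element of $\pi_1(\Sigma)$, namely a rotation around the cone point or a dihedral generator supported at a mirror; such elements are elliptic in $\Isom(\bbH^2)$ and admit no axis, hence no closed geodesic representative.

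For the ``only if'' direction, assume $\Sigma$ is hyperbolic and not in the list; I will construct an essential simple closed geodesic, treating the mirror-free case first. In that case let $g$ denote the genus of $\Sigma_{top}$, $b$ the number of boundary components, and $n$ the number of conical points. Since $\Sigma$ is hyperbolic and not in list (1), either $g\geq 1$, or $g=0$ with $b+n\geq 4$, or $\Sigma_{top}$ is non-orientable with enough features. A standard surface-topology argument then produces an essential simple closed curve $\alpha\subset\Sigma_{top}$ avoiding cone points, not null-homotopic and not boundary-parallel, with the property that each complementary component either has non-trivial topology or contains at least two conical points. The free homotopy class of $\alpha$ in $\pi_1(\Sigma)$ is then an infinite-order element not conjugate into a boundary subgroup, so by hyperbolic geometry it is freely homotopic to a simple closed geodesic, which is the essential simple closed geodesic sought.

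For the general case I would pass to the mirror double cover $\widetilde\Sigma$, the mirror-free orbifold obtained by reflecting $\Sigma$ across each of its mirrors; it carries an isometric involution $\iota$ with $\widetilde\Sigma/\iota=\Sigma$. Since $\chi(\widetilde\Sigma)=2\chi(\Sigma)<0$, $\widetilde\Sigma$ is still hyperbolic. The ramification data (each corner reflector with isotropy $D_{2r}$ lifts to a conical point of order $r$; each interior cone point lifts to two; each mirror-boundary adjacency contributes to the topology of $\widetilde\Sigma_{top}$) lets one compute the topological type of $\widetilde\Sigma$ from that of $\Sigma$. The key verification is that if $\Sigma$ is hyperbolic and avoids configurations (2), (3), and (4), then $\widetilde\Sigma$ is not in list (1); this is a finite but tedious enumeration. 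By the mirror-free case applied to $\widetilde\Sigma$, there is an essential simple closed geodesic $\tilde\alpha\subset\widetilde\Sigma$. Replacing $\tilde\alpha$ by a geodesic in its $\iota$-orbit minimising intersection with $\iota(\tilde\alpha)$, one may assume $\tilde\alpha$ and $\iota(\tilde\alpha)$ are equal or disjoint; the image in $\Sigma$ is then the required essential simple closed geodesic, possibly crossing mirrors perpendicularly.

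The main obstacle I anticipate is the sharpness check in case (4): one must verify that a disc with three mirrors and four or more boundary segments, or three mirrors together with any conical point, carries an essential simple closed geodesic. The combinatorics of corner reflectors and boundary-mirror adjacencies makes the bookkeeping with $\chi$ delicate; an alternative is to construct the geodesic directly from a properly embedded arc (in the spirit of Lemma \ref{arc}) joining two non-adjacent boundary segments, then doubling across an appropriate mirror to close it up. Either way, a careful case analysis on the cyclic ordering of mirrors, boundary arcs, and corner reflectors along $\bo\Sigma_{top}$ seems unavoidable.
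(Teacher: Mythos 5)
Your mirror-free case is essentially the paper's argument (orientability and planarity of $\Sigma_{top}^*$ via Möbius bands and non-planar curves, then $\chi<0$ forcing case (1)), but the mirror case has a genuine gap at the equivariance step of your double-cover argument. After producing an essential simple closed geodesic $\tilde\alpha$ in the mirror double $\widetilde\Sigma$, you need $\tilde\alpha$ and $\iota(\tilde\alpha)$ to be equal or disjoint for the image in $\Sigma$ to be simple. ``Minimising intersection over the $\iota$-orbit of $\tilde\alpha$'' is vacuous (that orbit has two elements), and minimising over all essential simple closed geodesics does not obviously give intersection zero: if $\tilde\alpha$ crosses $\iota(\tilde\alpha)$, the natural surgeries produce curves that may be inessential, non-simple after straightening, or no shorter, and no argument is given that the process terminates with an $\iota$-compatible geodesic. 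This missing claim --- that $\Sigma$ carries an essential geodesic whenever its double does --- is essentially equivalent in content to the statement you are proving in the mirror case, so as written the proof is circular at its crux. (A smaller omission: in your ``if'' verification you only consider closed curves, whereas essential simple closed geodesics of an orbifold with mirrors can be geodesic arcs meeting mirrors perpendicularly; these must also be ruled out in cases (2)--(4), though the paper leaves this direction implicit as well.)

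The paper avoids the double cover entirely and works inside $\Sigma$: the key observation is that a properly embedded arc joining a mirror $m$ to a mirror $n$, if it cannot be isotoped (rel the mirrors, avoiding cone points) into a boundary segment of $\Sigma$ or into $m$, straightens to a geodesic meeting the mirrors perpendicularly, which \emph{is} an essential simple closed geodesic of the orbifold. Assuming no such geodesic exists, this forces strong combinatorial constraints --- only one component of $\bo\Sigma_{top}$ contains mirrors, it carries at most three of them, and $\Sigma_{top}^*$ is a disc or an annulus --- after which $\chi(\Sigma)<0$ pins down exactly the configurations (2), (3), (4). Your own fallback suggestion (building the geodesic from a properly embedded arc in the spirit of Lemma \ref{arc} and doubling locally) is precisely this route, but you did not carry it out; executing it would both close the gap and eliminate the ``tedious enumeration'' of doubles that your current plan requires.
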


 Orbifolds $\Sigma$ in this list are hyperbolic
if and only if $\chi(\Sigma)<0$ (see Proposition \ref{euler}).

\begin{proof}
   Let $\Sigma$ be an orbifold with no closed geodesic, and let  $\Sigma_{top}^*$ be the underlying topological surface with the conical points removed.  It  has to be  orientable, since otherwise 
 it contains an embedded M\"obius band, whose core yields an essential simple closed geodesic.
Similarly,   $\Sigma_{top}^*$  has to be planar,   with at most 3 boundary components or punctures.

If $\Sigma$ has no mirror, we must be in case (1): 
the  total number of  conical points and boundary components must be three because $\chi(\Sigma)$ is  negative.

 We therefore assume that $\Sigma$ has a mirror $m$. Recall that each component $c$ of $\bo\Sigma$ is contained in a component $b$ of $\bo  \Sigma_{top}^*$ (a circle). If $c\ne b$, then $c$ is a segment whose endpoints belong to (possibly equal) mirrors contained in $b$.

The classification  when there are mirrors relies on the inequality $\chi(\Sigma)<0$ and the following basic observation.
Consider a properly embedded arc joining $m$ to a mirror $n$ (possibly equal to $m$). Since it cannot be isotopic to an essential simple closed geodesic, it may be isotoped (in the complement of the conical points, with its endpoints remaining on $m$ and $n$ respectively)  to a boundary segment of $\Sigma$ or to an arc contained in $m$.

This  implies that 
  the connected component $b$ of  $\bo\Sigma_{top}$ containing $m$ is
the only connected component of $\bo\Sigma_{top}$ containing a mirror,  and $b$ cannot contain more than 3 mirrors.
Considering arcs joining $m$ to $m$, it also implies
that  $\Sigma_{top}^*$ is a disc or an annulus (not a pair of pants).

If $\Sigma_{top}^*$ is  an annulus, the only possibility is that $b=m\cup s$ with $s$ a boundary segment of $\Sigma$.  This corresponds to cases (2) and (3), depending on whether $\Sigma$ has a conical point (necessarily of order $>2$) or a second boundary component.

The only remaining possibility is  that $\Sigma_{top}$ is a disc with boundary $b$, and there is no conical point. 
Any pair of mirrors in $b$ have to be adjacent, or joined by a boundary segment of $\Sigma$.
It follows that $b$ contains at most $3$ mirrors, and at most as many boundary segments. There must be 3 mirrors in order for $\chi(\Sigma)$ to be negative.
\end{proof}

\subsubsection{Orbifold with finite mapping class group}
 \label{fmg}

\index{orbifold!with finite mapping class group}
Suppose that $\Sigma$ is a compact hyperbolic surface. Unless it is a pair of pants, it contains a simple closed geodesic $\gamma$. If $\gamma$ is 2-sided, it defines a splitting of $\pi_1(\Sigma)$ over a maximal cyclic subgroup $H_\gamma$, and  the Dehn twist around $\gamma$ defines an infinite   order element of the mapping class group. If $\gamma$ is one-sided, the dual splitting is over an index 2 subgroup of $\pi_1(\gamma)$, corresponding to the boundary curve $\hat \gamma$ of a regular neighborhood; the Dehn twist around $\hat \gamma$ is homotopically trivial.  

The  pair of pants and the  twice-punctured projective plane contain  no 2-sided geodesic and have finite mapping class group.  All other hyperbolic surfaces contain  a   2-sided geodesic and have infinite mapping class group (note that the once-punctured Klein bottle contains a unique 2-sided geodesic; like the closed non-orientable surface of genus 3, it has no pseudo-Anosov mapping class).

In this subsection we generalize this discussion to orbifolds.

There are more examples of simple closed geodesics  which do not yield non-trivial twists.
If $\gamma$ is a mirror which is a full circle in $\partial \Sigma_{top}$, its fundamental group is isomorphic to $\bbZ\oplus \bbZ/2\bbZ$; 
  it defines a splitting of $\pi_1(\Sigma)$ over the index 2 subgroup isomorphic to $\bbZ$, but the associated Dehn twist is once again trivial.  If $\gamma$ is a geodesic arc whose endpoints belong to mirrors, its fundamental group is the dihedral group $D_\infty$ and there is no associated Dehn twist (because $D_\infty$ has trivial center). If $\gamma$ is a mirror joining two corner reflectors carrying $D_4$, it yields a splitting over $D_\infty$ but no twist.

 We refer to \cite{GL6} (especially Theorem 7.14) 
for a  general discussion of the relations between splittings and automorphisms   (see also \cite{DG2} or \cite{Lev_automorphisms},  and the discussion in Subsection \ref{zmax}). 
Here we limit ourselves to classifying orbifold groups which do not split over a maximal cyclic subgroup relative to the boundary subgroups; equivalently, we classify orbifolds  $\Sigma$ 
such that the interior of $\Sigma_{top}^*$ contains no 2-sided geodesic. 
These are the orbifolds whose mapping class group is finite.

Let $\Sigma$ be such an orbifold. We first note that
$\Sigma$ must be planar with at most 3 conical points or boundary components, or be a M\"obius band with one conical point, or a M\"obius band with one open disc removed,    or a projective plane with 2  conical points.

Now say that  a   component $b$ of $\bo\Sigma_{top}$ is \emph{simple} 
if $b$ contains no mirror (it is a component of $\bo \Sigma$) or $b$ is a single  mirror;
  we say that $b$ is a \emph{circular boundary component} or a \emph{circular mirror} accordingly. 
Note that a simple boundary component does not  contribute to $\chi(\Sigma)$ (see Definition \ref{defeuler}). In particular, if $\Sigma$ is planar and all boundary components of $\Sigma_{top}$ are simple, the total number of boundary components and conical points is 3.

If $b$ is a non-simple boundary component, consider a simple closed curve parallel to it inside $\Sigma_{top}$. Since it is not isotopic to a geodesic, it must be parallel to a simple boundary component $b'\ne b$, or bound a M\"obius band containing no conical point, or bound a disc containing at most one conical point.

One can now check that $\Sigma$ must be in the following list.  We write, e.g., $(S^2,3)$ to mean that the surface obtained from $\Sigma_{top}$ by removing the boundary and the conical points is a sphere minus 3 points. 
\begin{itemize}
\item $(S^2,3)$: a sphere with a choice of 3 conical points, circular boundary components, or circular mirrors;
\item $(S^2,2)$: an annulus one of whose boundary component is non-simple, and the other is either  a conical point or a circular boundary component or a circular mirror;
\item $(S^2,1)$:  a disc whose boundary is non-simple, with no conical point;
\item $(P^2,2)$: a projective plane with a choice of 2 conical points,  circular boundary components, or circular mirrors;

\item $(P^2,1)$: a M\" obius band whose boundary is non-simple  (no conical point).
\end{itemize}

\subsection{Definition and properties of quadratically hanging   subgroups}  \label{quah}

 As usual, we fix $G$, $\cala$, and $\calh$. 
  Let $Q$ be a subgroup of $G$.

\begin{dfn}[QH subgroup, fiber, extended boundary subgroup]\label{dfn_qh} 
\index{QH, quadratically hanging}\index{fiber (of a QH subgroup)}\index{extended boundary subgroup}

We say  that $Q$ is a \emph{QH subgroup} (over $\cala$, relative to  $\calh$) if: 
  \begin{enumerate}
  \item $Q=G_v$ is the stabilizer of a vertex $v$ of an $(\cala,\calh)$-tree $T$; 
  \item $Q$ is an extension $1\to F\to Q \to
  \pi_1(\Sigma )\to 1$, with $\Sigma$ a compact hyperbolic 2-orbifold;  we call $F$ the \emph{fiber},  and $\Sigma$ the underlying orbifold;
  \item each incident edge stabilizer, and each intersection $Q\cap gHg\m$ for $H\in \calh$, is an \emph{extended boundary subgroup}:
  by definition, this means that its image in $\pi_1(\Sigma)$ is   
  finite or contained in a boundary subgroup $B$ of $\pi_1(\Sigma)$. 
\end{enumerate}

 \end{dfn}

Condition (3) may be rephrased as saying that all groups in
 $\Inch_v$ (see Definition \ref{indup})
are extended boundary subgroups.

  In   full generality, the isomorphism type of $Q$ does not necessarily determine $F$ and $\Sigma$; when we refer to a QH subgroup, we always consider $F$ and $\Sigma$ as part of the   structure.   If $F$ is small, however,   it may be characterized as the largest normal subgroup of $Q$ which is small; in particular, any automorphism of $Q$ leaves $F$ invariant.  The group $Q$ does not have to be finitely generated if $F$ is not, but it is finitely generated relative to  $\Inch_v$  by Lemma \ref{relfg}, so Proposition \ref{arbtf} (guaranteeing  the existence of a minimal subtree) applies to actions of $Q$ relative to $\Inch_v$.

A vertex $v$ as above, as well as its image in $\Gamma=T/G$,  is called a \emph{QH vertex}.\index{QH, quadratically hanging}
It is the only point of $T$ fixed by $Q$ because extended boundary subgroups are proper subgroups of $Q$. Also note that the preimage in $Q$ of the finite   group carried by  a conical point or a corner reflector of
   $\Sigma$ is an extended boundary subgroup.

Any incident edge stabilizer $G_e$  is contained in an extension of $F$ by a virtually cyclic subgroup  of $\pi_1(\Sigma)$. But, even if $G$ is one-ended,  $G_e$ may meet $F$ trivially, or have trivial image in $\pi_1(\Sigma)$ (see Subsection  \ref{periph}). In particular,   in full generality, 
$F$ does not have  to belong to $\cala$,   or be universally elliptic if $T$ is a JSJ tree.

\begin{dfn}[Dual splitting of $G$, group $Q_\gamma$] \label{dfg}
A splitting  of $\pi_1(\Sigma)$ dual to a family of geodesics $\call$   as in Subsection \ref{cid} induces  a splitting of $Q$ relative to $F$.  By the third condition of Definition \ref{dfn_qh}, this splitting is also relative to    $\Inch_{|Q}$ (see Definition \ref{indup}), so extends to a splitting of $G$ relative to $\calh$ by Lemma \ref{extens}. We   say that this splitting of $G$ is \emph{dual to $\call$}\index{splitting!dual to a family of geodesics}\index{dual splitting}   
(or determined by $\call$). The edge group associated to $\gamma\in\call$ is denoted $Q_\gamma$.
\end{dfn}

 The 
   edge groups    $Q_\gamma$  
are extensions of the fiber $F$ by $\Z$ or $D_\infty$, and in general they do not have to be in $\cala$.
For many natural classes of groups $\cala$, though, such extensions are still in $\cala$ (this is the content of the stability  conditions\index{stability condition} of Subsection \ref{star}).

Conversely, we have seen (Proposition \ref{dual}) that small splittings of an orbifold group relative to the boundary subgroups are dual  to families of geodesics.  
We will prove  a similar statement (Lemma \ref{dual2}) for splittings of QH groups relative to incident edge stabilizers and groups in $\calh_{ | Q}$ (i.e.\ to $\Inch_{|Q}$),
but one has to make   additional assumptions. 

First,  the fiber has to be elliptic in the splitting; this is not automatic in full generality, but this holds
as long as $F$ and groups in $\cala$ are slender, see Lemma \ref{uesle} below.
Next, we need to ensure that boundary subgroups are elliptic;  this motivates
 the following definition.

\begin{dfn}[Used boundary component]\label{dfn_used}\index{used boundary component} 
 A boundary component $C$ of $\Sigma$ is \emph{used}  if the  group $B=\pi_1(C)$   (isomorphic to $\bbZ$ or $D_\infty$) 
 contains with   finite index the image of   an incident edge stabilizer 
or of a subgroup of $Q$ conjugate (in $G$) to a group in $\calh$. 
\end{dfn}

Equivalently,
$C$ is used if there exists some subgroup $H\in \Inch_v$
whose image in $\pi_1(\Sigma)$ is infinite and contained in $\pi_1(C)$ up to conjugacy.

Using Lemma \ref{arc}, we get:
\begin{lem}\label{lem_arcbis} 
  Let $Q=G_v$ be a QH vertex group of a tree $T$, with fiber $F$. If some boundary component is not used, then
$G$ splits relative to $\calh$ over  a group $F'$ containing $F$ with index at most 2.
Moreover, $T$ can be refined at $v$ using this splitting.
\end{lem}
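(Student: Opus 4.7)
The plan is to pull back to $Q$ the splitting of $\pi_1(\Sigma)$ provided by Lemma \ref{arc}, verify it is relative to $\Inch_v$, and then apply Lemma \ref{extens}.

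First I would invoke Lemma \ref{arc} applied to the unused boundary component $C$ of $\Sigma$: this yields a non-trivial splitting of $\pi_1(\Sigma)$ over a group $K$ isomorphic to $\{1\}$ or $\Z/2\Z$, relative to the fundamental groups $B_k$ of all boundary components of $\Sigma$ distinct from $B=\pi_1(C)$. Denote by $S_\Sigma$ its Bass-Serre tree. Composing the action with the projection $\pi:Q\onto \pi_1(\Sigma)$, one obtains an action of $Q$ on $S_\Sigma$ with edge stabilizer $F':=\pi^{-1}(K)$, a subgroup of $Q$ containing $F$ with index $|K|\le 2$. This action has no global fixed point since $\pi_1(\Sigma)$ has none on $S_\Sigma$, so it defines a non-trivial splitting of $Q$ over $F'$.

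Next I would check that this splitting of $Q$ is relative to $\Inch_v$. Let $H\in\Inch_v$. Since $Q$ is QH, the image $\pi(H)$ is either finite or contained (up to conjugacy) in a boundary subgroup of $\pi_1(\Sigma)$. In the first case, $\pi(H)$ fixes a point in $S_\Sigma$ by Serre's property (FA) for finite groups, hence so does $H$. In the second case, because $C$ is not used, the image $\pi(H)$ cannot be contained with finite index in $B$, and since subgroups of $B\simeq\Z$ or $D_\infty$ of infinite index are finite, $\pi(H)$ must actually be conjugate into some $B_k$ with $k\neq C$; by construction the splitting from Lemma \ref{arc} is relative to such $B_k$, so again $H$ is elliptic in $S_\Sigma$.

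It then remains to extend this splitting of $Q$ to a splitting of $G$ refining $T$ at $v$, which is exactly the content of Lemma \ref{extens}: using the tree $S_\Sigma$ (viewed as a tree with $Q$-action, relative to $\Inch_v$) to blow up the vertex $v$ of $T$, one obtains a tree $\hat T$ with a collapse map $\hat T\to T$ whose preimage of $v$ is equivariantly isomorphic to $S_\Sigma$; this $\hat T$ is relative to $\calh$ and its new edges, coming from $S_\Sigma$, are stabilized by conjugates of $F'$.

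The only step that requires a bit of care is verifying the relative condition, specifically handling elements of $\Inch_v$ with finite image in $\pi_1(\Sigma)$: this is where one uses that finite groups have Serre's property (FA). Everything else is a direct application of the lemmas already established (Lemma \ref{arc} and Lemma \ref{extens}).
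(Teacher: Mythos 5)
Your argument is correct and is essentially the paper's proof: pull back the splitting of $\pi_1(\Sigma)$ given by Lemma \ref{arc} through the extension $1\to F\to Q\to\pi_1(\Sigma)\to 1$ to get a splitting of $Q$ over $F'=\pi^{-1}(K)$, observe that unusedness of $C$ makes it relative to $\Inch_v$, and refine $T$ at $v$ via Lemma \ref{extens} (then collapse to a one-edge splitting over $F'$). You merely spell out the ellipticity check that the paper leaves implicit; the approach is identical.
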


\begin{rem}
 $F'=F$ whenever  the underlying orbifold has no mirror.
In general,  $F'$ does not have to  be in $\cala$.
\end{rem}

\begin{proof}
   Let $C$ be a boundary component of $\Sigma$. Lemma \ref{arc} yields 
a non-trivial splitting of $Q$ over a group  $F'$
 containing F with index	$\le2$   (with $F'=F$ if there is no mirror).  
If  $C$ is not used,
this splitting is relative to $\Inch_{ | Q}$. 
 By Lemma \ref{extens}, one may use it to refine $T$ at $v$. One obtains a splitting of $G$ relative to $\calh$, which may be collapsed to a one-edge splitting over $F'$.
\end{proof}

Proposition \ref{dual} implies:
   
   \begin{lem} \label{dual2} 
      Let $Q $ be a QH vertex group, with fiber $F$. 
   \begin{enumerate}
 \item Any non-trivial (minimal) 
 splitting of $Q$ relative to $F$ 
factors through   a splitting of $\pi_1(\Sigma)=Q/F$.
 \item If the splitting 
 is also relative to
 $\Inch_{|Q}$, 
 and every boundary component of $\Sigma$ is used, 
then the splitting of $\pi_1(\Sigma)$ is dominated by a splitting
 dual to a family of geodesics. In particular, $\Sigma$ contains an essential simple closed geodesic.

 \item  If, moreover,  the splitting of $Q$ has small edge groups, the splitting of $\pi_1(\Sigma)$  is dual to a family of geodesics.
\end{enumerate}
 \end{lem}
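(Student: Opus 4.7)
The plan is to reduce each assertion to Proposition \ref{dual} by showing that the action of $Q$ on the Bass--Serre tree of any splitting relative to $F$ automatically factors through the quotient $\pi_1(\Sigma)=Q/F$.

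For (1), let $S$ be the Bass--Serre tree of the given non-trivial minimal splitting of $Q$ relative to $F$. Since $F$ is elliptic in $S$ and normal in $Q$, the fixed point set $\Fix_S(F)$ is a non-empty $Q$-invariant subtree; by minimality, $\Fix_S(F)=S$, so $F$ lies in every edge and vertex stabilizer, and the action factors through $\pi_1(\Sigma)$. Since $Q$ and $\pi_1(\Sigma)$ have the same orbits on $S$, the induced $\pi_1(\Sigma)$-action is still minimal, non-trivial, and without inversion.

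For (2), I would show that every boundary subgroup $B=\pi_1(C)$ of $\pi_1(\Sigma)$ is elliptic in the induced action. Fix a boundary component $C$; by Definition \ref{dfn_used} it is used, so there exists $H\in\Inch_{|Q}$ whose image $\bar H$ in $\pi_1(\Sigma)$ has finite index in a conjugate of $B$. The splitting being relative to $\Inch_{|Q}$, the subgroup $H$ is elliptic in $S$, hence so is its image $\bar H$; since $B$ is a finite-index overgroup of $\bar H$ (up to conjugation), it is also elliptic. Thus the induced $\pi_1(\Sigma)$-action is a minimal, non-trivial, relative (to all boundary subgroups) splitting, and the second statement of Proposition \ref{dual} provides a tree dual to a family of essential simple closed geodesics of $\Sigma$ that dominates $S$; in particular $\Sigma$ carries such a geodesic.

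For (3), if the edge stabilizers of the original splitting of $Q$ are small, then the edge stabilizers of the induced $\pi_1(\Sigma)$-action, being their images under the quotient $Q\onto Q/F$, are quotients of small groups, hence small (a quotient of a group not containing $\F_2$ still does not contain $\F_2$). The first (stronger) conclusion of Proposition \ref{dual} then applies and gives that $S$ itself is equivariantly isomorphic to a tree dual to a family of essential simple closed geodesics.

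The only subtle point is ensuring that the hypotheses of Proposition \ref{dual} are genuinely inherited by the induced $\pi_1(\Sigma)$-action: minimality and absence of inversions are immediate from the corresponding properties of the $Q$-action, while ellipticity of boundary subgroups and smallness of edge stabilizers come respectively from the ``used'' hypothesis combined with the finite-index inclusion $\bar H\le B$, and from the stability of smallness under taking quotients.
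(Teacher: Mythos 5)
Your proof is correct and follows essentially the same route as the paper: $F$ acts trivially on the Bass–Serre tree because its fixed-point set is a nonempty $Q$-invariant subtree of a minimal tree, the "used" hypothesis makes every boundary subgroup elliptic (via the finite-index criterion), and Proposition \ref{dual} is then applied, its second statement for (2) and its first for (3). The details you add (the finite-index ellipticity step and the observation that images of small groups under $Q\onto\pi_1(\Sigma)$ are small) are exactly the points the paper leaves implicit.
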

   
 \begin{rem}
 The induced splitting of $\pi_1(\Sigma)$ is   relative to the boundary subgroups if (2) holds, but not necessarily if only (1) holds. 
 \end{rem}

 \begin{proof}  Let $S$ be the Bass-Serre tree of the splitting of $Q$.  The group $F$ acts as the identity on $S$:   its fixed point set is nonempty, and  $Q$-invariant because $F$ is normal in $Q$.
 We deduce that the action of 
 $Q$ on $S$ factors through an action of $\pi_1(\Sigma)$. 
 
 Under the   assumptions of (2), this action is relative to all boundary subgroups because all boundary components of $\Sigma$ are used (by an incident edge stabilizer or a conjugate of a group in $\calh$).
 We apply  Proposition \ref{dual}.
 \end{proof}

  Flexible QH vertex groups $G_v$ of JSJ decompositions have non-trivial splittings relative to   $\Inch_v$
   (see Corollary \ref{cor_flex}).  As in Corollary \ref{courbe} we wish to  deduce that $\Sigma$ 
   contains an essential
simple closed geodesic, so as to rule  out  the small orbifolds of Proposition \ref{smallorb}.

    \begin{prop} \label{bu}
Let $Q=G_v$ be a QH vertex stabilizer of a JSJ tree over $\cala$ relative to $\calh$. 
Assume that $F$, and any subgroup  of $G_v$ containing $F$ with index 2, belongs to  $\cala$. Also assume that  $F$  is universally elliptic. Then:
\begin{enumerate}
 \item every boundary component of $\Sigma$ is used;
 \item if $Q$ is flexible, then $\Sigma$ contains an essential simple closed geodesic;
  \item 
    let $T$ be an $(\cala,\calh)$-tree such that $Q$ acts on $T$ with small edge stabilizers; if $Q$ is not elliptic in $T$, 
then the action of $Q$ 
on its minimal subtree 
 is dual, up to subdivision,
to a family of essential simple closed geodesics of $\Sigma$.
\end{enumerate}
  \end{prop}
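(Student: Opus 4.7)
\medskip

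\noindent\textbf{Proof plan for Proposition \ref{bu}.}

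The plan is to prove the three assertions in order, with (1) feeding into both (2) and (3). First I would tackle (1) by a direct contradiction with JSJ maximality. Suppose some boundary component $C$ of $\Sigma$ is not used. By Lemma \ref{lem_arcbis}, one can refine the JSJ tree $T_J$ at $v$ using a splitting of $G$ relative to $\calh$ over a group $F'$ containing $F$ with index at most $2$; call the resulting tree $\hat T_J$. The assumption on $\cala$ gives $F'\in\cala$, and since $F$ is universally elliptic, so is the finite-index overgroup $F'$. All other edge stabilizers of $\hat T_J$ come from $T_J$, hence are also universally elliptic. Thus $\hat T_J$ is a universally elliptic $(\cala,\calh)$-tree that strictly refines $T_J$: the vertex stabilizer $Q$ of $T_J$ acts non-trivially on the blowup at $v$, so is not elliptic in $\hat T_J$. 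This contradicts the fact that the JSJ tree dominates every universally elliptic tree.

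For (2), I would combine (1) with Corollary \ref{cor_flex} and Lemma \ref{dual2}. If $Q$ is flexible, Corollary \ref{cor_flex} gives a non-trivial splitting of $Q$ relative to $\Inch_{|Q}$. Lemma \ref{lem_passage} (applied to $F<Q$, using that $F$ is $(\cala,\calh)$-universally elliptic) ensures this splitting is also relative to $F$. By Lemma \ref{dual2}(1) it factors through a non-trivial splitting of $\pi_1(\Sigma)$, and by (1) every boundary component is used, so Lemma \ref{dual2}(2) concludes that $\Sigma$ contains an essential simple closed geodesic.

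For (3), let $S=\mu_T(Q)$ be the minimal $Q$-invariant subtree. Since $F$ is universally elliptic it fixes a point of $T$; as $F\normal Q$, its fixed point set is $Q$-invariant, so by minimality $F$ acts trivially on $S$ and the action of $Q$ on $S$ factors through $\pi_1(\Sigma)=Q/F$. This action is non-trivial (since $Q$ is not elliptic in $T$), without inversions, minimal, and has small edge stabilizers (as sub-stabilizers of small edge stabilizers of $T$). The crucial point is to check that every boundary subgroup $B=\pi_1(C)$ acts elliptically on $S$: by (1), $C$ is used, so the preimage of $B$ in $Q$ contains with finite index either an incident edge stabilizer of $T_J$ (universally elliptic, hence elliptic in $T$) or a subgroup of $Q$ conjugate to a group of $\calh$ (elliptic in $T$ since $T$ is an $(\cala,\calh)$-tree); ellipticity transfers to the finite-index overgroup and hence to $B$. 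Proposition \ref{dual} then identifies $S$, up to subdivision, with the tree dual to a family of disjoint essential simple closed geodesics of $\Sigma$.

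The only place where care is required is the bookkeeping in (1) and (3) to check that the relevant finite-index overgroups of universally elliptic groups land in $\cala$ and remain elliptic; once the hypotheses on $F$ and on finite-index extensions are used, the remainder is a direct application of the preceding lemmas, so I do not expect a genuine obstacle beyond ensuring Lemma \ref{lem_arcbis} applies and that the induced splittings of $\pi_1(\Sigma)$ satisfy the hypotheses of Proposition \ref{dual}.
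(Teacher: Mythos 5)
Your proof is correct and follows essentially the same route as the paper: (1) by refining the JSJ tree via Lemma \ref{lem_arcbis} and contradicting maximality, and (2)--(3) by producing a splitting of $Q$ relative to $F$ and $\Inch_{|Q}$ and passing to $\pi_1(\Sigma)$ with elliptic boundary subgroups, which is exactly the content of Lemma \ref{dual2} (the paper simply invokes its parts (2) and (3) rather than re-running Proposition \ref{dual} by hand). The one slip, easily repaired, is in (3): ``used'' means the \emph{image} in $\pi_1(\Sigma)$ of an incident edge stabilizer or of a subgroup conjugate into $\calh$ has finite index in $B=\pi_1(C)$, not that such a group has finite index in the preimage $\hat B\subset Q$ (it may meet $F$ trivially while $F$ is infinite, cf.\ Example \ref{perd2}); your ellipticity transfer still goes through at the level of images, since such a group $H$, being elliptic in $T$, fixes a point of the $H$-invariant subtree $S$, so its image is elliptic in the induced $\pi_1(\Sigma)$-action and $B$ contains that image with finite index.
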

 
 \begin{proof}
 If some boundary component of $\Sigma$ is not used, Lemma \ref{lem_arcbis} yields
a refinement of the JSJ tree. The new edge stabilizers contain $F$ with index at most 2, so belong to $\cala$ and are universally elliptic.  
This  contradicts the maximality of the JSJ tree.

If $Q=G_v$ is flexible, it acts non-trivially on an $(\cala,\calh)$-tree. This tree is also relative to $F$ because $F$ is universally elliptic. This yields a splitting of $G_v$ relative to $F$, which is relative to $\calh_{ | G_v}$ and to incident edge stabilizers because they are universally elliptic. Since every boundary component is used, we obtain a geodesic by  Lemma \ref{dual2}. 

 Applying the previous argument to $T$ as in (3) shows that
the splitting of $Q$ is dual to a family of geodesics by the third assertion of Lemma \ref{dual2}.
\end{proof}

The  first part of the following proposition   shows that, conversely, the existence of an essential  geodesic implies flexibility.

\begin{prop}\label{prop_QHUE} 
  Let   $Q$ be a QH vertex group.
Assume that $\Sigma$ contains an essential simple closed geodesic, 
and that, for all essential simple closed geodesics $\gamma$, 
the group $Q_\gamma$  (see Definition \ref{dfg}) belongs to $\cala$.

\begin{enumerate}
\item
Any universally elliptic element (resp.\ subgroup) of $Q$ is contained in an extended boundary subgroup. 
 In particular, $Q$ is not universally elliptic. 
 \item  If $J<Q$ is small in $(\cala,\calh)$-trees, its image in $\pi_1(\Sigma)$  is virtually cyclic.
 \end{enumerate}
 
\end{prop}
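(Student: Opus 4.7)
The plan is to use the splittings of $G$ dual to essential simple closed geodesics $\gamma \subset \Sigma$ (Definition \ref{dfg}), which by the hypothesis $Q_\gamma \in \cala$ are all $(\cala,\calh)$-trees; on the minimal $Q$-invariant subtree of such a tree, the fiber $F$ acts trivially, so the action of $Q$ factors through its image $\pi_1(\Sigma)$. Consequently, an element (resp.\ subgroup) of $Q$ is hyperbolic (resp.\ non-elliptic) in such a dual splitting of $G$ if and only if its image in $\pi_1(\Sigma)$ is hyperbolic (resp.\ non-elliptic) in the corresponding splitting of $\pi_1(\Sigma)$. For the element case of (1), let $g \in Q$ be universally elliptic with image $\bar g \in \pi_1(\Sigma)$. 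If $g$ were not in an extended boundary subgroup, then $\bar g$ would have infinite order with no conjugate in a boundary subgroup, so by the filling property of the full collection of essential simple closed geodesics (Corollary \ref{lem_fill} and Definition \ref{dfn_fill}(2)), some $\gamma$ would make $\bar g$ hyperbolic in the dual splitting of $\pi_1(\Sigma)$, contradicting universal ellipticity of $g$. Applying this to any $g$ whose image has infinite order and is not conjugate into a boundary subgroup---such $g$ exist since $\Sigma$ has an essential geodesic---shows $Q$ itself is not universally elliptic.

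For the subgroup case of (1), let $H \leq Q$ be universally elliptic, so that $\bar H \leq \pi_1(\Sigma)$ is elliptic in every splitting of $\pi_1(\Sigma)$ dual to an essential simple closed geodesic, and every element of $\bar H$ is finite or conjugate into a boundary subgroup by the element case. I claim that $\bar H$ itself is finite or contained in a single boundary subgroup, which gives $H$ in a single extended boundary subgroup of $Q$. Assume $\bar H$ is infinite; then $\bar H$ contains an infinite-order element $\bar g$, which by the element case lies in a boundary subgroup $B = \Stab(\ell)$ for some component $\ell$ of $\partial \tilde \Sigma$. For any $\bar h \in \bar H$, if $\bar h \cdot \ell \ne \ell$ then a ping-pong argument inside $\pi_1(\Sigma) \subset \Isom(\bbH^2)$ shows that, for $n$ large, $\bar g^n \bar h \bar g^n \bar h^{-1} \in \bar H$ is a hyperbolic isometry whose axis is parallel to no component of $\partial \tilde \Sigma$, hence not conjugate into any boundary subgroup, contradicting the element case; thus every $\bar h$ stabilizes $\ell$, yielding $\bar H \subseteq B$.

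For (2), let $J \leq Q$ be small in $(\cala,\calh)$-trees. For each essential simple closed geodesic $\gamma$, the dual splitting of $G$ is an $(\cala,\calh)$-tree, and since $J$ is small there, Corollary \ref{pasirr} gives that $J$ fixes a point, fixes an end, or preserves a line; the same alternatives hold for $\bar J$ on the Bass-Serre tree of the dual splitting of $\pi_1(\Sigma)$. If $\bar J$ fixes an end or preserves a line in some such dual splitting, Remark \ref{rem_small} gives that $\bar J$ is virtually cyclic. Otherwise $\bar J$ is elliptic in every such dual splitting, and the argument of (1) applied directly to $\bar J$ shows $\bar J$ is finite or contained in a boundary subgroup, hence virtually cyclic in either case.

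The main obstacle is the subgroup case of (1): promoting the elementwise peripherality of $\bar H$ to containment in a single boundary subgroup. The ping-pong argument exploits that $\pi_1(\Sigma)$ acts convex cocompactly on $\tilde \Sigma \subset \bbH^2$ with pairwise disjoint stabilizers of components of $\partial \tilde \Sigma$ (malnormality of boundary subgroups). Some care is needed for orbifolds with mirrors or corner reflectors, where boundary subgroups are infinite dihedral rather than cyclic, but the geometric configuration in $\bbH^2$ is unchanged.
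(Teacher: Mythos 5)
Your proof is correct and follows essentially the same route as the paper: both reduce to showing that an element whose image in $\pi_1(\Sigma)$ has infinite order and is not peripheral becomes hyperbolic in the splitting dual to some geodesic (via the filling property), and both deduce (2) from smallness together with Remark \ref{rem_small}. The only divergence is that where the paper simply asserts that a subgroup contained in the union of all extended boundary subgroups lies in a single one, you prove this via a ping-pong argument in $\bbH^2$; that filling-in is sound (the key points being that distinct components of $\partial\tilde\Sigma$ are disjoint and uniformly separated, so for large $n$ the axis of $\bar g^n\bar h\bar g^n\bar h^{-1}$ cannot be a boundary component), though note that in the presence of mirrors distinct boundary subgroups are only almost malnormal — they may share an order-two reflection — which your argument does not actually need.
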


\begin{rem}
 This holds
under the weaker assumption that
the set of essential simple closed geodesics $\gamma$ such that $Q_\gamma\in \cala$
\emph{fills} $\Sigma$ (in the sense of Definition \ref{dfn_fill}).
\end{rem}

  \begin{proof}
 Any subgroup of $Q$ contained in the union of all extended boundary subgroups is contained in a single extended boundary subgroup, so  to prove (1) it suffices to show that, if  an element $g$   does not lie in an extended boundary subgroup, then $g$ is not   universally elliptic. 
 
 The image of $g$ in $\pi_1(\Sigma)$ has infinite order, so acts non-trivially in a splitting of  $\pi_1(\Sigma)$ dual to a geodesic $\gamma$ (see Definition \ref{dfn_fill}). 
 The splitting of $G$ dual to $\gamma$    is relative to $\calh$, and the edge group  $Q_\gamma$ is assumed to be in $\cala$, 
  so $g$ is not universally elliptic.
  
    If  $J$ as in (2) has infinite image   in $\pi_1(\Sigma)$, it acts non-trivially in a splitting of $G$  dual to $\gamma$ as above. The action preserves a line or an end by smallness of $J$, so the image of $J$ in $\pi_1(\Sigma)$ is virtually cyclic by Remark \ref{rem_small}.
\end{proof}

 Proposition \ref{bu} requires  $F$ to be universally elliptic. We show that this  
 is   automatic for splittings over slender groups.

\begin{lem} \label{uesle} 
Let $Q$ be a QH vertex group.
  If $F$ and all groups in $\cala$ are slender, then $F$ is universally elliptic. 
\end{lem}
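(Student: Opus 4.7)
I would argue by contradiction: assume $F$ is not elliptic in some $(\cala,\calh)$-tree $T$. Since $F$ is slender, Lemma \ref{lem_slender} provides a unique $F$-invariant line $\ell\subset T$, on which $F$ must act non-trivially. Because $F$ is normal in $Q$, uniqueness of $\ell$ forces $\ell$ to be $Q$-invariant as well.

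The core of the plan is to show that the induced action of $Q$ on $\ell$ makes $\pi_1(\Sigma)$ slender. Let $K$ be the pointwise stabilizer of $\ell$ in $Q$. Then $K\normal Q$ (because $\ell$ is $Q$-invariant), and $K$ is contained in the stabilizer of any edge of $\ell$, which lies in $\cala$ and is therefore slender; so $K$ is slender. The induced injection $Q/K\hookrightarrow\Aut(\ell)\cong D_\infty$ shows that $Q/K$ is virtually cyclic, hence slender. Passing to the quotient $\pi_1(\Sigma)=Q/F$, the image $\bar K:=KF/F$ of $K$ is a slender normal subgroup, and $\pi_1(\Sigma)/\bar K = Q/KF$ is a quotient of $Q/K$, so is also slender.

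Since an extension of a slender group by a slender group is slender (any finitely generated $H$ in such an extension has both $H\cap\bar K$ and its image in the quotient finitely generated, and hence is itself finitely generated), we conclude that $\pi_1(\Sigma)$ is slender. This contradicts the hypothesis that $\Sigma$ is a compact hyperbolic $2$-orbifold: $\chi(\Sigma)<0$ forces $\pi_1(\Sigma)$ to be a non-elementary discrete subgroup of $\Isom(\bbH^2)$, which necessarily contains a non-abelian free subgroup by the Tits alternative (so cannot be slender). The most delicate point to check is the normality of $K$ in $Q$, and of $\bar K$ in $\pi_1(\Sigma)$ --- both follow from the uniqueness of $\ell$ combined with the normality of $F$ --- together with the slender-by-slender closure property used in the final step.
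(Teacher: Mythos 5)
Your proof is correct and follows essentially the same route as the paper: a non-elliptic slender $F$ has a unique invariant line, which is $Q$-invariant by normality, the kernel of the action on the line lies in $\cala$ hence is slender while the image is virtually cyclic, and the resulting slenderness contradicts the hyperbolicity of $\Sigma$. The only (cosmetic) difference is that you locate the contradiction in $\pi_1(\Sigma)$ rather than in $Q$ itself, which the paper reaches directly by noting that $Q$ would be slender yet surjects onto the non-slender group $\pi_1(\Sigma)$.
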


  \begin{proof}
  Suppose that $F$ is slender   but not   universally
elliptic. It acts non-trivially  on some
 tree, and there is a unique $F$-invariant  line $\ell$. This line is  $Q$-invariant because   $F$ is normal in $Q$. It follows that $Q$ is an extension of a group in $\cala$ by its image in $\Isom(\ell)$, a   virtually cyclic group.
If groups in  $\cala$ are slender, we deduce that $Q$ is slender, a contradiction because $Q$ maps onto $\pi_1(\Sigma)$ with $\Sigma
$ hyperbolic.
\end{proof}

\begin{cor} \label{ufs}
Let $Q$ be a QH vertex group $G_v$ of  a JSJ tree $T$.  
 Assume that all groups in $\cala$ are slender,\index{slender group} and      that every extension of the fiber $F$ by a virtually cyclic group belongs to $\cala$. 
 Then:
 \begin{enumerate}
\item  $F$   is universally elliptic; it is the largest slender normal subgroup of $Q$;
\item all boundary components of the underlying orbifold $\Sigma$ are used;
\item if $G$ acts on a tree $S$, and $Q$ does not fix a point, then the action of $Q$ on its minimal subtree is dual to a family of essential simple closed geodesics of $\Sigma$;
\item $Q$ is flexible if and only if $\Sigma$ contains an essential simple closed geodesic; 
\item if $Q$ is flexible, than any universally elliptic subgroup of $Q$ is
an extended boundary subgroup.
 \end{enumerate}
\end{cor}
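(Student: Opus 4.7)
The corollary is essentially a packaging of the preceding results (Lemma \ref{uesle}, Proposition \ref{bu}, Proposition \ref{prop_QHUE}) applied to our specific hypotheses. The plan is to verify that the hypotheses of those results hold, then deduce (1)--(5) by direct application. The key observation is that the hypothesis ``every extension of $F$ by a virtually cyclic group belongs to $\cala$'' provides exactly what is needed: any subgroup of such an extension (in particular $F$ itself, and any overgroup of $F$ with index $\leq 2$) is slender, and every $Q_\gamma$ (which by Definition \ref{dfg} is an extension of $F$ by $\Z$ or $D_\infty$) belongs to $\cala$.

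First I would treat (1). Since $F$ embeds in any extension of $F$ by a virtually cyclic group, which is assumed to be in $\cala$ and hence slender, $F$ is slender. Lemma \ref{uesle} then gives that $F$ is universally elliptic. For the maximality statement: let $N\normal Q$ be slender. Its image $\bar N$ in $\pi_1(\Sigma)$ is a slender normal subgroup of an orbifold group of a compact hyperbolic $2$-orbifold. Since $\pi_1(\Sigma)$ acts non-elementarily and properly discontinuously on $\bbH^2$, any slender (in particular virtually cyclic or finite) normal subgroup must be trivial: such a subgroup would fix a non-empty convex subset of $\bbH^2$ (a point, or a geodesic for a reflection), and normality would force this set to be $\pi_1(\Sigma)$-invariant, contradicting non-elementarity. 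Hence $\bar N=1$ and $N\subset F$.

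For (2), both hypotheses of Proposition \ref{bu} are satisfied: $F$ and any index-$2$ overgroup belong to $\cala$, and $F$ is universally elliptic by (1). Assertion (1) of Proposition \ref{bu} gives that every boundary component of $\Sigma$ is used. For (3), let $S$ be any $(\cala,\calh)$-tree on which $G$ acts, so that $Q$ does not fix a point in $S$. Edge stabilizers of the action of $Q$ on its minimal subtree are contained in edge stabilizers of $S$, which lie in $\cala$ and are therefore slender, hence small. Assertion (3) of Proposition \ref{bu} then yields the description in terms of a family of essential simple closed geodesics of $\Sigma$.

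For (4), the ``only if'' direction is Proposition \ref{bu}(2). For the ``if'' direction, assume $\gamma$ is an essential simple closed geodesic of $\Sigma$; then $Q_\gamma\in\cala$ by hypothesis, and also $Q_{\gamma'}\in\cala$ for every other essential simple closed geodesic $\gamma'$. Proposition \ref{prop_QHUE}(1) applies and shows that $Q$ is not universally elliptic, hence flexible. Finally, (5) is precisely the content of Proposition \ref{prop_QHUE}(1), whose hypotheses are verified as above. The only step requiring any real work beyond citing the previous results is the maximality assertion in (1); the rest is a direct assembly.
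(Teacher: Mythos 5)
Your proposal is correct and takes essentially the same route as the paper, which simply states that the corollary ``follows from results proved above'' (Lemma \ref{uesle}, Proposition \ref{bu}, Proposition \ref{prop_QHUE}), with the observation that extensions of $F$ by trivial, $\Z/2$, $\Z$ or $D_\infty$ quotients are all covered by the hypothesis. Your extra argument for maximality of $F$ in (1) is the same point made in the remark following Definition \ref{dfn_qh} (a slender normal subgroup of $Q$ has elementary, hence trivial, image in the non-elementary group $\pi_1(\Sigma)$ -- just phrase it as preserving, rather than fixing, the canonical point/axis/limit set), so no genuinely new ingredient is involved.
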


 This follows from   results proved above.

\subsection{Quadratically hanging subgroups are elliptic in the JSJ}
\label{sec_QH_ell}

The goal of this subsection is to prove that,  
under suitable hypotheses, any QH vertex group  
is elliptic in the JSJ deformation space 
(if we do not assume existence of the JSJ deformation space, we obtain ellipticity in every universally elliptic tree).

We start with the following fact: 
 
\begin{lem} \label{pass}  If $G$ splits over a group $K\in\cala$, but does not split over any infinite index subgroup of $K$, 
then $K$ is elliptic in the JSJ deformation space.  
\end{lem}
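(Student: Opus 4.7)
The plan is to argue by contradiction, using the standard refinement machinery developed earlier in the paper, specifically the third assertion of Lemma \ref{cor_Zor}.

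Let $S$ be a one-edge $(\cala,\calh)$-tree whose edge stabilizer is (a conjugate of) $K$, provided by the hypothesis that $G$ splits over $K$. Let $T$ be a JSJ tree; we want to show that $K$ is elliptic in $T$. Since $T$ is universally elliptic, $T$ is certainly elliptic with respect to $S$. Suppose, for contradiction, that $K$ fails to be elliptic in $T$. Then the edge stabilizer $K$ of $S$ is not elliptic in $T$, so $S$ is not elliptic with respect to $T$.

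We are now in the exact situation of Lemma \ref{cor_Zor}(3), applied with $T_1=T$ (elliptic w.r.t.\ $T_2$) and $T_2=S$ (not elliptic w.r.t.\ $T_1$). It produces a splitting of $G$ over a subgroup $J$ having infinite index in an edge stabilizer of $S$. Since edge stabilizers of $S$ are conjugate to $K$, after conjugating we obtain a splitting of $G$ over a subgroup of $K$ of infinite index in $K$. This splitting is an $(\cala,\calh)$-splitting (it arises as a one-edge collapse of the standard refinement, which is an $(\cala,\calh)$-tree by Proposition \ref{prop_refinement}, and in any case $J\subset K\in\cala$ lies in $\cala$ since $\cala$ is closed under taking subgroups). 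This directly contradicts the hypothesis that $G$ does not split over any infinite index subgroup of $K$.

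Once the contradiction is reached, we conclude that $K$ is elliptic in every JSJ tree $T$, which is the same as being elliptic in the JSJ deformation space. The only point requiring mild care is the bookkeeping around conjugation (edge stabilizers of $S$ are only conjugate to $K$, not equal to it) and the fact that the splitting produced by Lemma \ref{cor_Zor}(3) is an $(\cala,\calh)$-splitting, so that the hypothesis genuinely rules it out; both are immediate. No substantial obstacle is expected, as the lemma is essentially an immediate packaging of Lemma \ref{cor_Zor}(3).
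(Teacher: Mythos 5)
Your proof is correct and is essentially the paper's own argument: the paper proves Lemma \ref{pass} in one line by applying Assertion (3) of Lemma \ref{cor_Zor} with $T_1$ a JSJ tree and $T_2$ a one-edge splitting over $K$, which is exactly your contradiction argument spelled out. The extra bookkeeping you include (conjugation of edge stabilizers, the splitting being an $(\cala,\calh)$-splitting) is fine and already implicit in the paper's remark following Lemma \ref{cor_Zor}.
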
 

\begin{proof} Apply Assertion 3 of Lemma \ref{cor_Zor} with   $T_1$   a JSJ tree  and $T_2$ a one-edge splitting over $K$. 
\end{proof}

\begin{rem} \label{uni} If $K$ is not universally elliptic, it fixes a unique point in any JSJ tree. Also note that being elliptic or universally elliptic is a commensurability invariant, so the same conclusions hold for groups commensurable with $K$. 
\end{rem}

In \cite{Sela_structure,RiSe_JSJ,DuSa_JSJ}
\index{Rips-Sela}\index{Dunwoody-Sageev}\index{Fujiwara-Papasoglu}  it is proved that,  if  $Q$
 is a  QH vertex group in some splitting (in the class considered), then $Q$ is 
elliptic in the JSJ deformation space.

This is not true in general, even if $\cala$ is the class of cyclic groups: $\F_n$ contains many
 QH subgroups, none of them elliptic in the JSJ deformation space (which consists of free actions, see Subsection \ref{free}).
This happens because $G=\F_n$ splits over groups in $\cala$ having infinite index in each other, 
something which is prohibited by the hypotheses of the papers mentioned above (in \cite{FuPa_JSJ},  $G$ is allowed to split over a subgroup of infinite index in a group in $\cala$, but $Q$ has to be the enclosing group of \emph{minimal splittings}, see  
\cite[Definition  4.5 and Theorem  5.13(3)]{FuPa_JSJ}).

A different counterexample will be given in Example \ref{exq} of Subsection \ref{ab}, with $\cala$ the family of abelian groups. 
 In that example the QH subgroup   $Q$
 only has one abelian splitting, which is universally elliptic, so $Q$ is not elliptic in the JSJ  space.  

These examples explain the hypotheses in the following  
result. 

\begin{thm}\label{qhe} Let   $Q$ be a   QH vertex group.\index{QH, quadratically hanging}
Assume that, if  $\hat J\inc Q$ is the preimage of a virtually cyclic subgroup   $J\inc \pi_1(\Sigma)$,  
  then $\hat J$  belongs to $\cala$   
and  $G$ does not split    over a subgroup of
infinite index of $\hat J$.
If one of the following conditions holds, then 
$Q$ is elliptic in the JSJ deformation space: 
\begin{enumerate}
\item \label{ass_main}   $\Sigma$ contains an essential  simple closed geodesic $\gamma$;
\item \label{ass_QG}  $\calh=\es$, and the boundary of $\Sigma$ is nonempty;
\item \label{ass_FUE}  the fiber $F$ is elliptic in the JSJ deformation space;
\item \label{ass_slender} all groups in $\cala$ are slender.
\end{enumerate}

\end{thm}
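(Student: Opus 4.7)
Let $T_J$ be a JSJ tree; assume for contradiction that $Q$ is not elliptic in $T_J$, and let $Y\subset T_J$ denote the minimal $Q$-invariant subtree. The main lever is Lemma \ref{pass}: for every essential simple closed geodesic $\gamma$ of $\Sigma$, the splitting of $G$ dual to $\gamma$ (Definition \ref{dfg}) has edge group $Q_\gamma$, the preimage of the virtually cyclic subgroup $J_\gamma\subset\pi_1(\Sigma)$ stabilizing $\gamma$. The standing hypothesis gives $Q_\gamma\in\cala$ and no splitting of $G$ over an infinite-index subgroup of $Q_\gamma$, so Lemma \ref{pass} forces $Q_\gamma$ to be elliptic in every JSJ tree, and in particular in $Y$ (using the bridge argument). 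By Remark \ref{uni} the same holds for every subgroup commensurable with $Q_\gamma$. An analogous argument, combined with the used/unused dichotomy of Lemma \ref{lem_arcbis}, shows that every extended boundary subgroup $\hat B$ (automatically in $\cala$ by hypothesis) is elliptic in $T_J$: if the corresponding boundary component is used, $\hat B$ is commensurable with an incident edge stabilizer; otherwise one applies Lemma \ref{lem_arcbis}.

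Next, we show the fiber $F$ is elliptic in $T_J$ under each of the four assumptions. Under (3) this is given; under (4), $F=\hat{\{1\}}$ is slender and hence universally elliptic by Lemma \ref{uesle}; under (1), $F\subset Q_\gamma$ for any essential simple closed geodesic $\gamma$ and the first paragraph applies; under (2) we split cases: if $\Sigma$ carries an essential simple closed geodesic we reduce to (1), while if $\Sigma$ is one of the small orbifolds of Proposition \ref{smallorb} the assumption $\calh=\es$ together with $\bo\Sigma\ne\es$ lets us invoke Lemma \ref{lem_arcbis} and the standing hypothesis to produce a splitting of $G$ over a group containing $F$ with finite index, and hence conclude via Lemma \ref{pass}. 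Since $F$ is normal in $Q$ and elliptic in $T_J$, its fixed-point set is a $Q$-invariant subtree containing $Y$, so $F$ acts trivially on $Y$; the $Q$-action on $Y$ descends to a non-trivial $\pi_1(\Sigma)$-action in which every $J_\gamma$, and every boundary subgroup, is elliptic.

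To reach a contradiction we pass this information to the geometry of $\Sigma$. Under (4), edge stabilizers of $Y$ are slender, so their images in $\pi_1(\Sigma)$ are small; Proposition \ref{dual} identifies the induced $\pi_1(\Sigma)$-action on $Y$ as dual to a non-empty family $\call$ of essential simple closed geodesics. Lemma \ref{courbes} then produces an essential simple closed geodesic $\gamma$ intersecting some $\gamma'\in\call$; this forces $J_\gamma$ to be hyperbolic in $Y$, contradicting Step~1. In cases (1), (2), (3) without slenderness one works with the second assertion of Proposition \ref{dual}: the induced action is dominated by a dual-geodesic tree $T_\call$, and any element $g\in\pi_1(\Sigma)$ hyperbolic in $Y$ must be hyperbolic in $T_\call$. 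By the filling property (Corollary \ref{lem_fill}), an element that is neither in a boundary subgroup nor in some $J_\gamma$ is available from the $T_\call$-action, again contradicting the ellipticity of all $J_\gamma$ and boundary subgroups established above.

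\textbf{Main obstacle.} The hard part is the final step outside the slender setting: one must ensure that the induced $\pi_1(\Sigma)$-action on $Y$ is governed tightly enough by its edge and vertex stabilizers to extract a concrete simple closed geodesic witness against Step~1. The potential largeness of edge stabilizers forces us to work with the weaker, domination-only version of Proposition \ref{dual}, and one must carefully verify that ellipticity of boundary subgroups persists on $Y$---this requires the used/unused analysis and is where the hypothesis about infinite-index splittings of $\hat J$ is essential. The small-orbifold subcase of (2) demands a separate finite case-check through Proposition \ref{smallorb}.
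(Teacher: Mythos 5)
Your overall route differs from the paper's proof (which, for each lift $\tilde\gamma$, pins down the \emph{unique} fixed point $c_{\tilde\gamma}$ of $Q_{\tilde\gamma}$ in $T_J$ using Proposition \ref{prop_QHUE} and Remark \ref{uni}, and then shows all these points coincide via Lemma \ref{above} and the filling/connectedness of the union of lifts), and it could in principle be made to work, but as written it has two genuine gaps. First, your justification that extended boundary subgroups are elliptic in $T_J$ is incorrect. When a boundary component $C$ is used, $\hat B$ need not be commensurable with an incident edge stabilizer: an incident edge group may meet the fiber trivially (Example \ref{perd2}), and in any case the incident edge stabilizers of the tree $T$ exhibiting $Q$ as QH are not known to be elliptic in $T_J$ (that tree is arbitrary, not universally elliptic). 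When $C$ is unused, Lemma \ref{lem_arcbis} produces a splitting of $G$ over a group $F'\supset F$ of index at most $2$, which lies with infinite index in $\hat J$ for a suitable two-ended $J$ (Remark \ref{arcb}) and therefore \emph{contradicts} the hypothesis; so the unused branch never occurs, and it cannot be used either to get ellipticity of $\hat B$ or, in your small-orbifold subcase of (2), to produce a splitting over a finite extension of $F$. The argument that actually works is the folding construction: a component used by an incident edge group $G_e$ yields a splitting of $G$ over the full preimage $\hat B$, and then the hypothesis together with Lemma \ref{pass} makes $\hat B$ (hence $F$ and $B$) elliptic in $T_J$; for components used only by groups of $\calh$, ellipticity of the relevant finite-index subgroup of $B$ comes from relativity of $T_J$ to $\calh$.

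Second, your endgame in the non-slender cases does not close. With only the domination statement of Proposition \ref{dual}, the equivariant map goes from the dual tree $T_\call$ to $Y$, so hyperbolicity transfers from $Y$ to $T_\call$ and not back: exhibiting a geodesic $\gamma$ crossing $\call$ makes $J_\gamma$ hyperbolic in $T_\call$, which is perfectly compatible with its ellipticity in $Y$, so no contradiction results. To salvage your strategy you need the genuinely dual (small edge stabilizer) case of Proposition \ref{dual}, and it is in fact available: edge stabilizers of the $Q$-action on $Y$ are intersections with edge stabilizers of the JSJ tree, hence universally elliptic, hence (by Proposition \ref{prop_QHUE}(1), once the existence of a geodesic is secured) contained in extended boundary subgroups, so their images in $\pi_1(\Sigma)$ are virtually cyclic; then the action on $Y$ is honestly dual to a family of geodesics and Lemma \ref{courbes} gives the contradiction with Step~1. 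You also omit the case where $Q$ is not elliptic but contains no hyperbolic element (so has no minimal subtree and fixes an end), which needs a short separate argument. The paper sidesteps all of this: it only uses Lemma \ref{dual2} and Proposition \ref{dual} to \emph{produce} a geodesic in cases (2)--(4), and then concludes in case (1) by the uniqueness-of-fixed-points and connectedness-of-lifts mechanism, which is where the no-splitting-over-infinite-index-subgroups hypothesis is used a second time (inside Lemma \ref{above}), a use your write-up does not reproduce.
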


 Note that $  \hat J$ contains $F$, so $F\in\cala$.

\begin{proof}  
Fix a JSJ tree $T_J$.
We start by proving ellipticity of $Q$ in $T_J$  assuming the existence of an essential simple closed geodesic $\gamma$. 

 Given
a lift $\Tilde \gamma\inc \Tilde \Sigma$, we have denoted by $H_{\gamma}$ 
the subgroup of $\pi_1(\Sigma) $ consisting of elements which preserve $\Tilde \gamma$ and each of the   half-spaces bounded by    $\Tilde \gamma$,
and by $Q_{\gamma}$   the preimage of $H_{\gamma}$ in $Q$.  We now write $Q_{\tilde \gamma}$ rather than $Q_{\gamma}$ because we want it to be well-defined, not just up to conjugacy.

The group $Q_{\tilde \gamma}$  belongs to $\cala$,
and $G$ splits
over $Q_{\tilde
\gamma}$. 
By Proposition \ref{prop_QHUE}, $Q_{\tilde\gamma}$ is not universally elliptic.  
By Lemma \ref{pass} and Remark \ref{uni}, $Q_{\tilde\gamma}$  fixes a  unique point $c_{\tilde \gamma}$ in  $T_J$.

\begin{lem} \label{above}
 Let $\tilde\gamma,\tilde\gamma'\inc\tilde\Sigma\setminus\bo\tilde\Sigma$ be lifts of simple geodesics. If $\tilde\gamma$ and $\tilde\gamma'$ intersect, 
then $c_{\tilde \gamma}=c_{{\tilde \gamma}'}$.
 \end{lem}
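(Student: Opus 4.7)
Plan: I would argue by contradiction, assuming $c_{\tilde\gamma}\ne c_{\tilde\gamma'}$ and producing a splitting of $G$ whose edge group is an infinite-index subgroup of $Q_{\tilde\gamma}$, contradicting the non-splitting hypothesis of the theorem applied to $\hat J=Q_{\tilde\gamma}$ (whose image in $\pi_1(\Sigma)$ is the virtually cyclic group $H_{\tilde\gamma}$). First I would record the basic equivariance: for any $g\in Q$ and any lift $\tilde\beta$, one has $g\cdot c_{\tilde\beta}=c_{g\tilde\beta}$, since $gQ_{\tilde\beta}g^{-1}=Q_{g\tilde\beta}$ and the fixed point is unique. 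I would then fix a lift $h'\in Q_{\tilde\gamma'}$ of a hyperbolic element of $H_{\tilde\gamma'}$ with axis $\tilde\gamma'$, and symmetrically a lift $h\in Q_{\tilde\gamma}$ of a hyperbolic element with axis $\tilde\gamma$.

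Using that $T_J$ is universally elliptic, Proposition \ref{prop_refinement} yields a refinement $\hat T$ of $T_J$ that dominates the one-edge splitting $T_\gamma$ of $G$ dual to $\gamma$, via a collapse map $p:\hat T\to T_J$ and an equivariant map $\phi:\hat T\to T_\gamma$; let $e_\gamma$ be the edge of $T_\gamma$ with stabilizer $Q_{\tilde\gamma}$. Because $\tilde\gamma$ and $\tilde\gamma'$ cross transversely, the image of $h'$ in $\pi_1(\Sigma)$ acts hyperbolically on $T_\gamma$ (its axis in $\tilde\Sigma$ crosses every translate of $\tilde\gamma$ it meets), hence $h'$ acts hyperbolically on $\hat T$. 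Since $h'$ is elliptic in $T_J$, fixing $c_{\tilde\gamma'}$, its axis $A_{h'}\subset\hat T$ collapses to $c_{\tilde\gamma'}$ under $p$ and thus lies in $Y_{\tilde\gamma'}:=p^{-1}(c_{\tilde\gamma'})$; on the other hand $\phi(A_{h'})$ is the axis of $h'$ in $T_\gamma$, which traverses $e_\gamma$ because $\tilde\gamma'$ crosses $\tilde\gamma$. I then pick an edge $\hat e\subset A_{h'}$ with $\phi(\hat e)=e_\gamma$, so that $G_{\hat e}\subset\mathrm{Stab}_G(e_\gamma)=Q_{\tilde\gamma}$.

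The contradiction will come from showing that $G_{\hat e}$ has infinite index in $Q_{\tilde\gamma}$, since then the one-edge splitting of $G$ over $G_{\hat e}$ obtained by collapsing every orbit of edges of $\hat T$ outside that of $\hat e$ directly contradicts the hypothesis. The assumption $c_{\tilde\gamma}\ne c_{\tilde\gamma'}$ ensures that $Y_{\tilde\gamma}=p^{-1}(c_{\tilde\gamma})$ and $Y_{\tilde\gamma'}$ are disjoint subtrees of $\hat T$; in particular the fixed subtree of $Q_{\tilde\gamma}$ in $\hat T$, contained in $Y_{\tilde\gamma}$, is disjoint from $\hat e$, so $Q_{\tilde\gamma}$ does not fix $\hat e$. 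The \textbf{main obstacle} is to upgrade this non-fixation into an \emph{infinite} orbit of $\hat e$ under $Q_{\tilde\gamma}$, i.e.\ to rule out that some positive power of a generator of the infinite cyclic subgroup of $H_{\tilde\gamma}$ fixes $\hat e$: the cleanest route is to enlarge $\hat T$ so that it also dominates the splitting $T_{\gamma'}$ dual to $\gamma'$, since then $h$ becomes hyperbolic in $\hat T$ and $\langle h^n\rangle\cap G_{\hat e}=\{1\}$ is immediate, forcing infinite index. The delicate point is that such a simultaneous refinement does not fall directly out of Proposition \ref{prop_refinement} (because $Q_{\tilde\gamma}$ is hyperbolic, not elliptic, in $T_{\gamma'}$); one must construct $\hat T$ by blowing up the vertex $c_{\tilde\gamma}$ of $T_J$ using a minimal $G_{c_{\tilde\gamma}}$-invariant subtree of the combined action on $T_\gamma$ and $T_{\gamma'}$, and check that the resulting tree is still an $\cala$-tree relative to $\calh$.
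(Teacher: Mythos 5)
Your setup through the choice of $\hat e$ is sound (and is essentially the paper's construction with the roles of $\gamma$ and $\gamma'$ exchanged), but the endgame has a genuine gap, which you yourself flag: from $c_{\tilde\gamma}\ne c_{\tilde\gamma'}$ you only get that $Q_{\tilde\gamma}$ does not fix $\hat e$, i.e.\ $G_{\hat e}\subsetneq Q_{\tilde\gamma}$, and proper containment does not give infinite index. Worse, the repair you propose cannot work: since $\tilde\gamma$ and $\tilde\gamma'$ cross, the dual splittings $T_\gamma$ and $T_{\gamma'}$ are hyperbolic with respect to each other ($Q_{\tilde\gamma}$ is hyperbolic in $T_{\gamma'}$ and vice versa), so no refinement of $T_J$ can dominate both; the ``combined action'' of $G_{c_{\tilde\gamma}}$ on the two splittings lives naturally on a two-dimensional core, not on a tree (this is exactly the phenomenon that makes QH vertices flexible). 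So the route ``prove $[Q_{\tilde\gamma}:G_{\hat e}]=\infty$ and contradict the hypothesis'' is a dead end. There is also a secondary unjustified step: the axis of the single element $h'$ collapses under $p$ to \emph{some} fixed point of $h'$ in $T_J$, but nothing forces that point to be $c_{\tilde\gamma'}$, since $h'$ alone may fix many points.

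The hypothesis is meant to be used in the opposite direction, and then the lemma follows directly, with no contradiction argument. Since $G$ splits over $G_{\hat e}$ (collapse all other orbits of edges of $\hat T$) and $G_{\hat e}\inc Q_{\tilde\gamma}$, the non-splitting hypothesis gives that $G_{\hat e}$ has \emph{finite} index in $Q_{\tilde\gamma}$; as $Q_{\tilde\gamma}$ is not universally elliptic, Lemma \ref{pass} and Remark \ref{uni} show that $G_{\hat e}$ fixes a \emph{unique} point of $T_J$, namely $c_{\tilde\gamma}$. On the other hand $G_{\hat e}$ fixes $p(\hat e)$, and to identify $p(\hat e)$ with $c_{\tilde\gamma'}$ you should replace the axis of $h'$ by the minimal subtree $M'$ of the full group $Q_{\tilde\gamma'}$ in $\hat T$: since $Q_{\tilde\gamma'}$ acts hyperbolically on $T_\gamma$ (hence on $\hat T$) and fixes the unique point $c_{\tilde\gamma'}$ of $T_J$, the collapse map sends $M'$ to $c_{\tilde\gamma'}$, and one chooses $\hat e\inc M'$ with $\phi(\hat e)\supset e_\gamma$. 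Then $c_{\tilde\gamma}=p(\hat e)=c_{\tilde\gamma'}$. This is exactly the paper's proof (there, one refines $T_J$ to dominate the splitting dual to $\gamma'$, takes the minimal subtree of $Q_{\tilde\gamma}$, finds an edge whose stabilizer lies in $Q_{\tilde\gamma'}$, and concludes via finite index and uniqueness of fixed points); your version would work verbatim after these two corrections.
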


\begin{proof} We can assume $\tilde\gamma\neq\tilde\gamma'$. 
Let $T'$ be the Bass-Serre tree of the splitting of $G$ determined by $\tilde \gamma'$. 
It contains a unique  edge $e'$ with stabilizer $Q_{\tilde \gamma'}$. Since ${\tilde \gamma}$ and ${\tilde \gamma}'$ intersect, 
the group $Q_{\tilde \gamma}$ acts hyperbolically on $T'$, and its minimal subtree $M$ (a line) contains $e'$. 
Let $T_1$ be a refinement of $T_J$ which dominates $T'$,    as   in Lemma \ref{prop_refinement},  and let $  M_1\inc  T_1$ be the minimal
subtree of $Q_{\tilde \gamma}$. 

The image of $  M_1$ in $T_J$ consists of the single point $c_{\tilde \gamma}$ (because $  T_1$ is a refinement of $T_J$), 
and its image by any equivariant map $f:  T_1\to T'$ contains $M$. 
Let $  e_1$ be an edge of $  M_1$ such that $f(  e_1)$ contains $e'$. 
The stabilizer $G_{  e_1}$ of $  e_1$ is contained in $Q_{{\tilde \gamma}'}$,
so $G_{e_1}$ fixes $c_{\tilde \gamma'}$.
Since $G$ does not split over infinite index subgroups of $Q_{\tilde\gamma'}$,
the index of $G_{e_1}$   in  $Q_{\tilde\gamma'}$ is finite. The group $Q_{\tilde\gamma'}$ is not universally elliptic (it acts non-trivially in the splitting dual to $\gamma$), 
so $c_{{\tilde \gamma}'}$ is the unique fixed point of $G_{e_1}$ in $T_J$ by Remark \ref{uni}. 
But $G_{  e_1}$ fixes $c_{\tilde \gamma}$ because $e_1$ is mapped to $c_{\tilde\gamma}$ in $T_J$,
so $c_{\tilde \gamma}=c_{{\tilde \gamma}'}$.
\end{proof}

We can now conclude. Since the set of all essential simple geodesics of $\Sigma$ fills $\Sigma$ (Corollary \ref{lem_fill}),
the union of their lifts is a connected subset of $\Tilde\Sigma$ (Definition \ref{dfn_fill}).
In particular, given   any pair $\tilde\gamma,\tilde\gamma'$ of such lifts,
there exists a finite sequence $\tilde \gamma=\tilde \gamma_0,\tilde \gamma_1,\dots,\tilde \gamma_p=\Tilde\gamma'$
where $\Tilde \gamma_i$ intersects $\Tilde \gamma_{i+1}$. Lemma \ref{above} implies that $c_{\tilde\gamma}=c_{\tilde\gamma'}$, so   $c_{\tilde \gamma}$ does not depend on $\tilde\gamma$.
It is fixed by $Q$, so $Q$ is elliptic.

We have   proved the theorem in   case (\ref{ass_main}) (when $\Sigma$ contains a geodesic).   
 We reduce   cases
(\ref{ass_QG})   and (\ref{ass_FUE})
 to this one, using Lemma \ref{dual2} to find a geodesic 
   (case (\ref{ass_slender}) reduces to (\ref{ass_FUE}) by  Lemma \ref{uesle}).

We   first show that every boundary component of $\Sigma$ is used (see Definition \ref{dfn_used}). 
If not, Lemma   \ref{lem_arcbis} yields a splitting of $G$
over a group   $F'$ containing $F$ with index 1 or 2.  
By Remark \ref{arcb}, the   group $F'$ is contained in  the preimage $\hat J$ of a 2-ended subgroup $J\inc \pi_1(\Sigma)$, 
and this   contradicts the assumptions of the theorem since $F'$ has infinite index in $\hat J$.

If $\calh=\es$ and $\partial \Sigma\ne \es$, every boundary component $C$ is used by   an incident edge stabilizer $G_e$ whose image in $\pi_1(\Sigma)$ is contained  in $B=\pi_1(C)$ with finite index.
 Let $\Hat B\subset Q$ be the preimage of $B$.
 Folding yields  a non-trivial splitting of
$G$ over $\Hat B$ (if for instance $G=R*_{G_e} Q $, then
$G=(R*_{G_e} \Hat B)*_{\Hat B}Q$).
Since $\hat B\in\cala$,
  Lemma \ref{pass} implies that 
$\hat B$ is elliptic in the JSJ deformation space; in particular, $F$ is elliptic. If $Q$ is not elliptic,
 Lemma \ref{dual2}   implies that $\Sigma$ contains a geodesic. 

Now suppose that $F$ is   elliptic in the JSJ space, but $Q$ is not. The action of $Q$ on any JSJ tree then factors through $\pi_1(\Sigma)$ by Assertion (1) of Lemma \ref{dual2}. Every boundary subgroup $B=\pi_1(C)$ is elliptic:  
this follows from the previous argument if $C$ is used by an incident edge stabilizer, and holds if $C$ is used by a group in $\calh$ because splittings are relative to $\calh$. Proposition \ref{dual} yields a geodesic.
  \end{proof}
  
\begin{rem} \label{rem_qhe} Under the assumptions of Theorem \ref{qhe},
assume moreover that all groups in $\cala$ are slender, 
 and that $G$ does not split over a subgroup of $Q$ whose
 image in $\pi_1(\Sigma)$ is finite.
Then $Q$ fixes a unique point $v_J\in T_J$,  so $Q\subset G_{v_J}$.
We claim that, \emph{if
$G_{v_J}$ (hence also $Q$) is universally elliptic, then $v_J$ is a QH vertex of $T_J$.}
This is  used in \cite{GL5} (proof of Theorem 4.2).

Let $T$ be a tree in which $Q$ is a   QH vertex group $G_w$. 
Note that $Q=G_{v_J}$ because $G_{v_J}$ is elliptic in $T$.
We have  to show that, if $e$ is an edge of $T_J$ containing $v$, then $G_e$ is 
 an extended boundary subgroup of $Q$.
Let $\hat T$ be a refinement of $T_J$ which dominates $T$.
Let $\hat w$ be the unique  point of $\hat T$ fixed by $Q$, and let $f:\hat T\to T$ be an equivariant map.  Let $\hat e$ be the lift of $e$ to $\hat T$.

If $f(\hat e)\ne\{w\}$,  then $G_e$ fixes an edge of $T$ adjacent to $w$, so is
an extended boundary subgroup of $Q$. Otherwise, consider
 a segment $x\hat w$, with $f(x)\ne w$, which contains $\hat e$. Choose such a segment of minimal length,
and let $\varepsilon=xy\ne \hat e$ be its initial edge (so $f(y)=w$).
We have $G_\varepsilon\inc G_y\inc G_w =Q$, and $G_\varepsilon$ fixes an edge of $T$ adjacent to $w$.
Since $G$ does not split over groups mapping to finite groups in $\pi_1(\Sigma)$,
the image of $G_\varepsilon$ in $\pi_1(\Sigma)$ is a finite index subgroup of a boundary
subgroup $B\inc\pi_1(\Sigma)$.
But we also have $G_\varepsilon\inc G_w=G_{\hat w}$, so that $G_\varepsilon\inc G_{\hat e}=G_e$.
Being slender and containing a finite index subgroup of $B$,  the image of $G_e$ in $\pi_1(\Sigma)$ has to be contained in $B$.
\end{rem}

  \begin{cor}\label{cor_qhe2}
Let $\cala=VPC_{\leq n}$\index{VPC} be the class of all virtually polycyclic groups of Hirsch length $\leq n$.
 Assume that $G$ does not split over a group in $VPC_{\leq n-1}$.
Let $Q$ be a QH vertex group with fiber in $VPC_{\leq n-1}$, in some splitting of $G$ over $VPC_{\leq n}$.

If $T$ is any JSJ tree of $G$ over $\cala$, 
then $Q$ is contained in a QH vertex stabilizer of $T$.
  \end{cor}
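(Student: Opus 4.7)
I would follow the scheme of Theorem \ref{qhe}, and then split into two cases according to whether the vertex of $T$ fixed by $Q$ is rigid or flexible.

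First I verify the hypotheses of Theorem \ref{qhe}. Let $J\subset\pi_1(\Sigma)$ be virtually cyclic, with preimage $\hat J\subset Q$. Then $\hat J$ is an extension of the fiber $F\in VPC_{\leq n-1}$ by $J$, so its Hirsch length is at most $n$ and $\hat J\in VPC_{\leq n}=\cala$. Any subgroup of $\hat J$ of infinite index has Hirsch length at most $n-1$, hence lies in $VPC_{\leq n-1}$, and by hypothesis $G$ admits no splitting over such a group. Moreover, every virtually polycyclic group is slender, so condition (4) of Theorem \ref{qhe} is satisfied. Consequently, $Q$ is elliptic in every JSJ tree $T$ of $G$ over $\cala$. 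Since $\Sigma$ is hyperbolic, $\pi_1(\Sigma)$ contains a non-abelian free subgroup (an infinite hyperbolic group acting on a $2$-dimensional orbifold cannot be virtually cyclic), so in particular $Q\notin\cala$. Thus $Q$ cannot be contained in any edge stabilizer of $T$, and $Q$ fixes a unique vertex $v\in T$, giving $Q\subset G_v$.

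It remains to show that $v$ is a QH vertex of $T$. Let $T_0$ be the given splitting in which $Q$ is a QH vertex stabilizer, and denote by $v_0$ the unique vertex of $T_0$ fixed by $Q$ (uniqueness holds for the same reason as above). Suppose first that $G_v$ is universally elliptic. Then $G_v$ is elliptic in $T_0$, and the vertex of $T_0$ it fixes must also be fixed by $Q\subset G_v$, forcing it to coincide with $v_0$; hence $G_v\subset G_{v_0}=Q$ and $G_v=Q$ inherits the QH structure of $Q$, so that $v$ is QH. Suppose now that $G_v$ is flexible. Because $\cala=VPC_{\leq n}$ consists of slender subgroups and satisfies the stability condition relevant for Section \ref{Fujpap}, Theorem \ref{slenderintro} applies to the vertex $v$ and yields that $G_v$ is slender or QH with slender fiber; since $G_v$ surjects (through its subgroup $Q$) onto the non-slender group $\pi_1(\Sigma)$, the slender option is excluded, so $G_v$ is QH. Either way, $v$ is a QH vertex of $T$ containing $Q$.

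The main obstacle is the case in which $G_v$ is strictly larger than $Q$ and flexible: the QH structure on $Q$ does not a priori extend to $G_v$, and one must rely on the classification of flexible vertices from Section \ref{Fujpap} rather than argue by hand. This in turn requires confirming that $\cala=VPC_{\leq n}$ fits the hypotheses of Theorem \ref{slenderintro}, notably the stability condition, which is explicitly among the allowed examples. Once that is granted, the rigid/flexible dichotomy above closes the proof.
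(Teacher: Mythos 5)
Your route is essentially the paper's: check the hypotheses of Theorem \ref{qhe} (your verification that $\hat J\in VPC_{\leq n}$ and that its infinite-index subgroups lie in $VPC_{\leq n-1}$ is correct, as is slenderness of $\cala$), deduce that $Q$ fixes a unique vertex $v$ of $T$, and then split into the rigid/flexible dichotomy, settling the flexible case by Theorem \ref{thm_description_slender} via the stability condition $(SC)$ for $VPC_{\leq n}$ with fibers in $VPC_{\leq n-1}$. That part, and the flexible case, are fine.

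The gap is in the rigid case. From universal ellipticity of $G_v$ you correctly get $G_v=Q$, but the conclusion needs more than the fact that the abstract group $G_v$ carries a QH structure inherited from the original tree $T_0$: one must show that $v$ is a QH vertex \emph{of $T$}, i.e.\ that the stabilizers of the edges of $T$ incident to $v$ are extended boundary subgroups of $Q$ (condition (3) of Definition \ref{dfn_qh} taken with respect to $T$). This is not formal from $G_v=Q$: an incident edge group of $v$ in $T$ is just some $VPC_{\leq n}$ subgroup of $Q$, and a priori its image in $\pi_1(\Sigma)$ could be infinite and non-peripheral. One cannot appeal to Proposition \ref{prop_QHUE}(1) (universally elliptic subgroups of $Q$ are extended boundary subgroups) either, because that proposition requires an essential simple closed geodesic on $\Sigma$ — and in the rigid case there is none, since otherwise $Q$ itself could not be universally elliptic. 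The paper closes exactly this point with Remark \ref{rem_qhe}: a genuine argument taking a refinement $\hat T$ of $T$ dominating $T_0$, analyzing the edges adjacent to the fixed point of $Q$, and using that $G$ does not split over a subgroup of $Q$ whose image in $\pi_1(\Sigma)$ is finite (such a subgroup is virtually contained in $F$, hence lies in $VPC_{\leq n-1}$, so your non-splitting hypothesis applies). Your proof needs this step, or an equivalent substitute, before you can say that $v$ is a QH vertex of $T$ in the rigid case.
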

 
  Proposition 
\ref{prop_qhe_intro} of the introduction is the case $n=1$.

\begin{proof}
 The group $Q$ fixes a point $v$ in $T$ by Theorem \ref{qhe}. 
 If $G_v$ is universally elliptic, we apply  Remark \ref{rem_qhe}. If not, $v$ is flexible and we will prove in Theorem \ref{thm_description_slender} that $G_v$ is QH.
\end{proof}

\subsection{Peripheral structure of quadratically hanging  vertices}\label{periph}

   Suppose that 
$Q$ is a QH vertex group of a JSJ decomposition.
The incident edge groups are extended boundary subgroups  (see Definition \ref{dfn_qh}).
However,
the collection of incident edge groups (the family $\Inc_v$ of Definition \ref{dfn_incv}) may change 
when the JSJ tree $T$ varies in the JSJ deformation space (though the collection of extended boundary subgroups usually   does not change, see Assertion (6) of Theorem \ref{thm_description_slender}).

\index{peripheral structure}
In Section 4 of \cite{GL2} we introduced a collection $\calm_0$ of subgroups of $Q$, which is related to the incident edge groups,
but which does not depend on the tree in the JSJ deformation space. We called this collection    the \emph{peripheral structure} of $Q$.
 
 The goal of this subsection is to show that the peripheral structure contains more information than the collection of all   extended subgroups, and may be fairly arbitrary.

In the examples, $\cala$ is the class of slender groups, and no slender group is conjugate to a proper subgroup of itself.
In this context, the peripheral structure $\calm_0$ of a non-slender vertex stabilizer $G_v$  of a tree $T$ 
may be determined as follows. We first collapse   edges so as to make $T$ reduced (see Subsection \ref{defspa}). The image of $v$ has the same   stabilizer,
and we still denote it by $v$. Then $\calm_0$ is the set of conjugacy classes of incident edge stabilizers $G_e\inc G_v$  
that are not properly contained in another incident edge stabilizer of $G_v$ 
(see \cite{GL2} for details).

We have seen (Proposition \ref{bu})
 that $\calm_0$ often uses   every boundary component of $\Sigma $.
Apart from that, the peripheral structure of $Q$ may be fairly arbitrary.
We shall now give examples. In particular, Example \ref{perd2} will show the following proposition:

\begin{prop}   If $Q$ is a   QH vertex group  of a slender JSJ decomposition  of a one-ended group, it is possible for an incident edge
group to meet
$F$ trivially, or   to have trivial image in $\pi _1(\Sigma )$.
\end{prop}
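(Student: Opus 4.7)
The plan is to exhibit both phenomena simultaneously in an explicit one-ended group $\hat G$ built via the filling construction of Subsection~\ref{filling}. Take a compact hyperbolic orientable surface $\Sigma$ with $n\ge 3$ geodesic boundary components $C_1,\dots,C_n$, fix a generator $b_i$ of $\pi_1(C_i)$, and set $Q=\pi_1(\Sigma)\times\bbZ$ with fiber $F=\{1\}\times\bbZ$. Choose
\[
H_1=\langle(b_1,1)\rangle,\qquad H_i=\langle b_i\rangle\times\bbZ\ \ (2\le i\le n),\qquad H_{n+1}=F,
\]
all slender subgroups of $Q$: $H_1\cap F=\{1\}$ and its image in $\pi_1(\Sigma)$ is infinite, while $H_{n+1}$ has trivial image. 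Pick a finitely presented group $R$ with Serre's property (FA) and no nontrivial slender subgroup, for instance $R=SL_3(\bbZ)$, and form $\hat G$ by iteratively amalgamating $Q$ with $K_i:=H_i\times R$ over $H_i$. Let $T$ be the Bass-Serre tree and $v$ the central vertex, with $G_v=Q$.

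The verification that $\hat G$ and $T$ have the right properties is routine modulo one key point. Each vertex group of the graph of groups is one-ended (containing $\bbZ^2$ or $R$), each edge group is infinite, so $\hat G$ is one-ended. The tree $T$ is a universally elliptic splitting over the class $\cala$ of slender subgroups: by Lemma~\ref{lem_relu}, each $K_i$ is $\calb$-universally elliptic for $\calb$ the family of slender subgroups of $\hat G$, so each $H_i\subset K_i$ is too. By construction $v$ is a QH vertex with fiber $F$ and orbifold $\Sigma$, every boundary component of $\Sigma$ is used by some $H_i$, and $v$ is flexible since $\Sigma$ admits essential simple closed geodesics (Corollary~\ref{lem_fill}). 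In $T$, the incident edge group $H_1$ meets $F$ trivially and $H_{n+1}$ has trivial image in $\pi_1(\Sigma)$.

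The nontrivial point is to verify that $T$ realizes the slender JSJ deformation space. The plan is to apply the Proposition at the end of Subsection~\ref{filling}, which identifies the minimal $Q$-invariant subtree of a JSJ tree $T_J$ of $\hat G$ with a relative slender JSJ of $Q$ over $\calh=\{H_1,\dots,H_{n+1}\}$. Once that relative JSJ is shown to be trivial, $Q$ is elliptic in $T_J$; together with universal ellipticity of the $K_i$'s this forces $T_J$ and $T$ to have the same elliptic subgroups and hence to lie in the same deformation space.

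The main obstacle is thus triviality of the relative slender JSJ of $Q$ over $\calh$. By Lemma~\ref{dual2}, any nontrivial such splitting of $Q$ factors through a splitting of $\pi_1(\Sigma)$ dual to an essential simple closed geodesic $\gamma$, with edge group isomorphic to $\bbZ^2$. Lemma~\ref{courbes} produces a second essential simple closed geodesic $\gamma'$ intersecting $\gamma$ transversely; the edge group of the splitting dual to $\gamma$ then acts hyperbolically on the tree dual to $\gamma'$, so it cannot be universally elliptic. This rules out nontrivial universally elliptic relative splittings of $Q$, completing the argument and exhibiting both phenomena on the QH vertex $v$ of the JSJ tree $T$.
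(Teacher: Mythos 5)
Your construction is essentially the paper's own: Subsection \ref{periph} proves this proposition via exactly the filling construction of Subsection \ref{filling}, with Example \ref{perd2} making the same two choices of incident edge groups (a group $\langle(b_1,1)\rangle$ meeting $F$ trivially, and $F$ itself, mapping trivially to $\pi_1(\Sigma)$) on a punctured torus, and Lemma \ref{lem_amal} verifying that the Bass--Serre tree is a slender JSJ tree by the same ingredients you use (Lemmas \ref{uesle} and \ref{dual2}, Proposition \ref{dual}, and the intersecting-geodesics argument of Proposition \ref{prop_QHUE}); your detour through the Proposition of Subsection \ref{filling} and triviality of the relative JSJ of $Q$ is just a repackaging of that direct verification of universal ellipticity and maximality. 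Two small corrections: $SL_3(\bbZ)$ certainly has nontrivial slender subgroups---what Lemma \ref{lem_relu} actually needs is only that $R$ be finitely presented, have property (FA), and be non-slender, which holds; and you should exclude genus $0$ with exactly three boundary components (the paper requires genus $\ge 1$ or at least four boundary components), since a pair of pants carries no essential simple closed geodesic, so your claims that $\Sigma$ admits such geodesics and that $v$ is flexible fail in that case (the vertex is then rigid though still QH, and $T$ is still a JSJ tree by Lemma \ref{lem_rigid}, so the stated conclusion would in fact survive, but the argument as written would not).
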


We work with $\calh=\es$ and $\cala$ the family of slender subgroups.

\begin{figure}[htbp]
  \centering
  \includegraphics{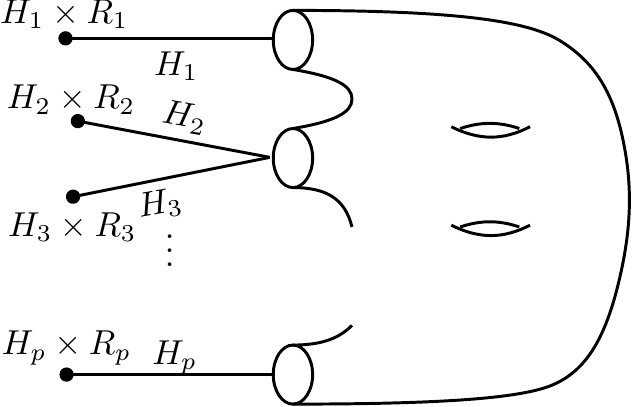}
  \caption
  {An example of a QH group in a JSJ decomposition.}
  \label{fig_exple_QH}
\end{figure}

To give examples, we use the filling construction described in Subsection \ref{filling}, see Figure \ref{fig_exple_QH}.
We start with   an extension $1\to F\to Q \to \pi_1(\Sigma )\to 1$,  with $F$   slender and $\Sigma $ a compact orientable surface  
(with genus $\ge 1$, or with at least 4  boundary components). 
Let $H_1,\dots,H_p$ be a finite
family of infinite   extended boundary subgroups of $Q$  as defined in Definition \ref{dfn_qh} (note that they are slender).   
  We impose that, for each boundary subgroup $B$ of
$\pi _1(\Sigma )$, there is an $i$ such that 
$H_i$ maps onto
a finite index subgroup of $B$ (i.e.\  every boundary component is used by some $H_i$  in the sense of  Definition \ref{dfn_used}). 
Let $R_i$ be a non-slender finitely presented group with Serre's property (FA) (it has no non-trivial action on a tree), for
instance $SL(3,\Z)$. 
As in Subsection  \ref{filling}, we define a finitely presented group  $\hat G$ by amalgamating $Q$ with   $K_i=H_i\times R_i$ over $H_i$ for
each $i$.

\begin{lem}\label{lem_amal}
The Bass-Serre tree $T$ of the amalgam defining $\hat G$ is a slender JSJ tree,  
$Q$ is a flexible QH subgroup, and $\hat G$ is one-ended.  If no $H_i$ is conjugate  in $Q$ to a subgroup of
  $H_j$ for $i\ne j$, the peripheral structure $\calm_0$ consists of
the conjugacy classes of the $H_i$'s.
\end{lem}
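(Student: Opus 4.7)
The plan is to verify each of the four assertions in turn: that $T$ is a slender JSJ tree of $\hat G$, that $Q$ is flexible QH, that $\hat G$ is one-ended, and finally the description of $\calm_0$. The edge stabilizers of $T$ are the $Q$-conjugates of the $H_i$'s, which are slender by construction. Each $H_i$ is universally elliptic in $\hat G$ over slender groups by (the first assertion of) Lemma \ref{lem_relu}: in any slender tree of $\hat G$, the group $R_i$ fixes a unique point (it has property (FA), and being non-slender it cannot be contained in a slender edge stabilizer), so $H_i$, which centralizes $R_i$, fixes the same point. The same argument shows each $K_i$ is elliptic in any slender tree. The main obstacle, and the only non-formal step, is to prove that $Q$ is elliptic in every universally elliptic slender tree $T'$ of $\hat G$, which is what is needed to establish the maximality condition for a JSJ tree.

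For this key step I will argue by contradiction, assuming that $Q$ acts non-trivially on its minimal subtree $\mu_{T'}(Q)$. Since each $H_i$ is universally elliptic, this action is relative to $\Inc_{|Q}=\{H_1,\dots,H_p\}$, and by Lemma \ref{uesle} applied to the class of slender groups, the fiber $F$ is universally elliptic and hence elliptic in $T'$. Being normal in $Q$ and elliptic, $F$ acts trivially on $\mu_{T'}(Q)$, so by Lemma \ref{dual2}(1) the action factors through $\pi_1(\Sigma)$. By our choice, every boundary component of $\Sigma$ is used by some $H_i$, and edge stabilizers are slender hence small, so Proposition \ref{dual} implies that the induced action of $\pi_1(\Sigma)$ is dual to a family of essential simple closed geodesics of $\Sigma$. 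Fix one such geodesic $\gamma$ and let $\hat H_\gamma\subset Q$ denote the preimage of its stabilizer $H_\gamma\subset\pi_1(\Sigma)$. Then $\hat H_\gamma$ is contained in an edge stabilizer of $T'$, so is universally elliptic over slender groups in $\hat G$. However, by Lemma \ref{courbes} there exists another essential simple closed geodesic $\gamma'$ intersecting $\gamma$; the splitting of $\pi_1(\Sigma)$ dual to $\gamma'$ lifts (trivially on $F$) to a splitting of $Q$ relative to $\Inc_{|Q}$, which extends via Lemma \ref{extens} to a slender splitting of $\hat G$ (the edge group is an extension of $F$ by $H_{\gamma'}$, hence slender). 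In that splitting, the image of $\hat H_\gamma$ in $\pi_1(\Sigma)$ is $H_\gamma$, which acts hyperbolically since $\gamma\cap\gamma'\ne\es$; therefore $\hat H_\gamma$ itself is hyperbolic, contradicting its universal ellipticity.

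For the remaining assertions, $Q$ is QH by construction, with fiber $F$ and underlying orbifold $\Sigma$, since the $H_i$'s are extended boundary subgroups by hypothesis. Since $\Sigma$ has genus at least $1$ or at least four boundary components it is not among the small orbifolds of Proposition \ref{smallorb}, so it contains an essential simple closed geodesic, and the dual splitting of $\hat G$ along any such geodesic is a slender splitting in which $Q$ acts non-trivially; hence $Q$ is not universally elliptic, so is flexible. For one-endedness of $\hat G$, consider any splitting over a finite group: each $R_i$ fixes a unique point (the non-slender FA argument), so each $K_i$ is elliptic, and the induced splitting of $Q$ has finite edge stabilizers and is relative to the $H_i$'s; Lemma \ref{uesle} (applied to the slender class of finite groups) forces $F$ to be elliptic, so the action of $Q$ factors through $\pi_1(\Sigma)$, and Proposition \ref{dual} forbids any non-trivial splitting of $\pi_1(\Sigma)$ over a finite group relative to all boundary subgroups (since such a splitting would be dual to geodesics, with infinite edge stabilizers). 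Finally, the amalgam tree $T$ is reduced because no $H_i$ equals $Q$ or $K_i$, so the incident edge stabilizers at $Q$ are exactly the $Q$-conjugates of the $H_i$'s; under the hypothesis that no $H_i$ is $Q$-conjugate into $H_j$ for $i\ne j$, together with the standing property that no slender group is conjugate to a proper subgroup of itself, no incident edge stabilizer of $Q$ is properly contained (up to conjugacy) in another. Hence $\calm_0$ consists precisely of the $Q$-conjugacy classes of $H_1,\dots,H_p$.
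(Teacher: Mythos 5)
Your proof is correct and, for the main content, follows the same route as the paper: universal ellipticity of $T$ via the (FA)+commutation argument (Lemma \ref{lem_relu}), and ellipticity of $Q$ in any universally elliptic tree $T'$ via universal ellipticity of the fiber (Lemma \ref{uesle}), factoring through $\pi_1(\Sigma)$ (Lemma \ref{dual2}), duality with a family of geodesics (Proposition \ref{dual}), and the fact that the resulting edge groups cannot be universally elliptic. The only differences are local. Where the paper simply cites Proposition \ref{prop_QHUE} for this last point, you re-derive the needed instance by hand, producing an intersecting geodesic $\gamma'$ via Lemma \ref{courbes} and extending the dual splitting with Lemma \ref{extens}; this is exactly the proof of Proposition \ref{prop_QHUE} unpacked, so nothing is gained or lost. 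For one-endedness the paper argues more slickly via Proposition \ref{prop_embo} (collapsing the edges of $T$ with infinite stabilizer yields a JSJ tree over finite groups, which is therefore trivial), whereas you argue directly on an arbitrary tree with finite edge stabilizers; your route is more elementary but, as written, stops after showing that $Q$ and the $K_i$ are elliptic. You still need the (routine) final step: if $Q$ fixes $x$ and $K_i$ fixes $y_i$, then $H_i\subset Q\cap K_i$ fixes $[x,y_i]$, which forces $x=y_i$ since $H_i$ is infinite and edge stabilizers are finite; hence $\hat G=\langle Q,K_1,\dots,K_p\rangle$ fixes $x$ and the splitting is trivial. Finally, for $\calm_0$ you make explicit (reducedness of $T$, incident edge groups, no proper containment up to conjugacy) what the paper delegates to the definition in \cite{GL2}; that part is fine.
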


\begin{proof} 
Let
$T'$ be any  tree. Each
$R_i$ fixes a unique point because it is not slender, and this point is also fixed by $H_i$. In particular,
$H_i\times R_i$, $H_i$  and $T$  are universally elliptic. 
To prove that $T$ is a JSJ tree, it suffices to see that $Q$ is elliptic in any universally elliptic  tree $T'$.

By Lemma \ref{uesle}, $F$  is  
universally elliptic.  If $Q$ is not elliptic in $T'$, then by Lemma \ref{dual2} 
the action of $Q$ on its minimal subtree    $T_Q\subset T'$
   factors through a nontrivial action of $\pi _1(\Sigma )$ with slender (hence cyclic) edge stabilizers. Since every $H_i$, hence every
boundary subgroup of $\pi _1(\Sigma )$, is elliptic, this action  is dual to a
system of disjoint 
geodesics on $\Sigma$  by Proposition \ref{dual}.
  By Proposition \ref{prop_QHUE}, 
no edge stabilizer of $T_Q$ is universally elliptic,  contradicting  universal ellipticity of $T'$.

This shows that $T$ is a JSJ tree, and 
 $Q$ is flexible because $\Sigma $ was chosen to contain
intersecting simple closed curves (see Corollary \ref{ufs}).

By Proposition  \ref{prop_embo}, one obtains a JSJ tree of $\hat G$ over finite groups
by collapsing all edges of $T$ with infinite stabilizer. Since each $H_i$ is infinite,
this JSJ tree  is trivial, so $\hat G $ is one-ended. 

The assertion about $\calm_0$ follows from the definition of $\calm_0$ given in
\cite{GL2}. 
\end{proof}

\begin{example}\label{perd}
Let $\Sigma $ be a punctured torus, with fundamental group  $\langle a,b\rangle$.
Write $u= [a,b]$. Let $Q=F\times\langle a,b\rangle$, with $F$ finite and
non-trivial. Let $H_1=\langle F, u^2\rangle$ and $H_2=\langle u\rangle$. 
The peripheral structure of $Q$ in the JSJ tree $T$ consists of
two elements, though $\Sigma $ only has one boundary component. There is a JSJ
tree $T'$ such that incident edge groups are conjugate to $\langle F, u \rangle$
(the quotient $T'/G$ is a tripod), but it does not display the peripheral structure of $Q$.
\end{example}

\begin{example} \label{perd2}
Let $\Sigma , a,b, u$ be as above. Again write $Q=F\times\langle
a,b\rangle$, but now $F=\langle t\rangle$ is infinite cyclic. Let $H_1=\langle  
u\rangle$ and $H_2=\langle t\rangle$. Then
$H_1$ meets
$F$ trivially, while $H_2$ maps trivially to $\pi _1(\Sigma )$.
\end{example}

\begin{example}\label{perd3}
Assume that the orbifold $\Sigma$ has a conical point $x$ carrying a finite  cyclic  group $F_x$ and that the fiber $F$ is infinite.
Then one can attach an edge to $x$:
one   chooses  any infinite subgroup $H_x$ 
  of the preimage   of $F_x$ in $Q$, and one constructs  an amalgam $Q*_{H_x}(H_x\times R_x)$. Similar constructions are possible
with  $x$ a  corner reflector, a point on a mirror, or even  a point with trivial isotropy.
\end{example}

\subsection{Flexible vertices of  abelian JSJ decompositions} 
\label{ab}

\index{flexible vertex, group, stabilizer}
We shall see in Section \ref{Fujpap}  that, 
if $\cala$ is the family of  cyclic subgroups, or virtually cyclic
subgroups, or slender subgroups, then non-slender flexible vertex groups $Q$ of JSJ decompositions   over $\cala$ are QH with slender fiber (we   say that $Q$ is slender-by-orbifold). 
Things are more complicated 
when $\cala$ is the family of   abelian subgroups (or equivalently of  finitely generated abelian subgroups, see Proposition \ref{loc}). 

The
basic reason  is the following: if a group $Q$ is an extension $1\to F\to Q \to \pi
_1(\Sigma )\to 1$ with $F$  a finitely generated abelian group and $\Sigma $ a
surface, a    splitting of
$\pi _1(\Sigma )$ dual to  a simple closed curve induces a splitting of $Q$ over a   subgroup which is
slender (indeed polycyclic) but not necessarily abelian.

Using the terminology of Definition \ref{dfn_RN},
 the regular neighbourhood\index{regular neighbourhood} of two abelian splittings is not necessarily an abelian splitting.

In fact, we shall now   construct examples showing:   
\begin{prop}
\begin{enumerate}
\item  {Flexible subgroups of abelian JSJ
trees are not always  slender-by-orbifold groups.} 
\item {One cannot
always   obtain
an abelian JSJ tree\index{abelian tree} by collapsing edges of a slender JSJ tree.}
\end{enumerate}
\end{prop}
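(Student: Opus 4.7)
The plan is to construct a single explicit example, obtained by the filling construction of Subsection \ref{filling}, that witnesses both assertions simultaneously. The mechanism is the one highlighted just before the proposition: if $Q$ sits in an extension $1\to F\to Q\to \pi_1(\Sigma)\to 1$ with $F$ finitely generated abelian and with $\pi_1(\Sigma)$ acting non-trivially on $F$, then the splitting of $Q$ dual to a simple closed curve $\gamma\subset\Sigma$ has edge group $F\rtimes \pi_1(\gamma)$, which is slender (polycyclic) but \emph{not} abelian whenever $\pi_1(\gamma)$ acts non-trivially on $F$. Such splittings survive in the slender JSJ but are forbidden in the abelian one.

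First I would choose $\Sigma$ a compact orientable hyperbolic surface with $\bo\Sigma\ne\es$, set $F=\bbZ^2$, and pick a homomorphism $\rho:\pi_1(\Sigma)\to SL_2(\bbZ)$ that is trivial on every boundary subgroup but has infinite image (for instance, factor through a surjection $\pi_1(\Sigma)\to\bbZ$ that vanishes on all boundary classes, followed by $\bbZ\to SL_2(\bbZ)$ sending $1$ to a hyperbolic matrix). Set $Q=F\rtimes_\rho \pi_1(\Sigma)$; the preimage $\hat B$ of each boundary subgroup is abelian, isomorphic to $\bbZ^3$. Apply the filling construction with the $H_i$ ranging over conjugacy representatives of the $\hat B$'s and $R_i=SL_3(\bbZ)$ (or any non-slender finitely presented group with property (FA)), producing a finitely presented one-ended group $\hat G$. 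By the argument of Lemma \ref{lem_amal}, the Bass-Serre tree $T$ of the resulting graph of groups is a slender JSJ tree of $\hat G$, with $Q$ a flexible QH vertex group over slender subgroups.

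Next I would identify the abelian JSJ of $\hat G$. By Lemma \ref{uesle}, $F$ is universally elliptic in every abelian splitting, so any abelian splitting of $Q$ relative to $\Inch_{|Q}$ factors through a splitting of $\pi_1(\Sigma)$ by Assertion (1) of Lemma \ref{dual2}; since every boundary component is used, Proposition \ref{dual} shows this splitting is dual to a family of simple closed geodesics of $\Sigma$, and abelianness of the edge groups forces $\rho$ to be trivial on each such geodesic. Refining $T$ at $Q$ by a maximal such family produces an abelian JSJ tree $T_a$ of $\hat G$ whose flexible vertex group is $F\rtimes \pi_1(\Sigma')$, where $\Sigma'$ is a component of the complement of this family on which $\rho$ still has infinite image. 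This group is \emph{not} slender-by-orbifold: its maximal normal slender subgroup is $F$, so any QH structure would force $F$ to be the fiber and $\pi_1(\Sigma')$ to act trivially on $F$, contradicting the choice of $\Sigma'$. This proves (1). For (2), note that every edge of $T$ has abelian stabilizer (some $\bbZ^3$), so collapsing the edges of $T$ with non-abelian stabilizer yields $T$ itself; but $T$ is strictly less refined than $T_a$, so $T$ is not an abelian JSJ tree.

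The main obstacle will be to choose $(\Sigma,\rho)$ so that, after cutting $\Sigma$ along every interior simple closed geodesic with trivial monodromy under $\rho$, at least one residual subsurface $\Sigma'$ still carries an infinite-image action of $\pi_1(\Sigma')$ on $F$; equivalently, so that the kernel of $\rho$ is not generated, as a normal subgroup, by a disjoint system of simple closed curves together with the boundary subgroups. Concretely I would take $\Sigma$ of genus $\ge 1$ with one boundary component and let $\rho$ factor through an abelianisation class represented by a non-separating curve, so that a single cut leaves a simpler surface on which the induced map is still non-trivial. Verifying this combinatorial condition on $(\Sigma,\rho)$ is the only non-routine point of the construction.
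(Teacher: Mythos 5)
Your argument for assertion (1) has a genuine gap, and it is not repairable within your framework. First, the step ``any QH structure would force $F$ to be the fiber and $\pi_1(\Sigma')$ to act trivially on $F$'' is false: Definition \ref{dfn_qh} only asks that $Q'$ be \emph{some} extension $1\to F'\to Q'\to\pi_1(\Sigma')\to 1$ with slender fiber, with no condition on the monodromy (indeed the paper's own example for (1) is a non-trivial circle bundle, so the action on the fiber is non-trivial there). Consequently your candidate flexible vertex $F\rtimes_\rho\pi_1(\Sigma')$, with $\pi_1(\Sigma')$ free or a surface-with-boundary group, is \emph{literally} a slender-by-orbifold group no matter what $\rho$ does, and cannot witness (1). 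Second, and more structurally: because you attach along the \emph{abelian} groups $\hat B_i\simeq\bbZ^3$, the edges of $T$ are universally elliptic abelian edges, so the abelian JSJ refines $T$ and all its flexible vertices (if any) sit inside $Q$ and have the form $F\rtimes\pi_1(\Sigma')$ --- always slender-by-orbifold. (With your concrete choice, genus $1$, one boundary component and $\phi$ dual to a non-separating curve $a$, things are even worse: $a$ is the unique $\rho$-trivial essential curve, cutting along it gives a pair of pants whose fundamental group is generated by $a$ and $bab\m$, on which $\rho$ is \emph{trivial}, so the resulting vertex $\bbZ^2\times\F_2$ is rigid and the abelian JSJ has no flexible vertex at all.) The paper's mechanism for (1) is different in an essential way: the attaching subgroups are chosen \emph{non-abelian} (Klein bottle groups), so the amalgamation edges cannot appear in any abelian splitting, and every ``positive'' curve crosses another positive curve; hence the abelian JSJ is \emph{trivial}, the flexible group is all of $\hat G$, and the $SL(3,\Z)$ factors inside $\hat G$ obstruct any slender-by-orbifold structure. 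That idea --- forcing the flexible group to contain the filling groups $R_i$ by killing all universally elliptic abelian splittings --- is what your construction is missing.

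Your treatment of assertion (2) is essentially sound and close in spirit to the paper's Example \ref{exq} (which instead uses a separating curve $\gamma$ with non-abelian attaching groups, so that collapsing the non-abelian edges of the slender JSJ tree gives the trivial tree while the abelian JSJ is the non-trivial splitting dual to $\gamma$). In your version one must still check two points you left implicit: that $a$ is the only $\rho$-trivial essential curve (true for your parameters, so the splitting dual to $a$ is universally elliptic over abelian subgroups and $T_a$ strictly dominates $T$), and that the argument rules out collapses of \emph{every} slender JSJ tree, not just of $T$; the latter follows because $Q$ is elliptic in the whole slender JSJ deformation space but not in the abelian JSJ deformation space.
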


By Proposition \ref{prop_embo}, 
 one can obtain an abelian JSJ tree by refining and collapsing  a slender JSJ tree. The point here is that collapsing alone is not always sufficient.  It may be shown that collapsing suffices when $G$ is  torsion-free, finitely presented, and CSA (see Proposition 8.12 of \cite{GL3a}).

We use the same construction as in the previous subsection, but now   $\pi _1(\Sigma )$ will act non-trivially on the fiber $F$.

\begin{example}  In this example $F\simeq\Z$. Let $\Sigma $ be obtained by
gluing a once-punctured torus to one of the boundary components of a pair of
pants. Let
$M$ be a circle bundle over $\Sigma $ which is trivial over the punctured torus
but non-trivial over the two boundary components of $\Sigma $. Let $Q=\pi
_1(M)$, and let
$H_1,H_2$ be the fundamental groups of the components of $\partial M$
(homeomorphic to Klein bottles). Note that $H_1,H_2$ are non-abelian. Construct
 $\hat G$ by amalgamation  with $H_i\times R_i$ as above. 
We claim that \emph{the abelian JSJ
decomposition of
$\hat G$ is trivial, and $\hat G$ is flexible (but not slender-by-orbifold). }

We argue as in the proof of Lemma \ref{lem_amal}.
We know that $H_1\times R_1$ and $H_2\times R_2$ (hence also $F$) are universally elliptic.  If  $T$ is any tree with abelian edge stabilizers, the  action of $Q$  on its minimal subtree factors through $\pi_1(\Sigma)$, and the action of $\pi_1(\Sigma)$ is dual to a system of simple closed curves.
But not all simple closed curves give rise to an abelian splitting of $Q$: they have to be ``positive'', in the sense that  the bundle is trivial over them.

To
prove that none of these splittings is universally elliptic, hence that the abelian JSJ space of $\hat G$  is trivial  and $\hat G$ is flexible,   it suffices to see that any positive
curve intersects (in an essential way) some other positive curve. This is true for
the curve $\delta $ separating the pair of pants from the punctured torus (one easily
constructs a positive curve meeting $\delta $ in 4 points). It is  also 
true for
curves meeting $\delta $. Curves disjoint from $\delta $ are contained in the
punctured torus, and the result is true for them.

 Every map from $SL(3,\Z)$ to a 2-orbifold group has finite image, so $\hat G$ is not isomorphic to a slender-by-orbifold group if we choose $R_i=SL(3,\Z)$.

\begin{rem} If one performs the construction adding a third group $H_3=F$, then $\hat G$
becomes a flexible vertex group in a group 
whose JSJ decomposition is non-trivial. 
\end{rem}
\end{example}

\begin{example}  \label{exq}
Now $F=\Z^2$. Let $\Sigma $ be a surface of genus $\ge2$ with two boundary
components $C_1,C_2$. Let  $\gamma $ be  a simple closed curve 
separating $C_1$ from $C_2$. Let $\Sigma '$ be the space obtained from $\Sigma
$ by collapsing $\gamma $ to  a point. 
Map $\pi _1(\Sigma )$ to  $SL(2,\Z)\inc\Aut(\Z^2)$ by projecting to  $\pi
_1(\Sigma ')$ and embedding the free group  $\pi _1(\Sigma ')$ into $SL(2,\Z)$.
Let $Q$ be the associated semi-direct product $\Z^2\rtimes \pi _1(\Sigma )$, and
$H_i=\Z^2\rtimes \pi _1(C_i)$.  Construct $\hat G$ as before. 

Abelian splittings of $\hat G$ now
come from simple closed curves on $\Sigma $ belonging to the kernel of $\rho: \pi _1(\Sigma )\to SL(2,\Z)$. 
But it is easy to see that $\gamma $ is the only
such curve.
 It follows that  the one-edge splitting dual to
$\gamma $ is an abelian JSJ  decomposition of $\hat G$. It has two rigid vertex groups. It cannot be obtained by collapsing a
slender JSJ splitting. 
\end{example}

\section{JSJ decompositions over slender groups} \label{Fujpap}

\index{slender group}
The main result of this section is the description of   JSJ decompositions over slender groups.
Recall  (Subsection \ref{sec_slender}) that a subgroup $A\inc G$ is slender if $A$ and all its subgroups are finitely generated. Whenever $G$ acts on a tree, $A$ fixes a point or leaves a line invariant.

Our approach essentially follows Fujiwara and Paposoglu \cite{FuPa_JSJ},\index{Fujiwara-Papasoglu} but with   simplifications. In particular, we do not have to ``deal with a third splitting''  
(see below for further discussion).

\subsection{Statement of results} \label{star}

Let $G$ be a finitely generated group, $\cala$ a family of subgroups stable under conjugation and taking subgroups,
and $\calh$ a finite set of finitely generated subgroups of $G$ such that $G$ is finitely presented relative to $\calh$.

The goal of this section is to show that non-slender \emph{flexible vertex groups $Q$ of JSJ decompositions over $\cala$ relative to $\calh$ are QH} (see Subsection \ref{quah}).  We need two assumptions on $\cala$. First, groups in $\cala$ should be slender  (or at least slender in $(\cala,\calh)$-trees, see Subsection \ref{slt}). The second is a stability condition  involving a subfamily $\calf\inc\cala$ (we will show that    fibers of QH flexible vertex groups  belong to $\calf$).

\begin{dfn}[Stability Condition ($SC$)]\label{dfn_SC}\index{stability condition ($SC$)}\index{0SC@($SC$), ($SC_\calz$): stability conditions}
We say that $\cala$ satisfies the \emph{stability condition ($SC$)}, with fibers\index{fiber (of a QH subgroup)} in  a family of subgroups $\calf$,  if the following hold for every short exact sequence $$1\to F\to A\to K\to1$$ with $A< G$:
\begin{enumerate}
\item
if $A\in \cala$, and 
  $K$ is isomorphic to $\Z$ or $D_\infty$, then $F\in\calf$; 

\item
if $F\in \calf$ and $K$ is isomorphic to  a quotient of $\Z$ or $D_\infty$, then $A\in\cala$.

\end{enumerate}
\end{dfn}

The group $\Z$ acts on the line in an orientation-preserving way. 
  The infinite dihedral group $D_\infty$ also  acts on the line, but orientation is not preserved.
In (2), the group $K$ may be finite (cyclic or dihedral) or infinite (isomorphic to $\Z$ or $D_\infty$). 

If $Q$ is a QH subgroup with fiber $F$, recall that any simple geodesic $\gamma$ on $\Sigma$ defines a splitting of $G$ over a group $Q_\gamma$ which is an extension of $F$ by $\Z$ or $D_\infty$. If there is a geodesic $\gamma_0$ such that $Q_{\gamma_0}\in\cala$, the stability condition ensures first that $F\in\calf$, and then that $Q_\gamma\in\cala$ for every $\gamma$ 
(compare the assumptions of Proposition \ref{prop_QHUE}, Corollary \ref{ufs} and Theorem \ref{qhe}). Failure of the stability condition explains why Theorem \ref{thm_description_slender} below does not apply to abelian JSJ splittings\index{abelian tree} (see Subsection \ref{ab}).

On the other hand, one easily checks that the stability condition holds   in the following cases:
  \begin{itemize}
\item $\cala$=\{slender\} (i.e.\ $\cala$ consists of all slender subgroups of $G$), with $\calf=\cala$;
\item $\cala$=\{virtually cyclic\},\index{virtually cyclic} with $\calf$=\{finite\};
\item $\cala$=\{virtually polycyclic\}, with $\calf=\cala$;
\item $\cala=\{VPC_{\leq n}\}$,\index{VPC} the virtually polycyclic subgroups of Hirsch length at most $n$, with 
 $\calf$=\{VPC$_{\leq n-1}$\};
 \item   $G$ is a torsion-free CSA group,\index{CSA group} $\cala$=\{finitely generated abelian\}, with $\calf=\cala$ (recall that a group is CSA if all maximal abelian subgroups are malnormal).\index{CSA group} 

\end{itemize}
 See Definition \ref{dfn_SCZ} for a different condition, which applies to $\cala$=\{cyclic\}.
Also recall that, since $G$ is relatively finitely presented, a JSJ decomposition  over   $\cala$ is also one over groups locally in $\cala$
 (Proposition \ref{loc}).

\begin{thm}\label{thm_description_slender}
   Suppose that all groups in $\cala$ are slender,\index{slender group} and $\cala$ satisfies the stability condition ($SC$), with fibers in a family $\calf$. Let $\calh$ be a finite family  of finitely generated subgroups. Let $G$ be finitely presented (or only finitely presented relative to  $\calh$).

If $Q$ is a non-slender flexible\index{flexible vertex, group, stabilizer} vertex group of a JSJ decomposition of $G$ over $\cala$ relative to $\calh$, then:
\begin{enumerate}
\item $Q$ is QH\index{QH, quadratically hanging} with fiber in $\calf$ (it maps onto $\pi_1(\Sigma)$, where $\Sigma$ is a compact hyperbolic 2-orbifold, with kernel $F\in \calf$; the image of an incident edge group in $\pi_1(\Sigma)$ is finite or contained in a boundary subgroup);
\item  if $G$ acts on a tree and $Q$ does not fix a point, the action of $Q$ on its minimal subtree is dual to a family of geodesics of $\Sigma$;
\item $F$ and extended boundary subgroups are universally elliptic;
\item every boundary component of $\Sigma$ is used; 
\item  $\Sigma$ contains an essential closed geodesic;
\item every universally elliptic subgroup of $Q$ is  an extended boundary subgroup. 
\end{enumerate} 

\end{thm}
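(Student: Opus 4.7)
The plan is to follow the Fujiwara--Papasoglu approach using the core of a product of trees, with the simplifications promised in Section~\ref{Fujpap}'s introductory description. First I would reduce to studying splittings of $Q$ itself. By Proposition \ref{prop_JSJ_sommets}, the action of $Q$ on its minimal subtree in any JSJ tree of $G$ is a JSJ tree of $Q$ relative to $\Inch_{|Q}$, and by Corollary \ref{cor_flex} $Q$ is totally flexible in this relative sense: it has no non-trivial universally elliptic splitting relative to $\Inch_{|Q}$, yet it does split non-trivially over $\cala$ relative to $\Inch_{|Q}$. All further analysis can then be carried out inside $Q$, treating $\Inch_{|Q}$ as the new $\calh$; the existence of the (relative) JSJ is guaranteed by Theorem \ref{thm_exist_mou_rel} since $Q$ is finitely presented relative to $\Inch_{|Q}$ (Proposition \ref{Gvrelfp}).

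Next I would produce two splittings $T_1,T_2$ of $Q$ (over $\cala$, relative to $\Inch_{|Q}$) such that the edge stabilizer of each is hyperbolic in the other; call such a pair \emph{crossing}. The idea is to choose each $T_i$ minuscule, meaning that no proper collapse/refinement in its deformation space gives a simpler splitting. If no crossing pair existed, any two one-edge splittings would be elliptic with respect to each other, and standard refinements (Proposition \ref{prop_refinement}, together with the second assertion of Lemma \ref{lem_sup}) would assemble them into a single universally elliptic splitting, contradicting total flexibility. Making this argument with minuscule splittings keeps edge groups under control so the stability condition $(SC)$ can be invoked.

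The heart of the proof is the construction of the \emph{regular neighborhood} of $T_1$ and $T_2$. Because every edge stabilizer of $T_i$ is slender, Lemma \ref{lem_slender} gives an invariant line in $T_j$; thus in the product $T_1\times T_2$, edge stabilizers of each $T_i$ act on an axis times a line, producing a band. Assembling these bands (via the Guirardel core) yields a $2$-dimensional $Q$-complex whose quotient has the structure of a graph of groups $\Lambda$ with a central vertex group of the form $1\to F\to Q_0\to \pi_1(\Sigma)\to 1$, where $\Sigma$ is a $2$-orbifold built from the band pattern, and $F$ is the common stabilizer of the band directions. The stability condition $(SC)$(1) applied to an edge group of $T_1$ (an extension of $F$ by $\Z$ or $D_\infty$) gives $F\in\calf$; condition $(SC)$(2) then guarantees that every splitting of $Q_0$ dual to a simple closed geodesic of $\Sigma$ remains over $\cala$, so $\Sigma$ is genuinely hyperbolic (the classification in Proposition \ref{smallorb} is ruled out by the existence of crossing splittings). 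A filling argument, iterating the choice of crossing pairs and enlarging the regular neighborhood each time, shows that $Q_0=Q$, giving assertion (1).

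The main obstacle is precisely this filling/maximality step: one must verify that the regular neighborhood absorbs every flexible piece of $Q$, so that nothing of $Q$ remains outside the QH vertex, and that incident edge groups land in extended boundary subgroups rather than cutting the orbifold further. Once (1) is established, the remaining assertions follow from results already proved: (3) is Lemma \ref{uesle} plus the observation that extended boundary subgroups contain incident edge stabilizers, which are universally elliptic by definition of the JSJ; (4) is Proposition \ref{bu}(1), applicable because $(SC)$(2) places the group in $\cala$; (2) is Proposition \ref{bu}(3) combined with Proposition \ref{dual}; (5) is Proposition \ref{bu}(2), since $Q$ is flexible; and (6) is Proposition \ref{prop_QHUE}(1), whose hypothesis $Q_\gamma\in\cala$ is again supplied by $(SC)$(2).
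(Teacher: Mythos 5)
Your overall frame is the right one (and it matches the paper's): reduce via Proposition \ref{prop_JSJ_sommets} and Corollary \ref{cor_flex} to a totally flexible group $Q$ relative to $\Inch_{|Q}$, which is finitely presented relative to $\Inch_{|Q}$ by Proposition \ref{Gvrelfp}; build the Fujiwara--Papasoglu core of two trees hyperbolic with respect to each other; use the stability condition to put the fiber in $\calf$ and the dual splittings back in $\cala$; and once $Q$ is known to be QH, deduce (2)--(6) from Subsection \ref{quah}. But the step you yourself flag as ``the main obstacle'' is a genuine gap, and it is exactly where the substance of the theorem lies. Producing a single crossing pair of one-edge splittings is the trivial case ($n=1$ of Proposition \ref{prop_exist_hh}); the regular neighborhood of an arbitrary crossing pair is in general a non-trivial splitting $R$ of $Q$ with one QH vertex, and nothing in your outline forces $R$ to be all of $Q$. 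The strategy you hint at --- iterating crossing pairs and enlarging the regular neighborhood to enclose more and more splittings --- is precisely the ``enclose a third splitting'' machinery of Fujiwara--Papasoglu that the paper deliberately avoids. The paper's route instead is: use relative finite presentation and Dunwoody accessibility to get a tree $U$ maximal for domination (Lemma \ref{lem_Zorn_minuscule}); prove Proposition \ref{prop_exist_hh}, a non-trivial induction on the number of orbits of edges of $U$ (itself using regular neighborhoods and Lemma \ref{QH_ell_minus}), producing $V$ \emph{fully} hyperbolic with respect to $U$, i.e.\ every edge stabilizer of each tree is hyperbolic in the other --- much stronger than a crossing pair of one-edge splittings; and then a short maximality argument (Subsection \ref{preuve}) shows $RN(U,V)$ is a point, so $Q$ itself is the QH group together with Proposition \ref{prop_RN}. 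Without the maximal $U$ and full hyperbolicity, the concluding collapse argument does not go through, and your filling step remains unproved.

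Two secondary issues. First, your working definition of ``minuscule'' (``no proper collapse/refinement in its deformation space gives a simpler splitting'') is not the paper's: minuscule means no subgroup $A'\ll A$ of an edge group $A$ is commensurable with an edge stabilizer of some tree; its role is to make ellipticity symmetric (Lemma \ref{lem_minuscule_minimal}) and the core symmetric, hence a pseudo-surface (Proposition \ref{prop_minuscule_hh}), not to feed the stability condition. Moreover you cannot simply ``choose'' the splittings minuscule: the constructions (Lemma \ref{lem_Zorn_minuscule}, Proposition \ref{prop_exist_hh}) need \emph{all} trees to be minuscule, and that is a separate proposition the paper proves for totally flexible groups (Proposition \ref{prop_all_minus}, Subsection \ref{sec_all_minus}), which your outline omits. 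Second, your justification of assertion (3) is off: an extended boundary subgroup containing a universally elliptic incident edge group is not thereby universally elliptic; the correct argument is that, by assertion (2), any $(\cala,\calh)$-tree restricted to $Q$ is either trivial or dual to geodesics of $\Sigma$, and extended boundary subgroups are elliptic in all such splittings (together with Lemma \ref{uesle} for $F$). The derivations you give for (2), (4), (5), (6) from Propositions \ref{bu}, \ref{dual} and \ref{prop_QHUE} are fine and agree with the paper.
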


 Once (1) is known, Assertions (2)-(6) are direct consequences of
 the results of Subsection \ref{quah}.
See 
  Subsection \ref{liste} for the list of orbifolds containing no essential closed geodesic, 
and Subsection \ref{pslflex} for a description of slender flexible vertex groups.

\begin{cor}\label{cor_slender} 
Let $\cala$ be the class of all slender subgroups of $G$. 
Let $\calh$ be a finite family  of finitely generated subgroups. Let $G$ be finitely presented (or only finitely presented relative to  $\calh$).

Then every flexible vertex of any JSJ decomposition over $\cala$ relative to $\calh$ is either slender  
or QH with slender fiber.
\qed
\end{cor}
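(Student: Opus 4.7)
The plan is to derive the corollary as a direct application of Theorem~\ref{thm_description_slender}, taking $\calf=\cala=\{\text{slender subgroups of }G\}$. The work reduces to checking that $\cala$ satisfies all hypotheses of the theorem, which is mostly bookkeeping.

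First I would verify that $\cala$ is admissible as a class of edge groups: closure under conjugation is obvious, and closure under subgroups is immediate from the definition of slenderness, since every subgroup of a slender group is itself finitely generated and all its further subgroups are finitely generated. Trivially, every group in $\cala$ is slender.

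Next I would check the stability condition $(SC)$ with $\calf=\cala$. For direction (1), given a short exact sequence $1\to F\to A\to K\to 1$ with $A\in\cala$ and $K$ isomorphic to $\Z$ or $D_\infty$, the group $F$ is a subgroup of $A$, hence slender. For direction (2), given $F\in\cala$ and $K$ a quotient of $\Z$ or $D_\infty$ (so $K$ is finitely generated virtually cyclic, hence slender), I need that the extension $A$ is slender. The group $A$ is finitely generated because $F$ and $K$ are, so it suffices to show every subgroup $H\leq A$ is finitely generated. The exact sequence $1\to H\cap F\to H\to H/(H\cap F)\to 1$ has $H\cap F$ finitely generated (as a subgroup of the slender group $F$) and $H/(H\cap F)$ finitely generated (as a subgroup of the slender group $K$), so $H$ is finitely generated. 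Hence $A\in\cala$, and $(SC)$ holds.

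Having checked the hypotheses, Theorem~\ref{thm_description_slender} applies: any non-slender flexible vertex group $Q$ of a JSJ decomposition of $G$ over $\cala$ relative to $\calh$ is QH with fiber in $\calf=\{\text{slender}\}$. Together with the tautological dichotomy (either $Q$ is slender or it is not), this yields the corollary. The real work lies entirely in the theorem itself, whose proof (occupying the remainder of this section) is the genuine obstacle; the corollary is merely a packaging of that result in the most natural special case.
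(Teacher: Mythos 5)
Your proposal is correct and matches the paper's own route: the paper treats Corollary~\ref{cor_slender} as an immediate specialization of Theorem~\ref{thm_description_slender} with $\calf=\cala=\{\text{slender}\}$, the only content being the verification (stated in the paper as ``one easily checks'') that the slender class satisfies $(SC)$, which you carry out correctly via the extension argument for subgroups of $A$.
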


  In view of the examples given above, one has a similar description of JSJ decompositions over virtually cyclic groups (non-slender flexible groups are QH with finite fiber), 
over $VPC_{\leq n}$-groups (non-slender flexible groups are QH with $VPC_{\leq n-1}$ fiber), etc.

The result by Fujiwara-Papasoglu \cite{FuPa_JSJ} is the case when $\calh=\es$ and $\cala$ is the class of all slender groups. Dunwoody-Sageev \cite{DuSa_JSJ} consider (a generalisation
of) the case when $\calh=\es$ and $\cala=\{VPC_{\leq n}\}$;
if $G$ does not split over a $VPC_{\leq n-1}$-subgroup, flexible vertex groups are QH with   $VPC_{ n-1}$ fiber. 

If groups in $\cala$ are only assumed to be slender in $(\cala,\calh)$-trees, the conclusions of the theorem apply to groups $Q$ which are not slender in $(\cala,\calh)$-trees (see Subsection \ref{slt}).

The class of all (finite or infinite) cyclic groups does not satisfy the stability condition ($SC$) 
if $G$ contains
dihedral subgroups. 
To recover Rips and Sela's\index{Rips-Sela} description of   JSJ decompositions
over cyclic groups, we   introduce  
a modified stability condition $(SC_\calz$) preventing groups in $\cala$ from acting dihedrally on a line.

\begin{dfn}[Stability Condition ($SC_\calz$)]\label{dfn_SCZ}
\index{stability condition ($SC_\calz$)}\index{0SC@($SC$), ($SC_\calz$): stability conditions}
We say that $\cala$ satisfies the \emph{stability condition ($SC_\calz$)}, with fibers in $\calf$,  if the following hold:
\begin{enumerate}
\item no group of $\cala$ maps onto $D_\infty$; given a short exact sequence $1\to F\to A\to \Z\to1$ with $A\in \cala$, we have $F\in\calf$;
\item given a short exact sequence $1\to F\to A\to K\to1$ with $F\in \calf$ and $K$ isomorphic to  a quotient of $\Z$, we have $A\in\cala$.

\end{enumerate}
\end{dfn}

This condition is satisfied by the   classes consisting of  
\begin{itemize}
 \item all cyclic subgroups,\index{cyclic tree} with $\calf=\{1\}$,
 \item all subgroups which are finite or cyclic, with $\calf=\{1\}$,
 \item  all virtually cyclic subgroups which do not map onto $D_\infty$ (i.e.\ are finite or have infinite center),
 with $\calf=$\{finite\},
\end{itemize}
 and  any class satisfying ($SC$)  and consisting of groups which do not map onto $D_\infty$.

\begin{thm}\label{thm_description_slenderZ}
Theorem \ref{thm_description_slender} holds if $\cala$ satisfies 
the stability condition $(SC_\calz)$ rather than ($SC$). In this case the underlying orbifold $\Sigma$ has no mirror\index{mirror} (all singular points are conical points).
\end{thm}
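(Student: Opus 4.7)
The plan is to follow the proof of Theorem \ref{thm_description_slender} (which uses the Fujiwara--Papasoglu regular-neighborhood construction on a filling pair of minuscule splittings of $Q$ relative to $\Inch_{|Q}$), adapting it in the few places where the stability condition is invoked, and then adding an argument specific to $(SC_\calz)$ to rule out mirrors.

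First I would verify that the construction of the QH structure goes through under $(SC_\calz)$. In the proof of Theorem \ref{thm_description_slender}, $(SC)$ is applied in two steps: (i) the edge group of any minuscule splitting of $Q$ used in the construction is an element $A \in \cala$ which fits in a short exact sequence $1 \to F \to A \to K \to 1$ with $K$ isomorphic to $\Z$ or $D_\infty$, and $(SC)$ yields $F \in \calf$; (ii) once $F \in \calf$ has been identified, every $Q_\gamma$ dual to an essential simple closed geodesic $\gamma$ on $\Sigma$ is an extension of $F$ by a quotient of $\Z$ or $D_\infty$, and $(SC)$ yields $Q_\gamma \in \cala$. Under $(SC_\calz)$, no group of $\cala$ maps onto $D_\infty$, so in step (i) the quotient $K$ is automatically cyclic and clause (1) of $(SC_\calz)$ suffices; in step (ii) we restrict to geodesics $\gamma$ with $H_\gamma \simeq \Z$ (i.e.\ two-sided simple closed geodesics not bounded by mirrors), and clause (2) of $(SC_\calz)$ delivers $Q_\gamma \in \cala$. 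Thus $Q$ is QH with fiber $F \in \calf$. Assertions (2)--(6) of Theorem \ref{thm_description_slender} then follow from Corollary \ref{ufs} and the other results of Subsection \ref{quah}, exactly as in the non-$\calz$ case, once one observes that the class of admissible geodesics (those with $H_\gamma$ cyclic) still fills $\Sigma$ by the flexibility of $Q$ together with Corollary \ref{lem_fill}.

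It remains to show $\Sigma$ has no mirror. I would argue by contradiction. Suppose $\Sigma$ contains a mirror. There are two cases, both of which produce an essential one-edge splitting of $Q$ relative to $\Inch_{|Q}$ whose edge group maps onto $D_\infty$:
\begin{itemize}
\item If $\Sigma$ has an arc mirror, then some boundary component $C$ of $\Sigma$ is an arc with $\pi_1(C) \simeq D_\infty$. By Proposition \ref{bu} applied to the JSJ tree, $C$ is used, so some incident edge stabilizer or conjugate of a group in $\calh_{|Q}$ maps with finite-index image into $\pi_1(C) \simeq D_\infty$. Among finite-index subgroups of $D_\infty$, those that are themselves $D_\infty$ force an element of $\cala$ to map onto $D_\infty$ directly; those that are $\Z$ can be combined with a reflection supplied by the adjacent mirror to produce a one-edge splitting of $Q$ (extending, by Lemma \ref{extens}, to an $(\cala,\calh)$-splitting of $G$) whose edge group is the preimage of a $D_\infty$-stabilizer of a geodesic arc meeting the mirror twice, hence maps onto $D_\infty$.
\item If $\Sigma$ has a circular mirror $M$, the stabilizer of a lift $\tilde M$ in $\pi_1(\Sigma)$ is isomorphic to $D_\infty$ (combining the reflection across $\tilde M$ with a translation along it), and $\pi_1(\Sigma)$ splits over this $D_\infty$; pulling back to $Q$ and extending to $G$ via Lemma \ref{extens} again yields an $(\cala,\calh)$-splitting of $G$ whose edge group maps onto $D_\infty$.
\end{itemize}
Either case contradicts clause (1) of $(SC_\calz)$, proving that $\Sigma$ has no mirror; in particular all singular points are conical.

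The main obstacle is the case analysis in the last paragraph, and specifically the verification that whenever $\Sigma$ carries any mirror one can produce an \emph{honest} $(\cala,\calh)$-splitting of $G$ (not merely a $G$-action on a tree with edge groups outside $\cala$) whose edge group maps onto $D_\infty$. This requires a careful orbifold-theoretic argument, lifting the $D_\infty$-splitting of $\pi_1(\Sigma)$ provided by a mirror to a splitting of $Q$ relative to $\Inch_{|Q}$ in such a way that the lifted edge group genuinely lies in $\cala$, so that the resulting contradiction with $(SC_\calz)$ is valid. The small-orbifold classification of Subsection \ref{liste} and the existence of an essential simple closed geodesic in $\Sigma$ (Assertion (5)) guarantee that $\Sigma$ is not too degenerate and that there is enough room on $\Sigma$ to perform this construction.
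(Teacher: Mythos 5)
There is a genuine gap, and it is exactly the one you flag yourself at the end: your mirror-exclusion argument needs the splitting you produce from a mirror to have edge group \emph{in $\cala$}, and nothing guarantees this. Clause (1) of $(SC_\calz)$ only forbids groups \emph{of $\cala$} from surjecting onto $D_\infty$; a splitting of $G$ (or of $Q$ relative to $\Inch_{|Q}$) over an extension of $F$ by $D_\infty$ that does not lie in $\cala$ is simply not an $(\cala,\calh)$-splitting and contradicts nothing. Indeed the paper warns, right after Definition \ref{dfg}, that the groups $Q_\gamma$ are extensions of $F$ by $\Z$ or $D_\infty$ and ``in general they do not have to be in $\cala$''; under $(SC_\calz)$ only the extensions by quotients of $\Z$ are guaranteed to be in $\cala$, so the $D_\infty$-type edge groups your two cases produce are precisely the ones you cannot place in $\cala$. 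There is also a secondary circularity in the first half of your plan: you restrict to geodesics with $H_\gamma\simeq\Z$ and assert these fill $\Sigma$, but that is only clear once you already know $\Sigma$ has no mirror, which is what you are trying to prove.

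The paper's route is different and avoids this entirely: the no-mirror statement is not deduced a posteriori from the QH structure, it is read off (and in fact needed) inside the regular-neighborhood construction of Proposition \ref{prop_RN}. For each square $e_1\times e_2$ of the core, $F$ is the kernel of the action of the slender edge group $G_{e_1}\in\cala$ on the line $\mu_{T_2}(G_{e_1})$; under $(SC_\calz)$ this group cannot surject onto $D_\infty$, so it acts by translations, and likewise $G_{e_2}$ on $\mu_{T_1}(G_{e_2})$. Since this holds for every square of $\bar Z$, the quotient orbifold $\bar Z/O$ has no mirror. This fact is then used, not just recorded: it is what makes the vertex groups $O_x=G_x/F$ at the links cyclic rather than dihedral, so that clause (2) of $(SC_\calz)$ gives $G_x\in\cala$ and the regular neighborhood $R$ is an $\cala$-tree. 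So under $(SC_\calz)$ the mirror-free conclusion is interwoven with the proof that the construction stays over $\cala$, rather than being an independent contradiction argument; if you want to salvage your outline, the fix is to prove no-mirror at the level of the core (translations versus dihedral actions of $\cala$-edge groups on invariant lines), not from an abstract QH structure on $Q$.
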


In particular:

\begin{thm}[\cite{RiSe_JSJ}]\label{thm_RiSe}
 Let $G$ be finitely presented relative to a finite family $\calh$ of finitely generated subgroups. Let $\cala$ be the class of all finite or cyclic subgroups of $G$.
 
 If $Q$ is a   flexible vertex group of a JSJ decomposition of $G$ over $\cala$ relative to $\calh$, and $Q$ is not virtually $\Z^2$, then $Q$ is QH with trivial fiber. Moreover, the underlying orbifold $\Sigma$ has no mirror, every boundary component of   $\Sigma$ is used, and $\Sigma$ contains an essential simple closed geodesic.
 \qed
\end{thm}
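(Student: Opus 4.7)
The strategy is to reduce to Theorem \ref{thm_description_slenderZ} by verifying its hypotheses, and then to handle slender flexible vertex groups by a direct argument.

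First I would check that the class $\cala$ of finite or cyclic subgroups satisfies the stability condition $(SC_\calz)$ of Definition \ref{dfn_SCZ} with fibers $\calf = \{1\}$. Any homomorphic image of a cyclic group is cyclic, so no group in $\cala$ surjects onto the non-cyclic group $D_\infty$; moreover, an exact sequence $1 \to F \to A \to \Z \to 1$ with $A \in \cala$ forces $A \cong \Z$ (since finite groups do not surject onto $\Z$) and hence $F = 1 \in \calf$. Conversely, if $F$ is trivial and $K$ is a quotient of $\Z$, then $A \cong K$ is cyclic and lies in $\cala$. All groups in $\cala$ are manifestly slender, so Theorem \ref{thm_description_slenderZ} applies and immediately yields the conclusion for any non-slender flexible $Q$: QH with trivial fiber, no mirror on the underlying orbifold, every boundary component used, and $\Sigma$ containing an essential simple closed geodesic.

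Second, I would show that any slender flexible vertex group $Q$ is necessarily virtually $\Z^2$. By Corollary \ref{cor_flex}, $Q$ has a non-trivial splitting relative to $\Inch_{|Q}$ with edge group $E \in \cala$. The key observation is that $E$ cannot be universally elliptic: otherwise Lemma \ref{extens} would permit refining the JSJ tree by this splitting, producing a strictly larger universally elliptic tree and contradicting maximality. Since finite groups (and the trivial group) enjoy property (FA) and are thereby universally elliptic, $E$ must be infinite cyclic. By Lemma \ref{lem_slender}, the slender group $Q$ preserves a line in the Bass--Serre tree of this splitting, acting either by translations --- giving an exact sequence $1 \to E \to Q \to \Z \to 1$, so that $Q$ is either $\Z^2$ or the Klein bottle group --- or dihedrally --- giving $1 \to E \to Q \to D_\infty \to 1$, whose index-$2$ translation subgroup falls under the previous case and is virtually $\Z^2$, forcing $Q$ itself to be virtually $\Z^2$.

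The technically delicate point, though not difficult once spotted, is in the slender case: one must argue that $E$ is forced to be infinite cyclic, via the interplay between property (FA) for finite groups and the maximality of the JSJ. Once this is done, the elementary classification of cyclic-by-cyclic and cyclic-by-$D_\infty$ extensions yielded by the invariant line finishes the argument.
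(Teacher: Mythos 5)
Your proof is correct. The main reduction is exactly the paper's: one checks that the class of finite or cyclic subgroups satisfies $(SC_\calz)$ with $\calf=\{1\}$ (the paper simply lists this among the examples after Definition \ref{dfn_SCZ}) and invokes Theorem \ref{thm_description_slenderZ} to handle non-slender flexible vertex groups, obtaining the QH structure with trivial fiber, no mirrors, used boundary components, and an essential geodesic.

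Where you diverge is the slender case. The paper disposes of it in one sentence by citing Proposition \ref{prop_RN}(\ref{qh}) or Proposition \ref{slflex}, whose proof uses \emph{two} different splittings of $Q$ relative to $\Inch_{|Q}$ and the product map $\varphi:Q\to K_1\times K_2$ to identify $Q$ as an extension of a virtually $\Z^2$ group by a group in $\calf$. You instead give a direct one-splitting argument: by Corollary \ref{cor_flex} the edge group $E$ of a splitting of $Q$ relative to $\Inch_{|Q}$ is not universally elliptic, hence (finite groups having property (FA)) infinite cyclic; since $Q$ is slender its minimal tree is a line, the kernel of the action on the line is the common edge stabilizer $E\cong\Z$, and the quotient is $\Z$ or $D_\infty$, so $Q$ is virtually $\Z^2$ by elementary extension theory. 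This is a clean and slightly more self-contained route; it works here precisely because $\calf=\{1\}$ forces $E$ to be the full kernel and to be infinite cyclic, whereas for a general class $\cala$ (as in Proposition \ref{slflex}) one needs the second splitting to pin down the fiber. Both arguments are valid; yours buys independence from Section \ref{pslflex} at no real cost, while the paper's citation covers the general slender-flexible description in one stroke.
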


  Indeed, it follows from (\ref{qh}) of Proposition \ref{prop_RN}, or from Proposition \ref{slflex}, that slender flexible groups are virtually $\Z^2$.

In the next subsection we shall reduce Theorems \ref{thm_description_slender} and \ref{thm_description_slenderZ} to Theorem \ref{thm_totally_flex}, which will be proved in
Subsections   \ref{fpc} through \ref{sec_all_minus}.

In Subsections \ref{fpc} through \ref{tfg}, and \ref{sec_all_minus},
 we only assume that $G$ is finitely generated  (not finitely presented), and $\calh$ may be arbitrary (we use finite presentability only in   Lemma \ref{lem_Zorn_minuscule} and Subsection \ref{preuve}).
 This will be useful   in Subsection \ref{unac}, where finite presentability will not be assumed. In particular, we will see (Theorem \ref{thm_RiSe2}) that Theorem \ref{thm_RiSe} is true if $G$ is only  finitely generated and $\calh$ is an arbitrary family of subgroups.

\subsection{Reduction to totally flexible groups}\label{sec_reduc}

Unlike Fujiwara-Papasoglu, we know  in advance that JSJ decompositions exist, and we only need to show that flexible vertex groups $G_v$ are QH.  
This allows us to  forget $G$ and concentrate on $G_v$,
but we have to remember the incident edge groups 
and we therefore consider splittings of $G_v$ 
that are relative to $\Inch_v$ (see Definition \ref{dfn_incv}): since incident edge groups are universally elliptic, these are exactly the splittings that extend to splittings of $G$ relative to $\calh$ 
(see Lemma \ref{extens}).   Even if we are interested only in non-relative JSJ decompositions  of $G$ ($\calh=\es$), it is important here that we work in a relative context. In Subsection \ref{sec_JSJ_acyl_desc} we will even have to allow groups in $\calh$ to be infinitely generated. 

The fact that $G_v$ is a flexible vertex of a JSJ decomposition says that 
 $G_v$ splits relative to $  \Inch_v$, but not over a universally elliptic subgroup
(Corollary \ref{cor_flex}).  This motivates the following  general definition.

\begin{dfn}[Totally flexible]\index{totally flexible group}
 $G$ is \emph{totally flexible} (over $\cala$ relative to $\calh$) if it admits a non-trivial splitting, but none over a universally elliptic subgroup.  Equivalently, the JSJ decomposition of $G$ is trivial, and $G$ is flexible.
\end{dfn}

 The example to have in mind is   the fundamental group of a compact hyperbolic surface other than a pair of pants, with $\cala$  the class of cyclic groups and $\calh$ consisting of the fundamental groups of the boundary components.

 We shall deduce Theorems \ref{thm_description_slender} and \ref{thm_description_slenderZ}
from the following result, which  says that totally flexible groups are QH.

\begin{thm}\label{thm_totally_flex}
  Let $G$ be finitely presented relative to a finite family $\calh$ of finitely generated subgroups.
Let $\cala$ be a class of slender groups
satisfying the stability condition $(SC)$ or  ($SC_\calz$)  with fibers in 
  $\calf$ (see Definitions \ref{dfn_SC} and \ref{dfn_SCZ}).
  
Assume that $G$ is totally flexible over $\cala$ relative to $\calh$, and not slender.
Then  $G$ is QH with fiber in $\calf$, 
and $\Sigma$ has no mirror in the $SC_\calz$ case.
\end{thm}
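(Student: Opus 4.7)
The plan is to follow the Fujiwara--Papasoglu \cite{FuPa_JSJ} approach, exploiting the \emph{a priori} knowledge that the JSJ deformation space exists (Theorem \ref{thm_exist_mou_rel}) in order to work directly with $G$ itself rather than with an abstract flexible vertex group of a larger ambient splitting. Since $G$ is totally flexible, no non-trivial $(\cala,\calh)$-splitting of $G$ is universally elliptic, so given any one-edge splitting $T_1$ with edge group $A_1\in\cala$ we can find a one-edge splitting $T_2$ in which $A_1$ is hyperbolic. Using that $A_1$ is slender (or slender in $(\cala,\calh)$-trees, as in Subsection \ref{slt}), $A_1$ preserves a unique line $\ell_1\subset T_2$ by Lemma \ref{lem_slender}; the symmetric argument shows the edge group $A_2$ of $T_2$ is hyperbolic in $T_1$ and preserves a unique line $\ell_2\subset T_1$. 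Thus a totally flexible, non-slender $G$ necessarily admits \emph{hyperbolic--hyperbolic} pairs of splittings, the algebraic counterpart of a pair of essentially intersecting simple closed curves on a surface.

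The central construction is the \emph{core} $C\subset T_1\times T_2$: a $G$-invariant two-dimensional subcomplex of the square complex $T_1\times T_2$ whose squares correspond to pairs of crossing edges. From $C$ one reads off the \emph{regular neighborhood} (or enclosing group) of $(T_1,T_2)$: a new splitting $\Gamma$ of $G$ possessing a distinguished vertex group $Q$ such that both $T_1$ and $T_2$ can be recovered by refining $\Gamma$ at $v=\{Q\}$ using one-edge splittings of $Q$ dual to disjoint simple closed geodesics on a compact 2-orbifold $\Sigma$. By the combinatorial description of $C$, the group $Q$ is an extension $1\to F\to Q\to\pi_1(\Sigma)\to1$; the stability condition $(SC)$ (or $(SC_\calz)$, in which case $\Sigma$ has no mirror) is exactly what is needed to guarantee both that $F\in\calf$ and that the edge groups of all the dual splittings of $Q$ stay in $\cala$.

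To deduce that $Q=G$, we need to choose the initial pair $(T_1,T_2)$ so that the regular neighborhood exhausts $G$. For this I would introduce \emph{minuscule splittings}, a strengthening of the minimal splittings of \cite{FuPa_JSJ}: a one-edge splitting over $A\in\cala$ is minuscule if $A$ cannot be replaced by a proper subgroup in $\cala$ giving a splitting of $G$ relative to $\calh$ with the same hyperbolic/elliptic pattern. Relative finite presentation of $G$ together with the relative Dunwoody accessibility of Proposition \ref{prop_accessibility_rel} will bound the complexity needed and allow a Zorn-type argument showing that minuscule splittings exist, and moreover that any pair of hyperbolic--hyperbolic minuscule splittings forms a \emph{filling pair}: every edge group of a splitting of $G$ which crosses $T_1$ or $T_2$ lies inside the regular neighborhood $Q$. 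Total flexibility then rules out the possibility that $Q$ is a proper vertex group, since a splitting of $G$ relative to $Q$ would have a universally elliptic edge group; hence $Q=G$.

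The main obstacle I expect is the filling step, namely verifying that the regular neighborhood of a pair of minuscule splittings absorbs every competing edge group without the inductive procedure creating a third essentially different splitting. The technical advantage of working directly on a totally flexible $G$ (rather than on an enclosing group at a vertex of some ambient JSJ tree) is precisely that one never has to ``deal with a third splitting'' as in \cite{FuPa_JSJ}: any extra splitting would either refine $\Gamma$ (contradicting total flexibility via universal ellipticity of its edge group) or be dual to a geodesic on $\Sigma$ (hence already enclosed). Once $Q=G$ is established, Assertion~(1) of Theorem \ref{thm_description_slender} holds by construction, and Assertions (2)--(6) follow from the general properties of QH vertex groups proved in Subsection \ref{quah}, in particular from Lemma \ref{dual2}, Proposition \ref{bu} and Corollary \ref{ufs}.
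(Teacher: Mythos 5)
Your overall architecture (core in $T_1\times T_2$, regular neighborhood with a QH vertex, stability condition to keep edge groups in $\cala$ and the fiber in $\calf$) is the right one, but the step where you get $Q=G$ has a genuine gap, and it is exactly the hard part of the proof. You assert that any hyperbolic--hyperbolic pair of minuscule one-edge splittings is automatically a filling pair, and that if $Q$ were a proper vertex group then ``a splitting of $G$ relative to $Q$ would have a universally elliptic edge group.'' Both claims fail already for the model case $G=\pi_1(\Sigma)$ a closed surface group with $\cala$ cyclic: take two simple closed geodesics that intersect but do not fill $\Sigma$. The dual splittings are hyperbolic--hyperbolic and minuscule (every cyclic splitting of a one-ended surface group is), yet the regular neighborhood is dual to the boundary of a proper subsurface, so $Q\subsetneq G$; moreover the edge groups of that regular neighborhood (the boundary curves of the subsurface) are \emph{not} universally elliptic --- they are only elliptic with respect to $T_1$ and $T_2$ (this is Assertion (\ref{el1}) of Proposition \ref{prop_RN}, nothing more). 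So total flexibility gives no contradiction from the existence of the splitting $R$, and your claimed filling property is simply false for one-edge splittings.

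What the paper actually does instead is to make the first tree of the pair \emph{maximal}: using relative finite presentability and Dunwoody accessibility, Lemma \ref{lem_Zorn_minuscule} produces a (multi-edge) tree $U$ maximal for domination --- the analogue of a pair-of-pants decomposition --- and this is where finite presentability enters. Then Proposition \ref{prop_exist_hh} (a non-trivial induction on the number of orbits of edges of $U$, using regular neighborhoods of auxiliary pairs and Lemma \ref{QH_ell_minus}) produces $V$ such that $U$ and $V$ are fully hyperbolic with respect to each other. Triviality of $R=RN(U,V)$ is then deduced from the \emph{maximality} of $U$ (Subsection \ref{preuve}): a common refinement of $R$ and $U$ would otherwise yield a tree $U'$ refining $U$ in the same deformation space, forcing an edge stabilizer of $U$ to be elliptic in $V$, a contradiction. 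Two further points you elide: your definition of ``minuscule'' is not the one needed (the paper's Definition requires that no $A'\ll A$ be commensurable with an edge stabilizer, and this is what makes the core symmetric, Proposition \ref{prop_minuscule_hh}, hence a pseudo-surface); and one must actually prove that in a totally flexible group \emph{all} splittings are minuscule (Proposition \ref{prop_all_minus}), which is itself a separate argument, not a Zorn-type existence statement.
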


Since there are no incident edge groups, being QH means that 
  $G$ is an extension 
 of a group $F\in \calf$ by
    the fundamental group
of a hyperbolic  orbifold $\Sigma$, and   the image of each group  $H\in\calh$ in $\pi_1(\Sigma)$   is either finite or contained in a boundary subgroup.  By Corollary \ref{ufs}, every component of $\bo \Sigma$ is used by a group of $\calh$ (if $\calh=\es$, then $\Sigma$ is a closed orbifold).

This theorem implies Theorems \ref{thm_description_slender} and \ref{thm_description_slenderZ}: we apply it to $G_v$, with  $\cala=\cala_v$ 
(the family of subgroups of $G_v$ belonging to $\cala$) and $\calh=   \Inch_{v}$ (Definition \ref{dfn_incv}).  Note that $ \Inch_v$ is a finite family of finitely generated subgroups by Theorem \ref{thm_exist_mou_rel} and Remark \ref{ashv}, that $G_v$ is finitely presented relative to $\Inch_v$ by Proposition \ref{Gvrelfp},
and  that $\cala_{ v}$ satisfies   $(SC)$ or $(SC_\calz)$ with fibers in $\calf_{ v}$.
\\

There are three main steps in the proof of Theorem \ref{thm_totally_flex}.

$\bullet$  Given two trees $T_1,T_2$ such than no edge stabilizer of one tree
  is elliptic in the other, we   follow Fujiwara-Papasoglu's construction of a \emph{core}\index{core in the product of two trees}  $\calc\inc
  T_1\times T_2$. 
  This core happens to be a surface away from its vertices.
  More precisely, if one removes from $\calc$ its cut vertices and the vertices  whose link is homeomorphic to a line,
one gets a surface whose connected components are simply connected. 
   The decomposition of   $\calc$ dual to  its cut points yields a   tree $R$
having QH vertices coming from the surface components.
  Borrowing Scott and Swarup's terminology \cite{ScSw_regular+errata}, we call this tree $R$  the \emph{regular neighborhood}\index{regular neighbourhood} 
     of $T_1$ and $T_2$ 
  (see Example  \ref{exsurf} for an explanation of this name).   
The stability condition is used to ensure that edge stabilizers of $R$ are in $\cala$.

$\bullet$
  Given a totally flexible $G$, we construct two splittings $U$ and $V$ of $G$
  which ``fill'' $G$, and we show that their regular neighborhood is
  the trivial splitting of $G$.
  We deduce that $G$ itself is QH,  as required (in the case of cyclic splittings of a torsion-free group, $G$ is  the fundamental group of a compact surface   and one should think of $U$ and $V$ as dual to transverse pair of pants decompositions, see Example \ref{pant}). 

$\bullet$
  The previous   steps require  that splittings of $Q$ be
  \emph{minuscule}. This is a condition which controls the way in which $Q$ may
  split over subgroups $A,A'$ with $A'\inc A$, and we check that it is
  satisfied.

 Our first step is the same as \cite{FuPa_JSJ}.
But
total flexibility allows us to 
  avoid the more complicated part of their paper where, given a 
QH vertex group of a tree (obtained for instance as the regular neighborhood of two splittings), one has to make it larger to  
enclose   a third splitting.

In the second step, the first splitting $U$ is obtained by a maximality argument which requires finite presentability. 

Fujiwara and Papasoglu work with a minimality condition for splittings that allows the  construction  of regular neighborhoods.
This condition is not sufficient for our purpose, and we replace it by a stronger
 condition (minuscule).

\subsection{Fujiwara-Papasoglu's core} \label{fpc}

As mentioned earlier, we let $\calh$ be arbitrary and we only assume that $G$ is finitely generated. Groups in $\cala$ are slender, and one of the stability conditions $(SC)$ or  ($SC_\calz$)  is satisfied. 
\subsubsection{Minuscule splittings}

\begin{dfn}[Minuscule] \index{minuscule} 
Given $A,A'\in\cala$, 
we say that $A'\ll A$ if $A'\subset A$ and there exists
 a tree $S$ such that
$A'$ is elliptic in $S$ but $A$ is not. We also write $A\gg A'$.

We say that $A\in\cala$ is \emph{minuscule} if, 
whenever a subgroup $A'\ll A$  fixes an edge $e$ of a tree, $A'$ has infinite index in the stabilizer $G_e$. Equivalently, $A$ is minuscule if and only if no $A'\ll A$ is commensurable with an edge stabilizer.

A tree is minuscule if   its edge stabilizers are minuscule.  We say that all trees  (or all splittings) are minuscule if all $(\cala,\calh)$-trees are minuscule. 
\end{dfn}

If $A'\ll A$, then $A'$ has infinite index in $A$. If $A'\inc A\ll B\inc B'$, then $A'\ll B'$. 
In particular, the relation $\ll $ is  transitive.

Being minuscule is a commensurability invariant. It is often used in the following way: if an edge stabilizer $G_e$ of a tree $T$ is elliptic in another tree $S$, then any minuscule $A$ containing $G_e$ is also elliptic in $S$.

\begin{rem} \label{boncas}
 If $G$ does not split (relative to $\calh$) over a subgroup commensurable with a subgroup of infinite index of a group in $\cala$, then
every  $A\in\cala$ is minuscule (because, if $A$ is not minuscule, some $G_e$ has a finite index subgroup $A'$ contained in $A$ with infinite index). 
This holds for instance under the assumptions of \cite{DuSa_JSJ},
in particular for splittings of one-ended groups over virtually cyclic subgroups. 
The reader interested only in this case may therefore
ignore Subsection \ref{sec_all_minus}, where we  prove that all splittings of a totally flexible group are minuscule.
\end{rem}

The following lemma says that ellipticity is a symmetric relation among minuscule trees 
(in the terminology of \cite{FuPa_JSJ},  minuscule one-edge splittings are \emph{minimal}).

\begin{lem}\label{lem_minuscule_minimal}
 If $T_1$ is elliptic \wrt $T_2$, and $T_2$ is minuscule,  then $T_2$ is elliptic \wrt $T_1$.
\end{lem}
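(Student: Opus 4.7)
The plan is to derive a contradiction from assuming that some edge stabilizer of $T_2$ fails to be elliptic in $T_1$, using the standard refinement of Proposition \ref{prop_refinement} and the defining property of $G_e$ being minuscule.

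First I would pick an edge $e$ of $T_2$ whose stabilizer $G_e$ is not elliptic in $T_1$, aiming for a contradiction. Since $T_1$ is elliptic with respect to $T_2$, Proposition \ref{prop_refinement} supplies a standard refinement $\hat T_1$ of $T_1$ that dominates $T_2$, which is still an $(\cala,\calh)$-tree by assertions \ref{it_Ge} and \ref{it_ell} of that proposition. The key features I would extract are: (ii) every edge stabilizer of $\hat T_1$ already fixes an edge in $T_1$ or in $T_2$, and (iii) $G_e$ contains an edge stabilizer $J = G_{e'}$ of $\hat T_1$.

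Next I would observe that $J$ is elliptic in $T_1$: by (ii), $J$ fixes an edge in $T_1$ (in which case it is trivially elliptic there) or an edge in $T_2$ (in which case it lies in a vertex stabilizer of $T_1$ via the collapse $\hat T_1 \to T_1$, hmm — actually more directly, $J$ fixes an edge $e'$ of $\hat T_1$, so it fixes the image of $e'$ in $T_1$, which is an edge or a vertex). Since $G_e$ is by hypothesis \emph{not} elliptic in $T_1$, the inclusion $J \subset G_e$ is strict in a strong way: the tree $T_1$ witnesses precisely the relation $J \ll G_e$ in the sense of the definition of minuscule.

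The final step is the punchline. Because $G_e \in \cala$ is minuscule (as an edge stabilizer of $T_2$), any subgroup $A' \ll G_e$ that fixes an edge of any tree must have infinite index in the stabilizer of that edge. Apply this to $A' = J$, which fixes the edge $e'$ of $\hat T_1$: the stabilizer of $e'$ is $J$ itself, so $J$ would have to have infinite index in $J$, which is absurd. This contradiction forces $G_e$ to be elliptic in $T_1$ for every edge $e$ of $T_2$, which is exactly the statement that $T_2$ is elliptic with respect to $T_1$. I do not expect any serious obstacle; the whole argument is essentially a direct application of Proposition \ref{prop_refinement}\ref{it_Ge}--\ref{it_Ge2} plus the definition of minuscule, with no need for finite presentability or any further machinery.
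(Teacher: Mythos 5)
Your argument is correct and is essentially the paper's own proof: both take the standard refinement $\hat T_1$ of $T_1$ dominating $T_2$, use assertion (iii) of Proposition \ref{prop_refinement} to find an edge stabilizer $J$ of $\hat T_1$ inside a given edge stabilizer of $T_2$, note that $J$ is elliptic in $T_1$, and invoke minusculeness of the $T_2$-edge stabilizer to conclude it is elliptic in $T_1$ as well. The only difference is that you phrase it as a contradiction while the paper argues directly, which is immaterial.
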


\begin{proof}
Let $\Hat T_1$ be a standard refinement  of $T_1$ dominating $T_2$ (Definition \ref{dfn_std_blowup}).
Let $e_2$ be an edge of $T_2$, and
$e$ an edge of $\Hat T_1$ with $G_e\subset G_{e_2}$. 
The group $G_e$ is elliptic in $T_1$, and since $G_{e_2}$ is minuscule 
 $G_{e_2}$ itself is elliptic in $T_1$.
This argument applies to any edge   of $T_2$, so  $T_2$ is elliptic \wrt $T_1$.
\end{proof}

\begin{lem}\label{lem_Zorn_minuscule}
Assume that $\calh$ is a finite family of finitely generated subgroups, 
 $G$ is finitely presented relative to $\calh$, and all trees are minuscule.
  There exists a    tree $T$ which is maximal for domination:
if  $T'$ dominates $T$, then $T$ dominates $T'$ (so $T$ and $T'$ are in the same deformation space).
\end{lem}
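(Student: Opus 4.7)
The plan is to apply Zorn's lemma to the set of (minimal) $(\cala,\calh)$-trees ordered by domination, so the task reduces to producing, for every chain $(T_i)_{i\in I}$, an upper bound. I would first invoke Lemma \ref{lem_Zorn} to extract a countable subfamily $J\subset I$ such that any tree $T$ which is elliptic with respect to every $T_i$ and which dominates every $T_j$ for $j\in J$ automatically dominates every $T_i$. This splits the task into two subproblems: constructing a $T$ dominating the countable subchain $(T_j)_{j\in J}$, and verifying the ellipticity hypothesis with respect to the full chain.

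After enumerating $J$ compatibly with the order as $T_1\leq T_2\leq\cdots$, I would inductively construct a sequence of refinements
\[
T_1 = T_1^{(1)}\longleftarrow T_1^{(2)}\longleftarrow T_1^{(3)}\longleftarrow\cdots
\]
with $T_1^{(k)}$ dominating $T_k$. By Assertion (ii) of Proposition \ref{prop_refinement}, an easy induction shows that the edge stabilizers of $T_1^{(k)}$ are edge stabilizers of $T_1,\ldots,T_k$. Since $T_{k+1}$ dominates each such $T_\ell$, the tree $T_{k+1}$ is elliptic with respect to $T_\ell$, and the minuscule hypothesis allows Lemma \ref{lem_minuscule_minimal} to run in the reverse direction, giving that $T_\ell$ is also elliptic with respect to $T_{k+1}$. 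Hence every edge stabilizer of $T_1^{(k)}$ is elliptic in $T_{k+1}$, and Proposition \ref{prop_refinement} produces the desired standard refinement $T_1^{(k+1)}$ dominating $T_{k+1}$, completing the induction.

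Since $G$ is finitely presented relative to $\calh$, the relative Dunwoody accessibility result (Proposition \ref{prop_accessibility_rel}) applied to this refinement sequence yields a tree $S$ with finitely generated edge stabilizers admitting a morphism to $T_1^{(k)}$ for all $k$ large enough, so $S$ dominates every $T_j$ with $j\in J$. The edge stabilizers of $S$ are themselves edge stabilizers of finitely many $T_{j_1},\ldots,T_{j_m}\in J$; for any $T_i$ in the chain, comparability with each $T_{j_\ell}$ combined with the symmetric ellipticity of Lemma \ref{lem_minuscule_minimal} forces edge stabilizers of $T_{j_\ell}$ to be elliptic in $T_i$, so $S$ is elliptic with respect to every $T_i$, and Lemma \ref{lem_Zorn} then gives that $S$ dominates the full chain, at which point Zorn's lemma produces the desired maximal element. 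The hard part is precisely this gap between refinement chains (to which accessibility directly applies) and the arbitrary chains of dominations one must handle: the minuscule hypothesis, via Lemma \ref{lem_minuscule_minimal}, is exactly what converts strict dominations into controllable refinement steps and what propagates the accessibility-based upper bound from $J$ back to the whole chain $I$.
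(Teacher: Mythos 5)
Your argument is correct, and its engine is exactly the paper's: use the minuscule hypothesis, via Lemma \ref{lem_minuscule_minimal}, to turn a domination-increasing sequence into a sequence of standard refinements, then invoke relative Dunwoody accessibility (Proposition \ref{prop_accessibility_rel}) to bound it. Where you genuinely differ is in the outer reduction. The paper never invokes Zorn's lemma or Lemma \ref{lem_Zorn}: it first uses Corollary \ref{cor_fg_rel} to reduce to the set $\calt$ of trees with finitely generated edge \emph{and} vertex stabilizers, observes that $\calt$ is countable, and then only needs to bound domination-increasing \emph{sequences}; in particular the bounding tree never has to be compared with anything outside the sequence. You instead run Zorn over all trees, use Lemma \ref{lem_Zorn} (which the paper reserves for the compatibility JSJ, Theorem \ref{thm_exists_compat}) to cut an arbitrary chain down to a countable subfamily, and then must additionally check that the accessibility tree $S$ is elliptic with respect to every member of the chain — which you do correctly, by noting that edge stabilizers of $S$ sit inside edge stabilizers of trees of the chain and using comparability within the chain together with Lemma \ref{lem_minuscule_minimal} once more. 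Your route avoids relying on countability of $\calt$, at the price of this extra bookkeeping and the transfinite wrapper; the paper's version is leaner.

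Two small repairs. First, a countable subset $J$ of a chain need not be enumerable in increasing order (its order type could be that of $\Q$); but since $J$ lies in a chain you can take running maxima of an arbitrary enumeration — let $V_k$ be the member of $\{U_1,\dots,U_k\}$ dominating the others — and work with the increasing sequence $(V_k)$, which dominates all of $J$. Second, Assertion (ii) of Proposition \ref{prop_refinement} only says that an edge stabilizer of the refinement \emph{fixes an edge} of one of the two trees, so edge stabilizers of $T_1^{(k)}$ (and likewise of $S$) are merely \emph{contained in} edge stabilizers of the $T_\ell$'s, not equal to them; this is harmless, since your argument only uses that ellipticity passes to subgroups.
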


Finite presentation is necessary as evidenced by Dunwoody's inaccessible group \cite{Dun_inaccessible}, with $\calh=\es$ and $\cala$ the family of finite subgroups.

\begin{example} \label{pantalons}
If we consider cyclic splittings of the fundamental group of a closed orientable surface  $\Sigma$, then $T$ is maximal if and only if it is dual to a pair of pants decomposition of $\Sigma$.
\end{example}

\begin{proof} 
Let $\calt$ 
be the set   of all  trees with finitely generated edge and vertex stabilizers, up to equivariant isomorphism.  By Dunwoody's accessibility  (Corollary \ref{cor_fg_rel})
 every tree is dominated by a tree in $\calt$, so it suffices to find a maximal element in $\calt$.

As pointed out in the proof of Theorem \ref{thm_exist_mou}, the set $\calt$ is countable, so it suffices to show that,  given any  sequence $T_k  $   such that $T_{k+1}$ dominates $T_k$, there exists a tree $T$ 
  dominating every $T_k$.

We produce inductively  a tree $S_k$ in the same deformation space as $T_k$ 
which refines $S_{k-1}$.
Start with $S_0=T_0$, and assume that $S_{k-1}$ is already defined.
Since $T_{k}$ dominates $T_{k-1}$, hence  $S_{k-1}$,
it  is elliptic \wrt $S_{k-1}$.
All  trees being assumed to be  minuscule, $S_{k-1}$ is elliptic \wrt $T_{k}$ by Lemma \ref{lem_minuscule_minimal}. We may therefore define $S_k$ as a standard refinement of $S_{k-1}$ dominating $T_k$. It belongs to the same deformation space as $T_k$ by Assertion \ref{it_ell} of Proposition \ref{prop_refinement}.

By Dunwoody's accessibility   
(Proposition \ref{prop_accessibility_rel}),
there exists a tree $T\in\calt$ dominating every $S_k$, hence every $T_k$. 
\end{proof}

\subsubsection{Definition of the core} \label{defco}

Let $T_1$ and $T_2$ be trees. Recall that $T_1$ is elliptic \wrt $T_2$ if every edge stabilizer of $T_1$ is elliptic in $T_2$ (Definition \ref{et}).  If $T_1$ has several orbits of edges, it may happen that certain edge stabilizers are elliptic in $T_2$ and others are not (being slender, they act on $T_2$ hyperbolically, leaving a  line invariant). 
This motivates the following definition.

\begin{dfn} [Fully hyperbolic]\index{fully hyperbolic tree}
 $T_1$ is \emph{fully hyperbolic} \wrt $T_2$ if
every edge stabilizer of $T_1$ acts hyperbolically on $T_2$. 

\end{dfn}

This implies that  edge stabilizers of $T_1$ are infinite, and no vertex stabilizer of $T_1$ is elliptic in $T_2$ (except if $T_1$ and $T_2$ are both 
 trivial). 

\begin{example} \label{pant2}
Suppose that $G$ is a surface group and $T_1$, $T_2$ are dual to families of disjoint geodesics $\call_1$, $\call_2$ as in Subsection \ref{orb}. Then $T_1$ is elliptic \wrt $T_2$ if each curve in $\call_1$ is either contained in $\call_2$ or disjoint from $\call_2$.  
It is fully hyperbolic  \wrt $T_2$ if each curve in $\call_1$ meets $\call_2$ and every intersection is transverse.
\end{example}

Let $T_1$ be fully hyperbolic \wrt $T_2$. We consider the product $T_1\times T_2$. We view it as a   complex made of squares $e_1\times e_2$, with the diagonal action of $G$. An edge of the form $e_1\times \{v_2\}$ is \emph{horizontal}, and an edge $\{v_1\}\times e_2$ is \emph{vertical}.
\index{horizontal edge of the core}
\index{vertical edge of the core}

Following \cite{FuPa_JSJ}, we define the \emph{asymmetric core}\index{core in the product of two trees}\index{0C1@$\calc(T_1,T_2)$: core of $T_1\times T_2$}
\index{0C2@$\check\calc(T_1,T_2)$: flipped core of $T_1\times T_2$}
 $\calc(T_1,T_2)$ as follows.
For each edge $e$ and each vertex $v$ of $T_1$, let $\mu_{T_2}(G_e)$ and $\mu_{T_2}(G_v)$ 
be the minimal subtrees of $G_e$ and $G_v$ respectively in $T_2$ (with $\mu_{T_2}(G_e)$ a line since $G_e$ is slender).

\begin{dfn} [Asymmetric cores $\calc(T_1,T_2)$, $\check\calc(T_1,T_2)$]
Let $T_1$ be fully hyperbolic \wrt $T_2$. The \emph{asymmetric core} $\calc(T_1,T_2)\subset T_1\times T_2$ is
$$\calc(T_1,T_2)=\left( \bigcup_{v\in V(T_1)} \{v\}\times \mu_{T_2}(G_v) \right)
\cup \left(
\bigcup_{e\in E(T_1)}e\times \mu_{T_2}(G_e)\right).$$

If we also assume that $T_2$ is fully hyperbolic \wrt $T_1$, we denote by 
$\check\calc(T_1,T_2)\subset T_1\times T_2$ the  opposite
construction:
$$\check\calc(T_1,T_2)=\left(\bigcup_{v\in V(T_2)}\mu_{T_1}(G_v)\times \{v\}\right) \cup 
\left(\bigcup_{e\in E(T_2)}  \mu_{T_1}(G_e)\times e\right).$$

\end{dfn}

When no confusion is possible, we use the notations $\calc$, $\check\calc$ instead of $\calc(T_1,T_2)$, 
$\check\calc(T_1,T_2)$.
Note that $\calc$ consists of all $(x,y)\in T_1\times T_2$ such that $y$ belongs to the minimal subtree of $G_x$.

Every  $\mu_{T_2}(G_v) $, $ \mu_{T_2}(G_e)$ being a   non-empty subtree, with $ \mu_{T_2}(G_e)\inc \mu_{T_2}(G_v) $ if $v$ is an endpoint of $e$, a standard argument shows that $\calc(T_1,T_2)$ is simply connected.

The group $G$ acts diagonally on $T_1\times T_2$, and $\calc$ is $G$-invariant. Since $G_e$ acts   cocompactly on the line  $\mu_{T_2}(G_e)$ 
for $e\in E(T_1)$, there are finitely many $G$-orbits of squares in $\calc$.
\begin{rem}\label{rem_fix}
If $H\subset G$ is elliptic in $T_1$ and $T_2$,   it fixes a vertex in $\calc$ (because $H$ fixes some $v_1\in T_1$, and being elliptic in $T_2$ it fixes a point in the $G_{v_1}$-invariant subtree $\mu_{T_2}(G_{v_1})\inc T_2$).
\end{rem}

\subsubsection{Symmetry of the   core} \label{symmco}

The goal of this subsection is to prove that the asymmetric core of minuscule splittings is actually symmetric: $\check\calc= \calc$. Before doing so, we note the following basic consequence of symmetry.

\begin{lem}\label{prop_surface}
Let  $T_1,T_2$ be  fully hyperbolic \wrt each other. Assume that 
  $\check\calc= \calc$.
Then 
$\calc$ is a pseudo-surface: if $V(\calc)$ is its  set of vertices,
then $\calc\setminus V(\calc)$ is a  surface (which does not have to be  connected or   simply connected). 
\end{lem}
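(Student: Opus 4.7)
The plan is to verify that every point of $\calc\setminus V(\calc)$ has a Euclidean neighborhood. Since $\calc$ is a two-dimensional subcomplex of $T_1\times T_2$ built from squares $e_1\times e_2$, a point in the interior of a square trivially has a neighborhood homeomorphic to $\R^2$, so only interior points of horizontal edges $e_1\times\{v_2\}$ and vertical edges $\{v_1\}\times e_2$ need be analysed. The goal in each case is to show that such an edge lies on the boundary of exactly two squares of $\calc$, so that a neighborhood of any interior point is the union of two half-disks glued along their common diameter, i.e.\ a disk.

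For a horizontal edge $e_1\times\{v_2\}\subset\calc$, the squares of $\calc$ containing it are the squares $e_1\times e_2$ with $e_2$ an edge of $T_2$ incident to $v_2$ and satisfying $e_2\subset\mu_{T_2}(G_{e_1})$. Full hyperbolicity of $T_1$ with respect to $T_2$ means that $G_{e_1}$ acts hyperbolically on $T_2$, and since $G_{e_1}$ is slender Lemma \ref{lem_slender} yields that $\mu_{T_2}(G_{e_1})$ is a bi-infinite line. The vertex $v_2$ lies on this line, hence is incident in it to exactly two edges, so there are exactly two squares of $\calc$ containing $e_1\times\{v_2\}$.

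The case of a vertical edge $\{v_1\}\times e_2$ is where the symmetry hypothesis $\check\calc=\calc$ comes in. The definition of $\calc$ by itself only gives $e_2\subset\mu_{T_2}(G_{v_1})$, which is typically much larger than a line; so a priori a vertical edge could be bordered by arbitrarily many (or just one) squares of $\calc$, producing branching or free edges along a whole edge. However $\{v_1\}\times e_2\subset\check\calc$ forces $v_1\in\mu_{T_1}(G_{e_2})$, and the squares of $\check\calc$ containing this vertical edge are precisely the squares $e_1\times e_2$ with $e_1\in E(T_1)$ incident to $v_1$ and $e_1\subset\mu_{T_1}(G_{e_2})$. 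Full hyperbolicity of $T_2$ with respect to $T_1$ and slenderness of $G_{e_2}$ once more make $\mu_{T_1}(G_{e_2})$ a bi-infinite line, so there are exactly two such edges $e_1$.

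Putting the three cases together, every non-vertex point of $\calc$ has a neighborhood homeomorphic to $\R^2$, so $\calc\setminus V(\calc)$ is a (possibly disconnected, possibly non-simply-connected) surface without boundary, as claimed. The substantive step is the vertical-edge case, which is essentially trivial once one recognises that the symmetry hypothesis is exactly what converts the two-dimensional set $\mu_{T_2}(G_{v_1})$ into a one-dimensional line structure coming from $\mu_{T_1}(G_{e_2})$; apart from this, the argument is purely a local-model verification in a square complex.
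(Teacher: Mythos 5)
Your proof is correct and follows essentially the same route as the paper: horizontal edges lie in exactly two squares because $\mu_{T_2}(G_{e_1})$ is a line (slender group acting hyperbolically), and the hypothesis $\check\calc=\calc$ transfers the symmetric line structure $\mu_{T_1}(G_{e_2})$ to the vertical edges. The extra local-model details you spell out (half-disks glued along a diameter) are implicit in the paper's argument, so there is no substantive difference.
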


\begin{proof}
  By construction, $\calc\cap( \rond{e_1}\times T_2)=\rond{e_1}\times \mu_{T_2}(G_{e_1})\simeq \rond{e_1}\times \bbR$ if $e_1$ is an edge of $T_1$ and $\rond{e_1}$ denotes the open edge.
It follows that all horizontal edges of $\calc$ are contained in exactly two squares.
The symmetric argument shows that all vertical edges of $\check\calc$ are contained in exactly two squares of $\check\calc$.
Since $\check\calc= \calc$,  all open edges of $\calc$ are contained in exactly two squares,
so $\calc\setminus V(\calc)$ is a surface.
\end{proof}

\begin{prop}\label{prop_minuscule_hh}
  Let $T_1,T_2$ be two minuscule  trees that are fully hyperbolic with respect to each other.
Then $\check\calc 
=\calc 
$.
\end{prop}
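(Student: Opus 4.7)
The plan is to reduce the equality $\check\calc = \calc$ to a square-by-square comparison inside $T_1 \times T_2$. By definition, a square $e_1 \times e_2$ (with $e_i \in E(T_i)$) lies in $\calc$ iff $e_2 \subset \mu_{T_2}(G_{e_1})$, and lies in $\check\calc$ iff $e_1 \subset \mu_{T_1}(G_{e_2})$. Thus it suffices to prove the symmetry
\[
e_2 \subset \mu_{T_2}(G_{e_1}) \iff e_1 \subset \mu_{T_1}(G_{e_2}),
\]
after which the matching of the $1$-cells of the form $\{v_1\} \times e_2$ and $e_1 \times \{v_2\}$ will follow from an analogous analysis at vertex stabilizers. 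By the symmetry of the hypotheses, I only need one direction.

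First I would analyze the pointwise stabilizer $K := G_{e_1} \cap G_{e_2}$. Assuming $e_2 \subset \mu_{T_2}(G_{e_1})$, the slender group $G_{e_1}$ acts hyperbolically on the line $\mu_{T_2}(G_{e_1})$ containing $e_2$ (by full hyperbolicity of $T_1$ with respect to $T_2$), so $K$ lies in the kernel of the translation-length homomorphism on that line, and hence has infinite index in $G_{e_1}$. Since $K$ is elliptic in $T_2$ while $G_{e_1}$ is not, we have $K \ll G_{e_1}$. As $T_1$ is minuscule, $G_{e_1}$ is minuscule, and since $K$ fixes the edge $e_2$ of $T_2$ (whose stabilizer is $G_{e_2}$), the minuscule condition forces $K$ to have infinite index in $G_{e_2}$ as well.

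The decisive step is upgrading this to $e_1 \subset \mu_{T_1}(G_{e_2})$. I would argue by contradiction, assuming $e_1$ lies off the invariant line $L := \mu_{T_1}(G_{e_2})$. Then a hyperbolic element $h \in G_{e_2}$ acting on $T_1$ translates the nearest-point projection of $e_1$ to $L$, so $h \cdot e_1 \ne e_1$, and $K$ has infinite index in the pointwise stabilizer of $e_1$ under $G_{e_2}$ in a way controlled by the translation lengths along $L$. Combining $h$ with a hyperbolic $g \in G_{e_1}$ acting on $T_2$ (which fixes $e_1$, and hence normalizes $K$), and tracking the common fixed sets of suitable products of $g$ and $h$ simultaneously in $T_1$ and $T_2$, one builds a subgroup $K' \subset G$ fixing a long segment of the axis $\mu_{T_2}(G_{e_1})$ while satisfying $K' \ll G_{e_2}$; minuscule-ness of $G_{e_2}$ then demands that $K'$ sit with infinite index in the corresponding edge stabilizer of $T_2$, which contradicts the bound coming from the finite index of the translation subgroup in the slender group $G_{e_1}$. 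The symmetric argument handles the converse implication, completing the matching of 2-cells, and the matching of 1-cells is obtained by an analogous application of minuscule-ness replacing edge stabilizers by vertex stabilizers $G_{v_1}$ (resp.\ $G_{v_2}$).

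The main obstacle is the quantitative ping-pong in the contradiction step: the minuscule hypothesis only furnishes qualitative ($\ll$ and infinite index) information, so to force a contradiction one must choose products of $g$ and $h$ producing a subgroup fixing simultaneously a long enough segment in each tree. Once the $2$-cells of $\calc$ and $\check\calc$ are shown to coincide, symmetry of the $1$-cells and vertices follows formally, because any $1$-cell of $\calc$ is adjacent to a square of $\calc$ by the construction of $\mu_{T_2}(G_{v_1})$ from axes of hyperbolic elements of $G_{v_1}$.
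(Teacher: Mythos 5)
Your reduction and your ``decisive step'' both have genuine gaps. First, the reduction: equality $\check\calc=\calc$ is \emph{not} just a statement about squares plus a formal matching of $1$-cells. The core $\calc$ contains vertical edges $\{v\}\times e_2$ (with $e_2\subset\mu_{T_2}(G_v)$) that need not bound any square: the hyperbolic element of $G_v$ whose axis contains $e_2$ need not lie in the stabilizer of any edge of $T_1$ incident to $v$, so your claim that every $1$-cell of $\calc$ is adjacent to a square of $\calc$ is false. These squareless vertical edges are exactly where the full strength of ``minuscule'' is needed — with the weaker Fujiwara--Papasoglu minimality one already gets $\calc^{(2)}=\check\calc^{(2)}$, but only $\calc^{(2)}$, not $\calc$ — so dismissing the $1$-cells as formal skips the main point of the proposition.

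Second, the square-level step is not an argument. The minuscule hypothesis only says that no $A\ll G_{e_2}$ is commensurable with an edge stabilizer of some $(\cala,\calh)$-\emph{tree}; to use it you must actually exhibit a splitting of $G$ whose edge group is such an $A$. Your sketch tries to produce a contradiction purely locally, by ping-pong between a hyperbolic $g\in G_{e_1}$ (in $T_2$) and $h\in G_{e_2}$ (in $T_1$): but there is no acylindricity, so ``a subgroup $K'$ fixing a long segment of $\mu_{T_2}(G_{e_1})$'' gives no contradiction — the conclusion you would draw from minusculeness (infinite index of $K'$ in an edge stabilizer of $T_2$) is not in tension with any ``bound coming from the finite index of the translation subgroup in $G_{e_1}$'', and the phrase ``$K$ has infinite index in the pointwise stabilizer of $e_1$ under $G_{e_2}$'' is vacuous since that stabilizer is $K$ itself. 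The actual proof is global: one shows the fiber $Y_m=\calc\cap(T_1\times\{m\})$ over the midpoint of each edge $e_2$ of $T_2$ is connected, because a ``bad'' component $Z$ is a track in the simply connected complex $\calc$, and the tree dual to $G\cdot Z$ is a nontrivial $(\cala,\calh)$-splitting with edge group $G_Z\ll G_{e_2}$, contradicting minusculeness of $T_2$; connectedness of all fibers $Y_y$ then forces $Y_y\supset\mu_{T_1}(G_y)\times\{y\}$, i.e.\ $\check\calc\subset\calc$, and symmetry finishes. Nothing in your proposal constructs the new splitting that minusculeness is designed to forbid, so the argument cannot be completed along the lines you describe.
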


In other words, the relation ``$y$ belongs to the minimal subtree of $G_x$'' is symmetric.

 The remainder of this subsection is devoted to the proof of 
 the proposition. We 
always assume  that $T_1$ and $T_2$ are  fully hyperbolic with respect to each other, but they are only  assumed to be minuscule when indicated. 

We denote by $\calc^{(2)}\subset\calc$\index{0C3@$\calc^{(2)}$: union of squares in $\calc(T_1,T_2)$}
the union of all closed squares $e_1\times e_2$ of $T_1\times T_2$ which are contained
in $\calc$, or equivalently  such that $e_2\subset \mu_{T_2}(G_{e_1})$.   It contains 
all horizontal edges $e\times \{v_2\}$  of $\calc$, but only contains those  vertical edges $\{v\}\times e_2$ which bound a square in $\calc$. 
Any  open edge $\{v\}\times \rond e_2$ which does not bound a square in $\calc$ disconnects   $\calc$.

We define $\check\calc^{(2)}$ analogously.

\begin{rem*} The minimality condition of \cite{FuPa_JSJ} is weaker than requiring trees to be minuscule, and they only conclude  $\check\calc^{(2)}=\calc^{(2)}$. \end{rem*}

If $y$ is a point of $T_2$, 
define $Y_{y}=\calc\cap(T_1\times\{y\})$
and $Y_{y}^{(2)}=\calc^{(2)}\cap(T_1\times\{y\})$. They are invariant under $G_{y}$ 
(we do not claim that they are connected). If $y$ is not a vertex, $Y_y\setminus Y_{y}^{(2)}$ is a union of isolated vertices, so
any connected component of $Y_y^{(2)}$ is a connected component of $Y_y$.

\begin{lem}\label{lem_fibres_cnx}
 Let $m$ be the midpoint of an edge $e_2$ of $T_2$.
 If
  $Y_{m}$
  is not  connected, then $T_2$ is not minuscule.
Moreover, if $Y_{m}^{(2)}$ is not connected, then  some edge stabilizer of $T_1$ is hyperbolic in 
a one-edge splitting over a group $ 
A\ll G_{e_2}$.
\end{lem}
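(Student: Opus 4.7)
The strategy is to extract from the disconnection of $Y_m^{(2)}$ a non-trivial $G_{e_2}$-splitting and to promote it to a $G$-splitting refining $T_2$ on the orbit of $e_2$. The basic structure is that $G_{e_2}$ fixes $m$, and therefore preserves the $G_{e_2}$-invariant subsets $Y_m$ and $Y_m^{(2)}$ of $T_1$; by full hyperbolicity of $T_2$ with respect to $T_1$, the slender group $G_{e_2}$ acts hyperbolically on $T_1$ with a unique invariant line $\ell$, and in particular has no fixed vertex in $T_1$.

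The key step is to show that $G_{e_2}$ acts non-trivially on the set of connected components of $Y_m^{(2)}$. Suppose instead that each component is $G_{e_2}$-invariant; choose two distinct components $C,C'$ and consider the bridge $[p,q]\subset T_1$ between them with $p\in C$, $q\in C'$. Since $G_{e_2}$ preserves both $C$ and $C'$ it preserves the bridge, and (using the no-inversion assumption) this forces a $G_{e_2}$-fixed vertex in $T_1$, contradicting hyperbolicity. The same bridge argument, applied to $Y_m$ using the isolated vertices of $Y_m\setminus Y_m^{(2)}$, handles the case when only $Y_m$ (and not $Y_m^{(2)}$) is disconnected.

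Collapsing each component of $Y_m^{(2)}$ equivariantly under $G_{e_2}$ therefore yields a non-trivial $G_{e_2}$-tree, from which one extracts a one-edge $G_{e_2}$-splitting with edge group $A=\Stab_{G_{e_2}}(C)\subsetneq G_{e_2}$. Extending this construction to the full $G$-orbit of $m$, using the core $\calc^{(2)}$ as scaffolding, yields a $G$-equivariant refinement $T_2^{\sharp}$ of $T_2$ in which each edge in the orbit of $e_2$ is replaced by a segment whose new edges have stabilizers conjugate to $A$. Collapsing $T_2^{\sharp}$ to retain a single $G$-orbit of new edges produces a one-edge splitting $S$ of $G$ over $A$; because $G_{e_2}$ acts non-trivially on its own splitting, $G_{e_2}$ is hyperbolic in $S$ while $A$ is elliptic, so $A\ll G_{e_2}$. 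For the ``moreover'' statement, pick an edge $e_1^{\flat}\in Y_m^{(2)}$ lying in a component moved by some element of $G_{e_2}$: by definition of $Y_m^{(2)}$, the axis of $G_{e_1^{\flat}}$ in $T_2$ contains $e_2$, so in $T_2^{\sharp}$ this axis crosses the inserted segment, and $G_{e_1^{\flat}}$ remains hyperbolic after collapse to $S$. For the first statement, the analogous construction starting from $Y_m$ produces an edge stabilizer $A\ll G_{e_2}$ of a new \AH-tree, and this $A$ is trivially commensurable with itself as an edge stabilizer, witnessing that $T_2$ is not minuscule.

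The main obstacle is the $G$-equivariant promotion of the $G_{e_2}$-splitting to a genuine refinement $T_2^{\sharp}$ of $T_2$: one must verify, using the symmetric structure of $\calc^{(2)}$ and the compatibility of the incident vertex stabilizers of $T_2$ with the component decomposition of $Y_m^{(2)}$, that the blow-up construction produces a well-defined simplicial $G$-tree with edge stabilizers in $\cala$ and relative to $\calh$. A secondary technical point is ensuring that at least one edge of the axis of $G_{e_1^{\flat}}$ in $T_2^{\sharp}$ survives the collapse to $S$, which is arranged by choosing the orbit of new edges kept in $S$ to meet this axis.
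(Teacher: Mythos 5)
Your central step has a genuine gap, and the route you chose for it cannot work as stated. You propose to promote the $G_{e_2}$-data to a splitting of $G$ via a refinement $T_2^{\sharp}$ of $T_2$ in which each edge in the orbit of $e_2$ is ``replaced by a segment whose new edges have stabilizers conjugate to $A$''. No such refinement exists: if $p\colon T_2^{\sharp}\to T_2$ is a collapse map, the preimage of the open edge $e_2$ is a single open edge $\hat e$, which is preserved by $G_{e_2}$ and therefore (no inversions) satisfies $G_{\hat e}=G_{e_2}$; one cannot insert edges with stabilizer $A\subsetneq G_{e_2}$ ``inside'' $e_2$. More to the point, the one-edge splitting over $A\ll G_{e_2}$ that the lemma asserts need not be compatible with $T_2$ at all, so looking for it among refinements of $T_2$ is the wrong move. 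The paper sidesteps this entirely: it picks a connected component $Z$ of $Y_m$ (resp.\ of $Y_m^{(2)}$) chosen so as \emph{not} to contain $\mu_{T_1}(G_{e_2})\times\{m\}$, observes that $G\cdot Z$ is a family of disjoint tracks in the simply connected complex $\calc$, and takes the dual tree $S$; simple connectivity of $\calc$ is what makes $S$ a genuine $G$-tree with edge group the stabilizer $G_Z$, with no compatibility with $T_2$ to verify, and groups of $\calh$ are elliptic in $S$ because they fix vertices of $\calc$ (Remark \ref{rem_fix}). You flag exactly this promotion as ``the main obstacle'' without resolving it, and it is the heart of the lemma.

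There is a second gap: you never establish $A\ll G_{e_2}$. Your key step only shows that $G_{e_2}$ permutes the components of $Y_m^{(2)}$ non-trivially, i.e.\ that $A=\Stab_{G_{e_2}}(C)$ is a \emph{proper} subgroup; but $\ll$ requires an $(\cala,\calh)$-tree in which $A$ is elliptic while $G_{e_2}$ is not, and a proper subgroup of $G_{e_2}$ (e.g.\ one of finite index) can perfectly well be hyperbolic in $T_1$. Your proposed witness is the unconstructed tree $S$ itself. The paper uses $T_1$ as the witness: since $Z$ avoids $\mu_{T_1}(G_{e_2})\times\{m\}$ and $G_{e_2}$ is slender (so every element of $G_{e_2}$ hyperbolic in $T_1$ has axis equal to the unique invariant line), the stabilizer $G_Z$ contains no element hyperbolic in $T_1$, hence is elliptic there, while $G_{e_2}$ is hyperbolic by full hyperbolicity. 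Your intermediate claim is also off: collapsing the components of $Y_m^{(2)}$ inside $T_1$ yields a $G_{e_2}$-tree whose edge stabilizers are of the form $G_{e_2}\cap G_{e_1}$, with $\Stab_{G_{e_2}}(C)$ occurring as a \emph{vertex} group, so even the local splitting is not the one you describe. Finally, non-triviality of the $G$-splitting and the ``moreover'' need a hyperbolic element; in the paper this is an element $g$ of $G_{v_1}$ (of $G_{e_1}$ with $e_1\times\{m\}\subset Z$ in the $Y_m^{(2)}$ case) whose axis in $T_2$ passes through $m$, and the line $\{v_1\}\times\mu_{T_2}(g)$ crosses the distinct translates $g^nZ$, so $g$, hence the edge stabilizer $G_{e_1}$ of $T_1$, is hyperbolic in the dual tree — a verification your sketch leaves contingent on $T_2^{\sharp}$.
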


The ``moreover''  will only be needed in Subsection \ref{symm}.

\begin{proof}
Assuming that $Y_m$ is not connected, 
let   $Z $ 
 be a connected component  
that does not contain $\mu_{T_1}(G_{e_2})\times\{m\}$   (there exists one since $\mu_{T_1}(G_{e_2})\times\{m\}$ is connected; 
  we do not claim   $\mu_{T_1}(G_{e_2})\times\{m\}\subset \calc$). 
If $Y^{(2)}_m$ is disconnected, we choose $Z\subset Y^{(2)}_m$.
Let $G_Z\subset G_{e_2}$ be the global stabilizer of $Z$.

We first  show  that $G_Z$ is elliptic in $T_1$, so in particular, $G_Z\ll G_{e_2}$. 
If $G_Z$ contains an element $h$ which is hyperbolic in $T_1$,
the projection of $Z$ in $T_1$    contains the axis $\mu_{T_1}(h)$. Since $G_{e_2}$ is slender, $\mu_{T_1}(h)=\mu_{T_1}(G_{e_2})$,
contradicting our choice of $Z$.

To prove that $T_2$ is not minuscule, it is therefore enough to construct a (minimal) tree $S$ in which $G_Z$ is an edge stabilizer.
Consider $\Tilde Z=G.Z\subset \calc$, and note that $Z$ is a connected component of $\Tilde Z$.
Since $\calc$ is simply connected, each connected component of $\Tilde Z$ separates $\calc$
($Z$ is a track in $\calc$).
Let $S $ be the tree dual to $\Tilde Z$: its vertices are the connected components
of $\calc\setminus\Tilde Z$, and its edges are the connected components of $\Tilde Z$.

By construction, $S $ is a one-edge splitting, $G_Z$ is the stabilizer of the edge corresponding to $Z$,
so in particular  $S $ is an $\cala$-tree. Since any $H\in\calh$ fixes a vertex in $\calc$ (see Remark \ref{rem_fix}),
$S $ is an $(\cala,\calh)$-tree. 

There remains to check that $S $ is non-trivial (hence minimal).
Let $\{v_1\}\times\{m\}$ be a vertex of $Z$. By definition of $\calc$, the point $m$ belongs to $\mu_{T_2}(G_{v_1})$, so there is 
  $g\in G_{v_1}$
whose axis in $T_2$ contains $m$.
The line $\{v_1\}\times\mu_{T_2}(g)\subset \calc$ naturally defines an embedded $g$-invariant line in $S $,
on which $g$ acts as a non-trivial translation. This proves that $g$ is hyperbolic in $S $, so $S $ is non-trivial.
It follows that $T_2$ is not minuscule.

Under the stronger assumption that $Y_m^{(2)}$ is disconnected, there is an edge $e_1\times\{m\}$ in $Z$,
and we can choose the element $g$ in $G_{e_1}$. The tree $S $ provides the required splitting, with edge group $G_Z\ll G_{e_2}$.
\end{proof}

\begin{lem}\label{lem_fibre_edge2vertex}
If
$Y_m$ is connected whenever $m$ is the midpoint of an edge of $T_2$, then 
$Y_y$ is connected for every   $y\in T_2$.
\end{lem}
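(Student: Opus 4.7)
The argument rests on two structural observations about the fibers $Y_y=\calc\cap(T_1\times\{y\})$ of the projection $\pi_2:\calc\to T_2$. First, for $y$ in the \emph{interior} of an edge $e_2$ of $T_2$, the conditions $y\in\mu_{T_2}(G_v)$ (for $v\in V(T_1)$) and $y\in\mu_{T_2}(G_e)$ (for $e\in E(T_1)$) are equivalent to $e_2\subset\mu_{T_2}(G_v)$ and $e_2\subset\mu_{T_2}(G_e)$ respectively, so $Y_y=Y_{m(e_2)}$ where $m(e_2)$ is the midpoint; consequently $\pi_2^{-1}(\rond e_2)=Y_{m(e_2)}\times\rond e_2$. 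Thus $\calc$ is obtained by gluing the slabs $Y_{m(e_2)}\times e_2$ along their boundary copies sitting inside the vertex-fibers $Y_{v_2}$.

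Second, for any vertex $v_2\in V(T_2)$, I claim
\[
Y_{v_2}=\bigcup_{e_2\ni v_2}Y_{m(e_2)} .
\]
The inclusion $\supset$ is immediate, since every subtree containing an edge $e_2$ adjacent to $v_2$ contains $v_2$. For $\subset$, I use the full-hyperbolicity hypothesis: no vertex or edge stabilizer of $T_1$ is elliptic in $T_2$ (the excerpt states this right after the definition of ``fully hyperbolic''). Hence for every $v\in V(T_1)$, $\mu_{T_2}(G_v)$ is a non-degenerate invariant subtree on which $G_v$ acts with hyperbolic elements, so it has no valence-$1$ vertex; and for every $e\in E(T_1)$, $G_e$ is slender and hyperbolic on $T_2$, so by Lemma~\ref{lem_slender} $\mu_{T_2}(G_e)$ is a line. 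In either case, if $v_2$ lies in such a subtree, then some adjacent edge of $T_2$ is contained in it, which gives $v\in Y_{m(e_2)}$ (resp.\ $e\subset Y_{m(e_2)}$) for some $e_2\ni v_2$.

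To conclude connectedness of $Y_{v_2}$, I argue by path-connectedness. Let $P,P'\in Y_{v_2}$. Since $\calc$ is simply connected, there is a path $\gamma$ in $\calc$ from $P$ to $P'$; refining, one can assume $\gamma$ is PL and only meets $\pi_2^{-1}(V(T_2))$ in finitely many points, so it decomposes into finitely many \emph{excursions}, each one contained in a single closed slab $Y_{m(e_2^{(j)})}\times e_2^{(j)}$ (for various edges $e_2^{(j)}$, possibly at other vertices of $T_2$ than $v_2$). Since $\pi_2\circ\gamma$ is a loop based at $v_2$ in the tree $T_2$, each excursion based at $v_2$ enters and exits the slab $Y_{m(e_2^{(j)})}\times e_2^{(j)}$ at points $(y_j,v_2)$ and $(y_j',v_2)$ lying in $Y_{m(e_2^{(j)})}\times\{v_2\}\subset Y_{v_2}$; excursions based at other vertices $v_2'\ne v_2$ similarly return to their basepoint. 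Because each $Y_{m(e_2^{(j)})}$ is connected by hypothesis, $y_j$ and $y_j'$ are connected by a path inside $Y_{m(e_2^{(j)})}\subset Y_{v_2}$. Replacing every excursion based at $v_2$ by such an internal path, and contracting every excursion based at some $v_2'\ne v_2$ to its basepoint (legal since $T_2$ is a tree, so $\pi_2\circ\gamma$ is null-homotopic), produces a path from $P$ to $P'$ entirely within $Y_{v_2}\times\{v_2\}$.

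The potential obstacle is purely technical: making sure the path $\gamma$ decomposes cleanly into finitely many edge-excursions. This is easy because $\calc$ is a locally finite $2$-complex (there are finitely many $G$-orbits of squares, but any compact path meets only finitely many) and one can perturb $\gamma$ to a PL path transverse to $\pi_2^{-1}(V(T_2))$. No minuscule or minimality hypothesis is needed; only full hyperbolicity of $T_1$ w.r.t.\ $T_2$ (to exclude leaves in the minimal subtrees) and the standing slenderness of edge stabilizers.
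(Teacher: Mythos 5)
Your overall strategy is the paper's: push a path joining two points of $Y_{v_2}$ into the fiber over $v_2$ by using connectedness of the midpoint fibers, and your preliminary observations (constancy of the fiber over the interior of an edge, $Y_{v_2}=\bigcup_{e_2\ni v_2}Y_{m(e_2)}$ via full hyperbolicity, the closed slabs $Y_{m(e_2)}\times e_2\subset\calc$) are all correct. The problem is the step that disposes of the portions of $\gamma$ lying over vertices $v_2'\ne v_2$. Your decomposition into slab-segments does not behave as you claim: a segment of $\gamma$ contained in the closed slab over an edge $e_2^{(j)}$ can perfectly well enter over one endpoint of $e_2^{(j)}$ and exit over the other (this happens as soon as the projected loop goes at least two edges deep into $T_2$), so it is neither ``based at $v_2$'' nor does it ``return to its basepoint''. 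Worse, the operation of ``contracting an excursion based at $v_2'$ to its basepoint'' is not available: the two endpoints of such an excursion are in general \emph{distinct} points of $Y_{v_2'}$, whose connectedness is exactly the statement you are proving, and null-homotopy of $\pi_2\circ\gamma$ in the tree $T_2$ gives no homotopy of $\gamma$ inside $\calc$ compatible with the fibers. Replacing such an excursion by a constant path would simply disconnect $\gamma$.

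The missing observation, which is what makes the paper's sketch work, is this: take $\gamma_0$ a subpath between two \emph{consecutive} visits of $\gamma$ to $\pi_2^{-1}(v_2)$. Its projection leaves $v_2$ through some edge $e_2$ and, because $T_2$ is a tree and the projection does not return to $v_2$ in between, it must re-enter $v_2$ through the \emph{same} edge $e_2$. Hence both endpoints of $\gamma_0$ lie in $Y_{m(e_2)}\times\{v_2\}\subset Y_{v_2}$, and the whole excursion $\gamma_0$ — however deep its projection travels in $T_2$ — is replaced in a single step by a path in $Y_{m(e_2)}\times\{v_2\}$, using the hypothesis on $Y_{m(e_2)}$. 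Iterating over the finitely many excursions yields a path from $P$ to $P'$ inside $Y_{v_2}$, with no need to ever treat excursions at other vertices (alternatively, one can argue on innermost excursions, replacing each inside the appropriate midpoint fiber over its own base vertex, which again only uses the hypothesis and not connectedness of vertex fibers). With this correction your argument becomes the paper's proof; as written, the ``contraction'' step is a genuine gap.
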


\begin{proof}
Clearly $Y_y$ is homeomorphic to some $Y_m$ if $y$ belongs to the interior of an edge, so assume that $y$ is a vertex of $T_2$. We sketch the
argument, which is standard. 
  Consider $a,b\in Y_y$, and
join them by a piecewise linear path $\gamma$  in $\calc$. The projection of $\gamma$ to $T_2$ is a loop   based at $y$. 
If this projection is not $\{y\}$, then  $\gamma$ has a subpath $\gamma_0$  whose endpoints project to $y$ and whose initial and terminal segments project
to the same edge $e_2=yv$.
Using connectedness of $Y_m$, for $m$ the midpoint of $e_2$, we may replace $\gamma_0$ by a path contained in $Y_y$. Iterating yields a path joining $a$ and $b$ in $Y_y$.
\end{proof}

\begin{lem}
 \label{lem_inclus}
  Assume that $T_1,T_2$ are fully hyperbolic with respect to each other, and  that $T_2$ is minuscule.
Then $\check\calc\inc\calc$.
\end{lem}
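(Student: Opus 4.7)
The plan is to reduce the inclusion $\check\calc \subset \calc$ to the single fiberwise statement
\[
\mu_{T_1}(G_y) \subset Y_y \quad \text{for every } y \in T_2,
\]
where $Y_y = \calc \cap (T_1 \times \{y\})$ is viewed as a subset of $T_1$. Once this is established, a point $(x, y) \in \check\calc$ satisfies $x \in \mu_{T_1}(G_y)$ (when $y$ is a vertex of $T_2$) or $x \in \mu_{T_1}(G_{e_2}) = \mu_{T_1}(G_y)$ (when $y$ lies in the interior of an edge $e_2$, since $G_y = G_{e_2}$), and in either case $x \in Y_y$, i.e., $(x, y) \in \calc$.

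To prove the fiberwise claim, I would check that $Y_y$ is a non-empty, closed, connected, $G_y$-invariant subset of $T_1$; the minimality of $\mu_{T_1}(G_y)$ among non-empty $G_y$-invariant subtrees then gives the desired inclusion. Connectedness is the key point and is where the minuscule hypothesis enters: the contrapositive of Lemma \ref{lem_fibres_cnx} applied to $T_2$ minuscule yields $Y_m$ connected for every midpoint $m$ of an edge of $T_2$, and Lemma \ref{lem_fibre_edge2vertex} extends this to all $y \in T_2$. The $G_y$-invariance is immediate from equivariance, and closedness follows by observing that $Y_y$ is a subcomplex of $T_1$: the inclusion $\mu_{T_2}(G_e) \subset \mu_{T_2}(G_v)$ for any endpoint $v$ of $e$ implies that whenever the interior of an edge of $T_1$ lies in $Y_y$, both of its endpoints do too.

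For non-emptiness, I would argue globally. The projection $\pi_2(\calc) = \bigcup_{v \in V(T_1)} \mu_{T_2}(G_v)$ is a subcomplex of $T_2$, since each $\mu_{T_2}(G_v)$ is a union of axes of elements hyperbolic in $T_2$, hence a union of full edges. It is connected: along any edge-path $v_1, \dots, v_n$ in $T_1$, the line $\mu_{T_2}(G_{e_i})$ is non-empty (edge stabilizers of $T_1$ are hyperbolic in $T_2$ by full hyperbolicity) and lies in both $\mu_{T_2}(G_{v_i})$ and $\mu_{T_2}(G_{v_{i+1}})$. So $\pi_2(\calc)$ is a non-empty $G$-invariant subtree of $T_2$, and by minimality of $T_2$ it equals $T_2$; hence $Y_y \neq \emptyset$ for every $y$.

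The heart of the argument is the connectedness of the fibers, which is precisely where the minuscule hypothesis on $T_2$ is used through the two preceding lemmas; the remaining pieces (closedness, invariance, non-emptiness) are routine consequences of the simplicial structure of $\calc$ and the minimality of $T_2$, and the final implication $\mu_{T_1}(G_y) \subset Y_y$ is then immediate from the definition of the minimal invariant subtree.
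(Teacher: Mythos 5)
Your proof is correct and follows essentially the same route as the paper: the minuscule hypothesis on $T_2$ gives connectedness of the fibers $Y_m$ via (the contrapositive of) Lemma \ref{lem_fibres_cnx}, Lemma \ref{lem_fibre_edge2vertex} upgrades this to all $y\in T_2$, and then $Y_y$, being a non-empty $G_y$-invariant subtree, contains $\mu_{T_1}(G_y)\times\{y\}$, which is exactly $\check\calc\inc\calc$. The extra verifications you include (closedness, non-emptiness via minimality of $T_2$, and the interior-of-edge case) are routine points the paper leaves implicit, and they are handled correctly.
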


\begin{proof}
 Lemmas \ref{lem_fibres_cnx} and  \ref{lem_fibre_edge2vertex} imply that $Y_y$ is connected for every $y\in T_2$. 
Being connected and $G_y$-invariant, $Y_y$  
contains $\mu_{T_1}(G_{y})\times \{y\}$, so  $\calc$ contains $\check\calc$. 
  \end{proof}

 Proposition \ref{prop_minuscule_hh} follows immediately from Lemma \ref{lem_inclus} by symmetry.

\subsection{The regular neighborhood} \label{rn}

In this subsection we assume that $T_1,T_2$ are   non-trivial,  fully hyperbolic \wrt each other, 
and that $\check\calc= \calc$. We use the core to construct a tree $R$, which we call the regular neighborhood of $T_1$ and $T_2$. Its main properties are summarized in Proposition \ref{prop_RN}.

We have seen (Lemma \ref{prop_surface}) that $\calc$ is a  surface away from its vertices. 
It follows that the link of any vertex $v\in\calc$ is a one-dimensional manifold, i.e.\ a disjoint union of lines and circles. Since $\calc$ is simply connected, the vertices whose link is  disconnected are precisely the cut points of $\calc$. We
  define the regular neighborhood $R$ as the tree dual to the decomposition of $\calc$ by its cut points.  

\begin{dfn}[Regular neighborhood]\label{dfn_RN} The \emph{regular neighborhood}\index{regular neighbourhood}\index{0RN@$RN(T_1,T_2)$: the regular neighbourhood of two trees}  
  $R=RN(T_1,T_2)=RN(T_2,T_1)$ is the bipartite tree  with vertex set $\calv\dunion\cals$, where  $\calv$ is the set of cut vertices $x$ of $\calc$, and $\cals$ is the set of connected components $Z$ of $\calc\setminus\calv$. There is an edge between $x$ and $Z$ in $R$ if and only if $v$ is in the closure $\bar Z$ of $Z$. 
\end{dfn}

$\calv$ may be empty, but $\cals$ is always non-empty  (unless $T_1,T_2$ are trivial).

\begin{example} \label{exsurf}
 The basic example is the following. As in Example \ref{pant2}, let $T_1, T_2$ be  cyclic splittings of   the fundamental group of a closed orientable surface  $\Sigma$ dual to families of geodesics $\call_1,\call_2$. Then $R$ is dual to the family $\call$ defined as follows (more precisely, $R$ is a subdivision of the tree dual to $\call$): consider the boundary of  a regular neighborhood of $\call_1\cup\call_2$, disregard homotopically trivial curves, and isotope each non-trivial one  to a geodesic. Orbits of vertices in $\cals$ correspond to   components of $\Sigma\setminus\call$ meeting $\call_1\cup\call_2$. The other components correspond to  orbits of vertices in $\calv$. 
 \end{example}

\begin{prop}\label{prop_RN}
Suppose that all groups in $\cala$ are slender, and $\cala$ satisfies ($SC $) or ($SC_\calz$).
 Let $T_1$, $T_2$ be non-trivial trees that are  fully hyperbolic \wrt each other, 
with $\check\calc= \calc$.

The bipartite tree  $R $ is a minimal  $(\cala,\calh)$-tree satisfying the following properties: 

  \begin{enumerate}
\renewcommand{\theenumi}{RN\arabic{enumi}}
    \item \label{qh}
    
        If  $v\in\cals$, then $G_v$  acts hyperbolically in $T_1$ and $T_2$. It is an extension $1\ra F \ra G_v\ra O\ra 1$, where    the fiber  $F $ is in $\calf$, is contained in every incident edge stabilizer,  and fixes an edge in $T_1$ and in $T_2$. There are two possibilities.
       
       If $G_v$ is not slender, it is QH with fiber $F$.
               The underlying  orbifold $\Sigma $ is hyperbolic, with fundamental group $O$,  and contains an essential simple closed geodesic. There is no mirror if ($SC_\calz$) holds. Extended boundary subgroups of $G_v$ are elliptic in $T_1$ and $T_2$. 
    
  If   $G_v$ is   slender,   then
  $O$ is virtually $\Z^2$ (it is the fundamental group of a Euclidean orbifold $\Sigma$ without boundary). Incident edge stabilizers are finite extensions of $F$.

  \item\label{el1} 
  The stabilizer of a vertex $v\in\calv$  is elliptic in $T_1$ and $T_2$; in particular,  $R $ is elliptic \wrt $T_1$ and $T_2$.  Conversely, any group $H\subset G$ elliptic in $T_1$ and $T_2$ is elliptic in $R $; more precisely,   $H$ fixes a vertex $v\in \calv$,
    or $H$ fixes a  vertex $v\in\cals$ and   the image of $H$ in $O$ is   finite or contained  in a boundary subgroup.

  \item\label{compat} 
  For $i=1,2$, one passes from $R$ to $T_i$ by refining $R$ at   vertices $v\in\cals$ using  families of essential simple closed geodesics in the underlying orbifolds, and then collapsing all original edges of $R$. In particular, 
   $R $ is compatible with $T_i$. Any edge stabilizer  of   $T_i$  fixes some  $v\in\cals$, and no other vertex of $R$; it contains the associated fiber.

\end{enumerate}
\end{prop}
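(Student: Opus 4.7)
The plan is to establish four things in sequence: first that $R$ is a well-defined minimal $(\cala,\calh)$-tree, then (RN2) (easy, via the inclusion $\calc \subset T_1 \times T_2$ and Remark \ref{rem_fix}), then (RN1) (the heart of the proof, an orbifold analysis at $\cals$-vertices), and finally (RN3) (compatibility via an explicit common refinement). That $R$ is a simplicial tree follows from simple connectedness of $\calc$ together with the fact that block decompositions of simply connected 2-complexes are dual to trees. The $G$-action is cocompact because $G$ acts cocompactly on the squares of $\calc$, and non-trivial because a hyperbolic element of $T_1$ cannot fix any vertex of $\calc$ (as it has no fixed point in $T_1$), hence acts hyperbolically on $R$; minimality follows by restricting to the minimal subtree.

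For (RN2), a vertex $v \in \calv$ is itself a vertex $(x_1,x_2)$ of $T_1 \times T_2$, so $G_v \subset G_{x_1} \cap G_{x_2}$ is elliptic in both trees. Conversely, any subgroup $H$ that is elliptic in both $T_1$ and $T_2$ fixes a vertex $w$ of $\calc$ by Remark \ref{rem_fix}; if $w \in \calv$ this gives the first case, and otherwise $H$ preserves the component $Z \in \cals$ containing $w$ and fixes the corresponding vertex of $R$, while the image of $H$ in the orbifold group $O = G_v/F$ constructed below fixes the image of $w$ in $\Sigma$ and is therefore finite or contained in a boundary subgroup.

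The central work is (RN1). For $v \in \cals$ corresponding to a component $Z$ of $\calc \setminus \calv$, I define the fiber $F$ to be the pointwise $G_v$-stabilizer of $Z$. Because $Z$ is a simply connected surface away from its vertices and $G_v$ acts cocompactly on $Z$, a standard propagation argument across shared edges shows that $F$ equals the stabilizer of any single square of $Z$: for a square $e_1 \times e_2$ of $Z$ we have $F = G_{e_1} \cap G_{e_2}$, which fits into the exact sequence $1 \to F \to G_{e_1} \to K \to 1$ where $K$ is the image of $G_{e_1}$ in $\mathrm{Isom}(\mu_{T_2}(G_{e_1}))$, a subgroup of $\bbZ$ or $D_\infty$. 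The first half of the stability condition then places $F$ in $\calf$. The quotient $G_v/F$ acts faithfully on the compact 2-orbifold $\Sigma = Z/F$; mirrors of $\Sigma$ correspond precisely to edges of $Z$ whose stabilizer acts dihedrally on the relevant axis, and under $(SC_\calz)$ such dihedral actions are forbidden so $\Sigma$ has no mirror. If $G_v$ is not slender, then $G_v/F$ is not slender (slender-by-slender being slender), so $\chi(\Sigma)<0$ by Proposition \ref{euler} and $\Sigma$ is hyperbolic; the horizontal edges of $Z$ project in $\Sigma$ to a non-empty family $\call_1$ of disjoint essential simple closed geodesics because $G_v$ acts hyperbolically on $T_1$. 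Edge stabilizers of $R$ incident to $v$ fix a cut vertex of $\calc$ in $\bar Z$ whose image in $\Sigma$ is either interior (finite isotropy) or on $\partial \Sigma$, making them extended boundary subgroups; they belong to $\cala$ by the second half of the stability condition. The slender case yields $G_v/F$ virtually $\bbZ^2$ with $\Sigma$ closed Euclidean, treated in the same framework.

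For (RN3), refining $R$ at each $v \in \cals$ by the dual splitting of $\call_1$ from Definition \ref{dfg} produces a tree $\hat R$; the collapse map $\hat R \to R$ is immediate, and an equivariant comparison via Proposition \ref{dual} identifies the tree obtained from $\hat R$ by collapsing the original edges of $R$ with $T_1$, giving the common refinement. The symmetric construction with vertical edges and $\call_2$ handles $T_2$. The main obstacle is the orbifold analysis in (RN1): identifying $F$ consistently across $Z$, applying the stability condition uniformly to both $F$ and to the incident edge stabilizers, and controlling the mirror dichotomy under $(SC)$ versus $(SC_\calz)$.
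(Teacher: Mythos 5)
Your overall route is the paper's: identify the fiber at an $\cals$-vertex as the pointwise stabilizer of a square (equal to the pointwise stabilizer of the whole block by propagation across shared edges), put it in $\calf$ via the first half of the stability condition, let $O=G_v/F$ act on the block to produce a $2$-orbifold, and read off the curves dual to $T_1,T_2$. But two steps as written would fail. First, your argument for non-triviality and minimality of $R$ is wrong: a hyperbolic element of $T_1$ indeed fixes no vertex of $\calc$, but vertices of $R$ in $\cals$ are entire components of $\calc\setminus\calv$, and these are typically stabilized by elements hyperbolic in \emph{both} trees (that is exactly assertion (RN1): $G_v$ acts hyperbolically in $T_1$ and $T_2$; every edge stabilizer of $T_2$ is such an element). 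In fact $R$ may well be a single point (when $\calc$ has no cut vertex), so non-triviality is neither provable nor asserted. Nor does ``restricting to the minimal subtree'' prove minimality of the specific tree $R$: in the paper minimality is deduced from the (RN3) analysis, namely that each $v\in\cals$ is the \emph{unique} fixed point of some edge stabilizer of $T_1$ (unique because that stabilizer is hyperbolic in $T_2$ while edge stabilizers of $R$ are elliptic) and no vertex of $\calv$ is terminal.

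Second, the orbifold construction is garbled: $\Sigma$ is not $Z/F$ ($F$ acts trivially on $Z$, so that quotient is $Z$, which is not compact); the orbifold is the quotient of $\bar Z$ by $O$, and it is not automatically a compact orbifold. At a vertex $x$ whose link in $\bar Z$ is a line, the isotropy $O_x$ is infinite (cyclic or dihedral), so one must equivariantly excise an $O_x$-invariant neighborhood; this truncation is exactly where $\partial\Sigma$ comes from, and it is what makes the statements about incident edge groups and $\calh$-intersections being extended boundary subgroups meaningful. Related gaps: in (RN3) an ``equivariant comparison via Proposition \ref{dual}'' does not identify the collapse of your $\hat R$ with $T_1$; the paper instead realizes $T_1$ as the tree dual to the pattern $p_1^{-1}(\text{midpoints of edges of }T_1)\subset\calc$ (point preimages of $p_1$ are connected), which makes the identification and the compatibility transparent. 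Finally, essentialness of the curves --- in particular that they are not boundary-parallel, which is what guarantees the essential simple closed geodesic in (RN1) --- uses that edge stabilizers of $T_1$ are hyperbolic in $T_2$ while extended boundary subgroups are elliptic; your justification ``because $G_v$ acts hyperbolically on $T_1$'' does not address this.
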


   The tree $R$ may be trivial (in   Example \ref{exsurf}, this happens precisely when $\call_1$ and $\call_2$ fill $\Sigma$). In this case, $G$ itself is   slender or   QH with fiber in $\calf$.

\begin{proof}
The action of $G$ on $\calc$ induces an action of $G$ on $R$. It is not obvious   that $R$ is minimal or has edge stabilizers in $\cala$. 
Note, however, that $R$ is relative to  $\calh$. In fact, any subgroup $H$ which is elliptic in $T_1$ and $T_2$ is elliptic in $R$, because $H$ fixes a vertex of $\calc$ by Remark \ref{rem_fix}. 

The stabilizer of a vertex $v\in \calv$ fixes a point in $\calc$, so is elliptic in $T_1$ and $T_2$. In particular,   edge stabilizers of $R$   are elliptic in $T_1$ and $T_2$; in other words, $R$ is elliptic \wrt $T_1$ and $T_2$. 

The heart of the proof is to show that $G_v$ is QH or slender for   $v\in\cals$ (\ref{qh}). Along the way we will show that $R$ is an $\cala$-tree and complete the proof of (\ref{el1}). We will then prove (\ref{compat}) and minimality of the action of $G$ on $R$.

If  $v\in\cals$,
the group $G_v$ acts on $\bar Z$,   the closure of  the  connected component $Z$ of $\calc\setminus\calv$ associated to $v$.
Thus $\bar Z$ may be viewed as one of the pieces one obtains when cutting  $\calc$ open  at its cut points.  
Stabilizers of edges of $R$ incident to $v$ fix a vertex in $\bar Z$.  The group $G_v$ contains an edge stabilizer of $T_1$ (resp.\ $T_2$), so acts hyperbolically in $T_2$ (resp.\ $T_1$).

Recall that $\bar Z$ is made of squares.
Since $G$ acts on $T_1$ and $T_2$ without inversions, any $g\in G_v$ leaving a square $S$ invariant is the identity on $S$, hence on adjacent squares
(because $\bar Z$ is a pseudo-surface), and therefore on the whole of $\bar Z$. We let $F$ be the pointwise stabilizer of $\bar Z$, so that $O=G_v/F$ acts on $\bar Z$.

We have pointed out in Subsection \ref{defco} that $G$ acts on $\calc$ with finitely many orbits of squares, so the same is true for the action of $G_v$ (and $O$) on $\bar Z$. The action of $O$ on   $\bar Z $ is proper in the complement of vertices, and free in the complement   of the $1$-skeleton (an element may swap two adjacent squares).
We consider this action near a vertex $x$. Since we now view $x$ as a vertex of $\bar Z$ rather than one of $\calc$, its link $L_x$ is connected. The stabilizer $O_x$ of $x$ for the action of $O$ acts on  $L_x$ with trivial edge stabilizers. 

If $L_x$ is a circle (so $\bar Z$ is a surface near $x$),  $O_x$ is a finite group (cyclic or dihedral) and the action is proper near $x$. 
If $L_x$ is a line, the stabilizer $O_x$   acts on it by translations or dihedrally, and the image of $x$ in $\bar Z/O$ must be viewed as a puncture. Since we want a compact orbifold with boundary, we remove an $O_x$-invariant open neighborhood of $x$ from $\bar Z$. 

After doing this $G_v$-equivariantly near all vertices of $\bar Z$ whose link is a line, we get an effective  proper action of $O$ on  a simply connected surface, with quotient a compact orbifold $\Sigma$. The fundamental group of $\Sigma$ is isomorphic to $O$. 

If $H$ is a subgroup of $G_v$ which is elliptic in $T_1$ and $T_2$ (in particular if $H\in\calh$), it fixes a vertex $x\in\bar Z$, so its image in $\pi_1(\Sigma)$ is finite (if the link of $x$ is a circle) or contained in a boundary subgroup of $\pi_1(\Sigma)$ (if the link is a line).  Conversely, any subgroup of $G_v$ whose image in $\pi_1(\Sigma)$ is finite or contained in a boundary subgroup of $\pi_1(\Sigma)$ is elliptic in $T_1$ and $T_2$. This completes the proof of  (\ref{el1}).

Before concluding that $G_v$ is slender or QH, we use  ($SC $) or ($SC_\calz$) to prove   that edge stabilizers of $R$ are in $\cala$.

We first show $F\in\calf$. 
Given any square $e_1\times e_2\inc \bar Z$, the group $F$ is the stabilizer of $e_2$ for the action of $G_{e_1}$ on the line $ \mu_{T_2}(G_{e_1})$; in particular, $F$ fixes an edge in $T_2$ (and in $T_1$). Moreover,    $F$ is the kernel of an epimorphism from $G_{e_1}$ to $\Z$ or $D_\infty$. The 
stability condition implies $F\in\calf$. If ($SC_\calz$) holds, $G_{e_1}$ acts on   $ \mu_{T_2}(G_{e_1})$ by translations, and  $G_{e_2}$ acts on   $ \mu_{T_1}(G_{e_2})$ by translations. This is true for all squares $e_1\times e_2\inc \bar Z$, so the orbifold $\bar Z/O$ contains no mirror. 

Any  incident edge stabilizer   of $v$ in $R$ is the stabilizer $G_x$ of a vertex $x\in \bar Z$ for the action of $G_v$ on $\bar Z$ (as pointed out earlier, it is    elliptic in $T_1$ and $T_2$).  It contains $F$, and $O_x=G_x/F$ acts on the link of $x$ with trivial edge stabilizers, so is cyclic or dihedral (finite or infinite). The stability condition ($SC $) implies $G_x\in\cala$. If ($SC_\calz$) holds, $O_x$ is cyclic because there is no mirror and we also get $G_x\in\cala$. The tree $R$ being bipartite, it is an $\cala$-tree.

Given $v\in\cals$, we would have proved that $G_v$ is a QH subgroup (in the sense of Definition \ref{dfn_qh}) if we  knew that the orbifold $\Sigma$ is hyperbolic. Assume otherwise. Since $\Sigma$ 
  is the quotient of a simply connected surface by the infinite group $O=\pi_1(\Sigma)$, it  is   Euclidean. 
Its fundamental group  is not virtually cyclic because it contains   the image of $G_{e_1}$ and $G_{e_2}$ for any square $e_1\times e_2$  in $\bar Z$. This implies that  $\Sigma$ is a quotient of the torus.  
In particular,  $\Sigma$ has empty boundary, and   incident edge stabilizers of $v$ in $R$ are finite extensions of $F$. We also deduce that 
$ \pi_1(\Sigma)$ contains $\Z^2$ with finite index, so $G_v$ is slender because it is an extension of $F$ by $\pi_1(\Sigma)$.

 To sum up: for $v\in\cals$, the group $G_v$ is slender (if $\Sigma$ is Euclidean) or QH
(if $\Sigma$ is hyperbolic).
 
We   now prove  (\ref{compat}) for $i=1$ (the proof for $i=2$ is the same). In the process we will show that all orbifolds $\Sigma$ contain an essential geodesic, thus completing the proof of (\ref{qh}), and that the action of $G$ on $R$ is minimal.

 Let $\calm_1\subset \calc$ be the preimage 
of all midpoints of edges of $T_1$ under the first projection $p_1:\calc\ra T_1$. It is a $G$-invariant collection of disjoint properly embedded lines  containing no vertex. One may view  $T_1$ as the tree dual to  $\calm_1$, with vertices the components of $\calc\setminus\calm_1$ and edges the components of $\calm_1$: the projection $p_1$ induces a map from the dual tree to $T_1$, which is an isomorphism because point preimages of $p_1 $ are connected. 

It immediately follows that any edge stabilizer $G_{e_1}$ of $T_1$ fixes a vertex $v\in\cals$ (associated to the component of $\calc\setminus\calv$ containing the preimage by $p_1$ of the midpoint of $e_1$) and contains the associated fiber; in particular, $T_1$ is elliptic  \wrt $R$. Since $G_{e_1}$ is hyperbolic in $T_2$ and edge stabilizers of $R$ are elliptic, $v$ is the only vertex   of $R$ fixed by $G_{e_1}$. 
This shows that, for any $v\in\cals$, there exists a subgroup  of $G$ (an edge stabilizer of $T_1$) having $v$ as its unique fixed point. Minimality of $R$ easily follows from this observation, since no vertex in $\calv$ is terminal. 

We define   $\hat T_1$ refining both $T_1$ and $R$ as the tree dual to the decomposition of $\calc$ given by its cut points and $\calm_1$. Its vertices are 
  the elements of $\calv$ together with the
components of $\calc\setminus(\calv\cup\calm_1)$, its edges are those of $R$ and components of $\calm_1$.   One obtains $T_1$ from $\hat T_1$ by collapsing edges coming from  edges of $R$, and $\hat T_1$ from 
   $T_1$ by refining at vertices $v\in\cals$. There remains to  show that the refinement  at $v$ is dual to a family of geodesics on the associated orbifold $\Sigma$. 

Recall that $v$ is associated to a  component $Z$ of $\calc\setminus\calv$.   Consider a component $\ell_1$ of $\calm_1$ contained in $Z$.
  Its stabilizer (an edge stabilizer $G_{e_1}$ of $T_1$)  acts cocompactly on $\ell_1$, so the image of $\ell_1$ in the orbifold $\Sigma$ associated to $v$ is a simple 1-suborbifold (simple closed curve if there is no mirror). This suborbifold is not homotopically trivial, so is isotopic to a geodesic; it is not boundary parallel because $G_{e_1}$ is hyperbolic in $T_2$ but extended boundary subgroups of $G_v$ are elliptic. 
    Components of $\calm_1$ contained in $Z$ thus yield the required   non-empty    family of disjoint essential simple closed geodesics in $\Sigma$. 
  \end{proof}

\begin{rem} 
Note that a given orbifold $\Sigma$ is filled by the images of $\calm_1$ (defined above) and $\calm_2$ (defined similarly using $T_2$).
The proof also shows that the number of orbits of vertices in $\cals$ is bounded by the number of orbits of edges of $T_1$. 
  If $T_1$ and $T_2$ are one-edge decompositions, then $R $   only has one orbit of vertices in $\cals$,
and $T_1$, $T_2$ are dual to single geodesics.

\end{rem}

\begin{prop}\label{borused}  
 Under the assumptions of Proposition \ref{prop_RN}, suppose  furthermore that $T_1$ is minuscule and $G$ is not slender. 
If $v\in\cals$, then $G_v$ is not slender; it is QH, and every boundary component of the underlying orbifold   is used (see Definition \ref{dfn_used}).
\end{prop}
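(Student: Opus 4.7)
The plan is to derive both assertions from the same mechanism: the minuscule hypothesis on $T_1$ conflicts with the existence of an $(\cala,\calh)$-tree having an edge stabilizer commensurable with $F$.

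The key preliminary observation is that for every $v\in\cals$ there exists an edge $e_1$ of $T_1$ whose image in $R$ is $v$, and for any such edge we have $F\ll G_{e_1}$. Indeed, by Proposition \ref{prop_RN}(\ref{compat}), $T_1$ is obtained from $R$ by refining at each $v\in\cals$ using a family of geodesics on the underlying orbifold, which is non-empty as shown in the proof of that proposition, and each resulting edge $e_1$ of $T_1$ satisfies $F\subset G_{e_1}$. On the other hand, $F$ fixes an edge in $T_2$ by Proposition \ref{prop_RN}(\ref{qh}), while $G_{e_1}$ acts hyperbolically on $T_2$ by full hyperbolicity of $T_1$ with respect to $T_2$; hence $T_2$ witnesses $F\ll G_{e_1}$.

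Granted this, I would first rule out $G_v$ being slender. By Proposition \ref{prop_RN}(\ref{qh}), in the slender case the incident edge stabilizers of $R$ at $v$ are finite extensions of $F$, hence commensurable with $F$. Since $R$ is itself an $(\cala,\calh)$-tree, this exhibits $A'=F\ll G_{e_1}$ commensurable with an edge stabilizer, contradicting the minuscule property of $T_1$. Therefore the slender case cannot occur, and the dichotomy in Proposition \ref{prop_RN}(\ref{qh}) forces $G_v$ to be QH.

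For the second assertion, suppose for contradiction that some boundary component $C$ of $\Sigma$ is not used. Lemma \ref{lem_arcbis} then yields a splitting $S$ of $G$ relative to $\calh$ whose edge stabilizer $F'$ satisfies $F\subset F'\subset G_v$ with $[F':F]\le 2$ (and $F'=F$ in the $(SC_\calz)$ case, since $\Sigma$ has no mirror). The stability condition applied to the extension $1\to F\to F'\to F'/F\to 1$ ensures $F'\in\cala$, so $S$ is an $(\cala,\calh)$-tree whose edge stabilizer $F'$ is commensurable with $F$. The same contradiction arises: $F\ll G_{e_1}$ is commensurable with the edge stabilizer $F'$ of $S$, violating the minuscule property of $T_1$. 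The main subtlety is the bookkeeping to guarantee that the candidate $(\cala,\calh)$-trees (namely $R$ in the slender case and $S$ in the non-used case) have edge stabilizers in $\cala$ commensurable with $F$; once that is arranged, the uniform pattern ``$F\ll G_{e_1}$ is commensurable with an edge stabilizer'' produces both contradictions.
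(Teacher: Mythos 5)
Your proof is correct and follows essentially the same route as the paper: establish $F\ll G_{e_1}$ for an edge of $T_1$ (using that $F$ is elliptic in $T_2$ while $G_{e_1}$ is hyperbolic there), then contradict minusculeness of $T_1$ by exhibiting an edge stabilizer commensurable with $F$ — an incident edge group of $R$ in the slender case, and the group $F'$ given by Lemma \ref{lem_arcbis} (in $\cala$ by the stability condition) in the unused-boundary case. The only detail worth making explicit is where the hypothesis that $G$ is not slender enters: it guarantees $R\neq\{v\}$, hence that an edge of $R$ incident to $v$ actually exists in the slender case.
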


\begin{proof}
Recall that $F$ fixes an edge $e_i$ in $T_i$. Since $T_1$ is fully hyperbolic \wrt $T_2$, we have $F\ll G_{e_1}$. If $G_v$ is slender  but $G$ is not, 
  then $R\neq\{v\}$, so there exists an edge in  $R$ incident to $v$. Its stabilizer is a finite extension of $F$, so $G_{e_1}$ cannot be minuscule. This contradiction implies that $G_v$ is QH.
  
By Lemma \ref{lem_arcbis}, if some boundary component is unused, $G$ splits over  a group containing $F$ with index $\le2$ (hence in $\cala$ by the stability condition), again a contradiction.
\end{proof}

\subsection{Constructing a filling pair of splittings} \label{tfg}

 In this subsection, as well as the next one, we assume that all trees are minuscule. 
This guarantees symmetry of the core (Proposition \ref{prop_minuscule_hh}), so any two trees fully hyperbolic \wrt each other have a regular neighborhood. It  also implies  that ellipticity is a symmetric relation among trees (Lemma \ref{lem_minuscule_minimal}).

The goal of this subsection is the following result showing the existence of a pair of splittings $U$, $V$ that \emph{fill} $G$.

\begin{prop} \label{prop_exist_hh}
Assume that $G$ is totally flexible and all  trees are minuscule.
Given a tree $U$, there exists a tree $V$ such that $U$ and $V$ are   fully hyperbolic \wrt each other.
\end{prop}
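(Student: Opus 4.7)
The plan is to build $V$ by an iterative construction. The first observation I would establish is that total flexibility forbids any edge stabilizer of any non-trivial tree from being universally elliptic: given such a stabilizer $A$ of a tree $T$, collapsing every orbit of edges of $T$ outside the orbit of one edge $e$ with $G_e=A$ produces a non-trivial one-edge splitting of $G$ over $A$, which contradicts total flexibility. Enumerating representatives $A_1,\dots,A_n$ of the $G$-conjugacy classes of edge stabilizers of $U$, this observation supplies, for each $i$, a tree $W_i$ in which $A_i$ acts hyperbolically.

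I would then construct a sequence of trees $V_0,V_1,\dots,V_n=V$ with the property that $V_k$ satisfies condition (b) (every edge stabilizer of $V_k$ is hyperbolic in $U$) and the $A_i$ for $i\le k$ are hyperbolic in $V_k$. For the base case I set $V_0$ to be the tree obtained from $W_1$ by collapsing every orbit of edges whose stabilizer is elliptic in $U$. Such $V_0$ is non-trivial: were it trivial, $W_1$ would be elliptic with respect to $U$, and minusculeness (Lemma \ref{lem_minuscule_minimal}) would then force $U$ to be elliptic with respect to $W_1$, contradicting the fact that the edge stabilizer $A_1$ of $U$ is hyperbolic in $W_1$. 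By construction $V_0$ also satisfies (b).

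The inductive step is where most of the work lies. Assume $V_k$ is built and that $A_{k+1}$ is elliptic in $V_k$ (otherwise I simply take $V_{k+1}=V_k$). Since $V_k$ is minuscule and $W_{k+1}$ is minuscule, ellipticity between them is a symmetric relation (Lemma \ref{lem_minuscule_minimal}). When $V_k$ is elliptic with respect to $W_{k+1}$ I form a standard refinement $\hat V$ of $V_k$ dominating $W_{k+1}$ (Proposition \ref{prop_refinement}). Refinement preserves hyperbolicity, so $A_1,\dots,A_k$ remain hyperbolic in $\hat V$; domination of $W_{k+1}$ makes $A_{k+1}$ hyperbolic in $\hat V$. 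I then define $V_{k+1}$ by collapsing in $\hat V$ the orbits of edges whose stabilizer is elliptic in $U$, so that condition (b) is restored. The edges of $\hat V$ coming from $V_k$ survive the collapse by the inductive hypothesis, hence the axes of $A_1,\dots,A_k$ in $\hat V$ (which contain the refined axes in $V_k$) still contain a surviving edge, keeping these $A_i$ hyperbolic.

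The main obstacle is ensuring that $A_{k+1}$ itself remains hyperbolic in $V_{k+1}$. Its axis in $\hat V$ projects to the single point fixed by $A_{k+1}$ in $V_k$, so that axis consists entirely of edges coming from the refinement by $W_{k+1}$, whose stabilizers are edge stabilizers of $W_{k+1}$ and may well be elliptic in $U$. The delicate point, which will require going back and choosing $W_{k+1}$ more carefully, is to argue that one can arrange at least one edge of this axis to have stabilizer hyperbolic in $U$. Here I would use total flexibility applied to the edge stabilizers of $W_{k+1}$ along the axis of $A_{k+1}$, together with minusculeness, which controls how edge stabilizers of different splittings can sit inside one another, to modify $W_{k+1}$ (by iterated standard refinements of the same type as above) until its edge stabilizers on the axis are themselves not elliptic in $U$. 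This compatibility argument between the ``bad'' axes and the set of groups hyperbolic in $U$ is the core technical content of the proof.
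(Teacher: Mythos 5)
Your reduction of the statement to making each edge stabilizer $A_1,\dots,A_n$ of $U$ hyperbolic one at a time, via refine-and-collapse, founders exactly where you say it does, and the fix you sketch is not a proof. In your inductive step, when $A_{k+1}$ is elliptic in $V_k$, the axis of $A_{k+1}$ in the standard refinement $\hat V$ lies entirely in the blown-up part, whose edge stabilizers fix edges of $W_{k+1}$ and may well be elliptic in $U$; collapsing to restore your condition (b) then re-ellipticizes $A_{k+1}$. Your proposed remedy --- ``modify $W_{k+1}$ by iterated standard refinements until its edge stabilizers on the axis are themselves not elliptic in $U$'' --- has no mechanism behind it: whether a given subgroup is elliptic in $U$ is a fixed fact, refinements only change which subgroups occur as edge stabilizers, and nothing you invoke (minusculeness included) shows that the process makes progress or terminates. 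Moreover there is a case your scheme never confronts: when $V_k$ (or $W$, in the paper's notation) is \emph{not} elliptic with respect to a one-edge splitting $T$ in which $A_{k+1}$ is hyperbolic, no standard refinement dominating both even exists, so the refine-and-collapse step cannot be run at all. That case is not avoidable and is precisely where the real content lies.

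The paper's proof of Proposition \ref{prop_exist_hh} handles this by a completely different, geometric mechanism, which is the ingredient missing from your argument. Inducting on the number of orbits of edges of $U$, when the tree $W$ (in which $G_1,\dots,G_{n-1}$ are hyperbolic) is not elliptic with respect to the one-edge splitting $T$ (in which $G_n$ is hyperbolic), one splits $W$ into $W_E$ and $W_H$, forms the regular neighborhood $R=RN(W_H,T)$, and uses Proposition \ref{prop_RN} and Proposition \ref{borused}: $R$ has a QH vertex group $Q$ with underlying orbifold $\Sigma$. After checking that $Q$ is elliptic in $W_E$ (Lemma \ref{QH_ell_minus}) so that $R$ can be refined to $\hat R$ dominating $W_E$ without touching $q$, the $G_i$ that are still elliptic in $\hat R$ are shown to contain edge stabilizers of the action of $Q$ on its minimal subtree in $U_E$, which by Lemma \ref{dual2} is dual to a family of geodesics $\call$ on $\Sigma$; choosing a transverse family $\call'$ meeting every curve of $\call$ (as in Example \ref{pant}, using Lemma \ref{courbes}) and refining $\hat R$ at $q$ by the dual splitting makes \emph{all} the remaining $G_i$ hyperbolic simultaneously. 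This simultaneous, surface-theoretic resolution is exactly what a one-group-at-a-time collapsing scheme cannot deliver. Note also that you are carrying a heavier burden than necessary: the paper only arranges that every edge stabilizer of $U$ is hyperbolic in $V$, and recovers the reverse direction at the very end by collapsing the edges of $V$ not lying on axes of edge stabilizers of $U$ and invoking the symmetry of ellipticity for minuscule trees (Lemma \ref{lem_minuscule_minimal}); maintaining your condition (b) at every stage is not needed, but dropping it would not rescue your induction either, since it stalls at the same point.
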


 In the next subsection we will apply this proposition to a maximal $U$ (given by Lemma \ref{lem_Zorn_minuscule}).

\begin{example} \label{pant}   In  the surface case (Examples \ref{pantalons} and \ref{pant2}), if $U$ is dual to a family $\call$ decomposing $\Sigma$ into pairs of pants, then $V$ is dual to a family meeting (transversely) every curve in $\call$.

 More generally, suppose that $G$ is the fundamental group of a compact hyperbolic 2-orbifold $\Sigma$,  and $\calh$ is the set of boundary subgroups, with $\cala$ consisting of all virtually cyclic subgroups. Then $U$ is dual to a family of geodesics $\call$  by Proposition \ref{dual}, and the proposition claims that there exists a family $\call'$ such that every geodesic in $\call$ meets $\call'$ and every geodesic in $\call'$ meets $\call$ (but no geodesic belongs to both $\call$ and $\call'$). To prove this, consider a maximal family $\call'$ such that each geodesic in $\call'$ meets $\call$ (transversely). Applying Lemma \ref{courbes} to the orbifold obtained by cutting $\Sigma$ along $\call'$ shows that every geodesic in $\call$ meets $\call'$.
\end{example}

\begin{proof}
The difficulty is to find $V$ with $U$ fully hyperbolic \wrt  $V$. 
Once this is done, we redefine $V$ by collapsing all edges   which do not belong to the axis of an edge stabilizer of $U$. This makes  $U$   hyperbolic with respect to each one-edge splitting underlying $V$.
By  Lemma \ref{lem_minuscule_minimal},  each one-edge splitting underlying $V$ is hyperbolic
\wrt $U$ and therefore $V$ is fully hyperbolic with respect to $U$.

  We argue by induction on the number $n$ of orbits of edges of $U$. For $n=1$, the existence of $V$
is just the fact that, $G$ being totally flexible, $U$ is not universally elliptic.

\begin{figure}[htbp]
  \centering
  \includegraphics{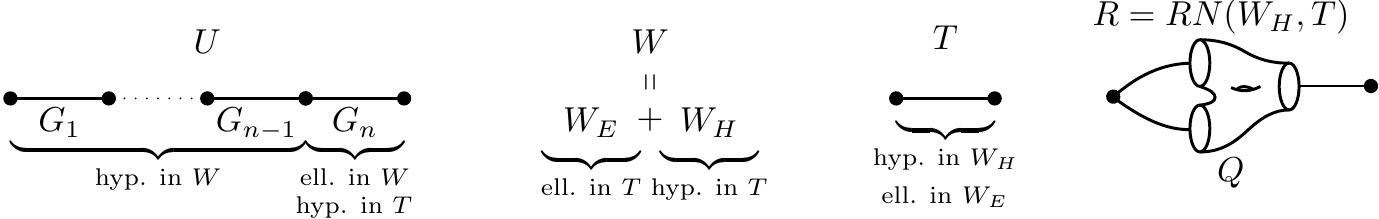}
\caption{The first players in the proof of Proposition \ref{prop_exist_hh}.}\label{fig_players}
\end{figure}

Denote by $G_1,\dots,G_n$ representatives of the edge stabilizers of $U$ (see Figure \ref{fig_players}, where we represent the quotient graphs of groups).
By induction,  there is a tree $W$   in which $G_1,\dots,G_{n-1}$
are hyperbolic. 
We can assume that $G_n$ is elliptic in $W$.
By the case $n=1$  there exists $T$ in which $G_n$ is hyperbolic, and we can assume that $T$ has
 a single  orbit of edges.
If $W$ is elliptic \wrt $T$, we   take for $V$ a standard refinement  of $W$ dominating $T$.

Otherwise, let $W_E,W_H$ be obtained from $W$ by collapsing   edges, keeping only those edges whose stabilizer is elliptic
or hyperbolic in $T$ respectively ($W_E$ may be trivial, but  $W_H$ is not). Note that $W_H$ is fully hyperbolic \wrt $T$.
By symmetry of the ellipticity relation 
among 
minuscule trees,
$T$ is (fully) hyperbolic \wrt $W_H$.  On the other hand,  $W_E$ and $T$ are elliptic with respect to each other,
and so are $W_E$ and $W_H$ (they are both collapses  of   $W$).

Let   $R=RN(W_H,T)$ be the regular neighborhood.
Recall that the set of vertices of $R$ is bipartite, with  
 $\calv$ the set of cut points of $\calc=\calc(W_H,T)\subset W_H\times T$, and $\cals$ 
  the set of connected components of $\calc\setminus \calv$.
Since $T$ has a single orbit of edges, there is   a single orbit
of vertices in $\cals$.
We fix $q\in \cals$, and denote by $Q$
its stabilizer. It is a QH vertex by Proposition \ref{borused}. 

\begin{lem}\label{QH_ell_minus}
  The QH vertex group $Q$ is elliptic in $W_E$.

\end{lem}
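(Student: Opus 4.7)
The plan is to argue by contradiction, assuming $Q$ is not elliptic in $W_E$ and exploiting the filling property of the two geodesic families coming from the core.

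First I would collect ellipticity. The fiber $F$ is elliptic in $W_E$: by (\ref{qh}), $F$ is contained in an edge stabilizer $G_{e_1}$ of $W_H$, and since edges of $W_H$ are the $T$-hyperbolic edges of $W$ (collapsed when passing to $W_E$), $G_{e_1}$ (hence $F$) fixes the collapsed vertex in $W_E$. Next, $W_E$ is elliptic with respect to $R$: an edge stabilizer of $W_E$ is the stabilizer of a $T$-elliptic edge $e$ of $W$, hence is elliptic in $T$ by construction and in $W_H$ because $e$ collapses in $W_H$; then (\ref{el1}) gives ellipticity in $R$. Since all trees here are minuscule, Lemma \ref{lem_minuscule_minimal} yields the symmetric statements: edge stabilizers of $R$, $W_H$, and $T$ are all elliptic in $W_E$. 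By (\ref{compat}), the edge groups $Q_\gamma$ for $\gamma$ in the two finite families $\call_1,\call_2\subset\Sigma$ of geodesics refining $R$ at $q$ to $W_H$ and $T$ are edge stabilizers of $W_H$ and $T$, hence elliptic in $W_E$; the proof of Proposition \ref{prop_RN} shows that $\call_1\cup\call_2$ fills $\Sigma$. Finally, by Proposition \ref{borused} every boundary component of $\Sigma$ is used, so each boundary subgroup of $O=\pi_1(\Sigma)$ contains with finite index the image either of an incident edge stabilizer of $R$ at $q$ (elliptic in $W_E$ by the above) or of a group in $\calh_{|Q}$ (elliptic in $W_E$ since $W_E$ is an $(\cala,\calh)$-tree); taking preimages and using invariance of ellipticity under finite-index extensions, every extended boundary subgroup of $Q$ is elliptic in $W_E$.

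Now suppose for contradiction that $Q$ is not elliptic in $W_E$, and let $M\subset W_E$ be the minimal $Q$-invariant subtree of the $F$-fixed subtree $W_E^F$. Since $F$ acts trivially on $M$, the action of $Q$ descends to a non-trivial action of $O=Q/F$ on $M$. The key device is nearest-point projection onto the $Q$-invariant subtree $M$: if a subgroup $H\subset Q$ fixes a point $p\in W_E$, the unique closest point $p'\in M$ to $p$ is also $H$-fixed, because $Q$-invariance of $M$ makes the projection $Q$-equivariant. Applied to extended boundary subgroups of $Q$ and to the $Q_\gamma$ for $\gamma\in\call_1\cup\call_2$, this shows that every boundary subgroup of $O$ and every $H_\gamma=Q_\gamma/F$ acts elliptically on $M$. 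Since $\cala$ consists of slender groups and $\Sigma$ is hyperbolic, edge stabilizers of $M$ viewed as an $O$-tree are slender subgroups of $\pi_1(\Sigma)$, hence virtually cyclic, hence small. Proposition \ref{dual} then represents the $O$-action on $M$ as dual to a non-empty family $\call'$ of disjoint essential simple closed geodesics of $\Sigma$.

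The contradiction comes from filling: choose $\gamma'\in\call'$, and since $\call_1\cup\call_2$ fills $\Sigma$ (Definition \ref{dfn_fill}), there exists $\gamma\in\call_1\cup\call_2$ intersecting $\gamma'$ non-trivially with $\gamma\ne\gamma'$. Then $H_\gamma$ acts hyperbolically in the $O$-splitting dual to $\gamma'$, namely $M$, contradicting the previously established ellipticity of $H_\gamma$ in $M$. The main technical point is the nearest-point projection, which transfers ellipticity from $W_E$ down to the minimal subtree $M$; the rest is bookkeeping with (\ref{qh})--(\ref{compat}), symmetry of ellipticity under minusculeness, and the filling property of $\call_1\cup\call_2$.
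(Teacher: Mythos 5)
Your proof is correct, but it follows a genuinely different route from the paper's. The paper proves the lemma by working inside the core $\calc(W_H,T)$: it first shows that each edge stabilizer of $W_H$ or of $T$ fixes a \emph{unique} point of $W_E$ (uniqueness because $W_E$ is elliptic with respect to $T$ while $W_H$ is fully hyperbolic with respect to $T$), then establishes the key claim that the two points attached to a square $e\times e'\subset\calc$ coincide --- this is where the technical work lies, via the common refinement $\hat R_T$ of $R$ and $T$, a standard refinement $\hat W_E$ of $W_E$ dominating it, and a minusculeness argument ruling out $G_\eps\ll G_{e'}$ --- and concludes by connectedness of the component of $\calc\setminus\calv$ defining $Q$. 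You instead argue by contradiction in the spirit of Theorem \ref{qhe} and Lemma \ref{lem_amal}: if $Q$ is not elliptic, its minimal subtree $M\subset W_E$ carries an action of $\pi_1(\Sigma)$ with small edge stabilizers and all boundary subgroups elliptic (using Proposition \ref{borused} together with the ellipticity in $W_E$ of the fiber, of the incident edge groups of $R$ --- which you correctly obtain from $W_E$ being elliptic with respect to $R$ plus minusculeness of $R$ and Lemma \ref{lem_minuscule_minimal} --- and of $\calh_{|Q}$), hence $M$ is dual to a family $\call'$ of geodesics by Proposition \ref{dual}; the filling of $\Sigma$ by the two families producing $W_H$ and $T$ from $R$ (the remark after Proposition \ref{prop_RN}) then forces some $Q_\gamma$, an edge stabilizer of $W_H$ or of $T$ and therefore elliptic in $W_E$, to act hyperbolically on $M$, a contradiction. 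All the inputs you invoke are available at this point, so there is no circularity; your version avoids the folds and auxiliary refinements of the paper's argument at the cost of calling on the duality and filling machinery, while the paper's core argument is more self-contained and produces the fixed point of $Q$ explicitly. One small imprecision: the splitting dual to $\gamma'$ is a collapse of $M$, not $M$ itself when $\call'$ has several curves; since hyperbolicity in a collapse passes to the refinement, the contradiction stands as written.
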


\begin{figure}[htbp]
  \centering
  \includegraphics{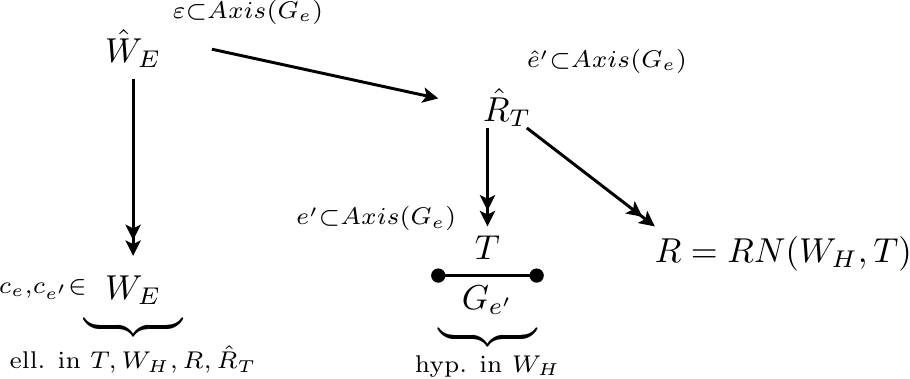}
  \caption{Proof that $Q$ is elliptic in $W_E$  ($\onto$ represents a collapse map).}
  \label{fig_Q_ell}
\end{figure}

  \begin{proof}
  In spirit,  the proof  is somewhat analogous to that of Theorem \ref{qhe}.

Let $e$ be an edge of $W_H$, and $e'$ an edge of $T$. 
Since $W_H$ is elliptic with respect to $W_E$, the stabilizer $G_e$ fixes a point $c_e\in W_E$.
This point is unique because otherwise  $G_e$ would fix an edge of $W_E$. But  $W_E$ is elliptic with respect to $T$, so   $G_e$ would be elliptic in $T$,
contradicting   the fact that $W_H$ is fully hyperbolic with respect to $T$.
Similarly, $G_{e'}$ fixes a unique point $c_{e'}\in W_E$.

As above, consider the core $\calc=\calc(W_H,T)\subset W_H\times T$. We claim  that $c_e=c_{e'}$ if the square $e\times e'$ is contained in $\calc$.
Assuming this, we associate to any square $e\times e'\inc\calc$ the point $c_e=c_{e'}\in W_E$.
Since adjacent squares are mapped to the same point, all squares in a given connected component of $\calc\setminus \calv$
are mapped to the same point. Since $Q$ is the stabilizer of such a component, it fixes a point in $W_E$, as required.

To conclude the proof, we have to prove our claim that $c_e=c_{e'}$ if $e\times e'\inc \calc$.
Let $\hat R_T$ be  a common refinement of $R$ and $T$ as in (\ref{compat})  (see Figure \ref{fig_Q_ell}).
By definition of the core, $e'$ lies in the axis (=minimal subtree) of $G_{e}$ in $T$.
Since  $\hat R_T$ collapses to $ T$, the edge $\hat e'$ of $\hat R_T$ projecting to $e'$ lies in the axis of $G_{e}$ in $\hat R_T$.
Note that $G_{\hat e'}=G_{e'}$.

Since $W_E$ is elliptic with respect to $W_H$ and $T$, it 
  is elliptic with respect to $R$ by (\ref{el1}),   hence also with respect to $\hat R_T$.
Let therefore 
$\hat W_E$ be 
a standard refinement  of $W_E$ dominating $\hat R_T$ as in Proposition \ref{prop_refinement}.

Let $M\subset \hat W_E$ be the  axis
of $G_{e}$.  Since $\hat R_T$ does not have to be a collapse of $\hat W_E$, we cannot claim that $M$ maps to the axis of $G_e$ in $\hat R_T$, but we can find 
an edge   $\eps\inc M$ whose image in $\hat R_T$ contains 
$\hat e'$, so that $G_\eps\subset G_{\hat e'}=G_{  e'}$.
Since $G_{e}$ fixes $c_{e}$ and only this point in $W_E$, the image of $M$ under the collapse map $\hat W_E\ra W_E$ is $\{c_{e}\}$.
In particular, $G_\eps$ fixes $c_{e}$. Since $G_\eps\subset G_{e'}$ fixes $c_{e'}$, it is enough to prove that $G_\eps$ fixes no 
edge in $W_E$. If it does, then $G_\eps$ is elliptic in $W_H$ (because $W_E$ is elliptic with respect to $W_H$),
but $G_{e'}$ is not (because $T$ is hyperbolic with respect to  $W_H$), so $G_\eps\ll G_{e'}$. This  contradicts the fact  that $T$ is minuscule and  proves the claim.
\end{proof}

\begin{figure}[htbp]
  \centering
  \includegraphics{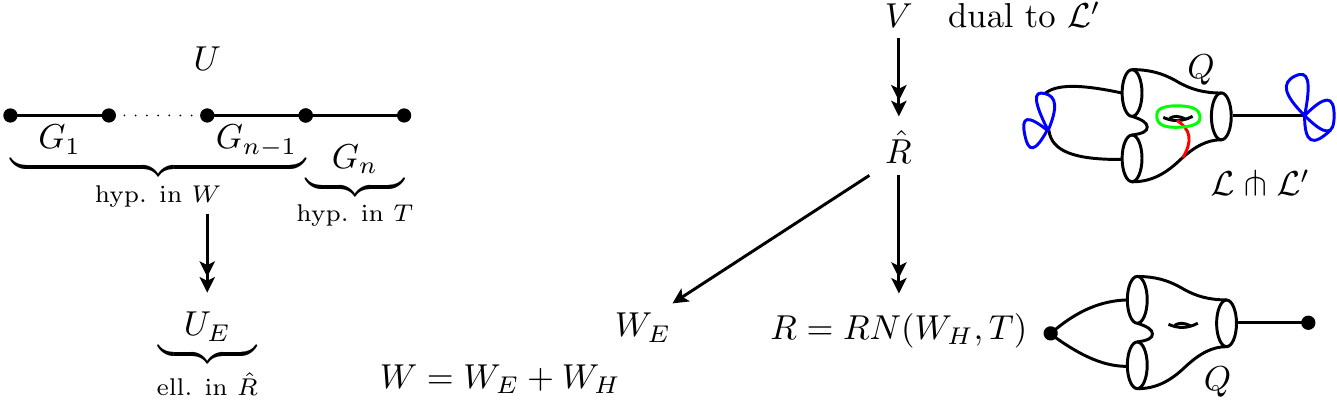}
  \caption{End of the proof of Proposition \ref{prop_exist_hh}.}
  \label{fig_endgame}
\end{figure}

Using the lemma, we can construct   a standard refinement $\hat R$ of $R$ dominating $W_E$  (see Figure \ref {fig_endgame})
without refining at vertices in the orbit of $q$.
In particular,  $q$ is still a QH vertex of $\hat R$.  
The tree  $V$ will be obtained by   refining $\hat R$ at  $q$. Recall that we want every edge stabilizer $G_i$ of $U$ to be hyperbolic in $V$. Since this will be automatic
if $G_i$ is hyperbolic in $\hat R$,
we consider the tree $U_E$ obtained by collapsing edges of $U$, keeping only those edges whose stabilizer is  elliptic in $\hat R$.

If $U_E$ is trivial,
we simply  take   $V=\hat R$, 
so we assume the contrary.
We claim that  $Q$ cannot be elliptic in $U_E$, nor in any collapse $\bar U_E$ which is not a point. Otherwise, the edge stabilizers of $T$ and $W_H$,  which are conjugate to subgroups of  $Q$ by (\ref{compat}), would be elliptic
in $\bar U_E$, so $\bar U_E$ would be elliptic with respect to  $T$ and $W_H$. This is a contradiction because  every $G_i$ is hyperbolic in $T$ or $W$, hence in $T$ or $W_H$ because it is elliptic in $W_E$  (which is dominated by $\hat R$).

 We now consider the action of $Q$ on its minimal subtree $\mu_{U_E}(Q)$. Note that $\mu_{U_E}(Q)$ meets every $G$-orbit of edges (otherwise $Q$ would be elliptic in some $\bar U_E$), so every $G_i$ which is elliptic in $\hat R$ contains (up to conjugacy) an edge stabilizer of the action of $Q$ on $\mu_{U_E}(Q)$.
 
 It follows from previous results that the action of  $Q$ on  $\mu_{U_E}(Q)$
  is (up to subdivision) dual to a family $\call$ of geodesics on the underlying orbifold $\Sigma$.  
To see this, view  $Q$  as a QH vertex stabilizer of $R$ or $\hat R$. Every boundary component of $\Sigma$ is used (Proposition \ref{borused}), the fiber is universally elliptic (Lemma \ref{uesle}), and incident edge groups are elliptic in $U_E$ because $U_E$ and $\hat R$ are  elliptic with respect to each other. The existence of $\call$ then follows from Lemma   \ref{dual2}.

 Now let $\call'$ be a family of geodesics transverse to $\call$ as in Example  \ref{pant}. Define $V$ by 
  refining $\hat R$  at $q$ 
  using the splitting of $Q$ dual to $\call'$, and observe  that every edge stabilizer $G_i$ of $U$ is hyperbolic in $V$, as required. This is clear  if $G_i$ is hyperbolic   in $\hat R$. Otherwise,   it contains (up to conjugacy) an edge stabilizer $J\inc Q$ of $\mu_{U_E}(Q)$, and $J$ is hyperbolic in $V$ because every curve in $\call$ meets $\call'$.
  \end{proof}

\subsection{ Flexible groups are QH when trees are minuscule}
\label{preuve}

We  can now prove Theorem \ref{thm_totally_flex} under the assumption    that all trees are minuscule. We will  prove in  the next subsection that this assumption is always fulfilled.

\begin{proof}[Proof of Theorem \ref{thm_totally_flex},  assuming that all trees are minuscule.]

Using finite presentability (which is assumed in Theorem \ref{thm_totally_flex}), we fix a maximal   splitting $U$ as in Lemma \ref{lem_Zorn_minuscule}.  
Proposition \ref{prop_exist_hh} yields $V$ such that
$U$ and $V$ are fully hyperbolic with respect to each other.
We may therefore consider the regular neighborhood $R$ of $U$ and $V$. We claim that $R$ is trivial (a point). This implies that $G$ is QH, as required.

Assume that $R$ is non-trivial. 
  Let   $\Hat R$ be a common refinement of $R$ and $U$ (it exists by Proposition \ref{prop_RN}(\ref{compat})). 
 Since $R$ is elliptic \wrt $V$, while $U$ is fully hyperbolic \wrt $V$, 
    each edge    of $\Hat R$ must be  collapsed in $R$ or in $U$ (possibly in both). 
 Being  non-trivial, $R$ contains an edge $\eps$, and we let $\hat \eps$ be the edge of $\hat R$ that is mapped to $\eps$.
Define $U'$ by collapsing all edges of $\Hat R$ which are not collapsed in $U$, except those in the orbit of $\eps$.

    Since $U'$ collapses to $U$, maximality of $U$ (Lemma \ref{lem_Zorn_minuscule}) implies that $U$ and $U'$ belong to  the same deformation space. Thus $\hat \varepsilon$ (viewed as an edge of $U'$) has an endpoint $v\in U'$ with  $G_v=G_{\hat\varepsilon}$, and its other endpoint   is in a different orbit. Since the action of $G$ on $U'$ is minimal, there is an edge $e$ of $U'$ with origin $v$ which does not belong to the orbit of  ${\hat\varepsilon}$. This edge has the same stabilizer as an edge of $U$, so $G_e$ is hyperbolic \wrt $V$. This is a contradiction since  $G_e\inc G_v=G_{\hat\varepsilon}=G_\varepsilon$, and $G_\varepsilon$ is elliptic in $V$ because $R$ is elliptic \wrt $V$ by (\ref{el1}).
 \end{proof}

\subsection{All splittings of a totally flexible group are minuscule}\label{sec_all_minus}

In this subsection, we complete the proof of Theorems \ref{thm_description_slender} and \ref{thm_description_slenderZ}
by showing that all   splittings of $G$ are minuscule
  if $G$ is totally flexible  (Proposition \ref{prop_all_minus}). As mentioned in Remark \ref{boncas}, this is sometimes not needed.
  
   In a few words, the proof goes as follows. Given an edge stabilizer $G_e$ of a  tree $T$, we first find a minuscule tree 
$T_1$ with an edge stabilizer $G_{e_1}$ commensurable with a subgroup of $G_e$. 
The goal is to show that $G_e$ and $G_{e_1}$ are in fact commensurable. 
This is  proved by showing that $G_{e_1}$ and $G_e$ are slender subgroups
of a QH vertex group, both containing the fiber. So ultimately, the argument relies on the fact that
two nested infinite slender subgroups of a  hyperbolic orbifold group are commensurable.
To embed  $G_{e_1}$ and $G_e$ into a QH group, we use  a tree $T_2$ in which $G_{e_1}$ is hyperbolic and we construct the core. Since we do not know that $T_2$ is minuscule, we have   to choose it carefully, to ensure that the core is symmetric (hence a surface).

\subsubsection{Complexity of edge groups}

  Recall that $\cala$ consists of slender groups; in particular, they are finitely generated.

 \begin{lem}\label{lem_access_tiny}
 Let $A_0\in\cala$. There is a bound, depending only on $A_0$, for the length $n$ of a chain $A_0\gg  A_1\gg A_2\dots \gg  A_n$. In particular, there is no infinite descending chain. 
 \end{lem}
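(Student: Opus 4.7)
The plan is to attach to each slender group $A\in\cala$ a non-negative integer complexity $h(A)$ which strictly decreases under the relation $\gg$, and then deduce the bound $n\le h(A_0)$.

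First, I would record the immediate observation, already spelled out just after the definition of $\ll$, that $A'\ll A$ forces $A'$ to have infinite index in $A$. Indeed, ellipticity in a tree is a commensurability invariant (noted in Subsection \ref{tre}), so if $A'$ had finite index in $A$, then $A$ would be elliptic in every tree in which $A'$ is, contradicting the existence of a witness tree $S$ for $A'\ll A$.

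Next, I would take $h$ to be the \emph{Hirsch length}. For the classes of $\cala$ that actually arise in the paper — cyclic, virtually cyclic, finitely generated abelian, virtually polycyclic, $VPC_{\le n}$ — every $A\in\cala$ is virtually polycyclic, so $h(A)\in\N$ is well-defined, and a standard argument by induction on a polycyclic series shows that whenever $A'\subsetneq A$ with $[A:A']=\infty$ one has $h(A')<h(A)$. Combining this with the previous step, any chain $A_0\gg A_1\gg\dots\gg A_n$ gives the strict inequalities $h(A_0)>h(A_1)>\dots>h(A_n)\ge 0$, and hence $n\le h(A_0)$; in particular, no infinite descending $\gg$-chain can exist.

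The subtle point, which I expect to be the main obstacle, is the choice of complexity in the fully abstract case where $\cala$ is allowed to contain slender groups which are not virtually polycyclic. In that setting, rather than appealing to a structural invariant, one can define $h(A)$ intrinsically as the supremum of lengths of descending chains $A=B_0\supsetneq B_1\supsetneq\dots\supsetneq B_k$ with each $B_i$ of infinite index in $B_{i-1}$, and one must verify $h(A_0)<\infty$ using the noetherian property of slender groups (all subgroups are finitely generated) together with an inductive argument on the structure of $A_0$. For every concrete application in this paper, however, the Hirsch length does the job directly and this abstraction is avoided.
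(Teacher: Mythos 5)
Your first two steps are fine: $A'\ll A$ does force infinite index, and for virtually polycyclic edge groups the strict drop in Hirsch length bounds the chain by $h(A_0)$. But this only proves the lemma when $\cala$ consists of virtually polycyclic groups, whereas the statement (and its main application, Corollary \ref{cor_slender}, where $\cala$ is the family of \emph{all} slender subgroups of $G$) allows arbitrary slender groups, which need not be virtually polycyclic. Your fallback for that case is where the gap lies: you define $h(A_0)$ as the maximal length of a chain of successive infinite-index subgroups and assert that its finiteness should follow from noetherianity ``together with an inductive argument on the structure of $A_0$''. No such argument is given, and none is available: slender groups have no structure theory to induct on, and noetherianity by itself does not bound the depth of infinite-index chains. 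Worse, your proposed invariant throws away exactly the hypothesis that makes the lemma true, namely that each step $A_{i-1}\gg A_i$ is witnessed by a tree; you are trying to prove a purely group-theoretic chain condition that is strictly stronger than what is needed and is not established for general slender groups.

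The paper's proof uses the witnessing trees directly and only needs $A_0$ to be slender (in fact only finitely generated, see Subsection \ref{slt}). For each $i$ choose a tree $T_i$ in which $A_i$ is elliptic but $A_{i-1}$ is not; then $A_0$ is not elliptic in $T_i$, so by slenderness it preserves a line (Lemma \ref{lem_slender}). Let $B_0$ be the intersection of the index-$2$ subgroups of $A_0$: it has finite index, is finitely generated, and acts on each invariant line by translations, giving a non-zero homomorphism $\varphi_i\colon B_0\to\Z$. Setting $B_j=A_j\cap B_0$, one checks $\varphi_i(B_j)=0$ if and only if $j\ge i$, so the $\varphi_i$ are linearly independent and $n$ is bounded by the rank of $\mathrm{Hom}(B_0,\Z)$, a bound depending only on $A_0$. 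This is what you are missing: the complexity is not an intrinsic invariant of $A_0$ like Hirsch length, but the first Betti number of $B_0$, extracted from the tree actions themselves; this is also what allows the later extension where the $A_i$ are arbitrary and only $A_0$ is assumed finitely generated.
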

 
 \begin{proof} 
   Since $A_{i-1}\gg A_{i}$, we can find a  tree $T_i$ 
     in which $A_{i}$ is elliptic, and $A_{i-1}$ is not.
     Let $B_0$ be the intersection of all subgroups of index 2 of $A_0$. For each $i$, the action of $A_0$ on $T_i$ has an invariant line, and 
  $B_0$ acts on this line  as a translation, given by a non-zero homomorphism $\varphi_i:B_0\to \Z$. 
   Letting $B_i=A_i\cap B_0$, we have $\varphi_i(B_j)=0$ if and only if $j\ge i$.
   This implies that the $\varphi_i$'s are linearly independent, and the lemma follows since $B_0$ is finitely generated.
  \end{proof} 

Using this lemma, we can associate a complexity $c(A_0)$ \index{0C0A@$c(A)$: complexity of an edge group $A$}
to any $A_0\in\cala$: it is 
 the maximal length $n$ of a chain 
$A_0\gg A_1\gg \dots \gg A_{n}$ where $A_1,\dots,A_n$ are commensurable with   edge stabilizers of  trees $T_1,\dots,T_n$.
 Thus $A_0$ is minuscule if and only if $c(A_0)=0$.
Note that $c(B)\leq c(A)$ if $B\subset A$, 
and $c(B)=c(A)$
  if $A,B$ are commensurable.

\begin{lem}\label{lem_contenu} 
Given an edge stabilizer $G_e$ of  a tree $T$, there exists a tree $T'$ with 
an edge stabilizer $G_{e'}$ such that  $G_{e'}$ is minuscule and $G_{e'}\cap G_e$ has finite index in $G_{e'}$. 
\end{lem}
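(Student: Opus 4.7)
The plan is to argue by induction on the complexity $c(G_e)$, which is a well-defined natural number by Lemma \ref{lem_access_tiny}. Throughout, we use freely that $\cala$ is stable under taking subgroups, so any subgroup of an edge stabilizer under consideration lies in $\cala$.

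\textbf{Base case.} If $c(G_e)=0$, then $G_e$ is minuscule by the remark following the definition of $c$, and we may simply take $T'=T$ and $e'=e$. In this case $G_{e'}\cap G_e=G_e$ has finite (in fact, trivial) index in itself.

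\textbf{Inductive step.} Assume $c(G_e)\ge1$, so $G_e$ is not minuscule. By definition, there exists $A'\in\cala$ with $A'\ll G_e$ and a tree $T_1$ having an edge stabilizer $G_{e_1}$ in which $A'$ has finite index; in particular $A'$ and $G_{e_1}$ are commensurable. Since $A'\subset G_e$, the intersection $G_{e_1}\cap G_e$ contains $A'$, hence has finite index in $G_{e_1}$. This already achieves the finite-index condition for the edge stabilizer $G_{e_1}$, so the only remaining issue is that $G_{e_1}$ itself may fail to be minuscule.

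To reduce the complexity, observe that $c$ is a commensurability invariant, so $c(G_{e_1})=c(A')$. Now any maximal chain $A'=B_0\gg B_1\gg\dots\gg B_m$ with each $B_i$ ($i\ge 1$) commensurable with an edge stabilizer can be extended by prepending $G_e\gg A'$, which shows $c(A')\le c(G_e)-1<c(G_e)$. Therefore $c(G_{e_1})<c(G_e)$, and the induction hypothesis applied to $G_{e_1}$ yields a tree $T'$ with a minuscule edge stabilizer $G_{e'}$ such that $G_{e'}\cap G_{e_1}$ has finite index in $G_{e'}$.

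\textbf{Chaining the finite-index conditions.} It remains to verify that $G_{e'}\cap G_e$ has finite index in $G_{e'}$. Intersecting with $G_e$ the inclusion $G_{e'}\cap G_{e_1}\subset G_{e'}$ of finite index, we get that $G_{e'}\cap G_{e_1}\cap G_e$ has finite index in $G_{e'}\cap G_e$. On the other hand, since $G_{e_1}\cap G_e$ has finite index in $G_{e_1}$, its intersection with the subgroup $G_{e'}\cap G_{e_1}$ (namely $G_{e'}\cap G_{e_1}\cap G_e$) has finite index in $G_{e'}\cap G_{e_1}$, which in turn has finite index in $G_{e'}$. Combining these, $G_{e'}\cap G_e$ has finite index in $G_{e'}$, as required.

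The main obstacle in this argument is simply setting up the complexity $c$ correctly (so that it is finite and strictly drops when one passes from $G_e$ to an edge stabilizer commensurable with an $A'\ll G_e$); once this is in place, the proof is a clean induction. Note that there is no subtlety about whether the trees $T_1$, $T'$ are $(\cala,\calh)$-trees: they appear as witnesses in the definition of $\ll$ and of $c$, where all trees are by convention $(\cala,\calh)$-trees.
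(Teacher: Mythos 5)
Your proof is correct and follows essentially the same route as the paper: induction on the complexity $c(G_e)$, producing an edge stabilizer $G_{e_1}$ commensurable with some $A'\ll G_e$, applying the induction hypothesis to $G_{e_1}$ (via $c(G_{e_1})=c(A')<c(G_e)$), and then chaining finite-index conditions. The only slip is that the definition of $c$ makes $A'$ merely \emph{commensurable} with $G_{e_1}$, not contained in it with finite index, so your claim that $G_{e_1}\cap G_e$ contains $A'$ should be replaced by the observation that it contains $A'\cap G_{e_1}$, which has finite index in $G_{e_1}$ (equivalently, replace $A'$ by $A'\cap G_{e_1}$, which still satisfies $A'\cap G_{e_1}\ll G_e$); the rest of your argument goes through unchanged.
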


\begin{proof}
The result is trivial if $c(G_e)=0$, with $T'=T$. If $c(G_e)>0$, we argue by induction. 
 By definition of the complexity, there exists $A\ll G_e$ with $A$ commensurable with an edge   stabilizer $G_{e_1}$. We have $c(G_{e_1})=c(A)<c(G_e)$, so by the induction hypothesis there exists a minuscule  edge stabilizer $G_{e'}$ such that
$G_{e'}\cap G_{e_1}$ has finite index in $G_{e'}$.
 Since $A$ and $G_{e_1}$  are commensurable, and $A\inc G_e$, the index of  $G_{e'}\cap G_e$   in $G_{e'}$ is finite.
\end{proof}

\subsubsection{Symmetry of the core} \label{symm}

 Recall (Proposition \ref{prop_minuscule_hh}) that the core of minuscule trees is symmetric. 

 In order to prove that all trees are minuscule,   we need to establish some weaker symmetry statements.
   We use the same notations as in Subsection \ref{fpc}.

\begin{lem}\label{lem_sym_cplete}
Let  $T_1,T_2$ be fully hyperbolic with respect to each other.
  If $ \check\calc\subset\calc$ and  $\calc^{(2)} \subset \check \calc^{(2)}$,
then $\check\calc= \calc$.
\end{lem}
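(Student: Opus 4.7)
The plan is to reduce the lemma to a square-level symmetry statement and then to argue away the lower-dimensional edges separately.

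First I would establish the square-level equality $\calc^{(2)}=\check\calc^{(2)}$. The inclusion $\check\calc^{(2)}\subset\calc^{(2)}$ is automatic from $\check\calc\subset\calc$: any closed square contained in $\check\calc$ lies in $\calc$ and is therefore a closed square of $\calc$. The reverse inclusion is the second hypothesis.

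Next I would show $\check\calc=\check\calc^{(2)}$ (whence $\check\calc=\calc^{(2)}$). For a vertical edge $\{v_1\}\times e_2\in\check\calc$ we have $v_1\in\mu_{T_1}(G_{e_2})$; by the full-hyperbolicity hypothesis $G_{e_2}$ is hyperbolic in $T_1$, and since groups in $\cala$ are slender this minimal subtree is a line with no terminal vertex, so $v_1$ is an endpoint of some edge $e_1\subset\mu_{T_1}(G_{e_2})$ and the square $e_1\times e_2$ lies in $\check\calc^{(2)}$ and contains $\{v_1\}\times e_2$ on its boundary. For a horizontal edge $e_1\times\{v_2\}$ of $\check\calc$, the inclusion $\check\calc\subset\calc$ places it in $\calc$, and the symmetric no-terminal-vertex argument applied to $\mu_{T_2}(G_{e_1})$ places it in $\calc^{(2)}=\check\calc^{(2)}$.

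The remaining step is to prove $\calc\subset\check\calc$, equivalently $\calc=\calc^{(2)}$, and I expect this to be the main obstacle, since the hypotheses are visibly asymmetric (they say nothing directly about possible ``stray'' vertical edges of $\calc$). I would argue by contradiction: suppose $\{v_1\}\times e_2\in\calc\setminus\calc^{(2)}$, so $e_2\subset\mu_{T_2}(G_{v_1})$, and using $\calc^{(2)}=\check\calc^{(2)}$ the fact that no square $e_1\times e_2$ with $v_1\in e_1$ lies in $\calc^{(2)}$ translates to $v_1\notin\mu_{T_1}(G_{e_2})$. Examine the horizontal fibre $Y_m=\calc\cap(T_1\times\{m\})$ at the midpoint $m$ of $e_2$: by the first two steps every other point of $Y_m$ lies in $\mu_{T_1}(G_{e_2})\times\{m\}$, so $(v_1,m)$ is an isolated point of $Y_m$ and $Y_m$ is disconnected. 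Following the construction in the proof of Lemma~\ref{lem_fibres_cnx}, the $G$-orbit of this isolated component is a track in the simply connected complex $\calc$ dual to a non-trivial one-edge splitting $S$ of $G$ whose edge stabilizer $G_Z$ is a subgroup of $G_{e_2}$ elliptic in $T_1$ (it fixes $v_1$). The hard part of finishing the argument will be extracting from this splitting the existence of a square in $\check\calc^{(2)}$ that is absent from $\calc^{(2)}$, contradicting the equality $\calc^{(2)}=\check\calc^{(2)}$ established above.
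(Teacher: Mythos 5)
Your first two steps are fine: the equality $\calc^{(2)}=\check\calc^{(2)}$ is exactly how the paper's proof begins, and your observation that $\check\calc=\check\calc^{(2)}$ (hence $\check\calc=\calc^{(2)}$) is correct, though it only reformulates the goal as showing that $\calc$ has no stray vertical edge. The genuine gap is in the final step, and it is more than the missing details you acknowledge: the mechanism you propose cannot be closed inside this lemma. The splitting $S$ you build from the isolated point $(v_1,m)$ has edge group $G_Z=G_{v_1}\cap G_{e_2}$, which is elliptic in $T_1$ while $G_{e_2}$ is hyperbolic there, so all you have produced is a splitting over a group $\ll G_{e_2}$. But Lemma \ref{lem_sym_cplete} carries no minusculeness or minimal-complexity hypothesis on $T_2$ --- it is designed precisely to be applied (in Lemma \ref{lem_minimal_hh}) to a $T_2$ that is \emph{not} assumed minuscule --- so the existence of such a splitting is not in itself absurd, and there is no visible way to convert it into ``a square of $\check\calc^{(2)}$ absent from $\calc^{(2)}$''. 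That construction is the content of Lemma \ref{lem_fibres_cnx}, and in the paper it serves to \emph{verify the hypothesis} $\calc^{(2)}\subset\check\calc^{(2)}$ in the situations where the present lemma is applied, not to prove the lemma itself. (A smaller inaccuracy: your claim that every point of $Y_m$ other than $(v_1,m)$ lies in $\mu_{T_1}(G_{e_2})\times\{m\}$ is unjustified, since there may be several stray vertical edges over $e_2$; this does not affect the disconnectedness of $Y_m$, but it is not needed either.)

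The missing idea is a purely combinatorial connectedness argument, which is the paper's proof. The open edge $\rond\eps=\{v_1\}\times\rond e_2$ bounds no square of $\calc$, hence disconnects the simply connected complex $\calc$. Consider the set of edges $e\subset\mu_{T_2}(G_{v_1})$ such that $\{v_1\}\times e$ bounds a square of $\calc$: it is non-empty, $G_{v_1}$-invariant, and does not contain $e_2$; by minimality of the action of $G_{v_1}$ on $\mu_{T_2}(G_{v_1})$ it must meet both components of $\mu_{T_2}(G_{v_1})\setminus\rond e_2$ (otherwise its convex hull would be a proper invariant subtree). This yields points $a,b\in\calc^{(2)}$ lying in distinct components of $\calc\setminus\rond\eps$. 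Now $a,b\in\calc^{(2)}\subset\check\calc^{(2)}\subset\check\calc$, the complex $\check\calc$ is connected (by the same standard argument as for $\calc$), and $\check\calc\subset\calc\setminus\rond\eps$ since $\check\calc$ is a subcomplex not containing $\eps$; a path from $a$ to $b$ inside $\check\calc$ then contradicts the separation. Note where the hypotheses enter: $\calc^{(2)}\subset\check\calc^{(2)}$ places $a$ and $b$ in $\check\calc$, and $\check\calc\subset\calc$ makes the connecting path stay in $\calc\setminus\rond\eps$. No splitting, track, or group-theoretic minimality is needed, which is exactly why the lemma can later be invoked for a $T_2$ about which nothing of that sort is known.
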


\begin{proof}
Note that $ \check\calc\subset\calc$ implies $\check\calc^{(2)}\inc \calc^{(2)}$, so   we have $\check\calc^{(2)}= \calc^{(2)}$.
Assuming $\check\calc\ne\calc$, we get  $\calc^{(2)}=\check\calc^{(2)}\subseteq\check\calc\subsetneq\calc$.   
As pointed out in Subsection \ref{symmco},    $\calc\setminus\calc^{(2)}$ is a union of open vertical edges
 and each such edge disconnects $\calc$.
Our assumption $\check\calc\ne\calc$ says that $\calc\setminus\check\calc$ contains such an edge $\rond\eps=\{v\}\times \rond e_2$. 

The set of edges $e$ of $\mu_{T_2}(G_{v })$ such that $\{v\}\times e $ bounds a square in $\calc$ is   a non-empty $G_v$-invariant set which does not contain $e_2$. By minimality of the action of $G_v$ on $\mu_{T_2}(G_{v })$, this set intersects both components of 
$ \mu_{T_2}(G_{v })\setminus \rond e_2$. In particular, there exist $a,b\in\calc^{(2)}$ that are not in the same connected component
of $\calc\setminus \rond\eps$. But $a$ and $b$ may be joined by a path in $\check\calc$, a contradiction since $\check\calc\subset \calc\setminus \rond\eps$.
\end{proof}

\begin{lem}\label{lem_minimal_hh}
  Let $T_1$ be a minuscule one-edge splitting.
  Assume that $T_1$ is not universally elliptic.
Among all one-edge splittings
$T $ such that $T_1$ is not elliptic \wrt   $T $,
consider $T_2$ whose edge stabilizers have minimal complexity.

Then $\calc(T_1,T_2)=\check\calc(T_1,T_2)$.
\end{lem}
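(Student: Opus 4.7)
The plan is to apply a ``dual'' form of Lemma \ref{lem_sym_cplete}, in which the roles of $\calc$ and $\check\calc$ are interchanged; this is legitimate because the cited proof is symmetric in the distinction between horizontal and vertical edges. The two ingredients needed are $\calc \subset \check\calc$ and $\check\calc^{(2)} \subset \calc^{(2)}$.

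Setting up: since $T_1$ is minuscule and not elliptic with respect to $T_2$, Lemma \ref{lem_minuscule_minimal} (with roles swapped) implies $T_2$ is not elliptic with respect to $T_1$. Both being one-edge splittings, they are fully hyperbolic with respect to each other, so the cores are defined. The inclusion $\calc \subset \check\calc$ is then the dual of Lemma \ref{lem_inclus}: apply that lemma to the pair $(T_2,T_1)$ using the minuscule hypothesis on $T_1$, and translate via the identification of cores under the swap of factors (so that $\check\calc(T_2,T_1)$ becomes $\calc(T_1,T_2)$ and vice versa).

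For the inclusion $\check\calc^{(2)} \subset \calc^{(2)}$, I will prove that $Y_m^{(2)} = \calc^{(2)} \cap (T_1 \times \{m\})$ is nonempty and connected for the midpoint $m$ of a representative edge $e_2$ of $T_2$. Granted this, $Y_m^{(2)}$ is a nonempty connected $G_{e_2}$-invariant subtree of $T_1$, hence contains the minimal $G_{e_2}$-invariant subtree $\mu_{T_1}(G_{e_2})$ by minimality; this translates to $\check\calc^{(2)} \cap (T_1 \times \{m\}) = \mu_{T_1}(G_{e_2}) \times \{m\} \subset Y_m^{(2)}$, yielding $\check\calc^{(2)} \subset \calc^{(2)}$ after translating by $G$. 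Nonemptiness of $Y_m^{(2)}$ is automatic because $T_2$ is a one-edge splitting: the projection $\bigcup_{e_1 \in E(T_1)} \mu_{T_2}(G_{e_1})$ is a nonempty $G$-invariant closed subset of $T_2$ containing some edge (namely $\mu_{T_2}(G_{e_1})$ for any $e_1$, which is a line since $G_{e_1}$ is slender and hyperbolic in $T_2$), and the $G$-orbit of an edge fills $T_2$.

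For connectedness, suppose $Y_m^{(2)}$ is disconnected. The ``moreover'' clause of Lemma \ref{lem_fibres_cnx} then yields a one-edge splitting $S$ over a group $A$ with $A \ll G_{e_2}$ such that some edge stabilizer of $T_1$, namely $G_{e_1}$, acts hyperbolically on $S$; in particular $T_1$ is not elliptic in $S$. Since $A \ll G_{e_2}$ and $G_{e_2}$ is itself an edge stabilizer, we have $c(A) < c(G_{e_2})$ by prepending $G_{e_2} \gg A$ to a maximal $\ll$-chain beneath $A$. This contradicts the minimality of $c(G_{e_2})$ in the definition of $T_2$. The main point of care will be the formulation of the dual of Lemma \ref{lem_sym_cplete} and the verification that its proof carries over verbatim; everything else is a single application of Lemma \ref{lem_fibres_cnx} combined with complexity bookkeeping.
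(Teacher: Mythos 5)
Your proof is correct and follows essentially the same route as the paper's: connectedness of $Y_m^{(2)}$ via the ``moreover'' of Lemma \ref{lem_fibres_cnx} and minimality of complexity gives $\check\calc^{(2)}\subset\calc^{(2)}$, Lemma \ref{lem_inclus} (applied with the factors swapped, using that $T_1$ is minuscule) gives $\calc\subset\check\calc$, and one concludes by the swapped form of Lemma \ref{lem_sym_cplete}. You merely make explicit a few points the paper leaves implicit (full hyperbolicity via Lemma \ref{lem_minuscule_minimal}, nonemptiness of $Y_m^{(2)}$, and the complexity bookkeeping $c(A)<c(G_{e_2})$), all of which check out.
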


Note that $T_1$ and $T_2$ are fully hyperbolic \wrt each other by Lemma \ref{lem_minuscule_minimal}, so $\calc(T_1,T_2)$ and $\check\calc(T_1,T_2)$ are defined.

\begin{proof} 
Let $m$ be the midpoint of an edge $e_2$ of $T_2$. Since edge stabilizers of $T_2$ have minimal complexity,
the second assertion of Lemma \ref{lem_fibres_cnx}
shows that $Y_m^{(2)}=\calc^{(2)}\cap(T_1 \times \{m\})$ is connected.
In particular, $Y_m^{(2)}$ contains $\mu_{T_1}(G_{e_2})\times \{m\}$, and $\calc^{(2)}$ therefore contains all squares $e_1\times e_2$ with $e_1\inc \mu_{T_1}(G_{e_2})$. This shows $\check\calc^{(2)}\subset\calc^{(2)}$.

On the other hand,   $T_1$ being minuscule, Lemma \ref{lem_inclus} implies $\calc\inc\check\calc$. We conclude by Lemma \ref{lem_sym_cplete}.
\end{proof}

\subsubsection{All splittings are minuscule}

\begin{prop}\label{prop_all_minus}
If $G$ is totally flexible and not slender, then every tree $T$ is minuscule.
\end{prop}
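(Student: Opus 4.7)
The plan is to argue by contradiction. Suppose some tree admits a non-minuscule edge stabilizer $G_e$. By Lemma \ref{lem_contenu}, there is a tree $T'$ with a minuscule edge stabilizer $G_{e_1}$ such that $[G_{e_1}:G_{e_1}\cap G_e]<\infty$. Since $c$ is commensurability invariant, it suffices to prove that $G_e$ and $G_{e_1}$ are actually commensurable: this would give $c(G_e)=c(G_{e_1})=0$, contradicting the non-minusculity of $G_e$. So I assume $G_{e_1}\cap G_e$ has infinite index in $G_e$ and work to a contradiction.

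Let $T_1$ be the one-edge splitting over $G_{e_1}$ extracted from $T'$; it is a minuscule $(\cala,\calh)$-tree, non-trivial, hence not universally elliptic by total flexibility of $G$, which forces $G_{e_1}$ itself not to be universally elliptic, and in particular infinite. By Lemma \ref{lem_minimal_hh}, choose a one-edge splitting $T_2$ in which $G_{e_1}$ is hyperbolic, whose edge stabilizers have minimal complexity; then $\calc(T_1,T_2)=\check\calc(T_1,T_2)$ is a pseudo-surface. Form $R=RN(T_1,T_2)$ via Proposition \ref{prop_RN}. Since $G$ is not slender and $T_1$ is minuscule, Proposition \ref{borused} ensures every vertex $v\in\cals$ of $R$ carries a QH vertex group $Q=G_v$ with hyperbolic underlying orbifold $\Sigma$ and fiber $F\in\calf$, with every boundary component used. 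By Proposition \ref{prop_RN}(\ref{compat}), $G_{e_1}$ fixes a unique vertex $v\in\cals$ of $R$ and contains $F$.

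The main step is to show $G_e$ also fixes $v$. Suppose it does not. The subgroup $G_{e_1}\cap G_e$ fixes $v$ (as a subgroup of $G_{e_1}$), so is elliptic in $R$. A short case analysis -- $G_e$ either fixes some $w\neq v$, in which case $G_{e_1}\cap G_e$ fixes $[v,w]$, or it preserves an invariant line $\ell$ without a fixed vertex, in which case the kernel $K_\ell$ of its action on $\ell$ intersected with $G_{e_1}\cap G_e$ has index at most $2$ in $G_{e_1}\cap G_e$ and fixes an edge of $\ell$ or of the bridge from $v$ to $\ell$ -- yields a subgroup $H$ of finite index in $G_{e_1}$ that fixes an edge $\varepsilon$ of $R$ adjacent to $v$. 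Hence $H\subset G_\varepsilon$, whose image in $\pi_1(\Sigma)=Q/F$ is either finite or contained in a boundary subgroup $B$ of $\pi_1(\Sigma)$ (Proposition \ref{prop_RN}(\ref{qh})). Now $F$ is universally elliptic by Lemma \ref{uesle}, whereas $G_{e_1}$ is not; hence $G_{e_1}$ is not commensurable with $F$, and the image $\bar G_{e_1}$ of $G_{e_1}$ in $\pi_1(\Sigma)$ is infinite. Since the image of $H$ has finite index in $\bar G_{e_1}$ and lies in the image of $G_\varepsilon$, that image is infinite and thus contained in $B$. Consequently $G_{e_1}$ has a finite-index subgroup inside the extended boundary subgroup $E\subset Q$ projecting to $B$. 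But by Proposition \ref{prop_RN}(\ref{qh}), $E$ is elliptic in $T_2$, forcing $G_{e_1}$ elliptic in $T_2$, contradicting our choice.

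Therefore $G_e\subset Q$. The subgroup $G_e':=G_eF\subset Q$ is slender, commensurable with $G_e$, and contains $F$. Both $\bar G_e'$ and $\bar G_{e_1}$ are virtually cyclic subgroups of the hyperbolic $2$-orbifold group $\pi_1(\Sigma)$, and they share a finite-index subgroup of $\bar G_{e_1}$; since $\bar G_{e_1}$ is infinite, $\bar G_e'$ is also infinite virtually cyclic, and the two are commensurable. Both contain $F$, so this lifts to commensurability of $G_e'$ and $G_{e_1}$ in $Q$, hence of $G_e$ and $G_{e_1}$ in $G$ -- the desired contradiction. The main obstacle is showing that $G_e$ fixes $v$: it combines the description of $R$ in Proposition \ref{prop_RN}, the universal ellipticity of the fiber provided by Lemma \ref{uesle}, and the ellipticity of extended boundary subgroups in $T_2$ coming from Proposition \ref{prop_RN}(\ref{qh}).
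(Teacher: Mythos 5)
Your proof is correct and follows essentially the same route as the paper's: reduce via Lemmas \ref{lem_contenu} and \ref{lem_minimal_hh} to the regular neighborhood $R=RN(T_1,T_2)$, show that $G_e$ fixes the QH vertex $v$ fixed by $G_{e_1}$, and deduce commensurability of $G_e$ and $G_{e_1}$ from the structure of slender subgroups of a QH group; your only departures are cosmetic (quoting Proposition \ref{borused} instead of treating a slender $G_v$ by hand, and detouring through extended boundary subgroups where (\ref{el1}) already makes the edge stabilizer $G_\varepsilon$ of $R$ elliptic in $T_2$, contradicting hyperbolicity of $G_{e_1}$ at once). The one claim you leave unjustified---that $G_eF$ is commensurable with $G_e$---is both immediate (since $G_{e_1}\cap G_e$ has finite index in $G_{e_1}\supset F$, so $F\cap G_e$ has finite index in $F$) and dispensable, because $G_e\subset G_eF$ already gives $[G_e:G_e\cap G_{e_1}]\le[G_eF:G_eF\cap G_{e_1}]<\infty$.
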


\begin{proof} 
 We may assume that $T$ is  a one-edge splitting. Denoting by $G_e$ an edge stabilizer, 
  Lemma \ref{lem_contenu} yields   a minuscule one-edge tree  $T_1$ with an edge stabilizer $ G_{e_1}$ such that $G_{e_1}\cap G_e$ 
has finite index in $ G_{e_1}$.
Since $G$ is totally flexible, $T_1$ is not universally elliptic, so we can choose $T_2$ as in Lemma    \ref{lem_minimal_hh}. The core of $T_1$ and $T_2$ being symmetric, we can consider the regular neigbourhood decomposition $R=RN(T_1,T_2)$.

By (\ref{compat}), we know that $G_{e_1}$ fixes a 
unique vertex $v$ of $R$, and $v\in\cals$. Its finite index subgroup $G_{e_1}\cap G_e$ also has $v$ as its unique fixed point (it is hyperbolic in $T_2$, while edge stabilizers of $R$ are elliptic by (\ref{el1})).

First 
 assume that $G_v$ is not slender, so that it is QH with fiber $F$. 
Consider any slender group $A$ containing $G_{e_1}\cap G_e$, for instance $G_e$. We claim that $G_{e_1}\cap G_e$ has finite index in $A$.
First, $A$ is elliptic in $R$: otherwise $G_{e_1}\cap G_e$ acts on the axis of $A$, and (being elliptic) has  a subgroup  of index $\le2$ fixing the axis, a contradiction since $G_{e_1}\cap G_e$ fixes only $v$.  
We deduce $A\inc G_v$. 
Since $G_v$ is QH,  
its slender subgroups are contained in extensions of $F$ by virtually cyclic groups. As $G_{e_1}$ contains $F$ with infinite index, and $G_{e_1}\cap G_e$ has finite index in $G_{e_1}$, the index of $G_{e_1}\cap G_e$ in $A$ is finite, so $A$ is commensurable with $G_{e_1}$. Applied to $A=G_e$, this argument shows that $G_e$ is minuscule. 

If $G_v$ is slender,   there has to be at least one edge incident to $v$  since we assume that $G$ is not slender.
By (\ref{qh}), the stabilizer of this edge is commensurable with $F$.
On the other hand, $F$ is contained in $G_{e_1}$; it is elliptic in $T_2$, but $G_{e_1}$ is not, so $F \ll G_{e_1}$.
This shows that $G_{e_1}$ is not minuscule, a contradiction.
\end{proof}

\subsection{Slenderness in trees} \label{slt}

We have proved that non-slender flexible  vertex groups of JSJ decompositions are QH when $\cala$ satisfies a stability condition and consists of slender groups. We now consider edge groups which are only slender in trees. 

Recall that $A$ is 
slender in   $(\cala,\calh)$-trees\index{slender in $(\cala ,\calh )$-trees}
if,  whenever $G$ acts on a tree, $A$ fixes a point or leaves a    line invariant.

We now have the following generalization of Theorems \ref{thm_description_slender} and \ref{thm_description_slenderZ}:

\begin{thm}\label{thm_description_slender_arbres}
  Let $G$ be finitely presented relative to a finite family $\calh$ of finitely generated subgroups. Suppose that all groups in $\cala$ are slender in $(\cala,\calh)$-trees, and $\cala$ satisfies  ($SC$) or ($SC_\calz$), with fibers in a family $\calf$.

If $Q$ is a  flexible vertex group of a JSJ decomposition of $G$ over $\cala$ relative to $\calh$, and $Q$ is not slender in $(\cala,\calh)$-trees, then  (as in Theorems \ref{thm_description_slender} and \ref{thm_description_slenderZ}) 
$Q$ is QH with fiber in $\calf$ (and the orbifold has no mirror in the ($SC_\calz$) case). 
\end{thm}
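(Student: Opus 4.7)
The plan is to follow the same overall strategy as for Theorems \ref{thm_description_slender} and \ref{thm_description_slenderZ}. First, I would carry out the reduction of Subsection \ref{sec_reduc}: apply the (to-be-proved) relative version of Theorem \ref{thm_totally_flex} to the vertex group $Q$ itself, with class of allowed edge groups $\cala_{ v}=\{A\in\cala\mid A\subset Q\}$ and relative structure $\Inch_v$. By Lemma \ref{extens}, any $(\cala_v,\Inch_v)$-splitting of $Q$ extends to an $(\cala,\calh)$-splitting of $G$, so $\cala_v$ consists of groups slender in $(\cala_v,\Inch_v)$-trees, $\cala_v$ satisfies the same stability condition with fibers $\calf_v$, and $Q$ is not slender in $(\cala_v,\Inch_v)$-trees (otherwise it would be slender in $(\cala,\calh)$-trees, contradicting the hypothesis). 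Flexibility of $Q$ together with Corollary \ref{cor_flex} implies total flexibility over $\cala_v$ relative to $\Inch_v$. Note that $Q$ is finitely presented relative to $\Inch_v$ by Proposition \ref{Gvrelfp}.

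The heart of the proof is therefore to re-run the argument of Subsections \ref{fpc}--\ref{preuve} under the assumption that groups in $\cala$ are only slender in $(\cala,\calh)$-trees. The key observation is that throughout these subsections, whenever slenderness of an edge stabilizer $G_e\in\cala$ is used, it is used to act on some other $(\cala,\calh)$-tree $T'$, to conclude that $G_e$ fixes a point or preserves a line in $T'$. This is precisely what the hypothesis ``$G_e$ is slender in $(\cala,\calh)$-trees'' provides. In particular: the minimal invariant subtrees $\mu_{T_2}(G_e)$ entering the definition of the core $\calc(T_1,T_2)$ are still lines (Subsection \ref{defco}); the symmetry arguments of Subsection \ref{symmco} go through verbatim; and the construction of the regular neighbourhood $R$ in Subsection \ref{rn} proceeds unchanged.

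The two places requiring a bit more care are the universal ellipticity of the fiber and the dichotomy slender-vs-QH at vertices $v\in\cals$. For universal ellipticity, I generalize Lemma \ref{uesle} as follows: if $F$ is slender in $(\cala,\calh)$-trees but acts non-trivially on some $(\cala,\calh)$-tree, then $F$ leaves a unique line $\ell$ invariant, and this line is $Q$-invariant since $F$ is normal in $Q$; thus $Q$ is an extension of a group in $\cala$ by a virtually cyclic group, and Lemma \ref{extslen} combined with the stability condition implies $Q$ is slender in $(\cala,\calh)$-trees, contradicting our hypothesis. For the dichotomy in Proposition \ref{prop_RN}(\ref{qh}), the same proof yields that $G_v$ is either QH (if the underlying orbifold is hyperbolic) or an extension of the fiber $F\in\calf$ by a virtually $\Z^2$ group, and Lemma \ref{extslen} (applied twice) now shows that in the latter case $G_v$ is slender in $(\cala,\calh)$-trees. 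Finally, in Subsection \ref{preuve}, the triviality of the regular neighbourhood of the filling pair $(U,V)$ forces $G$ itself (here $Q$ after reduction) to be QH or slender in $(\cala,\calh)$-trees; the hypothesis rules out the latter.

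The main obstacle is purely bookkeeping: one must verify that the property ``slender in $(\cala,\calh)$-trees'' behaves well under the group-theoretic operations arising in the proof — extensions (handled by Lemma \ref{extslen}), passage to subgroups (immediate from the definition), and passage from $G$ to a vertex stabilizer $Q$ with auxiliary family $\Inch_v$ (handled by Lemma \ref{extens}, which shows that $(\cala_v,\Inch_v)$-trees are exactly the restrictions of $(\cala,\calh)$-trees refining the ambient JSJ tree at $v$). None of these verifications requires any new idea beyond those already present in the proofs of Theorems \ref{thm_description_slender} and \ref{thm_description_slenderZ}; the construction of the core, the filling pair, and the regular neighbourhood all depend only on the existence of invariant lines for $\cala$-edge groups acting on $(\cala,\calh)$-trees, which is the only property used from slenderness.
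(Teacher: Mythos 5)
Your overall route is the paper's route: reduce to a totally flexible group via Subsection \ref{sec_reduc} and Proposition \ref{Gvrelfp}, re-run the core/regular-neighbourhood/filling-pair machinery with ``slender'' replaced by ``slender in $(\cala,\calh)$-trees'' using Lemma \ref{extslen}, and replace Lemma \ref{uesle} by the statement that the fiber is universally elliptic provided $Q$ itself is \emph{not} slender in $(\cala,\calh)$-trees (this is exactly Lemma \ref{uesle2} in the paper). Up to that point your proposal matches the intended argument.

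The genuine gap is that you only re-run Subsections \ref{fpc}--\ref{preuve} and never address Subsection \ref{sec_all_minus}, i.e.\ the proof that \emph{all} splittings of a totally flexible group are minuscule -- a hypothesis that Subsection \ref{preuve} (and Proposition \ref{prop_exist_hh}) explicitly requires. This is precisely where your closing claim, that ``the only property used from slenderness is the existence of invariant lines,'' breaks down, in two places. First, the proof of Lemma \ref{lem_access_tiny} bounds the length of a chain $A_0\gg A_1\gg\dots\gg A_n$ by a linear-independence argument in $\mathrm{Hom}(B_0,\Z)$ with $B_0$ a finite-index subgroup of $A_0$; this uses that $A_0$ is finitely generated, which a group merely slender in $(\cala,\calh)$-trees need not be. The paper's fix is to accept that the complexity $c(A)$ may be infinite and to observe that Lemmas \ref{lem_contenu} and \ref{lem_minimal_hh} survive because every tree is dominated by a tree with finitely generated edge stabilizers (Corollary \ref{cor_fg_rel}, using relative finite presentation). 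Second, the last step of Proposition \ref{prop_all_minus} uses that any slender subgroup $A\subset G_v$ of a QH group containing $G_{e_1}\cap G_e$ sits in an extension of the fiber $F$ by a virtually cyclic group; for $A$ only slender in $(\cala,\calh)$-trees this needs a new argument, namely the claim that such an $A$ has virtually cyclic image in $\pi_1(\Sigma)$: if $A$ is not an extended boundary subgroup, take a splitting dual to a geodesic in which $A$ is not elliptic; then $A/(A\cap F)$ acts on a line with virtually cyclic edge stabilizers, so is virtually abelian, hence virtually cyclic since it embeds in $\pi_1(\Sigma)$. Without these two repairs the minuscule hypothesis is unverified and the argument of Subsection \ref{preuve} that you invoke does not apply (Remark \ref{boncas} only dispenses with it under an extra non-splitting hypothesis you have not assumed).
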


This applies   whenever groups in $\cala$ are assumed to be slender or contained in a group of $\calh$ (up to conjugacy), for instance to  splittings of relatively hyperbolic groups over elementary subgroups relative to parabolic subgroups. 

 Theorem \ref{thm_description_slender_arbres} is proved as  in the slender case, with just a few changes  which we now describe.

 The arguments in Subsections \ref{fpc}
 through \ref{tfg} extend directly,   replacing ``slender'' by ``slender in $(\cala,\calh)$-trees'' and using Lemma \ref{extslen}. The proof of 
Proposition \ref{prop_exist_hh}
 uses Lemma \ref{uesle}, which we replace by the following statement:
 
 \begin{lem} \label{uesle2}
Let $Q$ be a QH vertex group.
  If $F$ and all groups in $\cala$ are slender in  $(\cala,\calh)$-trees, but $Q$ is not, then  $F$ is universally elliptic.  \end{lem}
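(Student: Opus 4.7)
The plan is to mimic the proof of Lemma~\ref{uesle} exactly, substituting the property ``slender in $(\cala,\calh)$-trees'' for ``slender'' at every step, and using Lemma~\ref{extslen} to close the argument in place of the direct appeal to slenderness of extensions. The key point is that the original proof extracts a canonical $F$-invariant line, uses normality $F\normal Q$ to promote it to a $Q$-invariant line, and then concludes that $Q$ is (virtually cyclic)-by-$\cala$; in our setting the same structural conclusion yields slenderness in $(\cala,\calh)$-trees via Lemma~\ref{extslen}, which is precisely the hypothesis we are contradicting.

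First, I would assume for contradiction that $F$ is not universally elliptic, so there exists an $(\cala,\calh)$-tree $T$ in which $F$ does not fix a point. Since $F$ is slender in $(\cala,\calh)$-trees, there is an $F$-invariant line $\ell\subset T$; by Corollary~\ref{pasirr} (together with the fact that $F$ must then contain hyperbolic elements, otherwise it would fix an end rather than leave a line invariant) this line is unique. Next I would use that $F\normal Q$: for any $q\in Q$, the conjugate subgroup $qFq\m=F$ leaves $q\ell$ invariant, and uniqueness forces $q\ell=\ell$. Therefore $\ell$ is $Q$-invariant.

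The main step is then to read off the structure of $Q$ from its action on $\ell$. Let $K$ be the pointwise stabilizer of $\ell$ in $Q$. Since $\ell$ contains at least one edge of $T$, $K$ fixes an edge of $T$, hence $K\in\cala$; by hypothesis $K$ is slender in $(\cala,\calh)$-trees. The quotient $Q/K$ embeds in the simplicial isometry group of $\ell$, which is a subgroup of $D_\infty=\Z\rtimes\Z/2$, and is in particular slender. Applying Lemma~\ref{extslen} to the exact sequence $1\to K\to Q\to Q/K\to 1$, we conclude that $Q$ itself is slender in $(\cala,\calh)$-trees, contradicting the hypothesis of the lemma.

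The only mild subtlety, and the one I would double-check, is the uniqueness of the invariant line $\ell$: this is what lets us transport the line across $Q$ using normality of $F$. Slenderness in $(\cala,\calh)$-trees a priori only asserts the \emph{existence} of a fixed point or an invariant line, but once there is no fixed point, $F$ must act hyperbolically (otherwise Proposition~\ref{trois} would force $F$ to fix a unique end, with no invariant line), and then $\ell$ is forced to be the unique minimal $F$-invariant subtree $\mu_T(F)$; this gives uniqueness and the argument goes through. Note that unlike Lemma~\ref{uesle}, we do \emph{not} need to invoke hyperbolicity of $\Sigma$ at the end, because the contradiction is now intrinsic to the hypothesis on $Q$ rather than to the mapping $Q\onto\pi_1(\Sigma)$.
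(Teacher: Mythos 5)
Your proof is correct and is essentially the paper's own argument: the paper's proof of Lemma \ref{uesle2} simply says that, as in Lemma \ref{uesle}, $Q$ is an extension of a group in $\cala$ by a virtually cyclic group and hence slender in $(\cala,\calh)$-trees by Lemma \ref{extslen}, contradicting the hypothesis. Your added care about uniqueness of the invariant line (via Corollary \ref{pasirr}) and your remark that hyperbolicity of $\Sigma$ is no longer needed are both accurate, but they do not change the route.
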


  \begin{proof} If $F$ is not universally elliptic, then as in the proof of Lemma \ref{uesle}  $Q$ is an extension of   
 a group in $\cala$ by   a   virtually cyclic group, 
so is slender in  $(\cala,\calh)$-trees by Lemma \ref{extslen}.
\end{proof}

   In Subsection \ref{sec_all_minus}, Lemma \ref{lem_access_tiny}  requires $A_0$
to be  finitely generated, but the other groups $A_i$ may be arbitrary. Complexity may be infinite, 
but Lemmas \ref{lem_contenu} and \ref{lem_minimal_hh} remain valid because every tree is dominated by a tree with finitely generated stabilizers (see Corollary \ref{cor_fg_rel}).

 At the end of  the proof of Proposition \ref{prop_all_minus}  we need to know that a subgroup   $A\inc G_v$ which is slender in $(\cala,\calh)$-trees is contained in an extension of $F$ by a virtually cyclic group.  More generally, suppose that $Q$ is QH and the underlying orbifold $\Sigma$ contains an essential  simple geodesic. We claim that, \emph{if  $A\inc Q$   is slender in $(\cala,\calh)$-trees, then its image in $\pi_1(\Sigma)$ is virtually cyclic.} 
 
 To see this,  
 assume that $A$ is not an extended boundary subgroup. As in  Proposition \ref{prop_QHUE},  we consider a splitting of $Q$ dual to a\ geodesic $\gamma$,  in which $A$ is not elliptic. 
  The group  $A/(A\cap F)$ acts on a line with virtually cyclic edge stabilizers, so is virtually abelian, hence virtually cyclic because  it embeds into $\pi_1(\Sigma)$. This proves the claim.

 \subsection{Slender flexible groups}\label{pslflex}

\index{flexible vertex, group, stabilizer}\index{slender group}
In this subsection we consider  a slender flexible vertex group $Q$  of a JSJ tree. Whenever $G$ acts on a tree and $Q$ does not fix a point, there is a unique $Q$-invariant line and the action of $Q$ on this line gives rise to a map $\varphi:Q\to D_\infty\simeq \text{Isom}(\Z)$ whose image is isomorphic to $\Z$ (if orientation is preserved) or to $D_\infty$. 

A natural analogue of Theorem \ref{thm_description_slender} would be the following statement:
\begin{enumerate}
\item $Q$ maps onto the fundamental group of a compact Euclidean 2-orbifold $\Sigma$, with fiber in $\calf$; 
\item incident edge groups have finite image  in $\pi_1(\Sigma)$ (note that Euclidean 2-orbifolds whose fundamental group is not virtually cyclic have empty boundary);
\item if $G$ acts on a tree and $Q$ does not fix a point, the action of $Q$ on its invariant line factors through $\pi_1(\Sigma)$.
\end{enumerate}

Unfortunately,   Assertion (3) of  this statement is not correct (consider   splittings of $\Z^n$, with $n\ge3$).
We replace it by      two assertions.

\begin{prop} \label{slflex}
Let $G,\calh,\cala$ be as in Theorem \ref{thm_description_slender} or Theorem  \ref{thm_description_slenderZ}. Let $Q$ be a  slender flexible vertex group   of a JSJ tree. 

\begin{enumerate}
\item $Q$
is an extension $1\to F\to Q\to O\to 1$, with $F\in\calf$ and $O$ virtually $\Z^2$. Incident edge groups are contained in finite extensions of $F$ (they are finite if groups in $\cala$ are virtually cyclic). 

\item  there is a map $\psi:Q\to (D_\infty)^n$, for some $n\ge2$, whose image has finite index;  incident edge groups have finite image; 
if $G$ acts on a tree and $Q$ does not fix a point, the action of $Q$ on its invariant line factors through $\psi $.
\end{enumerate}

\end{prop}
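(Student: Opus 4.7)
The plan is to reduce to the case where $Q$ itself is totally flexible with respect to the restricted data $(\cala_v,\Inch_v)$: by Corollary~\ref{cor_flex} and Lemma~\ref{lem_passage}, $Q$ admits non-trivial splittings relative to $\Inch_v$ but none over a universally elliptic subgroup, so $Q$'s own JSJ deformation space (relative to $\Inch_v$) is trivial. Working henceforth with $Q$ as the ambient group, I exploit slenderness: every non-trivial minimal action of $Q$ on an $(\cala_v,\Inch_v)$-tree preserves a unique invariant line, yielding a surjection $\varphi:Q\to K$ with $K\in\{\Z,D_\infty\}$ whose kernel $F_\varphi$ is the edge stabilizer. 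Hence $F_\varphi\in\cala_v$, and by $(SC)(1)$, $F_\varphi\in\calf$. Under $(SC_\calz)$ only translation actions occur, which will account for the absence of mirrors in the Euclidean orbifold obtained below.

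Flexibility produces a first such map $\varphi_1:Q\to K_1$ with kernel $F_1\in\calf$. Total flexibility implies $F_1$ is not universally elliptic, so there exists $\varphi_2:Q\to K_2$ with $\varphi_2(F_1)\ne 0$, i.e., $F_1\not\subset F_2$. Applying $(SC)(1)$ to the exact sequence
\[
1\to F_1\cap F_2\to F_1\to \varphi_2(F_1)\to 1
\]
(the image $\varphi_2(F_1)$ is a non-trivial subgroup of $K_2$, hence isomorphic to $\Z$ or $D_\infty$) gives $F:=F_1\cap F_2\in\calf$. The quotient $Q/F$ is an extension of $K_1$ by the virtually cyclic group $F_1/F\simeq\varphi_2(F_1)$, so virtually polycyclic of Hirsch length $2$, and since $Q$ is slender, $Q/F$ is virtually $\Z^2$. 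Every group in $\Inch_{|Q}$ is elliptic in both $T_1$ and $T_2$, hence has finite image in $K_1\times K_2$; equivalently, it is contained in a finite extension of $F$. Taking $\psi:=(\varphi_1,\varphi_2):Q\to K_1\times K_2\hookrightarrow D_\infty\times D_\infty$, its image has finite index in the ambient virtually $\Z^2$ group, yielding the map required in (2) with $n=2$.

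To complete (2) — every action of $Q$ on a tree factoring through $\psi$ — and to confirm that $F$ is the right fiber (so that $Q/F$ does not acquire extra Hirsch length when incorporating further splittings), the plan is to choose $T_1$ and $T_2$ to be fully hyperbolic with respect to each other and then apply the regular neighborhood machinery of Subsection~\ref{rn}. In this slender setting the core $\calc(T_1,T_2)$ is the full product $T_1\times T_2\simeq\R^2$, the symmetry $\check\calc=\calc$ is automatic since both $F_1$ and $F_2$ act cocompactly on the opposite tree, and Proposition~\ref{prop_RN} applies. Total flexibility forces the regular neighborhood $R(T_1,T_2)$ to be trivial, since a non-trivial $R$ would by property (\ref{el1}) provide a universally elliptic $\cala_v$-tree. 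Consequently $Q$ itself plays the role of the slender vertex of property (\ref{qh}), with $Q/F$ the fundamental group of a closed Euclidean $2$-orbifold $\Sigma$ (without mirrors under $(SC_\calz)$); any non-trivial action of $Q$ on a tree factors through $\pi_1(\Sigma)$ by the Euclidean analogue of Proposition~\ref{dual}, and hence through $\psi$.

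The main obstacle will be the construction of the fully hyperbolic pair in the slender totally flexible case, since Proposition~\ref{prop_exist_hh} cannot be applied directly: the proof of that proposition requires all trees to be minuscule, which is established via Proposition~\ref{prop_all_minus} only when $G$ is not slender. I plan to bypass this by a direct iterative argument: starting from $\varphi_1$ (flexibility) and an auxiliary $\varphi_2$ with $F_1$ hyperbolic in $T_2$ (total flexibility applied to $F_1$), if $F_2$ happens to be elliptic in $T_1$ then $F_2$ is contained in a vertex stabilizer of $T_1$ (a finite extension of $F_1$), and one substitutes another splitting supplied by total flexibility applied to $F_2$; the process terminates thanks to the descending chain condition on subgroups of the slender group $Q$.
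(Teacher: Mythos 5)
Your Assertion (1) is essentially the paper's own argument (two splittings of $Q$, kernel of the product map lands in $\calf$ by the stability condition, quotient is virtually $\Z^2$), and that part is fine. The gap is in Assertion (2), and it is exactly the failure the paper flags in the paragraph preceding the proposition. You take $\psi=(\varphi_1,\varphi_2)$, so $n=2$, and you claim, via the regular neighborhood of one fully hyperbolic pair and ``the Euclidean analogue of Proposition \ref{dual}'', that every line-action of $Q$ factors through $\pi_1(\Sigma)=Q/F$ and hence through $\psi$. For an action of $Q$ on a line to factor through $Q/F$ you need the fiber $F$ to act trivially on that line, i.e.\ to be elliptic in the given splitting (this is the hypothesis of Lemma \ref{dual2}(1)); but $F$ is not universally elliptic in general, which is precisely what the paper's remark after the statement warns about. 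The counterexample is the one the paper cites: $Q=\Z^n$, $n\ge 3$ (e.g.\ $G=Q=\Z^3$, slender and totally flexible over abelian or slender groups). Any two independent surjections $\varphi_1,\varphi_2:\Z^3\to\Z$ give $F\simeq\Z$ and $Q/F\simeq\Z^2$, yet a third independent surjection $\Z^3\to\Z$ gives a splitting in which $F$ is hyperbolic, so its line-action does not factor through $Q/F$, hence not through your $\psi$. Triviality of $R(T_1,T_2)$ only constrains the two chosen splittings: Proposition \ref{prop_RN} produces a virtually $\Z^2$ quotient adapted to that particular pair, and different pairs give genuinely different quotients, so no single pair (no $n=2$) can work.

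This is why the paper's proof of (2) takes a different route: it considers \emph{all} homomorphisms $\varphi_\alpha:Q\to D_\infty$ coming from actions of $G$ on trees, sets $J=\bigcap_\alpha\ker\varphi_\alpha$ and $D=Q/J$, lets $D'$ be the finite-index free abelian subgroup acting by translations in every such action, and chooses finitely many $\varphi_{\alpha_i}$ whose induced maps $p_{\alpha_i}:D'\to\Z$ form a $\Q$-basis of the span in $\mathrm{Hom}(D',\Z)$; the product map is then shown to be injective on $D$ (a commutator trick handles the order-2 reflections) with finite-index image in $(D_\infty)^n$, where $n$ is the rank of $D'$ and may exceed 2, and the kernel $J$ is not claimed to lie in $\calf$. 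A secondary crack in your write-up: you justify termination of your iterative construction of a fully hyperbolic pair by a ``descending chain condition on subgroups of the slender group $Q$'', but slenderness gives the ascending, not the descending, chain condition (e.g.\ $\Z\supset 2\Z\supset 4\Z\supset\cdots$); the paper controls such descents by the complexity of Lemma \ref{lem_access_tiny}, not by DCC. None of this affects your proof of Assertion (1).
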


In the first assertion, we do not claim that actions of $Q$ factor through the fundamental group of the 2-orbifold $O$ (the fiber does not have to be universally elliptic). In the second one, $Q$ maps to an $n$-dimensional Euclidean orbifold, but we do not claim that the kernel is in $\calf$. 

The first assertion only requires the first half of the stability condition,   the second one requires no stability condition.

The proposition remains true if $Q$ is only assumed to be slender in $(\cala,\calh)$-trees. 

\begin{example}  
The group $\bbZ^n$ has infinitely many  slender splittings, given by its morphisms to $\bbZ$.
The Klein bottle group  $\grp{a,t\mid tat\m=a\m}=\grp{t}*_{t^2=v^2}\grp{v}$
has exactly two  splittings,
corresponding to these two     presentations  (see Subsection \ref{sec_G_small}). They correspond to   morphisms to  $D_\infty$ and $\bbZ$ respectively.

In \cite{Beeker_abelian}, Beeker classified all flexible groups that occur in the abelian JSJ decomposition of the fundamental group of a graph of free abelian groups.
In particular, he exhibits a twisted Klein bottle group $K'=\grp{b_0,b_1\mid [b_0,b_1^2]=1,[b_0^2,b_1]=1}$
which splits as an amalgam $\grp{b_0,b_1^2}*_{\grp{b_0^2,b_1^2}}\grp{b_0^2,b_1}$
and also as an HNN extension (see Proposition 4.1 in \cite{Beeker_abelian}).
\end{example}

\begin{proof}
We sketch the proof of (1). Being flexible, $Q$ admits two different splittings relative to $\Inch_{ | Q}$. They are given by epimorphisms $\varphi_i:Q\to K_i$, with $K_i$ equal to $\Z$ or $D_\infty$. The image $O$ of $\varphi:Q\to K_1\times K_2$ is not virtually cyclic, so is virtually $\Z^2$. The kernel $F$ is in $\calf$ because it is also the kernel of $\varphi_{ | \ker \varphi_1}$, with $\ker \varphi_1\in\cala$. If $G_e$ is an incident edge group, its image by $\varphi_i$ has order at most 2 because $G_e$ is universally elliptic, hence  elliptic in the corresponding splitting of $Q$. 

For (2), we consider all non-trivial actions of $Q$ on the line which are restrictions of actions of $G$ on trees. We view them as non-trivial homomorphisms $\varphi_\alpha:Q\to D_\infty$ (there may be infinitely many of them). Let $J=\cap\ker \varphi_\alpha$ be the subgroup of $Q$ consisting of elements always acting as the identity, 
 and let $D=Q/J$.
 We shall show that $D$ embeds with finite index in some $ (D_\infty)^n$.
 
 The maps $\varphi_\alpha$ induce maps $\bar\varphi_\alpha:D\to D_\infty$, and $\cap\ker\bar\varphi_\alpha$ is trivial. 
 Let $D'$ be the subgroup of $D$ consisting of elements always acting as translations. It is abelian, torsion-free, has finite index (it contains  the intersection of all subgroups of index 2), and contains the commutator subgroup. Let $n$ be its rank.
 
 Each $\bar\varphi_\alpha$ induces $p_\alpha: D'\to \Z$. The subgroup of $\text{Hom}(D',\Z)$ generated by the $p_\alpha$'s has finite index because $\cap\ker p_\alpha$ is trivial, and we choose a finite family $\bar\varphi_{\alpha_i}$ ($1\le i\le n$) such that the corresponding $p_{\alpha_i}$'s are a basis over $\Q$. We show that the product map $$\Phi=\prod_{i=1}^n \bar\varphi_{\alpha_i}:D\to (D_\infty)^n$$ is injective and its image has finite index.
 
 Let $x\in D$. If it  has infinite order, there is a non-trivial $x^k$ in $D'$. This $x^k$ is mapped non-trivially by some $p_\alpha$, hence by some $p_{\alpha_i}$, hence by $\bar\varphi_{\alpha_i}$, hence by $\Phi$, so $x\notin\ker\Phi$. If $x$ has finite order and $x\ne1$, the order is 2 and some $\bar\varphi_{\alpha}$ maps it to a nontrivial reflection. We can find $y\in D$ such that $z=[x,y]$ is mapped by $\bar\varphi_{\alpha}$ to a non-trivial translation. The element $z$ is in $D'$, and as in the previous case $z\notin\ker\Phi$. It follows that $\Phi$ is injective. Its image has finite index because $D$ and $ (D_\infty)^n$ are both virtually $\Z^n$.

 Composing $\Phi$ with 
  the quotient map
 $  Q\to D$  yields a map $\psi:Q\to (D_\infty)^n$ whose image has finite index. As in Assertion (1), incident edge groups are universally elliptic, so their image by any $\varphi_\alpha$ has order at most 2. This implies that their image  by $\psi$ is finite. The last claim is obvious from the way $D$ was defined.
  \end{proof}
  
  \begin{rem} Similar arguments show that a finitely generated group is residually $D_\infty$ if and only if it is a finite index subgroup of a direct product whose factors are isomorphic to $D_\infty$ or $\Z/2\Z$.
  \end{rem}

\part{Acylindricity}
\label{partacyl}

In the previous chapters we have studied the JSJ deformation space.
Its existence was guaranteed by Dunwoody's accessibility,  which requires  finite presentability.
In this chapter we propose a construction based on the idea of acylindrical accessibility.

Sela \cite{Sela_acylindrical} defined a tree to be $k$-acylindrical if fixed point sets of non-trivial elements have   diameter bounded by $k$. Since we allow $G$ to have  torsion, the following definition is better adapted  \cite{Delzant_accessibilite}. 

\begin{dfn*}[Acylindrical] \index{acylindrical tree, splitting}
A tree $T$ is \emph{$(k,C)$-acylindrical}\index{0KC@$(k,C)$-acylindrical}\index{KC@$(k,C)$-acylindrical} if the pointwise stabilizer of every arc of length $\geq k+1$ is finite, of cardinality $\le C$.  
\end{dfn*}

\begin{example*} 
Let $\Sigma$ be a 2-orbifold as in Subsection \ref{orb}. The splitting of $\pi_1(\Sigma)$
 dual to a family $\call$ of geodesics is 2-acylindrical (1-acylindrical if all geodesics are 2-sided).  
If $\Sigma$ is the underlying orbifold of   a QH vertex group $Q$, the splitting of $Q$ dual to $\call$ is $(2,C)$-acylindrical if and only if the fiber $F$ is finite with order $\le C$.
\end{example*}

Acylindrical accessibility bounds the number of orbits of edges in any $(k,C)$-acylindrical tree, under the assumption that  $G$ is \emph{finitely generated} 
\cite{Sela_acylindrical,Delzant_accessibilite,Weidmann_accessibility}.

Since finite presentability is no longer required, this allows us for instance to construct JSJ decompositions of hyperbolic groups relative to 
  an infinite collection of infinitely generated subgroups, and   JSJ decompositions of finitely generated torsion-free CSA groups.

In order for our approach to work, we must  be able to  produce a $(k,C)$-acylindrical tree,
for some uniform constants $k$ and $C$, out of  an arbitrary tree.
We do so 
using the \emph{tree of cylinders} introduced in \cite{GL4}.

As an additional benefit, this construction (when available) produces a canonical tree\index{canonical tree} $T$ out of a   deformation space $\cald$.  If $\cald$ is invariant under automorphisms of $G$, so is $T$ 
  (see \cite{GL5,GL6} for applications of this construction, 
\cite{Bo_cut,ScSw_regular+errata,KhMy_effective,PaSw_boundaries} for other constructions of canonical decompositions,
and \cite{Lev_automorphisms,DaGr_isomorphism,BuKhMi_isomorphism,DG2,DaTo_isomorphism} 
for other  uses of such a canonical decomposition).
  A  general construction of another  invariant  tree, based on compatibility of splittings,
 will be given in Part \ref{chap_compat} (compatibility JSJ tree).

The tree of cylinders is introduced in Section \ref{sec_cyl}, and Section \ref{jsjac} describes our construction of JSJ decompositions
based on acylindricity.  Applications are given in Section \ref{exam}.

Unless indicated otherwise, we only assume that $G$ is finitely generated, and $\calh$ may be arbitrary.

\section{Trees of cylinders}\label{sec_cyl}

\index{tree of cylinders}
In the previous chapters we have defined and studied the JSJ deformation space, consisting of all JSJ trees.  
It is much better  to be able to find a   canonical
tree. 
A key example is provided by one-ended hyperbolic groups: Bowditch \cite{Bo_cut}\index{Bowditch} constructs a virtually cyclic JSJ tree using only the topology of the boundary $\partial G$; it is $\Out(G)$-invariant by construction. 

Unfortunately, it is not always possible to find an invariant JSJ tree. If for instance $G$ is free, the JSJ deformation space consists of  all  trees with a free action of $G$ (see Subsection \ref{free}), but (see Subsection \ref{freeg}) it is easy to check that the only $\Out(G)$-invariant tree is the trivial one (a point). 

 In this section we describe a construction, the \emph{(collapsed) tree of cylinders} \cite{GL4}, 
which,   under certain conditions,
associates   a new, nicer tree  to a given tree $T$.
 The first feature of this new tree is that it depends only on the deformation space $\cald$ of $T$.
Its second feature of interest to us here is that, under suitable assumptions, it is acylindrical with uniform constants, 
and   it lies in $\cald$ or at least in a deformation space not too different from $\cald$
 (the new tree is smally dominated  by  $T$  in the sense of Definition \ref{sm}).
Its third feature lies in its compatibility properties (in the sense of common refinements, see Subsection \ref{sec_morphisms});
this  will be used in Chapter \ref{chap_compat} to provide examples of compatibility JSJ trees.

  The results of this section and the next (where JSJ decompositions are constructed) will be  summed up in Corollary \ref{synthese}, which gives conditions ensuring that there is a canonical JSJ tree, and that its flexible vertices are QH with finite fiber.

\subsection{Definition}\label{defcyl}
We   recall the definition and some basic properties of the tree of cylinders (see  \cite{GL4} for details).

As usual, we fix 
  $\cala$ and $\calh$ and we restrict to $(\cala,\calh)$-trees. 
We let $\cale$\index{0AF@$\cale$: infinite groups in $\cala$} 
be the family of infinite  groups in $\cala$.  In applications we  assume  that $G$ is one-ended relative to $\calh$, so all trees have   edge stabilizers in $\cale$. The family $\cale$ is not stable under taking subgroups, 
  but it is \emph{sandwich closed}:\index{sandwich closed family of subgroups}
if $A,B\in\cale$  and $A< H< B$, then $H\in\cale$.

\begin{dfn}[Admissible equivalence relation]\label{eqrel}
An equivalence relation $\sim$ on $\cale$ is \emph{admissible}\index{admissible equivalence relation on $\cale$}  
(relative to $\calh$)  if the following axioms hold for any $A,B\in\cale$:
  \begin{enumerate}
\item If $A\sim B$ and $g\in G$, then $gAg\m\sim gBg\m$ (invariance under conjugation).
  \item If   $A\subset B$, then $A\sim B$ (nesting implies equivalence).
  \item \label{ax_3} Let $T$ be
  a tree  (relative to $\calh$) with infinite edge stabilizers. If $A\sim B$, and $A$, $B$ fix
$a,b\in T$ respectively, then for each edge $e\subset [a,b]$ one has $G_e\sim A\sim B$. 
  \end{enumerate}
  The equivalence class of $A\in\cale$ will be denoted by $\cla$.\index{0A@$[A]$: the equivalence class of $A\in \cala_\infty$}
   We let  $G$ act on $\cale/{\sim}$ by conjugation, and   the stabilizer of $A$  will be denoted by $G_\cla$.\index{0GA@$G_{[A]}$: the stabilizer of $[A]$}
\end{dfn}

Here are a few examples.
They will be studied in detail later (see Section \ref{exam}).
\begin{enumerate}
\item If $G$ is a torsion free CSA\index{CSA group} group (for instance a limit group, or more generally a toral relatively hyperbolic group), we can take for $\cale$ the set of infinite abelian subgroups,
and for $\sim$   the commutation relation: $A\sim B$ if $\grp{A,B}$ is abelian. The group $G_\cla$ is the maximal abelian subgroup containing $A$. 

\item If $G$ is  a relatively hyperbolic group\index{relatively hyperbolic group}  with small parabolic subgroups, we can take for $\cale$ the set of infinite elementary subgroups (a subgroup is elementary if it is virtually cyclic or parabolic; in this case, this is equivalent to being small).
The relation  $\sim$ is  co-elementarity:\index{co-elementary subgroups}
$A\sim B$ if and only if $\langle A,B\rangle$ is elementary.  The group $G_\cla$ is the maximal elementary subgroup containing $A$. 
We may also allow non-small parabolic subgroups, provided that we include them in $\calh$
 (i.e.\ we only consider   splittings relative
to the parabolic groups). 

\item
 $\cale$ is  the set of  infinite virtually cyclic\index{virtually cyclic} subgroups, and   $\sim$ is 
the commensurability relation ($A\sim B$ if and only if $A\cap B$ has finite index in $A$ and $B$). The group $G_\cla$ is the commensurator of $A$.\index{commensurable}\index{commensurator}
\end{enumerate}

Given  an admissible equivalence relation $\sim$ on $\cale$, we now associate a tree of cylinders $T_c$ to any tree $T$ with infinite edge stabilizers.

 We declare two edges  $e,f$ to be equivalent if $G_e\sim G_f$ (these groups are assumed to be in $\cala$, and they are in $\cale$ because they are infinite). 
The union of all edges in  an equivalence class is a subtree by axiom (\ref{ax_3}). 
Such a subtree $Y$ is called a  \emph{cylinder}\index{cylinder in a tree} of $T$; two distinct cylinders meet in at most one point. 
  The equivalence class in $\cale/{\sim}$ containing the stabilizers of edges of $Y$ will be denoted by $[Y]$.

\begin{dfn}[Tree of cylinders] \label{treecyl} \index{tree of cylinders}\index{0TC@$T_c$: tree of cylinders}
Given a tree $T$ with   edge stabilizers in $\cale$, its
  \emph{tree of cylinders} $T_c$  is the bipartite tree  
  with vertex set $V_0(T_c)\dunion V_1(T_c)$, where
  $V_0(T_c)$ is the set of vertices $v$ of $T$ which belong to at least two cylinders, 
$V_1(T_c)$ is the set  of cylinders $Y$ of $T$, and there is   an edge $\varepsilon=(v,Y)$ between $v$ and $Y$ in $T_c$ if and only if $v\in Y$ in $T$. 

Equivalently, one obtains $T_c$ from $T$ by replacing each cylinder $Y$ by the cone over the set of vertices $v\in Y$ belonging to another cylinder.

The    \emph{collapsed tree of cylinders} $T_c^*$ is the tree\index{0TC@$T_c^*$: collapsed tree of cylinders}\index{collapsed tree of cylinders}
obtained from $T_c$ by collapsing all edges whose stabilizer does not belong to $\cala$ \emph{(Warning: $T_c$ is not always an $\cala$-tree)}.
\end{dfn}

It is clear from the equivalent definition that $T_c$ is a tree, but 
 its edge stabilizers  do not always  belong to $\cala$; this is why we also consider $T_c^*$.  The  next lemma will say that 
  $T_c^*$ is  relative to $\calh$ (i.e.\ it is an $(\cala,\calh)$-tree). 
  
 The trees $T_c$ and $T_c^*$ are minimal.   
We claim that they are always irreducible or trivial (\ie they consist of a single point).
Indeed,
if $T_c$ or $T_c^*$ is non-trivial and not irreducible, then so is $T$ because it is compatible with them (Lemma \ref{compfac} below). 
  This implies that there is only one cylinder in $T$, so $T_c$ and $T_c^*$ are trivial,
a contradiction.

The stabilizer of a vertex $v\in V_0(T_c)$ is the stabilizer of $v$, viewed as a vertex of $T$;  it does not belong to $\cala$.
The stabilizer $G_Y$ of a vertex $Y\in V_1(T_c)$ is the stabilizer
of the equivalence class $[Y]$.
The stabilizer of an edge $\eps=(v,Y)$ of $T_c$ is  $G_\varepsilon=G_v\cap G_Y$; it is elliptic in $T$, and infinite because it contains $G_e$ if $e\inc T$ is any edge of $Y$ with origin $v$.  
If $G_\eps$ lies in $\cale$, it is a representative of $[Y]$.

\begin{lem} 
\ \label{lem_basicTc}
  \begin{enumerate}
  \item $T$ dominates $T_c$ and $T_c^*$, so $T_c^*$ is an $(\cala,\calh)$-tree.

  \item If $H<G$ is a vertex stabilizer of $T_c$ which is not elliptic in $T$, it is the stabilizer of the equivalence class $[Y]$ associated to some  cylinder $Y\subset T$.  
If $e$ is an edge of $Y$,   the equivalence class of $G_e$ is $H$-invariant
($hG_eh\m\sim G_e$ for $h\in H$).
  \item  $T_c$ and $T_c^*$ only depend  on the deformation space $\cald$ containing $T$ (we sometimes say that $T_c$ is the tree of cylinders of $\cald$).

\item  Suppose that
the stabilizer of every equivalence class $[A]$  belongs to   $\cala$. Then edge stabilizers of $T_c$ belong to $\cala$,  and therefore 
  $T_c=T_c^*$.
\item  The tree $T_c^*$ is equal to its  collapsed tree of cylinders:   $(T_c^*)_c^*=T_c^*$.

  \end{enumerate}

\end{lem}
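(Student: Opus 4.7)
My approach is to dispose of assertions (1), (2), (4) and (5) by direct inspection of the construction, since each reduces to a routine check about the bipartite structure of $T_c$. For (1) I would define an equivariant map $f:T\to T_c$ by sending each $v\in V_0(T_c)\subset V(T)$ to itself, sending any other vertex $v\in V(T)$ to a cylinder $Y$ containing $v$ (unique on edges, and on isolated vertices the choice is forced by minimality), and extending on edges by sending each $e\subset Y$ to the path in $T_c$ through the $V_1$-vertex $Y$. This exhibits $T$ as dominating $T_c$, hence (via the collapse $T_c\to T_c^*$) also $T_c^*$; since every $H\in\calh$ is elliptic in $T$ it is elliptic in $T_c^*$, which makes $T_c^*$ an $(\cala,\calh)$-tree (its edges are in $\cala$ by construction). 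For (2) I observe that vertices in $V_0(T_c)$ are, by definition, vertices of $T$ and so are elliptic in $T$; hence any $H=G_w$ which is not elliptic in $T$ must correspond to $w\in V_1(T_c)$, i.e.\ to a cylinder $Y$, and then $H=G_{[Y]}$. The invariance statement is immediate: if $h\in G_{[Y]}$ and $e\subset Y$, then $hY$ is a cylinder with class $[hG_eh^{-1}]=[Y]$, so $G_{he}=hG_eh^{-1}\sim G_e$. Assertion (4) is just the observation that edge stabilizers of $T_c$ have the form $G_v\cap G_{[Y]}$, a subgroup of $G_{[Y]}\in\cala$.

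For (5) the key point is that in $T_c$ (hence in $T_c^*$), two edges share a $V_1$-vertex $Y$ iff their stabilizers both lie in the class $[Y]$, while edges sharing a $V_0$-vertex have stabilizers in distinct classes $[Y]\neq[Y']$. Consequently the cylinders of $T_c^*$ are exactly the stars around its $V_1$-vertices, its set of vertices belonging to at least two cylinders is precisely $V_0(T_c^*)=V_0(T_c)$, and the new bipartite tree reproduces $T_c^*$ on the nose.

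The main obstacle is (3). My strategy is to use the fact from \cite{For_deformation, GL2} that any two trees in a deformation space are connected by a finite sequence of elementary collapses and expansions, and to verify directly that each such move preserves the tree of cylinders. Concretely, if $T'$ is obtained from $T$ by collapsing an edge $e=uv$ with $G_e=G_u$ and $u,v$ in the same $G$-orbit, then I would check (i) that the cylinders of $T$ are mapped bijectively onto the cylinders of $T'$ via the collapse map (the edge $e$, together with its orbit, lies entirely in a single cylinder because $G_e\subset G_u$ forces consistency of the $\sim$-class), (ii) that the $V_0$-vertices correspond, using that a vertex $w$ belongs to two cylinders of $T$ iff its image belongs to two cylinders of $T'$ (because $u$ and $v$ are identified without creating or destroying cylinder intersections, thanks to $G_u=G_e$), and (iii) that the incidence between $V_0$- and $V_1$-vertices is preserved. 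The delicate point is handling the case where the collapsed orbit might force cylinders to merge or become trivial; axiom (3) of admissibility, applied to the identification of $u$ and $v$, is what prevents this and guarantees the bijection.
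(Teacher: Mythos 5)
Your treatment of (1), (2) and (4) is correct and is essentially the paper's: the paper simply observes that every vertex stabilizer of $T$ fixes a point of $T_c$ (the vertex itself if it lies in two cylinders, the $V_1$-vertex of its unique cylinder otherwise), which is the content of your explicit map, and (2), (4) are read off the construction exactly as you do. For (5) the paper does not argue at all but cites Corollary 5.8 of \cite{GL4}; your direct sketch has the right idea, but it ignores the vertex identifications produced by the collapse $T_c\to T_c^*$ (a $V_0$-vertex may be merged with one or several $V_1$-vertices, so the phrase ``$V_0(T_c^*)=V_0(T_c)$'' is not literally meaningful, and the cone point of a cylinder of $T_c^*$ may itself lie in other cylinders); in the case $T_c=T_c^*$ your argument is fine, but the general case needs the more careful analysis of \cite{GL4}.

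The real issue is (3), where your route differs from the paper's and, as written, has gaps. The paper's proof associates to a domination map $f\colon T\to T'$ an induced map $f_c\colon T_c\to T'_c$ (a $V_0$-vertex $x$ goes to the unique point of $T'$ fixed by $G_x$; a cylinder $Y$ goes either to the cylinder of $T'$ whose edge stabilizers are equivalent to those of $Y$, or to the unique point fixed by them), and then notes that the maps induced by $T\to T'$ and $T'\to T$ are mutually inverse; no move-by-move analysis is needed. Your plan via Forester's elementary deformations could be made to work, but three points need repair. First, you state the collapse move with $u,v$ in the \emph{same} orbit; the move that stays inside the deformation space collapses an edge $e=uv$ with $G_e=G_u$ whose endpoints lie in \emph{different} orbits — an edge with $G_e=G_u$ projecting to a loop is exactly the configuration allowed in a reduced tree, and collapsing its orbit changes the deformation space (e.g.\ it collapses an HNN tree to a point). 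Second, the parenthetical claim that the whole orbit of $e$ lies in a single cylinder is false: $gG_eg^{-1}$ need not be equivalent to $G_e$. What you actually need is that the other edges incident to $u$ have stabilizer contained in $G_u=G_e$, hence lie in the cylinder of $e$ by axiom (2) (nesting implies equivalence), together with minimality (so $u$ is not terminal and such edges exist); the axiom you invoke, (3), is not the relevant one here. Third, and most seriously, your induction requires every intermediate tree of the deformation sequence to have (infinite) edge stabilizers in $\cala$, since otherwise its tree of cylinders is not even defined; Forester's theorem as you quote it allows expansions along arbitrary elliptic subgroups, so you would need the refined versions of the deformation theorem with controlled edge groups (and extra care in ascending deformation spaces). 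The induced-map argument of the paper bypasses all of these difficulties, which is presumably why it is the one sketched there.
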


Assertion (4) applies in particular to  Examples (1) and (2) above 
(CSA groups, relatively hyperbolic groups).

\begin{proof}
 Consider a   vertex $v$ of $T$. If $v$ belongs to two cylinders, it defines a vertex in $V_0(T_c)$, and this vertex is fixed by $G_v$. If $v$ belongs to a single cylinder $Y$, then $G_v$ fixes the   vertex of $V_1(T_c)$ corresponding to $Y$. This shows that $T$ dominates $T_c$, hence also its collapse $T_c^*$.

The second assertion follows from remarks made above: 
the stabilizer of a vertex   in $V_0(T_c)$ is a vertex stabilizer of $T$, the stabilizer of a vertex   in $V_1(T_c)$ is the stabilizer of an equivalence class $[Y]$. 
Also note that   (4) is clear since the stabilizer of an edge $\eps=(v,Y)$ of $T_c$ is contained in the stabilizer of $[Y]$.   Assertion (5) is  Corollary 5.8 of \cite{GL4}.

\cite{GL4} contains three proofs of the third assertion. We sketch one. A domination map $f:T\to T'$ induces a map $f_c:T_c\to T'_c$ mapping each edge onto a vertex or an edge. The image of $x\in V_0(T_c)$ by $f_c$ is the unique point of $T'$ fixed by $G_x$, viewed as a vertex of $V_0(T'_c)$;  the image of $Y\in V_1(T_c)$ is either the unique cylinder whose edge stabilizers are equivalent to those of $Y$, viewed as a vertex of $V_1(T'_c)$, or the unique point of $T'$ fixed by stabilizers of edges of $Y$, viewed as a vertex of $V_0(T'_c)$. If $T$ and $T'$ belong to the same deformation space, the map $g_c:T'_c\to T_c$ induced by a map $g:T'\to T$ is the inverse of $f_c$, so $T'_c= T_c$. 
\end{proof}

Assertion (3) implies that trees of cylinders are
  canonical  elements of their deformation space. In particular:
\begin{cor} \label{invaut}
 If $T$ is a JSJ tree over $\cala$ relative to $\calh$, then $T_c$ and $T_c^*$ are invariant under any automorphism of $G$ which preserves $\cala$ and $\calh$. \qed
\end{cor}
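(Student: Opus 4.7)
The strategy is to combine two facts already established: (a) by Lemma \ref{lem_basicTc}(3), the tree of cylinders depends only on the deformation space of $T$, and (b) the JSJ deformation space is canonically attached to the data $(G,\cala,\calh)$ and the admissible equivalence relation $\sim$, so any automorphism of $G$ preserving these data preserves $\cald_{JSJ}$ as well.

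Concretely, fix $\alpha\in\Aut(G)$ preserving $\cala$, $\calh$ (each setwise, up to conjugation), and the equivalence relation $\sim$ on $\cale$ (the latter is automatic in all the examples considered — commutation, commensurability, co-elementarity — since $\sim$ is defined purely in group-theoretic terms from $\cala$). For any $G$-tree $T$ with action $\rho:G\to\Aut(T)$, write $T^{\alpha}$ for the same underlying tree endowed with the twisted action $\rho\circ\alpha$. First I would check that $T^{\alpha}$ is again an $(\cala,\calh)$-tree: an edge $e$ of $T^{\alpha}$ has stabilizer $\alpha^{-1}(G_e)$, which lies in $\cala$ since $\alpha$ preserves $\cala$; similarly each $H\in\calh$ fixes a point in $T^{\alpha}$ because $\alpha^{-1}(H)$ is conjugate to a group in $\calh$ and hence elliptic in $T$. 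Universal ellipticity and the maximality condition defining JSJ trees are manifestly preserved by twisting, so $T^{\alpha}\in\cald_{JSJ}$.

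Next, by Lemma \ref{lem_basicTc}(3) the tree of cylinders depends only on the deformation space, so there is a canonical $G$-equivariant isomorphism $(T^{\alpha})_c\to T_c$. On the other hand, the construction of $T_c$ in Definition \ref{treecyl} is purely functorial in the pair (tree, action) and in the relation $\sim$: cylinders, vertices of $V_0(T_c)$ and $V_1(T_c)$, and their incidences are defined only in terms of these data. Since $\alpha$ preserves $\sim$, this functoriality gives a tautological identification $(T^{\alpha})_c=(T_c)^{\alpha}$. Composing the two identifications yields an $\alpha$-equivariant isomorphism $T_c\to T_c$, which is the required invariance. The same argument applies to $T_c^*$, since the collapse of edges with stabilizer outside $\cala$ is also canonical and commutes with twisting (because $\alpha$ preserves $\cala$).

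The only point that requires a moment's thought is the compatibility of $\alpha$ with $\sim$; once this is in place, the whole argument is essentially bookkeeping. In applications (CSA groups, relatively hyperbolic groups, commensurability) this compatibility holds for every $\alpha\in\Aut(G)$ preserving $\cala$, so the corollary applies as stated.
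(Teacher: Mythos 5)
Your proof is correct and follows the same route as the paper, which treats the statement as an immediate consequence of Lemma \ref{lem_basicTc}(3): an automorphism preserving $\cala$ and $\calh$ sends the JSJ deformation space to itself, and the (collapsed) tree of cylinders depends only on the deformation space, hence is invariant. Your extra remark that $\alpha$ must also be compatible with $\sim$ is a sensible point of care, and as you note it is automatic since the admissible relations used are defined intrinsically.
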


\begin{figure}[htbp]
 \centering
 \includegraphics{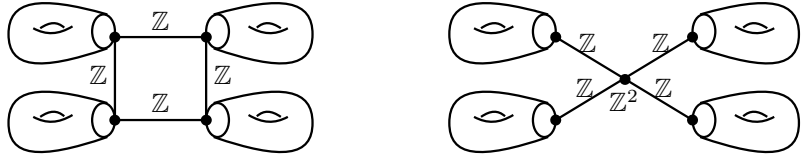}
 \caption{A tree and its tree of cylinders.}
 \label{fig_exemple_cyl}
\end{figure}

 In the following   examples, $\sim$ may be viewed at will as commutation, or co-elementarity, or commensurability. Indeed, two edge stabilizers  
will be equivalent if and only if they are equal. 
 
 \begin{example}
 The Bass-Serre tree of the graph of groups $\Gamma$ of Subsection \ref{exflex}   (three punctured tori glued along their boundaries, see Figure \ref{fig_3tores} in the introduction) 
is equal to its tree of cylinders (cylinders are tripods projecting bijectively onto $\Gamma$). 
 \end{example}
 
\begin{example} \label{z2}  
Now let $T$ be  the Bass-Serre tree of the graph of groups pictured on   the left hand side of Figure 
\ref{fig_exemple_cyl}  (reproducing Figure \ref{fig_ex}). 
There are four QH vertices $v_i$, which are fundamental groups of punctured tori $\Sigma_i$. There is an edge between $v_i$ and $v_{i+1}$ (mod 4), each carrying the same group $G_e$,  equal to the boundary subgroup of $\Sigma_i$.  Any cylinder $Y$ is a  line, with vertices $u_i$ ($i\in\Z)$ which are lifts of $v_i$ (mod 4). The setwise stabilizer of $Y$, which we shall call $P$,  is isomorphic to $\Z^2$, it acts on $Y$ by translations with   edge stabilizers equal to $G_e$ (up to   conjugacy). The group $G$ is hyperbolic relative to this   subgroup $P$ (it is a toral relatively hyperbolic group).
 
  In $T_c^* $ (which is equal to $T_c$), each line $Y$ is collapsed to a point  $u$  (a vertex of type $V_1$) and is replaced by edges joining that point  to each of the $u_i$'s. The stabilizer of $u$  is isomorphic to $P$, which is the centralizer (and the commensurator) of $G_e$. Unlike $T$, the tree $T_c^* $ is invariant under all automorphisms of $G$. 
 \end{example}

\subsection{Acylindricity}\label{sec_Tc_acyl}

\index{acylindrical tree, splitting}
Recall that $T$ is \emph{$(k,C)$-acylindrical} if the pointwise stabilizer  of every   arc   of length $\geq k+1$ has cardinality $\leq C$ (see the beginning of part \ref{partacyl}).

\begin{lem}\label{acl}
  Assume  that there exists $C>0$ such that,  if two groups of $\cale$ are inequivalent, then their   intersection has  order $\le C$. 
If $T$ is any $(\cala,\calh)$-tree, then $T_c^*$ is a $(2,C)$-acylindrical $(\cala,\calh)$-tree.
\end{lem}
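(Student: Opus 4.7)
The plan is to bound by $C$ the pointwise stabilizer $H$ of an arbitrary arc of length $3$ in $T_c^*$; the fact that $T_c^*$ is an $(\cala,\calh)$-tree is already contained in Lemma~\ref{lem_basicTc}(1). The strategy is to lift such an arc back to $T_c$ and exploit the bipartite structure $V_0\sqcup V_1$ there, since each edge of $T_c$ records, through its $V_1$-endpoint, a cylinder class, and the hypothesis controls the intersection of inequivalent members of $\cale$.

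First, I would lift the arc $\alpha\subset T_c^*$ to $T_c$: since $T_c^*$ is obtained from $T_c$ by collapsing the edges whose stabilizer is not in $\cala$, and collapse maps are alignment-preserving, each of the three edges of $\alpha$ corresponds to a unique un-collapsed edge $\tilde\eps_i$ of $T_c$ ($i=1,2,3$), and $H$ fixes each $\tilde\eps_i$ pointwise. I then claim the $\tilde\eps_i$ do not all lie in the same cylinder: the $V_1$-endpoint of $\tilde\eps_i$ is a cylinder $Y_i$ of $T$, and if $Y_1=Y_2=Y_3$ then the three $\tilde\eps_i$ would all be incident to the single vertex $Y_1\in V_1(T_c)$, which is impossible inside an arc of a tree.

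So there exist $j\ne k$ with $[Y_j]\ne[Y_k]$. The stabilizers $G_{\tilde\eps_j}$ and $G_{\tilde\eps_k}$ lie in $\cala$ (the edges are not collapsed in $T_c^*$) and each contains the stabilizer of some edge of $T$ in the corresponding cylinder, hence is infinite and lies in $\cale$. Axiom~(2) of admissibility (nesting implies equivalence, Definition~\ref{eqrel}) then yields $G_{\tilde\eps_j}\in[Y_j]$ and $G_{\tilde\eps_k}\in[Y_k]$, so they are inequivalent in $\cale$. As $H\subset G_{\tilde\eps_j}\cap G_{\tilde\eps_k}$, the hypothesis of the lemma gives $|H|\le C$, which establishes $(2,C)$-acylindricity.

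The only subtle point is the lifting step: one must verify that an arc of length $3$ in $T_c^*$ produces three un-collapsed edges of $T_c$ which cannot all share a common $V_1$-vertex. This follows from alignment-preservation of the collapse map $T_c\to T_c^*$ combined with the combinatorial impossibility of three edges of a single arc meeting at one vertex. Everything else reduces directly to the defining axioms of an admissible equivalence relation and the standing hypothesis on $C$.
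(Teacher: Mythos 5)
Your proof is correct and follows essentially the same route as the paper's: lift the length-$3$ segment to uncollapsed edges $(x_i,Y_i)$ of $T_c$, observe that their stabilizers lie in $\cale$ and are equivalent (via nesting) to the classes of their cylinders, find two edges with distinct cylinders, and apply the hypothesis on intersections of inequivalent groups. The only difference is that you spell out why two of the cylinders must differ (three edges of an arc cannot all meet one $V_1$-vertex), a step the paper states tersely; otherwise the arguments coincide.
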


This  applies in particular in Examples (1) and (2) above 
(CSA groups, relatively hyperbolic groups); note that in a relatively hyperbolic group there is a bound $C$ for the order of finite non-parabolic subgroups 
(see Lemma \ref{lem_coelem}).
In Example \ref{z2}, the tree $T$ is not acylindrical (cylinders are lines fixed by an infinite cyclic group) but $T_c^*$ is 2-acylindrical.

\begin{proof}
We have seen that $T_c^*$ is an $(\cala,\calh)$-tree.
Let $I$ be a segment of length 3 in $T^*_c$.  Its edges are images of edges $\varepsilon_i=(x_i,Y_i)$ of $T_c$. Let $e_i$ be an edge of $Y_i$ with origin $x_i$. The stabilizer $G_{\varepsilon_i}$ of $\varepsilon_i$ belongs to $\cala$ and contains $G_{e_i}$, so $G_{\varepsilon_i}$ belongs to $\cale$ and is equivalent to $G_{e_i}$.

Since  $T^*_c$ is a collapse of $T_c$, we can find $i,j$ with $Y_i\ne Y_j$. This implies $G_{e_i}\not\sim G_{e_j}$, so $G_{\varepsilon_i}\not\sim G_{\varepsilon_j}$. The stabilizer of $I$ fixes $\varepsilon_i$ and $\varepsilon_j$, so has  cardinality at most $C$.
\end{proof}

Virtually cyclic groups play a particular role in this context. 
 An infinite group $H$ is virtually cyclic if some finite index subgroup is infinite cyclic. Such a group maps onto $\Z$ or $D_\infty$ (the infinite dihedral group $\bbZ/2*\bbZ/2$) with finite kernel. 

\begin{dfn} [$C$-virtually cyclic] \label{dvc}\index{virtually cyclic@ $C$-virtually cyclic}\index{0CV@ $C$-virtually cyclic} Given $C\ge1$, we say that $H$ is \emph{$C$-virtually cyclic} if it maps onto $\Z$ or $D_\infty$ with kernel of order at most $C$. Equivalently, $H$ acts non-trivially on a simplicial line with edge stabilizers of order $\le C$. 
\end{dfn}

 An infinite virtually cyclic group   is $C$-virtually cyclic if    the order of its maximal finite normal subgroup is $\le C$.
  On the other hand, finite subgroups of a $C$-virtually cyclic group have cardinality bounded by $2C$.

 Recall (Subsection \ref{pti}) that $J<G $ is small in $T$  (resp.\ in $(\cala,\calh)$-trees) 
if it fixes  a point, or an end, or leaves a line invariant in $T$ (resp.\ in all $(\cala,\calh)$-trees on which $G$ acts). A group not containing $\F_2$, or contained in a group of $\calh$, is small in $(\cala,\calh)$-trees.

The following lemma is simple, but conceptually important. It says that subgroups which are small but not virtually cyclic are elliptic in acylindrical trees. 

\begin{lem}\label{vc}
If a subgroup $J\inc G$ is small in a $(k,C)$-acylindrical tree $T$,\index{small, small in $(\cala ,\calh )$-trees} but $J$ is not elliptic in $T$, then $J$ is    $C$-virtually cyclic. 
\end{lem}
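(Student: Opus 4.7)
The plan is to leverage the dichotomy provided by Corollary \ref{pasirr}: since $J$ is small in $T$ but not elliptic, $J$ either preserves a (unique) line $\ell \subset T$ or fixes a (unique) end $\xi$ of $T$ (these cases may overlap, which is harmless).

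In the invariant-line case, I would look at the action map $\phi : J \to \Isom(\ell)$. The kernel $\ker\phi$ pointwise fixes $\ell$, in particular fixes an arc of length $k+1$, so by $(k,C)$-acylindricity it has order at most $C$. The image $\phi(J)$ is a subgroup of $\Isom(\ell)$ acting without inversion, hence is one of $\{1\}$, $\Z/2\Z$, $\Z$, or $D_\infty$. The first two make $J$ finite, hence elliptic (finite groups have property (FA)), contradicting the hypothesis; in the remaining two cases $J$ surjects onto $\Z$ or $D_\infty$ with kernel of cardinality $\le C$, which is exactly the definition of $C$-virtually cyclic (Definition \ref{dvc}).

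In the fixed-end case, I would use the homomorphism $\chi : J \to \Z$ defined in Subsection \ref{tre}. Let $K = \ker\chi$. Every $g \in K$ is elliptic in $T$ and fixes a subray of any ray converging to $\xi$. By Serre's lemma, any finitely generated subgroup $F \subset K$ is elliptic; moreover each element of $F$ fixes a subray to $\xi$, so $F$ itself fixes an arbitrarily long arc in $T$. Acylindricity gives $|F| \le C$. The key small observation is that this uniform bound on all finitely generated subgroups of $K$ forces $|K| \le C$: if $F_0 \subset K$ is a finite subgroup of maximal order (necessarily $\le C$), then for any $g \in K$ the group $\langle F_0, g\rangle$ is finitely generated and of order $\le C$, hence equals $F_0$ by maximality, so $g \in F_0$. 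If $\chi$ is trivial then $J = K$ is finite, hence elliptic — contradiction; otherwise $\chi(J) \cong \Z$ and $J$ is $C$-virtually cyclic with finite kernel of order $\le C$.

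The only mildly delicate point is the passage from ``every finitely generated subgroup of $K$ has order $\le C$'' to ``$|K| \le C$'', but the maximality argument above handles it in one line. Everything else reduces to identifying the classical structure of small actions via Corollary \ref{pasirr} and reading off the acylindricity bound on pointwise stabilizers of long arcs.
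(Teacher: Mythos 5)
Your proof is correct and follows essentially the same route as the paper's: use the smallness dichotomy (invariant line or fixed end from Corollary \ref{pasirr}), bound the pointwise stabilizer of the line, resp.\ the kernel of $\chi:J\to\Z$, by $C$ via acylindricity, and read off the definition of $C$-virtually cyclic. You merely spell out two steps the paper leaves terse — identifying the possible images in $\Isom(\ell)$ and passing from the bound on finitely generated subgroups of $\ker\chi$ to the whole kernel — and both are handled correctly.
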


\begin{proof} By smallness, $J$ preserves a line or fixes an end. The result is clear if it acts on a line, since edge stabilizers  for this action have order $\le C$.
If it fixes an end, the set  of   elliptic elements of $J$ is the kernel  $J_0$ of a homomorphism from $J$ to $\Z$. 
Every finitely generated subgroup of $J_0$ is elliptic. It fixes a ray, so has order $\le C$  by acylindricity. 
 Thus $J_0$ has order $\le C$.
\end{proof}

\subsection{Small domination}\label{sec_Tc_smally}

We have seen that $T$ dominates $T_c$ and $T_c^*$  (Lemma \ref{lem_basicTc}),    
  and that $T_c^*$ is $(2,C)$-acylindrical under the assumptions of Lemma \ref{acl}. This domination may be strict: in particular, as shown by Lemma \ref{vc}, small groups which are not virtually cyclic tend to be elliptic in $T_c$ and $T_c^*$.  For instance, in Example \ref{z2}, the subgroup $P=\Z^2$   becomes elliptic in $T_c^*$. This is unavoidable according to Lemma \ref{vc}, but $P$ (together with its subgroups and their conjugates) is the only group that is elliptic in $T_c^*$ and not in $T$.

We want the deformation space of $T_c^*$ to be  as close to that of $T$ as possible, as in this example, and this motivates the following definition; 
 it applies to an arbitrary pair of trees $(T,T^*)$.

\renewcommand{\SS}{\cals}
\newcommand{\SNVC}{\cals_{\mathrm{nvc}}}

\begin{dfn} [\smally dominates] \label{sm} \index{small domination, $\SS$-domination}
Let $T,T^*$ be $(\cala,\calh)$-trees.
We 
say that $T$     \emph{\smally dominates}   $T^*$
if:
  \renewcommand{\theenumi}{\roman{enumi}}
\begin{enumerate}
\item \label{it_dom}$T$ dominates $T^*$;
  \item \label{it_edge}
 edge stabilizers of $T^*$ are elliptic in $T$. 

\item \label{it_small} any group which is elliptic in $T^*$ but not in $T$
is small in $(\cala,\calh)$-trees.
 \end{enumerate}
 
More generally, let $\SS$ be a family of subgroups closed under conjugation and taking subgroups, with every $J\in \SS$ small in $(\cala,\calh)$-trees.  
If $T$ smally dominates $T^*$, and every group which is elliptic in $T^*$ but not in $T$ belongs to $\SS$, we say that $T$ \emph{$\SS$-dominates $T^*$.}
 \end{dfn}
\index{0SS@$\SS$: a family of small subgroups}
 
\begin{rem}
$\cals$-domination will be useful to describe flexible vertex stabilizers of JSJ trees
(as in the proof of Theorem \ref{JSJ_CSA}).
\end{rem}

 Note that  the first two conditions of the definition are always satisfied
if $T$ is a tree and    $T^*$  is its collapsed tree of cylinders $T_c^*$. 
The following proposition basically says that 
the third condition holds when the groups $G_{[A]}$ are small in $(\cala,\calh)$-trees. In this case we can make every tree acylindrical without changing its deformation space more than forced by Lemma \ref{vc}.

\begin{prop} \label{tcsmdom}
Let $\sim$ be an admissible equivalence relation on $\cale$, and let $C$ be an integer. Assume that:
\begin{enumerate}
\item if two groups of $\cale$ are inequivalent, their   intersection has  order $\le C$;
\item every stabilizer $G_{[A]}$ is small in $(\cala,\calh)$-trees;
\item one of the following holds:
{\begin{enumerate}
\item every stabilizer $G_{[A]}$  belongs to $\cala$;
\item if $A\inc A'$ has index 2, and $A\in\cala$, then $A'\in\cala$;
\item
no group $G_{[A]}$ maps onto $D_\infty$.
\end{enumerate} }
\end{enumerate}

 If $T$ is any $(\cala,\calh)$-tree with infinite edge stabilizers, then $T_c^*$ is a $(2,C)$-acylindrical  $(\cala,\calh)$-tree smally dominated by $T$. 
 
 Assume furthermore that all subgroups which are small in $(\cala,\calh)$-trees but not virtually cyclic are elliptic in $T$. Then $T_c^*$ belongs to the same deformation space as $T$.
\end{prop}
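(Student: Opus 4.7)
The proposition is essentially an assembly result: most ingredients have been prepared by Lemma \ref{lem_basicTc} and Lemma \ref{acl}, and the task is to verify each clause of the conclusion separately. The $(2,C)$-acylindricity of $T_c^*$ together with the fact that $T_c^*$ is an $(\cala,\calh)$-tree follow at once from Lemma \ref{acl}, which uses only hypothesis (1). Similarly, conditions (i) and (ii) of smally domination (Definition \ref{sm}) are essentially free: $T$ dominates $T_c$, hence $T_c^*$, by Lemma \ref{lem_basicTc}(1); and every edge $e^*$ of $T_c^*$ lifts to a non-collapsed edge $\varepsilon=(v,Y)$ of $T_c$ with $v\in V_0(T_c)$, whose stabilizer $G_\varepsilon=G_v\cap G_Y$ is contained in $G_v$, a vertex stabilizer of $T$, so is elliptic in $T$.

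The main content is condition (iii). Let $H$ be elliptic in $T_c^*$ but not in $T$; I must show $H$ is small in $(\cala,\calh)$-trees. First suppose $H$ is already elliptic in $T_c$. Then $H$ fixes a vertex of $T_c$. If the vertex lies in $V_0(T_c)$, its stabilizer is a vertex stabilizer of $T$, contradicting non-ellipticity in $T$; so the vertex lies in $V_1(T_c)$, hence $H\subset G_{[Y]}$ for some cylinder $Y$, and $H$ is small by hypothesis (2). The harder case is that $H$ is not elliptic in $T_c$, so that $H$ acts nontrivially on a subtree $Y^*\subset T_c$ whose edges are all collapsed to a single vertex of $T_c^*$. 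This case does not occur under (3a), since then every edge stabilizer of $T_c$ is a subgroup of some $G_{[Y]}\in\cala$ hence in $\cala$, and no collapsing takes place ($T_c=T_c^*$). Under (3b) or (3c), an edge $\varepsilon=(v,Y)$ of such a $Y^*$ has $G_\varepsilon=G_v\cap G_{[Y]}$ sandwiched between some infinite edge stabilizer $G_e\in\cale$ (with $e\subset Y$) and $G_{[Y]}$; the plan is to show that $H$ is small in $(\cala,\calh)$-trees by using that $H$ projects into the quotient of $G_{[Y]}$ acting on $Y^*$, with edge stabilizers of that action equivalent to $G_e$. In case (3b), the closure assumption on $\cala$ combined with the fact that $G_\varepsilon\not\in\cala$ forces the quotient action to be non-orientable in a controlled way, and smallness is inherited from $G_{[Y]}$; in case (3c), the absence of a $D_\infty$-quotient allows one to pin down $G_\varepsilon$ as the kernel of a map to $\Z$ or an end-fixing action on $Y$, again forcing $H\subset G_{[Y]}$ (possibly up to an index-$2$ extension) and hence small.

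For the final assertion, assume every subgroup that is small in $(\cala,\calh)$-trees but not virtually cyclic is elliptic in $T$. Since $T$ already dominates $T_c^*$, it suffices to show that every vertex stabilizer of $T_c^*$ is elliptic in $T$. A vertex stabilizer of $T_c^*$ arises either as a vertex stabilizer of $T$ (hence elliptic) or from a vertex of $V_1(T_c)$ — possibly thickened by collapsed edges — whose stabilizer is contained in some $G_{[Y]}$. By (2), $G_{[Y]}$ is small in $(\cala,\calh)$-trees, and when it is not virtually cyclic the extra hypothesis places it inside a vertex stabilizer of $T$. The remaining obstruction, which is the main difficulty of the proof, is to handle the case where $G_{[Y]}$ is virtually cyclic: here one must rule out the pathology where $G_{[Y]}$ has an axis in $T$ lying inside the cylinder $Y$ while being elliptic in $T_c^*$. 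The plan is to use the $(2,C)$-acylindricity of $T_c^*$ together with the fact that any element of $G_{[Y]}$ translating $Y$ would also act hyperbolically on $T_c^*$ (because the construction of $T_c^*$ from $T_c$ does not fix any non-trivial axis inside a cylinder once passed to the collapsed bipartite picture), yielding a contradiction and forcing $G_{[Y]}$ to be elliptic in $T$. Combined with the remarks above, this shows $T_c^*$ dominates $T$, completing the proof.
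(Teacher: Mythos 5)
Your treatment of the routine parts is fine: acylindricity and the fact that $T_c^*$ is an $(\cala,\calh)$-tree come from Lemma \ref{acl}, conditions (i)--(ii) of Definition \ref{sm} hold as you say, and the case of condition (iii) where $H$ is elliptic in $T_c$ is handled correctly (vertex stabilizers of $T_c$ are vertex stabilizers of $T$ or groups $G_{[A]}$, small by hypothesis (2)). The gap is in the case you yourself call the hard one, $H$ elliptic in $T_c^*$ but not in $T_c$: this is exactly where hypotheses (2) and (3b)/(3c) must do real work, and what you offer is a ``plan'' whose steps (``forces the quotient action to be non-orientable in a controlled way'', ``pin down $G_\varepsilon$ as the kernel of a map to $\Z$'') are not arguments. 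The paper's route is to show this case never occurs, because $T_c$ and $T_c^*$ lie in the same deformation space: for a collapsed edge $\varepsilon=(x,Y)$, i.e.\ $G_\varepsilon=G_x\cap G_Y\notin\cala$, one first shows $G_Y$ is elliptic in $T$ (it is small by (2); if it fixed an end of $T$, then $G_\varepsilon$, being elliptic, would fix a ray and hence lie in $\cala$, a contradiction; if it acted dihedrally on a line, a subgroup of $G_\varepsilon$ of index $2$ would fix an edge, contradicting (3b), while (3c) excludes the dihedral case outright), then that its fixed vertex must be $x$ and that $G_\varepsilon=G_Y$, so collapsing $\varepsilon$ changes neither ellipticity nor the star of the vertex $Y$. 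Without this (or a genuine substitute) your verification of condition (iii) is incomplete, and your later description of a vertex stabilizer of $T_c^*$ as ``contained in some $G_{[Y]}$'' is unjustified: the stabilizer of a collapsed component contains the adjacent $V_0$-vertex stabilizer, and identifying it requires precisely the analysis above.

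The ``furthermore'' part contains a step that is actually false. You propose to rule out a virtually cyclic $G_{[Y]}$ acting hyperbolically along its cylinder by claiming that an element of $G_{[Y]}$ translating along $Y$ in $T$ stays hyperbolic in $T_c^*$. The opposite holds: such an element lies in $G_{[Y]}$, hence preserves the cylinder $Y$ and fixes the corresponding $V_1$-vertex of $T_c$; making these elements elliptic is the whole point of the tree of cylinders (see Example \ref{z2}, where $\Z^2$ translates along a cylinder of $T$ and fixes a vertex of $T_c$). Acylindricity of $T_c^*$ is a red herring here. The correct argument is much shorter: any edge $e\subset Y$ has infinite stabilizer $G_e\subset G_{[Y]}$, hence of finite index in the virtually cyclic group $G_{[Y]}$; since $G_e$ is an edge stabilizer of $T$ it is elliptic there, so $G_{[Y]}$ is elliptic in $T$. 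Combined with the extra hypothesis for the non-virtually-cyclic $G_{[A]}$'s and with the first part (equality of the deformation spaces of $T_c$ and $T_c^*$), this gives that $T_c^*$ and $T$ lie in the same deformation space.
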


\begin{proof}
Acylindricity comes from Lemma \ref{acl}, and conditions (\ref{it_dom}) and (\ref{it_edge})  of small domination are 
always satisfied by pairs 
$(T,T_c)$ and $(T,T_c^*)$. 

 Vertex stabilizers of $T_c$ are vertex stabilizers of $T$ or equal to some $G_{[A]}
 $, so by Assumption (2) subgroups elliptic in $T_c$ satisfy condition (\ref{it_small}). In other words, $T$ would smally dominate $T_c$ if $T_c$ were an $\cala$-tree.  To conclude that $T_c^*$ is smally dominated by $T$, 
we show that
$T_c$ and $T_c^*$ belong to  the same deformation space under Assumptions (2) and (3) (see {\cite[Remark 5.11]{GL4}}). 

Note that $T_c^*=T_c$ when (3a) holds, so we may assume that (3b) or (3c) holds.
Let $\varepsilon =(x,Y)$ be an edge of $T_c$ such that $G_\eps\notin\cala$. 
 It suffices to prove that $G_\eps=G_Y$ (so that collapsing the orbit of $\varepsilon$ does not change the deformation space) and that $G_{\eps'}\in \cala$
for every   edge $\eps'=(x',Y)$ with $x'\ne x$ (so no further collapse occurs in the star of the vertex $Y$  of $T_c$). 

The group $G_Y$ is small in   $(\cala,\calh)$-trees by 
(2). We claim that it is elliptic in $T$.
Assume otherwise, and consider its subgroup $G_\varepsilon$, which is elliptic in $T$. If $G_Y$ fixes an end of $T$, then $G_\varepsilon$ fixes a ray, so $G_\varepsilon\in \cala$, a contradiction. The remaining possibility (which is ruled out if (3c) holds) is that $G_Y$ acts dihedrally on a line. In this case some subgroup of $G_\varepsilon$ of index at most 2 fixes an edge, so is in $\cala$, contradicting (3b).

We have proved  that $G_Y$ is elliptic in $T$, hence fixes   a (unique) vertex $v\in Y\inc T$. 
We claim $x=v$. If $x\ne v$, let $e$ be the initial edge of the segment $[x,v]$. 
Then $$G_\varepsilon= G_x\cap G_Y\subset G_x\cap G_v\subset G_e,$$ 
 contradicting $G_\varepsilon\notin \cala$.
Thus $x=v$, so $\eps=(v,Y)$ is the only edge of $T_c$  incident to $Y\in V_1(T_c)$ with stabilizer 
not in $\cala$.
Moreover, since $G_Y$ fixes $x$, we have $G_{\varepsilon}=G_x\cap G_Y=G_Y$.
  This proves that $T_c$ and $T_c^*$ lie in the same deformation space, and that $T$ smally dominates $T_c^*$.

For the furthermore, we need to show that every $G_Y$ is elliptic in $T$. By Assumption (2) it is small in  $(\cala,\calh)$-trees, so the only case to consider is when $G_Y$ is virtually cyclic. If $e$ is any edge of $T$ contained in $Y$, its stabilizer $G_e$ is infinite so is a finite index subgroup of $G_Y$. It follows that $G_Y$ is elliptic in $T$.
\end{proof}

\begin{rem}\label{tcsmdom2}
 Given $\SS$ as in Definition \ref{sm}, assume that every $G_{[A]}$ belongs to $\SS$. Then $T$ $\SS$-dominates $T_c^*$. If all groups of $\SS$ which are not virtually cyclic are elliptic in $T$, then $T_c^*$ belongs to the same deformation space as $T$. The proof is exactly as above.
\end{rem}

\subsection{Compatibility}
\label{sec_Tc_compat}

\index{tree of cylinders}\index{compatible trees} 
The tree of   cylinders has strong compatibility properties, which will be useful to construct  the compatibility JSJ tree (see  Theorem \ref{ouf3}).

The following fact is a general property of the tree of cylinders.
Recall that two trees are compatible if they have a common refinement.

\begin{lem}[{\cite[Proposition 8.1]{GL4}}] \label{compfac}
  $T_c$ and $T_c^*$ are compatible with any $(\cala,\calh)$-tree dominated by $T$. \qed
\end{lem}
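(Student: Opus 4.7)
My plan is to construct a common refinement $\hat T$ of $T_c$ and $S$; since $T_c^*$ is a collapse of $T_c$, the same $\hat T$ will be a common refinement of $T_c^*$ and $S$, so it suffices to treat $T_c$.

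The first step is to observe that $T_c$ is elliptic with respect to $S$ in the sense of Definition \ref{et}. Indeed, each edge $\varepsilon=(v,Y)$ of $T_c$ has stabilizer $G_\varepsilon=G_v\cap G_Y$, which is contained in the vertex stabilizer $G_v$ of $T$. Since $T$ dominates $S$, the group $G_v$ is elliptic in $S$, so $G_\varepsilon$ is elliptic in $S$.

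The second step is the construction of $\hat T$ as an equivariant refinement of $T_c$. Choose equivariantly, for each $v\in V_0(T_c)$, a $G_v$-fixed point $p_v\in S$ (possible since $G_v$ is elliptic in $S$). For each $Y\in V_1(T_c)$, let $T_Y\subset S$ be the convex hull of the set $\{p_v : v\in V_0(T_c)\text{ adjacent to }Y\}$. This set is $G_Y$-invariant (because the choice of $p_v$ is equivariant, $G_Y$ preserves $Y$ as a subtree of $T$, and hence permutes its multi-cylinder vertices), so $T_Y$ is a $G_Y$-invariant subtree of $S$. Blow up each $Y\in V_1(T_c)$ to $T_Y$, attaching each incident edge $\varepsilon=(v,Y)$ of $T_c$ at the vertex $p_v\in T_Y$. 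Call the resulting tree $\hat T$; by construction, collapsing every $T_Y$ to a point recovers $T_c$. Define an equivariant map $f:\hat T\to S$ by taking the inclusion $T_Y\hookrightarrow S$ on each blown-up piece and collapsing each old edge $\varepsilon=(v,Y)$ to the point $p_v\in S$ (its two endpoints in $\hat T$, namely $v$ and the attachment point in $T_Y$, both map to $p_v$).

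The third and crucial step is to verify that $f$ is a collapse map, that is, that it preserves alignment. The principal obstacle lies here: one needs to rule out folding, which amounts to showing that whenever two subtrees $T_Y$ and $T_{Y'}$ overlap in $S$, the overlap is consistent with the combinatorics of $T_c$. The key structural fact is that distinct cylinders of $T$ meet in at most one vertex, so the defining sets of $T_Y$ and $T_{Y'}$ share at most one point $p_v$ (corresponding to their unique common $V_0$-vertex in $T_c$). Combined with axiom (\ref{ax_3}) of the admissible equivalence relation applied to $S$ (which forces any edge on the bridge between two points fixed by equivalent subgroups to carry an equivalent stabilizer), this controls the geometry of the $T_Y$'s inside $S$ and ensures that for any two points $a\in T_Y$, $b\in T_{Y'}$ with $Y\ne Y'$, the geodesic $[a,b]_S$ passes through the points $p_v$ dictated by the combinatorics of the path from $Y$ to $Y'$ in $T_c$. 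This gives alignment preservation, so $f$ is a collapse map and $\hat T$ is a common refinement of $T_c$ and $S$.
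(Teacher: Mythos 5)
The paper itself gives no proof of this lemma (it is quoted from \cite{GL4}), so I am judging your argument on its own terms; your blow-up construction in Steps 1--2 is the natural route (and of the same kind as the one in \cite{GL4}), and the reduction of $T_c^*$ to $T_c$ is fine. The problem is Step 3, which is precisely where the content lies: you assert the no-folding property rather than prove it, and the justification you sketch does not work as stated. The fact that two distinct cylinders of $T$ meet in at most one vertex does \emph{not} imply that "the defining sets of $T_Y$ and $T_{Y'}$ share at most one point": the assignment $v\mapsto p_v$ need not be injective (distinct vertices of $V_0(T_c)$ can perfectly well have the same fixed point in $S$, e.g.\ when $S$ collapses a large portion of $T$), and even if it were, a bound on the number of common \emph{defining} points says nothing about how the two convex hulls $T_Y$ and $T_{Y'}$ overlap in $S$, which is exactly what folding is about. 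Saying that axiom (3) "controls the geometry of the $T_Y$'s" is the conclusion you need, not an argument.

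Here is the missing step, i.e.\ the precise way axiom (3) of Definition \ref{eqrel} enters: \emph{every edge of $T_Y$ has stabilizer in the class $[Y]$}. Indeed, for $v,w\in Y\cap V_0(T_c)$ choose edges $e_v,e_w$ of the cylinder $Y$ incident to $v$ and $w$; then $G_{e_v}\sim G_{e_w}$, and $G_{e_v}\subset G_v$ fixes $p_v$ while $G_{e_w}\subset G_w$ fixes $p_w$, so axiom (3) applied to $S$ says that every edge of $[p_v,p_w]_S$ has stabilizer equivalent to $G_{e_v}$, hence in $[Y]$; since $T_Y$ is the union of such segments, the claim follows. Consequently $T_Y$ and $T_{Y'}$ share no edge when $Y\neq Y'$, and the image under $f$ of a reduced path of $\hat T$ is a concatenation of segments lying in pairwise distinct trees $T_{Y_i}$ (the cylinders along a geodesic of $T_c$ are pairwise distinct); if two consecutive nondegenerate segments met in more than their common junction point $p_w$ they would share an edge, forcing $[Y_i]=[Y_j]$, a contradiction. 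So the image path is a local geodesic, hence a geodesic, and $f$ preserves alignment. Two hypotheses should be flagged: axiom (3) is only available when $S$ has \emph{infinite} edge stabilizers, which is the situation in which the paper uses the lemma (one-endedness relative to $\calh$) but is not automatic for an arbitrary $(\cala,\calh)$-tree; and every cylinder must contain a vertex of $V_0(T_c)$ for $T_Y$ to be nonempty, which holds except in the trivial case where $T_c$ is a point. Incidentally, for $v\in V_0(T_c)$ the point $p_v$ is in fact unique (so your equivariant choice is canonical): $G_v$ contains two inequivalent infinite edge stabilizers of $T$, so by axiom (2) it cannot fix an edge of $S$.
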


We will also need the following more technical statement.

\begin{lem}\label{lraff}
Let $G$ be one-ended relative to $\calh$  (see Subsection \ref{AH}). Let $\sim$ be an admissible equivalence relation on $\cale$ such  that  every stabilizer $G_{[A]}$ (in particular, every group in $\cala$)  is small in $(\cala,\calh)$-trees.
Let $C$ be an integer, and 
 suppose that 
$\cala$ contains all $C$-virtually cyclic groups.  
  
Let $S$, $T$ be $(\cala,\calh)$-trees, with $S$ refining $T$. Assume that each vertex stabilizer $G_v$ of $T$ is small in $S$   (possibly elliptic) or   QH with
finite fiber   of cardinality $\leq C$. Then $S_c^*$ refines $T_c^*$.
\end{lem}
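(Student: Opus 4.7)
The plan is to construct an equivariant collapse map $q\colon S_c^{*}\to T_c^{*}$. Write $p\colon S\to T$ for the collapse map, and set $\Lambda_v=p^{-1}(v)\subset S$ for each vertex $v$ of $T$. Since $G$ is one-ended relative to $\calh$, every $(\cala,\calh)$-tree has infinite edge stabilizers, so $T_c^{*}$ and $S_c^{*}$ are well-defined $(\cala,\calh)$-trees. Note that Lemma \ref{compfac}, applied to $S$ (which dominates $T$ and hence $T_c^{*}$), already yields the compatibility of $S_c^{*}$ with $T_c^{*}$; the work lies in promoting compatibility to refinement.

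First I classify the cylinders of $S$ in terms of $p$. An edge of $S$ not collapsed by $p$ maps bijectively onto an edge of $T$ with the same stabilizer, so by admissibility axiom (3) each $T$-cylinder $Y_T$ lifts to a unique $S$-cylinder $\Phi(Y_T)$ containing the lifts of its edges. Every $S$-cylinder $Y_S$ is then of one of two types: \emph{type (a)}, in which case $Y_S=\Phi(Y_T)$ for a unique $T$-cylinder $Y_T$; or \emph{type (b)}, in which all edges of $Y_S$ are collapsed by $p$, forcing $Y_S\subset\Lambda_v$ for a single $v\in V(T)$. A key point, used repeatedly below, is that for a type (b) cylinder $Y_S\subset\Lambda_v$ the stabilizer $G_{[Y_S]}$ fixes $v$ in $T$: if some $g\in G_{[Y_S]}$ moved $v$, axiom (3) applied to $A\subset G_v$ and $gAg^{-1}\subset G_{gv}$ with $A\sim gAg^{-1}$ would produce an edge of $T$ with stabilizer in $[Y_S]$, contradicting type (b).

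I define $q$ as follows. A vertex $x\in V_0(S_c^{*})$ goes to $p(x)$ when $p(x)\in V_0(T_c^{*})$, and otherwise to the unique $T$-cylinder containing $p(x)$. A vertex $\Phi(Y_T)\in V_1(S_c^{*})$ of type (a) goes to $Y_T\in V_1(T_c^{*})$, and a type (b) vertex $Y_S\subset\Lambda_v$ goes to the image of $v$ under the same rule used for vertices. On edges $\varepsilon=(x,Y_S)$ of $S_c^{*}$ the image is either a point (when both endpoints collapse to the same vertex of $T_c^{*}$) or the edge joining $q(x)$ and $q(Y_S)$; the fact that this is well-defined uses paragraph two, and equivariance follows from the canonical nature of $\Phi$ and of the $p$-fibers. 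Vertex stabilizers of $S_c^{*}$ then fix their images: vertex stabilizers from $V_0(S_c^{*})$ lie in the corresponding $G_{p(x)}$, while stabilizers of type (a) vertices equal $G_{[Y_T]}$ (the stabilizer of a vertex of $T_c$), and stabilizers of type (b) vertices fix $v\in T$ by the key point above.

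The main obstacle is to verify that $q$ is alignment-preserving, equivalently that $q^{-1}(z)$ is connected for every vertex $z$ of $T_c^{*}$; this is where the hypothesis on vertex stabilizers $G_v$ of $T$ is decisive. The preimage of a point corresponding to $v\in V(T)$ is the portion of $S_c^{*}$ carried by $\Lambda_v$, namely its type (b) cylinders together with incident $V_0$-vertices. Connectedness reduces to verifying that the cylinder decomposition of $\Lambda_v$ inside $S$ coincides with the intrinsic one: no cylinder of $\Lambda_v$ is spuriously merged with an edge of $S\setminus\Lambda_v$, and no two edges of $\Lambda_v$ with $\sim$-inequivalent stabilizers are mistakenly joined. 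If $G_v$ is small in $S$, then $\Lambda_v$ is a point, a line, or has a fixed end, so its edge stabilizers are nested; by admissibility axiom (2) they are pairwise $\sim$-equivalent, and the intrinsic cylinder structure of $\Lambda_v$ is a single cylinder (or empty), giving connectedness trivially. If $G_v$ is QH with fiber $F$ of order at most $C$, Proposition \ref{dual} realizes the action of $G_v$ on $\Lambda_v$ as dual to a family of essential simple closed geodesics on $\Sigma_v$, with each edge stabilizer an extension of $F$ by $\Z$ or $D_\infty$, thus $C$-virtually cyclic and in $\cala$ by hypothesis. Two such stabilizers $A_1,A_2$ coming from distinct geodesic orbits cannot be $\sim$-equivalent, for otherwise both would lie in the common subgroup $G_{[A_1]}$, which is small in $(\cala,\calh)$-trees by assumption; but the images of $A_1$ and $A_2$ in $\pi_1(\Sigma_v)=G_v/F$ are two virtually cyclic subgroups with distinct axes in $\widetilde\Sigma_v$, so they generate a non-elementary subgroup, contradicting smallness. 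With the cylinder structure of $\Lambda_v$ thus under control, each $q^{-1}(z)$ is connected and $q$ is a collapse map; the descent from $S_c$ to $S_c^{*}$ and from $T_c$ to $T_c^{*}$ is compatible with $q$ because edges collapsed in the $*$-construction (those with stabilizer outside $\cala$) either are sent to points by $q$ or are mapped to edges that are themselves collapsed in $T_c^{*}$, so $q$ induces the desired collapse map $S_c^{*}\to T_c^{*}$.
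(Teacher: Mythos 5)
Your overall strategy (building the collapse map $S_c^{*}\to T_c^{*}$ directly by sorting cylinders of $S$ into those carrying lifts of $T$-cylinders and those contained in a fibre $\Lambda_v$, then checking connectedness of point preimages) differs from the paper's inductive argument, and your use of smallness of $G_{[A]}$ is the right tool; but the proof has two genuine gaps exactly where the difficulty lies. First, you never arrange that $G_v$ acts minimally on $\Lambda_v=p^{-1}(v)$: as stated, $\Lambda_v$ is merely some $G_v$-invariant subtree, so the structural claims you make about it (a point, a line, or a tree with a fixed end whose edge stabilizers are nested; an action dual to geodesics on $\Sigma_v$) are not justified. The paper first performs collapses of $S$ that do not change the deformation space (harmless, since the tree of cylinders depends only on the deformation space) and treats one orbit of vertices of $T$ at a time, precisely so that each $S_v$ is minimal; moreover, in the QH case the relevant statement is Lemma \ref{dual2}, not Proposition \ref{dual} directly, and applying it requires knowing that the fiber is elliptic, that every boundary component is used (one-endedness plus Lemma \ref{lem_arcbis}), and that the edge stabilizers of the $G_v$-action are virtually cyclic (smallness together with Assertion (2) of Proposition \ref{prop_QHUE}, which is where the hypothesis that $C$-virtually cyclic groups lie in $\cala$ enters).

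Second, and more seriously, the condition you yourself isolate --- ``no cylinder of $\Lambda_v$ is spuriously merged with an edge of $S\setminus\Lambda_v$'' --- is never verified in the QH case and is actually false in the small case. In the QH case your inequivalence argument only compares two geodesic-dual stabilizers $A_1,A_2$ inside $\Lambda_v$; what must be excluded is equivalence of a geodesic-dual stabilizer $G_e\supset G_\gamma$ with the stabilizer $G_f$ of a non-collapsed edge attached to $\Lambda_v$, whose image in $\pi_1(\Sigma_v)$ has finite index in a boundary subgroup. This is the heart of the paper's proof of the lemma: if $G_e\sim G_f$, then $\grp{G_\gamma,G_f}\subset G_{[G_e]}$ is small in $(\cala,\calh)$-trees, contradicting Assertion (2) of Proposition \ref{prop_QHUE} since its image in $\pi_1(\Sigma_v)$ is not virtually cyclic. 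In the small case such mixed cylinders genuinely occur (blow up a $\Z^2$ vertex of an abelian splitting into a line: the line and the incident old edges lie in a single cylinder), so ``connectedness trivially'' is not available; there the correct conclusion is that the collapsed tree of cylinders is unchanged by the blow-up, which the paper establishes by a separate argument (the domination maps $S'\to T\to S_c'^{*}$ induce cellular maps between collapsed trees of cylinders which cannot increase translation lengths, hence are isomorphisms by Theorem \ref{thm_compat_length}). Without these two verifications, your assertion that every $q^{-1}(z)$ is connected is unsubstantiated at precisely the critical points.
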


  The assumption  on vertex stabilizers of $T$ holds in particular if $T$ is a JSJ decomposition whose flexible vertices
are small or QH with   fiber of cardinality $\leq C$.

When $S$ dominates $T$, any map $f:S\ra T$ sends cylinder to cylinder and induces a cellular map $f{}_c^*$ from $S_c^*$ to  $T{}_c^*$ (it maps a vertex to a vertex, an edge to a vertex or an edge,   and is independent of $f$) \cite[Lemma 5.6]{GL4}.
The point of the  lemma is that $f{}_c^*$ is a collapse map if $f$ is.

\begin{proof} 
One passes from $S$ to $T$ by successively collapsing orbits of edges, in an arbitrary order. Starting from $S$, 
perform   collapses which do not change the deformation space as long as possible. Since trees in the same deformation space have the same tree of cylinders,
 this allows us to assume that no proper collapse of $S$  refining $T$ belongs to the same deformation space as $S$.  This ensures that, for each vertex $v$ of $T$, the action of $G_v$ on the preimage $S_v$ of $v$ in $S$ is minimal ($S_v$ may be a point).

Fix $v$, and consider the tree $S'$ 
obtained from $S$ by collapsing all edges mapped to a vertex of $T$ not in the orbit of $v$.
Thus $S_v$ embeds in $S'$, 
and we view $S_v$ as  the minimal subtree of $G_v$ in both $S$ or $S'$. We show that \emph{$S'_c{}^*$ refines $T_c^*$, and   vertex stabilizers  of $S'$  are small in $S$ or QH with fiber of cardinality $\le C$.} The lemma then follows by an inductive argument, considering $S$ and $S'$. We may assume that $S_v$ is not a point (otherwise $S'=T$).

First suppose that $G_v$ is small in $S$. 
This implies that $S_v$ is a line or a subtree with a $G_v$-fixed end, so all edges in $S_v$ belong to the same cylinder   of $S'$.
In particular, $G_v$ is elliptic in the tree of cylinders of $S'$, hence in $S_c'^*$.
We thus have  domination maps $S'\xra{f_1} T \xra{f_2} S_c'^*$.
As recalled above, by \cite[Lemma 5.6]{GL4}, these maps induce  (equivariant) cellular maps $S_c'^*\xra{f_{1c}^*} T_c^* \xra{f_{2c}^*} (S_c'^*)_c'^*=S_c'^*$ between
collapsed trees of cylinders (see Lemma \ref{lem_basicTc} for the equality $(S_c'^*)_c'^*=S_c'^*$).  The maps $f_{1c}^*$ and $f_{2c}^*$ cannot increase translation lengths, so it follows from Theorem \ref{thm_compat_length} that they are isomorphisms (one may also show directly that they are injective on any segment joing two vertices with stabilizer not in $\cala$). Thus $T_c^*=S_c'^*$. Vertex stabilizers of $S'$ are small in $S$ or QH because they are   vertex stabilizers of $T$ or contained in a conjugate of $G_v$.

 The second case is when $v$ is QH, 
 with finite fiber  $F$ and underlying orbifold $\Sigma$.  Consider the action of $G_v$ on $S_v$. By one-endedness, every component of $\bo \Sigma$ is used (Lemma \ref{lem_arcbis}), so by  Lemma  \ref{dual2} 
the action   is dominated by an action dual to a  family of geodesics $\call$ on $\Sigma$. 
It is in fact equal to such an action because edge stabilizers are small  in $(\cala,\calh)$-trees by assumption, 
hence virtually cyclic by Assertion (2) of Proposition \ref{prop_QHUE} (the assumption that $C$-virtually cyclic groups are in $\cala$ ensures that the groups $Q_\gamma$ are in $\cala$). In particular, vertices in $S_v$ are QH with fiber $F$.

We   claim that  \emph{any cylinder   of $S'$ containing an edge of the preimage $S_v$   of $v$ is entirely contained in $S_v$.} 
This implies that $S'_c$ refines $T_c$ by Remark 4.13 of \cite{GL4},  and therefore $S_c'^*$ refines $T_c^*$, thus completing the proof.   

 Let $e$ be an arbitrary edge in $S_v$. 
Given  an edge $f$  of $S'$ with $G_f\sim G_e$, we  have to prove   $f\subset S_v$. Suppose not.
Cylinders being connected,
we can assume that $f$ has an endpoint in $S_v$  (so is not collapsed in $T$).
Since $v$ is QH with finite fiber $F$, and   $G_f$ is infinite  by one-endedness, the image of $G_f$ 
in the  orbifold group $\pi_1(\Sigma)=G_v/F$ is infinite, and   therefore  contained with finite index in
  a boundary subgroup.

There is a geodesic $\gamma\in\call$ such that  $G_e$ contains the preimage $G_\gamma$  of the fundamental group of $\gamma$.
Since $G_e\sim G_f$, we have $\grp{G_\gamma,G_f}\subset \grp{G_e,G_f}\subset G_{[G_e]}$, so $\grp{G_\gamma,G_f}$ is small in $(\cala,\calh)$-trees.
This contradicts the 
second assertion of Proposition \ref{prop_QHUE} since $\grp{G_\gamma,G_f}$ is not virtually cyclic.
 This proves the claim, hence the lemma.
 \end{proof}

\section{Constructing JSJ decompositions using acylindricity}
\label{jsjac}

Using acylindrical accessibility we   show (Subsection  \ref{unac})  that  one may construct the JSJ deformation space of $G$ over $\cala$ relative to $\calh$, and describe its flexible subgroups, provided that every deformation space contains an acylindrical tree (with uniform constants). In Subsection \ref{sec_smally_acy} we prove the same results under the weaker assumption that every
tree  $T$ smally dominates an acylindrical tree $T^*$. 
In Section \ref{exam} we will combine this with Proposition \ref{tcsmdom}, which ensures the existence of  such trees $T^*$.

As already mentioned, acylindrical accessibility bounds the number of orbits of edges of acylindrical trees.
This does not prevent the existence of infinite sequences of refinements. 
For example, consider $G=A*B$ with $A,B$ torsion-free hyperbolic groups.
Fix $b\in B\setminus\{1\}$, and define $C_k=\grp{b^{2^k}}$ and $A_k=\grp{A,C_k}$.
Let $T_k$ be the Bass-Serre tree of the amalgam $G=A_k*_{C_k} B$.
It is $2$-acylindrical and can be refined into the splitting $G=A_{k+1}*_{C_{k+1}}C_k*_{C_k} B$.
This refinement is  not $2$-acylindrical but it belongs to the same deformation space as the $2$-acylindrical tree $T_{k+1}$. 
The trees $T_k$  all lie in distinct deformation spaces. Although this is true
in this example, it is not obvious in general that $G$ splits over the intersection of the groups $C_k$ (or some conjugates).

The  proof in Subsection \ref{exis} therefore uses not only acylindrical accessibility but also   arguments from Sela's proof \cite{Sela_acylindrical}, involving actions on $\bbR$-trees obtained by taking limits of splittings similar to the $T_k$'s considered above.
We refer the reader to the appendix
for some basic facts about \Rt s. In particular, we will use compactness of the space of projectivized length functions \cite{CuMo}, and Paulin's theorem   \cite{Pau_Gromov} that the Gromov topology and the axes topology agree on the space of irreducible \Rt s (and even on the space of semisimple trees, see Theorem \ref{thm_feighn}).

 As usual, 
 $\cala$ is a family stable under conjugation and taking subgroups. The family  $\calh$ may be  arbitrary, 
and $G$ is only assumed to be finitely generated
(it would be enough to assume
 that $G$ is   finitely generated relative to
a finite collection of subgroups, and these subgroups are in $\calh$).

\subsection{Uniform acylindricity }\label{unac}

Recall that a  tree $T$ is $(k,C)$-acylindrical if all   segments
of length $\geq k+1$ have stabilizer of cardinality $\leq C$. 
If $T'$ belongs to the same deformation space as $T$, it is $(k',C)$-acylindrical  for some $k'$ (see \cite{GL2}), but in general there is no control on $k'$.

  The main assumption of this section is the following.
We fix $k$ and $C$, and we assume that for each $(\cala,\calh)$-tree $T$ there is a $(k,C)$-acylindrical tree $T^*$ 
in the same deformation space.
Our goal is to  deduce 
the existence of the JSJ deformation space. 
This   section is a first step towards  Theorem \ref{thm_smallyacyl}, where
we allow $T^*$ not to lie in the same deformation space as $T$.

For example, suppose that $G$ is a one-ended torsion-free CSA group (for instance, a toral relatively hyperbolic group), and $\cala$ is the class of abelian subgroups. One can take   $T^*:=T_c=T_c^*$ to be the tree of cylinders
as in  the first example of Subsection \ref{defcyl}, provided that $G$ contains no non-cyclic abelian subgroups, or more generally that  these subgroups are contained  in a group of $\calh$ (up to conjugacy):
this guarantees that $T_c$ lies in the same deformation space as $T$  (see
Proposition \ref{tcsmdom}). The general case (no assumption on abelian subgroups) will be taken care of in   Subsection \ref{sec_smally_acy}.

 Recall (Definition \ref{dvc})  that a group is $C$-virtually cyclic  for some $C>0$ if it maps to $\bbZ$ or $D_\infty$ 
with kernel of cardinality $\leq C$.
Also recall that  $G$ is only assumed to be finitely generated, and  there is no restriction on $\calh$ (it can be any collection of   subgroups).

\begin{thm}\label{thm_JSJacyl}
 Given $\cala$ and $\calh$, suppose that there exist numbers $C$ and $k$ such that:
\begin{itemize}
\item  $\cala$ 
contains all $C$-virtually cyclic subgroups  of $G$, 
and all subgroups of order  $\leq 2C$;
\item for any $(\cala,\calh)$-tree $T$, there is an $(\cala,\calh)$-tree $T^*$ in the same deformation space
which is $(k,C)$-acylindrical.
\end{itemize}
Then the JSJ deformation space of $G$ over $\cala$ relative to $\calh$  exists.

Moreover, if all  groups in $\cala$  are small in $(\cala,\calh)$-trees, then the flexible vertices
are QH
  with fiber of cardinality at most $C$.
\end{thm}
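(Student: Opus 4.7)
The plan is to adapt the existence proof of Theorem \ref{thm_exist_mou}, replacing Dunwoody's accessibility (which requires finite presentation) with acylindrical accessibility, available under finite generation alone. The description of flexible vertices will then follow by applying the machinery of Part \ref{part_QH} to vertex stabilizers, exploiting the fact that the only use of finite presentation in the proof of Theorem \ref{thm_totally_flex} is through Lemma \ref{lem_Zorn_minuscule}, which admits an acylindric substitute.

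The crucial technical step is the following substitute for Lemma \ref{lem_Zorn_minuscule}: every ascending chain of universally elliptic refinements $S_1, S_2, \dots$ admits an upper bound that is itself a universally elliptic tree. To establish it, replace each $S_k$ by a $(k,C)$-acylindrical tree $S_k^*$ in the same deformation space. By Weidmann's acylindrical accessibility \cite{Weidmann_accessibility}, the $S_k^*$ (with redundant vertices removed) have a uniformly bounded number of edge orbits. If infinitely many $S_k^*$ share a deformation space, that space provides a bound for the chain; otherwise, rescaling gives a subsequence converging in the equivariant Gromov--Hausdorff topology to a limit $\bbR$-tree $T_\infty$, which inherits $(k,C)$-acylindricity by Theorem \ref{thm_feighn}. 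A Rips-style analysis of $T_\infty$, combined with the hypothesis that $\cala$ contains every $C$-virtually cyclic subgroup and every subgroup of order $\leq 2C$, should force $T_\infty$ to be simplicial with edge stabilizers in $\cala$; and since edge stabilizers of each $S_k$ are universally elliptic, they remain elliptic in $T_\infty$, so $T_\infty$ dominates every $S_k$. With this lemma, existence of the JSJ space follows as in Theorem \ref{thm_exist_mou}: enumerate the countable set of universally elliptic trees with finitely generated stabilizers (using Corollary \ref{cor_fg_rel}), build an ascending chain of refinements dominating each in turn, and take its upper bound.

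For the flexible vertex description, let $Q = G_v$ be a flexible vertex stabilizer of a JSJ tree, not small in $(\cala,\calh)$-trees. By Corollary \ref{cor_flex}, $Q$ is totally flexible over $\cala_v$ relative to $\Inch_v$, and by Lemma \ref{relfg} finitely generated relative to $\Inch_v$. The hypothesis on $\cala$ ensures that $\cala_v$ satisfies the stability condition $(SC)$ with fibers $\calf = \{\text{finite subgroups of order} \leq C\}$: given $1 \to F \to A \to K \to 1$ with $A \in \cala_v$ and $K$ isomorphic to $\bbZ$ or $D_\infty$, the group $A$ acts on a line in some $(\cala,\calh)$-tree with edge stabilizer $F$; since every deformation space contains a $(k,C)$-acylindrical representative, Lemma \ref{vc} applied to such a representative forces $|F| \leq C$; conversely, an extension of such an $F$ by a quotient of $\bbZ$ or $D_\infty$ is $C$-virtually cyclic, hence in $\cala$. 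Running the proof of Theorem \ref{thm_totally_flex} for $Q$ relative to $\Inch_v$, using the acylindric substitute for Lemma \ref{lem_Zorn_minuscule} at the one place where finite presentation is invoked, produces the desired QH structure on $Q$ with fiber of cardinality at most $C$.

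The hard part will be the limit step in the existence proof: showing that the equivariant Gromov--Hausdorff limit of a sequence of universally elliptic $(k,C)$-acylindrical trees is genuinely simplicial, with edge stabilizers in $\cala$, and dominating the whole sequence. The hypothesis that $\cala$ contains all $C$-virtually cyclic subgroups and all subgroups of order $\leq 2C$ is precisely what forces the Rips-theoretic decomposition of $T_\infty$ to have simplicial components with edge stabilizers that remain inside $\cala$, rather than producing a thin or exotic component whose arc stabilizers are small but not of the type we allow.
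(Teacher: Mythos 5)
Your overall flavor (acylindrical accessibility plus a limiting argument, then the Part \ref{part_QH} machinery) is the right one, but the central technical step as you state it would fail. First, in your "otherwise" branch the rescaled Gromov--Hausdorff limit $T_\infty$ cannot serve as an upper bound for the chain: rescaling by $\lambda_i\to\infty$ makes every element of bounded translation length elliptic in $T_\infty$, so $T_\infty$ does not dominate the $S_k$'s; and knowing that edge stabilizers of $S_k$ are elliptic in $T_\infty$ is ellipticity in the wrong direction for domination (it would let you refine $S_k$ to dominate $T_\infty$, not conversely). Nor does a rescaled limit inherit $(k,C)$-acylindricity -- what survives is only control of arc and tripod stabilizers (order $\le C$ or $C$-virtually cyclic). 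The paper's argument instead splits on whether the lengths $\ell_i(g)$ stay bounded: in the bounded case no rescaling is needed, the limit has integer length function hence is simplicial, and one collapses edges with stabilizer not in $\cala_\elli$ to get the JSJ tree; the unbounded case is ruled out as a contradiction, and that contradiction requires two ingredients you omit -- the preliminary reduction (Lemma \ref{oneend}, with Lemma \ref{oef}) to the case where $G$ has no splitting over a subgroup of order $\le 2C$, and universal ellipticity of the approximating trees $U_i$, which via Lemma \ref{cor_Zor}(3) turns the $C$-virtually cyclic splittings read off from $T_\infty$ into forbidden splittings over groups of order $\le 2C$. Without the one-ended reduction the unbounded case genuinely occurs (e.g.\ free products). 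Also, your appeal to Corollary \ref{cor_fg_rel} to enumerate universally elliptic trees is not available here ($G$ is only finitely generated and $\calh$ arbitrary); the paper instead chooses countably many universally elliptic trees $U_i$ detecting ellipticity of elements, and concludes domination of all universally elliptic trees from Lemma \ref{cor_Zor}(2).

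Second, your substitute for Lemma \ref{lem_Zorn_minuscule} is mismatched with the place where it is needed. In the flexible-vertex step one works with a totally flexible group $Q$, which has no non-trivial universally elliptic splittings at all, so an upper bound for chains of universally elliptic refinements gives nothing there; what Lemma \ref{lem_Zorn_minuscule} provides is a tree maximal for domination among \emph{all} $(\cala,\calh)$-trees. The paper replaces it by a different device: by Weidmann's acylindrical accessibility one picks a $(k,C)$-acylindrical tree $U$ with the maximal number of edge orbits, pairs it with a fully hyperbolic $V$ via Proposition \ref{prop_exist_hh}, and then shows the regular neighborhood $RN(U,V)$ is a point by a counting argument on commensurability classes of edge stabilizers ($|U|\le c(U)\le c(S)\le c(S^*)\le |S^*|\le |U|$), which is not the domination-maximality argument of Subsection \ref{preuve}. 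Finally, your verification of $(SC)$ for $\cala_v$ is unjustified for groups $A\in\cala$ that are not edge stabilizers; the clean route (as in the paper) is to note that total flexibility plus Lemma \ref{vc} makes every edge stabilizer $C$-virtually cyclic, and then to replace $\cala$ by the family of $C$-virtually cyclic and finite subgroups, for which $(SC)$ holds with $\calf$ the subgroups of order $\le C$.
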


See Subsections \ref{sec_slender}, \ref{jsjdf} and \ref{quah} for the definitions of small, flexible and QH.

  Theorem \ref{thm_JSJacyl} will be proved in  Subsections \ref{exis} and \ref{sec_JSJ_acyl_desc}. 
We start with  a general  lemma.

\begin{lem}\label{oef}
If a finitely generated group $G$ does not split over subgroups of order $\le C$ relative to a family $\calh$, 
there exists a finite  family $\calh'=\{H_1,\dots,H_p\}$, with $H_i$ a  finitely generated group contained in  a group of $\calh$, 
such that $G$ does not split over subgroups of order $\le C$ relative to  $\calh'$. 
\end{lem}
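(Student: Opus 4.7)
The plan is to argue by contradiction: assume that for every finite family $\calh'=\{H_1,\dots,H_p\}$ of finitely generated subgroups, each contained in some element of $\calh$, $G$ does split over a subgroup of order $\leq C$ relative to $\calh'$, and derive a splitting of $G$ over such a subgroup relative to all of $\calh$.

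First I would establish a key reduction: \emph{in any tree $T$ with edge stabilizers of order $\leq C$, a subgroup $H<G$ is elliptic if and only if every finitely generated subgroup of $H$ is.} The nontrivial direction uses Proposition \ref{trois}. If $H$ is not elliptic and contains no hyperbolic element, it must be infinitely generated and fix a unique end. No element of $H$ can translate toward the end (else it would be hyperbolic), so every element fixes a sub-ray; the pointwise stabilizer of any ray is contained in an edge stabilizer, hence has order $\leq C$. Thus every finite subset of $H$ generates a subgroup of order $\leq C$, forcing $|H|\leq C$, and then $H$ is finitely generated and elliptic, a contradiction. This observation lets me replace $\calh$ by the countable family $\calf$ of all finitely generated subgroups of $G$ contained in some element of $\calh$: the hypothesis ``$G$ does not split over subgroups of order $\leq C$ relative to $\calh$'' is equivalent to ``relative to $\calf$''.

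Now enumerate $\calf=\{J_1,J_2,\dots\}$ and, using Lemma \ref{lem_oneed}, for each $n$ pick a one-edge splitting $T_n$ of $G$ over a subgroup of order $\leq C$, relative to $\{J_1,\dots,J_n\}$. The plan is to extract a subsequential limit $T_\infty$ which is a non-trivial $(\cala,\calf)$-tree, contradicting the hypothesis via Step~1. To build $T_\infty$, I would fix a finite generating set $S$ of $G$: by Serre's lemma some $s\in S$ is hyperbolic in $T_n$, and after passing to a subsequence I may assume that the same $s_0\in S$ is hyperbolic for all $n$. Choose a basepoint $v_n\in T_n$ lying on the axis of $s_0$. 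Linnell's accessibility \cite{Linnell} bounds the number of orbits of edges in a reduced tree with edge groups of order $\leq C$, and combined with the bound $C$ on edge-stabilizer order this means that the isomorphism type of the ball of radius $R$ around $S\cdot v_n$ in $T_n$ (as a $G$-labelled combinatorial object) has only finitely many possibilities for each $R$. A diagonal (Koenig-type) extraction then produces a limiting based $G$-simplicial tree $T_\infty$ with edge stabilizers of order $\leq C$, in which $s_0$ remains hyperbolic (non-triviality) and in which every $J_i$ stays elliptic (since it is elliptic in $T_n$ for all $n\geq i$, and its fixed-point set persists in the limit). Applying Step~1 to $T_\infty$ shows it is relative to $\calh$, contradicting the hypothesis.

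The main obstacle is the compactness step producing $T_\infty$: one must verify that the diagonal extraction yields a simplicial (rather than merely $\bbR$-tree) limit, that edge stabilizers of order $\leq C$ are preserved in the limit, and that ellipticity of each fixed $J_i$ together with hyperbolicity of $s_0$ both pass to $T_\infty$. The combined bound on the number of edge-orbits (from Linnell) and on edge-stabilizer order is what makes the local combinatorial structure finitely branching, so this is where the hypothesis on bounded edge groups is essential; without it, the argument would have to go through Rips theory and a non-simplicial limit, which is precisely what we are trying to avoid.
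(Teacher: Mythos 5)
Your Step~1 (a subgroup of a tree with edge stabilizers of order $\le C$ is elliptic as soon as all its finitely generated subgroups are) is correct, and it is in fact the same observation the paper uses at the end of its own argument to pass from the finitely generated subgroups back to the groups of $\calh$. The overall strategy, however, diverges from the paper's, and the divergence is where the gap lies.

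The compactness step does not work as stated. Your justification for the K\"onig-type extraction is that Linnell's accessibility together with the bound $C$ on edge stabilizers gives only finitely many possibilities for the ball of radius $R$ around $S\cdot v_n$. Linnell's theorem bounds the number of \emph{orbits of edges} of a reduced tree; it says nothing about the local structure of balls, and indeed vertices of $T_n$ can have infinite valence (an amalgam $A*_F B$ with $F$ of order $\le C$ and $[A:F]=\infty$), and with $G$-labels there are infinitely many ball types in any case. More fundamentally, nothing in your setup bounds the displacements $d_{T_n}(v_n, s v_n)$ or the translation lengths $\ell_{T_n}(s_0)$: the $T_n$ are arbitrary one-edge splittings relative to $\{J_1,\dots,J_n\}$, and such lengths can go to infinity (already for free splittings of $\F_2$ read in different bases). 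When they do, the sequence has no subsequential limit as an unrescaled $G$-tree; one must rescale, the limit is an $\R$-tree, and extracting a splitting over a group of order $\le C$ from it requires Rips/Sela-type structure theory --- exactly the machinery you say you want to avoid (this is how the paper handles the analogous unbounded case in the proof of Theorem~\ref{thm_JSJacyl}, using acylindricity). In the bounded case your limit is fine (integer-valued limit length function gives a simplicial tree, and stabilizers of bridges between axes control edge stabilizers), but you have no mechanism to force boundedness, so the argument is incomplete at its central step. Also a small slip: Serre's lemma only gives a hyperbolic element in $S\cup S\cdot S$, not necessarily in $S$.

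For comparison, the paper proves the lemma without any limiting process: it enumerates the finitely generated subgroups $H_1,H_2,\dots$ contained in groups of $\calh$, takes $T_n$ to be a \emph{JSJ tree} over the class of subgroups of order $\le C$ relative to $\{H_1,\dots,H_n\}$ (these exist by Linnell's accessibility), and shows the deformation space of $T_n$ stabilizes for large $n$: refining $T_n$ successively to JSJ trees relative to the smaller families produces a chain of refinements whose total number of edge orbits is bounded by Linnell, which bounds the number of indices at which the deformation space can change. Once the space stabilizes, every $H_n$, hence (by your Step~1 argument) every $H\in\calh$, is elliptic, and the non-splitting hypothesis forces the stable deformation space to be trivial. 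If you want to salvage a limit-type argument, you would need either to choose the $T_n$ with controlled length functions or to confront the rescaled $\R$-tree limit; as written, the finiteness claim underpinning your extraction is not available.
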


A special case of this lemma is proved in \cite{Perin_elementary}.

\begin{proof} All trees in this proof will have stabilizers in the family $\cala(C)$ consisting of all subgroups of order $\le C$. Note that over $\cala(C)$ all trees are universally elliptic, so having no splitting is equivalent to the  JSJ deformation space being trivial. 

 Let  $H_1,\dots,H_n,\dots$ be  an enumeration of all finitely generated subgroups of $G$ contained in  a group of $\calh$. 
We have  pointed out  in Subsection \ref{Dunw}   
that by Linnell's accessibility $G$ admits  JSJ decompositions over  $\cala(C)$,
so let $T_n$ be a JSJ tree   relative to $\calh_n=\{H_1,\dots,H_n\}$. 
We show the lemma by proving that $T_n$ is trivial for $n$ large. 

By Lemma \ref{lem_rafin} the tree $T_n$, which is relative to $\calh_{n-1}$, may be refined to a JSJ tree relative to $\calh_{n-1}$. If we fix  $k$, we may therefore find trees $S_1(k),\dots,S_k(k)$ such that $S_i(k)$ is a JSJ tree relative to $\calh_i$ and $S_i(k)$ refines $S_{i+1}(k)$. By Linnell's accessibility theorem, there is a uniform bound for the number of orbits of edges of $S_1(k)$ (assumed to have no redundant vertices). 
 As $T_i$ and $S_i(k)$ are in the same deformation space,
this number is an upper bound for the number of $i's$ such that $T_i$ and $T_{i+1}$ belong to different deformation spaces, so   for   $n$ large the trees $T_n$ all belong to the same deformation space $\cald$ 
(they have the same elliptic subgroups).

Since every $H_n$ is elliptic in $\cald$, so is every $H\in\calh$ (otherwise $H$ would fix a unique end, and edge stabilizers would increase infinitely many times along a ray going to that end). The non-splitting hypothesis made on $G$ then implies that $\cald$ is trivial. 
\end{proof}

We also note:

\begin{lem}\label{oneend} It suffices to prove Theorem \ref{thm_JSJacyl} under the additional hypothesis that 
 $G$ does not split over subgroups of order $\le 2C$ relative to  $\calh$ (``one-endedness condition''). 
\end{lem}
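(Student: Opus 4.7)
The plan is to first split $G$ as a tree $T_0$ with edge stabilizers of order $\le 2C$ whose vertex stabilizers $G_v$ no longer split over such small subgroups relative to $\Inch_v$, and then to apply the ``one-ended'' form of Theorem \ref{thm_JSJacyl} to each $G_v$ separately before reassembling the pieces via Proposition \ref{prop_JSJ_sommets}.

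\emph{Step 1: Constructing $T_0$.} Let $\cala(2C)\subset\cala$ be the family of subgroups of order $\le 2C$; by hypothesis $\cala(2C)\subset\cala$. Since these groups are finite, they are universally elliptic and have bounded order, so by Linnell's accessibility \cite{Linnell} (as in Remark \ref{rem_linn}) the JSJ deformation space $\cald_0$ of $G$ over $\cala(2C)$ relative to $\calh$ exists. Let $T_0$ be a tree in $\cald_0$. By Corollary \ref{cor_flex}, for each vertex $v$ of $T_0$, the stabilizer $G_v$ does not split over any group in $\cala(2C)$ relative to $\Inch_v$, which is exactly the strengthened non-splitting hypothesis of the lemma, applied to $G_v$ with relative structure $\Inch_v$.

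\emph{Step 2: Transferring the hypotheses to each $G_v$.} We must verify the assumptions of Theorem \ref{thm_JSJacyl} for $G_v$, $\cala_v$, and $\Inch_v$. The first one is inherited: $\cala_v$ contains all $C$-virtually cyclic and all order-$\le 2C$ subgroups of $G_v$. For the acylindricity assumption, let $S_v$ be any $(\cala_v,\Inch_v)$-tree. By Lemma \ref{extens}, $S_v$ extends to an $(\cala,\calh)$-tree $S$ of $G$ obtained by refining $T_0$ at $v$ with $S_v$. By hypothesis there is a $(k,C)$-acylindrical $(\cala,\calh)$-tree $S^*$ in the same $G$-deformation space as $S$. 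The minimal $G_v$-invariant subtree $S^*_v:=\mu_{S^*}(G_v)$ is then $(k,C)$-acylindrical as a $G_v$-tree, and it lies in the same $G_v$-deformation space as $S_v$: indeed, since $S$ and $S^*$ have the same elliptic subgroups of $G$, they have the same elliptic subgroups of $G_v$, so $\mu_S(G_v)=S_v$ and $\mu_{S^*}(G_v)$ have the same elliptic subgroups.

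\emph{Step 3: Assembling.} By the one-ended case of Theorem \ref{thm_JSJacyl}, each vertex group $G_v$ has a JSJ tree $T_v$ over $\cala_v$ relative to $\Inch_v$, with flexible vertex stabilizers QH with fiber of cardinality $\le C$. Edge stabilizers of $T_0$ are finite, hence universally elliptic, so Proposition \ref{prop_JSJ_sommets} applies: refining $T_0$ at each vertex using $T_v$ produces a JSJ tree of $G$ over $\cala$ relative to $\calh$. Finally, by Corollary \ref{cor_unbout} (or directly from Corollary \ref{cor_flex}), any flexible vertex of this JSJ tree is a flexible vertex of some $G_v$, so it is QH with fiber of order $\le C$, as required.

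The main technical point is Step 2: the verification that an acylindrical $(\cala,\calh)$-tree in the $G$-deformation space of $S$ restricts, via its minimal $G_v$-subtree, to an acylindrical tree in the $G_v$-deformation space of $S_v$. Everything else is a direct combination of Linnell's accessibility (to produce $T_0$), Lemma \ref{extens} and Proposition \ref{prop_JSJ_sommets} (to pass between splittings of $G$ and of $G_v$), and Corollary \ref{cor_flex} (to translate maximality of $T_0$ into the relative non-splitting condition on each $G_v$).
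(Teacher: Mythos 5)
Your proof is correct and follows essentially the same route as the paper: take a JSJ tree over the subgroups of order $\le 2C$ relative to $\calh$ (which exists by Linnell's accessibility), and assemble the relative JSJ trees of its vertex groups via Proposition \ref{prop_JSJ_sommets}. Your Step 2, verifying that the acylindricity hypothesis passes to each $(G_v,\cala_v,\Inch_v)$ via minimal subtrees, is a useful explicit check that the paper leaves implicit, but it does not change the argument.
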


\begin{proof} Let $\cala (2C)\inc\cala$ denote the family of subgroups of order $\le 2C$. As mentioned  in the previous proof, Linnell's accessibility implies the existence of a JSJ tree over $\cala (2C)$ relative to $\calh$. We can now apply Proposition \ref{prop_JSJ_sommets}.
\end{proof}

 \subsubsection{Existence of the JSJ deformation space}\label{exis}
 
Because of Lemma \ref{oneend}, we assume from now on that $G$ does not split over subgroups of order $\le 2C$ relative to  $\calh$.

In this  subsection we prove the first assertion of Theorem \ref{thm_JSJacyl}  (flexible vertices will be studied in the next subsection). 
We have to construct a universally elliptic tree $T_J$ which dominates every universally elliptic tree   
(of course, all trees are $(\cala,\calh)$-trees and universal ellipticity is defined with respect to $(\cala,\calh)$-trees). 
Countability of $G$ allows us to choose a sequence of universally elliptic trees $U_i$ such that, if $g\in G$ is elliptic in every $U_i$, 
then it is elliptic in every universally elliptic tree.  
 By  Assertion (2) of  Lemma  \ref{cor_Zor}, it suffices  that  $T_J$ dominates  every $U_i$.  
 Inductively replacing each $U_i$   by its 
standard refinement
dominating $U_{i-1}$, as in the proof of Theorem \ref{thm_exist_mou},
we may assume that $U_{i}$ dominates $U_{i-1}$ for all $i$.  In particular, we are free to replace $U_i$ by  a subsequence when needed.

Let    $T_i$ be a $(k, C)$-acylindrical tree in the same deformation space as $U_i$, with distance function denoted by $d_i$  (we view $T_i$ as a metric tree, with all edges of length 1).
Let $\ell_i:G\to\Z$ be the translation length function of $T_i$ (see the appendix).   The proof   has two main steps.
First we assume that, for all $g$, the  sequence $\ell_i(g)$ is bounded, and  
 we   construct a universally elliptic $(\cala,\calh)$-tree $T_J$  which dominates every $U_i$. Such a  tree is a JSJ tree. In the second step, we deduce a contradiction from the assumption that the sequence $\ell_i$ is unbounded.

$\bullet$  If $\ell_i(g)$ is bounded for all $g$,  we pass to 
 a subsequence so that $\ell_i$ has a limit $\ell$ (possibly $0$). Since the set of length functions of trees is closed 
\cite[Theorem 4.5]{CuMo}, $\ell$ is the length function associated to  the action of $G$ on an  \Rt{}    $T$. 
 It  takes values in $\bbZ$, so the tree $T$ is simplicial  (possibly with edges of length $\frac12$, see Example \ref{arbsimp}). In general $T$  is not an  $\cala $-tree, but we will show that it is relative to $\calh$. 

We can assume that all trees $T_i$ are non-trivial.
By Lemma \ref{vc},
$T_i$ is irreducible except if $G$ is virtually cyclic, in which case the theorem is clear.
If $g\in G$ is hyperbolic in some $T_{i_0}$, then $\ell_i(g)\geq 1$ for all $i\geq i_0$  (because $T_i$ dominates $T_{i_0}$), so $g$ is hyperbolic in $T$.
Since every $T_i$ is irreducible, there exist $g,h\in G$ such that $g,h$,  and $ [g,h ]$  are hyperbolic in $T_i$ (for some $i$), hence in $T$, so  $T$ is also irreducible.
By \cite{Pau_Gromov}, $T_{i }$ converges to $T$ in the equivariant Gromov-Hausdorff topology\index{topology!equivariant Gromov-Hausdorff topology}\index{Gromov@(equivariant) Gromov-Hausdorff topology} (see Theorem \ref{thm_fred}).

 We will not claim anything about the edge stabilizers of $T$, but we study its vertex stabilizers.
We claim that \emph{a subgroup $H\inc G$ is elliptic in $T$ if and only if it is elliptic in every $T_i$}; in particular,  every $H\in \calh$ is elliptic in $T$, and $T$ dominates every $T_i$. 

The claim  is true if $H$ is finitely generated, since $g\in G$ is elliptic in $T$ if and only if it is elliptic in every $T_i$. 
If $H$ is infinitely generated,  fix   a finitely generated subgroup  $H'$   of cardinality  $>C$.
If   $H$ is elliptic in $T$, it is elliptic in $T_i$ because otherwise it fixes a unique  end and $H'$ fixes
an infinite ray of $T_i$, contradicting acylindricity of $T_i$.
  Conversely, if $H$ is   elliptic in every $T_i$ but not in $T$, the group $H'$ fixes   an infinite ray in $T$. 
By   convergence in the Gromov topology, $H'$ fixes a segment of length $k+1$ in $T_{i }$ for $i$ large. This contradicts acylindricity of $T_i$, thus proving the claim.

We now return to the trees $U_i$.  They are dominated by $T$ since the trees $T_i$ are.
Their edge stabilizers  are in $\cala_{\elli}$, 
the family of groups in $\cala$ which are universally elliptic. 
Since $\cala_{\elli}$ is stable under taking subgroups, any  equivariant map $f:T\ra U_i$ factors through the tree $T_J$ obtained from $T$ by collapsing all edges with stabilizer not in  $\cala_{\elli}$. 
The tree $T_J$ is a universally elliptic $(\cala,\calh)$-tree dominating every $U_i$, hence 
every universally elliptic tree. It is 
a JSJ tree.

  $\bullet $ We now suppose that $\ell_i(g)$ is unbounded for some $g\in G$, and we work towards a contradiction. Since the set of projectivized non-zero length functions  on  a finitely generated group is compact \cite[Theorem 4.5]{CuMo}, 
we may assume that ${\ell_i}/{\lambda_i}$ converges to the length function $\ell$ of a non-trivial  \Rt\ $T_\infty$, for some sequence $\lambda_i\to+\infty$. 
  By Theorem \ref{thm_feighn},  $T_i/\lambda_i$ converges to $T$ 
  in the Gromov topology (all $T_i$'s are irreducible, and  
  we   take   $T_\infty$ to be  a line if it is not irreducible).

\begin{lem} \label{stab}\ 
  \begin{enumerate}
  \item  
 Any subgroup $H<G$ of order $>C$ which is elliptic in every $T_i$ fixes a unique point in $T_\infty$. In particular, elements of $\calh$ are elliptic in $T_\infty$.  
  \item Tripod stabilizers of $T_\infty$ have cardinality $\leq C$.
  \item If $I\subset T_\infty$ is a non-degenerate arc,    its stabilizer $G_ I$ has order $\le C$ or is $C$-virtually cyclic.

  \item Let $J\subset I\subset T_\infty$ be two non-degenerate arcs. If 
    $G_ I$ is $C$-virtually cyclic, then $G_ I=G_ J$.
  \end{enumerate}
\end{lem}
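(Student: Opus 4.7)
The plan is to prove the four assertions in order, using throughout the following two facts: $T_i$ is $(k,C)$-acylindrical, so any subgroup that pointwise fixes a segment of length $>k$ in $T_i$ has cardinality $\le C$; and $T_i/\lambda_i \to T_\infty$ in the equivariant Gromov--Hausdorff topology by Theorem~\ref{thm_feighn}, so any finite configuration in $T_\infty$ can be approximated by a configuration in $T_i$ which is equivariant for a fixed finite set of elements up to an error $\varepsilon_i\to 0$ (measured in $T_i/\lambda_i$).

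For (1), suppose first $H$ is finitely generated with $|H|>C$ and elliptic in every $T_i$. Acylindricity prevents $H$ from pointwise fixing a segment of length $>k$ in $T_i$, so $\mathrm{Fix}_{T_i}(H)$ has diameter $\le k$, hence diameter $\le k/\lambda_i\to 0$ in $T_i/\lambda_i$. Since each generator of $H$ has zero translation length in $T_\infty$ (as $\ell_i/\lambda_i\to\ell_\infty$), Serre's lemma gives a fixed point of $H$ in $T_\infty$; approximating it in $T_i/\lambda_i$ and using that $\mathrm{Fix}(H)$ collapses to a point shows this fixed point is unique. For arbitrary $H$ with $|H|>C$, pick a finitely generated $H'\subset H$ with $|H'|>C$; then every finitely generated $H''$ with $H'\subset H''\subset H$ satisfies $|H''|>C$ and has a unique fixed point in $T_\infty$, which must equal that of $H'$. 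The claim for $\calh$ follows: groups in $\calh$ are elliptic in every $T_i$ (trees are relative to $\calh$), and the case $|H|\le C$ is handled by Serre's lemma directly.

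For (2), suppose a tripod $\Delta$ with center $p$ and endpoints $a,b,c$ has stabilizer of cardinality $>C$, and pick finitely generated $H\subset G_\Delta$ with $|H|>C$. By (1), $H$ fixes the unique point $p$ (and similarly $a,b,c$) in $T_\infty$. By Gromov convergence, for $i$ large we obtain approximations $\varphi_i(a),\varphi_i(b),\varphi_i(c),\varphi_i(p)\in T_i/\lambda_i$ forming an approximate tripod, and since each generator of $H$ moves these points by at most $\varepsilon_i$, combining (1) gives that $H$ pointwise fixes subtrees in $T_i$ which approximate $\Delta$ at scale $\lambda_i$. Taking two of the branches yields a pointwise-fixed arc in $T_i$ of length comparable to $\lambda_i\min(|pa|,|pb|,|pc|)$, which is $\gg k$ for $i$ large, contradicting $(k,C)$-acylindricity.

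For (3) and (4), let $I=[a,b]$ be a non-degenerate arc with $|G_I|>C$. By (2) the fixed point set $\mathrm{Fix}(G_I)\subset T_\infty$ is contained in a unique line $\ell$, and any element of $G$ that normalizes $G_I$ must preserve $\ell$. Apply the approximation argument of (2) to any finitely generated $H\subset G_I$ with $|H|>C$: the subgroup $K_H$ consisting of elements elliptic in all $T_i$ for $i$ large pointwise fixes an arc of $T_i$ whose rescaled length is bounded below, so acylindricity forces $|K_H|\le C$; the remaining elements of $H$ act on $T_i$ preserving a common line close to the approximant of $I$, hence give a homomorphism $H/K_H\hookrightarrow\mathrm{Isom}(\R)$ whose image is discrete (from the simplicial structure of $T_i$), hence cyclic or dihedral. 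Passing to the union of such $H$'s inside $G_I$ (all are forced by uniqueness in (1) to share the same kernel and the same invariant line $\ell$) exhibits $G_I$ as a $C$-virtually cyclic group acting on $\ell$. For (4), suppose $J\subset I$ and $g\in G_J\setminus G_I$; then $g$ fixes $J$ but not $I$, so $\mathrm{Fix}(g)\cap I$ is a proper subarc of $I$ containing $J$. Choosing an infinite-order $h\in G_I$, the conjugate $hgh^{-1}$ also fixes $J$, and successive conjugates $h^n g h^{-n}$ together with $g$ fix $J$ while their fixed sets escape from $\ell$ in different directions; this produces a tripod whose stabilizer contains a subgroup of cardinality $>C$, contradicting (2). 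The main obstacle is in (3): extending the analysis from finitely generated $H\subset G_I$ to all of $G_I$ requires using (1) to identify, canonically, the kernel and axis, so that the various finitely generated subgroups piece together into a single $C$-virtually cyclic action.
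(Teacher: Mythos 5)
Your parts (1) and (2) are essentially correct and follow the paper's route (Gromov approximation plus $(k,C)$-acylindricity, as in Example \ref{asept}), but (3) and (4) both have genuine gaps. In (3), the step that fails is the claim that the elements of $H$ which are hyperbolic in $T_i$ ``preserve a common line close to the approximant of $I$'', yielding a homomorphism $H/K_H\hookrightarrow \Isom(\R)$. What the approximation actually gives is that each such element has small translation length and an axis containing most of $[a_i,b_i]$; axes overlapping along a long segment need not coincide, so there is no common invariant line, no homomorphism to $\Isom(\R)$ is defined, and you also never check that $K_H$ is a (normal) subgroup. In particular nothing in your argument rules out that $G_I$ contains a nonabelian free subgroup, and that is exactly the heart of the matter in the paper: choosing a free subgroup $\langle h_1,\dots,h_n\rangle\le G_I$ of rank $n\gg C$, all $n(n-1)/2$ commutators $[h_{j_1},h_{j_2}]$ pointwise fix most of the approximating segment $[a_i,b_i]$ for $i$ large, contradicting $(k,C)$-acylindricity because these commutators are distinct and nontrivial. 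Hence $G_I$ contains no $\F_2$; then either $G_I$ is elliptic in every $T_i$, in which case $|G_I|\le C$ by your own (1) (it fixes an arc, not a single point), or Lemma \ref{vc}, applied to its non-elliptic small action on an acylindrical $T_i$, makes it $C$-virtually cyclic. Your closing remark locates the difficulty in passing from finitely generated $H$ to $G_I$, but that passage is the easy part; the missing idea is the exclusion of $\F_2$.

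For (4), the tripod argument does not work: $\Fix(g)$ is a subtree containing $J$ but not all of $I$, and it may perfectly well be a subarc of the line $\ell$ containing $I$, so neither $g$ nor its conjugates $h^ngh^{-n}$ need produce any tripod, and even if some fixed sets did branch off $\ell$ you have not exhibited a single tripod whose stabilizer has more than $C$ elements (each conjugate fixes $J$, but the group they generate only visibly fixes the arc $J$). Moreover (4) cannot follow softly from (1)--(3): a limit tree in which $G_J$ contains the $C$-virtually cyclic group $G_I$ with index $2$ is consistent with all three, so one must return to the approximating trees. The paper does this with a second commutator trick: taking $h\in G_I$ hyperbolic in $T_i$ (possible by (1)) and $g\in G_J$ not fixing the endpoint $a$, the elements $[g,h],\dots,[g,h^{C+1}]$ all fix a common long arc of $T_i$, so two of them coincide by acylindricity, whence $g$ commutes with a power of $h$, preserves its axis, and moves $a_i$ by only $\ell_i(g)=o(\lambda_i)$, contradicting $d_i(a_i,ga_i)\ge\delta\lambda_i$.
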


\begin{proof}
 Recall that $T_{i+1}$ dominates $T_i$, so a subgroup acting non-trivially on $T_i$ also acts non-trivially on $T_j$ for $j>i$.

To prove (1), we  may assume that $H$ is finitely generated. 
It is elliptic in $T_\infty$ because all of its elements are. But it cannot fix an arc in $T_\infty$: 
otherwise, since $ {T_i}/{\lambda_i}$ converges to $T_\infty $ in the Gromov topology, 
$H$ would fix a long segment in $T_i$ for  $i$ large, contradicting 
acylindricity.

Using the Gromov topology, as in Example \ref{asept}, one sees that a finitely generated subgroup fixing a tripod of $T_\infty$ 
fixes a long tripod of   $T_i$ for $i$ large,  so has cardinality at most $C$
by acylindricity. This proves (2).

 To prove (3),  consider a group $H$ fixing a non-degenerate arc $I=[a,b]$ in $T_\infty$.   We write $ | I | $ for the length of $I$.   It suffices to show that $H$   is small:
 depending on whether $H$ is elliptic in every $T_i$ or not, Assertion (1) or  Lemma \ref{vc} then gives the required conclusion. 

If $H$ contains a non-abelian free group, choose elements $\{ h_1,\dots,h_n\}$ generating a free subgroup $\F_n\inc H$   of rank $n\gg C$, 
and choose $\varepsilon>0$ with  $\varepsilon \ll | I | $. 
For $i$ large, there exist  approximations  $a_i,b_i$ of $a,b$ in $T_i$ at distance at least $(|I|-\eps)\lambda_i$ from each other, and contained in
the characteristic set (axis or fixed point set) of each $h_j$. 
Additionally, 
the translation length of every $h_j$ in $T_i$ is at most $\eps\lambda_i$ if $i$ is large enough.
Then all commutators $[h_{j_1},h_{j_2}]$ fix most of  the segment $[a_i,b_i]$, contradicting acylindricity of $T_i$ if $n(n-1)/2>C$. 

We now prove (4).  By Assertion (1), we know that $G_I$ acts non-trivially on $T_i$ for $i$  large, so we can choose a hyperbolic element $h\in G_I$.
We suppose that  some $g\in G_J$ 
does not fix an endpoint, say  $a$, of $I=[a,b]$, and we argue towards a contradiction. 
Let $a_i,b_i\in T_i$ be in the axis of $h$ as above, with $d_i(a_i,ga_i)\geq \delta\lambda_i$ for some   $\delta>0$.
For $i$ large the translation lengths of $g$ and $h$ in $T_i$ are small compared to $\frac{  | J | \lambda_i}C$, and the elements
$[g,h],[g,h^2],\dots, [g,h^{C+1}]$ all fix a common long   arc in $T_i$.
By acylindricity, there exist  $j_1\neq j_2$ such that $[g,h^{j_1}]=[g,h^{j_2}]$, so $g$ commutes with $h^{j_1-j_2}$.
It follows that $g$ preserves the axis of $h$, and therefore moves $a_i $ by $\ell_i(g)$, a contradiction since  $\ell_i(g)/\lambda_i$ goes to 0 as $i\to\infty$.
\end{proof}

  Sela's proof of acylindrical accessibility\index{acylindrical accessibility}  \cite{Sela_acylindrical} now comes into play to describe the structure of $T_\infty$. 
We use the generalization given by Theorem 5.1 of \cite{Gui_actions}, which allows non-trivial tripod stabilizers.    
Lemma  \ref{stab} shows that stabilizers of unstable arcs and tripods have  cardinality at most $C$, and we have assumed that   $G$ does not split over a subgroup of cardinality at most $2C$ relative to $\calh$,
hence also relative to a finite family $\calh' $ by Lemma \ref{oef}. 
It follows  that  $T_\infty$ is a graph of actions as in Theorem 5.1 of \cite{Gui_actions}. In order to reach  the desired   contradiction, we have to rule out   several  possibilities.

First consider a vertex action $G_v \actson  Y_v$ of the decomposition of $T_\infty$ given by \cite{Gui_actions}. 
If $Y_v$ is a line on which $G_v$ acts  with dense orbits through a finitely generated group, 
then, by Assertion (3) of Lemma \ref{stab},  $G_v$ contains a finitely generated subgroup   $H$ mapping onto $\bbZ^2$ with  finite or virtually cyclic kernel, and acting non-trivially. 
The group $H$ acts non-trivially on $T_i$ for $i$ large,   contradicting Lemma \ref{vc}.

Now suppose that 
 $G_v\actson Y_v$ has kernel $N_v$, and the action of $G_v/N_v$  is dual to an   arational measured foliation
on a $2$-orbifold $\Sigma$ (with conical singularities). Then $N_v$ has order $\le C$ since it fixes a tripod. 
 Consider a one-edge splitting $S$ of $G$ (relative to $\calh$) dual to a simple closed 
curve  on $\Sigma$. 
 This splitting is over a
$C$-virtually cyclic group $G_e$.
In particular, $S$ is an $(\cala,\calh)$-tree.
Since $G_e$ is hyperbolic in $T_\infty$, it is also hyperbolic in $T_i$, hence in $U_i$, for $i$ large enough.
On the other hand,   being universally elliptic, $U_i$ is elliptic with respect to $S$. 
By  Remark 2.3 of \cite{FuPa_JSJ} (see Assertion (3) of Lemma \ref{cor_Zor}),  $G$ splits relative to $\calh$ over
an infinite index subgroup of $G_e$, \ie over a group of order $\le 2C$ (in fact $\leq C$ in this case), contradicting our assumptions.

 By  Theorem 5.1 of \cite{Gui_actions}, the only remaining possibility is that $T_\infty$ itself is a simplicial tree, and   all edge stabilizers are $C$-virtually cyclic.
Then $T_\infty$ is an $(\cala,\calh)$-tree, and its edge stabilizers are hyperbolic in $T_i$ for $i$ large.
This leads to a contradiction as in the previous case,   and concludes the proof of the first assertion of Theorem \ref{thm_JSJacyl}.

\subsubsection{Description of flexible vertices} \label{sec_JSJ_acyl_desc}

  Now that we know that the JSJ decomposition exists,  we prove the second assertion of Theorem \ref{thm_JSJacyl}:   if all groups in $\cala$ are small in $(\cala,\calh)$-trees,
then the flexible vertices are   QH with fiber of cardinality at most $C$. 

The arguments are similar to those used in Section \ref{Fujpap}, with one key difference: since we do not assume finite presentability of $G$, we do not have  Lemma \ref{lem_Zorn_minuscule}    constructing  a tree which is maximal for domination using  Dunwoody's accessibility.
We shall use acylindrical accessibility instead.

As in Subsection   \ref{sec_reduc} we may assume that $G$ is totally flexible, \ie that there exist non-trivial $(\cala,\calh)$-trees,
but none of them is universally elliptic.
Indeed, to prove that a flexible vertex $G_v$ of the JSJ decomposition of $G$ is QH, 
it is enough to study  $(G_v,\Inc_v^\calh)$ instead of $(G,\calh)$.
Note that we do not know in advance that JSJ trees have finitely generated edge stabilizers, so
we have to allow infinitely generated groups in $\Inc_v^\calh$ (even if groups in $\calh$ are finitely generated).

The fact that $G$ is totally flexible implies   that any edge stabilizer $A $  of any tree
  is $C$-virtually cyclic: since $A$ is small in $(\cala,\calh)$-trees, this follows 
from Lemma \ref{vc} applied to the action of $A$ on $T^*$, where $T$ is some $(\cala,\calh)$-tree in which $A$ is not elliptic. We may therefore replace $\cala$ by the family consisting of all $C$-virtually cyclic subgroups and all finite subgroups. It satisfies the stability condition $(SC)$ of Definition \ref{dfn_SC}, with $\calf$ the family of subgroups of order $\le C$, so we are free to use the results of Subsections \ref{fpc} and \ref{rn}, as well as Proposition \ref{prop_exist_hh} (splittings are clearly minuscule, so Subsection \ref{sec_all_minus} is not needed).

By acylindrical accessibility \cite{Weidmann_accessibility}, there is a bound on the number of orbits of edges of
a $(k,C)$-acylindrical tree. Among all $(k,C)$-acylindrical trees with no redundant vertex, consider a tree $U$ 
 whose number of  orbits of edges is maximal
(this is a substitute for the  maximal tree provided by
Lemma \ref{lem_Zorn_minuscule}).  Proposition \ref{prop_exist_hh} yields $V$ such that $U$ and $V$ are fully hyperbolic \wrt each other, and we let $R=RN(U,V)$ be their regular neighborhood (Proposition \ref{prop_RN}).  

 Recall (Proposition \ref{prop_RN} (\ref{qh})) that the stabilizer $G_v$ of a vertex $v\in\cals$ is QH or slender. If slender, it maps onto 
a group that is virtually $\bbZ^2$. On the other hand, 
 it must be virtually cyclic
by Lemma \ref{vc} because it acts hyperbolically on $U$. This contradiction shows that $G_v$ must be  QH (with fiber of cardinality $\le C$) if  $v\in\cals$. 
As in the proof of Theorem \ref{thm_totally_flex},  it thus suffices  to show that $R$ is a point.

By
(\ref{compat})
one obtains $U$ from $R$ by refining $R$ at vertices $v\in\cals$
and collapsing all other edges.   We let $S$ be a common refinement of $R$ and $U$ such that  no edge of $S$ is collapsed in both   $R$ and $U$
(it is the  lcm $S=R\vee U$ of  Subsection \ref{sec_arith}).
Let $S^*$ be a $(k,C)$-acylindrical tree with no redundant vertex in the deformation space of $S$.

We now consider commensurability classes of edge stabilizers.

Since $U$ is dual to geodesics in   QH vertices of $R$ by
(\ref{compat}), and   has no redundant vertex,   edges of $U$ in different  orbits have non-commensurable stabilizers. 
Every edge stabilizer of $U$ is also an edge stabilizer of $S$. Next observe that,
for any edge $e$ of $S$, there exists an  edge $e'$ of $S^*$ such that $G_e$ and $G_{e'}$ are commensurable.
Indeed, since $S^*$ dominates $S$, there exists $e'$ with  
$G_{e'}\subset G_e$. By one-endedness (Lemma \ref{oneend}), the cardinality of $G_{e'}$ is greater than $2C$.
Since $G_e$ is $C$-virtually cyclic, this implies that $G_{e'}$ is  infinite, hence commensurable to $G_e$.

 Given a tree $T$, denote 
by $ | T | $ the number of orbits of edges, and by $c(T)$ the number of equivalence classes of orbits of edges, where two orbits are equivalent if they contain edges with commensurable stabilizers. 
We have just proved $ | U | \le c(U)\le c(S)\le c(S^*)\le  | S^* | $.

The maximality property of $U$ now implies that these inequalities are equalities, so every edge stabilizer of $S$ is commensurable to an edge stabilizer of $U$. If $R$ is not a point, let $A$ be an edge stabilizer. It is elliptic in $V$ by (\ref{el1}). On the other hand, $A$ is an edge stabilizer of $S$, so is commensurable to an edge stabilizer of $U$. This    contradicts the  full  hyperbolicity of $U$  with respect to $V$, thus  completing the proof of Theorem \ref{thm_JSJacyl}.

\subsubsection{Splittings over virtually cyclic groups}

 Before generalizing Theorem \ref{thm_JSJacyl} in the next subsection, we give an application of the previous arguments. 
 In Section \ref{Fujpap} we proved that certain flexible   groups are QH when $\calh$ is a finite family of finitely generated subgroups and $G$ is finitely presented relative to $\calh$. Acylindricity will allow us to remove these assumptions for splittings over virtually cyclic groups. This is based on the following lemma. The stability conditions and total flexibility are defined in   Section \ref{Fujpap}.
  
\begin{lem} \label{acyli}
 Assume that all groups in $\cala$ are virtually cyclic, and $\cala$ satisfies one of the stability conditions\index{stability condition} $(SC)$ or  ($SC_\calz$).
 If $G$ is totally flexible, then   any tree $T$
 is $(2,C)$-acylindrical  for some $C$ (depending on $T$).
\end{lem}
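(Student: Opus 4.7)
My plan is to form the regular neighbourhood of $T$ together with a carefully chosen auxiliary splitting, and then exploit the QH structure it provides. I first record two preliminary observations. Any finite group has Serre's property (FA) and is therefore universally elliptic, so under total flexibility every edge stabilizer of every $(\cala,\calh)$-tree is infinite virtually cyclic. Moreover all trees are minuscule: if $A\in \cala$ is infinite virtually cyclic and $A'\ll A$, then $A'\subsetneq A$ must have infinite index (otherwise $A',A$ would be commensurable and hence both elliptic or both hyperbolic in every tree), so $A'$ is finite and therefore not commensurable with any infinite edge stabilizer. Thus Propositions \ref{prop_all_minus} and \ref{prop_exist_hh} are available. (The slender case, in which $G$ is virtually $\Z^2$, must be handled separately or excluded; I proceed assuming $G$ is not slender, which is the case in the intended applications.)

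Apply Proposition \ref{prop_exist_hh} to obtain a tree $V$ such that $T$ and $V$ are fully hyperbolic with respect to each other, and form the regular neighbourhood $R=RN(T,V)$ of Proposition \ref{prop_RN}. For any $v\in\cals$, the slender alternative of \ref{prop_RN}(\ref{qh}) would force every incident edge stabilizer of $v$ in $R$ to be a finite extension of the finite fiber $F_v\in\calf$, hence finite; since $R$ is a splitting of the totally flexible $G$, this is impossible, and every $v\in\cals$ is QH with finite fiber $F_v$. Denote by $\hat R$ the common refinement of $R$ and $T$ provided by \ref{prop_RN}(\ref{compat}), so $T$ is obtained from $\hat R$ by collapsing the original edges of $R$ and, over each $v\in\cals$, the preimage $Y_v\subset \hat R$ is a subtree realizing the action of $G_v$ dual to a family of essential simple closed geodesics of the underlying hyperbolic orbifold $\Sigma_v$.

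Let $[x_0,x_3]\subset T$ be an arc of length $3$ with consecutive edges $e_1,e_2,e_3$ and pointwise stabilizer $K$. By \ref{prop_RN}(\ref{compat}) each $G_{e_i}$ fixes a unique vertex $v_i=v(e_i)\in\cals$ of $R$, and $K$ fixes $v_1,v_2,v_3$. First suppose $v_1=v_2=v_3=v$. Then lifts $\hat e_1,\hat e_2,\hat e_3$ lie in $Y_v$, and $K$ fixes the segment they span. The action of $G_v$ on $Y_v$ factors through $\pi_1(\Sigma_v)=G_v/F_v$ and is dual to a family of essential geodesics, so $K/F_v$ fixes three distinct edges, corresponding to three distinct lifts in $\tilde\Sigma_v\subset\bbH^2$ of essential closed geodesics. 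The intersection of the $\pi_1(\Sigma_v)$-stabilizers of three distinct geodesic lines of $\bbH^2$ is finite, bounded by the torsion of $\pi_1(\Sigma_v)$, so $|K|$ is bounded by $|F_v|$ times this torsion. Now suppose some $v_i\neq v_j$, say $v_1\neq v_2$. Then $K$ fixes every edge of $R$ on the segment $[v_1,v_2]$; in particular it fixes an $R$-edge $\eps$ incident to $v_1$, whose stabilizer, by the QH structure at $v_1$, is (conjugate to) an extended boundary subgroup $B$ of $G_{v_1}$. Thus $K\subset B\cap G_{e_1}$, and modulo $F_{v_1}$ the image of $B$ lies in a boundary subgroup of $\pi_1(\Sigma_{v_1})$ while the image of $G_{e_1}$ is the stabilizer of the lift of an essential (hence not boundary-parallel) simple closed geodesic $\gamma_1$ of $\Sigma_{v_1}$; as these two subgroups preserve distinct geodesic lines of $\bbH^2$, their intersection is finite. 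So $K$ is again finite with uniformly bounded order.

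In both cases $|K|$ is bounded by a constant depending only on $R$, hence on $T$ (the tree $V$ being constructed from $T$), and taking the maximum of these bounds over the finitely many $G$-orbits of vertices in $\cals$ yields the required constant $C$. The main obstacle is the second case: one must carefully match the extended boundary subgroup coming from $\eps\subset R$ to a subgroup preserving a boundary component of $\tilde\Sigma_{v_1}$, and observe that essential geodesics are never boundary-parallel, so that the two preserved lines of $\bbH^2$ are genuinely distinct.
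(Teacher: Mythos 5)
Your proof is correct and takes essentially the same route as the paper: obtain $V$ from Proposition \ref{prop_exist_hh}, form the regular neighbourhood $R=RN(T,V)$, and use (\ref{compat}) of Proposition \ref{prop_RN} to see that $T$ is dual to families of essential geodesics in the orbifolds underlying the QH vertices of $R$; your two-case analysis of a length-$3$ segment simply spells out the $(2,C)$-acylindricity that the paper delegates to the example at the beginning of Part \ref{partacyl}. Your standing exclusion of slender $G$ matches the paper's own implicit restriction, since its proof speaks of the QH vertex groups of $R$ (i.e.\ relies on Proposition \ref{borused}, which assumes $G$ not slender).
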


\begin{proof}
Since groups in $\cala$ are virtually cyclic, we may assume (by making $\calf$ smaller if needed) that all groups in $\calf$ are finite.
 Total flexibility implies that $G$ is one-ended relative to $\calh$, so all trees are minuscule (of course, this also follows from Proposition \ref{prop_all_minus}). 
 By Proposition \ref{prop_exist_hh}, there exists a tree $T'$ such that $T$ and $T'$ are fully hyperbolic with respect to each other. Let $R=RN(T,T')$ be their regular neighborhood. By (\ref{compat}), the tree $T$ is dual to families of geodesics in the orbifolds underlying the QH vertex groups of $R$. It follows that $T$ is
 $(2,C)$-acylindrical, for $C$ the maximum order of the corresponding fibers (see the example in the beginning of Part \ref{partacyl}). 
\end{proof}

This lemma allows us to argue as in the previous subsection, provided that there is a bound for the order of   groups in $\calf$. In particular, we get the following strengthening of Theorem \ref{thm_RiSe}:

\begin{thm} 
\label{thm_RiSe2}
 Let $G$ be a finitely generated group, and let $\calh$ be an arbitrary family of subgroups.
 Let $\cala$ be the class of all finite or cyclic subgroups of $G$. Assume that there exists a JSJ tree $T$   over $\cala$ relative to $\calh$.
 
 If $Q$ is a  flexible vertex stabilizer of $T$,
 then $Q$ is virtually $\Z^2$ or QH with trivial fiber. Moreover, the underlying orbifold $\Sigma$ has no mirror, every boundary component of   $\Sigma$ is used, and $\Sigma$ contains an essential simple closed geodesic.
\end{thm}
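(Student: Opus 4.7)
The plan is to adapt the acylindrical JSJ argument of Subsection~\ref{sec_JSJ_acyl_desc} to the class $\cala$ of finite or cyclic subgroups, using Lemma~\ref{acyli} in place of finite presentability. As in Subsection~\ref{sec_reduc}, by Proposition~\ref{prop_JSJ_sommets} and Corollary~\ref{cor_flex}, I first reduce to the case in which $G = Q$ is totally flexible over $\cala_Q$ relative to $\Inch_v$. The family $\cala_Q$ satisfies the stability condition $(SC_\calz)$ of Definition~\ref{dfn_SCZ} with fiber class $\calf = \{1\}$: a cyclic group admits no quotient isomorphic to $D_\infty$, and the kernel of any epimorphism $\Z \to \Z$ is trivial. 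Moreover, finite groups have property (FA), so if $G$ split over a finite group relative to $\Inch_v$, the resulting tree would be non-trivial and universally elliptic, contradicting total flexibility; hence $G$ is one-ended relative to $\Inch_v$.

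If $G$ is slender, I invoke Proposition~\ref{slflex}(1), whose argument uses only total flexibility together with the stability condition (and not finite presentability): it exhibits two independent epimorphisms $\varphi_i : G \to K_i$ induced by non-trivial line actions of $G$, with product map having image of finite index in $K_1 \times K_2$ and kernel in $\calf = \{1\}$. Hence $G$ is virtually $\Z^2$, yielding the first alternative of the statement.

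If $G$ is not slender, Proposition~\ref{prop_all_minus} guarantees that every tree is minuscule. Given any tree $T$, Proposition~\ref{prop_exist_hh} produces $T'$ fully hyperbolic with respect to $T$; by Proposition~\ref{prop_RN}, the regular neighborhood $R = RN(T, T')$ has QH vertex groups at $\cals$-vertices with fibers in $\calf = \{1\}$ (the slender alternative in (\ref{qh}) is excluded by Proposition~\ref{borused}). By item~(\ref{compat}) of Proposition~\ref{prop_RN}, the tree $T$ is dual to families of essential simple closed geodesics inside these QH pieces with trivial fiber, so $T$ is $(2,1)$-acylindrical \emph{uniformly} in $T$. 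This uniformity lets me apply acylindrical accessibility~\cite{Weidmann_accessibility} and choose a tree $U$ (without redundant vertex) whose number of edge orbits is maximal. Applying Proposition~\ref{prop_exist_hh} once more yields a tree $V$ fully hyperbolic with respect to $U$, and I form $R = RN(U, V)$. The commensurability-counting argument of Subsection~\ref{sec_JSJ_acyl_desc} (comparing $|U| \le c(U) \le c(S) \le c(S^*) \le |S^*|$, where $S$ is a common refinement of $R$ and $U$ and $S^*$ is a $(2,1)$-acylindrical representative of the deformation space of $S$) then forces $R$ to be trivial, so that $G$ itself is QH with trivial fiber. The orbifold $\Sigma$ has no mirror by $(SC_\calz)$; every boundary component is used by Proposition~\ref{borused}; and $\Sigma$ contains an essential simple closed geodesic by Corollary~\ref{ufs}(4) applied to the flexibility of $G$.

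The main obstacle will be in the non-slender case: verifying that the commensurability-counting argument of Subsection~\ref{sec_JSJ_acyl_desc}, which in that subsection is carried out after enlarging $\cala$ to contain all $C$-virtually-cyclic (including $D_\infty$-type) subgroups, transposes correctly to our setting where $\cala$ is kept as is. The point is that $(SC_\calz)$ prevents $D_\infty$-edge stabilizers from appearing in any regular neighborhood constructed along the way, so the uniform constant $C = 1$ survives and the enlargement of $\cala$ is unnecessary.
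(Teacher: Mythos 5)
Your proposal is correct and follows essentially the same route as the paper: the paper's own proof just reduces to the totally flexible case, observes that $\calf=\{1\}$ makes every tree $2$-acylindrical via Lemma \ref{acyli}, and reruns the acylindrical-accessibility argument of Subsection \ref{sec_JSJ_acyl_desc}, which is exactly what you spell out (including the slender alternative via Proposition \ref{slflex} and the observation that no enlargement of $\cala$ is needed since $(SC_\calz)$ already holds). One cosmetic remark: the hypotheses of Corollary \ref{ufs} are not literally satisfied here (extensions of the trivial fiber by $D_\infty$ are not in $\cala$), but the existence of an essential simple closed geodesic is already given by Proposition \ref{prop_RN}(\ref{qh}) or Proposition \ref{bu}(2), which you could cite instead.
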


\begin{proof}
The argument is the same as in   Subsection \ref{sec_JSJ_acyl_desc}.  One first reduces to the case when $G$ is totally flexible. Since $\calf=\{1\}$, all trees are 2-acylindrical by Lemma \ref{acyli}, so acylindrical accessibility applies.

\end{proof}

\subsection{Acylindricity up to small groups}\label{sec_smally_acy}

In this section we generalize Theorem \ref{thm_JSJacyl}. Instead of requiring that every deformation space contains a $(k,C)$-acylindrical tree, we require that 
every tree smally dominates some $(k,C)$-acylindrical tree   (Definition \ref{sm}).

 Recall that  $G$ is only assumed to be finitely generated, and  there is no restriction on $\calh$ (it can be any collection of   subgroups).

\begin{thm}\label{thm_smallyacyl} Given $\cala$ and $\calh$,
suppose that there exist numbers $C$ and $k$ such that:
\begin{itemize}
\item  $\cala$ 
contains all $C$-virtually cyclic subgroups, 
and all subgroups of cardinal $\leq 2C$;
\item every  $(\cala,\calh)$-tree $T$ \smally dominates some $(k,C)$-acylindrical\index{0KC@$(k,C)$-acylindrical}\index{KC@$(k,C)$-acylindrical} $(\cala,\calh)$-tree $T^*$.\index{small domination, $\SS $-domination}\index{acylindrical tree, splitting}
\end{itemize}
Then the JSJ deformation space of $G$  over $\cala$ relative to $\calh$  exists.

Assume further that all  groups in $\cala$  are small in $(\cala,\calh)$-trees. Then the flexible\index{flexible vertex, group, stabilizer} vertex groups  
 that
 are not small in $(\cala,\calh)$-trees are    QH\index{QH, quadratically hanging}
with fiber of cardinality at most $C$.

More generally,  if  $\cals$ is as in Definition \ref{sm} and $T$ always $\cals$-dominates $T^*$, then 
  flexible vertex groups  that do not belong to $\SS$ are    QH
with fiber of cardinality at most $C$.

\end{thm}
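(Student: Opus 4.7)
The plan is to adapt the proof of Theorem~\ref{thm_JSJacyl} by systematically replacing each use of a $(k,C)$-acylindrical tree \emph{in the same deformation space} as a given tree by the smally dominated acylindrical companion $T^*$ supplied by the hypothesis, with an extra argument to handle the slack between the two deformation spaces.

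First I would reduce to the case that $G$ is one-ended relative to $\calh$. Since $\cala$ contains every subgroup of cardinality $\leq 2C$, Linnell's accessibility furnishes a Stallings--Dunwoody type splitting of $G$ over such subgroups; as in Lemma~\ref{oneend}, together with Proposition~\ref{prop_JSJ_sommets}, this reduces the statement to JSJ decompositions of the one-ended vertex groups relative to $\Inch_{|Q}$ (and the extra smallness is inherited). I would next choose a sequence of universally elliptic trees $U_i$ with $U_{i+1}$ dominating $U_i$, so that a subgroup elliptic in every $U_i$ is elliptic in every universally elliptic tree, and set $T_i := U_i^*$, the $(k,C)$-acylindrical companion of $U_i$. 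The compactness--rescaling argument of Subsection~\ref{exis} applied to the $T_i$ yields a limit tree $T_\infty$: the rescaled case is killed by one-endedness via Lemma~\ref{stab} and the Rips machine of \cite{Gui_actions}, exactly as in the original proof, while in the bounded case $T_\infty$ is a simplicial $(k,C)$-acylindrical $(\cala,\calh)$-tree. Define $T_J$ by collapsing in $T_\infty$ the edges whose stabilizer is not in $\cala_\elli$; its edge stabilizers are universally elliptic by construction.

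The main obstacle is to verify that $T_J$ dominates every universally elliptic tree $U$. A vertex stabilizer $H$ of $T_J$ is elliptic in every $T_i$, and one of two cases occurs. Either $H$ is elliptic in every $U_i$, in which case $H$ is elliptic in $U$ by the defining property of the sequence; or, for some $i$, $H$ is elliptic in $T_i$ but not in $U_i$, and condition~(iii) of Definition~\ref{sm} forces $H$ to be small in $(\cala,\calh)$-trees (respectively in $\SS$ in the more general statement). In the latter case I would apply the hypothesis to $U$ itself, producing an acylindrical companion $U^*$ smally dominated by $U$: since $H$ is small, its action on $U^*$ fixes a point, an end, or a line, and an appeal to Lemma~\ref{vc} together with the one-endedness reduction (no splittings over groups of cardinality $\le 2C$) shows that $H$ is in fact elliptic in $U^*$. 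Since any subgroup elliptic in $U^*$ but not in $U$ is also small, the same smallness plus one-endedness reverses the issue and gives ellipticity in $U$. This closes the existence argument.

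For the description of flexible vertex stabilizers $Q$ of $T_J$ that are not small in $(\cala,\calh)$-trees (respectively not in $\SS$), I would proceed as in Subsection~\ref{sec_JSJ_acyl_desc}. After reducing via Proposition~\ref{prop_JSJ_sommets} to $Q$ being totally flexible over $\cala_Q$ relative to $\Inch_{|Q}$, every edge stabilizer of a $(\cala_Q, \Inch_{|Q})$-tree is $C$-virtually cyclic by Lemma~\ref{vc}, so the stability condition $(SC)$ holds with $\calf$ the family of subgroups of cardinality $\leq C$. Using acylindrical accessibility applied to the acylindrical companions, pick a universally elliptic $(k,C)$-acylindrical tree $U$ of $Q$ maximizing the number of orbits of edges, produce a companion $V$ via Proposition~\ref{prop_exist_hh} so that $U$ and $V$ are fully hyperbolic with respect to one another, and form the regular neighborhood $R = RN(U,V)$ of Proposition~\ref{prop_RN}. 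The slender alternative in (RN1) is excluded by Lemma~\ref{vc} combined with the assumption that $Q$ is not small (resp.\ not in $\SS$); maximality of $U$ forces $R$ to be trivial by the commensurability-counting argument of Subsection~\ref{sec_JSJ_acyl_desc}, so $Q$ itself is QH with fiber of cardinality at most $C$.
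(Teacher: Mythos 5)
There is a genuine gap in your existence argument, at the step where you claim that the collapsed limit tree $T_J$ dominates every universally elliptic tree $U$. All the companion trees $T_i=U_i^*$ are relative to $\SNVC$: by Lemma \ref{vc}, every group of $\SS$ which is not $C$-virtually cyclic is elliptic in any $(k,C)$-acylindrical tree. Hence every such group is elliptic in the limit $T$, and therefore in $T_J$, so your $T_J$ can at best be a JSJ tree relative to $\calhs=\calh\cup\SNVC$ — and such a tree does \emph{not} in general dominate all absolutely universally elliptic trees. In the CSA example of Example \ref{asuiv} and Figure \ref{fig_JSJCSA}, the subgroup $\Z^2$ is elliptic in every companion $U_i^*$, hence in your $T_J$, but it is hyperbolic in the universally elliptic tree $T_a$ obtained by refining at the $\Z^2$ vertex; so $T_J$ fails to dominate $T_a$. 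Your attempted repair in the second paragraph does not work: from smallness of $H$, Lemma \ref{vc} only gives that $H$ is elliptic in $U^*$ \emph{or} $C$-virtually cyclic (one-endedness over groups of order $\le 2C$ is irrelevant here, since the problematic splittings are over infinite groups), and even when $H$ is elliptic in $U^*$, the observation that groups elliptic in $U^*$ but not in $U$ are small does not ``reverse the issue'' — $H$ may be precisely such a group, as $\Z^2$ is in the example. (A secondary point: the cofinality of the sequence $U_i$ obtained from countability of $G$ is a statement about elements, not about subgroups such as your vertex stabilizers $H$; the standard way around this, Lemma \ref{cor_Zor}(2), requires exactly the domination of each $U_i$ that is in question.)

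What is missing is the second stage of the paper's proof. The paper applies Theorem \ref{thm_JSJacyl} in the category of $(\cala,\calhs)$-trees, where the companion $T^*$ does lie in the same deformation space as $T$ (Lemma \ref{lem_smallVC}, Corollary \ref{cor_domin}); this yields a reduced \emph{relative} JSJ tree $T_r$, which plays the role of your $T_J$. One then needs two further non-trivial steps: Lemma \ref{lem_Tr}, showing that the reduced $T_r$ is $(\cala,\calh)$-universally elliptic (this uses the reducedness analysis in Lemma \ref{lem_smallVC}(3)), and crucially Lemma \ref{lem_rel_vs_abs}, which refines $T_r$ at the vertices whose stabilizer lies in $\SS$, using the fact (Corollary \ref{sma2}, fed into Proposition \ref{prop_JSJ_sommets}) that a group small in $(\cala,\calh)$-trees has at most one non-trivial deformation space containing a universally elliptic tree, so its relative JSJ exists with no accessibility input. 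Only after this refinement does one obtain a tree $T_a$ dominating all universally elliptic trees. The description of the flexible vertices then follows by transferring Theorem \ref{thm_JSJacyl} through the equality of vertex stabilizers not in $\SS$ of $T_a$ and $T_r$, rather than by re-running the regular-neighbourhood argument for a vertex group $Q$ as you propose — where, incidentally, you would also have to justify that $(\cala_Q,\Inch_{|Q})$-trees admit $(k,C)$-acylindrical companions, which is not literally what the hypothesis provides.
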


\begin{figure}[htbp] 
  \centering
  \includegraphics[width=\textwidth]{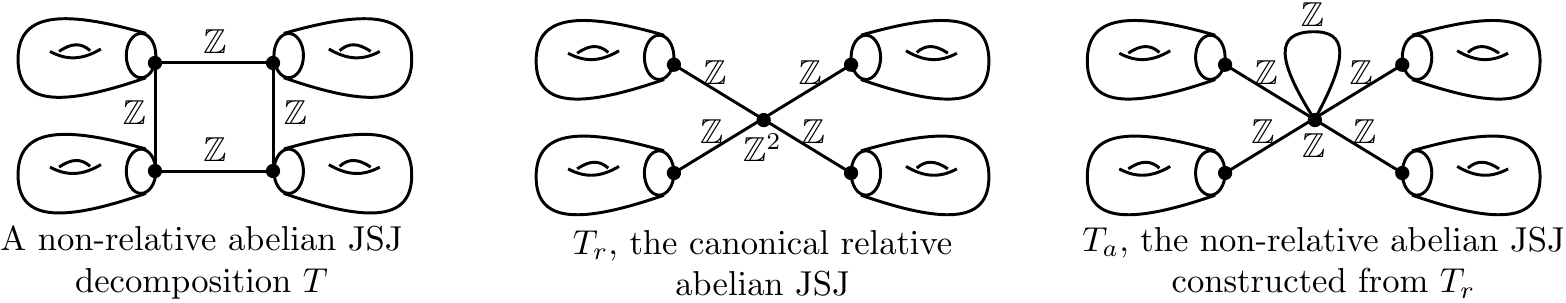}
  \caption{JSJ decompositions of a CSA group; relative means relative to non-cyclic abelian subgroups.}
  \label{fig_JSJCSA}
\end{figure}

 \begin{example} \label{asuiv}
 This applies for instance if 
  $G$ is a toral relatively hyperbolic group  (more generally, a torsion-free CSA group),\index{CSA group} $\cala$
   is the family of abelian subgroups,\index{abelian tree} and 
  we take for $T^*$ the tree of cylinders $T_c$ (equal to $T_c^*$), with $k=1$ and $C=2$   (see
Section \ref{exam} for more examples). 

In particular, let us return to 
    Example \ref{z2}, with $\calh=\es$. Figure \ref{fig_JSJCSA}
shows (the quotient graphs of groups of)   three trees.
The first one is an   abelian JSJ tree $T$; the next one is its tree of cylinders $T_c=T_c^*=T^*$,
which is also the JSJ tree $T_r$  relative to non-cyclic abelian subgroups constructed in the proof of Theorem \ref{thm_smallyacyl} (see Corollary \ref{Tr});
the last one is the JSJ tree $T_a$ constructed in the proof, obtained  by refining $T_r$ at vertices with stabilizer $\Z^2$ (Lemma \ref{lem_rel_vs_abs}).
It is another JSJ tree, in the same deformation space as $T$, and $T_r=T_c^*={(T_a)}_c^*$. 
 \end{example}

The rest of this section is devoted to the proof of Theorem \ref{thm_smallyacyl}. As above, we assume (thanks to Lemma \ref{oneend})   that $G$ does not split over groups of order $\leq 2C$ 
relative to $\calh$ (``one-endedness'' assumption). 

We write $\SS$ for the family of all subgroups of $G$ which are small in $(\cala,\calh)$-trees, or for a family as in Definition \ref{sm} if $T$   $\cals$-dominates $T^*$. 
We write  $ \SNVC$  for  the groups in $\SS$ which are not $C$-virtually cyclic. 
\index{0SS2@$\SNVC$: groups in $\SS$ not $C$-virtually cyclic}

\begin{lem}\label{lem_smallVC} 
Suppose that $T$ $\SS$-dominates a $(k,C)$-acylindrical   tree $T^*$. 
\begin{enumerate}
\item If a vertex stabilizer $G_v$ of $T^*$ is not elliptic in $T$, it belongs to $\SNVC$ (in particular, it is not $C$-virtually cyclic). 
\item 
 Let $H$ be a subgroup. It  is elliptic in $T^*$ if and only if it is elliptic in $T$ 
or   contained in a group $K \in \SNVC$.
In particular,  all  $(k,C)$-acylindrical trees $\SS$-dominated by $T$ belong to the same   deformation space.
\item Assume that $T^*$ is reduced. Then every  edge stabilizer  $G_e$  of $T^*$ has a subgroup of index at most $2$ fixing an edge in $T$. In particular, if $T$ is universally elliptic, so is $T^*$. 
\end{enumerate}
\end{lem}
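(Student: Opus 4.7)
The plan is to establish the three assertions in order, relying on the definition of $\SS$-domination, Lemma~\ref{vc}, and the structural description of small actions on trees (Corollary~\ref{pasirr} and Proposition~\ref{cinq}).

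For (1), the vertex stabilizer $G_v$ is automatically elliptic in $T^*$, so if it is not elliptic in $T$ then condition (iii) of $\SS$-domination gives $G_v\in\SS$; it remains to rule out that $G_v$ is $C$-virtually cyclic. Suppose it were. By the relative one-endedness reduction (Lemma~\ref{oneend}) edge stabilizers of $T^*$ have order at least $2C+1$, and by minimality some such edge is incident to $v$, so its stabilizer $G_e\subset G_v$ is infinite, contained in a virtually cyclic group, hence of finite index in $G_v$. Being small, $G_v$ either preserves a unique line in $T$ or fixes a unique end (it cannot be elliptic). In the first case any finite-index subgroup has strictly positive translation length on that line, and in the second the pushing homomorphism $\chi:G_v\to\Z$ is nontrivial and has nontrivial restriction to any finite-index subgroup, giving a hyperbolic element. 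Either way $G_e$ cannot be elliptic in $T$, contradicting condition (ii). Hence $G_v\in\SNVC$.

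For (2) the forward direction is immediate from (1): if $H$ fixes a vertex $v$ of $T^*$, either $G_v$ is elliptic in $T$ (and then so is $H$), or $G_v\in\SNVC$ and $H\subset G_v$ lies in an $\SNVC$-group. For the converse, if $H$ is elliptic in $T$ it is elliptic in $T^*$ by condition (i), while if $H\subset K\in\SNVC$ then $K\in\SS$ is small in $(\cala,\calh)$-trees, hence small in the $(k,C)$-acylindrical tree $T^*$; Lemma~\ref{vc} then forces $K$ to be elliptic in $T^*$ (otherwise it would be $C$-virtually cyclic, contradicting $K\in\SNVC$), and $H$ inherits ellipticity. Since this description of the elliptic subgroups of $T^*$ depends only on $T$ and $\SS$, any two $(k,C)$-acylindrical trees $\SS$-dominated by $T$ have the same elliptic subgroups, hence belong to the same deformation space.

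For (3) I would build a standard refinement $\hat T$ of $T^*$ dominating $T$ as in Proposition~\ref{prop_refinement}, constructed by blowing up each vertex $w$ of $T^*$ into the $G_w$-invariant subtree $\mu_T(G_w)\subset T$ (a single point when $G_w$ is elliptic in $T$, and, by (1) combined with Proposition~\ref{cinq}, a line or a subtree fixing an end when $G_w\in\SNVC$). For an edge $e=uv$ of $T^*$ the lift $\hat e$ is attached at points $p_u\in\mu_T(G_u)$ and $p_v\in\mu_T(G_v)$ fixed by $G_e$, with $G_{\hat e}=G_e$. If $p_u\neq p_v$ in $T$, then $G_e$ fixes the segment $[p_u,p_v]$, hence an edge of $T$. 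If one of $G_u,G_v$ belongs to $\SNVC$, the index-$2$ subgroup of $G_e$ acting trivially on the preserved line (or fixing a ray toward the fixed end) gives the required edge-fixing subgroup. The delicate case is when $G_u$ and $G_v$ are both elliptic in $T$ with the same fixed point $a=p_u=p_v$: then $\langle G_u,G_v\rangle\subset G_a$ is elliptic in $T$, hence in $T^*$, which forces one of $G_u,G_v$ to fix the other's vertex, giving $G_e=G_u$ or $G_e=G_v$; reducedness of $T^*$ makes $u$ and $v$ lie in the same orbit, and chasing the conjugator together with the symmetric choice of $p_u,p_v$ produces the desired index-$\le 2$ subgroup fixing an edge of $T$. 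The ``in particular'' follows at once: universal ellipticity is preserved under finite-index overgroups, so if each edge stabilizer of $T$ is universally elliptic, the same holds for $T^*$. The main obstacle is precisely the degenerate case $p_u=p_v$ in (3), where the factor of $2$ appears and reducedness must be invoked; the rest reduces to routine applications of Lemma~\ref{vc} and the dichotomy for slender actions on trees.
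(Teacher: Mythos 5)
Your treatment of assertions (1) and (2) is correct and essentially the paper's own argument: for (1) you combine the standing one-endedness assumption (edge stabilizers of $T^*$ have order $>2C$, hence are infinite and of finite index in a $C$-virtually cyclic $G_v$, hence cannot be elliptic in $T$ when $G_v$ is not), contradicting condition (ii) of $\SS$-domination, and (2) is Lemma \ref{vc} together with (1). The only omission there is the degenerate case where $T^*$ is a point (so that no edge is incident to $v$), which the paper disposes of in a parenthesis and which your argument needs as well.

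The genuine gap is in (3), exactly where you say the main obstacle lies. After reducing to the case where $G_u$ and $G_v$ are both elliptic in $T$ with $p_u=p_v$, you assert that reducedness makes $u,v$ lie in one orbit and that ``chasing the conjugator together with the symmetric choice of $p_u,p_v$ produces the desired index-$\le 2$ subgroup fixing an edge of $T$.'' As written this is not an argument, and the announced outcome is not what actually happens. One must first reduce to the sub-case where $G_u$ and $G_v$ each fix a \emph{unique} point $x$ of $T$ (otherwise $G_e\inc G_u$ already fixes an edge, and without uniqueness a conjugator need not fix any particular point); in that sub-case the correct conclusion is a \emph{contradiction} showing the configuration cannot occur, not the construction of an index-$2$ subgroup. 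Concretely, as in the paper: $\grp{G_u,G_v}$ fixes $x$ in $T$, hence is elliptic in $T^*$ by domination, so reducedness (in the form recalled in Subsection \ref{defspa}) provides an element $g$, \emph{hyperbolic in $T^*$}, with $gu=v$; then $g$ conjugates $G_u$ to $G_v$, hence sends the unique fixed point $x$ of $G_u$ in $T$ to the unique fixed point of $G_v$, which is again $x$; so $g$ is elliptic in $T$, hence elliptic in $T^*$ by (i), contradicting its hyperbolicity. The ingredients your sketch omits are precisely the uniqueness of the fixed point, the hyperbolicity in $T^*$ of the conjugator (the ``same orbit'' form of reducedness alone suffices only if you add the observation that an elliptic element sending $u$ to the adjacent vertex $v$ would be an inversion), and the transfer of ellipticity from $T$ to $T^*$; the ``symmetric choice of $p_u,p_v$'' plays no role. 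The remaining parts of your (3) — the cases $p_u\neq p_v$ and $G_u$ or $G_v$ in $\SNVC$, and the ``in particular'' via finite-index overgroups — are fine and match the paper, and the detour through a standard refinement of $T^*$ is harmless but unnecessary.
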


  Recall (see Subsection \ref{comp}) that a tree $S$ is \emph{reduced}  if no proper collapse of $S$ lies in the same deformation space as $S$.    Equivalently, any edge $e=uv$ with $u,v$ in different orbits satisfies  $G_e\ne G_u$ and $G_e\ne G_v$.
  Since one may obtain a reduced tree in the same deformation space as $S$ by collapsing edges, there is no loss of generality in assuming that the smally dominated trees $T^*$ are reduced.

\begin{proof} Clearly $G_v\in\SS$ because $T$ $\cals$-dominates $T^*$.  Assume that it is $C$-virtually cyclic.
 Stabilizers of edges incident to $v$ have infinite index in $G_v$ since they are elliptic in $T$  and $G_v$ is not, so they have order $\le 2C$. 
This contradicts the ``one-endedness'' assumption (note that $T^*$ is not trivial because then $G=G_v$ would be $C$-virtually cyclic, also contradicting one-endedness).

The ``if'' direction of Assertion (2) follows from  Lemma \ref{vc}. 
Conversely, assume that $H$ is elliptic in $T^*$ but not in $T$. 
If     $v$ is a vertex of $T^*$ fixed by $H$, we have  $H\inc  G_v$ and  $G_v\in \SNVC$ 
  by Assertion (1).

For Assertion (3), let $u$ and $v$ be the endpoints of an edge $e$ of $T^*$.  
First suppose that $G_u$ is not elliptic in $T$. It is small in $T$, so it preserves a line or fixes an end. Since   $G_e$ is elliptic in $T$, some subgroup of index at most $2$ fixes an edge. 
 If $G_u$ fixes two distinct points of $T$, then $G_e$ fixes an edge. 
We may therefore assume that $G_u$ and $G_v$ each  fix  a unique point in $T$. 

If these fixed points are  different, $G_e=G_u\cap G_v$ fixes an edge of $T$. 
Otherwise,
  $\langle G_u,G_v\rangle$ fixes a point $x $ in $T$, and is therefore elliptic in $T ^*$. 
Since $T^*$ is reduced, some $g\in G$ acting hyperbolically on $T^*$ maps $u$ to $v$ and conjugates $G_u$ to $G_v$ 
 (unless there is  such a $g$, collapsing the edge $uv$ yields a tree in the same deformation space as $T^*$).
This element $g$ fixes $x$, so is elliptic in $T^*$, a contradiction.
\end{proof}

\newcommand{\calhs}{\calh\cup\SNVC}

\begin{cor} \label{cor_domin} \ 
  \begin{enumerate}\item $T^*$ is an $(\cala,\calhs)$-tree.

  \item \label{domin2}
    If $T_1$ dominates $T_2$, then $T_1^*$ dominates $T_2^*$. 

  \item  \label{domin3}
    If $T$ is an $(\cala,\calhs)$-tree, then $T^*$ lies in the same
    deformation space as $T$.  In particular,
    Theorem
    \ref{thm_JSJacyl} applies to $(\cala,\calhs)$-trees. \qed
  \end{enumerate}
\end{cor}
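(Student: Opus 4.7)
The plan is to prove all three assertions as rather direct consequences of Lemma \ref{lem_smallVC}, which characterizes both the vertex stabilizers of $T^*$ that fail to be elliptic in $T$ (they must lie in $\SNVC$) and the subgroups that are elliptic in $T^*$ (exactly those elliptic in $T$ or contained in some $K\in\SNVC$). The common thread is that $\SNVC$ is precisely the obstruction between the deformation spaces of $T$ and $T^*$, and enlarging $\calh$ to $\calhs$ absorbs this obstruction.

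For (1), I would note that $T^*$ is by hypothesis an $(\cala,\calh)$-tree, so only ellipticity of groups $K\in\SNVC$ remains to be checked. If some such $K$ were not elliptic in $T^*$, then since $K$ is small in $(\cala,\calh)$-trees, its action on the $(k,C)$-acylindrical tree $T^*$ would fall under Lemma \ref{vc}, forcing $K$ to be $C$-virtually cyclic, contradicting $K\in\SNVC$.

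For (2), I would take a vertex $v$ of $T_1^*$ and show that $G_v$ is elliptic in $T_2^*$. By Assertion (1) of Lemma \ref{lem_smallVC} applied to the pair $(T_1,T_1^*)$, either $G_v$ is elliptic in $T_1$, in which case it is elliptic in $T_2$ by domination and hence in $T_2^*$ by the ``if'' direction of Assertion (2) of Lemma \ref{lem_smallVC} applied to $(T_2,T_2^*)$; or $G_v\in\SNVC$, in which case it is elliptic in $T_2^*$ by part (1) of the present corollary. Either way, $T_1^*$ dominates $T_2^*$.

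For (3), small domination gives that $T$ dominates $T^*$ automatically, so it suffices to show the reverse. Take a vertex stabilizer $G_v$ of $T^*$. By Assertion (1) of Lemma \ref{lem_smallVC}, either $G_v$ is elliptic in $T$ (done) or $G_v\in\SNVC\subset\calhs$, and since $T$ is by hypothesis an $(\cala,\calhs)$-tree, such a $G_v$ is elliptic in $T$. Hence $T^*$ dominates $T$, so they share a deformation space. The ``in particular'' then requires only verifying the hypotheses of Theorem \ref{thm_JSJacyl} for $(\cala,\calhs)$: the assumption on $\cala$ (containing all $C$-virtually cyclic and all order $\le 2C$ subgroups) is inherited from Theorem \ref{thm_smallyacyl}, and the acylindrical representative of each $(\cala,\calhs)$-deformation space is provided by $T\mapsto T^*$ together with what we just proved. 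I do not expect serious obstacles here; the main care is to quote the correct direction of Lemma \ref{lem_smallVC} in each step, in particular to remember that it is stated for reduced $T^*$ where needed, and to note that replacing $T^*$ by a reduced collapse within its deformation space does not affect any of the three statements.
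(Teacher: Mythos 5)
Your proof is correct and follows exactly the route the paper intends: the corollary is stated with no written proof precisely because it is a direct consequence of Lemma \ref{lem_smallVC} (whose ``if'' direction rests on Lemma \ref{vc}), which is what you use at each step, with the observation that $\SNVC\subset\calhs$ closes assertion (3) and that (1) together with (3) supplies the acylindrical representative needed for Theorem \ref{thm_JSJacyl}. Your caution about reducedness is harmless but unnecessary, since only assertion (3) of Lemma \ref{lem_smallVC} requires $T^*$ reduced and you never invoke it.
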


We also deduce:
\begin{cor} \label{ttpar}
\  
\begin{itemize}
 \item If a subgroup $J$ is small in $(\cala,\calhs)$-trees, it is also small in $(\cala,\calh)$-trees.
 \item If $J\notin \SS$ is elliptic in every   tree relative to $\calhs$, it is elliptic in every tree which is only relative to $\calh$.

\end{itemize}
\end{cor}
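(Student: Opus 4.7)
The plan is to exploit the assignment $T \mapsto T^*$ provided by the hypothesis of Theorem \ref{thm_smallyacyl} together with Lemma \ref{lem_smallVC}(2), which characterizes ellipticity in $T^*$: a subgroup is elliptic in $T^*$ if and only if it is elliptic in $T$ or contained in some $K \in \SNVC$. Recall also that $T^*$ is an $(\cala,\calhs)$-tree by Corollary \ref{cor_domin}(1).

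The second assertion is nearly immediate: given an arbitrary $(\cala,\calh)$-tree $T$, the subgroup $J$ is elliptic in the $(\cala,\calhs)$-tree $T^*$ by hypothesis, and Lemma \ref{lem_smallVC}(2) gives either $J$ elliptic in $T$ (as desired) or $J \subset K$ for some $K \in \SNVC \subset \SS$; the latter would force $J \in \SS$ by closure of $\SS$ under taking subgroups, contradicting $J \notin \SS$.

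For the first assertion I argue by contradiction: suppose $J$ is small in every $(\cala,\calhs)$-tree yet acts irreducibly on some $(\cala,\calh)$-tree $T$. A Schottky construction using large powers of two hyperbolic elements with disjoint axes produces a rank-$2$ free subgroup $F \subset J$ acting freely on (its minimal subtree in) $T$; since a free action of a non-abelian free group on a tree is irreducible, every non-abelian subgroup of $F$ acts irreducibly on $T$. On the other hand $J$ is small in $T^*$ by hypothesis, so by Corollary \ref{pasirr} one of the following holds: $J$ is elliptic in $T^*$, $J$ preserves a unique line $L \subset T^*$, or $J$ fixes a unique end of $T^*$. In each case I produce a finitely generated non-abelian free subgroup $F' \subset F$ which is elliptic in $T^*$. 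If $J$ is elliptic, take $F' = F$. If $J$ preserves $L$, the action $\rho \colon J \to \Isom(\bbR)$ has metabelian image, so the second derived subgroup of $F$ lies in $\ker \rho$; this subgroup is non-abelian (since $F \cong F_2$ is not solvable) and contains a finitely generated non-abelian free $F'$ fixing $L$ pointwise. If $J$ fixes an end with cocycle $\chi \colon J \to \bbZ$, then $[F, F] \subset \ker \chi$ contains a finitely generated non-abelian free $F'$ whose elements all have trivial cocycle, hence fix a ray to the end and are elliptic in $T^*$; so $F'$ itself is elliptic by Serre's lemma (Proposition \ref{trois}).

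Lemma \ref{lem_smallVC}(2) now applies to $F'$: it is elliptic in $T^*$ but not elliptic in $T$ (since $F$ acts freely on $T$), so $F' \subset K$ for some $K \in \SNVC \subset \SS$, whence $F' \in \SS$. Thus $F'$ is small in $T$, contradicting the irreducibility of its action on $T$ established above. The main obstacle is the case analysis in the previous paragraph, in particular producing a non-abelian free subgroup of $F$ inside the kernel of the action on $L$ or of the end cocycle; this is handled using the derived series and the non-solvability of $F \cong F_2$.
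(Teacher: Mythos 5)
Your proof is correct, and your treatment of the second assertion is the paper's own argument in contrapositive form: apply Lemma \ref{lem_smallVC}(2) to the $(\cala,\calhs)$-tree $T^*$ and rule out containment in a group of $\SNVC$ using $J\notin\SS$. For the first assertion, however, you take a genuinely different route. The paper argues in two lines: if $J$ acts irreducibly on some $(\cala,\calh)$-tree $T$, then $J\notin\SS$ and $J$ is not elliptic in $T^*$ by Lemma \ref{lem_smallVC}(2); since $T^*$ is $(k,C)$-acylindrical, Lemma \ref{vc} would force $J$ to be $C$-virtually cyclic if it were small in $T^*$, which is impossible as $J\supset\F_2$, so $J$ is not small in $T^*$. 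You instead avoid Lemma \ref{vc} and acylindricity altogether: you extract a Schottky subgroup $F\subset J$ acting freely on $T$, run the case analysis of Corollary \ref{pasirr} on the (assumed) small action of $J$ on $T^*$, use the derived series to find a non-abelian free $F'\subset F$ elliptic in $T^*$, and then apply the converse direction of Lemma \ref{lem_smallVC}(2) to conclude $F'\in\SS$, contradicting smallness of members of $\SS$ in $(\cala,\calh)$-trees since $F'$ acts irreducibly on $T$. Both arguments are valid and use Lemma \ref{lem_smallVC}(2) as the transfer mechanism; the paper's version is shorter because Lemma \ref{vc} already packages exactly the consequence of acylindricity you re-derive by hand (your kernel-of-the-line-action and end-cocycle analysis is essentially the proof pattern of Lemma \ref{lem_slender}), while yours has the mild advantage of only using the ellipticity dichotomy of Lemma \ref{lem_smallVC}(2) rather than invoking acylindricity of $T^*$ a second time.
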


\begin{proof}
 If $J$ is not small in an $(\cala,\calh)$-tree $T$, it does not belong to $\SS$, and is not elliptic in $T^*$ by Assertion (2) of Lemma \ref{lem_smallVC}. By Lemma \ref{vc}, it is not small in the $(\cala,\calhs)$-tree $T^*$.
 
 If  $J\notin \SS$ is not elliptic in some $(\cala,\calh)$-tree $T$, then by Lemma \ref{lem_smallVC} it  is not elliptic in
 $T^*$.
\end{proof}

Thanks to the third assertion of Corollary \ref{cor_domin}, we may 
apply  Theorem \ref{thm_JSJacyl} to get:

\begin{cor} \label{Tr}
There exists  a JSJ tree $T_r$    over $\cala$ relative to $\calhs$.  \qed
   \end{cor}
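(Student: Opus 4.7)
The plan is to deduce Corollary \ref{Tr} as a direct application of Theorem \ref{thm_JSJacyl} to the pair $(\cala,\calhs)$, using Corollary \ref{cor_domin} to verify its hypotheses.

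First I would check that the assumptions of Theorem \ref{thm_JSJacyl} hold for $(\cala,\calhs)$-trees. The first hypothesis, that $\cala$ contains all $C$-virtually cyclic subgroups and all subgroups of cardinality $\le 2C$, is simply inherited from the standing assumption in Theorem \ref{thm_smallyacyl}, since enlarging $\calh$ to $\calhs$ does not affect the family $\cala$. The second hypothesis requires that, for any $(\cala,\calhs)$-tree $T$, there exist a $(k,C)$-acylindrical $(\cala,\calhs)$-tree lying in the same deformation space as $T$.

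To verify this second hypothesis, I would take any $(\cala,\calhs)$-tree $T$. Since $\calhs\supset\calh$, the tree $T$ is in particular an $(\cala,\calh)$-tree, so by the standing hypothesis of Theorem \ref{thm_smallyacyl} it smally dominates a $(k,C)$-acylindrical $(\cala,\calh)$-tree $T^*$. By Corollary \ref{cor_domin}(1), $T^*$ is actually an $(\cala,\calhs)$-tree (this is where acylindricity combined with Lemma \ref{vc} gives ellipticity of every group in $\SNVC$). By Corollary \ref{cor_domin}(3), since the original $T$ is an $(\cala,\calhs)$-tree, the tree $T^*$ lies in the same deformation space as $T$. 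Thus the second hypothesis of Theorem \ref{thm_JSJacyl} is satisfied for $(\cala,\calhs)$-trees.

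Applying Theorem \ref{thm_JSJacyl} to $(\cala,\calhs)$ then yields the existence of the JSJ deformation space of $G$ over $\cala$ relative to $\calhs$, and any tree $T_r$ in this deformation space is the desired JSJ tree. There is essentially no obstacle here, since all the technical work has been done in establishing Corollary \ref{cor_domin}; the proof is a one-line invocation of Theorem \ref{thm_JSJacyl} once the translation between $(\cala,\calh)$- and $(\cala,\calhs)$-trees via the $T\mapsto T^*$ construction has been set up.
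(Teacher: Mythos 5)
Your proof is correct and is essentially the paper's own argument: the paper deduces Corollary \ref{Tr} precisely by invoking Theorem \ref{thm_JSJacyl} for $(\cala,\calhs)$-trees, which is legitimate thanks to Corollary \ref{cor_domin} (its first assertion making $T^*$ relative to $\calhs$, its third placing $T^*$ in the deformation space of any $(\cala,\calhs)$-tree $T$). You have merely spelled out the hypothesis-checking that the paper compresses into the phrase "thanks to the third assertion of Corollary \ref{cor_domin}".
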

   
We can assume that $T_r$ is reduced. 
We think of $T_r$ as  \emph{relative}, as it is relative to $\SNVC$ (not just to $\calh$). 
  In  Example \ref{asuiv},  $\SNVC$ is the class of all non-cyclic abelian subgroups and 
$T_r$ is  a JSJ tree 
over abelian groups relative to all non-cyclic abelian subgroups.

\begin{lem}\label{lem_Tr}
  If $T_r$ is reduced,  then  it is $(\cala,\calh)$-universally elliptic.
\end{lem}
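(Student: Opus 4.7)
The plan is to show that every edge stabilizer $G_e$ of $T_r$ is elliptic in an arbitrary $(\cala,\calh)$-tree $T$. Fix $T$ and $e$. By the hypothesis of Theorem \ref{thm_smallyacyl}, choose a $(k,C)$-acylindrical $(\cala,\calh)$-tree $T^*$ smally dominated by $T$, which we may assume reduced (collapsing edges within its deformation space preserves these properties). By Corollary \ref{cor_domin}(1), $T^*$ is in fact an $(\cala,\calhs)$-tree. Since $T_r$ is the JSJ over $\cala$ relative to $\calhs$, it is $(\cala,\calhs)$-universally elliptic, so $G_e$ is elliptic in $T^*$: fix $x\in T^*$ with $G_e\subset G_x$.

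If $G_x$ is elliptic in $T$, then $G_e\subset G_x$ is elliptic in $T$ and we are done. Otherwise, arguing by contradiction, the vertex stabilizer $G_x$ of $T^*$ is not elliptic in $T$, so Lemma \ref{lem_smallVC}(1) (applied to the pair $(T,T^*)$ with $T^*$ reduced) gives $G_x\in\SNVC\subset\calhs$. In particular, $G_x$ is elliptic in $T_r$, fixing some vertex $w$, whence $G_e\subset G_x\subset G_w$. Since $G_e$ fixes both $e$ and $w$, it fixes the entire segment from $w$ to $e$ in $T_r$.

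The heart of the argument is to exploit this configuration together with reducedness of $T_r$ to derive a contradiction. Since $G_x\in\SS$ acts non-trivially and smally on $T$, it preserves a unique line or fixes a unique end in $T$, giving a non-trivial small action of $G_x$ on a tree with edge stabilizers in $\cala$. Via the inclusion $G_x\subset G_w$, this will be used to produce a splitting of $G_w$ relative to its incident edge groups (which include $G_e$ and the stabilizers of the edges on $[w,e]$, all of which fix the segment $[w,e]$ because $G_e$ does). By Lemma \ref{extens}, this splitting extends to a refinement $\hat T_r$ of $T_r$ at $w$. One then checks that $\hat T_r$ is an $(\cala,\calhs)$-universally elliptic tree strictly refining $T_r$ (the new edge stabilizers are in $\cala$ and elliptic in every $(\cala,\calhs)$-tree, because groups in $\SNVC$ are elliptic there and the new edge groups are related to $G_x$ by smallness). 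Since $T_r$ is a JSJ tree over $\calhs$, such a refinement must lie in the same deformation space as $T_r$, contradicting reducedness.

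The main obstacle is precisely the construction of the refinement at $w$ and the verification that its new edge stabilizers lie in $\cala_\elli^{\calhs}$: one needs to control the stabilizers appearing in the small action of $G_x$ on $T$ and to check that the incident edge groups at $w$ remain elliptic in that action, which is a delicate point where both reducedness and the hypothesis that $\cala$ contains all $C$-virtually cyclic subgroups and all subgroups of order $\le 2C$ are essential. Once this is carried out, the JSJ maximality of $T_r$ over $\calhs$ closes the argument.
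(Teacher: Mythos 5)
Your setup (up to the configuration $G_e\subset G_x\subset G_w$ with $G_x\in\cals_{\mathrm{nvc}}$ elliptic in $T_r$) is correct, but everything after that is only announced, and the route you announce does not close. Concretely: (i) you never construct the splitting of $G_w$ relative to $\Inch_w$. A non-trivial small action of the \emph{subgroup} $G_x$ on $T$ gives no splitting of the larger group $G_w$, and the natural candidate (the minimal subtree of $G_w$ in $T$, or anything read off from $T$) cannot be relative to the incident edge groups: the edge of $[w,e]$ incident to $w$ has stabilizer containing $G_e$, which by assumption is \emph{not} elliptic in $T$. So the very hypothesis you are arguing against obstructs the refinement you want to build. (ii) Even granting such a refinement $\hat T_r$, your final step is misstated: a universally elliptic refinement of $T_r$ lying in the same deformation space as $T_r$ does not contradict reducedness -- reducedness forbids proper \emph{collapses} in the same deformation space, not refinements. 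In arguments of this shape (cf.\ Corollary \ref{cor_flex}) the correct conclusion is that the splitting of the vertex group must be trivial, which only yields a contradiction if you have independently shown it non-trivial -- precisely the missing point (i). Also, the claim that the new edge groups lie in $\cala$ and are universally elliptic ``because they are related to $G_x$ by smallness'' is unsupported: subgroups of a group in $\cals$ need not lie in $\cala$ at all.

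The paper's proof uses two facts your sketch never establishes, and they are what make reducedness usable. First reduce to $T$ having a single orbit of edges. Then $G_e$ fixes a \emph{unique} point $u\in T^*$ (unique because edge stabilizers of $T^*$ are elliptic in $T$ while $G_e$ is not), and $G_u\in\cals_{\mathrm{nvc}}$ by Lemma \ref{lem_smallVC}(1). Since $T$ dominates $T^*$, the group $G_u$ contains an edge stabilizer of $T$; as $T$ has one orbit of edges, $T$ is universally elliptic relative to $\calh\cup\cals_{\mathrm{nvc}}$, hence so is $T^*$ by Lemma \ref{lem_smallVC}(3), and maximality of $T_r$ gives that \emph{$T_r$ dominates $T^*$}. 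Now let $a,b$ be the endpoints of $e$ in $T_r$: by domination $G_a$ and $G_b$ are elliptic in $T^*$, and since they contain $G_e$ their fixed-point sets are $\{u\}$, so $\grp{G_a,G_b}\subset G_u\in\cals_{\mathrm{nvc}}$ is elliptic in $T_r$. Reducedness, via its characterization (an edge $ab$ with $\grp{G_a,G_b}$ elliptic admits a hyperbolic element $g$ with $ga=b$), produces such a $g$; conjugating fixed-point sets gives $gu=u$, so $g\in G_u\in\cals_{\mathrm{nvc}}$, contradicting that $T_r$ is relative to $\calh\cup\cals_{\mathrm{nvc}}$. So rather than refining $T_r$ at $w$, the mechanism is domination of $T^*$ by $T_r$ plus uniqueness of the fixed point of $G_e$ in $T^*$, fed into the reducedness criterion.
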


\begin{proof}
We let $e$ be an edge of $T_r$ such that $G_e$ is not elliptic  in some $(\cala,\calh)$-tree $T$, and we argue towards a contradiction. We may assume that $T$   only has one orbit of edges. The first step is to show that $T_r$ dominates $T^*$.

Since $T^*$ is relative to $\calhs$,
 the group $G_e$ fixes a vertex  $u\in T^*$  by $(\cala,\calhs)$-universal ellipticity of $T_r$. This $u$ is unique because edge stabilizers of $T^*$ are elliptic in $T$. Also note that $G_u\in\SNVC$ by Assertion (1) of Lemma \ref{lem_smallVC}.

We may assume that $T^*$ is not a point (if it is, then $G\in \SNVC$  and  $T_r$ itself is a point), 
 so $G_u$ contains an edge stabilizer of $T^*$, hence the stabilizer of some edge $f\inc T$ since 
there is an equivariant map $T\to T^*$.
   The group $G_f$ is 
(trivially) $(\cala,\calhs)$-universally elliptic  because  $G_f \inc G_u\in \SNVC$. Since $T$ has a single orbit of edges, it is  $(\cala,\calhs)$-universally elliptic (but it is not relative to $\calhs$). On the other hand,  $T^*$ is an $(\cala,\calhs)$-tree, and it is $(\cala,\calhs)$-universally elliptic by Assertion (3) of Lemma \ref{lem_smallVC}.
By maximality of the JSJ, $T_r$ dominates $T^*$.

Recall that $u$ is the unique fixed point of $G_e$ in $T^*$. 
Denote by $a,b$ the   endpoints of $e$ in $T_r$. 
Since $T_r$ dominates $T^*$, the groups $G_a$ and $G_b$ fix $u$, so $\grp{G_a,G_b}\inc G_u\in\SNVC$ is elliptic in $T_r$. As in the   proof of Lemma \ref{lem_smallVC},
some $g\in G$ acting hyperbolically on $T_r$ maps $a$ to $b$ (because $T_r$ is reduced). 
This $g$ fixes $u$, so belongs to $G_u\in\SNVC$, a contradiction since $T_r$ is relative to $\calhs$.
\end{proof}

 We  shall   now construct a JSJ tree $T_a$ relative to $\calh$ by refining $T_r$ (a reduced JSJ tree relative to $\calhs$) at vertices with small stabilizer.
  This JSJ tree $T_a$ is thought of as \emph{absolute} as it is not relative to $\SNVC$.

\begin{lem}\label{lem_rel_vs_abs}
There exists a JSJ tree $T_a$ over $\cala$ relative to $\calh$.  It may be obtained by refining $T_r$ at vertices with stabilizer in $\SS$ (in particular, the set of vertex stabilizers not 
belonging to $\SS$ is the same for $T_a$ as for $T_r$). Moreover,  
  $T_a^*$ lies in the same deformation space as $T_r$.
\end{lem}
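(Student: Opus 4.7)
The plan is to refine $T_r$ at each vertex using the JSJ of the vertex stabilizer (relative to its incident edge groups) and then appeal to Proposition \ref{prop_JSJ_sommets}. First I would check the hypotheses of that proposition: $T_r$ is $(\cala,\calh)$-universally elliptic by Lemma \ref{lem_Tr}, and each vertex stabilizer $G_v$ admits a JSJ over $\cala_{G_v}$ relative to $\Inch_v$. For $G_v \notin \SS$, Corollary \ref{ttpar} implies $G_v$ is elliptic in every $(\cala,\calh)$-tree, so via Lemma \ref{extens} $G_v$ admits no non-trivial splitting relative to $\Inch_v$, whence its JSJ $S_v$ is a point. For $G_v \in \SS$, the group $G_v$ is small in $(\cala_{G_v},\Inch_v)$-trees (again via Lemma \ref{extens}), so Corollary \ref{sma2} provides a JSJ tree $S_v$. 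Refining $T_r$ at each vertex using $S_v$ yields a tree $T_a$ which, by Proposition \ref{prop_JSJ_sommets}, is a JSJ tree of $G$ over $\cala$ relative to $\calh$; only vertices with $G_v\in\SS$ carry non-trivial refinements, giving the first two claims of the lemma.

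Next I would show that no genuine refinement occurs at a vertex with $G_v$ that is $C$-virtually cyclic, so the only effective refinements happen at vertices with $G_v\in\SNVC$. Any splitting of $G_v$ over $\cala_{G_v}$ relative to $\Inch_v$ has edge group $E\subset G_v$. If $E$ has infinite index in $G_v$, then $E$ is finite of cardinality at most $2C$, and by Lemma \ref{extens} the splitting extends to one of $G$ relative to $\calh$ with edge group of order $\leq 2C$, contradicting the one-endedness assumption from Lemma \ref{oneend}. If $E$ has finite index in $G_v$, a direct Bass--Serre analysis inside the $C$-virtually cyclic group $G_v$ shows that no non-trivial amalgam or HNN extension of $G_v$ over a finite-index subgroup can produce $G_v$ itself. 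Hence $S_v$ is trivial whenever $G_v$ is $C$-virtually cyclic.

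Finally, I would establish that $T_a^*$ lies in the same deformation space as $T_r$ by proving mutual domination. Since $T_a$ dominates $T_r$ and $T_r$ is an $(\cala,\calhs)$-tree, Corollary \ref{cor_domin} (parts 2 and 3) gives that $T_a^*$ dominates $T_r^*$, which lies in the deformation space of $T_r$. Conversely, to see that $T_r$ dominates $T_a^*$, I would check that every vertex stabilizer $G_v$ of $T_r$ is elliptic in $T_a^*$: if $G_v\notin\SS$ or $G_v$ is $C$-virtually cyclic, the refinement at $v$ is trivial, so $G_v$ remains a vertex stabilizer of $T_a$ and is elliptic in $T_a^*$; if $G_v\in\SNVC$, Assertion (2) of Lemma \ref{lem_smallVC} applied with $K=G_v$ directly yields ellipticity of $G_v$ in $T_a^*$.

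The main obstacle will be the treatment of $C$-virtually cyclic vertices of $T_r$: without further argument, a non-trivial refinement there could produce edges whose stabilizers survive in $T_a^*$ and block $T_r$ from dominating $T_a^*$. Ruling this out relies crucially on the one-endedness reduction of Lemma \ref{oneend} together with the restricted splitting behaviour of $C$-virtually cyclic groups.
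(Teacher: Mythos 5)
Your overall plan is the paper's: refine $T_r$ at its vertices using relative JSJ trees of the vertex stabilizers, invoke Proposition \ref{prop_JSJ_sommets} (legitimate because $T_r$ is $(\cala,\calh)$-universally elliptic by Lemma \ref{lem_Tr}), get one direction of the ``moreover'' from Corollary \ref{cor_domin}, and the other by ellipticity of vertex stabilizers of $T_r$ in $T_a^*$. Your treatment of the $\SS$-vertices (smallness in $(\cala_v,\Inch_v)$-trees via Lemma \ref{extens}, then Corollary \ref{sma2}) and your use of Lemma \ref{lem_smallVC}(2) for $\SNVC$-vertices are correct.

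The genuine gap is your first step, at vertices with $G_v\notin\SS$. You apply Corollary \ref{ttpar} to conclude that such a $G_v$ is elliptic in every $(\cala,\calh)$-tree, but the hypothesis of that corollary is ellipticity in \emph{every} tree relative to $\calhs$, whereas all you know about $G_v$, as a vertex stabilizer of the JSJ tree $T_r$, is that it is elliptic in every \emph{universally elliptic} $(\cala,\calhs)$-tree. The conclusion you draw is in fact false in general: a flexible vertex stabilizer of $T_r$ not in $\SS$ (e.g.\ a QH vertex, which is precisely what the theorem is after) is not elliptic in every tree and does admit non-trivial splittings relative to $\Inch_v$ (dual to geodesics), so ``$G_v$ admits no non-trivial splitting relative to $\Inch_v$'' cannot be the reason its relative JSJ is trivial. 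What you need, and what is true, is weaker: if $G_v\notin\SS$ then $G_v$ is elliptic in every $(\cala,\calh)$-\emph{universally elliptic} tree, whence its JSJ relative to $\Inch_v$ exists and is trivial (a non-trivial universally elliptic splitting of $G_v$ relative to $\Inch_v$ would, by Lemmas \ref{extens} and \ref{lem_passage}, refine $T_r$ to a universally elliptic tree in which $G_v$ is not elliptic). Proving this requires the acylindrical tree: if $G_v$ is not elliptic in some $(\cala,\calh)$-universally elliptic tree $T$, then $T^*$ (taken reduced) is relative to $\calhs$ and is $(\cala,\calhs)$-universally elliptic by Lemma \ref{lem_smallVC}(3), hence dominated by $T_r$; so $G_v$ is elliptic in $T^*$ but not in $T$, and small domination forces $G_v\in\SS$. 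This contraposition is the missing idea; without it, neither the existence of the relative JSJ at non-$\SS$ vertices nor the claim that refinement happens only at $\SS$-vertices is justified, and your converse domination ($T_r$ dominates $T_a^*$) inherits the gap, since it relies on the refinement being trivial at those vertices. (Incidentally, for that converse there is a shorter route avoiding the vertex-by-vertex check and the analysis of $C$-virtually cyclic vertices: $T_a^*$ is an $(\cala,\calhs)$-tree which is $(\cala,\calhs)$-universally elliptic, hence dominated by $T_r$ by maximality of $T_r$.)
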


\begin{proof}
Let $v$ be a vertex of $T_r$. 
We shall  prove the existence of a JSJ tree $T_v$  for $G_v$
relative to the family
$\Inch_v$ consisting of incident edge groups and subgroups conjugate to a group of  $\calh$   (see   Definition \ref{indup}).
  By Proposition \ref{prop_JSJ_sommets},
  which applies because $T_r$ is $(\cala,\calh)$-universally elliptic (Lemma \ref{lem_Tr}), one then obtains a JSJ tree $T_a$ for $G$ relative to $\calh$ by refining $T_r$ using the trees $T_v$.

If  $G_v$  is elliptic in every $(\cala,\calh)$-universally elliptic tree $T$, its JSJ is trivial (see Lemma \ref{lem_passage}) and no refinement is needed at $v$. Assume therefore that $G_v$ is not elliptic in such a $T$. Consider   the $(\cala,\calhs)$-tree $T^*$. It  is $(\cala,\calh)$-universally elliptic by  Assertion (3) of  Lemma \ref{lem_smallVC}, hence $(\cala,\calhs)$-universally elliptic, so it is dominated by $T_r$. In particular, $G_v$ is elliptic in $T^*$, so 
belongs to $\SS$. 

Let $T_v\subset T$ be the minimal $G_v$-invariant subtree  (it exists by Proposition \ref{arbtf}   and Lemma \ref{relfg}, 
since the incident edge stabilizers of $v$ in $T_r$ are   elliptic in $T$).
Being small in $(\cala,\calh)$-trees,
 $G_v$ has 
 at most one non-trivial deformation space containing a universally elliptic tree (Proposition \ref{sma}).
Applying this   to splittings of $G_v$ relative to  $\Inch_v$,
we deduce that   $T_v$ is a
JSJ tree of $G_v$ relative to $\Inch_v$.
This shows the first two assertions of the lemma.

We    now show  the ``moreover''. 
 Since $T_a^*$ is an  $(\cala,\calhs)$-tree,  which is $(\cala,\calhs)$-universally elliptic by the third assertion of Lemma \ref{lem_smallVC}, 
it is dominated by $T_r$.
Conversely, $T_a$ dominates $T_r$  
and therefore $T_a^*$ dominates $T_r^*$ by  Corollary \ref{cor_domin}(\ref{domin2}), 
so $T_a^*$ dominates $T_r  $ since $T_r$ and $T_r^*$ lie in the same deformation space
by  Corollary \ref{cor_domin}(\ref{domin3}).
 \end{proof}

\begin{rem}
 By Corollary \ref{ttpar}, the type of vertex stabilizers not in $\SS$ (rigid or flexible) is the same in $T_a$ (relative to $\calh$) and $T_r$ (relative to $\calhs$).
\end{rem}

We can now conclude the proof of Theorem \ref{thm_smallyacyl}.
The JSJ deformation space relative to $\calh$ exists by Lemma \ref{lem_rel_vs_abs}. 
The description of flexible vertex groups follows from Theorem \ref{thm_JSJacyl},  since JSJ trees relative to $\calh$ and $ \calhs $ have the same vertex stabilizers not in $\SS$ by Lemma \ref{lem_rel_vs_abs}.

\section{Applications}\label{exam}

 Recall that $\cala_\infty$ is the family of infinite groups in $\cala$. 
Combining Proposition \ref{tcsmdom} and Theorem \ref{thm_smallyacyl}  yields: 
\begin{cor} \label{synthese}  Let $G$ be a finitely generated group. Given $\cala$ and $\calh$, let 
 $\sim$ be an admissible equivalence on $\cale$.  
Let  $\SS$ be the family of groups contained 
in some $G_{[A]}$, with $A\in \cala_\infty$.

 Assume that $G$ is one-ended relative to $\calh$, and  there exists an integer $C$ such that:
\begin{enumerate}
\item  $\cala$ 
contains all $C$-virtually cyclic subgroups, 
and all subgroups of cardinal $\leq 2C$;

\item if two groups of $\cale$ are inequivalent, their   intersection has  order $\le C$;
\item   every stabilizer $G_{[A]}$ is small in $(\cala,\calh)$-trees (hence so is every element of $\SS$);

\item one of the following holds:
{\begin{enumerate}
\item every stabilizer $G_{[A]}$  belongs to $\cala$;
\item if $A\inc A'$ has index 2, and $A\in\cala$, then $A'\in\cala$;
\item
no group $G_{[A]}$ maps onto $D_\infty$.
\end{enumerate} }
\end{enumerate}
Then:

\begin{enumerate}
\item 
 there is a JSJ tree $T_a$  over $\cala$ relative to $\calh$; its collapsed tree of cylinders $(T_a)_c^*$ is a JSJ tree relative to $\calh\cup\SNVC$ (with $ \SNVC$     the family of groups in $\SS$ which are not $C$-virtually cyclic);
  \item $T_a$ and $(T_a)_c^*$ have the same vertex stabilizers not in $\SS$; flexible vertex stabilizers   
 that do not belong to $\SS$  
 are    QH
with fiber of cardinality at most $C$;

 \item
  $(T_a)_c^*$ is a  
  canonical JSJ tree  relative to $\calh\cup\SNVC$; in particular, it is invariant under any automorphism of $G$ preserving $\cala$ and $\calh$;
  \item
 $(T_a)_c^*$ is compatible with every $(\cala,\calh)$-tree.
\end{enumerate}
\end{cor}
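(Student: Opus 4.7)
The plan is to derive Corollary~\ref{synthese} as a direct combination of Proposition~\ref{tcsmdom} (which supplies an acylindrical tree $T_c^*$ smally dominated by any given tree $T$) with Theorem~\ref{thm_smallyacyl} (which then constructs the JSJ tree and describes its flexible vertices), and to read off canonicity and compatibility from the general properties of the tree of cylinders collected in Section~\ref{sec_cyl}.

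First I would observe that, because $G$ is one-ended relative to $\calh$, every $(\cala,\calh)$-tree has edge stabilizers in $\cale$, so the tree of cylinders is defined. Conditions (2)--(4) of the corollary are exactly the three hypotheses of Proposition~\ref{tcsmdom}, which therefore yields that for every $(\cala,\calh)$-tree $T$ the tree $T_c^*$ is $(2,C)$-acylindrical and $\SS$-dominated by $T$. Together with condition~(1), this verifies the hypotheses of Theorem~\ref{thm_smallyacyl} with $k=2$, giving a JSJ tree $T_a$ of $G$ over $\cala$ relative to $\calh$, with flexible vertex stabilizers not in $\SS$ that are QH with fiber of cardinality at most $C$.

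Next I would identify $(T_a)_c^*$ as a JSJ tree relative to $\calh\cup\SNVC$ and prove that the two trees share the same stabilizers outside $\SS$. For this I would re-use the internal structure of the proof of Theorem~\ref{thm_smallyacyl}: Corollary~\ref{Tr} produces a JSJ tree $T_r$ relative to $\calh\cup\SNVC$, and Lemma~\ref{lem_rel_vs_abs} shows that $T_a^*$ (which, in the present instantiation, is exactly $(T_a)_c^*$) lies in the same deformation space as $T_r$; hence $(T_a)_c^*$ is itself a JSJ tree relative to $\calh\cup\SNVC$. Lemma~\ref{lem_rel_vs_abs} further shows that $T_a$ is obtained from $T_r$ by refining only at vertices with stabilizer in $\SS$, and Corollary~\ref{ttpar} guarantees that the rigid/flexible dichotomy for stabilizers outside $\SS$ is unchanged when passing from $\calh$ to $\calh\cup\SNVC$; so $T_a$ and $(T_a)_c^*$ carry the same vertex stabilizers outside $\SS$ and the same QH description of the flexible ones.

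For canonicity I would argue that the JSJ deformation space relative to $\calh\cup\SNVC$ is unique, so the deformation space of $(T_a)_c^*$ does not depend on the choice of the JSJ tree $T_a$; since by Lemma~\ref{lem_basicTc}(3) the collapsed tree of cylinders depends only on the deformation space of its input, $(T_a)_c^*$ itself is independent of $T_a$. Invariance under any automorphism of $G$ preserving $\cala$ and $\calh$ follows: such an automorphism permutes JSJ trees relative to $\calh$, so preserves $(T_a)_c^*$. Finally, for compatibility with an arbitrary $(\cala,\calh)$-tree $S$, I would use that $T_a$ is universally elliptic to invoke Lemma~\ref{lem_rafin} and produce a refinement $\hat T_a$ of $T_a$ dominating $S$; Lemma~\ref{compfac} then gives compatibility of $(\hat T_a)_c^*$ with $S$. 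Applying Lemma~\ref{lraff} with $T=T_a$ and $S=\hat T_a$ yields that $(\hat T_a)_c^*$ refines $(T_a)_c^*$, so a common refinement of $(\hat T_a)_c^*$ and $S$ is also a common refinement of $(T_a)_c^*$ and $S$. The main delicate point is the verification of the vertex-stabilizer hypothesis of Lemma~\ref{lraff}: rigid vertex groups of $T_a$ are universally elliptic, hence elliptic (in particular small) in $\hat T_a$; flexible vertex groups lying in $\SS$ are small in $(\cala,\calh)$-trees, hence small in $\hat T_a$; and the remaining flexible vertex groups are QH with fiber of cardinality at most $C$ by the first step---which is precisely why the description of flexible vertices obtained from Theorem~\ref{thm_smallyacyl} is indispensable here.
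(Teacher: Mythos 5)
Your proof is correct and follows essentially the same route as the paper: Proposition \ref{tcsmdom} (with Remark \ref{tcsmdom2} supplying the $\cals$-domination) feeding into Theorem \ref{thm_smallyacyl} with $T^*:=T_c^*$, Lemma \ref{lem_rel_vs_abs} for assertions (1)--(2), Lemma \ref{lem_basicTc}(3) (i.e.\ Corollary \ref{invaut}) for canonicity, and the refinement--Lemma \ref{compfac}--Lemma \ref{lraff} argument for compatibility. Your explicit verification of the vertex-stabilizer hypothesis of Lemma \ref{lraff} is a detail the paper delegates to the remark following that lemma, but it is the same argument.
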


\begin{proof}
 By the one-endedness assumption, all $(\cala,\calh)$-trees $T$ have edge stabilizers in $\cale$, so the collapsed tree of cylinders $T_c^*$ is defined. By Proposition \ref{tcsmdom} and Remark  \ref{tcsmdom2}, it is   a $(2,C)$-acylindrical  $(\cala,\calh)$-tree $\cals$-dominated by $T$, and Theorem \ref{thm_smallyacyl} applies    taking $T^*:=T_c^*$ 
  (groups in $\cala$ are small in $(\cala,\calh)$-trees by Assumption (3)):  there exists a JSJ tree $T_a$, obtained as in Lemma \ref{lem_rel_vs_abs}, and its flexible vertex stabilizers are in $\SS$ or QH with fiber of cardinality at most $C$.

 Lemma \ref{lem_rel_vs_abs} states that  $(T_a)_c^*$
belongs to the deformation space of $T_r$ (a JSJ tree relative to $\calh\cup\SNVC$), and $T_a$ is obtained by refining $T_r$ at vertices with stabilizer in $\SS$. This proves the first two assertions of the corollary.
The third one follows from Corollary \ref{invaut}.

For the fourth assertion, let $T$ be any $(\cala,\calh)$-tree.
Since $T_a$ is universally elliptic,  there exists a refinement $S$ of $T_a$
dominating $T$  by Proposition  \ref{prop_refinement}. 
By
Lemma \ref{compfac},
$S_c^*$ and $T$ have a common refinement $R$.
Since $S_c^*$ is a refinement of $(T_a)_c^*$ by Lemma \ref{lraff}, the tree $R$ is a common refinement of $T$ and  $(T_a)_c^*$.
\end{proof}

\begin{rem} \label{gdss}   The result remains true if we enlarge $\SS$ (keeping it invariant under conjugating and taking subgroups), as long as all groups in $\SS$ are small in $(\cala,\calh)$-trees.
\end{rem}

\begin{rem}  By Corollary \ref{cor_unbout}, the one-endedness assumption is usually not necessary for the   assertions about $T_a$,
provided that the relative Stallings-Dunwoody space exists. 
\end{rem}

The first assumption of the corollary ensures that Propositions \ref{bu} and \ref{prop_QHUE} apply to  flexible vertex stabilizers $G_v$ of $T_a$ and $(T_a)_c^*$ that do not belong to $\SS$. In particular:

\begin{cor} \label{synthesep}
If $G_v$ is a flexible vertex stabilizer not belonging to $\cals$, then:
\begin{enumerate}
 \item  the underlying orbifold $\Sigma$ contains an essential simple geodesic;
 \item every boundary component of $\Sigma$ is used;
 \item every universally elliptic subgroup of $G_v$ is contained in an extended boundary subgroup;
 \item if $T$ is an $(\cala,\calh)$-tree in which $G_v$ does not fix a point,   the action of $G_v$ on its minimal subtree $\mu_T(G_v)$  is dual to a family of  geodesics of $\Sigma$.
\end{enumerate}
\end{cor}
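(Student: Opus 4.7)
The plan is to deduce Corollary \ref{synthesep} directly from Propositions \ref{bu} and \ref{prop_QHUE}, after invoking Corollary \ref{synthese}(2) to ensure that $G_v$ is QH with fiber $F$ of cardinality at most $C$. All four assertions then reduce to checking the hypotheses of those two propositions, given the standing assumptions (1)--(4) of Corollary \ref{synthese}.

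First I would verify the hypotheses of Proposition \ref{bu}. Since $|F|\le C\le 2C$, assumption (1) of Corollary \ref{synthese} guarantees that $F\in\cala$ and that any subgroup of $G_v$ containing $F$ with index $2$ also lies in $\cala$. Being finite, $F$ has Serre's property (FA), so $F$ is universally elliptic. Proposition \ref{bu}(1)--(2) then immediately yield assertions (1) and (2) of the corollary, using that $G_v$ is flexible by hypothesis.

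For assertion (4), let $T$ be any $(\cala,\calh)$-tree with $G_v$ not fixing a point. Every edge stabilizer of $T$ lies in $\cala$: if finite it is trivially small in $(\cala,\calh)$-trees, and if infinite it is contained in some $G_{[A]}$, which is small in $(\cala,\calh)$-trees by assumption (3). Hence $G_v$ acts on $T$ with small edge stabilizers, and Proposition \ref{bu}(3) (combined with the fact that we may subdivide to remove the ``up to subdivision'' proviso, since geodesics are allowed to be one-sided in our setting) gives that the action of $G_v$ on $\mu_T(G_v)$ is dual to a family of essential simple closed geodesics of $\Sigma$.

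Finally, for assertion (3), I would apply Proposition \ref{prop_QHUE}(1). By assertion (1), already established, $\Sigma$ contains an essential simple closed geodesic. For every simple closed geodesic $\gamma$ of $\Sigma$, the group $Q_\gamma$ is an extension of $F$ by $\Z$ or $D_\infty$, and since $|F|\le C$, the group $Q_\gamma$ is $C$-virtually cyclic; by assumption (1) of Corollary \ref{synthese}, $Q_\gamma\in\cala$. Proposition \ref{prop_QHUE}(1) then shows that every universally elliptic subgroup of $G_v$ is contained in an extended boundary subgroup, which is assertion (3). There is no substantial obstacle here: the proof is entirely a matter of translating the hypotheses of Corollary \ref{synthese} into those of Propositions \ref{bu} and \ref{prop_QHUE}, with Corollary \ref{synthese}(2) doing the actual work of identifying $G_v$ as QH with finite fiber.
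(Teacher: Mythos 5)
Your overall route is the paper's: Corollary \ref{synthese}(2) makes $G_v$ QH with finite fiber, and then Propositions \ref{bu} and \ref{prop_QHUE} are quoted; your treatment of assertions (1), (2) and (3) is correct, including the key computation that $Q_\gamma$ is an extension of $F$ (of order $\le C$) by $\Z$ or $D_\infty$, hence $C$-virtually cyclic and so in $\cala$ by assumption (1).

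There is, however, a gap in your proof of assertion (4). You verify that the edge stabilizers of the $G_v$-action on $\mu_T(G_v)$ are \emph{small in $(\cala,\calh)$-trees} (true: they lie in $\cala$, hence in $\cals$ or are finite), and you then invoke Proposition \ref{bu}(3). But the smallness hypothesis of Proposition \ref{bu}(3) is not smallness in $(\cala,\calh)$-trees: it feeds into Lemma \ref{dual2}(3) and ultimately Proposition \ref{dual}, which need the edge groups (more precisely their images in $\pi_1(\Sigma)$) to be small in the absolute sense, i.e.\ not to contain $\F_2$ -- otherwise one only gets the ``dominated by a dual tree'' conclusion, not equality. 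In the generality of Corollary \ref{synthese} groups in $\cala$ need not be abstractly small: assumption (3) only makes the $G_{[A]}$ small in $(\cala,\calh)$-trees, and in the relatively hyperbolic application a parabolic edge group can contain $\F_2$. So the hypothesis you check is strictly weaker than the one you use. The missing step is exactly the point the paper's proof flags: since (by the computation you already made for assertion (3)) the groups $Q_\gamma$ lie in $\cala$, Proposition \ref{prop_QHUE}(2) applies and shows that each such edge stabilizer, being small in $(\cala,\calh)$-trees, has virtually cyclic image in $\pi_1(\Sigma)$; as it meets $F$ in a finite group, it is itself virtually cyclic, hence small in the required sense, and only then does Proposition \ref{bu}(3) give that the action of $G_v$ on $\mu_T(G_v)$ is dual to a family of geodesics.
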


\begin{proof} This follows directly from Propositions \ref{bu} and \ref{prop_QHUE}, 
 noting that edge stabilizers for the action of $G_v$ on $\mu_T(G_v)$ are virtually cyclic by the second assertion of Proposition \ref{prop_QHUE}.
  \end{proof}

In the following sections, we are going to describe examples where   Corollaries \ref{synthese} and \ref{synthesep} apply.
We first treat the case of abelian splittings of CSA groups.
To allow torsion, we introduce $K$-CSA groups in Subsection \ref{aKcsa}, and describe their JSJ decomposition
over virtually abelian groups.
We then consider elementary splittings   of relatively hyperbolic groups, 
and splittings
over virtually cyclic subgroups under the assumption that
these subgroups have small commensurators. We conclude by defining the $\Zmax$-JSJ decomposition of one-ended hyperbolic groups.

\subsection{CSA groups}\label{acsa}

In our first application, $G$ is a torsion-free CSA group, and we consider  abelian or cyclic splittings.
Recall that $G$ is \emph{CSA} if the commutation relation is transitive on $G\setminus\{1\}$, and maximal abelian subgroups
are malnormal.  Toral relatively hyperbolic groups, in particular limit groups  and torsion-free hyperbolic groups, are CSA.
 See Example \ref{asuiv} and Figure
\ref{fig_JSJCSA} for an illustration.

We let $\cala$ be either the family of abelian subgroups of $G$,  or the family of cyclic subgroups.  If $G$  is freely indecomposable  relative to $\calh$, commutation is an admissible equivalence relation on
 $\cala_\infty$ (see \cite{GL4}, or Lemma \ref{coadm} below),  and we can define trees of cylinders $T_c$.   The groups $G_{[A]}$ are maximal abelian subgroups, so are small in all trees. 
Over abelian groups   (\ie when $\cala$ is the class of abelian subgroups),  
all edge stabilizers of $T_c$  belong to $\cala$ since every $G_{[A]}$ is abelian, so $T_c^*=T_c$. 
Over cyclic groups,   $T_c$ may have non-cyclic edge  stabilizers, so we have to use $T_c^*$, 
obtained from $T_c$ by collapsing edges with non-cyclic stabilizers.

\begin{thm} \label{JSJ_CSA}
Let $G$ be a finitely generated torsion-free CSA group,\index{CSA group} and  $\calh$   any family of subgroups. 
 Assume that $G$ is freely indecomposable relative to $\calh$.
\begin{enumerate}
\item There is an  abelian (resp.\ cyclic)\index{abelian tree}\index{cyclic tree} JSJ tree $T_a$   relative to $\calh$. 
Its collapsed tree of cylinders $(T_a)_c^*$  (for commutation) is a JSJ tree relative to $\calh$ and   all non-cyclic abelian subgroups. 
\item 
   $T_a$ and $(T_a)_c^*$ have the same non-abelian vertex stabilizers; non-abelian flexible\index{flexible vertex, group, stabilizer} vertex stabilizers   
 are    QH (they are fundamental groups of compact surfaces).

\item
 $(T_a)_c^*$ is invariant under all automorphisms of $G$ preserving $\calh$.
 It is compatible with every $(\cala,\calh)$-tree.
\end{enumerate}
\end{thm}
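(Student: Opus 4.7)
The plan is to deduce the theorem from Corollaries \ref{synthese} and \ref{synthesep}, taking $\sim$ to be the commutation relation on $\cala_\infty$ and $C=1$. I may assume $G$ is not abelian, so the JSJ decomposition is non-trivial. Since $G$ is torsion-free, finite subgroups are trivial, so freely indecomposable relative to $\calh$ is equivalent to one-ended relative to $\calh$. The CSA property implies that commutation is an admissible equivalence relation on $\cala_\infty$ (as verified in \cite{GL4}); the nontrivial axiom is that if $A\sim B$ are elliptic at $a,b$ respectively, then edge stabilizers along $[a,b]$ also commute with $A$ and $B$. Consequently, $G_{[A]}$ is the unique maximal abelian subgroup containing $A$, and $\SS$ consists precisely of the abelian subgroups of $G$.

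Next, I would verify the numerical hypotheses of Corollary \ref{synthese} with $C=1$. Torsion-freeness forces every $1$-virtually cyclic subgroup to be infinite cyclic, and every subgroup of cardinality at most $2$ to be trivial; both lie in $\cala$ in either case. If $A, B \in \cala_\infty$ do not commute, then $A \cap B = 1$: a common nontrivial element would lie in the unique (by CSA) maximal abelian subgroup containing it, which would then contain both $A$ and $B$, forcing them to commute. Each $G_{[A]}$ is abelian, hence small in $(\cala,\calh)$-trees. When $\cala$ is the class of abelian subgroups, condition (4a) of Corollary \ref{synthese} holds because $G_{[A]}$ is abelian. When $\cala$ is the class of cyclic subgroups, $G_{[A]}$ is torsion-free abelian and therefore does not surject onto the (torsion-containing) group $D_\infty$, so condition (4c) holds.

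Applying Corollary \ref{synthese} then yields a JSJ tree $T_a$ over $\cala$ relative to $\calh$, and shows that its collapsed tree of cylinders $(T_a)_c^*$ is a JSJ tree relative to $\calh \cup \SNVC$, invariant under the automorphisms of $G$ preserving $\calh$, and compatible with every $(\cala,\calh)$-tree. This gives Assertion (3) directly. The family $\SNVC$ consists of the abelian subgroups which are not infinite cyclic (the trivial subgroup is universally elliptic and plays no role), i.e., the non-cyclic abelian subgroups, yielding Assertion (1). For Assertion (2), Corollary \ref{synthese}(2) gives that $T_a$ and $(T_a)_c^*$ share the same vertex stabilizers outside $\SS$, i.e., the same non-abelian vertex stabilizers. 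Corollary \ref{synthesep} then gives that any non-abelian flexible vertex stabilizer $G_v$ is QH with fiber $F$ of cardinality $\leq C=1$, hence trivial, so $G_v \simeq \pi_1(\Sigma)$ for some compact hyperbolic $2$-orbifold $\Sigma$. Torsion-freeness of $G$ rules out conical points, corner reflectors, and mirrors on $\Sigma$, so $\Sigma$ is a compact surface.

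The bulk of the work is thus routine verification of hypotheses; no single step should be difficult, but the one I would expect to require most care is the admissibility of commutation (axiom (3) of Definition \ref{eqrel}), namely that edge stabilizers between the fixed sets of two commuting abelian subgroups also commute with them. This uses in an essential way that maximal abelian subgroups of a CSA group are malnormal, and is cleanest to establish by showing that such edge stabilizers are themselves contained in the maximal abelian subgroup containing $A$ and $B$.
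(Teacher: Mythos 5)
Your proposal is correct and follows essentially the same route as the paper: the paper's proof of this theorem consists precisely of applying Corollary \ref{synthese} with $\cala$ the abelian (resp.\ cyclic) subgroups, $\cals$ the family of abelian subgroups and $C=1$, and then observing that torsion-freeness forces QH vertex groups to have trivial fiber and the underlying orbifold to be a surface; your hypothesis checks (admissibility of commutation as in Lemma \ref{coadm}/\cite{GL4}, trivial intersections of non-commuting abelian subgroups by CSA, smallness of the groups $G_{[A]}$, and condition (4)) simply make explicit what the paper leaves to the reader. One small remark: in the cyclic case the correct reason $G_{[A]}$ cannot surject onto $D_\infty$ is that it is abelian while $D_\infty$ is not (torsion-freeness alone would not rule out torsion in a quotient), but since you do assume $G_{[A]}$ abelian the conclusion stands.
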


\begin{proof} 
 We apply Corollary \ref{synthese}, with $\cala$ consisting of all abelian (resp.\ cyclic) subgroups, $\cals$ the family of abelian subgroups, and  $C=1$. Since $G$ is torsion-free, QH vertex groups have trivial fiber, and the underlying orbifold is a surface. 
\end{proof}

\subsection{$\Gamma$-limit  groups and $K$-CSA groups} \label{aKcsa}

 The notion of CSA groups is not well-adapted to groups with torsion. This is why we   shall introduce   $K$-CSA groups, where $K$ is an   integer. 
  Every hyperbolic group $\Gamma$
  is $K$-CSA for some   $K$. 
Being $K$-CSA is a universal property; in particular,   all $\Gamma$-limit groups are $K$-CSA.

We say that a group is \emph{$K$-virtually abelian}  \index{virtually abelian@$K$-virtually abelian}
if it contains an  abelian subgroup of index $\leq K$ 
(note that the infinite dihedral group $D_\infty$ is 1-virtually cyclic,  in the sense of Definition  \ref{dvc}, but only 2-virtually abelian). As usual, a group is locally  $K$-virtually abelian if its finitely generated subgroups are $K$-virtually abelian.

\begin{lem}\label{lem_Zorn2}
  If a countable  group $J$ is locally $K$-virtually abelian, then $J$ is $K$-virtually abelian.
\end{lem}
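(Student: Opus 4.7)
The plan is to use a compactness/König's lemma argument on the sequence of finitely generated subgroups exhausting $J$. Since $J$ is countable, write $J = \bigcup_{n\ge 1} J_n$ with $J_1 \subset J_2 \subset \cdots$ a nested sequence of finitely generated subgroups. By hypothesis, for each $n$ the set
\[
\mathcal{A}_n := \{A \le J_n : A \text{ abelian}, \ [J_n : A] \le K\}
\]
is non-empty. Moreover $\mathcal{A}_n$ is finite: a finitely generated group has only finitely many subgroups of index at most $K$ (they are kernels of homomorphisms to the finite symmetric group $S_K$, of which there are finitely many).

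The key compatibility step is to check that restriction $A \mapsto A \cap J_n$ defines a map $\mathcal{A}_{n+1} \to \mathcal{A}_n$. Given $A \in \mathcal{A}_{n+1}$, the intersection $A \cap J_n$ is abelian, and
\[
[J_n : A \cap J_n] = [J_n A : A] \le [J_{n+1} : A] \le K,
\]
so indeed $A \cap J_n \in \mathcal{A}_n$. We therefore have an inverse system of non-empty finite sets; by König's lemma there exists a coherent choice $A_n \in \mathcal{A}_n$ with $A_{n+1} \cap J_n = A_n$ for all $n$.

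Set $A := \bigcup_n A_n \le J$. It is a subgroup, and it is abelian since any two of its elements lie in some common $A_n$. To conclude, I need to bound $[J:A]$. For each $n$ the inclusion $J_n \hookrightarrow J_{n+1}$ induces an injection of coset spaces $J_n/A_n \hookrightarrow J_{n+1}/A_{n+1}$ (because $A_n = A_{n+1}\cap J_n$), and $J/A = \bigcup_n J_n/A_n$ is the increasing union. Since the cardinalities $|J_n/A_n|$ are bounded by $K$, the sequence stabilizes, and
\[
[J : A] = \lim_{n\to\infty} [J_n : A_n] \le K,
\]
so $J$ is $K$-virtually abelian. The main (minor) obstacle is the index estimate in the restriction step, which relies on the elementary formula $[J_n : A\cap J_n] = [J_n A : A]$; everything else is a routine compactness argument.
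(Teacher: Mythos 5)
Your proof is correct and is essentially the paper's own argument: both rest on the finiteness of the set of index-$\le K$ subgroups of each finitely generated $J_n$ together with the index inequality $[J_n:A\cap J_n]\le[J_{n+1}:A]$, followed by a compactness extraction (the paper via a diagonal subsequence, you via König's lemma on the inverse system of restrictions) and taking the increasing union. The difference is purely organizational, not mathematical.
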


\begin{proof}
Let $g_1,\dots,g_n,\dots$ be a numbering of the elements of $J$.
Let $A_n\subset\grp{g_1,\dots,g_n}$ be an abelian   subgroup of index $\leq K$.
For a given  $k$, there are only finitely many subgroups of index $\leq K$ in $\grp{g_1,\dots,g_k}$,
so there is a subsequence $A_{n_i(k)}$ such that $A_{n_i(k)}\cap\grp{g_1,\dots g_k}$ is independent of  $i$. 
By a diagonal argument, one produces an   abelian subgroup $A$ of $J$ whose intersection with each $\grp{g_1,\dots,g_n}$ has index $\leq K$,
so $A$ has index $\leq K$ in $J$.
\end{proof}

\begin{dfn}[$K$-CSA] \label{kcsa}\index{0KA@$K$-CSA}\index{K-CSA}
Say that $G$ is $K$-CSA for some $K>0$ if:
  \begin{enumerate}
  \item Any finite subgroup has cardinality at most $K$ (in particular, any element of order $>K$ has infinite order).
\item Any element $g\in G$ of infinite order is contained in a \emph{unique} maximal virtually abelian group $M(g)$,\index{0MG@$M(g)$, $M(H)$: the maximal virtually abelian group containing $g$, $H$}
and $M(g)$ is $K$-virtually abelian.
\item $M(g)$ is its own normalizer.
  \end{enumerate}
\end{dfn}

A $1$-CSA group is just a torsion-free CSA group. The Klein bottle group is $2$-CSA but not $1$-CSA.
Any hyperbolic group $\Gamma$ is $K$-CSA for some $K$ since finite subgroups of $\Gamma$ have bounded order, 
and there are only finitely many isomorphism classes of virtually cyclic groups whose finite subgroups have bounded order (see Lemma 2.2 of \cite{GL_vertex} for a proof). 
Corollary \ref{lg} will say that $\Gamma$-limit groups also are $K$-CSA.

\begin{lem} \label{rem_KCSA}
Let $G$ be a $K$-CSA group.  
\begin{enumerate}
\item If   $g,h\in G$ have infinite order, the following conditions are equivalent:
\begin{enumerate}
\item $g $ and $h$ have non-trivial commuting powers;
\item $g^{K!}$ and $h^{K!}$ commute;
\item $M(g)=M(h)$;
\item $\langle g,h\rangle$ is virtually abelian.
\end{enumerate}
\item Any infinite virtually abelian subgroup $H$
is contained  in a  {unique} maximal virtually abelian group $M(H)$. \index{0MG@$M(g)$, $M(H)$: the maximal virtually abelian group containing $g$, $H$}
The group 
  $M(H)$  is $K$-virtually abelian and  almost malnormal: if $M(H)\cap M(H)^g$ is infinite, then $g\in M(H)$. 
\end{enumerate}
\end{lem}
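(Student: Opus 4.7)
\medskip

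\noindent\textbf{Plan.} I would prove (1) by a cycle of implications, then bootstrap to (2). The implication (b) $\Rightarrow$ (a) is immediate. For (a) $\Rightarrow$ (c): if $g^m$ and $h^n$ commute with $m,n\ne 0$, then $\langle g^m,h^n\rangle$ is abelian and contains the infinite-order element $g^m$. Any virtually abelian subgroup of $G$ containing an element $x$ of infinite order extends, by Lemma \ref{lem_Zorn2} together with Zorn, to a maximal virtually abelian subgroup containing $x$, which by condition (2) of Definition \ref{kcsa} must be $M(x)$. Applied to $x=g^m$ and $x=h^n$ this gives $M(g^m)=M(h^n)$; and since $M(g)$ is itself a maximal virtually abelian subgroup containing $g^m$, the uniqueness in (2) yields $M(g^m)=M(g)$, and similarly $M(h^n)=M(h)$, so $M(g)=M(h)$. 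For (c) $\Rightarrow$ (d), $\langle g,h\rangle\subset M(g)$ which is virtually abelian. For (d) $\Rightarrow$ (b), $\langle g,h\rangle\subset M(g)$ is $K$-virtually abelian; an abelian subgroup of index $\leq K$ has a normal core which is normal abelian of index dividing $K!$, so $g^{K!}$ and $h^{K!}$ commute.

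For (2), the key is to show that $H$ is contained in $M(h)$ for some element $h\in H$ of infinite order. First I produce such an $h$: let $A\subset H$ be an abelian subgroup of finite index, so $A$ is infinite; if $A$ were a torsion group, each finitely generated subgroup would be finite of order $\leq K$ (condition (1)), and an ascending chain argument combined with this uniform bound would force $A$ to be finite, a contradiction. So $A$ contains some $h$ of infinite order. Replacing $A$ by its normal core we may assume $A\normal H$. Some $h^m\in A$; since $A$ is abelian containing $h^m$, the argument of part (1) gives $A\subset M(h^m)=M(h)$. For any $t\in H$, normality gives $(tht\m)^m=th^mt\m\in A\subset M(h)$, so $M(tht\m)=M((tht\m)^m)=M(h)$ by part (1). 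Hence $t$ normalizes $M(h)$, and condition (3) of Definition \ref{kcsa} yields $t\in M(h)$. Thus $H\subset M(h)$, and I set $M(H):=M(h)$. It is the unique maximal virtually abelian subgroup containing $H$: any such subgroup contains $h$ and so equals $M(h)$ by uniqueness. It is $K$-virtually abelian by condition (2).

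For almost malnormality, suppose $M(H)\cap gM(H)g\m$ is infinite. Being virtually abelian, by the previous argument it contains an element $h'$ of infinite order. Then $h'\in M(H)$ gives $M(h')=M(H)$, and $g\m h'g\in M(H)$ (still of infinite order) gives $M(g\m h'g)=M(H)$, i.e.\ $g\m M(h')g=M(H)$, so $g$ normalizes $M(H)$. By condition (3) applied to $M(H)=M(h)$, we conclude $g\in M(H)$.

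The main obstacle will be the step establishing that an arbitrary (possibly infinitely generated) infinite virtually abelian subgroup $H\subset G$ contains an infinite-order element and then that the inclusion $A\subset M(h)$ for a normal abelian $A\normal H$ of finite index extends to all of $H$; this requires combining the boundedness of finite subgroups (condition (1)), the normal-core reduction, and the self-normalizing property (condition (3)) of $M(h)$ in an essential way, and is the place where all three axioms of $K$-CSA come into play simultaneously.
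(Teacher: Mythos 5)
The gap is in your (a) $\Rightarrow$ (c), and it propagates. Your key claim --- that a virtually abelian subgroup containing an infinite-order element $x$ extends, ``by Lemma \ref{lem_Zorn2} together with Zorn'', to a maximal virtually abelian subgroup, which must then be $M(x)$ --- does not go through as stated. Lemma \ref{lem_Zorn2} requires the union of a chain to be locally $K$-virtually abelian \emph{with the uniform constant $K$}; but for a chain of arbitrary virtually abelian subgroups you only know that finitely generated subgroups of the union are virtually abelian, and nothing available at this point bounds the index of an abelian subgroup in them by $K$ (Definition \ref{kcsa} gives such a bound only for the groups $M(g)$ themselves --- the statement ``every finitely generated virtually abelian subgroup of $G$ is $K$-virtually abelian'' is essentially equivalent to the containment you are trying to establish). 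If instead you run Zorn inside the class of abelian, or of $K$-virtually abelian, subgroups (where chain unions are unproblematic), the maximal element you obtain is only maximal \emph{in that restricted class}, and the uniqueness in condition (2), which concerns maximality among all virtually abelian subgroups, cannot be invoked to identify it with $M(x)$. Symptomatically, your part (1) never uses axiom (3) of Definition \ref{kcsa}, and that axiom is exactly what drives this implication.

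The paper's mechanism, which also makes the Zorn detour unnecessary, is: if $y$ commutes with the infinite-order element $x$, then $yM(x)y^{-1}=M(yxy^{-1})=M(x)$ by uniqueness (conjugation-equivariance) of $M$, so $y$ normalizes $M(x)$, hence $y\in M(x)$ by axiom (3); if moreover $y$ has infinite order, then $M(y)=M(x)$ by uniqueness again. Applied with $x=g^m$, $y=h^n$ this gives (a)$\Rightarrow$(c) at once (together with $M(g)=M(g^m)$, $M(h)=M(h^n)$). The same repair is needed wherever you lean on the unproved claim: your direct (d)$\Rightarrow$(b) asserts $\langle g,h\rangle\subset M(g)$ under hypothesis (d) alone (it is safer to prove (d)$\Rightarrow$(a) by your normal-core argument and close the cycle through (a)$\Rightarrow$(c)$\Rightarrow$(b)); and in part (2) the step ``$A\subset M(h^m)$'' should be justified elementwise by the normalizer argument (which also handles torsion elements of $A$, not covered by part (1) as stated). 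With these replacements, the rest of your part (2) --- conjugating $h$ by elements of $H$, self-normalization, and the malnormality argument --- is correct and coincides with the paper's proof.
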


\begin{proof}
  $(c)\Rightarrow (b)\Rightarrow (a)$ in Assertion (1) is clear since
  $g^{K!}\in A$ if $A\inc M(g)$ has index $\le K$. We prove
  $(a)\Rightarrow (c)$. If $g^m$ commutes with $h^n$, then $g^m$
  normalizes $M(h^n)$, so $M(g^m)=M(h^n)$ and
  $M(g)=M(g^m)=M(h^n)=M(h)$. Clearly $(c)\Rightarrow (d)\Rightarrow
  (a)$. This proves Assertion (1).

 Being virtually abelian, $H$ contains an element $h_0$ of infinite order, and we
  define $M(H)=M(h_0)$.   By Assertion (1), $M(H)$ does not depend on the choice of
$h_0$. To prove that $H\subset M(H)$, consider $h\in H$.
Since $hh_0 h\m\in H$ has infinite order, we have
  $M(h_0)=M(h h_0 h\m)=hM(h_0)h\m$, so $h\in M(h_0)$ because $M(h_0)$ equals its normalizer.
 A similar argument shows almost malnormality.
 There remains to prove uniqueness. If $A$ is any 
virtually abelian group  containing $H$, then $M(A)$ is defined and coincides with $M(h_0)$, so
$A\subset M(A)=M(h_0)=M(H)$.
\end{proof}

One easily checks that any subgroup of a $K$-CSA group is still $K$-CSA. This is in fact a consequence of the following proposition
saying that  $K$-CSA is a universal property.
We refer to \cite{CG_compactifying} for the topological space of marked groups, and its relation with universal theory.

\begin{prop}\label{prop_univ}
  For any fixed $K>0$, the class of $K$-CSA groups is defined by a set of  (coefficient-free) universal sentences   ($K$-CSA is a universal property).
In particular, the class of $K$-CSA groups is stable under taking subgroups, and closed in the space of marked groups.
\end{prop}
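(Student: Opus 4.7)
\emph{Plan.} The plan is to exhibit an explicit (possibly infinite) set $\mathcal{T}_K$ of universal sentences in the language of groups such that a group $G$ satisfies $\mathcal{T}_K$ if and only if $G$ is $K$-CSA. Once such an axiomatization is in hand, the two ``in particular'' consequences are formal: universal sentences are inherited by subgroups (a universal variable can be instantiated in the subgroup just as in the ambient group), so any subgroup of a $K$-CSA group is $K$-CSA; and the set of marked groups satisfying a universal sentence is closed, since a failure of such a sentence is witnessed by finitely many elements, hence by finitely many words in the marking, and therefore persists in a Cayley-ball neighborhood. Write $N = K!$ throughout.

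\emph{Axiomatization of the three conditions.} Condition (1) of Definition~\ref{kcsa} splits into two pieces. ``Every torsion element has order at most $K$'' is the single universal sentence $\forall x\,(x^{N+1} = x \Rightarrow x^N = 1)$. ``No finite group of order $>K$ embeds in $G$'' is the countable conjunction, over all finite groups $F$ with $|F|>K$, of the universal sentence forbidding an embedding of $F$: enumerating $F=\{a_0=1,a_1,\dots,a_{n-1}\}$ with multiplication law $\mu$, this is
\[ \forall x_0, \dots, x_{n-1}\,\Big(x_0 \ne 1 \,\vee\, \bigvee_{i \ne j} x_i = x_j \,\vee\, \bigvee_{i,j} x_i x_j \ne x_{\mu(i,j)}\Big). \]
For conditions (2) and (3), Lemma~\ref{rem_KCSA}(1) provides the key translation: for $g,h$ with $g^N,h^N\ne 1$ one has $M(g)=M(h) \Leftrightarrow [g^N,h^N]=1$, so commutation of $N$-th powers is the equivalence relation defining maximal virtually abelian subgroups on infinite-order elements. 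We include the universal sentences expressing (i) transitivity
\[ \forall g, h, k\,\big(g^N h^N k^N \ne 1 \wedge [g^N, h^N] = 1 \wedge [h^N, k^N] = 1 \Rightarrow [g^N, k^N] = 1\big), \]
(ii) $K$-virtual abelianness of $M(g)$: for any $g$ of infinite order and any $K+1$ elements $h_0,\dots,h_K$ whose $N$-th powers commute with $g^N$, some product $h_i^{-1}h_j$ ($i\ne j$) lies in a uniformly-definable abelian subgroup $A(g)\le M(g)$ of index $\le K$ (for example, $A(g)$ can be specified as the set of elements of $M(g)$ centralizing a carefully chosen finite family of $N$-th powers), and (iii) almost malnormality (Lemma~\ref{rem_KCSA}(2)): $\forall g, x\,(g^N \ne 1 \wedge x^N \ne 1 \wedge [g^N, xg^Nx^{-1}] = 1 \Rightarrow [g^N, x^N] = 1)$, supplemented by a parallel statement capturing the torsion normalizing $M(g)$, which is bounded in order by (1).

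\emph{Main obstacle and conclusion.} The principal technical difficulty is to faithfully express the $K$-virtually abelian structure of $M(g)$ and the handling of torsion elements in or normalizing $M(g)$ by universal formulas alone, without existential quantifiers; the strategy is to pin down, uniformly in $g$, an abelian subgroup $A(g)$ of bounded index in $M(g)$ as a definable intersection of centralizers of $N$-th powers, so that the coset-counting statement ``some two of $K+1$ elements agree modulo $A(g)$'' becomes universal. Granting this axiomatization, the equivalence with Definition~\ref{kcsa} is routine: a $K$-CSA group manifestly satisfies every sentence of $\mathcal{T}_K$ by Lemma~\ref{rem_KCSA}; conversely, in any $G\models\mathcal{T}_K$ one may define, for each $g$ of infinite order, $M(g)$ as the union of the $\sim_N$-equivalence class of $g$ together with a controlled torsion part, and the axioms ensure that $M(g)$ is the unique, $K$-virtually abelian, self-normalizing maximal virtually abelian subgroup containing $g$. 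The two ``in particular'' consequences then follow at once from the universal nature of $\mathcal{T}_K$.
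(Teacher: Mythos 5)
There is a genuine gap, and it sits exactly at the heart of the statement. Your treatment of condition (1) is fine and matches the paper, but for conditions (2) and (3) everything hinges on expressing, by universal sentences alone, that the relevant virtually abelian subgroups are $K$-virtually abelian; this is precisely the step you leave open (``Granting this axiomatization\dots''), and the proposed fix does not work as stated. There is no reason that a canonical abelian subgroup $A(g)$ of index at most $K$ in $M(g)$ exists, let alone one definable uniformly in $g$ without parameters: the natural candidates (normal core, centralizer of the $N$-th powers of elements of $M(g)$) need not be abelian (torsion in $M(g)$ can centralize $g^N$), the index bound they give is of the order of $K!$ rather than $K$, and a ``carefully chosen finite family'' of $N$-th powers is an existential choice, not a formula. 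Worse, even granting a universally definable $A(g)$, your pigeonhole axiom puts a universally quantified membership condition inside a disjunction in the conclusion; the quantifier does not commute with the disjunction, so the prenex universal sentence you can actually write is strictly weaker than the statement you need. Finally, the converse direction (a group satisfying your sentences is $K$-CSA) is only asserted: you would have to show that the $\sim_N$-class of $g$ together with ``a controlled torsion part'' is a subgroup, is $K$-virtually abelian, and is the unique maximal virtually abelian subgroup containing $g$, and you would need a local-to-global step to pass from finitely generated to arbitrary virtually abelian subgroups. Note also that Lemma \ref{rem_KCSA}, on which your translation rests, is only available in groups already known to be $K$-CSA, so it can only serve the easy direction.

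The paper resolves exactly this difficulty with a different device: for fixed $m,n$, the property ``$\grp{g_1,\dots,g_n}$ is $m$-virtually abelian'' is expressed by a finite disjunction $\mathrm{VA}_{m,n}$ of finite systems of equations in $g_1,\dots,g_n$, with no auxiliary quantifiers at all. This is obtained by enumerating the finitely many subgroups of index $\le m$ of the free group $\F_n$, choosing finite generating sets $w_i(x_1,\dots,x_n)$ for each, and imposing the relations $[w_i(g_1,\dots,g_n),w_j(g_1,\dots,g_n)]=1$; a subgroup of index $\le m$ in $\grp{g_1,\dots,g_n}$ pulls back to one in $\F_n$ and conversely. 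Because $\mathrm{VA}_{m,n}$ is quantifier-free it can be used freely in hypotheses and conclusions, and conditions (2) and (3) become universal once restated in terms of finitely generated subgroups: every finitely generated virtually abelian subgroup is $K$-virtually abelian; if $\grp{g,h}$ and $\grp{g,g_1,\dots,g_n}$ are $K$-virtually abelian with $g$ of order $>K$, so is $\grp{g,h,g_1,\dots,g_n}$; and if $\grp{g,hgh\m}$ is $K$-virtually abelian, so is $\grp{g,h}$. The passage to infinitely generated virtually abelian subgroups is supplied by Lemma \ref{lem_Zorn2}, which your argument never addresses. You should either adopt this quantifier-free encoding or give an honest proof of the uniform definability you are assuming; as written, the central claim is unproven.
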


\begin{proof}
  For any finite group $F=\{a_1,\dots,a_n\}$, the fact that $G$ does not contain a subgroup isomorphic to $F$
is equivalent to a universal sentence saying that for any $n$-tuple $(x_1,\dots,x_n)$ satisfying the multiplication
table of $F$, not all $x_i$'s are distinct.
Thus, the first property of $K$-CSA groups is defined by (infinitely many) universal sentences.

\newcommand{\VA}{\mathrm{VA}}
Now consider the second property. 
We claim that, given $m$ and $n$, the fact that $\grp{g_1,\dots,g_n}$ is $m$-virtually abelian may be expressed by the disjunction $\VA_{m,n}$ of finitely many finite
systems of equations in the elements $g_1,\dots,g_n$. To see this, let $\pi:\F_n\to G$ be the homomorphism sending the $i$-th generator $x_i$ of   the free group $\F_n=\grp{x_1,\dots x_n}$ to $g_i$. If $A\inc \grp{g_1,\dots,g_n}$ has index $\le m$, so does $\pi\m(A)$ in $\F_n$. Conversely, if $B\inc \F_n$ has index $\le m$, so does $\pi(B)$ in $\grp{g_1,\dots,g_n}$. To define $\VA_{m,n}$, we then enumerate the subgroups of index $\le m$ of $\F_n$. For each subgroup, we choose a finite set of generators $w_i(x_1,\dots,x_n)$ and we write the system of equations
$[w_i(g_1,\dots,g_n),w_j(g_1,\dots,g_n)]=1$. This proves the claim.

By Lemma \ref{lem_Zorn2} (and Zorn's lemma), any $g$  is contained in a maximal $K$-virtually abelian subgroup. 
The second property of Definition \ref{kcsa} can be restated as follows: any finitely generated virtually abelian group is $K$-virtually abelian,
and if $\langle g,h\rangle$ and $\grp{g,g_1,\dots,g_n}$ are   $K$-virtually abelian,
with $g$ of order $>K$, then $\grp{g,h,g_1,\dots,g_n}$ is $K$-virtually abelian.
 This is defined by a set of universal sentences constructed using    the $\VA_{m,n}$'s.

If the first two properties of the definition hold, the third one is expressed by saying that, if $g$ has order $>K$ and $\grp{g,hgh\m}$ is $K$-virtually abelian, so is $\grp{g,h }$. This is a set of universal sentences as well.
\end{proof}

  Recall that  a $\Gamma$-limit group is defined as a limit of subgroups of $\Gamma$ in the space of marked groups. 
Proposition \ref{prop_univ}   implies  that, if $\Gamma$ is   $K$-CSA, then any $\Gamma$-limit group is $K$-CSA. In particular:

\begin{cor} \label{lg}
Let $\Gamma$ be a hyperbolic group. There exists $K$ such that any $\Gamma$-limit group\index{GA@$\Gamma$-limit group}  is $K$-CSA. 

Moreover, any subgroup of a $\Gamma$-limit group $G$ contains a non-abelian free subgroup or is $K$-virtually abelian. 
\end{cor}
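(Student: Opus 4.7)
The plan is to extract both statements from two ingredients already available: the universal character of the $K$-CSA axioms (Proposition~\ref{prop_univ}), and the Tits alternative in the hyperbolic group $\Gamma$. I begin by fixing $K$ large enough that $\Gamma$ itself is $K$-CSA (as recalled after Definition~\ref{kcsa}) and that every infinite virtually cyclic subgroup of $\Gamma$ contains an abelian subgroup of index at most $K$. Both conditions can be arranged simultaneously: the first holds for some $K_1$, and since finite subgroups of $\Gamma$ have order bounded by $K_1$ while an infinite virtually cyclic group always contains an abelian subgroup of index at most twice the order of its finite radical, the second holds with $K_2 = 2K_1$; take $K := \max(K_1, K_2)$.

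For the first assertion I would invoke Proposition~\ref{prop_univ} directly: the $K$-CSA property is defined by universal sentences, so it passes to subgroups and is closed in the space of marked groups. A $\Gamma$-limit group is, by definition, a limit in that space of (marked) subgroups of the $K$-CSA group $\Gamma$, and is therefore $K$-CSA.

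For the moreover, let $G$ be a $\Gamma$-limit group and let $H \inc G$ be a subgroup containing no non-abelian free subgroup. By Lemma~\ref{lem_Zorn2} it suffices to prove that every finitely generated subgroup of $H$ is $K$-virtually abelian, so I assume $H = \grp{h_1,\dots,h_\ell}$. Being a subgroup of the fully residually $\Gamma$ group $G$, the group $H$ inherits a sequence of homomorphisms $\phi_n : H \to \Gamma$ that are injective on arbitrarily large finite subsets. Each image $\phi_n(H)$ is finitely generated and contains no $\F_2$: otherwise, lifting two generators of $\F_2 \inc \phi_n(H)$ to $H$ yields a two-generated subgroup of $H$ surjecting onto the free group $\F_2$; since $\F_2$ is free this epimorphism splits, producing $\F_2 \inc H$, contrary to hypothesis. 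By the Tits alternative in the hyperbolic group $\Gamma$, each $\phi_n(H)$ is therefore virtually cyclic (possibly finite), hence $K$-virtually abelian by our choice of $K$.

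It remains to transfer this property from the $\phi_n(H)$ to $H$ itself, and this is the main technical point. Using the construction carried out in the proof of Proposition~\ref{prop_univ}, the property of being $K$-virtually abelian for an $\ell$-generated group is encoded by a disjunction $\mathrm{VA}_{K,\ell}$ of finitely many finite systems $\Sigma^{(1)},\dots,\Sigma^{(N)}$ of word equations in the generators. Since every tuple $(\phi_n(h_1),\dots,\phi_n(h_\ell))$ satisfies $\mathrm{VA}_{K,\ell}$, pigeonhole yields an index $i$ such that the same system $\Sigma^{(i)}$ is satisfied along an infinite subsequence of $n$. For each equation $w(x_1,\dots,x_\ell) = 1$ appearing in $\Sigma^{(i)}$ we then have $\phi_n\bigl(w(h_1,\dots,h_\ell)\bigr) = 1$ for infinitely many $n$, and the discriminating property of the $\phi_n$ forces $w(h_1,\dots,h_\ell) = 1$ in $H$. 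Hence $H$ satisfies $\Sigma^{(i)}$ and is $K$-virtually abelian. The only delicate point is this last pigeonhole-plus-discrimination step, whose success hinges on $K$-virtual abelianness of finitely generated groups being witnessed by \emph{finitely many} fixed systems of equations; this is precisely what the disjunction $\mathrm{VA}_{K,\ell}$ supplied by the proof of Proposition~\ref{prop_univ} provides.
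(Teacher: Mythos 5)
Your first assertion and most of the supporting bookkeeping (the choice of $K$, the reduction to finitely generated $H$ via Lemma~\ref{lem_Zorn2}, the lifting of $\F_2$ from an image, the Tits alternative in $\Gamma$, the bound $2K_1$ on the index of an abelian subgroup of a virtually cyclic subgroup, and the pigeonhole on the finitely many systems making up $\mathrm{VA}_{K,\ell}$) are sound. The gap is the sentence ``Being a subgroup of the fully residually $\Gamma$ group $G$, the group $H$ inherits a sequence of homomorphisms $\phi_n:H\to\Gamma$ that are injective on arbitrarily large finite subsets.'' With the definition used here --- a $\Gamma$-limit group is a limit of subgroups of $\Gamma$ in the space of marked groups --- it is not true by definition, nor by any soft argument, that $G$ is discriminated by $\Gamma$. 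Convergence of marked subgroups $(G_n,S_n)\to(G,S)$ only says that relations of bounded length agree for $n$ large; it does not produce homomorphisms $G\to\Gamma$, since a map defined on the marked generators must respect relations of $G$ of \emph{arbitrary} length. The implication ``$\Gamma$-limit group $\Rightarrow$ fully residually $\Gamma$'' is a theorem requiring the equational Noetherianity of $\Gamma$ (Sela in the torsion-free case, Reinfeldt--Weidmann for hyperbolic groups with torsion), a deep result that the paper neither states nor uses. Since your entire treatment of the ``moreover'' hinges on the discriminating family $\phi_n$, this is a genuine gap, not a presentational one. (A secondary, fixable point: for the final discrimination step you need injectivity of $\phi_n$ on a given finite set to hold for all large $n$, or at least along the subsequence selected by the pigeonhole.)

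The paper's proof avoids this issue entirely by staying inside the universal theory of $\Gamma$: by a result of Koubi there is $M$ such that among $M$ distinct elements of $\Gamma$ some $x_i$ or $x_ix_j$ has order $>K$, and by Delzant there is $N$ such that $x^N$ and $y^N$ either commute or generate $\F_2$; both facts are expressed by (schemes of) coefficient-free universal sentences, hence hold in $G$ because they hold in subgroups of $\Gamma$ and pass to limits of marked groups. Given $H\le G$ without $\F_2$, one first gets an element $g\in H$ of infinite order, then for every $h\in H$ the elements $g^N$ and $hg^Nh\m$ commute, and Lemma~\ref{rem_KCSA} (using the $K$-CSA structure of $G$ already established in the first part) gives $h\in M(g)$; thus $H\subset M(g)$ is $K$-virtually abelian. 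If you want to keep your structure, you must either invoke equational Noetherianity of hyperbolic groups explicitly as an external theorem, or replace the discriminating homomorphisms by universal-sentence transfers of this kind.
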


\begin{rem}
We will not use the  ``moreover''. 
  There are additional restrictions on the virtually abelian subgroups. For instance, there exists $N\geq 1$ such that,
if $hgh\m=g\m$ for some
$g$ of infinite order, then $hg'^Nh\m=g'^{-N}$ for all $g'$ of infinite order in $M(g)$.
\end{rem}

\begin{proof}
  The first assertion is immediate from Proposition \ref{prop_univ}.

 Now let $H$ be an infinite subgroup of $G$ not containing $\F_2$.
By \cite[Proposition 3.2]{Koubi_croissance}, there exists a number $M$ such that, 
if $x_1,\dots, x_M$ are distinct elements of $\Gamma$, some element of the form $x_i$ or $x_ix_j$ has infinite order (\ie order $>K$). 
This universal statement also holds in $G$, so $H$ contains an element $g$ of infinite order. 
Recall that there exists a number $N$ such that, if  $x,y\in\Gamma$, 
then $x^N$ and $y^N$ commute or generate $\F_2$ (see \cite{Delzant_sous-groupes}). 
The same statement holds in $G$ since,
for each non-trivial word $w$, the universal statement $ 
 [x^N,y^N]\neq 1\Rightarrow w(x^N,y^N)\neq 1$
holds in $\Gamma$ hence in $G$.
Thus,  for all 
$h\in H$, the elements $g^N$ and $hg^Nh\m $ commute.
By Lemma \ref{rem_KCSA},
$H$ normalizes $M(g)$, so $H\subset M(g)$ and 
$H$ is $K$-virtually abelian.
\end{proof}

Let $G$ be a $K$-CSA group.
We now show how to define a tree of cylinders for virtually abelian splittings of $G$  (hence also for virtually cyclic splittings). 

\begin{dfn} [Virtual commutation]
Let $\cala$ be the family of all virtually abelian  
subgroups of $G$, and $\cale$ the family of  
{infinite}   subgroups in $\cala$.
Given $H,H'\in \cale$, define
$H\sim H'$ if $M(H)=M(H')$. This is an equivalence relation, which we call \emph{virtual commutation}.\index{virtual commutation}
\end{dfn}

Equivalently, $H\sim H'$ if and only if $\grp{H,H'}$ is virtually abelian. The stabilizer $G_{[H]}$ of the equivalence class   of any $H\in \cale$   (for the action of $G$ by conjugation) is   the virtually abelian group $M(H)$.

\begin{lem} \label{coadm}
  If $G$ is one-ended relative to $\calh$, the equivalence relation $\sim$  on $\cala_\infty$  is   admissible   (see Definition \ref{eqrel}).
\end{lem}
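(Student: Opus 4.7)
The plan is to verify the three axioms of Definition \ref{eqrel} for the virtual commutation relation on $\cala_\infty$. Axioms (1) and (2) are essentially formal: invariance under conjugation follows from $M(gHg\m) = gM(H)g\m$ (which itself follows from uniqueness in Lemma \ref{rem_KCSA}), and for nesting, if $A\subset B$ with both in $\cala_\infty$, then $A$ is an infinite virtually abelian subgroup of $B$, and uniqueness of the maximal virtually abelian supergroup (Lemma \ref{rem_KCSA}(2)) gives $M(A)=M(B)$, so $A\sim B$. The real content is Axiom (3), so I concentrate on that.

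For Axiom (3), let $T$ be an $(\cala,\calh)$-tree with infinite edge stabilizers, take $A,B\in\cala_\infty$ with $A\sim B$, and let $a,b\in T$ be fixed by $A,B$ respectively. Write $M=M(A)=M(B)$. Pick elements $\alpha\in A$ and $\beta\in B$ of infinite order (which exist since $A,B$ are infinite virtually abelian). By Lemma \ref{rem_KCSA}(1), since $M(\alpha)=M(A)=M(B)=M(\beta)$, the elements $\alpha^{K!}$ and $\beta^{K!}$ commute. Both are elliptic in $T$ (they fix $a$ and $b$ respectively), so I would first invoke the standard fact that two commuting elliptic isometries of a tree have a common fixed point: if $F_\alpha$ and $F_\beta$ are their fixed subtrees, commutativity forces each element to preserve the other's fixed subtree; then $\beta^{K!}$, being elliptic in $T$, must be elliptic on the invariant subtree $F_\alpha$ (translation length is preserved by restriction), so it has a fixed point in $F_\alpha\cap F_\beta$. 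Call this common fixed point $p$.

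Then $\alpha^{K!}$ fixes $[a,p]$ and $\beta^{K!}$ fixes $[b,p]$. Any edge $e\subset[a,b]$ lies in $[a,p]\cup[b,p]$ (the convex hull of $\{a,b,p\}$ is a tripod or segment), so $e$ is fixed by $\alpha^{K!}$ or by $\beta^{K!}$. Thus $G_e$ contains an element of infinite order belonging to $M$; since $G_e\in\cala$ and is infinite (by the hypothesis on $T$), it lies in $\cala_\infty$, and the uniqueness clause of Lemma \ref{rem_KCSA}(2) forces $M(G_e)=M$, that is, $G_e\sim A\sim B$.

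The main (minor) obstacle is the common-fixed-point lemma for commuting elliptic isometries; I would dispose of it in one line by the argument above (invariance of the fixed subtree under commutation plus the observation that an elliptic element restricted to an invariant subtree stays elliptic). Finite presentation or one-endedness are not actually used in the verification itself; the one-endedness hypothesis in the statement is a standing convention ensuring that the trees considered indeed have infinite edge stabilizers, so that Axiom (3) has non-empty content.
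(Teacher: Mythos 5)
Your proof is correct and takes essentially the same route as the paper's: there, one passes to commuting finite-index subgroups $A_0\subset A$, $B_0\subset B$ inside $M(A)=M(B)$, uses that two commuting elliptic subgroups have a common fixed point $c$, and observes that any edge of $[a,b]$ lies in $[a,c]$ or $[c,b]$, so its (infinite, virtually abelian) stabilizer contains $A_0$ or $B_0$ and is therefore equivalent to $A$ and $B$. Your version with the single commuting elements $\alpha^{K!},\beta^{K!}$ is the same argument carried out at the level of elements, with the common-fixed-point fact re-proved rather than quoted.
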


\begin{proof}
 By one-endedness, all $(\cala,\calh)$-trees have edge stabilizers in $\cale$. The first two properties of admissibility are obvious. 
Consider $A,B\in\cale$ with $A\sim B$, and an $(\cala,\calh)$-tree $T$ in which $A$ fixes some $a $ and $B$ fixes some $b $.
Since the group generated by two commuting elliptic groups is elliptic,
there are   finite index subgroups $A_0\subset A$ and  $B_0\subset B$ such that
 $\grp{A_0, B_0}$ fixes a point $c\in T$.  
Given  any edge $e$ in the segment $[a,b]$,
it is contained in $[a,c]$ or $ [c,b]$, so, say,   $A_0\inc G_e$ and $G_e\sim A_0\sim A$ as required. 
\end{proof}

\begin{thm} \label{JSJ_KCSA}
Let $G$ be a $K$-CSA group, and  $\calh$   any family of subgroups. 
Assume that $G$ is one-ended relative to $\calh$.
\begin{enumerate}
\item 
There is a  JSJ tree $T_a$   relative to $\calh$ over virtually abelian (resp.\ virtually cyclic) subgroups. 
Its collapsed tree of cylinders $(T_a)_c^*$  (for virtual commutation) is a JSJ tree relative to $\calh$ and   all virtually abelian subgroups which are not virtually cyclic.

\item 
   $T_a$ and $(T_a)_c^*$ have the same non-virtually abelian vertex stabilizers;   flexible vertex stabilizers   which are not virtually abelian 
 are    QH 
with finite fiber.

\item
 $(T_a)_c^*$ is invariant under all automorphisms of $G$ preserving $\calh$.  It is compatible with every $(\cala,\calh)$-tree.

\end{enumerate}
\end{thm}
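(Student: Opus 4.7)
The plan is to apply Corollary \ref{synthese} with $\cala$ the class of all virtually abelian (resp.\ virtually cyclic) subgroups of $G$, $\cale=\cala_\infty$, $\sim$ the virtual commutation relation of Definition 10.1, and $C=K$. Admissibility of $\sim$ on $\cale$ (in either case) is Lemma \ref{coadm}: its proof only uses that commuting elliptic pairs produce a common fixed point after passing to finite index subgroups, which remains valid whether one restricts edge stabilizers to virtually abelian or to virtually cyclic groups.

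I then verify the four hypotheses of Corollary \ref{synthese} with $C=K$. For (1): by the $K$-CSA property every finite subgroup has order at most $K\leq 2K$, hence is virtually cyclic and lies in $\cala$; and every $K$-virtually cyclic subgroup is virtually cyclic. For (2): if $A,B\in\cale$ have $A\cap B$ infinite, then $A\cap B$ is itself infinite virtually abelian, and Lemma \ref{rem_KCSA}(2) applied to $A\cap B$, $A$, and $B$ forces $M(A)=M(A\cap B)=M(B)$, so $A\sim B$; hence the intersection of inequivalent pairs is finite of order $\leq K$. For (3): because $M(A)$ equals its own normalizer, $G_{[A]}=M(A)$, which is virtually abelian, hence contains no $\F_2$ and is small in every tree. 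For (4): in the virtually abelian case $M(A)\in\cala$, so (4a) holds; in the virtually cyclic case an index-two overgroup of a virtually cyclic group is still virtually cyclic, so (4b) holds.

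By Remark \ref{gdss} I enlarge $\SS$ to the class of all virtually abelian subgroups of $G$; these are small in all trees because they contain no non-abelian free subgroup. Since every virtually cyclic subgroup of a $K$-CSA group has its maximal finite normal subgroup of order $\leq K$ and is therefore $K$-virtually cyclic, the family $\SNVC$ in Corollary \ref{synthese} coincides exactly with the class of virtually abelian subgroups that are not virtually cyclic. Corollary \ref{synthese} now yields all of Theorem \ref{JSJ_KCSA} at once: existence of $T_a$; identification of $(T_a)_c^*$ as a JSJ tree relative to $\calh\cup\SNVC$; equality of the vertex stabilizers of $T_a$ and $(T_a)_c^*$ outside $\SS$ (i.e.\ those which are not virtually abelian); the fact that flexible vertex stabilizers not in $\SS$ are QH with fiber of cardinality $\leq K$, hence finite; canonicity of $(T_a)_c^*$, hence its invariance under every automorphism of $G$ preserving $\calh$; and compatibility of $(T_a)_c^*$ with every $(\cala,\calh)$-tree.

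The only mild subtlety is the virtually cyclic case, where $M(A)$ typically is not virtually cyclic, so (4a) fails and the tree of cylinders $T_c$ itself may carry non-cyclic edge stabilizers; one therefore must work with the collapsed tree of cylinders $T_c^*$ and appeal to (4b). Apart from this, the proof is a direct translation of the CSA argument of Theorem \ref{JSJ_CSA}, with the $K$-CSA property playing the role of torsion-free CSA via Lemma \ref{rem_KCSA}.
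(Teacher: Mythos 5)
Your proposal is correct and follows exactly the paper's own route: the paper proves Theorem \ref{JSJ_KCSA} in one line by invoking Corollary \ref{synthese} with $\cala$ the virtually abelian (resp.\ virtually cyclic) subgroups, $\sim$ virtual commutation, $\cals$ the virtually abelian subgroups, and $C=K$, and your write-up simply makes explicit the verification of the hypotheses (via Lemmas \ref{coadm} and \ref{rem_KCSA} and Remark \ref{gdss}) that the paper leaves implicit. The only cosmetic slip is that $\SNVC$ also contains the finite subgroups (which are not $C$-virtually cyclic by definition), but since these are universally elliptic this changes nothing in the conclusion.
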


 \begin{proof}

 We apply Corollary \ref{synthese}, with $\cala$  the family  of all virtually abelian (resp.\ virtually cyclic) subgroups,  $\sim$   virtual commutation, $\cals$ the family of virtually abelian subgroups, and  $C=K$.
\end{proof}

\subsection{Relatively hyperbolic groups} 
\label{sec_relh}

In this subsection we assume that  $G$ is hyperbolic\index{relatively hyperbolic group} relative to  a family of finitely generated subgroups $\calp=\{P_1,\dots, P_p\}$. Recall that a subgroup is \emph{parabolic}\index{parabolic} if it is conjugate to a subgroup of some $P_i$, \emph{elementary}\index{elementary subgroup} if it is 
  virtually cyclic  (possibly finite) or parabolic.   Any infinite elementary subgroup is contained in a unique maximal elementary subgroup. 

 The following lemma is folklore.
\begin{lem}\label{lem_coelem}

  Let $G$ be a relatively hyperbolic group. 
  \begin{enumerate}\item 
    There exists $C>0$ such that any elementary subgroup $A<G$ of
    cardinality $> C$ is contained in unique maximal elementary
    subgroup $E(A)$;  moreover, $E(A)$ is parabolic if $A$ is finite.
  \item If $A<G$ is  virtually cyclic
  but not parabolic, then it is finite of cardinality $\geq C$
    or $C$-virtually cyclic.

  \item 
    If $A,B<G$ are elementary subgroups such that
    $A\cap B$ has cardinality  $> C$, then  $E(A)=E(B)$.

  \end{enumerate}
\end{lem}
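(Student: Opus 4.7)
The plan is to reduce the three assertions to two standard facts about the relatively hyperbolic structure on $(G,\calp)$. The first is that every infinite elementary subgroup $A$ of $G$ is contained in a unique maximal elementary subgroup $\widetilde E(A)$; this $\widetilde E(A)$ is a conjugate of some $P_i$ when $A$ is parabolic and is virtually cyclic (``loxodromic'') otherwise. The second is that there exist uniform constants $N_1,N_2\geq 1$ such that every finite subgroup of $G$ of cardinality $>N_1$ is parabolic (contained in a conjugate of some $P_i$), and the intersection of any two distinct maximal elementary subgroups has cardinality $\leq N_2$. Both facts are standard and can be extracted for instance from Osin's memoir or Bowditch's paper; the second bundles together the usual almost malnormality of maximal parabolic and of maximal loxodromic elementary subgroups with finiteness of their pairwise intersections.

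Setting $C=\max(N_1,N_2)+1$, I would prove (1) in two steps. If $A$ is infinite elementary, the first fact gives $E(A)=\widetilde E(A)$ with the stated properties. If $A$ is finite with $|A|>C$, then $|A|>N_1$ forces $A$ to be parabolic, so $A$ lies in some maximal parabolic $gP_ig^{-1}$; a second distinct maximal parabolic containing $A$ would produce an intersection of cardinality $>N_2$, contradicting the second fact. Hence $E(A)$ is well-defined, and the ``moreover'' clause ($E(A)$ parabolic when $A$ is finite) follows.

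For (2), let $A$ be virtually cyclic and non-parabolic. If $A$ is finite, then $|A|>N_1$ would make $A$ parabolic, a contradiction, so $|A|\leq N_1\leq C$. If $A$ is infinite, let $K$ be its maximal finite normal subgroup; the quotient $A/K$ is $\bbZ$ or $D_\infty$. If $|K|>C$, then by (1) applied to $K$ there is a unique maximal parabolic $P$ containing $K$. Since $K$ is characteristic in $A$, the group $A$ normalizes $K$, hence normalizes $P$; almost malnormality of $P$ forces $A\subseteq P$, contradicting the hypothesis. Thus $|K|\leq C$, i.e.\ $A$ is $C$-virtually cyclic in the sense of Definition \ref{dvc}. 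For (3), if $|A\cap B|>C$ then $A\cap B$ is elementary with cardinality $>C$, so (1) produces a unique maximal elementary subgroup $E(A\cap B)$ containing it. Since $E(A)$ is maximal elementary and contains $A\cap B$, the uniqueness forces $E(A)=E(A\cap B)$; symmetrically $E(B)=E(A\cap B)$, and thus $E(A)=E(B)$.

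The main obstacle is not any step of the proof but rather extracting the two background facts with explicit uniform constants from the literature on relatively hyperbolic groups; the content really does lie in these background facts. Once they are granted, the remaining argument is essentially combinatorial, and the only mildly delicate point is noticing in (2) that the characteristic property of the maximal finite normal subgroup is what allows one to promote a bound on $|K|$ to a statement about $A$ itself via almost malnormality of a parabolic subgroup.
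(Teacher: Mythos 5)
Your overall skeleton matches the paper's: assertion (1) is outsourced to standard relatively hyperbolic technology (the paper's proof simply cites Lemma 3.1 of \cite{GL6} for it), assertion (3) is read off immediately from the uniqueness in (1), and assertion (2) is reduced to showing that the maximal finite normal subgroup of $A$ has cardinality at most $C$. Also, you prove ``finite of cardinality $\le C$'' in (2), which is what the paper's own argument actually yields (the ``$\geq C$'' in the statement is evidently a misprint), so no issue there.

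Two specific points. In (2), your step ``$A$ normalizes $K$, hence normalizes $P=E(K)$, and almost malnormality of $P$ forces $A\subseteq P$'' needs almost malnormality in its usual coset form, namely $|P\cap gPg^{-1}|\le N_2$ for every $g\notin P$ (equivalently, self-normalization of $P$); your background fact, as phrased for \emph{distinct} maximal elementary subgroups, says nothing about an element $a\notin P$ with $aPa^{-1}=P$, which is exactly the configuration you must exclude. This is easily repaired, since the literature does state malnormality in the coset form, but note how the paper sidesteps it: since $A$ is infinite elementary of cardinality $>C$, the subgroup $E(A)$ is itself a maximal elementary subgroup containing $F=K$, so uniqueness in (1) applied to $F$ gives $E(A)=E(F)$, which is parabolic; no normalizer computation or malnormality beyond (1) is needed. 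Similarly, in the finite case of (1) you only rule out a second maximal \emph{parabolic} containing $A$; for uniqueness among all maximal elementary subgroups you must also exclude a maximal virtually cyclic non-parabolic one containing $A$, which your intersection bound (stated for all maximal elementary subgroups) does cover, but should be invoked explicitly.
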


\begin{proof}
  The first assertion is contained in \cite[Lemma 3.1]{GL6}. 
If $A<G$ is virtually cyclic but not 
 $C$-virtually cyclic,
its maximal finite normal subgroup $F$ has cardinality $>C$. Thus
  $F$ is parabolic by (1), and so is $A$ since $E(A)=E(F)$.
This proves the second assertion.
The third assertion immediately follows from the first.
\end{proof}

\begin{dfn}[Co-elementary]\index{co-elementary subgroups}
 We say   that  two infinite elementary subgroups $A,B$ are \emph{co-elementary}
if   $\langle A,B\rangle$ is elementary, or equivalently if $E(A)=E(B)$. This is an equivalence relation $\sim$ on the set of infinite elementary subgroups.
  \end{dfn}
  
  We let $\cala$ be either the class of elementary subgroups, or the class of virtually cyclic groups. In both cases, co-elementarity is an equivalence relation on $\cale$.

  \begin{lem} \label{admi} 
Let $\cala$ be the family of elementary subgroups (resp.\ of virtually cyclic subgroups), and let $\calh$ be any family of subgroups. If every $P_i$ is small in $(\cala,\calh)$-trees, the co-elementarity equivalence relation on $\cale$ is  admissible (relative to $\calh$).  
  \end{lem}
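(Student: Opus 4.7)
The approach is to verify each of the three axioms of Definition \ref{eqrel}. Axioms (1) and (2) follow immediately from Lemma \ref{lem_coelem}: conjugation sends the unique maximal elementary subgroup $E(A)$ to $E(gAg^{-1})$, giving (1); for (2), if $A \subset B$ with $A,B$ infinite elementary, then $E(B)$ is an elementary subgroup containing $A$, so $E(A)=E(B)$ by the uniqueness clause of Lemma \ref{lem_coelem}(1).

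For axiom (3), fix $A \sim B$ in $\cale$, a tree $T$ with infinite edge stabilizers, and $a,b \in T$ fixed by $A,B$ respectively; I must show $G_e \sim A$ for every edge $e \subset [a,b]$. The key observation is that $\langle A,B\rangle \subset E(A)$ is elementary, and every elementary subgroup is small in $(\cala,\calh)$-trees: virtually cyclic groups are always small, and infinite parabolic subgroups are small by the standing hypothesis on the $P_i$. Hence by Corollary \ref{pasirr}, $\langle A,B\rangle$ acting on $T$ either (i) fixes a point $c$, or (ii) fixes a unique end $\omega$, or (iii) preserves a unique line $L$.

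In cases (i) and (ii), I would use that $A$ fixes the segment $[a,c]$ (resp.\ the ray $[a,\omega)$) pointwise, and similarly $B$ fixes $[b,c]$ (resp.\ $[b,\omega)$). For the end case, this uses the fact that an elliptic element fixing $a$ and preserving an end $\omega$ must act trivially on the entire ray $[a,\omega)$, because it preserves the ray setwise and preserves distances from $a$. Choosing $c$ as the origin of the common subray $[c,\omega)\subset [a,\omega)\cap[b,\omega)$ in case (ii), one has $[a,b]\subset [a,c]\cup [c,b]$ in both cases, so each edge $e \subset [a,b]$ satisfies $A \subset G_e$ or $B \subset G_e$; axiom (2) already established, together with $A\sim B$, then yields $G_e \sim A$.

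The main obstacle is case (iii). Here I project $a,b$ onto $L$ to get $a',b' \in L$, so that $[a,b]=[a,a']\cup[a',b']\cup[b',b]$; edges in $[a,a']$ and $[b',b]$ are fixed pointwise by $A$ and $B$ respectively and are handled as before. For an edge $e \subset [a',b']\subset L$, the argument is more subtle: since $A$ is elliptic with fixed point $a' \in L$, the homomorphism $A \to \mathrm{Isom}(L)$ has image in the stabilizer of $a'$, which has order $2$; hence its kernel $N_A$ has index at most $2$ in $A$, is therefore infinite, fixes $L$ pointwise, and so $N_A \subset G_e$. Applying Lemma \ref{lem_coelem}(3) to $A$ and $G_e$, whose intersection contains the infinite group $N_A$, we obtain $E(G_e)=E(A)$, that is $G_e \sim A$. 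This completes the verification of axiom (3).
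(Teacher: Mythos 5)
Your proof is correct and follows essentially the same route as the paper: the only real content is axiom (3), handled by noting that $\langle A,B\rangle$ acts non-irreducibly on $T$ and then splitting into the fixed-point, fixed-end, and invariant-line cases, with the projections $a',b'$ and an index-$\le 2$ subgroup of $A$ fixing the line pointwise in the last case. The minor variations — treating the virtually cyclic and parabolic cases uniformly via smallness of $\langle A,B\rangle$ (where the paper handles the virtually cyclic case separately through $A\cap B\subset G_e$), and closing the line case with Lemma \ref{lem_coelem}(3) instead of axiom (2) plus transitivity — are cosmetic and equally valid.
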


\begin{proof} We fix an $(\cala,\calh)$-tree $T$ with infinite edge stabilizers. We assume that $A\sim B$, and that $A,B$ fix $a,b$ respectively in $T$. We must show $G_e\sim A$ for every edge $e\inc[a,b]$. This is clear if $\langle A,B\rangle$ is   not parabolic  (hence is virtually cyclic), since 
then $A\cap B$ is infinite (it has finite index in $A$ and $B$) and contained in $G_e$, so $A\sim (A\cap B)\sim G_e$.
Assume therefore that $\langle A,B\rangle$ is contained in some $P_i$. By assumption $P_i$ is small in $T$. We distinguish several cases.

If $P_i$ fixes a point $c$, the edge $e$ is contained in   $[a,c]$ or $[b,c]$, and $G_e$ contains $A$ or $B$ so is equivalent to $A$ and $B$.  The argument is the same if $P_i$ fixes an end of $T$ (with $c$ at infinity). The last case is when $P_i$ acts dihedrally on a line $L$. Let $a',b'$ be the projections of $a,b$ on $L$. They are fixed by $A$ and $B$ respectively. If $e$ is contained in $[a,a']$ or $[b,b']$, then $G_e$ contains $A$ or $B$, so we may assume $e\inc [a',b']$.
  Now a subgroup $A'\subset A$ of index at most $2$ fixes  $L$ pointwise, so $A'\subset G_e$ and  $G_e\sim A'\sim A$.
\end{proof}

 The lemma allows us to define trees of cylinders. 
 The stabilizer $G_{[A]}$ of the equivalence class   of any $A\in \cale$   (for the action of $G$ by conjugation) is $E(A)$, it is  small in $(\cala,\calh)$-trees if every $P_i$ is.  Note that $P_i$ is small in $(\cala,\calh)$-trees if it does not contain $\F_2$, or is contained in a group of $\calh$.

 When $\cala$ is the class of elementary subgroups, no collapsing is necessary: $(T_c)^*=T_c$. On the other hand, 
if  $\cala$ is the class of
  virtually  cyclic subgroups,  and some of the $P_i$'s are not virtually cyclic, one may have $G_{[A]}\notin \cala$ and $(T_c)^*$ may be  a proper collapse of $T_c$.

\begin{thm} \label{thm_JSJr}
Let $G$ be hyperbolic relative to  a family of finitely generated subgroups $\calp=\{P_1,\dots, P_p\}$,  with no $P_i$ virtually cyclic. 
Let $\cala$ be the class of all elementary subgroups of $G$ (resp.\ of all virtually cyclic subgroups).
Let $\calh$ be any family of subgroups.  

If $G$ is one-ended relative to $\calh$, and every $P_i$  
is small in $(\cala,\calh)$-trees,
then:
\begin{enumerate}
\item 
there is a  JSJ tree $T_a$   relative to $\calh$ over elementary (resp.\ virtually cyclic) subgroups; 
its collapsed tree of cylinders $(T_a)_c^*$  (for co-elementarity) is a JSJ tree relative to   $\calh\cup \calp$.

\item     $T_a$ and $(T_a)_c^*$ have the same non-elementary vertex stabilizers;   flexible\index{flexible vertex, group, stabilizer} vertex stabilizers   which are not elementary
 are    QH with finite fiber;

\item  
$(T_a)_c^*$ is invariant under all automorphisms of $G$ preserving $\calp$ and $\calh$, 
and 
is compatible with every $(\cala,\calh)$-tree.

\end{enumerate}
\end{thm}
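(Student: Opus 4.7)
The strategy is to apply Corollary \ref{synthese} with $\sim$ the co-elementarity relation on $\cale$, taking $\cala$ to be either the class of elementary or of virtually cyclic subgroups, and $\cals$ the family of subgroups of the maximal elementary subgroups $G_{[A]}=E(A)$ (which by Lemma \ref{lem_coelem} is well-defined once we choose $C$ as in that lemma). First I would check that the four numbered hypotheses of Corollary \ref{synthese} are satisfied. Condition (1) is immediate: when $\cala=\{$virtually cyclic$\}$ it holds trivially, and when $\cala=\{$elementary$\}$ every $C$-virtually cyclic group is in particular virtually cyclic, hence elementary. Condition (2) is exactly Lemma \ref{lem_coelem}(3). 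Condition (3) is the content of the small-parabolics assumption combined with the fact that $E(A)$ is either virtually cyclic (hence small) or parabolic (contained in a conjugate of some $P_i$, which is small in $(\cala,\calh)$-trees by hypothesis). Admissibility of $\sim$ is Lemma \ref{admi}.

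Condition (4) requires a case split. In the elementary case, $G_{[A]}=E(A)$ is itself elementary, so belongs to $\cala$, and (4a) holds. In the virtually cyclic case (4a) may fail when $E(A)$ is parabolic but not virtually cyclic; however, (4b) holds because an index-$2$ extension of a virtually cyclic group is virtually cyclic. Thus in both cases Corollary \ref{synthese} applies and produces a JSJ tree $T_a$ together with its collapsed tree of cylinders $(T_a)_c^*$, giving the invariance under automorphisms preserving $\calp$ and $\calh$ and the compatibility with every $(\cala,\calh)$-tree stated in (3).

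To obtain the precise relative structure of $(T_a)_c^*$ claimed in (1), I would next identify the family $\cals_{\mathrm{nvc}}$ appearing in Corollary \ref{synthese} with (the subgroups of) $\calp$ up to the conjugacy redundancies. Each $P_i$ is infinite and not virtually cyclic, hence of the form $E(A)$ for any infinite $A\subset P_i$; in particular $P_i\in \cals_{\mathrm{nvc}}$. Conversely, any non-$C$-virtually-cyclic group in $\cals$ is contained in a maximal elementary group $E(A)$ which, being non-virtually-cyclic, must be parabolic, and hence conjugate into some $P_i$. Therefore a tree is relative to $\calh\cup \cals_{\mathrm{nvc}}$ if and only if it is relative to $\calh\cup\calp$: one direction is clear, and the other follows because ellipticity of the $P_i$'s forces ellipticity of all their subgroups. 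This identification turns the statement of Corollary \ref{synthese}(1) into Assertion (1) of the theorem.

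For Assertion (2), I would invoke Corollary \ref{synthesep}: non-elementary vertex stabilizers are not in $\cals$, so by Corollary \ref{synthese} $T_a$ and $(T_a)_c^*$ share these vertex stabilizers, and the flexible non-elementary ones are QH with fiber of cardinality at most $C$ (hence finite). The main obstacle I anticipate is the bookkeeping in the virtually cyclic case: verifying (4b) rather than (4a) requires keeping track of the distinction between edge groups of $T_c$ (which may be non-virtually-cyclic, parabolic) and edge groups of the collapse $T_c^*$, and confirming that the collapse lies in the same deformation space as $T_c$ so that no information is lost. Once that is in hand, the three assertions follow uniformly in the two cases.
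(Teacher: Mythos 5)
Your route is essentially the paper's: the actual proof also consists of applying Corollary \ref{synthese} with $C$ as in Lemma \ref{lem_coelem} and the co-elementarity relation, and your verification of hypotheses (1)--(4) (including the use of (4b) in the virtually cyclic case) is in order. The difference, and the one genuine gap, lies in your choice of $\cals$ and the resulting identification of $\SNVC$ with $\calp$. You take $\cals$ to be the default family of Corollary \ref{synthese}, namely the subgroups of the groups $G_{[A]}=E(A)$ with $A\in\cala_\infty$. In the virtually cyclic case, $\cala_\infty$ consists of the \emph{infinite virtually cyclic} subgroups, so if some $P_i$ is an infinite torsion group (which the hypotheses allow: such a $P_i$ is finitely generated, infinite, and not virtually cyclic), then $P_i$ contains no group of $\cala_\infty$ at all; moreover $P_i$ is not contained in any $E(A)$ with $A\in\cala_\infty$, since such an $E(A)\supset P_i$ would be a conjugate of $P_i$ and would then contain the infinite virtually cyclic group $A$. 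Hence your claim that ``each $P_i$ is of the form $E(A)$ for any infinite $A\subset P_i$, in particular $P_i\in\SNVC$'' fails in this case, and with it the equivalence you assert between ``relative to $\calh\cup\SNVC$'' and ``relative to $\calh\cup\calp$''. The paper sidesteps exactly this point by taking $\SS$ to be the family of \emph{all} elementary subgroups, which Remark \ref{gdss} permits because elementary subgroups are small in $(\cala,\calh)$-trees under the stated hypotheses; then every $P_i$ lies in $\SNVC$ tautologically. Your argument can also be repaired more cheaply by noting that a finitely generated torsion group is elliptic in every tree by Serre's lemma, so relativity to torsion $P_i$'s is automatic; but as written the step is false, and it is precisely the corner case the paper flags.

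Two smaller points. First, Corollary \ref{synthese}(3) gives invariance of $(T_a)_c^*$ under automorphisms preserving $\cala$ and $\calh$; to obtain invariance under automorphisms preserving $\calp$ and $\calh$, as stated in the theorem, one still needs the (one-line, but necessary) observation that an automorphism preserving $\calp$ preserves the class of elementary subgroups -- the paper makes this remark explicitly before invoking Corollary \ref{invaut}. Second, in your converse inclusion, a non-$C$-virtually-cyclic group of $\cals$ need not sit inside a \emph{parabolic} $E(A)$: it may be a finite subgroup of a virtually cyclic $E(A)$. This does not hurt the argument (finite groups are elliptic in every tree, so they impose no relative condition), but the phrasing ``must be parabolic'' should be weakened to ``parabolic or finite''.
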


When $G$ is hyperbolic and $\calh=\es$, the tree $(T_a)_c^*=(T_a)_c $ is the virtually cyclic JSJ tree constructed by Bowditch \cite{Bo_cut}\index{Bowditch} using the topology of $\partial G$. 

 Removing virtually cyclic groups from $\calp$ does not destroy relative hyperbolicity; the assumption that no $P_i$ is virtually cyclic makes statements simpler and   causes no loss of generality.

\begin{proof}
We apply Corollary \ref{synthese},  with $C$ as in  Lemma \ref{lem_coelem} and $\SS$ the family 
of all elementary subgroups (we use Remark \ref{gdss}  if we work over virtually cyclic groups and some $P_i$ is a torsion group).
All groups in $\SNVC$ are parabolic, and   every $P_i$ is in $\SNVC$.
In particular, a tree is relative to $\SNVC$ if and only if it is relative to $\calp$.
Automorphisms preserving $\calp$ preserve the set of elementary subgroups, so Corollary \ref
{invaut} applies.
\end{proof}

 The assumption that parabolic groups $P_i$ are small in $(\cala,\calp)$-trees is automatic 
   as soon as $\calh$ contains $\calp$, since we consider splittings relative to $\calh$. We therefore get:

\begin{cor} \label{thm_JSJrcourt2}
Let $G$ be hyperbolic relative to a finite family of finitely generated subgroups $\calp=\{P_1,\dots, P_p\}$. Let $\cala$ be the family of elementary subgroups of $G$. Let $\calh$ be any family of subgroups containing $\calp$.

   If $G$ is one-ended relative to $\calh$, there is a  JSJ tree   over $\cala$ relative to $\calh$ which is equal to its tree of cylinders, invariant under automorphisms of $G$ preserving $\calh$, and compatible with every $(\cala,\calh)$-tree. Its non-elementary flexible vertex stabilizers   are
   QH with finite fiber. \qed
\end{cor}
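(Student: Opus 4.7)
The plan is to derive this corollary directly from Theorem~\ref{thm_JSJr}, by verifying that its hypotheses hold automatically in our setting and then extracting the additional information that the JSJ tree coincides with its own tree of cylinders.

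First I would reduce to the case where no $P_i$ is virtually cyclic: removing the virtually cyclic members from the peripheral family preserves the relative hyperbolicity of $G$, leaves the class $\cala$ of elementary subgroups unchanged, and keeps the inclusion $\calp\subseteq\calh$. Next, since every $P_i$ lies in $\calh$, each $P_i$ is elliptic in every $(\cala,\calh)$-tree and hence small in $(\cala,\calh)$-trees. The hypotheses of Theorem~\ref{thm_JSJr} are therefore satisfied, producing a JSJ tree $T_a$ over $\cala$ relative to $\calh$ whose collapsed tree of cylinders $(T_a)_c^*$ (for co-elementarity) is a JSJ tree relative to $\calh\cup\calp$, with non-elementary flexible vertex stabilizers QH with finite fiber, with compatibility with every $(\cala,\calh)$-tree, and with invariance under the relevant automorphisms.

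Because $\calp\subseteq\calh$ we have $\calh\cup\calp=\calh$, so $(T_a)_c^*$ already lies in the JSJ deformation space relative to $\calh$ itself. Moreover, since $\cala$ consists of all elementary subgroups and each stabilizer $G_{[A]}=E(A)$ is elementary and therefore belongs to $\cala$, Lemma~\ref{lem_basicTc}(4) shows that no collapse is needed and $(T_a)_c^*=(T_a)_c$. Setting $T:=(T_a)_c$, Lemma~\ref{lem_basicTc}(5) gives $T_c=T$, so this JSJ tree equals its own tree of cylinders. The description of its flexible vertex stabilizers, the compatibility assertion, and the invariance under automorphisms of $G$ preserving $\calh$ (which, given $\calp\subseteq\calh$, already forces the canonical class of elementary subgroups to be permuted) all descend from the corresponding parts of Theorem~\ref{thm_JSJr}.

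No serious obstacle appears, since the analytic content is packaged inside Theorem~\ref{thm_JSJr} and Proposition~\ref{tcsmdom}; the only step requiring a little care is the identification $(T_a)_c^*=(T_a)_c$, which is precisely where the specific choice $\cala=\{\text{elementary}\}$ (as opposed to $\cala=\{\text{virtually cyclic}\}$) enters and makes the collapsed tree of cylinders equal to the uncollapsed one.
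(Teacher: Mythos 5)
Your proposal is correct and follows essentially the same route as the paper, which deduces this corollary immediately from Theorem~\ref{thm_JSJr}: since $\calp\subseteq\calh$ the parabolic subgroups are elliptic, hence small, in every $(\cala,\calh)$-tree, $\calh\cup\calp=\calh$, and over the class of elementary subgroups no collapsing occurs, so $(T_a)_c^*=(T_a)_c$ is equal to its own tree of cylinders by Lemma~\ref{lem_basicTc}. The only place where you supply an argument the paper does not is the parenthetical claim that an automorphism preserving $\calh$ automatically preserves the class of elementary subgroups; this is not obvious (the peripheral structure need not be canonical), but the paper itself asserts invariance under automorphisms preserving $\calh$ without further justification, while its Theorem~\ref{thm_JSJr}(3) is stated for automorphisms preserving both $\calp$ and $\calh$, so your treatment is no weaker than the paper's on this point.
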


In particular:
 
\begin{cor} \label{thm_JSJrcourt}
Let $G$ be hyperbolic relative to a finite family of finitely generated subgroups $\calp=\{P_1,\dots, P_p\}$. Let $\cala$ be the family of elementary subgroups of $G$.

   If $G$ is one-ended relative to $\calp$, there is a      JSJ tree   over $\cala$ relative to $\calp$ which is equal to its tree of cylinders, invariant under automorphisms of $G$ preserving $\calp$, and compatible with every $(\cala,\calp)$-tree. Its non-elementary flexible vertex stabilizers   are
   QH with finite fiber. \qed
\end{cor}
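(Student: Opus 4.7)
The plan is to deduce this as the immediate specialization $\calh := \calp$ of Corollary \ref{thm_JSJrcourt2}. The first step is to verify that the hypotheses transfer: the required containment $\calp \subseteq \calh$ is trivial when $\calh = \calp$, and the relative one-endedness hypotheses then coincide verbatim. With this substitution, each conclusion of Corollary \ref{thm_JSJrcourt2} becomes the corresponding conclusion here—existence of a JSJ tree over $\cala$ relative to $\calp$, equality with the tree of cylinders for the co-elementarity relation (recall that since $\cala$ consists of all elementary subgroups, every stabilizer $G_{[A]} = E(A)$ lies in $\cala$, so $T_c^* = T_c$ and no further collapsing is needed), compatibility with every $(\cala,\calp)$-tree, and the dichotomy for flexible vertex stabilizers (elementary, or QH with finite fiber).

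The one cosmetic point to address is the automorphism invariance: Corollary \ref{thm_JSJrcourt2} asserts invariance under automorphisms preserving both $\calp$ and $\calh$, but under the specialization $\calh = \calp$ these two conditions collapse into the single condition of preserving $\calp$, which is precisely what the statement claims. I do not expect any genuine obstacle, as the argument is a pure relabelling; the only reason for a separate statement is to isolate the natural case where splittings are required to be relative to exactly the given peripheral structure. If one preferred a self-contained argument rather than invoking Corollary \ref{thm_JSJrcourt2}, one could instead apply Corollary \ref{synthese} directly with $\sim$ the co-elementarity relation (admissible by Lemma \ref{admi}, whose hypothesis that every $P_i$ be small in $(\cala,\calp)$-trees is automatic since $P_i \in \calp$) and $C$ as in Lemma \ref{lem_coelem}, noting that $\SNVC \subseteq \calp$ in this setting so that the relative JSJ tree of Corollary \ref{synthese} is already relative to $\calp$.
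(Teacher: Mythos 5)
Your proposal is correct and is exactly the paper's own derivation: the paper states Corollary \ref{thm_JSJrcourt} simply as the special case $\calh=\calp$ of Corollary \ref{thm_JSJrcourt2} (introduced by ``In particular''), with the hypothesis and automorphism-invariance checks being the same trivial relabelling you describe. Your alternative remark about applying Corollary \ref{synthese} directly is consistent with how the paper proves the underlying Theorem \ref{thm_JSJr}, but no separate argument is needed here.
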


Note that  $G$ is finitely presented relative to $\calp$, so 
existence of a JSJ tree also follows from Theorem \ref{thm_exist_mou_rel}.

\subsection{Virtually cyclic splittings}\label{sec_VC}
 
In this subsection  we consider splittings of $G$ over virtually cyclic groups,
assuming smallness of their  commensurators.

Let $\cala$ be the family of  virtually cyclic\index{virtually cyclic} (possibly finite) subgroups of $G$, and $\cale$   the family of all  infinite virtually cyclic subgroups. Recall that two subgroups $A$ and $B$ of $G$ are  commensurable if  $A\cap B$ has finite index
in $A$ and $B$. \index{commensurable}
 The commensurability relation $\sim$
is an admissible relation on $\cale$ (see \cite{GL4}), so one can define a tree of cylinders.

The stabilizer $G_{[A]}$ of the equivalence class   of a group $A\in \cale$ is its commensurator $\Comm(A)$,
 consisting of elements $g$ such that $gAg\m$ is commensurable with $A$.

Corollary \ref{synthese} yields:

\begin{thm}\label{thm_VC}  
Let $\cala$ be the family of   virtually cyclic
 subgroups,
and  let $\calh$ be  any set of subgroups of $G$, with $G$   one-ended relative to $\calh$.  Let 
$ \SS$ be the set of subgroups of commensurators of infinite virtually cyclic subgroups.
Assume that  there is a bound $C$ for the order of
finite subgroups of $G$,  
and that   all groups of $\SS$  are small in $(\cala,\calh)$-trees.
Then:

\begin{enumerate}
\item 
  There is a  virtually cyclic JSJ  tree $T_a$   relative to $\calh$.
  Its collapsed tree of cylinders $(T_a)_c^*$ (for commensurability)  is a virtually cyclic JSJ tree relative to $\calh$ and    the groups of $\SS$ which are not virtually cyclic.
  
  \item
   $T_a$ and $(T_a)_c^*$ have the same  vertex stabilizers not in $\SS$;  their flexible subgroups 
    commensurate some infinite virtually cyclic subgroup,
or are   QH  with finite fiber.

\item
$(T_a)_c^*$ is invariant under all automorphisms of $G$ preserving  $\calh$.  It is compatible with every $(\cala,\calh)$-tree.\qed
\end{enumerate} 

\end{thm}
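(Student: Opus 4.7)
The strategy is to apply Corollary \ref{synthese} with the commensurability relation $\sim$ on $\cale$, where $\cale$ denotes the set of infinite virtually cyclic subgroups of $G$. Admissibility of $\sim$ for virtually cyclic splittings is already established (see \cite{GL4} and the discussion in the third example of Subsection \ref{defcyl}). The family $\SS$ of the theorem coincides exactly with the family $\SS$ produced by Corollary \ref{synthese}: the stabilizer $G_{[A]}$ of a commensurability class is $\Comm(A)$, and subgroups of these commensurators form $\SS$.

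The plan is then to verify the four numerical hypotheses of Corollary \ref{synthese}. For (1): since every finite subgroup has order $\le C$, any subgroup of cardinality $\le 2C$ is finite hence virtually cyclic, and $C$-virtually cyclic groups are by definition virtually cyclic, so they all belong to $\cala$. For (2): if $A, B \in \cale$ are non-commensurable, then $A \cap B$ cannot be infinite, for an infinite subgroup of an infinite virtually cyclic group automatically has finite index, which would force $A$ and $B$ to be commensurable; hence $A \cap B$ is finite of order $\le C$. For (3): every $G_{[A]} = \Comm(A) \in \SS$, and by hypothesis groups in $\SS$ are small in $(\cala,\calh)$-trees. For condition (4), we verify (b): an index-$2$ extension of a virtually cyclic group is virtually cyclic, hence remains in $\cala$. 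Thus all hypotheses of Corollary \ref{synthese} hold, yielding assertions (1) and (2) of the theorem directly, once one observes that $\SNVC$ (groups in $\SS$ which are not $C$-virtually cyclic) agrees modulo finite groups (which are universally elliptic for free by Serre's property (FA)) with the groups in $\SS$ that are not virtually cyclic; indeed, any infinite virtually cyclic subgroup of $G$ has maximal finite normal subgroup of order $\le C$ under our bound, so is $C$-virtually cyclic.

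For the description of flexibles in (2): Corollary \ref{synthese} tells us that flexible vertex stabilizers not in $\SS$ are QH with fiber of cardinality $\le C$, which is exactly the QH-with-finite-fiber statement; flexible vertex stabilizers lying in $\SS$ are, by the very definition of $\SS$, contained in the commensurator of some infinite virtually cyclic subgroup, which is the meaning of ``commensurate some infinite virtually cyclic subgroup.'' Finally, for (3), we observe that any automorphism of $G$ preserving $\calh$ automatically preserves the class $\cala$ of all virtually cyclic subgroups (an intrinsic property), hence also preserves commensurability and thus $\cale/{\sim}$; invariance and compatibility of $(T_a)_c^*$ then follow from assertions (3) and (4) of Corollary \ref{synthese}.

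The only step that might require care is the bookkeeping that ``$\SNVC$ up to finite groups'' coincides with ``groups in $\SS$ which are not virtually cyclic'', so that the conclusion ``JSJ relative to $\calh$ and the groups of $\SS$ which are not virtually cyclic'' matches what Corollary \ref{synthese} literally provides; but this is immediate from the uniform bound $C$ on finite subgroups, which forces every infinite virtually cyclic subgroup of $G$ to be $C$-virtually cyclic. No new analysis beyond Corollary \ref{synthese} is required.
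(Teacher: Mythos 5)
Your proposal is correct and follows exactly the paper's route: the paper derives Theorem \ref{thm_VC} by a direct application of Corollary \ref{synthese} with the commensurability relation on infinite virtually cyclic subgroups and $\SS$ the subgroups of commensurators, which is precisely your argument. Your extra bookkeeping (checking hypotheses (1)--(4) of the corollary, and noting that $\SNVC$ differs from the non-virtually-cyclic groups of $\SS$ only by finite groups, which are elliptic in every tree) fills in details the paper leaves implicit, and is accurate.
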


\begin{rem}
  This applies  if $G$ is a
  torsion-free CSA group, or a $K$-CSA group, or any relatively
  hyperbolic group whose finite subgroups have bounded order as long
  as all parabolic subgroups  are small in $(\cala,\calh)$-trees.
 
  If $G$ is $K$-CSA, the   trees of cylinders  of a given $T$ 
   for commutation and
  for commensurability  belong to the same deformation space (this follows from Lemma \ref{rem_KCSA}).  
  
\end{rem}

We also have:  

 \begin{thm} \label{thm_JSJct}
 Let $G$ be torsion-free and commutative transitive.\index{commutative transitive} 
  Let $\calh$ be any family of subgroups.  If $G$ is freely indecomposable relative to $\calh$, then:
 \begin{enumerate}
 \item 
  There is a  cyclic JSJ tree  $T_a$ relative to $\calh$. Its  collapsed tree of cylinders ${(T_a)}_c^*$ (for commensurability) is a JSJ tree   relative to $\calh$ and all subgroups isomorphic to  a  solvable Baumslag-Solitar group\index{Baumslag-Solitar group} $BS(1,s)$. 
 \item $T_a$ and ${(T_a)}_c^*$ have the same non-solvable vertex stabilizers. Their flexible subgroups are QH (they are
surface groups), unless $G=\Z^2$.
\item $(T_a)_c^*$ is invariant under all automorphisms of $G$ preserving  $\calh$.  It is compatible with every $(\cala,\calh)$-tree.
\end{enumerate}
 \end{thm}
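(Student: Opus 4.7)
The plan is to apply Corollary \ref{synthese} with $\cala$ the class of cyclic subgroups (so that $\cale$ is the family of infinite cyclic subgroups), $\sim$ the commensurability relation on $\cale$, and $\SS$ the family of subgroups contained in some commensurator $\Comm(\langle g\rangle)$; the stabilizer $G_{[\langle g\rangle]}$ for the conjugation action is $\Comm(\langle g\rangle)$. Admissibility of $\sim$ is standard: axioms (1) and (2) of Definition \ref{eqrel} are immediate, and axiom (3) follows from the fact that two commensurable infinite cyclic subgroups share a non-trivial common power, which must fix every edge on the segment between any two of their fixed points, so the stabilizer of such an edge lies in the same commensurability class. All numerical hypotheses of Corollary \ref{synthese} will be verified with $C=1$.

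The key technical input is commutative transitivity, used to control commensurators. For $g$ of infinite order write $M:=C(g)$ and $N:=\Comm(\langle g\rangle)$. By CT, $M$ is the unique maximal abelian subgroup of $G$ containing $g$. A direct CT argument shows $M$ is normal in $N$: if $h\in N$ satisfies $hg^mh^{-1}=g^n$ with $m,n\neq 0$, then $(hgh^{-1})^m=g^n$ is a non-trivial element commuting with both $g$ and $hgh^{-1}$, so by CT $g$ and $hgh^{-1}$ commute, forcing $hgh^{-1}\in M$ and hence $hMh^{-1}=M$ by uniqueness of $M$. Torsion-freeness combined with CT rules out conjugation inversion: if $hgh^{-1}=g^{-1}$ then $h^2\in M$, and since $h$ commutes with $h^2$ CT would give $h\in M$, whence $g^2=1$, a contradiction. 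Consequently conjugation by $h$ acts on $M\otimes\Q$ by multiplication by $n/m\in \Q^*_{>0}$, giving a homomorphism $N/M\to\Q^*_{>0}$ whose kernel is $M$ itself. Thus $N$ is metabelian, in particular contains no $F_2$, and so is small in $(\cala,\calh)$-trees, yielding hypothesis (3). A refinement of the same analysis identifies $N$ as either abelian or isomorphic to a solvable Baumslag–Solitar group $BS(1,s)$.

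The remaining hypotheses of Corollary \ref{synthese} are straightforward. With $C=1$, torsion-freeness makes every $1$-virtually cyclic subgroup cyclic and every subgroup of order $\le 2$ trivial, giving (1); inequivalent infinite cyclic subgroups must meet trivially (otherwise the intersection would have finite index in both, contradicting inequivalence), giving (2); and each $N$ is torsion-free, so cannot surject onto the infinite dihedral group, giving (4c). Corollary \ref{synthese} then provides the JSJ tree $T_a$ over cyclic subgroups relative to $\calh$, and the collapsed tree of cylinders $(T_a)_c^*$ as a JSJ tree relative to $\calh\cup\SNVC$. The set $\SNVC$ is the family of non-cyclic subgroups of commensurators; by the metabelian/BS structure above these are precisely the subgroups isomorphic to some $BS(1,s)$ (using the convention $BS(1,1)=\Z^2$, and noting that higher-rank abelian subgroups appear only in the abelian case, where relative free indecomposability controls them). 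Assertion (3) of the theorem is then immediate from Corollary \ref{synthese} via Corollary \ref{invaut} and Lemma \ref{compfac}.

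It remains to establish the description of flexible stabilizers in Assertion (2). Corollary \ref{synthese} already gives that flexible vertex stabilizers of $T_a$ or of $(T_a)_c^*$ that do \emph{not} lie in $\SS$ are QH with trivial fiber, hence compact surface groups; and Lemma \ref{lem_rel_vs_abs} shows that the refinement passing from $(T_a)_c^*$ to $T_a$ takes place only at vertices whose stabilizers lie in $\SS$, which are solvable, so the non-solvable vertex stabilizers of the two trees coincide. The main obstacle, which I expect to be the hardest part of the proof, is to rule out solvable flexible vertex stabilizers except when $G=\Z^2$: a metabelian $G_v\in\SS$ that is flexible must split cyclically relative to its incident edge groups, and because $v$ sits in a cylinder its incident edges lie in a single commensurability class inside $M(g)$, so the resulting splittings are highly constrained. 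Combining this with the slender flexible classification of Subsection \ref{pslflex} and the CT hypothesis forces $G_v=\Z^2$, and relative free indecomposability of $G$ then forces $G=G_v=\Z^2$.
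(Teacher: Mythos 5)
Your verification of the hypotheses of Corollary \ref{synthese} is essentially sound (the metabelian structure of $\Comm(\langle g\rangle)$ is exactly the paper's Remark \ref{metab}, and hypothesis (4) holds via (4b) -- note that your argument for (4c), ``torsion-free so cannot surject onto $D_\infty$'', is wrong as stated, since torsion-free groups such as the Klein bottle group do surject onto $D_\infty$; here Klein bottle subgroups are excluded by CT, but you don't need (4c) anyway). The genuine gap is the pivotal claim that ``a refinement of the same analysis identifies $N$ as either abelian or isomorphic to a solvable Baumslag--Solitar group $BS(1,s)$'', and hence that $\cals_{\mathrm{nvc}}$ is exactly the family of subgroups isomorphic to some $BS(1,s)$ (with $BS(1,1)=\Z^2$). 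This is false, and it is precisely what Remark \ref{metab} warns against: all one gets from CT is that $\Comm(\langle g\rangle)$ is metabelian. For instance $G=\Z[1/6]\rtimes\Z^2$, with the two generators acting by multiplication by $2$ and $3$, is torsion-free, CT and freely indecomposable, and the commensurator of $\langle 1\rangle$ is all of $G$, which is neither abelian nor any $BS(1,s)$; likewise $\Z^3$ occurs as the commensurator of a cyclic subgroup inside non-abelian freely indecomposable torsion-free CT groups (e.g.\ the limit group $\F_2*_{\Z}\Z^3$ obtained by an extension of a centralizer), so your parenthetical ``higher-rank abelian subgroups appear only in the abelian case'' does not hold. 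As a result, Corollary \ref{synthese} with your choice of $\cals$ only yields that $(T_a)_c^*$ is a JSJ tree relative to $\calh$ together with all non-cyclic subgroups of commensurators, which is a different relative class, and assertion (1) of the theorem does not follow. The paper closes exactly this gap by a different input: Proposition 6.5 of \cite{GL4} says that for any tree $T$ with cyclic edge stabilizers, a vertex stabilizer of $T_c^*$ which is not elliptic in $T$ is a solvable $BS(1,s)$ (with $s\neq -1$ by CT); this gives $\cals$-domination of $T_c^*$ by $T$ for the \emph{smaller} family $\cals$ of subgroups of $BS(1,s)$-subgroups, and one then applies Theorem \ref{thm_smallyacyl} directly (arguing as in Corollary \ref{synthese}, via Lemmas \ref{lem_rel_vs_abs} and \ref{lraff}) rather than Corollary \ref{synthese} itself, whose built-in $\cals$ is too large here.

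A secondary weakness is the last step, which you yourself flag as a sketch: ruling out flexible vertex stabilizers lying in $\cals$ except when $G=\Z^2$. Your appeal to the slender classification of Subsection \ref{pslflex} is not available as stated, because subgroups of commensurators need be neither slender nor slender in $(\cala,\calh)$-trees (e.g.\ $BS(1,6)$, or $\Z[1/6]$, which can fix an end without fixing a point or preserving a line); they are only small. With the paper's smaller $\cals$ this step amounts to showing that no subgroup of a $BS(1,s)$ can be flexible unless it is $\Z^2=G$, which is the assertion the paper makes at the end of its proof; with your larger $\cals$ (arbitrary metabelian subgroups of commensurators) the claim would require a separate argument that you have not supplied.
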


 Recall that $G$ is \emph{commutative transitive} if commutation is a transitive  relation on   $G\setminus\{1\}$.
 
 \begin{rem}\label{metab}
 We cannot apply Corollary \ref{synthese} directly, because we cannot claim that $G_{[A]}$, the commensurator of a cyclic subgroup $A=\langle a\rangle$, is a $BS(1,s)$. Note however that $G_{[A]}$ is metabelian: if $g$ commensurates $A$, there is a relation $g\  a^pg\m=a^q$; mapping $g$ to $p/q$ defines a
 map from $G_{[A]}$ to $\Q^*$  whose     kernel is the centralizer of $A$, an abelian group.
\end{rem}

 \begin{proof}
   By Proposition 6.5 of \cite{GL4}, if $T$ is any tree with cyclic edge stabilizers, a vertex
   stabilizer  of its collapsed tree of cylinders    $T^*_c$
   which is not elliptic in $T$ is a  solvable
   Baumslag-Solitar group $BS(1,s)$ (with $s\ne-1$
   because of commutative transitivity).   In particular, $T$ $\SS$-dominates 
   $T^*_c$, with $\SS$ consisting  of
   all groups contained in a $BS(1,s)$ subgroup.

   We may therefore apply     Theorem \ref{thm_smallyacyl},    taking $T^*:=T^*_c$,  and argue as in the proof of Corollary \ref{synthese}, using Lemmas \ref{lem_rel_vs_abs} and Lemma \ref{lraff} (which applies since $G_{[A]}$ cannot contain $\F_2$).
    No
   group in $\cals$ can be flexible,
   except if $G_v=G\simeq \Z^2$, so all flexible groups are QH surface
   groups. 
 \end{proof}

\subsection{The $\Zmax$-JSJ decomposition} \label{zmax}

In this section,  $G$ is a one-ended hyperbolic group. We
consider splittings of $G$ over virtually cyclic subgroups (necessarily infinite), and for simplicity we assume $\calh=\es$.

Theorem \ref{thm_JSJr}
yields  a tree $T=(T_a)^*_c=(T_a) _c$, which is the tree of cylinders of any JSJ tree. This tree is itself a JSJ tree,  it is canonical  (in particular, invariant under automorphisms), and its flexible vertex stabilizers are QH with finite fiber. It is in fact the tree constructed   by Bowditch\index{Bowditch} in \cite{Bo_cut}.

It has been noticed by several authors \cite{Sela_structure,DaGr_isomorphism,DG2} 
that it is sometimes useful to replace $T$ by a slightly different tree whose edge stabilizers are  {maximal} virtually cyclic subgroups with infinite center.

To motivate this, recall that there is a strong connection between splittings and automorphisms. By Paulin's theorem  \cite{Pau_arboreal}  combined with Rips's theory of actions on \Rt s \cite{BF_stable}, $G$ splits over a virtually cyclic subgroup whenever $\Out(G)$ is infinite.

Conversely, suppose $G=A*_CB$, with $C$ virtually cyclic (there is a similar discussion for HNN extensions). If $c$ belongs to the center of $C$, it defines a Dehn twist $\tau_c$: the automorphism of $G$ which is conjugation by $c$ on $A$ and the identity on $B$.

But this does not always imply that $\Out(G)$ is infinite (see \cite{MNS_downunder}). 
There are two reasons for this. First, even though $C$ is infinite by one-endedness, its center  may  be finite, 
for instance if $C$ is infinite dihedral (see Subsection \ref{basic}). Second, if some power of $c$ centralizes $A$ or $B$,
then the image of $\tau_c$ in $\Out(G)$ has finite order.

\index{0Z@$\Zc$: virtually cyclic subgroups with infinite center}
\index{0Zmax@$\Zmax$: maximal virtually cyclic subgroups with infinite center}
We therefore consider  the set $\Zc$ of subgroups $C<G$ 
that are virtually cyclic with infinite center, and the family $\Zmax$ consisting of the maximal elements of $\Zc$ (for inclusion).
It is now true that \emph{$\Out(G)$ is infinite if and only if $G$ splits over a group $C\in\Zmax$}    (\cite{Sela_structure,DG2,Carette_automorphism,GL4}).

We say that a subgroup is a $\Zc$-subgroup or a $\Zmax$-subgroup if it belongs to $\Zc$ or  $\Zmax$.
 For   $C\in\Zc$, we denote by $\hat C$ the unique $\Zmax$-subgroup of $G$ containing $C$
(it is the pointwise stabilizer of the pair of points of $\bo G$ fixed by $C$).

A tree is a $\Zc$-tree, or a $\Zmax$-tree, if its edge stabilizers are in $\Zc$  or  $\Zmax$. 
\begin{dfn}[$\Zmax$-JSJ tree]    A $\Zmax$-tree is a \emph{$\Zmax$-JSJ tree} if  it is   elliptic with respect to every $\Zmax$-tree,
and maximal (for domination) for this property.
\end{dfn}

Beware that $\Zc$  and  $\Zmax$ are not stable under taking subgroups, so this does not fit in our usual setting.

All $\Zmax$-JSJ trees belong to the same deformation space, the \emph{$\Zmax$-JSJ deformation space}.   As in Subsection \ref{jsjdf},  this follows from Proposition \ref{prop_refinement}, but we need to know that, \emph{given $\Zmax$-trees $T_1$, $T_2$, any standard refinement $\hat T_1$ of $T_1$ dominating $T_2$ is a $\Zmax$-tree.} 

To see this, consider an edge $e$ of $\hat T_1$. If its image in $T_1$ is an edge, then $G_e$ is an edge  stabilizer of $T_1$ so is $\Zmax$. If not, $e$ is contained in the preimage $Y_v$ of a vertex $v$ of $T_1$. The group $\hat G_e$ is elliptic in $T_1$ (because it contains $G_e$ with finite index). If it fixes some $w$, the segment between $v$ and $w$ is fixed by $G_e$, hence by $\hat G_e$ because $T_1$ is a $\Zmax$-tree. We deduce that $\hat G_e$ fixes $v$, and therefore leaves $Y_v$ invariant. The fact that $Y_v$ maps injectively into the $\Zmax$-tree  $T_2$ now implies that $\hat G_e$ fixes $e$,  since it fixes every edge in the image of $e$ in $T_2$.

\begin{rem} \label{pointf}
This argument is based on the following useful fact: if a subgroup $H<G$ fixes a vertex $v$ in  a $\Zmax$-tree, and $H$ contains a  group $C\in\Zc$, then $\hat C$ fixes $v$. 
\end{rem}

In this section we shall construct    and describe a canonical  $\Zmax$-JSJ tree.
Because of the relation between $\Zmax$-splittings and  infiniteness of $\Out(G)$   mentioned above, it  is algorithmically computable 
 (for computability of the usual JSJ decomposition, see \cite{Touikan_finding} and \cite{Barrett_JSJ}).

\begin{lem}\label{lem_TZmax}
  Given any $\Zc$-tree $T$, one can construct a $\Zmax$-tree $T_{\Zmax}$ with the following properties:
  \begin{enumerate}
  \item $T$ dominates $T_{\Zmax}$;
  \item every $\Zmax$-tree  $S$ dominated by $T$ is dominated by $T_{\Zmax}$;
  \item every edge stabilizer of $T_{\Zmax}$ has finite index in some edge stabilizer of $T$.
  \end{enumerate}
\end{lem}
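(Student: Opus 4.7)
The plan is to construct $T_\Zmax$ by an equivariant modification of $T$ that enlarges its edge stabilizers to their $\Zmax$-envelopes. For each $C\in\Zc$, hyperbolicity of $G$ provides a unique $\Zmax$-subgroup $\hat C$ containing $C$ (namely the pointwise stabilizer of the pair of endpoints of $C$ in $\bo G$), with $[\hat C:C]<\infty$. Since the edge stabilizers of $T$ lie in $\Zc$, each edge $e$ has a well-defined envelope $\hat{G_e}\in\Zmax$, and the goal is to realize these envelopes as the edge stabilizers of a tree dominated by $T$.

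The construction has two steps. First, I group the edges of $T$: declare $e\sim e'$ if $\hat{G_e}=\hat{G_{e'}}$. Each equivalence class is a connected subtree (a ``cylinder'') $Y\subset T$ with a well-defined envelope $\hat C\in\Zmax$; admissibility of this relation follows in the spirit of Definition \ref{eqrel} from the fact that commensurable $\Zc$-subgroups have the same envelope. The associated tree of cylinders $T_c$ is bipartite with type-$0$ vertices $v\in V(T)$ belonging to several cylinders (stabilizer $G_v$) and type-$1$ vertices $Y$ (stabilizer equal to $\hat C$ after a possible index-$2$ adjustment to stay inside $\Zc$). Second, the edge stabilizers of $T_c$ are intersections $G_v\cap\hat C$ and need not be maximal, so I enlarge them: at each type-$1$ vertex $Y$, the envelope $\hat C$ is elliptic in $T$ (being a finite extension of the elliptic subgroup $G_e$ for any $e\subset Y$), and I refine $Y$ equivariantly by attaching each incident edge to a point of $\Fix(\hat C)\cap Y$ with new stabilizer $\hat C$, then collapse every resulting edge whose stabilizer coincides with an endpoint stabilizer. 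The outcome is a simplicial $G$-tree $T_\Zmax$ whose edge stabilizers are exactly the envelopes $\{\hat{G_e}\mid e\in E(T)\}$, hence in $\Zmax$.

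Properties (1) and (3) are immediate from the construction. Every vertex stabilizer of $T_\Zmax$ is either a vertex stabilizer of $T$ or an envelope $\hat{G_e}$; the latter is elliptic in $T$ because it is a finite extension of the elliptic subgroup $G_e$, so $T$ dominates $T_\Zmax$. Each edge stabilizer of $T_\Zmax$ is some $\hat{G_e}\in\Zmax$, containing the edge stabilizer $G_e$ of $T$ as a subgroup of finite index. The substantive content is (2): let $S$ be any $\Zmax$-tree dominated by $T$. Every edge stabilizer $G_e$ of $T$ is contained in a vertex stabilizer of $T$, hence is elliptic in $S$; since $G_e\in\Zc$ and $S$ is a $\Zmax$-tree, Remark \ref{pointf} forces $\hat{G_e}$ to fix a vertex of $S$ as well. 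Consequently every vertex stabilizer of $T_\Zmax$ is elliptic in $S$, so $T_\Zmax$ dominates $S$.

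The main technical obstacle lies in carrying out the refinement/collapse of the second step rigorously. One must specify how to reattach the incident edges of $T_c$ to the blown-up type-$1$ vertices, verify that the $G$-action remains without inversions and minimal, and check that every edge whose stabilizer is a proper (infinite-index) sub-envelope is eliminated by the final collapse so that only stabilizers in $\Zmax$ survive. Once these bookkeeping steps are in place, the universal property (2) is driven entirely by Remark \ref{pointf}, which propagates ellipticity from each $\Zc$-subgroup to its $\Zmax$-envelope in any $\Zmax$-tree.
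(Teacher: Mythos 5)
Your plan founders at the step you yourself flag as "bookkeeping": enlarging the edge stabilizers to their envelopes is not a bookkeeping operation, and the tree you describe does not exist in general. In a $G$-tree the stabilizer of an edge is determined by the action and must be contained in \emph{both} endpoint stabilizers, so you cannot "attach each incident edge to a point of $\Fix(\hat C)\cap Y$ with new stabilizer $\hat C$" while keeping the vertex stabilizers of $T$ (or of $T_c$): in the paper's own example $G=A*_{a=c^k}\grp{c}$, the envelope of the edge group is $\grp{c}$, which is not contained in the adjacent vertex group $A$, so no tree dominated by $T$ can have edge stabilizers "exactly the envelopes" next to the old vertex groups. The only equivariant way to promote $G_e$ to $\hat G_e$ is to fold together the $\hat G_e$-orbit of $e$; this identifies the $\hat G_e$-translates of the far endpoint of $e$, creating new vertex stabilizers such as $\grp{A,cAc\m,\dots}$ that are neither vertex stabilizers of $T$ nor envelopes, and these folds cascade — in the example they collapse everything, which is why the paper notes that $T_{\Zmax}$ may be trivial even though all envelopes are non-trivial. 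So your structural claim "every vertex stabilizer of $T_\Zmax$ is a vertex stabilizer of $T$ or an envelope $\hat G_e$", on which (1) and especially (2) rest, is false, and the tree-of-cylinders preprocessing does not help (in the example the cylinders are the stars of the $\grp{c}$-vertices and $T_c$ is just $T$ again, with the same edge groups).

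Once the vertex groups of $T_\Zmax$ are generated by many elliptic pieces, ellipticity of each envelope in $S$ (your correct use of Remark \ref{pointf}) no longer yields ellipticity of the vertex group — a group generated by elliptic subgroups need not be elliptic — so (2) is not proved. The paper's proof instead constructs $T_\Zmax$ by an induction on the number of orbits of edges with $G_e\neq\hat G_e$: at each step one folds the $\hat G_e$-orbit of a suitable edge (either $e$ itself, if $\hat G_e$ fixes an endpoint of $e$, or the first edge $e'$ on the path from $\Fix\hat G_e$ to $e$, which satisfies $G_e\subset G_{e'}\subsetneq\hat G_{e'}=\hat G_e$), and checks that \emph{any equivariant map $T\to S$ to a $\Zmax$-tree factors through the fold}; this is exactly where Remark \ref{pointf} is used (since $G_e$ fixes the image of $e$ in $S$ pointwise and $S$ is a $\Zmax$-tree, $\hat G_e$ fixes it too, so the points being folded already have equal images). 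Property (2) then follows by induction, and $T_\Zmax$ is taken to be the minimal subtree of the resulting tree, which may well be a point. If you want to salvage your write-up, replace the refinement/collapse of $T_c$ by this folding induction and prove the factorization property at each step; that is the substantive content of the lemma.
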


\begin{example}
  It may happen that $T_{\Zmax}$ is trivial even though $T$ is not. This occurs for instance for $T$ corresponding to a splitting
of the form $A*_{a=c^k} \grp{c}$.
\end{example}
\begin{proof}
  Let $T'$ be the quotient of $T$ by the smallest equivalence relation such that,
for all edges $e\subset T$ and all $h\in \hat G_e$, we have $h.e\sim e$.
We shall give an alternative description of $T'$, which shows that it is   a $\Zmax$-tree satisfying all the required properties;
however, it may happen that $T'$ is not minimal (it may even be trivial), so we
  define  $T_{\Zmax}$ as the minimal $G$-invariant subtree of $T'$, and the lemma follows.
  
  We construct $T'$ by folding. We argue by induction on the number of $G$-orbits of edges $e$ 
with $G_e\neq \hat G_e$. Let $e$ be such an edge  (if there is none,   $T'=T$ is a $\Zmax$-tree).
Since $[\hat G_e:G_e]<\infty$, the group $\hat G_e$ is elliptic in $T$.
If one of the endpoints of $e$ is fixed by $\hat G_e$, let $T_1$ be the tree obtained by  folding together all the edges in the  $\hat G_f$-orbit of $G_f$, for every edge $f$ in the $G$-orbit of $e$. If not,   let $e'$ be the first edge in the shortest path joining $\Fix \hat G_e$ to $e$.
Since this path is fixed by $G_e$, one has $G_e\subset G_{e'}\subsetneq \hat G_{e}=\hat G_{e'}$,
and one can   fold the edges in the $\hat G_{e'}$-orbit of $e'$.
In both cases we obtain a tree $T_1$ having  fewer orbits of edges with $G_e\neq \hat G_e$, and  any map $T\ra S$ 
factors through $T_1$. The lemma now follows by induction. 
\end{proof}

\begin{rem}
 A similar construction is used in Section 5 of \cite{GL_McCool}: $G$ is a toral relatively hyperbolic group, $T$ is a tree with abelian edge stabilizers, and it is   replaced by a tree whose edge stabilizers  are abelian and stable under taking roots.
\end{rem}

We denote by $\cala$ the family consisting of all subgroups   $C\in\Zc$ and all their finite subgroups (alternatively, one could include all   finite subgroups of $G$).
It is stable under taking subgroups, and
since $G$ is one-ended  $\cala$-trees have edge stabilizers in $\Zc$.
Let $T=(T_a)^*_c$ be the canonical JSJ tree over $\cala$ provided by Theorem \ref{thm_JSJr}. Its flexible vertex stabilizers are QH with finite fiber, and the underlying orbifold has no mirrors (see Theorem \ref
{thm_description_slenderZ}).

Let $T_{\Zmax}$ be the tree associated to $T$ by Lemma \ref{lem_TZmax}. It is a $\Zmax$-tree, which is   elliptic  with respect to every $\Zmax$-tree by Assertion (3) of the lemma. Unfortunately, it is not always a $\Zmax$-JSJ tree. We illustrate  this on the Klein bottle group $K$ (of course $K$ is not hyperbolic, see   hyperbolic examples below).

As pointed out in Subsection \ref{sec_G_small}, the cyclic JSJ decomposition of $K$ is trivial, and $K$ is flexible because it has   two cyclic splittings, corresponding to the presentations 
 $K=\grp{a,t\mid tat\m=a\m}=\grp{t}*_{t^2=v^2}\grp{v}$.   Thus $T$ and $T_{\Zmax}$ are trivial trees. 
  Note, however, that
  the amalgam is not over a $\Zmax$-subgroup (the group generated by $t^2$ is not a maximal cyclic subgroup). It follows that the Bass-Serre tree of the HNN extension is elliptic with respect to every $\Zmax$-tree, and is  a $\Zmax$-JSJ tree.
 
 Geometrically, let $\Sigma$ be a flat Klein bottle, or a compact hyperbolic surface, or more generally a hyperbolic  orbifold as in Theorem \ref
{thm_description_slenderZ} (we allow cone points, but not   mirrors).
An essential   simple closed geodesic $\gamma$ on $\Sigma$  defines a splitting over a cyclic group, which is in $\Zmax$ if and only if $\gamma$ is 2-sided (see Subsection \ref{fmg}).
 The $\Zmax$-JSJ tree is trivial if and only if every essential 2-sided simple closed geodesic crosses (transversely) some other 
 2-sided   geodesic. 
 
 This happens  in almost all  cases, but there are exceptions:
 the flat Klein bottle, 
 the Klein bottle with one conical point, and   the Klein bottle with one open disc removed.   On a flat Klein bottle, all essential 2-sided simple closed geodesics are isotopic. In the other two exceptional cases (which are hyperbolic),    there is a unique essential 2-sided simple closed geodesic.
 
 We can now construct  a canonical\index{canonical tree}
 $\Zmax$-JSJ tree $\hat T_{\Zmax}$.
 If no orbifold $\Sigma_v$ underlying a QH vertex $v$ of the canonical JSJ tree $T$ is an exceptional one,
 we let $\hat T_{\Zmax}$ be the tree obtained by applying Lemma \ref{lem_TZmax} to $T$. 
 Now suppose that there are vertices $v$ with   $\Sigma_v$ a Klein bottle with one conical point (in which case $G_v=G$ because $\bo\Sigma=\es$), or a Klein bottle with one open disc removed.
 We then  refine $T$ at these vertices, 
 using the splitting of $\Sigma_v$ dual to the unique essential 2-sided geodesic, and we   apply the construction of Lemma  
\ref{lem_TZmax} to the tree $\hat T$ thus obtained.

\begin{prop}
  Let $G$ be a one-ended hyperbolic group. The tree $\hat T_{\Zmax}$ constructed   above is a canonical  $\Zmax$-JSJ tree.
\end{prop}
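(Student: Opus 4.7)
The plan is to verify the three defining properties of a $\Zmax$-JSJ tree (being a $\Zmax$-tree, being elliptic with respect to every $\Zmax$-tree, being maximal for this property), and then to deduce canonicity from the canonical nature of $T=(T_a)_c^*$. That $\hat T_{\Zmax}$ is a $\Zmax$-tree is immediate from the construction in Lemma \ref{lem_TZmax}. For canonicity, once the other properties are established, it will follow from the fact that $T$ is invariant under automorphisms of $G$ (Theorem \ref{thm_JSJr}), that the exceptional orbifolds are characterized topologically (Subsection \ref{fmg}) and therefore detected canonically among the QH vertex orbifolds of $T$, and that Lemma \ref{lem_TZmax} is a canonical construction.

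For ellipticity with respect to every $\Zmax$-tree $S$, I would first check this for $\hat T$ and then transfer it to $\hat T_{\Zmax}$ via Lemma \ref{lem_TZmax}(3) and Remark \ref{pointf}. Edges of $\hat T$ are of two types: edges inherited from $T$, whose stabilizers lie in $\Zc\subset\cala$ and are universally elliptic over $\cala$ since $T$ is an $\cala$-JSJ tree; and edges introduced by refining an exceptional QH vertex $v$ along the unique essential $2$-sided geodesic $\gamma$ of $\Sigma_v$, whose stabilizer $C$ lies in $\Zmax$ by construction. For stabilizers of the first type ellipticity in $S$ is automatic because $S$ is an $\cala$-tree. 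For a stabilizer $C$ of the second type, either $G_v$ is elliptic in $S$ (and so is $C$), or by Corollary \ref{synthesep} the action of $G_v$ on its minimal subtree in $S$ is dual to a family of essential simple closed geodesics on $\Sigma_v$; as $S$ is a $\Zmax$-tree, only $2$-sided geodesics can occur in this family (splittings dual to $1$-sided geodesics have non-$\Zmax$ edge group), and in the exceptional cases $\gamma$ is the only such geodesic up to isotopy, so $C$ fixes an edge in $S$.

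For maximality, I would take a $\Zmax$-tree $S$ that is elliptic with respect to every $\Zmax$-tree and show $\hat T$ dominates $S$; the same then holds for $\hat T_{\Zmax}$ by Lemma \ref{lem_TZmax}(2). A vertex stabilizer $H$ of $\hat T$ is either a rigid vertex stabilizer of $T$ (hence universally elliptic over $\cala$ and elliptic in $S$), a flexible QH vertex stabilizer of $T$ whose orbifold $\Sigma_v$ is non-exceptional, or a vertex stabilizer produced by refining an exceptional QH vertex (whose orbifold is a proper sub-orbifold of the original $\Sigma_v$). Assuming $H$ is not elliptic in $S$ in the non-exceptional QH case, Corollary \ref{synthesep} gives a dual family of $2$-sided essential simple closed geodesics on $\Sigma_v$; by the classification in Subsection \ref{fmg}, since $\Sigma_v$ is non-exceptional, any $2$-sided essential simple closed geodesic is crossed transversely by another $2$-sided essential simple closed geodesic $\alpha$, and the splitting of $G$ dual to $\alpha$ is a $\Zmax$-splitting in which $S$ cannot be elliptic, contradicting the hypothesis on $S$. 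In the refined exceptional case, the sub-orbifolds obtained after cutting along $\gamma$ have the property that any further essential simple closed geodesic is crossed by another $2$-sided one in $\Sigma_v$ itself, so the same contradiction applies.

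The main obstacle will be the maximality step in the QH case, specifically the need for a ``filling by $2$-sided geodesics'' statement: for every $\Sigma_v$ not on the exceptional list of Subsection \ref{fmg}, every essential $2$-sided simple closed geodesic is intersected transversely by another such geodesic. This is the refinement of Corollary \ref{lem_fill} that powers the argument, and verifying it case by case against the list in Subsection \ref{fmg} is where care is required; the refinement step at exceptional vertices is designed precisely so that, after cutting along the unique $2$-sided geodesic, the remaining flexibility of $H$ relative to its incident edge groups is absorbed into the already universally elliptic part of $\hat T$.
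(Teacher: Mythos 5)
Your proposal is correct and follows essentially the same route as the paper's proof: ellipticity via Lemma \ref{lem_TZmax}(3) together with universal ellipticity of the old edge groups and the uniqueness of the $2$-sided geodesic at exceptional vertices, maximality by showing the relevant vertex groups are elliptic in any universally elliptic $\Zmax$-tree using the crossing property of $2$-sided geodesics on non-exceptional orbifolds and the fact that the refined pieces (pair of pants, annulus with one conical point) admit no essential geodesic at all; the only difference is cosmetic, in that you work with $\hat T$ and transfer via Lemma \ref{lem_TZmax}(2)--(3), while the paper works directly with the socket vertex groups of $\hat T_{\Zmax}$ and Remark \ref{pointf}. The two points you leave compressed --- that the restriction of a $\Zmax$-tree to a QH vertex group cannot be dual to a family containing a one-sided geodesic (one needs that the $\Zmax$ edge stabilizer of $S$ is then the full group $\widehat{Q}$, which contains a lift of the one-sided curve inverting the corresponding edge), and that $Q_\alpha$ is actually maximal, so the splitting dual to an interior $2$-sided geodesic is a $\Zmax$-splitting --- are true and are left just as implicit in the paper's own proof.
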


\begin{proof}
 First suppose that $T$ has no exceptional QH vertex.  Then  $\hat T_{\Zmax}= T_{\Zmax}$, and by Assertion (3) of Lemma \ref{lem_TZmax}  it is elliptic  with respect to every $\Zmax$-tree. It is canonical because $T$ is canonical  and the definition of $T_{\Zmax}$ given in the first paragraph of the proof  of Lemma  \ref{lem_TZmax} does not involve choices. We only need to prove maximality:   any vertex stabilizer    of $T_{\Zmax}$ is elliptic in every tree $S$ which  is   elliptic with respect to every $\Zmax$-tree.
 
 Recall that $T$ is a tree of cylinders, so is bipartite (see Subsection 
 \ref{defcyl}).  If $v\in V_1(T)$, its stabilizer in $T_{\Zmax}$ is the same as in $T$ (a maximal virtually cyclic subgroup). It  is elliptic in $S$ because it contains an edge stabilizer  of $T$ with finite index.

 If two edges of $T$ are folded when passing from $T$ to $T_{\Zmax}$, they have commensurable stabilizers, so belong to the same cylinder of $T$ (which is the star of a vertex in $V_1$). This implies that,  if $v\in V_0(T)$, its stabilizer in $T_{\Zmax}$ is
  a multiple amalgam (\ie a tree of groups) $\hat G_v=
  G_v*_{B_1} \hat B_1\dots *_{B_k} \hat B_k 
  $, 
where $B_1,\dots,B_k$ are representatives of
  conjugacy classes of incident edge stabilizers.    The group $G_v$ is clearly elliptic in $S$ if it is rigid,
and also   if it is QH with non-exceptional underlying orbifold 
because no $\Zmax$-splitting of $G_v$ relative to its incident edge group is universally elliptic. 
It follows that $\hat G_v$ is elliptic by Remark \ref{pointf}.

  The argument when there are exceptional QH vertices is similar. Refining $T$ replaces the exceptional QH vertices by QH vertices whose underlying orbifold is a pair of pants or an annulus with a conical point; their stabilizers do not split over a $\Zmax$-subgroup relative to the boundary subgroups.
\end{proof}

\begin{rem}
The proof shows that flexible  vertex groups of $\hat T_{\Zmax}$ are \emph{QH  with sockets}\index{socket} 
(also called  sockets \cite{Sela_structure},  or orbisockets \cite{DG2}),
\ie QH groups with roots added to the boundary subgroups.
More precisely, they are
 of the form $\hat G_v=
  G_v*_{B_1} \hat B_1\dots *_{B_k} \hat B_k 
  $
  with $G_v$ a QH vertex group of $T$ and $B_1,\dots,B_k$   representatives of
the conjugacy classes of boundary subgroups of $G_v$. The incident edge groups of $\hat G_v$ in $\hat T_{\Zmax}$ are the $\hat B_i$'s (some of them may be missing when the tree $T'$ defined in the proof of Lemma \ref{lem_TZmax} is not minimal).
\end{rem}

\begin{rem}
In applications to model theory,
  hyperbolic surfaces $\Sigma$ which
  do not carry a pseudo-Anosov diffeomorphism play a special role.
  There are four of them: the pair of pants, the twice-punctured projective plane, the once-punctured Klein bottle, the closed non-orientable surface of genus 3. The first two have finite mapping class group, but the   other two do not
 and this causes   problems (see the proof of Proposition 5.1 in \cite{GLS_finiteindex}).

  When a once-punctured Klein bottle appears in a QH vertex, one may refine the splitting as explained above, using the unique   essential 2-sided simple closed geodesic $\gamma$ (this creates a QH vertex based on  a pair of pants). The Dehn twist $T_\gamma$ around $\gamma
$ generates a finite index subgroup of the mapping class group of $\Sigma$. It  acts trivially on every vertex group of the refined splitting.

Similarly, if $\Sigma$ is a closed non-orientable surface of genus 3, there is  a unique 1-sided simple geodesic $\gamma$ whose complement is orientable (see  Proposition 2.1 of \cite{GoMa_homeotopy}). It is the core of a M\"obius band whose complement is a once-punctured torus $\Sigma'$ (unlike $\Sigma$, it carries a pseudo-Anosov diffeomorphism).  The mapping class group of $\Sigma$ leaves $\gamma$ invariant  and preserves the cyclic splitting of $\pi_1(\Sigma)$ given by decomposing $\Sigma$ as the union of $\Sigma'$ and a M\"obius band. It is isomorphic to $GL(2,\Z)$, the mapping class group of  $\Sigma'$,

These refinements give  a canonical way of modifying the JSJ decomposition of a torsion-free one-ended hyperbolic group (described in Subsection \ref{acsa})
so that all surfaces appearing in QH vertices have a mapping class group which is   finite or contains a pseudo-Anosov map.
\end{rem}

\part{Compatibility}\label{chap_compat}

As usual, we fix a family $\cala$ of subgroups which is stable under conjugating and   taking subgroups, and another family $\calh$, and we only consider $(\cala,\calh)$-trees.  We work with simplicial trees, but we often view them as metric trees (with every edge of length 1) in order to apply the results from the appendix   (for instance, the fact that compatibility  passes to the limit). 
We will  freely use some concepts from the appendix, in particular   the arithmetic of trees (Subsection  \ref{sec_arith}).

 In    Section \ref{de}
we have defined  a JSJ tree (of $G$ over $\cala$ relative to $\calh$) as a tree which is universally elliptic  and dominates every universally elliptic tree. Its deformation space is the JSJ deformation space $\cald_{JSJ}$. In the next section we  define the compatibility JSJ deformation space $\Dco$ and the compatibility JSJ tree $\Tco$.
   The deformation space $\Dco$ contains a universally compatible tree and dominates every universally compatible tree. The tree $\Tco$ is a preferred universally compatible tree in $\Dco$. In particular, it is invariant under automorphisms of $G$ preserving $\cala$ and $\calh$. In Section \ref{exemp} we give examples, provided in particular by   trees of cylinders  (see Section \ref{sec_cyl}).

 \section{The  compatibility JSJ tree}\label{compt}

Recall (Subsection \ref{sec_morphisms}) that two trees $T_1$ and $T_2$ are compatible if they have a common refinement. In other words, there exists a tree 
$T$ with collapse maps $T\to T_i$.\index{compatible trees}

\begin{dfn}[Universally compatible]\index{universally compatible tree}
A tree $T$ is \emph{universally compatible}  (over $\cala$ relative to $\calh$) if it is compatible with every   tree.    
\end{dfn}

In particular, this means that any tree $T'$ can be obtained from $T$ by refining and collapsing.
When $T'$ is a one-edge splitting, either $T'$ coincides with the splitting associated to one of the edges of $T/G$,
or one can obtain $T'/G$ by refining $T/G$ at some vertex $v$ using a one-edge splitting of $G_v$ relative to the incident edge groups, and collapsing
all the original edges of $T/G$.

\begin{dfn}[Compatibility JSJ deformation space]\index{compatibility JSJ deformation space}
  If, among deformation spaces containing a universally compatible
  tree, there is one which is maximal for domination, it is unique.
  It is denoted by $\Dco$\index{0DC@$\Dco$: compatibility JSJ space} and it is called the \emph{compatibility JSJ
    deformation space} of $G$ over $\cala$ relative to $\calh$.
\end{dfn}

 To prove uniqueness, consider   universally compatible trees $T_1,T_2$. By Corollary \ref{coll} we may assume that they are irreducible. They are compatible with each other, and $T_1\vee T_2$ (see
 Lemma \ref{ppcm} in the appendix)
is universally compatible by
Proposition \ref{prop_lcm} (2).
If $T_1$ and $T_2$ belong to maximal deformation spaces, 
we get that $T_1,T_1\vee T_2$, and $T_2$ lie in the same deformation space,    proving uniqueness.

 Clearly, a universally compatible tree is universally elliptic.
This implies that $\Dco$ is dominated by $\cald_{JSJ}$. Also note that, if $T$ is universally compatible and $J$ is an  edge stabilizer in an arbitrary tree, then $J$ is elliptic in $T$ (i.e.\ any tree is elliptic with respect to $T$).

\subsection{Existence of the compatibility JSJ space}\label{exdur}

\begin{thm}\label{thm_exists_compat}
If $G$ is finitely presented relative to a family $\calh=\{H_1,\dots,H_p\}$ of finitely generated subgroups, the  {compatibility JSJ space} $\Dco$ of $G$ (over $\cala$ relative to $\calh$) exists. 
\end{thm}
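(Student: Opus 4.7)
The strategy is to construct a universally compatible tree $T^*$ that dominates every universally compatible tree via a countable iteration of lcm's, followed by a limiting argument. Two universally compatible trees are automatically compatible with each other (each being compatible with every tree), so Lemma \ref{ppcm} guarantees the existence of their lcm, and Proposition \ref{prop_lcm}(2) says this lcm is again universally compatible. Thus the lcm operation is closed on universally compatible trees, and taking lcm's moves us upward in the refinement (hence domination) order. Any maximal element of the poset of deformation spaces containing a universally compatible tree is unique by the discussion following the definition of $\Dco$, so it suffices to produce one.

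To reduce to a countable problem, I would observe that the set $\calu$ of universally compatible $(\cala,\calh)$-trees with finitely generated edge stabilizers is countable: such a tree is encoded by a finite graph of finitely generated groups up to the countably many possible edge-to-vertex homomorphisms, exactly as in the proof of Theorem \ref{thm_exist_mou_rel} (using relative finite generation of vertex groups, Lemma \ref{relfg}, together with Remark \ref{den}). Enumerate $\calu=\{U_1,U_2,\dots\}$ and define inductively $T_0=\{*\}$ and $T_k=T_{k-1}\vee U_k$. Each $T_k$ is universally compatible by repeated application of Proposition \ref{prop_lcm}(2), and $T_{k+1}$ refines $T_k$. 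Applying relative Dunwoody's accessibility (Proposition \ref{prop_accessibility_rel}) to the refinement sequence $(T_k)$ yields an $(\cala,\calh)$-tree $S$ with finitely generated edge stabilizers and a morphism $S\to T_k$ for all sufficiently large $k$. This $S$ dominates every $T_k$, hence every $U_k$.

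It remains to verify that $S$ is universally compatible (so that its deformation space lies in the poset defining $\Dco$), and that $S$ dominates every universally compatible tree (not only those in $\calu$). For the first point, I would invoke the main result of the appendix: two $\R$-trees are compatible if and only if the sum of their length functions is again a length function, which in particular shows that compatibility is a closed condition in the space of $\R$-trees (in the axes / equivariant Gromov--Hausdorff topology). Since each $T_k$ is compatible with any prescribed $(\cala,\calh)$-tree $V$, a suitably normalized limit of the $T_k$'s is compatible with $V$; one then identifies $S$ (given by Dunwoody's accessibility, which is actually a subdivision of some $T_{k_0}$ in the proof of Proposition \ref{prop_accessibility_rel}) with this limit, so $S$ is universally compatible. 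For the second point, given an arbitrary universally compatible tree $S'$, Corollary \ref{cor_fg_rel} provides an $(\cala,\calh)$-tree with finitely generated edge stabilizers mapping to $S'$; using universal compatibility of $S'$, this tree (or a small lcm modification of it) sits in $\calu$ and thus is dominated by $S$, whence $S$ dominates $S'$.

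The main obstacle is the limiting step: rigorously identifying the simplicial tree $S$ produced by Dunwoody's accessibility with a Gromov-topology limit of the $T_k$'s, so that the closedness of compatibility can be invoked to transfer universal compatibility from the $T_k$'s to $S$. The integer-valued length functions of the $T_k$'s grow under refinement, so one must choose the normalization and the subsequence carefully (using the compactness of the projectivized space of length functions of \cite{CuMo} and Paulin's theorem) in order to retain simplicial structure in the limit while preserving compatibility with arbitrary $V$; this is precisely the reason the argument is ``best expressed in terms of $\R$-trees'', and is the point at which the length-function characterization of compatibility from the appendix plays its essential role.
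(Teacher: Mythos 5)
Your overall skeleton (closing universally compatible trees under lcm via Lemma \ref{ppcm} and Proposition \ref{prop_lcm}, invoking relative Dunwoody accessibility for finite presentability, and using the length-function characterization of compatibility to pass to a limit) matches the paper's strategy, but the crucial limiting step is not correctly handled and this is a genuine gap. The tree $S$ produced by Proposition \ref{prop_accessibility_rel} only admits a \emph{morphism} $S\to T_k$, which may fold; its deformation space is in general strictly above those of all the $T_k$ (see the remark following Proposition \ref{prop_accessibility}), it is not a subdivision of any $T_{k_0}$ of your sequence, and the paper explicitly observes that one cannot claim $S$ is universally compatible. Nor can $S$ be ``identified with a limit of the $T_k$'s'': the length functions $\ell_{T_k}$ typically diverge on some elements, so an unrescaled limit does not exist, while rescaling by $\lambda_k\to\infty$ and using compactness of projectivized length functions destroys exactly what you need (integer values, domination of each $T_k$, control of the deformation space), and may even produce a non-simplicial $\R$-tree unrelated to the $T_k$'s by domination. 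The paper's actual mechanism (Proposition \ref{prop_lim_compat}) uses $S$ only as an \emph{upper bound}: analysing how $S\vee T_k$ grows inside the fixed deformation space of $S$, one sees the growth is concentrated along finitely many ``segments'', and one replaces $T_k$ by a collapse $\ol T_k$ in the \emph{same} deformation space keeping only the initial edge of each segment; then the number of orbits of edges of $\ol T_k$ is bounded, the length functions $\ell_{\ol T_k}$ are non-decreasing and bounded above by $C\,\ell_S$ (Forester's observation), and their unrescaled limit is a simplicial $(\cala,\calh)$-tree dominating every $T_k$, universally compatible by Corollary \ref{cofer} since each $\ol T_k$ is (a collapse of a universally compatible tree is again universally compatible). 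This collapse-and-bounded-limit argument is the missing idea; without it, universal compatibility does not transfer to the dominating tree.

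There is also a gap in your countability reduction. Restricting to the set $\calu$ of universally compatible trees with finitely generated edge stabilizers requires knowing that every universally compatible tree $S'$ is dominated by a member of $\calu$. Corollary \ref{cor_fg_rel} produces a tree with finitely generated edge stabilizers dominating $S'$, but nothing forces that tree (or its lcm with $S'$) to be universally compatible, so it need not lie in $\calu$ and your final claim that $S$ dominates $S'$ is unjustified. The paper sidesteps this by reducing to a countable subfamily via Lemma \ref{lem_Zorn}, which applies to the family of \emph{all} universally compatible trees with no finiteness hypothesis, using only that the candidate limit tree, being universally compatible, is elliptic with respect to every tree in the family.
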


The heart of the proof of Theorem \ref{thm_exists_compat} is the following proposition.

\begin{prop}\label{prop_lim_compat}   Let $G$ be finitely presented relative to a family $\calh=\{H_1,\dots,H_p\}$ of finitely generated subgroups.
  Let $T_0\leftarrow T_1\dots \leftarrow T_k \leftarrow\cdots$ be a sequence of   refinements of
irreducible universally compatible  trees.  There exist collapses $\ol T_k$ of
$T_k$, in the same deformation space as
$T_k$, such that the sequence $\ol T_k$ converges to a universally compatible 
  simplicial $(\cala,\calh)$-tree
$T$ which dominates every $T_k$.
\end{prop}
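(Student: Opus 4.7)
The plan is to realize $T$ as a limit of length functions on $G$, using compactness of the space of length functions together with the closedness of compatibility under such limits; this latter property is precisely why the statement is treated via $\bbR$-trees in the appendix.

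First, I would apply Dunwoody's relative accessibility (Proposition \ref{prop_accessibility_rel}) to the refinement chain, obtaining a simplicial $2$-complex $X$ (with subcomplexes $Y_i$ carrying the peripheral groups $H_i$) such that each $T_k$ is dual to a pattern $\tau_k\subset X$ with $\tau_k\subset\tau_{k+1}$, and such that for $k$ past some $k_0$ the successive components of $\tau_{k+1}\setminus \tau_k$ cobound only product regions in $\ol{X\setminus\bigcup Y_i}$. This bounds the non-parallel tracks outside a fixed compact subcomplex and so controls the combinatorics of all the $T_k$ uniformly. After passing to a subsequence, I would choose $\ol T_k$ to be a collapse of $T_k$ in its deformation space, obtained by collapsing the ``thin'' sub-pieces added past level $k_0$. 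With this choice, the length function $\ell_{\ol T_k}$, after normalization by a uniform factor, takes bounded values on a fixed finite relative generating set.

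Second, by the compactness of the space of projective length functions of $G$ (Culler--Morgan), a subsequence of $\ell_{\ol T_k}$ converges to the length function of an $\bbR$-tree $T$. Since the $\ell_{\ol T_k}$ are integer-valued and the accessibility construction yields only finitely many orbits of edges in the collapsed models, the limit length function is itself integer-valued on a cofinal subset, so $T$ is simplicial. That $T$ is an \AH-tree then follows from the equivariant Gromov--Hausdorff convergence (Theorem \ref{thm_fred}): each $H_i$ is elliptic in every $\ol T_k$ hence in $T$, and edge stabilizers of $T$ are controlled by edge stabilizers of the $\ol T_k$'s via the approximating morphisms, so lie in $\cala$. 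Domination $T\to T_k$ is obtained by passing to the limit the equivariant maps $\ol T_\ell\to T_k$ arising from the refinement chain for $\ell\geq k$.

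Finally, the main payoff: for any \AH-tree $U$, every $T_k$ is compatible with $U$ by hypothesis, and so is each collapse $\ol T_k$ (collapses of compatible trees are compatible). By the characterization of compatibility via length functions established in the appendix (two $\bbR$-trees $S_1,S_2$ are compatible if and only if $\ell_{S_1}+\ell_{S_2}$ is again a length function), compatibility is a closed condition in the space of length functions, so the limit $T$ is compatible with $U$. Since $U$ was arbitrary, $T$ is universally compatible. The main obstacle is ensuring the limit is \emph{simplicial} and lies in $(\cala,\calh)$, rather than being a genuine $\bbR$-tree with possibly degenerate edge stabilizers; relative finite presentation, through Dunwoody accessibility, is precisely what supplies the uniform combinatorial bound needed to rule this out, after which the closedness of compatibility from the appendix does the rest.
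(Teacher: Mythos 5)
Your second half is in the right spirit (limit of length functions, simpliciality via integrality, universal compatibility via the closedness of compatibility from Corollary \ref{cofer}), but the heart of the proposition is the construction of the collapses $\ol T_k$ together with the proof that their length functions converge \emph{without rescaling}, and this is where your argument has a genuine gap. Dunwoody accessibility applied to the chain controls the trees $S_k$ dual to the patterns $\tau_k$ (which stabilize up to subdivision), not the trees $T_k$ themselves: as the remark after Proposition \ref{prop_accessibility} shows, the $T_k$ may run through infinitely many deformation spaces, so ``collapsing the thin sub-pieces added past level $k_0$'' is not a well-defined recipe, and you give no argument that the resulting length functions stay bounded on a generating set. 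The paper instead uses accessibility only to produce a single tree $S$ dominating all $T_k$, forms $S_k=S\vee T_k$ inside the fixed deformation space of $S$, observes that the growth of $S_k/G$ occurs along a bounded number of ``segments'' of valence-two vertices (one incident edge group equal to the vertex group), and defines $\ol T_k$ by keeping only the \emph{initial} edge of each segment in $T_k/G$. This bounds the number of prime factors of $\ol T_k$, makes the sequence $\ell_{\ol T_k}$ \emph{non-decreasing}, and Forester's bound $\ell(S\vee\ol T_k)\le C\,\ell(S)$ then gives genuine (non-projective) convergence. Your appeal to Culler--Morgan projective compactness with an unspecified ``uniform normalization'' does not substitute for this: if the normalizing factors are unbounded the limit is a non-simplicial $\bbR$-tree and your integrality argument for simpliciality collapses, and proving the factors are bounded is exactly the missing construction.

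Two further steps are asserted rather than proved and in fact rely on the monotonicity you do not have. Domination of each $T_k$ does not follow from ``passing equivariant maps $\ol T_\ell\to T_k$ to the limit'' (there is no such limiting argument in the Gromov topology without extra control); in the paper it follows from $\ell\ge\ell_k$, which forces every element elliptic in $T$ to be elliptic in $T_k$, combined with the fact that $T$, being universally compatible, is elliptic with respect to $T_k$, and then Lemma \ref{cor_Zor}(2). Monotonicity is also what guarantees irreducibility of the limit, which is needed both for simpliciality (Example \ref{arbsimp}) and to apply Corollary \ref{cofer}. Finally, ``edge stabilizers of $T$ are controlled by edge stabilizers of the $\ol T_k$ via the approximating morphisms'' is not an argument: there are no morphisms to or from the limit, only Gromov--Hausdorff convergence, and since edge stabilizers need not be finitely generated one needs the paper's bridge argument (realize $G_e$ as the stabilizer of the bridge between axes of hyperbolic elements $g,h$, choose $k$ with $\ell_k=\ell$ on $g,h,gh$, and show $G_e$ fixes the corresponding bridge, hence an edge, in $\ol T_k$) to conclude $G_e\in\cala$.
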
 

We view trees as metric,  with each edge of length 1, and convergence is in the space of \Rt s (unlike the proof of Theorem \ref{thm_JSJacyl}, no  rescaling  of the metric is necessary here).   The tree $T$ may have redundant vertices.  

\begin{proof}[Proof of Theorem \ref{thm_exists_compat} from the proposition]   
We may assume that there is a non-trivial  universally compatible  tree. We may also assume that all such trees are irreducible: otherwise it follows from Corollary \ref{coll} (see the appendix) that there is only one deformation space of   trees, and the theorem is trivially true.

Let  $(S_\alpha )_{\alpha \in
A}$ be the  set of   
(isomorphism
 classes of) universally compatible   trees. We have to find a
universally compatible  tree $T$ which dominates every
$S_\alpha $. By
Lemma \ref{lem_Zorn}, we only need $T$ to dominate all trees in a countable set   $S_k$, $k\in  \N$. 
 
 Let  $T_k$ be  the lcm $S_0\vee\dots\vee S_k$ (see Definition \ref{lcm});
 it is universally compatible by Assertion (2)
of Proposition \ref{prop_lcm},  and refines $T_{k-1}$.
Proposition \ref{prop_lim_compat} yields the desired tree $T$: it dominates every $T_k$, hence every $S_k$,
and it is universally compatible by Corollary \ref{cofer}.
\end{proof}

\begin{proof}[Proof of Proposition \ref{prop_lim_compat}]
By    Dunwoody's accessibility (see 
Proposition \ref{prop_accessibility_rel}), there exists a tree $S$  
which dominates every $T_k$ (this is where we use finite presentability). But of course we cannot claim that it is  universally
compatible.

 We may assume that $T_k$ and $S$ are minimal, that $T_{k+1}$ is different from $T_k$, and that the gcd $S\wedge T_k$ (see Definition \ref{gcd}) is independent of
$k$. We define $S_k=S\vee T_k$;  it has no redundant vertices  (see Remark \ref{redun}). We denote by $\Delta_k,\Gamma,\Gamma_k$ 
the
quotient graphs of groups of
$T_k ,S ,S  _k$, and we let $\pi _k:\Gamma _k\to \Gamma $ be the collapse map  (see Figure \ref{fig_raff}).
We
denote by $\rho _k$ the collapse map $\Gamma _{k+1}\ra \Gamma _k$.

\begin{figure}[htbp]
 \centering
 \includegraphics{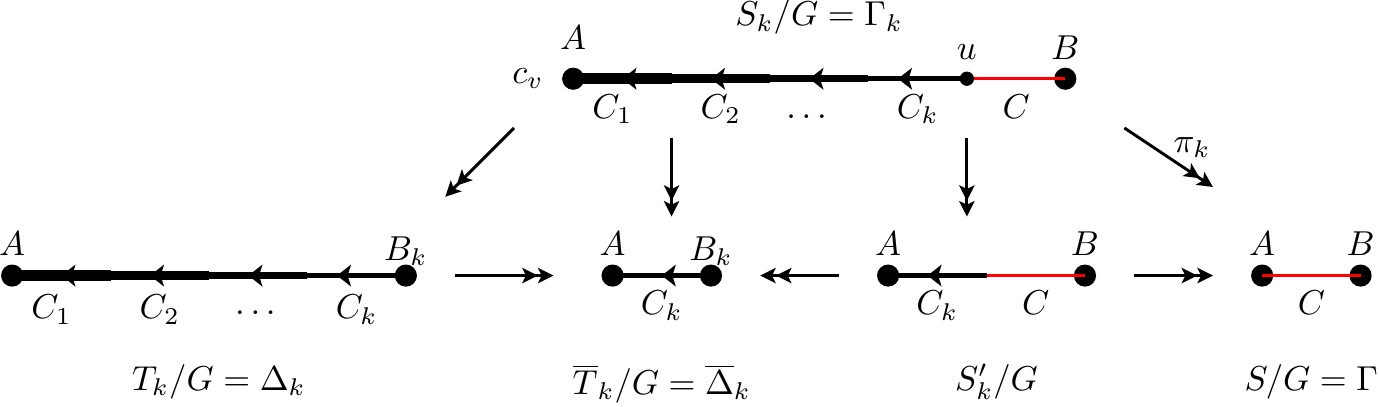}
 \caption{Segments in the quotients of the trees  $T_k$, $S$, $S_k$, $\ol T_k$, $S'_k$, with $C_1\supset C_2\supset \dots\supset C_k \supset C$,
and $B_k=B*_C C_k$.}
 \label{fig_raff}
\end{figure}

The trees $S_k$ all belong to the deformation space of $S$, and
$S_{k+1}$ strictly refines $S_k$. In particular, the number of edges of   $ \Gamma_
k $ grows.  The idea now is the following. Accessibility
holds within a given deformation space (see \cite{GL2} page 147; this is an easy form of accessibility, which requires no smallness or finite presentability hypothesis,  and in any case is not used  directly here). This implies that  the growth of $\Gamma_k$ occurs through the creation of a bounded number of
long segments whose interior  vertices   have  valence 2, with one of the incident edge groups equal to
the vertex group (but the other edge group is smaller than the vertex group, since $S_k$ has no redundant vertices). We now make this precise.

Fix $k$.
For each vertex $v\in \Gamma $, define $Y_v=\pi_k\m(\{v\})\subset
\Gamma _k$. The $Y_v$'s are disjoint, and edges of $\Gamma _k$ not contained in $ \cup_v Y_v$
correspond to edges of $\Gamma $.

Since $S_k$ and $S$ are in the same deformation
space, 
  $Y_v$ is a tree of groups, 
and  it contains a vertex $c_v$
whose vertex group equals the fundamental group of $Y_v$ (which is the vertex group of $v$ in $\Gamma$). This $c_v$ may fail to
be unique, but we can choose one for every $k$  in a way which is compatible with the   maps
$\rho _k$. We orient edges of $Y_v$ towards $c_v$. The group carried by
such an edge is then equal to the group carried by its initial vertex. 

Say that a vertex $u\in Y_v$ is    \emph{peripheral} if $u=c_v$ or $u$   is  adjacent to an  edge
of
$\Gamma _k$ which is not in $Y_v$ (\ie is mapped onto an edge of $\Gamma $  by $\pi _k$).  By
minimality of
$S_k$, each terminal vertex
$u_0$ of
$Y_v$ is peripheral (because it carries the same group as the initial edge of the segment
$u_0c_v$).  

In each $\Gamma _k$, the total  number of peripheral vertices is bounded by $2|E(\Gamma) |+ | V(\Gamma) | $. 
It follows that the number of points of valence $\ne2$ in $\cup_v Y_v$ is   bounded.
Cutting each $Y_v$   at its peripheral vertices 
and   its points of valence $\ge3$ produces the \emph{segments} of $\Gamma _k$ mentioned
earlier.   On the example of Figure \ref{fig_raff}, there is one segment $c_vu$ in $\Gamma_k$, 
corresponding to the edges labelled $C_1,\dots,C_k$.
The point $c_v\in \Gamma_k$ is the vertex labelled by $A$.  
The vertex $v$ of $\Gamma$ to which the segment corresponds is the vertex of $\Gamma$ labelled $A$.
The vertex $u$ is peripheral.
 
Having defined segments for each $k$, we now let $k$ vary. 
The preimage of a segment of $\Gamma _k$ under the map $\rho 
_k$ is a union of segments of $\Gamma _{k+1}$. Since the number of segments is bounded
independently of $k$, we may assume that $\rho _k$ maps every segment of $\Gamma _{k+1}$ onto
a segment of $\Gamma _k$. In particular, the number of segments is independent of $k$.

 Recall that we have oriented the edges of $Y_v$ towards
  $c_v$. Each edge  contained in   $\cup _v Y_v$ carries the same group as its initial
vertex, and edges in a given segment are coherently oriented. Segments are therefore oriented.

There are various ways of performing collapses on $\Gamma _k$. Collapsing all edges contained
in segments yields $\Gamma $ (this does not change the deformation space). On the other hand, one
obtains $\Delta_k=T_k/G$ from $\Gamma _k$ by collapsing some of the edges which are not contained in any
segment (all of them if $S\wedge T_k$ is   trivial).

The segments of $\Gamma _k$ may be viewed as segments in   
$\Delta_k$, but collapsing
the initial edge of a segment of $\Delta_k$ may now change the deformation space (if the
group  carried by the initial  point  of the segment  has increased when $\Gamma_k$ is collapsed to $\Delta_k$).

We define a graph of groups $\ol\Delta_k$ by collapsing, in each segment of
$\Delta_k$, all edges but the initial one. The corresponding tree $\ol T _k$ is a collapse of $T_k$ which
belongs to the same deformation space as $T_k$. Moreover, the number of edges of $\ol\Delta 
_k$ (prime factors of $\ol T _k$)  is constant: there is one per segment, and one for each common prime factor of $T_k$ and $S$.
 
  Let $\ell_k:G\to \Z$ be the length function of $\ol T _k$.

  \begin{lem}
The sequence $\ell_k$  is non-decreasing  (\ie every sequence $\ell_k(g)$ is non-decreasing) and
converges. 
\end{lem}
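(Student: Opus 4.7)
My plan is to establish the two assertions of the lemma separately: first that $(\ell_k)$ is non-decreasing, then that it is uniformly bounded. Since $\ell_k(g)$ is always a non-negative integer, the combination of these two properties yields convergence (and in fact pointwise stabilization).

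For monotonicity, I will use that $\ol T_k$ lies in the deformation space of $T_k$ and $\ol T_{k+1}$ in that of $T_{k+1}$. Because $T_{k+1}$ refines $T_k$, the deformation space of $T_{k+1}$ dominates that of $T_k$, so $\ol T_{k+1}$ dominates $\ol T_k$. This yields a $G$-equivariant surjective map $f\colon \ol T_{k+1}\to \ol T_k$, which after subdividing $\ol T_{k+1}$ if necessary I may take to be a morphism (sending every edge to an edge, with no collapse). A morphism between simplicial trees equipped with the edge-length-$1$ metric never increases translation lengths: the image of the axis of a hyperbolic element $g\in G$ in $\ol T_{k+1}$ is a $g$-invariant connected subset of $\ol T_k$ containing the axis of $g$ in $\ol T_k$, so a fundamental domain in the image has length at most $\ell_{k+1}(g)$. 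Hence $\ell_k(g)\le \ell_{k+1}(g)$ for every $g\in G$.

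For the uniform bound, I invoke the tree $S$ provided by relative Dunwoody accessibility, which by construction dominates every $T_k$ and therefore every $\ol T_k$. The same argument as above (passing to a subdivision of $S$ if necessary and factoring a dominating map as a morphism) produces a $G$-equivariant morphism $S\to \ol T_k$, yielding $\ell_k(g)\le \ell_S(g)$ for every $k$ and every $g$. Thus the non-decreasing integer-valued sequence $(\ell_k(g))_k$ is bounded above by $\ell_S(g)$, and therefore stabilizes to a limit.

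The main technical point is the standard but slightly delicate verification that the dominating maps $\ol T_{k+1}\to \ol T_k$ and $S\to \ol T_k$ can be chosen to be morphisms with respect to the edge-length-$1$ metric on a suitable subdivision of the source, rather than merely continuous equivariant maps that might stretch edges. Once this is granted, the monotonicity and the uniform bound both follow immediately from the general principle that morphisms do not increase translation lengths, and the lemma is established.
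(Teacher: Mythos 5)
There is a genuine gap, and it sits at the exact point your proposal waves through: the claim that a domination map $\ol T_{k+1}\to \ol T_k$ (or $S\to \ol T_k$) can be taken to be non-increasing on translation lengths. Making an equivariant map simplicial by subdividing the \emph{source} changes the source's length function (each new edge gets length $1$), so the inequality you obtain is $\ell_k(g)\le \ell_{\,\mathrm{subdivided}}(g)$, not $\ell_k(g)\le\ell_{k+1}(g)$. Domination by itself gives no control on translation lengths: two trees in the same deformation space dominate each other yet generally have different length functions (for instance $S_k=S\vee T_k$ lies in the same deformation space as $S$ but its length function grows with $k$ --- this is precisely why the paper has to invoke Forester's observation with a constant $C$ depending on a bound for the number of edge orbits, rather than the naive ``$S$ dominates $\ol T_k$, hence $\ell_k\le \ell_S$''). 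So both your monotonicity step and your uniform bound step fail as stated. Note also that the inequality $\ell_k\le\ell_{k+1}$ \emph{would} be immediate if $\ol T_k$ were a collapse of $\ol T_{k+1}$, but it is not in general: the edge surviving in $\ol\Delta_{k}$ (the initial edge of a segment) need not correspond to the edge surviving after passing to level $k+1$, and this mismatch is the whole content of the lemma.

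What the paper actually does is different in both halves. For monotonicity, it observes that $\ol T_{k-1}$ and $\ol T_k$ are both collapses of $T_k$, so each length function is a sum of prime-factor length functions of $T_k$; it then compares, segment by segment, the prime factor of the edge $f_k$ surviving in $\ol T_{k-1}$ with that of the initial edge $e_k$ surviving in $\ol T_k$. Since $f_k$ carries the same group as its initial vertex, any lift of that vertex meets only one lift of $f_k$, so along any translation axis in $T_k$ every crossing of a lift of $f_k$ is immediately preceded by a crossing of a lift of $e_k$; hence the $f_k$-prime factor is bounded above by the $e_k$-prime factor, and summing over segments gives $\ell_{k-1}\le\ell_k$. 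For convergence, the paper sets $S'_k=S\vee\ol T_k$, notes that it is a collapse of $S_k$, hence lies in the deformation space of $S$ and has a bounded number of edge orbits, and applies Forester's observation to get $\ell(S'_k)\le C\,\ell(S)$ with $C$ independent of $k$; since $\ell_k\le\ell(S'_k)$, the non-decreasing integer-valued sequences $\ell_k(g)$ are bounded and stabilize. Your proof would need to be replaced by arguments of this kind; the ``morphisms don't stretch'' principle cannot be made to apply here.
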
 

\begin{proof}

The difference between
$\ell_k
$ and $\ell_{k -1}$ comes from the fact that initial edges of segments of $\Delta  _{k }$ may be
collapsed in $\Delta  _{k -1}$. Fix a segment $L$ of $\Delta _k$. Let $e_k$ be its initial
edge. We assume that $e_k$ is distinct from  the
 edge $f_k$  mapping onto the initial edge of the image of $L$ in $ \Delta
_{k -1}$.  

Assume for
simplicity that $e_k$ and $f_k$ are adjacent (the general case is similar). The group carried by
$f_k$ is equal to the group carried by its initial vertex $v_k$. A given lift $\tilde v_k$  of
$v_k$ to
$  T _k$  is therefore adjacent to only  one lift of $f_k$ (but to several lifts of $e_k$).
On any translation axis   in $ T _k$, every occurrence of a lift of $f_k$ is immediately preceded by
an occurrence of a lift of $e_k$.  The length function of the prime factor of $  T _k$ and $\ol T_{k-1}$ corresponding
to
$f_k$ is therefore bounded from above by that of the prime factor of $  T _k$ and $\ol T_{k}$ corresponding to $e_k$. Since this is true for every
segment, we get $\ell_{k -1}\le \ell_k$ as required.

Let $S'_k=S\vee\ol T _k$. It   collapses to  $S$, belongs to the same deformation space as $S$
(because it is a collapse of $S_k$), and the number of edges of $S'_k/G$ is bounded. By an
observation due to Forester (see \cite[p.\ 169]{GL2}), this implies an inequality 
$ \ell(S'_k)\le C \ell (S)$, with $C$ independent of $k$. Since $\ell_k\le \ell(S'_k)$, we get convergence.
\end{proof}

We call $\ell$ the limit of $\ell_k$. It is the length function of a tree $T$ because the set of length
functions of trees is closed  \cite 
{CuMo}.   This tree is simplicial
  because
$\ell$ takes values in $\Z$  (see Example \ref{arbsimp}),
and   irreducible because $\ell_k$ is  non-decreasing. 
It  is universally compatible as a limit of universally compatible trees, by  Corollary \ref{cofer}.
Since $\ell\geq \ell_k$, every $g\in G$ elliptic in $T$ is elliptic in $T_k$,
and $T$ dominates $T_k$ by 
Lemma \ref{cor_Zor}. Each $H_i$ is elliptic in $T$ because it is finitely generated and its elements are elliptic.

There remains to prove that every edge stabilizer  $G_e$ of $T$ belongs to $\cala$. If $G_e$ is
finitely generated, there is a simple argument using the  equivariant Gromov topology. In general,
we argue as follows. We may find hyperbolic
elements $g,h$ such that $G_e$ is the stabilizer of the bridge between $A(g)$ and $A(h)$ 
(the bridge might be $e\cup e'$  as in the proof of Lemma \ref{po} if an endpoint of $e$ is a valence 2 vertex). Choose $k$   so
that the values of $\ell_k$ and $\ell$ coincide on $g,h,gh$. In particular, the axes of $g$ and $h$ in
$\ol T_k$ are disjoint.

Any $s\in G_e$ is elliptic in $\ol T_k$ since $\ell_k \leq \ell $.
Moreover, $\ell_k(gs)\leq \ell(gs)\le\ell(g)=\ell_k(g)$. The fixed point set of $s$ in $\ol T_k$ must
  therefore
intersect the axis of
$g$, since otherwise $\ell_k(gs)>\ell_k(g)$. Similarly, it intersects the axis of $h$.
It follows that $G_e$ fixes the bridge between the axes of $g$ and $h$ in $\ol T_k$, so $G_e\in\cala$.
  This concludes the proof of Proposition \ref{prop_lim_compat}.
\end{proof}

\subsection{The compatibility JSJ tree $\Tco$}\label{adur}

 We  shall now deduce from \cite{GL2}   that  $\Dco$, if irreducible,    contains a canonical\index{canonical tree}
 tree $\Tco$, which we call the \emph{compatibility JSJ tree.} 
It is fixed under any automorphism of $G$ that leaves $\Dco$ invariant.
Note that  $\tco$  may be refined to a JSJ tree (Lemma \ref{lem_rafin}).

\begin{lem}  An irreducible\index{irreducible tree, deformation space}  deformation space 
$\cald$ can only contain  finitely many reduced
 universally compatible  trees. 
\end{lem}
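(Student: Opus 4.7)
The plan is to show that every reduced universally compatible tree in $\cald$ is a reduced collapse of a single universally compatible tree with finitely many orbits of edges, and then to invoke finite accessibility within the deformation space $\cald$ (see \cite{GL2}, p.~147), which guarantees a uniform bound $N$ on the number of orbits of edges of reduced trees in $\cald$.

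First I would verify that for any two reduced universally compatible $T_1,T_2\in \cald$, the lcm $L = T_1\vee T_2$ exists (immediate from mutual compatibility), is universally compatible by Proposition \ref{prop_lcm}(2), and lies in $\cald$. The non-trivial point is the last one: if a subgroup $H$ is elliptic in $T_1$ (equivalently in $T_2$, since both are in $\cald$) and fixes $v_i\in T_i$, then $H$ preserves each of the subtrees $Y_i = \pi_i^{-1}(v_i)\subset L$, and the bridge between $Y_1$ and $Y_2$ in $L$ is pointwise fixed by $H$; so $H$ is elliptic in $L$. By induction, the lcm of any finite collection of reduced universally compatible trees in $\cald$ remains in $\cald$ and universally compatible.

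Assume for contradiction that $\cald$ contains infinitely many distinct reduced universally compatible trees $T_1,T_2,\dots$, and form the increasing sequence of refinements $L_k = T_1\vee\cdots\vee T_k$, all in $\cald$ and universally compatible. Applying Proposition \ref{prop_lim_compat} to this sequence yields collapses $\bar L_k$ of $L_k$ in the same deformation space as $L_k$, together with a simplicial universally compatible limit tree $T$ that dominates every $L_k$, hence every $T_i$. Since $T$ is universally compatible, it is compatible with each reduced $T_i$, and combining compatibility with domination shows that $T_i$ is a collapse of $T$.

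The main obstacle is controlling the number of orbits of edges of $T$: I would argue finiteness by reusing the segment-growth analysis from the proof of Proposition \ref{prop_lim_compat}, which relies precisely on finite accessibility within a fixed deformation space. The key point is that passing from $L_k$ to $L_{k+1}$ inside $\cald$ can only create boundedly many new ``long segments'' in $\Gamma_k = L_k/G$, and the length function analysis in that proof then bounds the translation lengths uniformly, forcing the limit $T$ to have only finitely many orbits of edges. Once this is established, $T$ admits only finitely many collapses up to isomorphism, contradicting the assumption that infinitely many distinct reduced $T_i$'s appear as collapses of $T$, and thereby proving the lemma.
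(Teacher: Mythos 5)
Your overall strategy (form the lcm's $L_k=T_1\vee\cdots\vee T_k$ inside $\cald$ and derive a contradiction from a bound on numbers of orbits of edges) begins like the paper's argument, but the way you try to get the finiteness has genuine gaps. First, you route everything through Proposition \ref{prop_lim_compat}, which requires $G$ to be finitely presented relative to a finite family of finitely generated subgroups; the lemma carries no such hypothesis and is needed in full generality (it is used, for instance, to define $\Tco$ in the acylindrical setting of Theorem \ref{ouf3}, where $G$ is only finitely generated and $\calh$ is arbitrary). So even a completed version of your limiting argument would prove a strictly weaker statement. Second, the step ``$T$ is universally compatible and dominates $T_i$, hence $T_i$ is a collapse of $T$'' is false: compatibility plus domination only yields that $T\vee T_i$ lies in the deformation space of $T$ and collapses onto $T_i$, not that $T$ itself collapses onto $T_i$. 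For example, with $C\subsetneq C'\subset A$, the one-edge trees of $A*_CB$ and of $A*_{C'}(C'*_CB)$ are compatible and the first dominates the second, yet the second is not a collapse of the first. Since your final counting argument needs each $T_i$ to be a collapse of the single tree $T$ (and it is not even clear that the limit tree of Proposition \ref{prop_lim_compat} lies in $\cald$), this gap is fatal as written. Third, the finiteness input you invoke at the start is misstated: what \cite{GL2} provides is a bound on the number of orbits of edges of trees in $\cald$ that are reduced in the sense of Bestvina--Feighn, not of trees reduced in the sense used in this paper; moreover the $L_k$ need not be reduced in the latter sense, and the ``segment-growth analysis'' you propose to reuse to bound the edges of the limit tree is not carried out and is not designed for that purpose.

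The paper's proof needs no limit at all: it forms $S_k=T_1\vee\cdots\vee T_k$ (universally compatible and in $\cald$ by Proposition \ref{prop_lcm}), observes that every edge of $S_k$ is a surviving edge (it survives in one of the $T_i$'s), so $S_k$ is BF-reduced, and that $\cald$ is non-ascending; the bound of \cite{GL2} on orbits of edges of BF-reduced trees in a fixed deformation space then forces the increasing sequence $S_k$ to stabilize, so all the $T_i$ are collapses of a single tree with finitely many orbits of edges, and hence there are only finitely many of them. If you want to repair your write-up, drop the appeal to Proposition \ref{prop_lim_compat} and the compatibility-plus-domination claim, and instead apply this surviving-edge/BF-reduced argument directly to your trees $L_k$.
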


  Recall (see Subsection \ref{comp}) that $T$ is \emph{reduced}\index{reduced tree}  if no proper collapse of $T$ lies in the same deformation space as $T$.   If $T$ is not reduced, one may perform collapses so as to obtain a reduced tree $T'$ in the same deformation space (and $T'$ is
universally   compatible if $T$ is).

\begin{proof} This follows from results in \cite{GL2}. We refer to \cite{GL2} for definitions  not given here. Suppose that there are infinitely many reduced  universally compatible trees $T_1,T_2,\dots$. Let $S_k=T_1\vee T_2\vee\dots\vee T_k$. It is an $(\cala,\calh)$-tree and belongs to $\cald $ by Assertion (3) of Proposition \ref{prop_lcm} . 

As pointed out on page 172 of \cite{GL2}, the tree $S_k$ is BF-reduced, \ie reduced in the sense of \cite{BF_bounding}, because all its edges are surviving edges (they survive in one of the $T_i$'s),  and the space $\cald$ is non-ascending 
by Assertion (4) of Proposition 7.1 of \cite{GL2}. 
 Since  there is a bound $C_\cald$ for the number of orbits of edges of a BF-reduced tree in  $\cald$ 
(\cite[Proposition 4.2]{GL2}), 
  the sequence $S_k$ is eventually constant. 
\end{proof}      

\begin{rem}  The proof shows that $\cald$   only contains  finitely many reduced
  trees which are compatible with every tree in $\cald$.
\end{rem}

\begin{cor} \label{corpref}
 If $\cald$ is irreducible and contains a  universally compatible tree, it   has a \emph{preferred element:} the lcm\index{lcm of pairwise compatible trees} of its reduced universally compatible trees.  
\qed 
\end{cor}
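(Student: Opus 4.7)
The plan is to extract the preferred element from the finite list of reduced universally compatible trees in $\cald$ by taking their lcm, and then show that this lcm is itself a universally compatible tree in $\cald$ that depends only on $\cald$.

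First I would verify that the set of reduced universally compatible trees in $\cald$ is nonempty. Starting from any universally compatible $T\in\cald$, a sequence of edge collapses produces a reduced tree $T'$ in the same deformation space. I would check that $T'$ remains universally compatible: given an arbitrary tree $S$, a common refinement $R$ of $T$ and $S$ also refines $T'$ (since $T$ collapses to $T'$), so $R$ is a common refinement of $T'$ and $S$. Combined with the preceding lemma, this yields a finite nonempty list $T_1,\ldots,T_n$ of reduced universally compatible trees in $\cald$.

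Next, since each $T_i$ is universally compatible, the trees $T_1,\ldots,T_n$ are pairwise compatible, so Lemma \ref{ppcm} from the appendix supplies their lcm $\Tco:=T_1\vee\cdots\vee T_n$. Assertion (2) of Proposition \ref{prop_lcm} guarantees that $\Tco$ is itself universally compatible (as an lcm of pairwise compatible universally compatible trees), and Assertion (3) of the same proposition guarantees that $\Tco$ lies in $\cald$ (as an lcm of trees in the common deformation space $\cald$).

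Finally, I would observe that the construction is canonical: the collection $\{T_1,\ldots,T_n\}$ is intrinsically attached to $\cald$, so any automorphism of $G$ preserving $\cala$, $\calh$ and $\cald$ permutes these trees and therefore fixes $\Tco$ (up to equivariant isomorphism). None of the steps is a serious obstacle here, since the key ingredients — finiteness of the list, existence of the lcm, and preservation of universal compatibility and of the deformation space under taking lcm's — have all been established in the preceding lemma and in Proposition \ref{prop_lcm}; the content of the corollary is essentially to assemble these facts.
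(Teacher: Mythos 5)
Your proof is correct and follows essentially the same route as the paper: nonemptiness via collapsing to a reduced universally compatible tree, finiteness from the preceding lemma, and existence of the lcm together with its universal compatibility and membership in $\cald$ from Proposition \ref{prop_lcm} (assertions (2) and (3)). The only cosmetic point is that for the lcm of $n$ trees the relevant statement is Proposition \ref{prop_lcm} rather than Lemma \ref{ppcm} (which treats two trees), and you cite both anyway.
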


 This preferred element is universally compatible by Assertion (2) of Proposition \ref{prop_lcm}.

\begin{dfn} [Compatibility JSJ tree $\Tco$] \label{deftco}
\index{compatibility JSJ tree} 
If the compatibility JSJ deformation space $\Dco$ exists and is irreducible, its preferred element is called  the \emph{compatibility JSJ tree $\Tco$  of $G$ (over $\cala$ relative to $\calh$)}.\index{0TC@$\Tco$: compatibility JSJ tree} If $\Dco$ is trivial, we define $\Tco$ as the trivial tree (a point). 
\end{dfn}

 It may happen that $\Dco$ is neither trivial nor irreducible. It   then follows from Remark \ref{coll} that it is the   only  non-trivial   deformation space of  trees. If there is a unique reduced tree $T$ in $\Dco$ (in particular, if 
$\Dco$   consists of actions on a line), we define $\Tco=T$. Otherwise we do not define $\Tco$. See Subsection \ref{gbsd} for an example where $\Dco$ consists of trees with exactly one fixed end.

\section{Examples}\label{exemp}

We start with various examples,  and   we then explain in Subsection \ref{ac} that the tree of cylinders  $(T_a)_c^*$ of Section 
\ref{sec_cyl} belongs to the compatibility deformation space if all groups $G_{[A]}$ belong to $\cala$. 

 For simplicity we assume   $\calh=\es$  in Subsections \ref{freeg} through \ref{sec_PD}.

\subsection{Free groups}  \label{freeg}
When $\cala$ is $\Aut(G)$-invariant, the compatibility JSJ tree $\tco$ is
$\Out(G)$-invariant. This sometimes forces it to be trivial. Suppose for instance that
$G$ has a finite generating set $a_i$ such that all elements $a_i$ and $a_ia_j^{\pm1}$
($i\ne j$) belong to the same $\Aut(G)$-orbit. Then the only $\Out(G)$-invariant length
function $\ell$ is the trivial one. This follows from Serre's lemma (see Subsection \ref{tre})
 if the generators
are elliptic,   
from the inequality $\max(\ell(a_ia_j),
\ell(a_ia_j^{-1}))\ge\ell(a_i)+\ell(a_j)$ (see Lemma \ref{calcul}) if they are hyperbolic. In particular:

\begin{prop} If $G$ is  a free group and $\cala$ is $\Aut(G)$-invariant, then $\tco$ is trivial. \qed
\end{prop}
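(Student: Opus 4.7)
The plan is to reduce this to the general criterion spelled out in the paragraph immediately preceding the proposition: since $\cala$ is $\Aut(G)$-invariant, the compatibility JSJ deformation space $\Dco$ is $\Aut(G)$-invariant, and hence by canonicity of $\Tco$ (Definition \ref{deftco}) the tree $\Tco$ itself is invariant under $\Aut(G)$. Consequently its translation length function $\ell$ is $\Out(G)$-invariant, \emph{i.e.} $\ell(\varphi(g))=\ell(g)$ for every $\varphi\in \Aut(G)$.

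Next I would verify the hypothesis of the preceding paragraph for a free basis $a_1,\dots,a_n$ of $G=\F_n$ (the case $n\le 1$ being trivial since then $\cala$ offers no non-trivial compatibility splitting invariant under $\Out(\F_n)$). By the standard Nielsen transformations, for $i\ne j$ the tuples
\[
(a_1,\dots,a_n),\quad (a_1,\dots,\underbrace{a_ia_j^{\pm 1}}_{\text{position }i},\dots,a_n)
\]
are both free bases of $\F_n$, so there is an automorphism of $\F_n$ sending $a_i$ to $a_ia_j^{\pm 1}$ while fixing the remaining generators. Similarly any permutation of the basis is realized by an automorphism. Hence all the elements $a_i$ and $a_ia_j^{\pm 1}$ ($i\ne j$) lie in a single $\Aut(G)$-orbit, so they share a common translation length $L:=\ell(a_1)$.

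It then remains to run the dichotomy already stated in the text. If $L=0$, then every $a_i$ and every product $a_ia_j$ is elliptic in $\Tco$; by Serre's lemma (Subsection \ref{tre}) the whole group $G=\langle a_1,\dots,a_n\rangle$ is elliptic, so $\Tco$ is trivial by minimality. If $L>0$, then the inequality $\max(\ell(a_ia_j),\ell(a_ia_j^{-1}))\ge \ell(a_i)+\ell(a_j)$ (an immediate consequence of the standard formulas for translation lengths in trees, recorded as Lemma \ref{calcul} of the appendix) forces $L\ge 2L$, a contradiction. Hence $L=0$ and $\Tco$ is a point.

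There is no real obstacle: the entire content of the proposition is that $\F_n$ admits a generating set fitting the hypothesis of the preceding paragraph, which is immediate from Nielsen's description of $\Aut(\F_n)$. The only thing worth double‑checking is the small‑rank case $n\le 1$, where one can simply observe that the only irreducible $\Aut(\Z)$‑invariant deformation space would consist of actions on a line, and these are ruled out by reducedness together with the involution $a\mapsto a^{-1}$, so $\Tco$ is trivial by Definition \ref{deftco}.
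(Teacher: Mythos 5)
Your argument is essentially the paper's own proof: invariance of $\Tco$ gives an $\Out(G)$-invariant length function, Nielsen transformations show that a free basis $a_1,\dots,a_n$ together with the elements $a_ia_j^{\pm1}$ ($i\ne j$) lies in a single $\Aut(\F_n)$-orbit, and then Serre's lemma (elliptic case) or the inequality of Lemma \ref{calcul} (hyperbolic case) forces $\ell\equiv 0$, hence $\Tco$ trivial. One caveat: your aside on rank $\le 1$ is not right as stated — for $G=\Z$ the action on a line is equivariantly isomorphic to its precomposition with $a\mapsto a^{-1}$ (conjugate by a reflection), so it is \emph{not} ruled out by the involution; this degenerate case, which requires $i\ne j$ in the main argument, is simply not addressed by the paper's proof either and is best excluded (or handled by noting that $\Dco$ is then not irreducible, so Definition \ref{deftco} does not produce a tree via the irreducible case).
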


\subsection{Algebraic rigidity}

 The following result   provides simple examples with
$\tco$ non-trivial.

\begin{prop} \label{prop_rig}
Assume that there is only one reduced JSJ tree  $T_J\in\cald_{JSJ}$, and that
$G$ does not split over a subgroup contained with infinite index in a group of $\cala$.
Then $\tco$ exists and equals $T_J$. 
\end{prop}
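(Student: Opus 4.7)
The plan is to show that $T_J$ itself is universally compatible; once this is established, $T_J\in\Dco$. Since $\cald_{JSJ}$ always dominates $\Dco$, while $\Dco$ is defined as the maximal deformation space containing a universally compatible tree, this forces $\Dco=\cald_{JSJ}$. Then $T_J$, being the unique reduced tree in $\Dco$ and universally compatible, is the preferred element of $\Dco$ by Corollary \ref{corpref} (or by the convention when $T_J$ is trivial), so $\tco = T_J$.

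To verify universal compatibility of $T_J$, fix an arbitrary $(\cala,\calh)$-tree $T$; I would argue that $T$ and $T_J$ admit a common refinement. The key point, and this is where the non-splitting hypothesis enters, is to establish that $T$ is elliptic with respect to $T_J$. Since $T_J$ is universally elliptic, it is elliptic with respect to $T$, so if $T$ were not elliptic with respect to $T_J$, Assertion~(3) of Lemma \ref{cor_Zor} applied to the pair $(T_J,T)$ would produce a splitting of $G$ (necessarily relative to $\calh$) over a subgroup of infinite index in an edge stabilizer of $T$, i.e., over a subgroup of infinite index in a group of $\cala$. This contradicts the hypothesis, so $T$ is indeed elliptic with respect to $T_J$.

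With both trees elliptic with respect to each other, Proposition \ref{prop_refinement} applied to $(T_1,T_2)=(T,T_J)$ produces a standard refinement $\hat T$ of $T$ that dominates $T_J$. By Lemma \ref{lem_sup}(2), $\hat T$ is universally elliptic; combined with the fact that it dominates the JSJ tree $T_J$, this places $\hat T$ in $\cald_{JSJ}$. The uniqueness assumption now comes into play: collapsing $\hat T$ to a reduced representative in its deformation space must yield the unique reduced tree $T_J$. Thus $\hat T$ collapses onto $T_J$; since by construction it also collapses onto $T$, it is a common refinement, and $T$ is compatible with $T_J$, as required.

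The main obstacle is exactly the ellipticity of $T$ with respect to $T_J$: without the non-splitting assumption, Proposition \ref{prop_refinement} would only give a refinement of $T_J$ dominating $T$ via a map involving folds, which is generally not sufficient to produce a common refinement. The remainder of the argument is purely bookkeeping, relying on the uniqueness of the reduced element of $\cald_{JSJ}$ and the identification of the preferred element of $\Dco$ with that reduced element.
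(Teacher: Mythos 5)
Your overall strategy (reduce to showing $T_J$ is universally compatible, get ellipticity of an arbitrary tree $T$ with respect to $T_J$ from the non-splitting hypothesis via Assertion (3) of Lemma \ref{cor_Zor}, then build a standard refinement and invoke uniqueness of the reduced JSJ tree) is the same as the paper's, and the first and last parts are fine. But there is a genuine gap in the middle step. You apply Lemma \ref{lem_sup}(2) to conclude that the standard refinement $\hat T$ of $T$ dominating $T_J$ is universally elliptic; that lemma requires \emph{both} trees to be universally elliptic, whereas here $T$ is an arbitrary $(\cala,\calh)$-tree. By Proposition \ref{prop_refinement}, edge stabilizers of $\hat T$ only fix an edge of $T$ \emph{or} of $T_J$, so those coming from $T$ need not be universally elliptic. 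Consequently $\hat T$ need not lie in $\cald_{JSJ}$ at all: if it did, $T_J$ would dominate $\hat T$, hence dominate $T$, which fails whenever $T$ is not universally elliptic (e.g.\ a splitting dual to a curve in a flexible QH vertex, or any essential curve when $G$ is a closed surface group and $T_J$ is a point -- situations allowed by the hypotheses). So the assertion ``collapsing $\hat T$ to a reduced representative in its deformation space yields $T_J$'' is unjustified: the deformation space of $\hat T$ is generally not $\cald_{JSJ}$.

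The repair is the intermediate step the paper inserts. Let $T'$ be obtained from $\hat T$ by collapsing all edges whose stabilizer is not universally elliptic. Any equivariant map $f:\hat T\to T_J$ is constant on such edges (if $f$ is non-constant on an edge, that edge's stabilizer fixes an arc of $T_J$, hence lies in an edge stabilizer of $T_J$ and is universally elliptic), so $f$ factors through $T'$ and $T'$ dominates $T_J$. Since edge stabilizers are unchanged under collapse, $T'$ is universally elliptic, hence a JSJ tree; only now does uniqueness of the reduced JSJ tree apply, giving that $T'$ (hence $\hat T$) refines $T_J$, so $\hat T$ is a common refinement of $T$ and $T_J$ and $T_J$ is universally compatible. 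With this insertion your argument matches the paper's proof; your concluding identification of $\Dco$ with $\cald_{JSJ}$ and of $\tco$ with $T_J$ is then correct.
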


\begin{proof}   Let $T$ be any $\cala$-tree. The second assumption implies that
$T$ is elliptic with respect to  $T_J$ by  Remark 2.3 of \cite{FuPa_JSJ} or  Assertion (3) of Lemma \ref{cor_Zor},
so  we can consider a standard refinement 
$\hat T$ of $T$   dominating $T_J$  as in  Proposition \ref 
{prop_refinement}.
Any equivariant map $f:\hat T\ra T_J$ must
 be constant on any edge whose stabilizer is not universally elliptic,
hence factors through the tree $T'$ obtained from $\hat T$ by collapsing these edges.
In particular, $T'$ dominates $T_J$ hence is a JSJ
tree because it is universally elliptic. Since $T_J$ is the unique reduced JSJ tree, $T'$
is a refinement of $T_J$, so $T_J$ is compatible with $T$. This shows that $T_J$ is universally compatible.  Thus $\tco=T_J$. 
\end{proof}

A necessary and sufficient condition for a tree to be the unique reduced
tree in its deformation space is given in \cite{Lev_rigid} (see also \cite{ClFo_Whitehead}). 

The proposition applies for instance  to free splittings  and splittings over finite groups,
whenever there is a JSJ tree with only one orbit of edges. This provides examples of
virtually free groups with $\tco$ non-trivial:  any amalgam $F_1*_F F_2$ with $F_1, F_2$ finite and $F\ne F_1, F_2$ has this property (with $\cala$ the set of finite subgroups).

\subsection{Free products}
\index{free splitting}
 Let $\cala$ consist only of the trivial group. Let
$G=G_1*\dots*G_p*\F _q$ be a Grushko decomposition\index{Grushko decomposition, deformation space} ($G_i$ is non-trivial, not $\Z$, and freely indecomposable,  $\F_q$ is free of rank $q$). 
 If
$p=2$ and $q=0$, or $p=q=1$,   there is a JSJ tree with   one
orbit of edges and $\tco $ is a
one-edge splitting as explained above. 
We now show that \emph{$\tco$ is trivial if $p+q\ge3$ } (of course it is
trivial also if   $G$ is freely indecomposable or free of rank $\ge2$).

Assuming $p+q\ge3$, we actually show that there is no non-trivial 
tree $T$ with trivial edge stabilizers which is invariant under a finite index subgroup of
$\Out(G)$. By collapsing edges, we may assume that
$T$   only has one orbit of edges. Since $p+q\ge3$, we can write $G=A*B*C$ where
$A,B,C$ are non-trivial and $A*B$ is a vertex   stabilizer of $T$. Given a non-trivial $c\in
C$ and $n\ne 0$, the subgroup $c^nAc^{-n}*B$ is the image of $A*B$ by an automorphism
but is not conjugate to $A*B$. This contradicts the invariance of $T$. 

\subsection{(Generalized) Baumslag-Solitar groups}\label{gbsd}\index{generalized Baumslag-Solitar group}
We consider cyclic splittings of generalized Baumslag-Solitar groups\index{generalized Baumslag-Solitar group} (see Subsection  \ref{gbs}).

First consider a solvable  Baumslag-Solitar group\index{Baumslag-Solitar group}   $BS(1,s)$, with $ | s | \ge2$. In this case $\Dco$ is trivial if $s$ is not a prime power. If $s$ is  a prime power, $\Dco$ is the JSJ deformation space (it is   not irreducible).

When   $G=BS(r,s)$ with none of $r,s$ dividing the
other, Proposition \ref{prop_rig} applies   by \cite{Lev_rigid}. 
In particular, $\Dco$ is non-trivial. This holds, more generally, when $G$ is a generalized Baumslag-Solitar group defined by a labelled graph with no   label dividing another label at the same vertex. See \cite{Beeker_compatibility} for a    systematic study of $\Dco$ for generalized Baumslag-Solitar groups.

\subsection{The canonical decomposition of Scott  and Swarup}
\label{scoswa}

\index{Scott-Swarup}\index{canonical tree} Recall that a group is $\VPC_{n}$\index{VPC} (resp.\ $\VPC_{\le n}$)  if it is 
virtually polycyclic of Hirsch length $n$  (resp.\  $ {\le n}$).
 Let $G$ be a finitely presented group, and $n\ge1$. Assume that $G$ does not split over a
    $\VPC_{n-1}$ subgroup, and that $G$ is not $\VPC_{n+1}$.  Let $\cala $ consist of all subgroups of $\VPC_n$ subgroups.
    
    We have shown in \cite{GL5} that the tree of cylinders (for commensurability) of the JSJ deformation space is (up to subdivision) 
    the  Bass-Serre tree $T_{SS}$ of the regular neighbourhood\index{regular neighbourhood} $\Gamma_n=\Gamma(\calf_n:G)$ constructed by Scott-Swarup in Theorem 12.3 of \cite{ScSw_regular+errata}.

    Since $T_{SS}$ is universally compatible 
(\cite[Definition 6.1(1)]{ScSw_regular+errata}, or \cite[Corollary 8.4]{GL4} and \cite[Theorem 4.1]{GL5}), it is dominated by the compatibility deformation space $\Dco$.
        The domination may be strict: if $G=BS(r,s)$, the tree $T_{SS}$ is always trivial but,  as pointed out above, $\Dco$ is non-trivial when none of $r,s$ divides the
other.

\subsection{Poincar\'e duality groups}\label{sec_PD}
 Let  $G$ be a Poincar\'e duality group of dimension $n$ (see also work by Kropholler on this subject \cite{Kro_JSJ}).
Although such a group is not necessarily finitely presented, it is almost finitely presented   \cite[Proposition  1.1]{Wall_PoincareGT},
which is sufficient for Dunwoody's accessiblity, so   the JSJ  deformation space  and the compatibility JSJ deformation space exist.
By \cite[Theorem A]{KroRol_splittings3}, if $G$ splits over a virtually solvable subgroup $H$, then $H$
is 
$\VPC_{n-1}$. We therefore  consider the family $\cala$ consisting of 
$\VPC_{\leq n-1}$-subgroups.

By \cite[Corollary 4.3]{KroRol_relative}, for all  $\VPC_{n-1}$ subgroups $H$, the number of coends $\Tilde e(G,H)$ is $2$.
By \cite[Theorem 1.3]{KroRol_relative}, if $G$ is not virtually polycyclic,
then $H$ has finite index in its commensurator. 
 By Corollary 8.4(2) of \cite{GL4},
this implies that   
the JSJ deformation space 
contains a universally compatible tree (namely its tree of cylinders for commensurability),
 so equals $\Dco$.

But one has more in this context:
any universally elliptic tree is universally compatible.
Indeed, since $\VPC_{n-1}$-subgroups of $G$ have precisely $2$ coends, Proposition 7.4 of
\cite{ScSw_regular+errata} implies that any two one-edge splittings $T_1,T_2$ of $G$ over $\cala$ 
with edge stabilizers of $T_1$ elliptic in $T_2$ are compatible.
Indeed, strong crossing of almost invariant subsets corresponding to $T_1$ and $T_2$
occurs if and only  edge stabilizers of $T_1$ are not elliptic in $T_2$ (\cite[Lemme 11.3]{Gui_coeur}),
and the absence of (weak or strong) crossing is equivalent to compatibility of $T_1$ and $T_2$ \cite{ScSw_splittings}.
To sum up, we have:

\begin{cor}
  Let $G$ be a Poincar\'e duality group of dimension $n$, with $G$  not virtually polycyclic. Let $\cala$ the family of 
$\VPC_{\leq n-1}$-subgroups. Then $\tco$ exists  
 and lies in the JSJ deformation space 
of $G$ over $\cala$. \qed
\end{cor}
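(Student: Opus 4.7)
The plan is to prove the stronger statement $\Dco=\cald_{JSJ}$, which immediately gives both assertions of the corollary once Corollary \ref{corpref} is applied to produce $\tco$ as the preferred element of $\cald_{JSJ}$.

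First I would record existence. Although $G$ need not be finitely presented, by \cite[Proposition 1.1]{Wall_PoincareGT} it is almost finitely presented, which is enough to run Dunwoody's accessibility (Proposition \ref{prop_accessibility}); from this $\cald_{JSJ}$ is constructed as in Theorem \ref{thm_exist_mou} and $\Dco$ as in Theorem \ref{thm_exists_compat}. I would also flag two inputs that drive the rest of the argument: by \cite[Theorem A]{KroRol_splittings3} every infinite edge group in any splitting of $G$ over $\cala$ is $\VPC_{n-1}$, and by \cite[Corollary 4.3]{KroRol_relative} every such subgroup $H$ satisfies $\tilde e(G,H)=2$.

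Next I would establish that $\cald_{JSJ}$ is dominated by $\Dco$, i.e.\ that every universally elliptic tree over $\cala$ is in fact universally compatible. By Lemma \ref{lem_oneed} it suffices to show, for every universally elliptic one-edge splitting $T_1$ and every one-edge splitting $T_2$ of $G$ over $\cala$, that $T_1$ and $T_2$ are compatible. Translate $T_1,T_2$ into the dual almost invariant subsets. Because the edge groups have exactly two coends in $G$, \cite[Lemme~11.3]{Gui_coeur} identifies strong crossing of these subsets with non-ellipticity of the edge stabilizers of one tree in the other; by \cite[Proposition 7.4]{ScSw_regular+errata} combined with \cite{ScSw_splittings}, absence of (weak and strong) crossing in the two-coended setting is equivalent to compatibility of the dual splittings. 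Since $T_1$ is universally elliptic, its edge stabilizers are elliptic in $T_2$, so no strong crossing occurs and $T_1,T_2$ are compatible.

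Combining both inclusions gives $\cald_{JSJ}=\Dco$. To invoke Corollary \ref{corpref} I need irreducibility: if $\Dco$ is nontrivial but not irreducible, then by the analysis following Definition \ref{deftco} some tree in $\Dco$ has a fixed end or an invariant line, which by Kropholler--Roller would force $G$ itself to be virtually polycyclic, contrary to hypothesis. Hence $\Dco$ is trivial or irreducible, and in either case $\tco$ exists and lies in $\cald_{JSJ}=\Dco$. The main obstacle is the middle step: carefully matching the Scott--Swarup crossing dichotomy (strong vs.\ weak, in the two-coended regime) with the ellipticity reformulation of splittings, so that the equivalence ``non-ellipticity $\Leftrightarrow$ strong crossing $\Leftrightarrow$ incompatibility'' is rigorously pinned down from the cited sources; the remainder of the argument is a synthesis of what is already compiled in the paragraph preceding the corollary.
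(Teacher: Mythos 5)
Your proof is correct and is essentially the paper's own argument: the paper establishes the corollary precisely by showing, with the same inputs you cite (two coends for $\VPC_{n-1}$ subgroups from \cite{KroRol_relative}, strong crossing $\Leftrightarrow$ non-ellipticity from \cite[Lemme 11.3]{Gui_coeur}, Proposition 7.4 of \cite{ScSw_regular+errata} to upgrade "no strong crossing" to "no crossing", and \cite{ScSw_splittings} for "no crossing $\Leftrightarrow$ compatible"), that every universally elliptic tree is universally compatible, hence $\cald_{JSJ}=\Dco$; the paper's additional tree-of-cylinders route via commensurators (\cite[Theorem 1.3]{KroRol_relative} plus \cite[Corollary 8.4(2)]{GL4}) is an alternative you do not need. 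Two minor caveats: the reduction of compatibility to one-edge splittings is not given by Lemma \ref{lem_oneed} (which concerns ellipticity and domination) but by the lcm machinery of Propositions \ref{prop_flag} and \ref{prop_lcm}, and your welcome irreducibility check (which the paper leaves implicit) is better justified by noting that a line or fixed end would make $G$ virtually ($\VPC_{n-1}$-by-cyclic), hence virtually solvable and so virtually polycyclic by the structure of solvable Poincar\'e duality groups, rather than by an appeal to Kropholler--Roller.
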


In particular, $G$ has a canonical JSJ tree over $\cala$.

\subsection{Trees of cylinders} \label{ac}

In Sections \ref{jsjac} and \ref{exam} we have used trees of cylinders\index{tree of cylinders} to construct a universally compatible tree $(T_a)^*_c$ (see  the last assertion of Corollary \ref{synthese}); we always denote by $T_a$ a JSJ tree over $\cala$ relative to $\calh$,  as in Lemma \ref{lem_rel_vs_abs}. We now show that $(T_a)^*_c$ belongs to the compatibility JSJ deformation space under the additional assumption that  every stabilizer $G_{[A]}$ belongs to $\cala$  (this implies that trees of cylinders have edge stabilizers in $\cala$, so   (see Subsection \ref{defcyl}) collapsing is not needed:   $T_c=T_c^*$).

  \begin{thm} \label{ouf3}
Given $\cala$ and $\calh$, let 
 $\sim$ be an admissible equivalence on $\cale$.  Assume that $G$ is one-ended relative to $\calh$, and  there exists an integer $C$ such that:

 \begin{enumerate}
\item  $\cala$ 
contains all $C$-virtually cyclic subgroups, 
and all subgroups of cardinal $\leq 2C$;

\item if two groups of $\cale$ are inequivalent, their   intersection has  order $\le C$;
\item every stabilizer $G_{[A]}$ belongs to $\cala$, but $G\notin\cala$;
\item  every  $G_{[A]}$ is small in $(\cala,\calh)$-trees.
\end{enumerate}
Then  the compatibility JSJ deformation space  $\Dco$  exists and contains $(T_a)_c $, the   
tree of cylinders of JSJ trees (see Corollary \ref{synthese}). 
It is trivial or irreducible, so the JSJ compatibility tree $\Tco$ is defined.     Flexible vertex stabilizers   of $\Tco$ belong to $\cala$ or are   QH subgroups with finite fiber.

  \end{thm}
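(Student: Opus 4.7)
The plan is to identify $\Dco$ with the deformation space of $(T_a)_c$, then apply Corollary~\ref{corpref} to obtain $\Tco$, and finally transfer the description of flexible vertex stabilizers from Corollary~\ref{synthese}.

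First, I would verify that $(T_a)_c$ is universally compatible. Hypothesis~(3) together with the construction of the collapsed tree of cylinders yields $T_c=T_c^*$ for every $(\cala,\calh)$-tree $T$; indeed, edge stabilizers of $T_c$ are contained in stabilizers $G_{[A]}\in\cala$. Assertion~(4) of Corollary~\ref{synthese} then states that $(T_a)_c=(T_a)_c^*$ is compatible with every $(\cala,\calh)$-tree.

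Second, the main step is to show that the deformation space of $(T_a)_c$ is maximal among deformation spaces containing a universally compatible tree. Let $S$ be an arbitrary universally compatible $(\cala,\calh)$-tree, and form $R=S\vee(T_a)_c$, which is universally compatible by Proposition~\ref{prop_lcm}(2) and hence universally elliptic. The goal is to prove that $R$ lies in the same deformation space as $(T_a)_c$; since $R$ already refines $(T_a)_c$, it suffices to show that $(T_a)_c$ dominates $R$. By Corollary~\ref{synthese}(1), $(T_a)_c$ is a JSJ tree over $\cala$ relative to $\calh\cup\SNVC$, so it dominates every universally elliptic $(\cala,\calh\cup\SNVC)$-tree; thus it is enough to verify that $R$ is relative to $\SNVC$, i.e.\ that every $K\in\SNVC$ is elliptic in $R$. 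Once this is done, $(T_a)_c$ and $R$ lie in the same deformation space, whence $(T_a)_c$ dominates $S$.

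The technical heart is the relativity of $R$ to $\SNVC$. Fix $K\in\SNVC$, so that $K\subset G_{[A]}$ for some $[A]$ and $K$ is not $C$-virtually cyclic. Suppose for contradiction that $K$ is not elliptic in $R$. Let $U_A$ be the one-edge splitting obtained from $(T_a)_c$ by collapsing all orbits of edges except one incident to the type $V_1$ vertex stabilized by $G_{[A]}$; then $G_{[A]}$ (hence $K$) fixes a vertex $u$ of $U_A$. Since $S$ is universally compatible, the tree $R'=S\vee U_A$ exists, and a subgroup is elliptic in $R'$ if and only if it is elliptic in both $S$ and $U_A$; in particular, since $K$ is elliptic in $U_A$ but not in $R$ (and $R$ refines $S$, so not in $S$), $K$ is not elliptic in $R'$. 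The preimage $Y_u$ of $u$ in $R'$ is a $G_{[A]}$-invariant subtree that embeds in $S$ (no edge is collapsed in both $S$ and $U_A$). Hypothesis~(4) forces the $G_{[A]}$-action on $Y_u\hookrightarrow S$ to be non-irreducible; since it is non-elliptic by assumption, it either preserves a unique line or fixes a unique end of $Y_u$ (hence of $S$). In the line-preserving case, axiom~(2) of admissibility and hypothesis~(3) force all edge stabilizers along the line to lie in $[A]$ and be contained in $G_{[A]}$; combined with the index-at-most-two structure of pointwise stabilizers and the presence of a hyperbolic element (supplied by $K$), one concludes $G_{[A]}/G_{[A]}^0$ is finite while $G_{[A]}$ contains an element of positive translation length, a contradiction. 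The end-fixing case is handled by a parallel analysis on a ray towards the fixed end, again using the admissibility axioms and hypothesis~(4) to force edge stabilizers into $[A]\subset G_{[A]}$ and to produce the incompatible quotient. This is the main obstacle: in general, ruling out the end-fixing subcase is more delicate, and the argument may require exploiting the specific structure of $G_{[A]}$ provided in the applications (abelian in the CSA setting, elementary in the relatively hyperbolic setting).

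Third and last, once $\Dco$ is identified with the deformation space of $(T_a)_c$, triviality-or-irreducibility of $\Dco$ follows from the corresponding dichotomy for trees of cylinders (noted after Definition~\ref{treecyl}). Corollary~\ref{corpref} then produces the preferred element $\Tco$ (with $\Tco$ defined as the trivial tree in the trivial case, per Definition~\ref{deftco}). For the description of flexible vertex stabilizers: $\Tco$ lies in the same deformation space as $(T_a)_c$, hence has the same vertex stabilizers up to groups in $\cala$. By Corollary~\ref{synthese}(2), flexible vertex stabilizers of $(T_a)_c$ not in $\SS$ are QH with fiber of cardinality at most $C$; those in $\SS$ are subgroups of some $G_{[A]}$, and hence belong to $\cala$ by hypothesis~(3). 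This completes the claimed classification.
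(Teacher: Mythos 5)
Your scaffolding — universal compatibility of $(T_a)_c$ via Corollary \ref{synthese}, reduction to showing that $(T_a)_c$ dominates an arbitrary universally compatible tree $S$ after replacing $S$ by $S\vee (T_a)_c$, and the final transfer of the flexible-vertex description via Corollary \ref{corpref} — matches the paper. But the technical heart has a genuine gap, in two places. First, your reduction needs \emph{every} $K\in\SNVC$ to be elliptic in $S$ (so that $R$ is relative to $\calh\cup\SNVC$ and the relative JSJ property of $(T_a)_c$ applies), and your device for this, the one-edge collapse $U_A$ at the $V_1$-vertex stabilized by $G_{[A]}$, presupposes that $[A]$ is the class of a cylinder of $T_a$, i.e.\ that some edge stabilizer of $T_a$ lies in $[A]$. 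When $G_{[A]}$ is elliptic in $T_a$ (inside a rigid or QH vertex group, say) there is no such vertex, your argument says nothing about those $K$, and proving the claim for them is essentially the theorem itself (it follows from the domination you are trying to establish), so the reduction is circular there. Second, and more seriously, the contradictions you propose in the line-invariant and end-fixing cases cannot work as stated: the configuration ``$G_{[A]}$ acts non-elliptically on $S$, preserving a line or fixing an end, with edge stabilizers along it lying in $[A]$'' is perfectly consistent for trees that are merely universally \emph{elliptic} — it occurs in the paper's own example $G=\langle H,t\mid tat\m=a\rangle$, with $S$ the HNN tree $T_0$ and $G_{[A]}=\langle a,t\rangle\simeq\Z^2$ acting on an invariant line with edge stabilizers $\langle a\rangle$. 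So no contradiction can be extracted from that picture alone; universal \emph{compatibility} of $S$ must enter at exactly this point, and your argument does not use it there (you yourself flag the end case as unresolved, and the asserted ``$G_{[A]}/G_{[A]}^0$ finite versus positive translation length'' clash in the line case is not a contradiction).

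What the paper does instead, and what is missing from your proposal, is the following. After arranging that the universally compatible tree $T$ refines $(T_a)_c$, it suffices to show that each vertex stabilizer $G_v=G_{[A]}\in\cala$ of $(T_a)_c$ is elliptic in $T$ — so the groups of $\SNVC$ with no associated cylinder never need to be treated. If the image of $v$ in $(T_a)_c/G$ has valence at least $2$, one refines $(T_a)_c$ within its deformation space to an $(\cala,\calh)$-tree $T'$ having $G_v$ itself as an edge stabilizer; since $T$ is universally compatible, it is compatible with $T'$, and compatibility forces edge stabilizers of each tree to be elliptic in the other, so $G_v$ is elliptic in $T$. If the valence is $1$, one uses the small non-elliptic action of $G_v$ on $T$ (a nontrivial homomorphism to $\Z$ or to $D_\infty$, nontrivial because $G_v$ contains an element hyperbolic in $T$) to produce an index-two subgroup $G_0<G_v$ containing the incident edge group $G_e$, refines $(T_a)_c$ so that $G_0$ becomes an edge stabilizer of a minimal $(\cala,\calh)$-tree $T'$, and again concludes from compatibility of $T$ with $T'$ that $G_0$, hence $G_v$, is elliptic in $T$ — the desired contradiction. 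If you wish to keep your reduction, you must supply this compatibility-with-an-auxiliary-refinement argument (or an equivalent use of universal compatibility) in both the line and end cases, and give a separate treatment of the classes $[A]$ not realized by cylinders of $T_a$; as written, the proof does not go through.
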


\begin{proof}  
  Corollary \ref{synthese}, applied with $\SS=\cala$, shows that $(T_a)_c$ is universally compatible  and its flexible vertex stabilizers are in $\cala$ or QH. 
 The point is to    show that $(T_a)_c$
 is maximal (for domination) among universally compatible trees. 
 This will prove that $\Dco$  exists and contains $(T_a)_c$. It is  trivial or irreducible because 
 any tree of cylinders is trivial or irreducible (see Subsection \ref{defcyl}).

We consider a  universally compatible tree $T$, and we show that $(T_a)_c$ dominates $T$. 
Replacing  $T$ by  $T\vee (T_a)_c$ (which is universally compatible by  Assertion (2) of Proposition \ref{prop_lcm}),   
 we can assume that   $T$ refines $(T_a)_c$.

We have to show that each vertex stabilizer $G_v$ of $(T_a)_c$ 
is elliptic in $T$.
If $G_v$ is not a  $G_{[A]}$, then it is elliptic in $T_a$, hence in $T$  because $T$,  being universally elliptic, is dominated by $T_a$.
      By  Assumptions (3) and (4), we can therefore assume  that $G_v\in\cala$, and also that $G_v$   is small in $T$.

  Since $G\notin\cala$, the quotient graph $(T_a)_c/G$ is not a point (equivalently, there are at least two cylinders in $T_a$). 
There are two cases. If the image of $v$ in   $(T_a)_c/G$ has valence at least 2,  we can refine $(T_a)_c$ to a minimal tree $T'$ (in the same deformation space) 
having $G_v$ as an edge stabilizer. 
Since $G_v\in\cala$, this is an $(\cala,\calh)$-tree. 
  Its edge group $G_v$ is elliptic in $T$ because $T$ is universally compatible. 

The remaining case is when  the image 
 of $v$ in $(T_a)_c/G$ has valence 1. 
  We assume   that $G_v$ is not elliptic in $T$, and we argue towards a contradiction. 

 Let  $e$ be an edge of $(T_a)_c$ containing $v$.
We are going to prove that $G_v$ contains a 
 subgroup $G_0$ of index 2
 with  $G_{e}\inc G_0$.
Assuming this fact, we can refine $(T_a)_c$ to a minimal   $(\cala,\calh)$-tree $T'$ in which $G_0$ is an edge stabilizer.
 As above, $G_0$ is elliptic in $T$, and so is $G_v$. This is the required contradiction,   proving that $(T_a)_c$ dominates $T$.

 We now construct $G_0$.
Since we have assumed that $T$ refines $(T_a)_c$, 
 Proposition \ref{arbtf} and Lemma \ref{relfg} 
imply that $G_v$ contains a hyperbolic element.
 We know that 
  $G_v$
is small in $T$, so there are only two possibilities. 

If there is a fixed end,   the    action 
 defines a homomorphism $\chi:G_v\ra \bbZ$ (see Subsection \ref{tre}). This homomorphism vanishes on $G_e$, which is elliptic in $T$, but   is     non-trivial  because there is a hyperbolic element, so we define $G_0$ as the preimage of the index 2 subgroup of the image of $\varphi$.
 
  If the action is dihedral, we get an epimorphism $\chi:G_v\ra \bbZ/2*\bbZ/2$. 
Since $G_{e}$ is elliptic in $T$, its image under $\chi$ is trivial or contained in a conjugate of a $\bbZ/2$ factor. One constructs $G_0$ as the preimage of a suitable index 2 subgroup of the image of $\chi$.
\end{proof}

This theorem applies directly to abelian splittings\index{abelian tree} of CSA\index{CSA group} groups (Subsection \ref{acsa}), virtually abelian splittings of $K$-CSA groups (Subsection \ref{aKcsa}), elementary splittings of relatively hyperbolic groups\index{relatively hyperbolic group} (Subsection \ref{sec_relh}), because the condition $G_{[A]}\in\cala$ is satisfied. In each of these cases we conclude that $(T_a)^*_c=(T_a)_c$ belongs to $\Dco$. 
  We get for instance:

\begin{cor}
  Let $G$ be a finitely generated one-ended torsion-free CSA group, $\cala$ the class of abelian subgroups, and $\calh$ a family of subgroups.
Let $\sim$ be the commutation relation among infinite abelian subgroups.

Then the compatibility JSJ deformation space $\Dco$ exists and contains the tree of cylinders of any JSJ tree.\qed
\end{cor}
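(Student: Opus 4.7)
The plan is to apply Theorem \ref{ouf3} directly, taking $\sim$ to be the commutation relation on the family $\cale$ of infinite abelian subgroups, with constant $C=1$. The corollary is trivial when $G$ is itself abelian (the one-endedness forces $\Dco$ to be the trivial deformation space), so I assume $G\notin\cala$. The work consists in verifying the four hypotheses of Theorem \ref{ouf3}, plus admissibility of $\sim$; essentially everything flows from the CSA axiom together with torsion-freeness.

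First I would establish admissibility of $\sim$ on $\cale$, following the template of Lemma \ref{coadm}. Invariance under conjugation is immediate, and nesting $A\subset B$ in $\cale$ gives $\langle A,B\rangle=B$ abelian. For the third axiom, let $A\sim B$ in $\cale$ fix $a,b$ in an $(\cala,\calh)$-tree $T$ with infinite edge stabilizers: then $\langle A,B\rangle$ is abelian, hence the subgroups $A$ and $B$ are contained in a common maximal abelian subgroup $M$; since $A$ is elliptic and $M$ is abelian, one can find finite-index subgroups $A_0\subset A$ and $B_0\subset B$ such that $\langle A_0,B_0\rangle$ fixes a point $c\in T$, and the segment $[a,b]$ decomposes through $c$ to give $G_e\supset A_0$ or $G_e\supset B_0$ for every edge $e\subset[a,b]$, hence $G_e\sim A\sim B$.

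Next I would check the four numerical hypotheses of Theorem \ref{ouf3} with $C=1$. (1) Since $G$ is torsion-free, $1$-virtually cyclic means infinite cyclic and subgroups of cardinality $\leq 2$ means trivial; both lie in $\cala$. (2) If $A,B\in\cale$ share a non-trivial element $g$, then $g$ commutes with $A$ and with $B$, so by the CSA property both $A$ and $B$ lie in the unique maximal abelian subgroup containing $g$, forcing $\langle A,B\rangle$ to be abelian and thus $A\sim B$; contrapositively, inequivalent elements of $\cale$ meet trivially. (3) The stabilizer $G_{[A]}$ of an equivalence class is the maximal abelian subgroup of $G$ containing $A$, which is abelian and hence in $\cala$; and $G\notin \cala$ by the standing assumption. (4) Any abelian group, and in particular $G_{[A]}$, is small in $(\cala,\calh)$-trees since it cannot act irreducibly (it contains no $\F_2$). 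Finally, one-endedness of $G$ relative to $\calh$ follows from one-endedness of $G$ itself.

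With all hypotheses in place, Theorem \ref{ouf3} delivers the existence of $\Dco$ together with the fact that it contains the tree of cylinders $(T_a)_c$ of any JSJ tree, which is exactly the statement of the corollary. There is no substantive obstacle here; the only delicate point is the use of the CSA axiom to establish both hypothesis (2) and the third admissibility axiom, where one needs malnormality of maximal abelian subgroups to propagate the equivalence across segments of the tree.
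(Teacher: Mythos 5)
Your proof is correct and follows the paper's own route: the paper deduces this corollary by applying Theorem \ref{ouf3} directly to the commutation relation, the hypotheses having been verified (with $C=1$, $G_{[A]}$ the maximal abelian subgroup containing $A$, smallness from the absence of $\F_2$, and relative one-endedness from absolute one-endedness) exactly as you do. Your explicit treatment of the case $G\in\cala$ is a reasonable extra precaution, since Theorem \ref{ouf3} assumes $G\notin\cala$ while the corollary, like the paper's one-line proof, glosses over this degenerate case.
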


\begin{cor}
 
  Let $G$ be hyperbolic relative to  a family of finitely generated subgroups $\calp=\{P_1,\dots, P_p\}$.   
Let  $\cala$ be the class of elementary subgroups, and let $\calh$ any family of subgroups
containing  all $P_i$'s which contain $\F_2$.
Let $\sim$ be the co-elementary relation on $\cala_\infty$.

 If $G$ is one-ended relative to $\calh$, then the compatibility JSJ deformation space $\Dco$ exists and contains the tree of cylinders of any JSJ tree.\qed
\end{cor}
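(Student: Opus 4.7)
The plan is a direct verification of the hypotheses of Theorem \ref{ouf3} in the present setting. Choose $\cala$ to be the class of elementary subgroups, $\sim$ to be co-elementarity on $\cala_\infty$, and let $C$ be the constant provided by Lemma \ref{lem_coelem}. We may assume that $G$ is not itself elementary, since otherwise $G$ is virtually cyclic or parabolic and the statement is trivial (in particular $\Dco$ reduces to the trivial deformation space); this takes care of the clause $G\notin\cala$ in hypothesis (3) of Theorem \ref{ouf3}.

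The key preliminary observation is that every $P_i$ is small in $(\cala,\calh)$-trees. Indeed, if $P_i$ contains $\F_2$, then by assumption $P_i\in\calh$, so $P_i$ is elliptic (hence small) in every $(\cala,\calh)$-tree; if $P_i$ does not contain $\F_2$, then no subgroup of $P_i$ can act irreducibly on a tree, so $P_i$ is small in the abstract sense and a fortiori in $(\cala,\calh)$-trees. With this in hand, Lemma \ref{admi} applies and shows that $\sim$ is admissible relative to $\calh$. Turning to the numerical hypotheses of Theorem \ref{ouf3}: every finite subgroup and every virtually cyclic subgroup is elementary, so $\cala$ contains all $C$-virtually cyclic subgroups and all subgroups of cardinality $\le 2C$, giving hypothesis (1); hypothesis (2) is precisely the content of Lemma \ref{lem_coelem}(3); and the maximal elementary subgroup $G_{[A]}=E(A)$ supplied by Lemma \ref{lem_coelem}(1) lies in $\cala$, giving the first half of (3).

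It remains to verify hypothesis (4): every $G_{[A]}=E(A)$ is small in $(\cala,\calh)$-trees. If $E(A)$ is virtually cyclic, it is small abstractly and we are done. If $E(A)$ is parabolic, it is conjugate into some $P_i$, and the preliminary observation above then shows that $E(A)$ is small in $(\cala,\calh)$-trees. The main (and only) technical point is this interaction between the smallness hypothesis required by Theorem \ref{ouf3} and the arbitrariness allowed for parabolic groups, which is precisely why the statement needs to assume that those $P_i$ containing $\F_2$ lie in $\calh$: absent such a hypothesis, one could not conclude that $E(A)$ acts with a fixed point, a fixed end, or an invariant line on every $(\cala,\calh)$-tree. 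Once (4) is established, Theorem \ref{ouf3} applies and produces $\Dco$ containing $(T_a)_c$, the tree of cylinders of any JSJ tree (which does not depend on the choice of JSJ tree by Lemma \ref{lem_basicTc}(3), and which coincides with $(T_a)_c^*$ since every $G_{[A]}$ is in $\cala$ by Lemma \ref{lem_basicTc}(4)).
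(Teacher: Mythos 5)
Your proposal is correct and follows essentially the same route as the paper: the corollary is obtained there as a direct application of Theorem \ref{ouf3} in the setting of Subsection \ref{sec_relh}, with the constant $C$ and hypothesis (2) coming from Lemma \ref{lem_coelem}, admissibility from Lemma \ref{admi}, and smallness of the $G_{[A]}=E(A)$ reduced to smallness of the $P_i$'s, which is exactly where the hypothesis that $\calh$ contains every $P_i$ containing $\F_2$ is used. Your explicit treatment of the degenerate case $G\in\cala$ (needed for hypothesis (3) of Theorem \ref{ouf3}) is a point the paper leaves implicit, and is a reasonable, if quickly justified, addition.
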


We now consider    cyclic splittings,  with $\sim$ the commensurability relation.\index{cyclic tree}\index{commensurable}

\begin{example*}  Let $H$ be a torsion-free hyperbolic group with
  property (FA) (it has no non-trivial action on a tree), and $\langle a \rangle$ a maximal cyclic
  subgroup. Consider the HNN extension $G=\langle H,t\mid
  tat\m=a\rangle$, a one-ended torsion-free CSA group. 
The Bass-Serre  tree $T_0$ is a JSJ tree over abelian groups. 
Its tree of cylinders $T_1$ is  the Bass-Serre tree of the amalgam $G=H*_{\langle a\rangle}\langle
  a,t\rangle$, it is also the compatibility JSJ tree over abelian groups by    Theorem \ref{ouf3}.
Over cyclic groups, $T_0$ is a JSJ tree, its (collapsed) tree of cylinders is $T_1$,
 but the compatibility JSJ tree is $T_0$ (this follows from Proposition \ref{prop_rig} and \cite{Lev_rigid}; 
the non-splitting assumption   of Proposition \ref{prop_rig} holds over cyclic groups, but not over abelian groups). 
\end{example*}

In this example $\Dco$   strictly dominates $(T_a)_c^*$. One obtains a tree in $\Dco$ by refining $T_1=(T_a)^*_c$ at vertices with group $\Z^2$. This is a general fact.

\begin{thm} 
Let $G$ be a finitely generated  torsion-free group.
Let $\cala$ be the family of     cyclic
 subgroups,
and  let $\calh$ be  any family of subgroups of $G$, with $G$   one-ended relative to $\calh$.  
Assume that commensurators\index{commensurator} of infinite cyclic subgroups are small in $(\cala,\calh)$-trees.

Then the   cyclic compatibility JSJ space $\Dco$ relative to $\calh$ exists. If furthermore $G$ is not a solvable Baumslag-Solitar group $BS(1,s)$,
one obtains a tree in $\Dco$ by
  refining $(T_a)^*_c$ (the collapsed tree of cylinders for commensurability) at   some vertices $v$ with $G_v$ virtually  $\bbZ^2$.
 
\end{thm}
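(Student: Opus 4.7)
The plan is to apply Corollary~\ref{synthese} with $\cala$ the family of all cyclic subgroups, $\sim$ the commensurability relation on infinite cyclic subgroups, and constant $C=1$. Verifying the four hypotheses of that corollary reduces essentially to torsion-freeness: conditions~(1) and~(4)(b) both follow from the fact that any torsion-free virtually cyclic group is cyclic; condition~(2) holds with $C=1$ because two non-commensurable infinite cyclic subgroups of a torsion-free group have trivial intersection; condition~(3) is precisely the standing hypothesis that each $G_{[A]}=\Comm(A)$ is small in $(\cala,\calh)$-trees. Corollary~\ref{synthese} then produces a cyclic JSJ tree $T_a$ relative to $\calh$ whose collapsed tree of cylinders $(T_a)_c^*$ is compatible with every $(\cala,\calh)$-tree, i.e.\ is universally compatible.

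To build a candidate for $\Tco$, I would analyze the vertex stabilizers of $(T_a)_c^*$. Those inherited from $T_a$ are universally elliptic, while the new ones are commensurators $\Comm(A)$ of infinite cyclic subgroups. Since each such $\Comm(A)$ is small in $(\cala,\calh)$-trees and torsion-free, it must be cyclic, virtually $\bbZ^2$, or a small torsion-free non-abelian group such as a solvable Baumslag--Solitar group $BS(1,s)$ with $|s|\ge 2$. For each vertex $v$ with $G_v$ virtually $\bbZ^2$, the finitely many incident edge groups are commensurable infinite cyclic groups, and so all sit inside a unique maximal cyclic subgroup $C_v<G_v$; the group $G_v$ then has a canonical one-edge HNN splitting $S_v$ over $C_v$, relative to the incident edges. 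I define $\Tco$ by refining $(T_a)_c^*$ at those virtually $\bbZ^2$ vertices $v$ for which the splitting of $G$ obtained by lifting $S_v$ via Lemma~\ref{extens} is itself universally compatible. Universal compatibility of $\Tco$ then follows from Proposition~\ref{prop_lcm}(2), since $\Tco$ is the lcm of $(T_a)_c^*$ with the universally compatible lifts of the $S_v$'s.

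The main obstacle is showing that $\Tco$ lies in the maximal deformation space of universally compatible trees, which simultaneously establishes the existence of $\Dco$ and identifies $\Tco$ as its preferred element. Given any universally compatible tree $T'$, the lcm $\Tco\vee T'$ is universally compatible by Proposition~\ref{prop_lcm}(2); one must show that $\Tco\vee T'$ lies in the same deformation space as $\Tco$, equivalently that $\Tco$ dominates $T'$. Because $T'$ is universally elliptic it is dominated by $T_a$, and its compatibility with $(T_a)_c^*$ forces its refinement structure to factor through splittings of the vertex stabilizers $\Comm(A)$ relative to their incident edges. The core technical step is to check that the action of $\Comm(A)$ on its minimal subtree $\mu_{T'}(\Comm(A))$ is either trivial (when $\Comm(A)$ is cyclic or of $BS(1,s)$-type) or exactly dual to the canonical HNN splitting $S_v$ (when $\Comm(A)$ is virtually $\bbZ^2$). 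The hypothesis $G\ne BS(1,s)$ is invoked precisely here to exclude $BS(1,s)$-type commensurators from contributing nontrivial universally compatible splittings, since such a splitting, lifted to $G$, would force $G$ itself to be a Baumslag--Solitar group. This case analysis, rooted in the structure of small torsion-free groups that commensurate an infinite cyclic subgroup, is the heart of the argument.
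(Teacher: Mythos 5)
Your first step is fine and matches the paper: with $C=1$ the hypotheses of Corollary~\ref{synthese} hold for a torsion-free group (the paper invokes Theorem~\ref{thm_VC}, which is the same application), so $(T_a)_c^*$ is universally compatible. The gap is in everything after that. Your existence argument hinges on two claims that are neither established nor true as stated. First, the trichotomy ``$\Comm(A)$ is cyclic, virtually $\bbZ^2$, or of $BS(1,s)$-type'': being small in $(\cala,\calh)$-trees constrains only how a group can act on the relevant trees, not its isomorphism type (a group of $\calh$, or any group with property (FA), is small in $(\cala,\calh)$-trees whatever its structure), so no such classification of the commensurators is available, and none is used in the paper. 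Second, the ``core technical step'' --- that the minimal subtree of each $\Comm(A)$ in an arbitrary universally compatible tree $T'$ is trivial or dual to your canonical splitting $S_v$ --- is exactly the hard point and is left unproven; it is also where your construction breaks down: at a Klein-bottle vertex the incident edge groups, though pairwise commensurable, need not lie in a single maximal cyclic subgroup (in $\grp{a,t\mid tat\m=a\m}$ the subgroups $\langle t\rangle$ and $a\langle t\rangle a\m$ are commensurable but generate a non-cyclic group), and the relevant relative splitting can be the amalgam $\grp{t}*_{t^2=v^2}\grp{v}$ rather than an HNN extension over a maximal cyclic subgroup, so the tree you call $\Tco$ is not even well defined as described.

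The paper's route avoids all of this. Existence of $\Dco$ is obtained by a finiteness argument that does not use $G\ne BS(1,s)$: any universally compatible tree, after taking its lcm with $(T_a)_c^*$, is a universally elliptic tree dominating $(T_a)_c^*$ and dominated by $T_a$; such a tree is determined up to deformation by the relative splittings of the vertex stabilizers $G_v$ of $(T_a)_c^*$, and since each $G_v$ is either elliptic in $T_a$ or small in $(\cala,\calh)$-trees, Lemma~\ref{lem_passage} and Corollary~\ref{sma2} leave at most two possible deformation spaces per orbit of vertices, hence finitely many possibilities in all; a maximal universally compatible deformation space then exists by taking lcm's (Proposition~\ref{prop_lcm}). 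Only afterwards, to describe a tree $T'\in\Dco$ refining $(T_a)_c^*$, does one consider a vertex stabilizer $G_v$ not elliptic in $T'$: being small in $T'$, it preserves a line or fixes a unique end; the fixed-end case forces $\Dco$ to be non-irreducible, hence $G=G_v$ is an ascending HNN extension over a cyclic group, i.e.\ a $BS(1,s)$, which is excluded; in the line case $G_v$ is virtually $\bbZ^2$ because the edge stabilizers of the line are cyclic. Note finally that your plan makes the existence of $\Dco$ depend on $G\ne BS(1,s)$, whereas the theorem asserts existence without that hypothesis, so at the very least you would need a separate (easy, but missing) argument in the Baumslag--Solitar case.
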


\begin{rem}  
If $G$ has torsion, but there is a bound  for the order of
finite subgroups,
a similar  theorem   holds for virtually cyclic splittings (with the same proof). In the furthermore, one must assume that $G$ is not  virtually $BS(1,s)$; one has to prove that 
an ascending HNN extension of an infinite virtually cyclic group is virtually $BS(1,s)$, we leave this as an exercise to the reader (compare 
\cite{FaMo_QI2}).

\end{rem}

\begin{proof}  
By Theorem \ref{thm_VC}, $(T_a)^*_c$  is universally compatible, so $\Dco$, if it exists, dominates $(T_a)^*_c$ and is dominated by $T_a$. 
By \cite[Remark 5.11]{GL4}, smallness of commensurators implies that $(T_a)_c$ and $(T_a)^*_c$ are in the same deformation space. It follows that any group elliptic in $(T_a)^*_c$ 
but not in   $T _a$  is contained in the commensurator of some $A\in\cale$,
hence  is small in $(\cala,\calh)$-trees. 

 We shall now show that universally elliptic trees $S$ dominating $(T_a)^*_c$ and   dominated by $T_a$ belong to only finitely many deformation spaces. This will prove the existence of $\Dco$. 
To determine 
such a tree $S$ up to deformation, one needs to know the action on $S$ of vertex stabilizers $G_v$ of $(T_a)^*_c$ (up to deformation). This action has universally elliptic edge stabilizers by Lemma \ref{lem_passage},  and $G_v$ is small in $(\cala,\calh)$-trees or elliptic in $T_a$ (hence in $S$),
so only two deformation spaces are possible for the action of a given  $G_v$ on $S$ by Corollary \ref{sma2}. This shows the required finiteness, hence the existence of $\Dco$.

 One may obtain a tree  $T'\in \Dco$ by refining $(T_a)^*_c$ at vertices $v$ with $G_v$ not elliptic in   $T' $. 
As explained above such a $G_v$ 
is small in $T'$.  There are two possibilities. If $G_v$  fixes exactly one end, then $\Dco$ is an ascending deformation space (as defined in \cite[Section 7]{GL2}). By  Proposition 7.1(4) of \cite{GL2}, $\Dco$ cannot be  irreducible, so $G=G_v$ is an ascending HNN extension of a   cyclic group, hence isomorphic to some $BS(1,s)$.
The other possibility is that $G_v$ acts on a line, hence is virtually $\Z^2$ 
because edge stabilizers are   cyclic ($G_v$ is isomorphic to $\Z^2$ or the Klein bottle group). 
 \end{proof}

\begin{cor}
Let $G$ be torsion-free and commutative transitive.\index{commutative transitive}
  Let $\calh$ be any family of subgroups.  If $G$ is freely indecomposable relative to $\calh$, and is not a solvable Baumslag-Solitar group,
   the  cyclic  compatibility JSJ deformation space  $\Dco$ relative to $\calh$ exists and may be obtained by  (possibly) refining     
$(T_a)_c^*$ at vertices with stabilizer isomorphic to $\Z^2$. 
\end{cor}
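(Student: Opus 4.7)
The plan is to deduce the corollary from the preceding theorem by checking its hypotheses and then sharpening the conclusion. The theorem requires (i) one-endedness of $G$ relative to $\calh$, and (ii) smallness of commensurators of infinite cyclic subgroups in $(\cala,\calh)$-trees. Given these, $\Dco$ exists, and since $G$ is assumed not to be $BS(1,s)$, one obtains a tree in $\Dco$ by (possibly) refining $(T_a)_c^*$ at some vertices $v$ with $G_v$ virtually $\bbZ^2$. The corollary's conclusion is the same, except that ``virtually $\bbZ^2$'' is strengthened to ``isomorphic to $\bbZ^2$''.

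The verification of (i) is immediate: since $G$ is torsion-free, the only finite subgroup is trivial, so splitting over a finite subgroup reduces to free splitting, and ``one-ended relative to $\calh$'' coincides with ``freely indecomposable relative to $\calh$''. For (ii), I would invoke Remark~\ref{metab}: in any commutative transitive group, the commensurator $\Comm(A)$ of an infinite cyclic $A = \grp a$ admits a homomorphism to $\Q^*$ sending $g$ to $p/q$ whenever $g a^p g\m = a^q$, and the kernel is the centralizer of $a$, which is abelian by commutative transitivity. Hence $\Comm(A)$ is metabelian, and in particular contains no copy of $\F_2$, so it is small in \emph{every} tree on which $G$ acts, and a fortiori small in $(\cala,\calh)$-trees.

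Finally, I would sharpen ``virtually $\bbZ^2$'' to ``$\bbZ^2$''. Let $G_v$ be a torsion-free virtually $\bbZ^2$ subgroup of $G$; the key point is that commutative transitivity forces $G_v$ to be abelian. Indeed, pick any abelian $A\simeq \bbZ^2$ of finite index in $G_v$ and any $g\in G_v$. Some non-trivial power $g^k$ lies in $A$, so $g$ commutes with $g^k$; by commutative transitivity in $G$, the centralizer $C_G(g^k)$ is abelian, and it contains $A$, so $g$ commutes with all of $A$. Hence $G_v\subset C_G(g^k)$ is abelian, and a torsion-free abelian group containing $\bbZ^2$ with finite index is itself $\bbZ^2$. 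Combining this with the theorem yields the corollary.

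The argument is essentially a straightforward invocation of the theorem, and I expect no serious obstacle; the only point requiring any care is the last step, which uses commutative transitivity twice (once to make centralizers abelian, and implicitly in ruling out non-abelian torsion-free virtually $\bbZ^2$ groups such as the Klein bottle group, which is indeed not commutative transitive).
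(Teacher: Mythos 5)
Your proposal is correct and takes essentially the same route as the paper: the paper's proof is precisely the observation that commensurators of non-trivial cyclic subgroups are metabelian (Remark on metabelianity), hence small, so the preceding theorem applies. The two points you spell out explicitly --- that torsion-freeness makes ``freely indecomposable relative to $\calh$'' coincide with ``one-ended relative to $\calh$'', and that commutative transitivity upgrades the torsion-free virtually $\Z^2$ vertex stabilizers to $\Z^2$ (ruling out the Klein bottle group) --- are left implicit in the paper and your arguments for them are correct.
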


\begin{proof}
The theorem applies because 
 commensurators of  non-trivial  cyclic subgroups are metabelian (see Remark \ref{metab})  hence small.
\end{proof}

\appendix

\section{$\bbR$-trees, length functions, and compatibility}\label{arbr}

 In this appendix we view a simplicial tree as a  metric space (by giving length 1 to every edge), and more generally we consider \Rt s.  
  See \cite{Sh_dendrology,Chi_book}
for basic facts on \Rt s.
 
 Recall that two simplicial trees $T_1,T_2$ are compatible if there is a tree that collapses onto both $T_1$ and $T_2$. In the context of 
   $\bbR$-trees (simplicial or not),   collapse maps have a natural generalisation
      as  maps
preserving alignment: the image of an arc $[a,b]$ is the  segment  $[f(a), f(b)]$ (possibly  a point). 
Compatibility of $\bbR$-trees thus makes sense.

The length function of an \Rt{} $T$  with an isometric action of $G$  is the map $\ell:G\ra\bbR$ defined by $\ell(g)=\min_{x\in T}d(x,gx)$.

The first main result of this appendix is  Theorem \ref{prop_comp}, saying that two $\bbR$-trees are compatible
if and only if the sum of their length functions is again a length function.
This has a nice consequence: the set of $\bbR$-trees compatible with a given tree  is closed.

As a warm-up, we give a proof of the following classical facts: a minimal  irreducible \Rt{}  is determined by its length function; 
the equivariant Gromov-Hausdorff  topology and  the  axes topology (determined by length functions)  agree on the space of irreducible $\bbR$-trees 
(following a suggestion by M.\ Feighn, we extend this to the space of semi-simple trees). 
Our proof does not use based length functions and extends to a proof of Theorem \ref{prop_comp}.

After proving Theorem \ref{prop_comp}, we  show that pairwise compatibility for a finite set of $\bbR$-trees implies the existence of a common refinement.
We then define prime factors, greatest common divisors (gcd's), least common multiples (lcm's)  for irreducible simplicial trees.
We conclude by explaining how to obtain actions on \Rt s by blowing up vertices of JSJ trees.

We assume that $G$ is finitely generated, but  
 Subsections
 \ref{marb} 
 to \ref{cr} apply to any  
infinitely generated group (hypotheses such as irreducibility
ensure that $G$ contains enough hyperbolic elements). We leave details to the reader.

\subsection{Metric trees and length functions} \label{marb}

When endowed with a path metric making each edge isometric to a closed interval, a simplicial tree becomes an $\bbR$-tree (we usually declare each edge to have length 1). 
An \emph{$\bbR$-tree} is a geodesic metric space $T$ in which any two distinct points are connected by a unique topological arc (which we  often call a segment).
\index{R-tree@$\bbR$-tree}
Most  considerations of the preliminary section  apply to
\Rt s as well as simplicial trees.

 We denote by $d$, or $d_T$,  the distance in a tree $T$. All \Rt s are equipped with an isometric action of $G$, and considered equivalent if they are equivariantly isometric.   If $\lambda>0$, we denote by $T/\lambda$ the tree $T$ equipped with the distance $d/\lambda$.

A \emph{branch point} is a point $x\in T$ such that $T\setminus\{x\}$ has at least three components.  A non-empty subtree is \emph{degenerate} if it is a single point, \emph{non-degenerate}\index{degenerate segment} otherwise. If $A,B$ are disjoint   closed subtrees, the \emph{bridge}\index{bridge} between them is the unique arc $I=[a,b]$ such that $A\cap I=\{a\}$ and $B\cap I=\{b\}$. 
 
 We say that a map $f:T\to T'$ \emph{preserves alignment},\index{alignment preserving map} or is a \emph{collapse map}, \index{collapse map}
if the  image of any segment $[a,b]$ is the  segment  $[f(a), f(b)]$ (possibly  a point). The restriction of $f$ to $[a,b]$ is then continuous.
A map  is a collapse map if and only if its restriction to any segment is continuous, and the preimage of any point is a subtree.  
Two trees $T_1$ and $T_2$ are \emph{compatible}\index{compatible trees} if there exists a tree $T$ with collapse maps $f_i:T\to T_i$.

If $g\in G$, we denote by $\ell(g)$ its \emph{translation length}
\index{translation length}\index{0LG@$\ell(g)$: the translation length of $g$}
 $\ell(g)=\min_{x\in T}d(x,gx)$.  There is no parabolic isometry in an $\bbR$-tree, so the minimum is achieved on  a non-empty subset of $T$, 
the
  \emph{characteristic set} \index{0AG@$A(g)$: characteristic set of $g$}\index{characteristic set}
$A(g)$: the fixed point set if $g$ is elliptic  ($\ell(g)=0$), the axis if  $g $ is hyperbolic ($\ell(g)>0$).

The map $\ell:G\to\R$ is   the \emph{length function}\index{length function} of $T$; we denote it by $\ell_T$ if there is a risk of confusion.   We say that a map $\ell:G\to\R$ is a length function if there is a tree $T$ such that $\ell=\ell_T$. \index{0LT@$\ell$, $\ell_T$: the length function of $T$}

The action is \emph{minimal}\index{minimal action} if there is no proper $G$-invariant subtree.
  If $G$ contains a hyperbolic element, there is a unique minimal  subtree:  the union of translation axes of hyperbolic elements.

As in Proposition \ref{cinq}, if a group $H$ (possibly infinitely generated) acts on an \Rt\ $T$, one of the following must occur:  

$\bullet$  the action is irreducible \index{irreducible tree, deformation space}
(there are two hyperbolic elements $g,h$ whose axes have intersection of finite length;  for $n$ large,  $g^n$ and $h^n$ generate an $\F_2$ acting freely and discretely);

$\bullet$
 there  is a fixed point   in $T$, or only in its metric completion  (trivial action); 

$\bullet$ there is an invariant line;

$\bullet$ there is a    fixed end \index{end of a tree}
(an end of an $\bbR$-tree is defined as an equivalence class of geodesic rays up to finite Hausdorff distance).

  If  $H$ is finitely generated (or finitely generated relative to finitely many elliptic subgroups), and fixes a point in the metric completion of $T$, then it fixes a point in $T$.
When there is a fixed end, the length function is the absolute value of a homomorphism $\chi:H\to\R$ (such length functions are usually called abelian; we do not use this terminology, as it may cause confusion).   

 As in \cite{CuMo}, we say that  a minimal $T$ is \emph{semi-simple}\index{semi-simple action on a tree} if there is a hyperbolic element, and either there is  an  invariant line in $T$, or the action is irreducible.

We will use  the following facts, with $T$   an \Rt{} with a minimal action of $G$.

\begin{lem}[\cite{Pau_Gromov}] \label{calcul}
Let $g,h$ be hyperbolic elements. 
\begin{enumerate}
 \item If their axes  $A(g)$, $A(h)$  are disjoint, then
$$\ell(gh)=\ell(g\m h)=\ell(g)+\ell(h)+2d(A(g),A(h))>\ell(g)+\ell(h).$$ The intersection between $A(gh)$
and $A(hg)$ is the bridge between $A(g)$ and $A(h)$. 
\item 
  If their axes   meet, then
$$\min(\ell(gh),\ell(g\m h))\le\max(\ell(gh),\ell(g\m h))=\ell(g)+\ell(h).$$ The   inequality is an equality
if and only if   the axes meet in a single point. \qed
\end{enumerate}
\end{lem}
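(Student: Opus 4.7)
The plan is to reduce everything to the standard formula $d(x,gx) = \ell(g) + 2d(x, A(g))$ valid for any hyperbolic isometry $g$ of an $\bbR$-tree, together with the fact that the geodesic from $x$ to $gx$ passes through the nearest-point projection $\pi_{A(g)}(x)$ and its $g$-image. Both parts (1) and (2) will then become bookkeeping about how the axes $A(g)$ and $A(h)$ sit with respect to each other.

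For part (1), let $[a,b]$ be the bridge between the disjoint axes, with $a\in A(g)$, $b\in A(h)$, and set $d = d(a,b) > 0$. The key geometric observation is that the projection $\pi_{A(g)}$ carries all of $A(h)$ to the single point $a$ (and symmetrically), so for any $y\in A(h)$ the geodesic $[y,gy]$ decomposes as $[y,b]\cup[b,a]\cup[a,ga]\cup[ga,gb]\cup[gb,gy]$ without backtracking. Applying this with $y = hb$, I compute $[b, ghb]$ as $[b,a]\cup[a,ga]\cup[ga,gb]\cup[gb,ghb]$, of total length $\ell(g)+\ell(h)+2d$. This shows $\ell(gh)\le \ell(g)+\ell(h)+2d$. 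To get equality, I check that this path and its $gh$-translate $[ghb, (gh)^2 b]$ concatenate without backtracking at $ghb$ (which follows because $gb\in A(g)$ is the projection of $ghb$ onto $A(g)$, and $gh\cdot b$ lies in $g\cdot A(h)$, whose bridge to $A(g)$ lands at $ga$); thus $b$ lies on $A(gh)$ and $\ell(gh) = \ell(g)+\ell(h)+2d$. Since $A(g^{-1}) = A(g)$, the same argument with $g$ replaced by $g^{-1}$ gives $\ell(g^{-1}h) = \ell(g) + \ell(h) + 2d$. For the final claim, I identify $A(gh)$ as the line through $b$ and $gb$ (extended by $gh$-translates), and $A(hg) = h\cdot A(gh)$ as the line through $hb$ and $hgb$; their intersection is exactly $[a,b]$, the bridge.

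For part (2), let $I = A(g)\cap A(h)$, a (possibly degenerate or unbounded) subsegment of each axis. Restricted to $I$, both $g$ and $h$ act as translations of a line, either in the same or opposite direction. If they translate in the same direction, then $gh$ acts on $I$ as a translation by $\ell(g)+\ell(h)$, so $I\subset A(gh)$ and $\ell(gh) = \ell(g)+\ell(h)$, while $g^{-1}$ and $h$ translate in opposite directions, leading to cancellation and $\ell(g^{-1}h)\le \ell(g)+\ell(h)$ (with strict inequality when $|I|>0$, equality when $I$ is a point because the projections decouple just as in case (1)). The opposite-direction case is symmetric. In both cases the maximum of $\ell(gh)$ and $\ell(g^{-1}h)$ equals $\ell(g)+\ell(h)$, and the minimum is strictly smaller exactly when $I$ has positive length, proving the equivalence in the last sentence.

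The main obstacle, and the step I would be most careful about, is the no-backtracking verification that pins down $\ell(gh)$ exactly (rather than only giving an upper bound). Upper bounds on translation length come for free by evaluating $d(x,ghx)$ at convenient points, but showing that the chosen point actually lies on the axis requires controlling how the elementary segments (pieces of $A(g)$, $A(h)$, and the bridge) concatenate under iteration of $gh$. This amounts to checking that at each branch point along the candidate axis, the incoming and outgoing directions are distinct — a straightforward but notationally heavy case analysis that uses the hypothesis $A(g)\cap A(h)=\emptyset$ in (1) and the relative orientations of the translations on $I$ in (2).
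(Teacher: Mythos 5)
The paper does not prove this lemma at all: it is quoted from Paulin \cite{Pau_Gromov} (it is the classical Culler--Morgan/Paulin computation), so there is no in-paper argument to compare with. Your proof is the standard one and is essentially correct: in (1) you exhibit the geodesic $[b,ghb]=[b,a]\cup[a,ga]\cup[ga,gb]\cup[gb,ghb]$ and verify the no-backtracking condition at $ghb$, which indeed shows $b\in A(gh)$ and pins down $\ell(gh)$ exactly; the case $g^{-1}h$ and the case analysis in (2) by relative translation directions on $I=A(g)\cap A(h)$ are the right reductions.

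Two spots are written more loosely than they should be, though neither is a wrong idea. First, in (2) the phrase ``$gh$ acts on $I$ as a translation by $\ell(g)+\ell(h)$'' is not literally meaningful, since $gh$ need not preserve $I$; what is true (and what you should say) is that every $x\in I$ satisfies $d(x,ghx)=\ell(g)+\ell(h)$ in the same-direction case, and this only gives the upper bound --- the equality $\ell(gh)=\ell(g)+\ell(h)$ and the inclusion $I\subset A(gh)$ still require the same no-backtracking check as in (1), applied at an endpoint of $I$ (with the two cases $\ell(h)\le |I|$ and $\ell(h)>|I|$), exactly the verification you flag at the end. Second, the final claim of (1), that $A(gh)\cap A(hg)=[a,b]$, is asserted rather than checked: from your fundamental domain one should observe (by the symmetric computation for $hg$, or by applying $(gh)^{-1}$ and $h$) that $[a,b]\subset A(hg)$ as well, and that beyond $a$ the two axes leave along $A(g)$ in opposite directions while beyond $b$ they leave along $A(h)$ in opposite directions; since the intersection of two axes is connected, it is exactly the bridge. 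With these routine completions your argument is a correct proof of the cited lemma.
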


\begin{lem}[{\cite[Theorem 2.7]{CuMo}}] \label{irrt}
  $T$ is irreducible if and only if there exist hyperbolic elements $g,h$ with $[g,h]$ hyperbolic.
   \qed
\end{lem}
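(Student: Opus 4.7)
The plan is to prove both directions, using Proposition \ref{cinq} (which classifies minimal actions) and Lemma \ref{calcul} (the translation length formulas).

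\textbf{($\Rightarrow$) Irreducibility yields a hyperbolic commutator.} If $T$ is irreducible, the definition (in Subsection \ref{tre}) provides hyperbolic elements $g_0, h_0$ whose axes $A(g_0)$ and $A(h_0)$ are disjoint or meet in a segment of finite length. First I would replace $g_0,h_0$ by sufficiently high powers $g = g_0^n$, $h = h_0^n$ so that $\ell(g)$ and $\ell(h)$ both exceed the diameter of $A(g_0)\cap A(h_0)$; note that $A(g)=A(g_0)$ and $A(h)=A(h_0)$. A standard ping-pong argument on the four endpoints of $A(g),A(h)$ in the space of ends then shows that $\langle g,h\rangle$ is free of rank two and acts freely on its minimal invariant subtree. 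In particular $[g,h]\neq 1$; being a nontrivial element of a group acting freely on an $\bbR$-tree it is hyperbolic. Alternatively, when $A(g)$ and $A(h)$ are disjoint, one can give a direct computation from Lemma \ref{calcul}: the axis $A(hgh^{-1})=hA(g)$ is disjoint from $A(g)$ for $h$ of sufficient translation length compared to $d(A(g),A(h))$, and then $[g,h]=g\cdot(hg^{-1}h^{-1})$ is hyperbolic by part (1) of Lemma \ref{calcul} since its factors have disjoint axes.

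\textbf{($\Leftarrow$) A hyperbolic commutator forces irreducibility.} Suppose $g,h$ are hyperbolic and $[g,h]$ is hyperbolic. Let $T_0$ be the (unique) minimal $G$-invariant subtree, which exists because $G$ contains hyperbolic elements. By Proposition \ref{cinq} applied to the action of $G$ on $T_0$, if $T_0$ is not irreducible then one of the following holds, and I rule out each in turn:
\begin{itemize}
\item[(i)] $T_0$ is a point: impossible since $g$ is hyperbolic.
\item[(ii)] $T_0$ is a line on which $G$ acts by translations, or dihedrally. In the dihedral case, hyperbolic elements of $G$ must act as nontrivial translations on $T_0$ (reflections fix a point). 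Hence $g$ and $h$ act as nontrivial translations of $\bbR$, and so $[g,h]$ acts trivially on $T_0$; in particular $\ell_{T_0}([g,h])=0$, and since translation lengths on $T$ agree with those on its minimal subtree $T_0$, we obtain $\ell([g,h])=0$, contradicting hyperbolicity.
\item[(iii)] $G$ fixes a unique end of $T_0$. Then the translation length function is the absolute value of the homomorphism $\chi:G\to\bbR$ measuring how much each element pushes toward the fixed end (see Subsection \ref{tre}). Since $\chi$ is a homomorphism to an abelian group, $\chi([g,h])=0$, hence $\ell([g,h])=0$, again a contradiction.
\end{itemize}
Thus $T_0$ (hence $T$) is irreducible.

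\textbf{Main obstacle.} The delicate step is the forward direction: passing from the mere existence of two hyperbolic elements with bounded axis overlap to a hyperbolic commutator. Without taking powers the commutator can fail to be hyperbolic (e.g.\ if axes overlap substantially, cancellation in Lemma \ref{calcul}(2) can occur). The argument therefore hinges on choosing powers large enough to separate axes in the end-compactification and then invoking ping-pong (or, equivalently, iterating Lemma \ref{calcul}(1)); the converse direction is routine once Proposition \ref{cinq} and the description of length functions for actions with a fixed end are in hand.
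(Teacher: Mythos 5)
Your proof is correct and matches the intended argument: the paper gives no proof of Lemma \ref{irrt} (it is quoted from Culler--Morgan with a \qed), but the sketch it does give in the preliminaries on trees — suitable powers of two hyperbolic elements with finite axis overlap generate a free group acting freely, whence the commutator is hyperbolic — is exactly your forward direction, and your converse via the classification of non-irreducible minimal actions (point, line, fixed end with length function $|\chi|$) is the routine one. Two small quibbles: since the lemma lives in the appendix and concerns $\bbR$-trees you should invoke the classification of minimal actions on $\bbR$-trees from Subsection \ref{marb} rather than the simplicial Proposition \ref{cinq} (the content is the same), and in your disjoint-axes computation no largeness assumption on $\ell(h)$ is needed, since $d(A(g),hA(g))=2\,d(A(g),A(h))+\ell(h)>0$ automatically.
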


\begin{lem}[{\cite[Lemma 4.3]{Pau_Gromov}}] \label{seg}
If $T$ is irreducible,  any arc $ [a,b]$ is contained in the axis   of some $g\in G$. \qed
\end{lem}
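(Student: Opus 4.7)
The plan is to realize $[a,b]$ as part of the axis of a product $u^N v^N$ of two hyperbolic elements with disjoint, appropriately positioned axes, analyzed through Lemma~\ref{calcul}(1).

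Two preliminary facts are needed. First, by Lemma~\ref{irrt} there exist hyperbolic $g, h \in G$ with $[g,h]$ hyperbolic; using Lemma~\ref{calcul} with sufficiently high powers, one extracts from $\{g, h, [g,h]\}$ (or from conjugates thereof) two hyperbolic elements with disjoint axes, since otherwise all translation axes in $G$ would share a common point or end, contradicting irreducibility. Second, minimality together with the existence of a hyperbolic element implies that $T$ coincides with its unique minimal $G$-invariant subtree, which is the union of axes of all hyperbolic elements; in particular every point of $T$ lies on some axis.

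Given $[a,b]$, I first choose hyperbolic $u, v \in G$ with $a \in A(u)$ and $b \in A(v)$. Using conjugation by the elements furnished by Step~1 (whose axes can be moved by $G$-translates to cover $T$), I adjust $u$ and $v$ so that $A(u)$ and $A(v)$ become disjoint while still passing through $a$ and $b$ respectively; this is the technical heart of the argument. After possibly replacing $u$ or $v$ by its inverse, I ensure that the bridge $[p_u, p_v]$ between $A(u)$ and $A(v)$ lies inside $[a,b]$, with $[a, p_u]\subset A(u)$ lying on the side from which $u$ translates toward $p_u$ and $[p_v, b]\subset A(v)$ on the side into which $v$ translates away from $p_v$. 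Then Lemma~\ref{calcul}(1) says that $u^N v^N$ is hyperbolic, and a standard axial ping-pong estimate shows that its axis contains the sub-arcs $[u^{-N}(p_u), p_u]\subset A(u)$ and $[p_v, v^N(p_v)]\subset A(v)$, joined across the bridge $[p_u, p_v]$. Since $[a,p_u]$ and $[p_v,b]$ have bounded length, choosing $N$ large enough forces these two sub-arcs to contain them, so the axis of $u^N v^N$ contains $[a,p_u]\cup[p_u,p_v]\cup[p_v,b] = [a,b]$, as required.

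The main obstacle is the adjustment in the third step: producing conjugates of $u$ and $v$ whose axes are simultaneously disjoint, positioned so that the bridge falls inside $[a,b]$, and oriented in the correct translation directions. Once this configuration is achieved, the conclusion follows from a routine quantitative application of Lemma~\ref{calcul}(1).
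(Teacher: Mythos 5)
The paper itself gives no proof of this lemma (it simply cites Paulin), so your proposal has to stand on its own; its overall shape (catch $a$ and $b$ on axes of hyperbolic elements $u,v$ and show that $u^Nv^N$ works for large $N$) is the standard and correct strategy, but the step you yourself call the technical heart is a genuine gap, and the configuration you want to reduce to is not always available. Write $A(u)\cap[a,b]=[a,p_u]$ and $A(v)\cap[a,b]=[q_v,b]$ (if either axis already contains $[a,b]$ you are done). Disjointness of $A(u)$ and $A(v)$ with bridge inside $[a,b]$ forces $p_u<q_v$; but nothing in your argument rules out that every hyperbolic axis through $a$ runs at least as far as every hyperbolic axis through $b$ comes back, so that any two such axes meet inside $[a,b]$. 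This really happens: if $a$ and $b$ are not branch points and the only branch point of $T$ in $[a,b]$ is an interior point $x$, then every axis through $a$ contains $[a,x]$ and every axis through $b$ contains $[x,b]$, so disjointness is impossible; such arcs exist in any minimal irreducible simplicial tree. Conjugation does not repair this, because replacing $u$ by $gug^{-1}$ moves its axis to $gA(u)$, which in general no longer passes through $a$. A complete proof must therefore also treat the case where $A(u)\cap A(v)$ is a point or a compact overlap inside $[a,b]$ (it does work: orienting $u$ and $v$ to translate in the same suitable direction along the intersection, the axis of $u^Nv^N$ contains the intersection together with long pieces of $A(u)$ and $A(v)$ on either side, hence $[a,b]$ -- but this rests on part (2) of Lemma \ref{calcul} and a separate computation, not on part (1)).

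Even in the disjoint case your directional claim is reversed. With bridge $[p_u,p_v]$, the axis of $w=u^Nv^N$ contains the bridge together with $[p_u,u^Np_u]$ (the $u$-forward side of $p_u$) and $[v^{-N}p_v,p_v]$ (the $v$-backward side of $p_v$), not $[u^{-N}p_u,p_u]$ and $[p_v,v^Np_v]$: the path $p_v\to p_u\to u^Np_u\to wp_v$ has length $2d(p_u,p_v)+N\ell(u)+N\ell(v)=\ell(w)$ and is a geodesic, so it lies on the axis, and applying the same computation to $w^{-1}$ (which has the same axis) gives $[v^{-N}p_v,p_v]$. With the orientations you prescribe ($u$ translating toward $p_u$ from the $a$-side, $v$ translating away from $p_v$ toward $b$), the axis of $u^Nv^N$ therefore escapes away from $a$ and $b$ and meets $[a,b]$ only in the bridge. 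This is repairable -- invert both elements, or use $v^Nu^N$ -- but it shows the ``routine quantitative application'' was not actually checked; note that Lemma \ref{calcul}(1) by itself only says the axis contains the bridge, so the explicit geodesic computation (or an equivalent) is genuinely needed. A smaller slip of the same kind occurs in your first step: raising $g,h,[g,h]$ to high powers does not change their axes, so it cannot by itself produce disjoint axes; one needs conjugates such as $g$ and $h^Ngh^{-N}$, though the existence of two hyperbolic elements with disjoint axes in an irreducible action is indeed true and standard.
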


\begin{example} \label{arbsimp}
We use these lemmas to show that, 
\emph{if $T$ has a minimal  irreducible action of $G$, and   $\ell$  takes values in $\Z$, then $T$ is a simplicial tree.}
It suffices to prove that the distance between any two branch points lies in $\frac12\bbZ$. Given two branch points $a,b\in T$,
by Lemma \ref{seg}, one can find hyperbolic elements $g,h\in G$ with disjoint axes   such that the bridge between the axes of $g$ and $h$
is precisely $[a,b]$. By Lemma \ref{calcul}(1), $d(a,b)=\frac12(\ell(gh)-\ell(g)-\ell(h))$.
\end{example}

\subsection{From length functions to trees}\label{fred}

\newcommand{\Tirr}{\calt_{\mathrm{irr}}}
 Let $G$ be a finitely generated group. \index{0Tz@$\calt$: set of actions on $\bbR$-trees}\index{0Tzi@$\Tirr$: set of irreducible actions on $\bbR$-trees}
Let $\calt$ be the set of minimal isometric actions of $G$ on $\bbR$-trees modulo equivariant isometry.
 Let $\Tirr\subset \calt$ 
be the set of irreducible $\bbR$-trees.
The following are classical results:

\begin{thm}[\cite{AlperinBass_length,CuMo}] \label{thm_compat_length}
  Two minimal irreducible \Rt s $T,T'$ with the same length function are equivariantly isometric. 
\end{thm}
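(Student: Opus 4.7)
The plan is to construct an equivariant isometry $\Phi \colon T \to T'$ by identifying axes of hyperbolic elements. First, observe that since $\ell_T = \ell_{T'}$, an element $g \in G$ is hyperbolic in $T$ iff it is hyperbolic in $T'$; for each such $g$, write $A_T(g) \subset T$ and $A_{T'}(g) \subset T'$ for its axis, each isometric to $\bbR$ with $g$ acting as a translation of amplitude $\ell(g)$. The key point is that Lemma \ref{calcul} lets us read from $\ell$ alone every geometric configuration of a pair of axes: the axes $A(g)$ and $A(h)$ are disjoint iff $\ell(gh) = \ell(g^{-1}h) > \ell(g) + \ell(h)$, in which case the distance between them is $\tfrac12(\ell(gh) - \ell(g) - \ell(h))$; if they meet, $\ell(g) + \ell(h) - \min(\ell(gh), \ell(g^{-1}h))$ computes the length of their intersection. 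In particular these configurations agree in $T$ and $T'$.

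I would then proceed as follows. Pick a hyperbolic element $g_0$ and any $\langle g_0 \rangle$-equivariant isometry $\phi_0 \colon A_T(g_0) \to A_{T'}(g_0)$ (there are two, corresponding to the two orientations). Extend $\phi_0$ to $\Phi$ by declaring, for every hyperbolic $h$, that $\Phi$ restricted to $A_T(h)$ is the unique equivariant isometry $A_T(h) \to A_{T'}(h)$ whose restriction to $A_T(h) \cap A_T(g_0)$ agrees with $\phi_0$ whenever this intersection is non-empty; when $A_T(h)$ is disjoint from $A_T(g_0)$, one uses an intermediate axis $A(k)$ meeting both (such a $k$ exists by irreducibility, using Lemma \ref{seg} on an arc from $A_T(g_0)$ to $A_T(h)$ suitably extended) and composes the two identifications. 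Since every arc of $T$ lies in the axis of some hyperbolic element (Lemma \ref{seg}), the union of axes is all of $T$, so $\Phi$ is defined everywhere.

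Two things remain to check, and the second is the main obstacle. Equivariance: by construction $\Phi$ is $\langle g \rangle$-equivariant on each $A_T(g)$, and full $G$-equivariance follows from $g \cdot A_T(h) = A_T(ghg^{-1})$ together with the uniqueness of the chosen axis identifications. The hard part is \emph{well-definedness}: if a point $x$ lies simultaneously on $A_T(h_1)$ and $A_T(h_2)$, the two candidate values of $\Phi(x)$ must agree. This is where Lemma \ref{calcul} does its real work, applied systematically to triples $(g_0, h_1, h_2)$ of hyperbolic elements: the relative position inside $A_T(h_i)$ of any intersection with another axis is determined purely by $\ell$, and the same formulas applied in $T'$ force the two values to coincide. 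Once well-definedness is established, the fact that $\Phi$ is a (global) isometry is immediate: given $x, y \in T$, an axis $A(g)$ containing the arc $[x,y]$ exists by Lemma \ref{seg}, and $\Phi$ is an isometry in restriction to $A(g)$ by construction, which gives $d_{T'}(\Phi(x), \Phi(y)) = d_T(x,y)$.
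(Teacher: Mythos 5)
There is a genuine gap, and it sits exactly at the point you yourself flag as "the main obstacle". Two problems. First, your starting point is already off: the $\grp{g_0}$-equivariant isometries $A_T(g_0)\to A_{T'}(g_0)$ do not come in a pair corresponding to orientations — they form a one-parameter family (any two differ by a translation of the axis, since translations are exactly the isometries of $\bbR$ commuting with a nontrivial translation). Since for irreducible minimal trees the global equivariant isometry is unique, only \emph{one} member of this family extends to all of $T$; an arbitrary choice of $\phi_0$ will generically fail at the very first step, because $\phi_0$ need not carry $A_T(h)\cap A_T(g_0)$ into $A_{T'}(h)$ at all, so "the unique equivariant isometry $A_T(h)\to A_{T'}(h)$ agreeing with $\phi_0$ on the overlap" need not exist. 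Second, the consistency you defer to Lemma \ref{calcul} is precisely what the lemma does not give you: it determines \emph{lengths} (distance between disjoint axes, length of an overlap) from $\ell$, but not the \emph{location} of an overlap or bridge point along a given axis relative to your chosen identification, nor independence of the intermediate axis $A(k)$ used to reach a disjoint axis. Pinning down locations from $\ell$ alone requires an extra device, and asserting that "the same formulas force the two values to coincide" is essentially restating the theorem.

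The paper's proof supplies exactly this missing device and avoids all choices. For a branch point $x$, one picks hyperbolic $g,h$ with disjoint axes whose bridge is $[x,y]$ (Lemmas \ref{calcul} and \ref{seg}); then $\{x\}=A(g)\cap A(gh)\cap A(hg)$, and since disjointness of axes is read off from $\ell$, the corresponding intersection $A'(g)\cap A'(gh)\cap A'(hg)$ in $T'$ is again a single point, which is \emph{defined} to be $f(x)$. The key observation is that $f(x)=\bigcap_k A'(k)$ over all hyperbolic $k$ whose $T$-axis contains $x$; this intrinsic description gives independence of the choices of $y,g,h$ and equivariance simultaneously, the bridge formula $d(x,y)=\tfrac12(\ell(gh)-\ell(g)-\ell(h))$ gives that $f$ is isometric on branch points, and one concludes by extending continuously to the closure of the branch points, linearly on complementary intervals, with surjectivity from minimality of $T'$. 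If you want to salvage your axis-propagation scheme, you would need to replace the arbitrary $\phi_0$ by a normalization of this kind (e.g.\ matching the triple-intersection points), at which point you have reconstructed the paper's argument.
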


\begin{thm}[\cite{Pau_Gromov}] \label{thm_fred}
  The equivariant Gromov-Hausdorff topology and the axes topology agree on $\Tirr$. 
\end{thm}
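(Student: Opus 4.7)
The plan has two directions. For the easy one, suppose $T_n \to T$ in the equivariant Gromov--Hausdorff topology. For any $g \in G$ and any $x \in T$, the formula
\[
\ell_T(g) \;=\; \max\!\bigl(0,\; d_T(x,g^2x) - d_T(x,gx)\bigr)
\]
holds: when $g$ is hyperbolic with axis $A(g)$ and $x$ projects to a point of $A(g)$ at distance $d$, one computes $d(x,gx) = 2d + \ell(g)$ and $d(x,g^2x) = 2d + 2\ell(g)$, while when $g$ is elliptic $\Fix(g) \subseteq \Fix(g^2)$ forces $d(x,g^2x) \le d(x,gx)$. Thus $\ell_T(g)$ is recovered from just three distances between translates of one point, and applying the definition of Gromov convergence to $K = \{x,gx,g^2x\}$ and to the finite subset $\{e,g,g^2\} \subset G$ yields $\ell_{T_n}(g) \to \ell_T(g)$.

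For the converse, fix $T \in \Tirr$ with $\ell_{T_n} \to \ell_T$ pointwise on $G$, a finite $K \subset T$, a finite $P \subset G$, and $\varepsilon > 0$. The goal is to produce, for $n$ large, a finite subset $K_n \subset T_n$ together with a bijection $f_n \colon K \to K_n$ approximating all pairwise distances, and all distances to $P$-translates, up to $\varepsilon$. I plan to encode the finite subtree $Y \subset T$ spanned by $K \cup PK$ through axes of hyperbolic elements. Lemma \ref{irrt} provides plenty of hyperbolic elements, and Lemma \ref{seg} asserts that every arc of $T$ lies in the axis of some $g \in G$; so I can select finitely many hyperbolic $g_1,\dots,g_N \in G$ with $Y \subset \bigcup_i A(g_i)$. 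Replacing each $g_i$ by a sufficiently high power, or by its product with an auxiliary hyperbolic element, I may also assume that $[g_i,g_j]$ is hyperbolic for every pair $i \neq j$, so that each pair $A(g_i),A(g_j)$ falls into one of the cases of Lemma \ref{calcul}. Let $W \subset G$ denote the finite set consisting of the $g_i$, the products $g_i g_j^{\pm 1}$, and all $P$-conjugates of these elements.

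The key step, and the main technical obstacle, is that the relative configuration of the axes $\{A(g_i)\}$ in $T$ is determined up to $\varepsilon$ by the finite data $\{\ell_T(w) : w \in W\}$. This is exactly the content of Lemma \ref{calcul}: when $A(g_i)$ and $A(g_j)$ are disjoint the bridge length equals $\tfrac12(\ell(g_ig_j) - \ell(g_i) - \ell(g_j))$, and when they meet, the length of the overlap and the relative orientations of $g_i,g_j$ are read off from $\ell(g_ig_j)$ and $\ell(g_ig_j^{-1})$ compared with $\ell(g_i) + \ell(g_j)$. Since $\ell_{T_n}(w) \to \ell_T(w)$ for each $w \in W$, for $n$ large the axes $A_n(g_i)$ in $T_n$ realize the same configuration up to $\varepsilon$. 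Each $x \in K \cup PK$ lies on some $A(g_i)$, and its position is specified by its signed distance on that axis from a canonical reference point (for instance the projection of $A(g_j)$ onto $A(g_i)$ for some chosen $j$), again a quantity computable from the $\ell_T(w)$. Transferring this description to $A_n(g_i)$ produces $f_n(x)$; the $P$-equivariance is automatic because $gA(g_i) = A(gg_ig^{-1})$ with $gg_ig^{-1} \in W$, and the almost-isometry estimate reduces to comparing axis combinatorics in $T$ and in $T_n$. The most delicate bookkeeping appears when several axes overlap or bridge each other, which requires a systematic case analysis that is combinatorially heavy but routine once Lemma \ref{calcul} is available.
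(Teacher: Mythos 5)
Your first direction is fine and coincides with the paper's (the identity $\ell(g)=\max\bigl(d(x,g^2x)-d(x,gx),0\bigr)$, valid at every point $x$, makes $T\mapsto\ell_T(g)$ continuous in the Gromov topology). The gap is in the ``key step'' of the converse. You claim that the relative configuration of the axes $A(g_1),\dots,A(g_N)$ --- and hence all distances between the points of $K\cup PK$ marked on them --- is determined up to $\varepsilon$ by the lengths of the $g_i$, of the products $g_ig_j^{\pm1}$, and of their $P$-conjugates. But Lemma \ref{calcul} only controls each \emph{pair} of axes: it gives the bridge length, or the overlap length and relative orientation, of $A(g_i)$ and $A(g_j)$; it says nothing about where along $A(g_i)$ the projections of $A(g_j)$ and of $A(g_k)$ sit relative to one another. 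Concretely, let $A(g_1)$ be a long axis and let $g_2,g_3$ have axes disjoint from it, with bridges attaching at distinct points $p_2,p_3\in A(g_1)$; reflecting $p_3$ to the other side of $p_2$ along $A(g_1)$ changes none of the quantities $\ell(g_i)$, $\ell(g_ig_j^{\pm1})$, yet it changes distances from marked points of $A(g_1)$ or $A(g_2)$ to marked points of $A(g_3)$. So closeness of $\ell_{T_n}$ to $\ell_T$ on your finite set $W$ does not even determine the side on which an attachment point lies, and your transfer map $f_n$ is neither well defined nor almost isometric from this data; repairing it needs longer words (triple products, or conjugates such as $g_i^{\,n}g_jg_i^{-n}$ tested against $g_k$), which is exactly the bookkeeping that is not ``routine''. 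A secondary slip: replacing $g_i$ by a product with an auxiliary element to force $[g_i,g_j]$ hyperbolic changes $A(g_i)$ and destroys your covering of $Y$, and is unnecessary since Lemma \ref{calcul} already treats overlapping axes.

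The paper avoids all coordinate bookkeeping by the sharper encoding used for Theorem \ref{thm_compat_length}: for each branch point $x_i$ of the finite configuration one chooses (via Lemmas \ref{calcul} and \ref{seg}) hyperbolic elements $g_i,h_i$ with disjoint axes such that $x_i$ is the endpoint on $A(g_i)$ of the bridge between them, so that $\{x_i\}=A(g_i)\cap A(g_ih_i)\cap A(h_ig_i)$. In $T'$ one then \emph{defines} $x_i'=A'(g_i)\cap A'(g_ih_i)\cap A'(h_ig_i)$, which makes sense as soon as $\ell'$ is close to $\ell$ on finitely many elements; independence of the choice of $(g_i,h_i)$ up to an error tending to $0$ is checked by comparing the six axes coming from two choices, and the required estimates on $d_{T'}(x_i',a_kx_j')$ follow from Lemma \ref{calcul}. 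Points which are not branch points are treated by interpolating between branch points. If you wish to keep your ``cover the spanned subtree by axes and use coordinates'' scheme, you must either enlarge $W$ as indicated or anchor each marked point by such a triple intersection of axes.
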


By Theorem \ref{thm_compat_length}, the assignment $T\mapsto\ell_T$ defines an embedding $\Tirr\to\R^G$. 
The \emph{axes topology} is the topology induced by this embedding. \index{topology!axes topology}\index{axes topology}
  The set of length functions  is closed in $\R^G$  (even if $G$ is not finitely generated),
and when $G$ is finitely generated 
it is projectively compact
\cite[Theorem 4.5]{CuMo}.

\index{topology!equivariant Gromov-Hausdorff topology}\index{Gromov@(equivariant) Gromov-Hausdorff topology}  
The \emph{equivariant Gromov-Hausdorff topology} (or just Gromov topology) on $\calt$  is defined by the following  neighbourhood basis.
Given $T\in\calt$, a number $\eps>0$, a finite subset $A\subset G$, and $x_1,\dots, x_n\in T$,
define $N_{\eps,A,\{x_1,\dots x_n\}}(T)$ as the set of trees $T'\in \calt$ such that there exist
$x'_1,\dots,x'_n\in T'$ with 
 $$|d_{T'}( x'_{i },ax'_{j})-d_T( x_{i }, ax_{j })|\leq \eps$$  for all $a\in A$ 
 and $i,j\in \{1,\dots,n\}$. We call $x'_i$ an \emph{approximation} of $x_i$ in $T'$.\index{approximation point}

\begin{example} \label{asept}
 As an illustration  of this definition, let us explain why, for a given $g\in G$,  \emph{the set of trees  $T\in\calt$ such that $g$ fixes a tripod in $T$ is open in the Gromov topology.}
Recall that a tripod is the convex hull of 3 points that do not lie in a segment.
Let $x_1,x_2,x_3$ (with indices modulo $3$) be the endpoints of a tripod fixed by $g$, and $p$ the center of this tripod.
Let $\eps$ be very small compared to the distances $d_T(p,x_i)$, and take $A=\{1,g\}$. If $T'$ lies in $N_{\eps,A,\{x_1,x_2,x_3\}}$,
consider approximation points $x'_i$ of $x_i$.
Since $d_T(x_i,gx_i)=0$, we have $d_{T'}(x'_i,gx'_i)<\eps$. It follows that the midpoint  $m'_i$ of $[x'_i,gx'_i]$ in $T'$
is at distance at most $\eps/2$ from $x'_i$. 
Now $$d_{T}(x_{i-1},x_i)+d_T(x_i,x_{i+1})-d_T(x_{i-1},x_i)=2d_T(p,x_i)\gg \eps,$$ 
so $$d_{T'}(x'_{i-1},x'_i)+d_{T'}(x'_i,x'_{i+1})-d_{T'}(x'_{i-1},x'_i)\gg\eps$$
and  $$d_{T'}(m'_{i-1},m'_i)+d_{T'}(m'_i,m'_{i+1})-d_{T'}(m'_{i-1},m'_i)>0.$$
It follows that the three points $m'_1,m'_2,m'_3$ do not lie in a segment.  But the midpoint of $[x,gx]$ always belongs   to the characteristic set of $g$, 
so the characteristic set of $g$ cannot be a line. Thus  $g$ is elliptic, and therefore fixes $m'_1,m'_2,m'_3$.
\end{example}

The fact that the axes topology is finer   than the Gromov  topology (which is the harder half of   Theorem \ref{thm_fred})    should be viewed as a version with parameters of Theorem \ref{thm_compat_length}: the length function
determines the tree, in a continuous way. 
 As a preparation for the next subsection, we now give   quick proofs of these theorems. 
Unlike previous proofs, ours does not use based length functions.

\begin{proof} [Proof of Theorem \ref{thm_compat_length}]
Let $T,T'$ be minimal irreducible \Rt s with the same length function $\ell$. We denote by $A(g)$ the
axis of a hyperbolic element $g$ in $T$, by $A'(g)$ its axis in $T'$. By Lemma \ref{calcul},  $A(g)\cap A(h)$ is empty if
and only if $A'(g)\cap A'(h)$ is empty.

We define an isometric equivariant map $f$ from the set of branch points of $T$ to $T'$, as follows. Let  $x $ be a
branch point  of $T$, and $y\ne x$ an auxiliary branch point. 
By Lemmas \ref{calcul} and \ref{seg}, there exist hyperbolic elements $g,h$   whose axes in $T$ do
not intersect,   such that $[x,y]$ is   the bridge between $A(g)$
and $A(h)$, with $x\in A (g)$ and $y\in A (h)$. 
  Then $\{x\}=A(g)\cap A({gh})\cap A({hg})$. The axes of $g$ and $h$ in $T'$ do not intersect,
so 
$ A'(g)\cap A'({gh})\cap A'({hg})$ is a single point which we call $f(x)$.

Note that $f(x)=\cap_k A'(k)$, the intersection being over all
hyperbolic elements $k$ whose axis in $T$ contains $x$: if $k$ is such an element,
  its axis in $T'$ meets all three sets $ A'(g)$, $A'({gh})$, $A'({hg})$, so
contains $f(x)$. This gives an intrinsic definition of $f(x)$, independent of   the choice of $y$,
$g$, and
$h$.
In particular, $f$ is $G$-equivariant. It is isometric because $d_{T'}(f(x),
f(y)) $ and $d_T(x,y)$ are both  equal to $1/2\bigl(\ell(gh)-\ell(g)-\ell(h)\bigr)$.

We then  extend  $f$ equivariantly and isometrically first to the closure
of the set of branch points of $T$,  and then  to each   complementary interval. The resulting map from
$T$ to $T'$ is onto because $T'$ is minimal.
\end{proof}  

\begin{proof} [Proof of Theorem \ref{thm_fred}]  Given $g\in G$, the map $T\mapsto \ell_T(g)$, from $\Tirr $ to $\R$, is
continuous in the Gromov topology: this follows from the formula 
 $\ell(g)=\max(d(x,g^2x)-d(x,gx),0)$.
This shows that the Gromov topology is finer   than the axes topology.

For the converse, we fix $\varepsilon >0$, a finite set of points $x_i\in T$, and a finite set of elements
$a_k\in G$. We have to show that, if the length function $\ell '$ of $T'$ is close enough to $\ell$ on 
a suitable finite subset of $G$, there exist points $x'_i\in T'$ such that $$|d_{T'}(x'_i,a_kx'_j)
-d_T(x_i,a_kx_j)|<\varepsilon $$ for all $i,j,k$. 

First assume that each $x_i$ is a branch point.
For each $i$, choose elements $g_i,h_i$ as in the previous proof, 
with $x_i$ an endpoint of the bridge between $A(g_i)$ and $A(h_i)$. If $\ell'$ is close to $\ell$, the axes of
$g_i$ and $h_i$ in $T'$ are disjoint and we can define $x'_i$ as $ A'(g_i)\cap A'({g_ih_i})\cap A'({h_ig_i})$.
A different choice $\tilde g_i,\tilde  h_i$ may lead to a different point $\tilde x'_i$. But the distance
between $  x'_i$ and $\tilde x'_i$ goes to $0$ as $\ell'$ tends to $\ell$ because all pairwise distances
between $ A'(g_i),A'({g_ih_i}), A'({h_ig_i}),A'(\tilde g_i),A'({\tilde g_i\tilde h_i}),A'({\tilde h_i\tilde
g_i})$ go to 0. It is then easy to complete the proof. 

If some of the $x_i$'s are not branch points, one can add new points so that each such $x_i$ is contained in an arc bounded by branch points $x_{b_i}$, $x_{c_i}$. One then defines $x'_i$ as the point dividing  $[x'_{b_i},x'_{c_i}]$ in the same way as $x_i$ divides $[x_{b_i},x_{c_i}]$.
 \end{proof}

\newcommand{\Tss}{\calt_{\mathrm{ss}}}
As suggested by M.\ Feighn, one may extend the previous results to reducible trees. Let $\Tss$ consist of all minimal trees which are either 
irreducible or isometric to $\R$ (we only rule out trivial trees and trees with exactly one fixed end).
 
Every non-zero length function is the length function of a tree in $\Tss$. 

\begin{thm} \label{thm_feighn}  
  Two minimal    trees $T,T'\in \Tss$ with the same length function are equivariantly isometric. 
   The equivariant Gromov-Hausdorff topology and the axes topology agree on $\Tss$. 
\end{thm}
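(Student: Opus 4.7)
The proof extends Theorems \ref{thm_compat_length} and \ref{thm_fred} by adding the case when $T$ is isometric to $\R$. The plan is to exploit the fact that the type of action (irreducible versus line) is determined by the length function alone: by Lemma \ref{irrt}, $T$ is irreducible if and only if there exist $g, h \in G$ with $\ell(g)$, $\ell(h)$, and $\ell([g,h])$ all positive, and this condition depends only on $\ell$. Hence if two trees $T, T' \in \Tss$ share the same length function, they are either both irreducible or both lines.

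For the first statement (uniqueness), the irreducible case is Theorem \ref{thm_compat_length}. In the line case, the action on $\R$ is determined by $\ell$ up to equivariant isometry by a standard calculation: one distinguishes the translation case (where $\ell = |\chi|$ for a homomorphism $\chi\colon G \to \R$) from the dihedral case (where no such global $\chi$ exists), recovers the homomorphism on the index-$2$ translation subgroup from $\ell$, and identifies the reflections as those elements outside this subgroup with $\ell = 0$; the resulting data determines the action on $\R$ up to the unique non-trivial isometry of $\R$ fixing the origin.

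For the topology comparison, the direction ``axes coarser than Gromov'' is handled as in Theorem \ref{thm_fred}, using $\ell(g) = \max(d(x, g^2x) - d(x, gx), 0)$. For the reverse, I would adapt the construction in the proof of Theorem \ref{thm_fred}. If $T$ is irreducible, the original argument applies essentially unchanged: the irreducibility condition from Lemma \ref{irrt} is open in the axes topology, so for $T'$ sufficiently axes-close to $T$ we have $T' \in \Tirr$, and the branch-point construction produces approximations. The new case is $T \cong \R$: given finite data $\{x_1, \ldots, x_k\} \subset T$, finite $A \subset G$, and $\varepsilon > 0$, pick a hyperbolic $g \in G$; its axis in $T$ is all of $T$. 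For $T'$ axes-close to $T$, choose $x'_i$ on the axis $A'(g) \subset T'$ at signed distance $d_T(x_1, x_i)$ from a base point $x'_1 \in A'(g)$, and verify the required estimate $|d_{T'}(x'_i, a x'_j) - d_T(x_i, a x_j)| < \varepsilon$ by expressing both sides, via Lemma \ref{calcul}, in terms of translation lengths of elements $g^N$, $a$, $g^N a g^{-N}$, $g^N a$ for suitably chosen $N$, and invoking the hypothesis that $\ell_{T'}$ is close to $\ell_T$ on the enlarged finite set of these elements.

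The main technical obstacle is controlling the ``width'' of $T'$ transverse to $A'(g)$ when $T' \in \Tirr$ but $\ell_{T'}$ is close to a line length function $\ell_T$. The key observation is that any off-axis geometry in $T'$ is detected by an excess term in $\ell_{T'}(g^N a)$: by Lemma \ref{calcul}(1), if $a \in A$ acts on $T'$ with axis at distance $\delta$ from $A'(g)$, then $\ell_{T'}(g^N a) \geq N \ell_{T'}(g) + \ell_{T'}(a) + 2\delta - O(1)$ for large $N$, while on the line $T$ this quantity equals $|N \chi(g) + \chi(a)|$; closeness of the two length functions on this finite set therefore forces $\delta$ to be small, which in turn controls the approximation error in $d_{T'}(x'_i, a x'_j)$.
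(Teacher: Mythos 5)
Your reduction to the case where one of the trees is a line (via Lemma \ref{irrt}, since irreducibility is visible from the length function and the set of irreducible length functions is open in the axes topology), your uniqueness argument for actions on $\R$, and the direction ``Gromov finer than axes'' all match the paper, which disposes of the reducible uniqueness case by citing Culler--Morgan. The genuine gap is in the remaining direction for the line case. Controlling the distance $\delta$ from the characteristic set of $a$ to $A'(g)$ is not what is needed: to make $d_{T'}(x'_i,ax'_j)$ correct you need each $a$ in the finite set $A$ to translate by approximately $\chi(a)$, in the correct direction, along a long subsegment of $A'(g)$ containing all the approximation points. Your estimates do not give this. If the axis of $a$ meets $A'(g)$ in a short segment located far from your (arbitrarily chosen) basepoint $x'_1$, then $\delta=0$; the value $\ell_{T'}(g^Nag^{-N})=\ell_{T'}(a)$ is automatic by conjugation invariance, so that element detects nothing; and when $\chi(a)$ and $\chi(g)$ have the same sign, $\ell_{T'}(g^Na)=N\ell_{T'}(g)+\ell_{T'}(a)$ is compatible with any such configuration. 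So all the length-function values you propose to compare can be correct to within $\eps$ while $d_{T'}(x'_i,ax'_j)$ is badly wrong. Even when $\chi(a)\chi(g)<0$, the cancellation in $\ell_{T'}(g^Na)$ only forces an overlap of length roughly $\ell(a)$, with no control on its position relative to the points.

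The paper's proof of Claim \ref{cvg} supplies exactly the missing step: since $g^{-1}h^Ngh^{-N}$ is elliptic on the line $T$, its length in a nearby tree is small, and because the characteristic set of $h^Ngh^{-N}$ is the $h^N$-translate of that of $g$, this forces the overlap $A_n(g)\cap A_n(h)$ to have length at least about $N\ell(h)$; a Helly-type argument reduces the simultaneous statement for $g_1,\dots,g_k$ to pairs, and comparing $\ell_n(g_1g_2)$ with $\ell_n(g_2g_1)$ (Lemma \ref{calcul}) pins down the relative translation directions, producing a long common overlap in which the approximation points are then placed (so the basepoint is not arbitrary, but chosen inside this region). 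Your sketch contains no substitute for this. In addition, your topology argument presupposes a homomorphism $\chi$, i.e.\ the translation case: in the dihedral case the orientation-reversing elements of $A$ are elliptic in $T$ and require the separate analysis the paper gives (they act as approximate central symmetries on long subarcs of the reference axis, with fixed points located via $\ell(gg')$), and this case is absent from your proposal.
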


In other words, the assignment $T\mapsto \ell_T$ induces a homeomorphism between $\Tss$, 
equipped with the   Gromov topology, and the space of non-zero length functions. 

We note that the results of \cite{CuMo} are  stated for   all trees in $\Tss$, those of \cite{Pau_Gromov} for  trees which are irreducible or dihedral.

\begin{proof}  We refer to \cite[page 586]{CuMo} for a proof of the first assertion 
when the actions are not irreducible. 
Since the set of irreducible length functions is open,  it suffices to show the following fact: 

\begin{claim} \label{cvg} {If $T_n$ is a sequence of trees in $\Tss$ whose length functions $\ell_n$ converge to the length function $\ell$ of an action of $G$ on $T=\R$, then $T_n$ converges to $T$ in the Gromov topology.}
\end{claim}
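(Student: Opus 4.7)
The plan is to construct Gromov approximations directly, treating separately the case where $T_n$ is a line and the case where $T_n$ is irreducible. Without loss of generality I assume $T = \R$ has a translation-only action, so $\ell = |\chi|$ for a non-trivial homomorphism $\chi : G \to \R$ (the dihedral case is analogous with additional bookkeeping for reflection fixed points). If $T_n$ is itself a line, the first part of Theorem \ref{thm_feighn} (proved in \cite{CuMo} for this case) asserts that a minimal line action in $\Tss$ is determined up to equivariant isometry by its length function. Convergence $|\chi_n| \to |\chi|$ on a finite generating set of $G$ forces, in a suitable parametrization of $T_n \cong \R$, the convergence $\chi_n \to \chi$ pointwise; Gromov convergence then follows from the direct formula $d_{T_n}(x'_i, a x'_j) = |(x'_j - x'_i) + \chi_n(a)|$ by taking $x'_i = x_i$.

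The main case is $T_n$ irreducible. I would fix $g_0 \in G$ with $\chi(g_0) > 0$, so $g_0$ is hyperbolic in $T$ and, for $n$ large, hyperbolic in $T_n$ with axis a line $L_n \subset T_n$. The key step is to prove that for every element $a$ in a prescribed finite set $A \subset G$, the characteristic set of $a$ in $T_n$ meets $L_n$ in a subsegment whose length tends to $+\infty$, and that on this subsegment $a$ acts as it does on $T$ up to vanishing error. If $\chi(a) \ne 0$, so $a$ is hyperbolic in $T$, then Lemma \ref{calcul} applied to $g_0 a$ and $g_0^{-1} a$ combined with $\ell_n(g_0^{\pm 1} a) \to \ell(g_0) + \ell(a)$ forces the axes of $g_0$ and of $a$ in $T_n$ to be eventually non-disjoint and to overlap in longer and longer arcs. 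If $\chi(a) = 0$, so $a$ is elliptic in $T$ with $\ell_n(a) \to 0$, an analogous computation involving $\ell_n(a g_0^k)$ for a couple of values of $k$ shows that the characteristic set of $a$ in $T_n$ approaches $L_n$ and that $a$ pointwise fixes a longer and longer subsegment of $L_n$.

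From this, a common subsegment $I_n \subset L_n$ of diverging length is $\eps$-invariant under every $a \in A$, with each $a$ acting on $I_n$ approximately as a translation by $\chi(a)$. Approximation points $x'_i \in I_n$ are obtained by transferring the $\R$-coordinates of the given $x_i \in T$ relative to a fixed base point on $L_n$, and the inequality $|d_{T_n}(x'_i, a x'_j) - d_T(x_i, a x_j)| < \eps$ follows for $n$ large. The main obstacle will be the elliptic case of the key step: elements elliptic in $T$ may well be hyperbolic (with small translation length) in $T_n$, and controlling both their translation length and the position of their characteristic set relative to $L_n$ requires balancing several applications of Lemma \ref{calcul} simultaneously; by contrast, the hyperbolic case is almost immediate.
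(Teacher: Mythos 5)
Your skeleton is the paper's: fix one element hyperbolic in $T$, intersect every other element's characteristic set in $T_n$ with its axis, and show agreement on longer and longer subsegments. But the two quantitative steps that actually carry the proof are missing. First, for $a$ hyperbolic in $T$ the asserted limit $\ell_n(g_0^{\pm 1}a)\to\ell(g_0)+\ell(a)$ is false for one of the two signs: in $T=\R$ one has $\ell(g_0a)=|\chi(g_0)+\chi(a)|$ and $\ell(g_0^{-1}a)=|\chi(g_0)-\chi(a)|$, which cannot both equal $\ell(g_0)+\ell(a)$. More importantly, even the correct limits of $\ell_n(g_0a)$, $\ell_n(g_0^{-1}a)$, $\ell_n(g_0)$, $\ell_n(a)$ do \emph{not} force the overlap of the axes of $g_0$ and $a$ in $T_n$ to diverge: a tree in which the two axes overlap coherently in a segment of length $\ell(g_0)+\ell(a)$ already realizes all four limiting values exactly, independently of $n$. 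Lemma \ref{calcul} only distinguishes disjoint axes, axes meeting in a point, and axes meeting in a nondegenerate arc; it gives no lower bound on the length of that arc. To make the overlap diverge you must bring in elements of large translation length whose image in $T$ is controlled --- this is precisely the paper's device: $g^{-1}h^Ngh^{-N}$ is elliptic in $T$, so the segments $I_n(g)=A_n(g)\cap A_n(h)$ and $h^NI_n(g)=I_n(h^Ngh^{-N})$ must come together, and since $h^N$ translates $I_n(g)$ by about $N\ell(h)$ along the axis this forces $|I_n(g)|\geq N\ell(h)-o(1)$ (alternatively one can use $\ell_n(g_0^{-N}a^{M})$). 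The same mechanism handles elements elliptic in $T$, the case you explicitly leave unresolved; so the step you call ``almost immediate'' and the step you call the main obstacle in fact need the same extra idea, and neither is supplied.

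Second, even granting that each $a$ in the finite set $A$ overlaps the axis $L_n$ of $g_0$ in a segment of diverging length, your jump to a \emph{common} subsegment of diverging length is unjustified: the individual overlaps could a priori drift apart along the infinite line $L_n$. The paper spends the second half of its argument on exactly this point: reduce to pairs via Helly's theorem, rule out disjointness of $I_n(g_1)$ and $I_n(g_2)$ by observing that it would make $I_n(g_1g_2g_1^{-1})$ or $I_n(g_2g_1g_2^{-1})$ empty, and treat the crossing case via $I_n(g_1g_2)$ or $I_n(g_2g_1)$. Finally, the dihedral case of $T$ is not mere bookkeeping and is part of the claim: one must show that an orientation-reversing element acts as an approximate central symmetry on a long subarc of $L_n$ (using that $\ell_n(hg^{-1}hg)$ is small while $\ell_n(h^{-1}g^{-1}hg)$ is not) and then locate the approximate fixed points of two such elements from $\ell(gg')$; nothing in your sketch addresses this. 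As written, the proposal is therefore incomplete at the points where the real work lies.
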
 

To prove the claim, we denote by $A_n(g)$ the characteristic set of $g\in G$ in $T_n$, and we fix $h\in G$ hyperbolic in $T$ (hence in $T_n$ for $n$ large). We denote by  $I_n(g)$ the (possibly empty or degenerate) segment  $A _n(g)\cap A_n(h)$.

The first case is when $G$ acts on $T$ by translations. To show that $T_n$ converges to $T$, it suffices to show that, given   elements $g_1,\dots, g_k$ in $G$,  the length of    $ \bigcap _i I_n(g_i)$  goes to infinity with $n$. By a standard argument using Helly's theorem, we may assume $k=2$. 

We first show that, for any $g$,  the length   $ | I_n(g)  | $ goes to infinity. Let $N\in \N$ be arbitrary.
Since $g\m h^Ngh^{-N}$ is elliptic in $T$, the distance between $I_n(h^Ngh^{-N})$ and $I_n(g)$ goes to $0$ as $n\to\infty$. But $I_n(h^Ngh^{-N})$ is the image of  $I_n(g)$ by $h^N$, so $\liminf_{n\to\infty}  | I_n(g) | \ge N\ell(h)$.

 To show that the overlap  between $I_n(g_1) $ and $I_n(g_2) $ goes to infinity,  we can assume  that the relative position of $I_n(g_1) $ and $I_n(g_2) $ is the same for all $n$'s. 
  If they are disjoint, $I_n(g_1g_2g_1\m)$ or $I_n(g_2g_1g_2\m)$ is empty, a contradiction. 
Since  every $|I_n(g )|$ goes to infinity, the result is clear if $I_n(g_1)$ and $I_n(g_2)$ are   nested. 
In the remaining case, up to changing $g_i$ to its inverse, we can assume that $g_1,g_2$ translate in the same direction along  
$A_n(h)$ if they are both hyperbolic.
Then $I_n(g_1) \cap I_n(g_2) $ equals $I_n(g_1g_2)$ or $I_n(g_2g_1)$, so its length goes to infinity.

Now suppose that the action of $G$ on $T$ is dihedral. Suppose that 
$g\in G$ reverses orientation on $T$. 
For $n$ large, the axes of $h, g\m hg, ghg\m $ in $T_n$ have a long overlap by the previous argument. On this overlap $h $ translates in one direction, $g\m hg$ and $ghg\m $ in the other (because $\ell_n( h   g\m hg)$ is close to 0 and $\ell_n( h\m   g\m hg)$ is not). It follows that $g$ acts as a central symmetry on  a long subarc of $A_n(h)$. Moreover, if $g,g'$ both reverse orientation, the distance between their fixed points on $A_n(h)$ is close to $2\ell(gg')$. The convergence of $T_n$ to $T$ easily follows from these observations. This proves the claim, hence the theorem.
\end{proof}  

\subsection{Compatibility and length functions}
\label{sec_compatible_via_longueurs}

Recall that two  \Rt s $T_1,T_2$ are \emph{compatible} if they have a \emph{common refinement}: there exists an   \Rt  {} $\hat T$
with  (equivariant)  maps $g_i:\hat T\to T_i$   preserving alignment (the image of a segment is a segment,  possibly  a point, see  Subsections \ref{comp}  and \ref{marb});
we call such maps collapse maps.\index{compatible trees}

If $T_1$ and $T_2$ are compatible, they have a \emph{standard common refinement $T_s$} constructed as
follows.\index{standard common refinement} 

We denote by $d_i$ the distance in $T_i$, and by $\ell_i$ the length function. 
Let $\hat T$ be any common refinement.  Given $x,y\in \hat T$, define $$\delta
(x,y)=d_1(g_1(x),g_1(y))+d_2(g_2(x),g_2(y)).$$ This is a pseudo-distance satisfying $\delta
(x,y)=\delta (x,z)+\delta (z,y)$ if $z\in[x,y]$ (this is also a length measure, as defined in \cite{Gui_dynamics}).
The associated metric space $(T_s,d)$, obtained by identifying $x,y$ when $\delta(x,y)=0$, is an \Rt{} which
refines $T_1$ and $T_2$, with maps
$f_i:T_s\to T_i$ satisfying $d
(x,y)=d_1(f_1(x),f_1(y))+d_2(f_2(x),f_2(y))$.
The $f_i$'s are $1$-Lipschitz, hence continuous.

The length function of $T_s$ is $\ell=\ell_1+\ell_2$ (this follows from  the formula    $\ell(g)=\lim_{n\to\infty}\frac1 nd(x,g^nx) $). In particular,
$\ell_1+\ell_2$ is a length function. We now prove the converse.

\begin{thm} \label{prop_comp}  
Two minimal irreducible \Rt s $T_1$, $T_2$   with an action of $G$ 
  are compatible if and only if the sum $\ell=\ell_{ 1}+\ell_{ 2}$ of their length functions
  is a length function.   
\end{thm}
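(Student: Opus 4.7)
The forward direction is already implicit in the discussion preceding the theorem: given any common refinement, the standard common refinement $T_s$ constructed from it has length function $\ell_1+\ell_2$, so the sum is automatically a length function. I would therefore concentrate on the converse.

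Assume $\ell:=\ell_1+\ell_2$ is the length function of some $\bbR$-tree. First I would check that $\ell$ is itself irreducible: since $T_1$ is irreducible, Lemma \ref{irrt} supplies $g,h\in G$ with $[g,h]$ hyperbolic in $T_1$, so $\ell([g,h])\geq\ell_1([g,h])>0$ and similarly $\ell(g),\ell(h)>0$; Lemma \ref{irrt} then yields irreducibility of $\ell$. By Theorem \ref{thm_compat_length}, there is a unique minimal irreducible $\bbR$-tree $T$ with $\ell_T=\ell$, and the goal reduces to exhibiting $T$ as a common refinement of $T_1$ and $T_2$.

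To do this I would construct equivariant maps $f_i\colon T\to T_i$ preserving alignment, by mimicking the intrinsic construction used in the proof of Theorem \ref{thm_compat_length}. For each branch point $x\in T$, pick an auxiliary branch point $y$ and, via Lemmas \ref{calcul} and \ref{seg}, elements $g,h\in G$ hyperbolic in $T$ with disjoint axes such that $[x,y]$ is the bridge between $A_T(g)$ and $A_T(h)$, with $x\in A_T(g)$. Set
$$f_i(x)=A_{T_i}(g)\cap A_{T_i}(gh)\cap A_{T_i}(hg),$$
understanding $A_{T_i}(\cdot)$ as a characteristic set (fixed-point set if elliptic, axis if hyperbolic). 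I would then verify that $f_i(x)=\bigcap_k A_{T_i}(k)$ over all $k\in G$ with $x\in A_T(k)$; this intrinsic description makes $f_i$ canonical and hence equivariant, and shows it is independent of the auxiliary choices.

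The key step, and the main obstacle, is the identity
$$d_T(x,y)=d_{T_1}(f_1(x),f_1(y))+d_{T_2}(f_2(x),f_2(y))$$
for branch points $x,y$. The right-hand side of $d_T(x,y)=\tfrac{1}{2}(\ell_T(gh)-\ell_T(g)-\ell_T(h))$, obtained from Lemma \ref{calcul}(1) applied in $T$, splits as $\sum_i\tfrac{1}{2}(\ell_i(gh)-\ell_i(g)-\ell_i(h))$ since $\ell_T=\ell_1+\ell_2$. The task is to show that this decomposition matches the decomposition $d_{T_1}(f_1(x),f_1(y))+d_{T_2}(f_2(x),f_2(y))$ term by term. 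This requires a case analysis on the configuration of $g,h$ in each $T_i$ (both hyperbolic with disjoint axes, one or both elliptic, or axes meeting in a segment or a single point), possibly replacing $gh$ by $g^{-1}h$ in the bad cases to ensure non-negativity; Lemma \ref{calcul} provides the needed bounds. What makes the argument work is that the global sum is rigidly pinned down: each $d_{T_i}$ is non-negative, their sum equals $d_T(x,y)$, and Lemma \ref{calcul} gives matching upper bounds, leaving no slack. Once the identity is established on branch points, each $f_i$ is $1$-Lipschitz and extends continuously to all of $T$, the identity extends by continuity, and for $z\in[x,y]$ the additivity of $d_T$ together with the triangle inequalities in the $T_i$ forces $d_{T_i}(f_i(x),f_i(z))+d_{T_i}(f_i(z),f_i(y))=d_{T_i}(f_i(x),f_i(y))$; that is, $f_i(z)\in[f_i(x),f_i(y)]$. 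This is the alignment-preserving property, so $T$ is a common refinement of $T_1$ and $T_2$.
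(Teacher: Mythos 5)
Your overall skeleton is the same as the paper's (define $f_i$ on branch points via intersections of axes, use $\ell=\ell_1+\ell_2$ to get the additive distance identity, and conclude that the $f_i$ are collapse maps), but there is a genuine gap exactly where you defer to ``a case analysis on the configuration of $g,h$ in each $T_i$''. Your construction takes $g,h$ hyperbolic only in $T$, and then interprets $A_{T_i}(\cdot)$ as a characteristic set to cover the elliptic cases. That does not work as stated: if, say, $g$ is elliptic in $T_1$, its characteristic set is a possibly large fixed subtree, and $A_{T_1}(g)\cap A_{T_1}(gh)\cap A_{T_1}(hg)$ need not be a single point, so $f_1(x)$ is not well defined and the ``intrinsic'' description $\bigcap_k A_{T_1}(k)$ no longer rescues independence of choices (in Theorem \ref{thm_compat_length} that trick works precisely because equality of length functions forces hyperbolicity in both trees simultaneously, which fails here). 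The paper closes this hole with a genuinely new ingredient, Lemma \ref{lem_hyperbolic} (resting on the semigroup Lemma \ref{lem_semigroup} from \cite{Gui_coeur}): for any prescribed arc $[x,y]\subset T$ one can choose $g,h$ hyperbolic in \emph{all three} trees $T,T_1,T_2$ whose bridge in $T$ is $[x,y]$. Nothing in your proposal produces such elements, and without them the construction of $f_i$ does not get off the ground.

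The second missing piece is the justification of ``no slack''. Your argument that the decomposition $d_T(x,y)=\sum_i\tfrac12(\ell_i(gh)-\ell_i(g)-\ell_i(h))$ matches $\sum_i d_{T_i}(f_i(x),f_i(y))$ term by term assumes what has to be proved: a priori one summand could be negative (axes of $g,h$ overlapping in a nondegenerate arc in $T_i$, by Lemma \ref{calcul}(2)) while the other exceeds $d_T(x,y)$, and replacing $gh$ by $g^{-1}h$ does not obviously repair this, since the two trees may prefer opposite choices. The paper rules out these configurations by Lemma \ref{lem_interaxe}: using $\ell=\ell_1+\ell_2$ and \emph{both} inequalities of Lemma \ref{calcul} (for $gh$ and $g^{-1}h$ simultaneously), it shows that if the axes of $g,h$ are disjoint in $T$ then their axes in each $T_i$ meet in at most one point, and if they meet in $T$ then they meet in each $T_i$. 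With that lemma the intersection defining $f_i(x)$ is a single point and each summand is exactly $d_{T_i}(f_i(x),f_i(y))$; without it, your case analysis is an unexecuted plan at the one place where the theorem's content lies. So the proposal is not wrong in direction, but the two lemmas that make it work are precisely what is missing.
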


\begin{rem}
  If $T_1$ and $T_2$ are compatible, then 
$\lambda_1 l_{ 1}+\lambda_2 l_{ 2}$ is   a length function for all $\lambda_1,\lambda_2\geq 0$.
\end{rem}

\begin{cor} \label{cofer}
  Compatibility is a closed relation on $\Tirr\times \Tirr$.
In particular, the set of irreducible \Rt s compatible with a given $T_0$ is closed in $\Tirr$.
\end{cor}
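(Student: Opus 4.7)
The plan is to deduce the corollary directly from Theorem \ref{prop_comp} together with the fact that the set of length functions is closed in $\R^G$ (quoted right after Theorem \ref{thm_compat_length}). By Theorem \ref{thm_compat_length}, the assignment $T \mapsto \ell_T$ embeds $\Tirr$ into $\R^G$, and the axes topology on $\Tirr$ is by definition the topology induced by this embedding. Thus convergence $T^{(n)} \to T$ in $\Tirr$ is equivalent to pointwise convergence $\ell_{T^{(n)}}(g) \to \ell_T(g)$ for every $g \in G$.

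So, given a sequence $(T_1^{(n)}, T_2^{(n)}) \in \Tirr \times \Tirr$ converging to $(T_1, T_2)$, with each pair compatible, I would proceed as follows. First, by Theorem \ref{prop_comp}, the function $\ell_{T_1^{(n)}} + \ell_{T_2^{(n)}}$ is a length function of some $\bbR$-tree for every $n$. Second, since addition is continuous on $\R^G$ with the product topology, the sequence $\ell_{T_1^{(n)}} + \ell_{T_2^{(n)}}$ converges pointwise to $\ell_{T_1} + \ell_{T_2}$. Third, the closedness of the set of length functions in $\R^G$ (see \cite[Theorem 4.5]{CuMo}, cited in the paragraph after Theorem \ref{thm_compat_length}) guarantees that $\ell_{T_1} + \ell_{T_2}$ is itself a length function. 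Applying the converse direction of Theorem \ref{prop_comp} yields that $T_1$ and $T_2$ are compatible, proving that compatibility is a closed relation on $\Tirr \times \Tirr$. The ``in particular'' clause follows since the slice $\{T_0\} \times \Tirr$ intersected with the compatibility relation is closed, and projecting to the second factor preserves this under the natural homeomorphism.

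There is essentially no hard step here: the theorem has already done the real work by translating compatibility into a condition (``the sum of length functions is a length function'') that is manifestly closed under the axes topology. The only point meriting a line of care is that we are working in $\Tirr$ rather than in the larger space $\Tss$ of semi-simple trees, so we should confirm that no limit argument forces us to leave $\Tirr$: it does not, because we are checking that a property of the pair is preserved under limits in $\Tirr \times \Tirr$, not asserting that the common refinement varies continuously. If anything, a minor subtlety would be whether one wants to upgrade the corollary to $\Tss \times \Tss$ via Theorem \ref{thm_feighn}, but the statement as given is confined to $\Tirr$, so this is not needed.
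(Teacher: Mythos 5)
Your proof is correct and is exactly the paper's argument: the paper also deduces the corollary in one line from Theorem \ref{prop_comp} together with the closedness of the set of length functions in $\R^G$, which you have simply spelled out (using that the axes topology is the topology induced by $T\mapsto\ell_T$ and that $G$, being finitely generated, is countable so a sequential argument suffices). No gaps; nothing further is needed.
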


\begin{proof} 
This follows from the fact that the set of length functions
is a closed subset of $\bbR^G$ \cite{CuMo}.
\end{proof}

\begin{proof}[Proof of Theorem \ref{prop_comp}.]
We   have to prove the ``if'' direction.  Let $T_1,T_2$ be irreducible minimal \Rt s with  length  functions
$\ell_1,\ell_2$, such that $\ell=\ell_1+\ell_2 $ is the length function of a minimal  \Rt {}
$T$. We denote  by $ A(g)$, $A_1(g)$, $A_2(g)$ axes in $T$, $T_1$, $T_2$ respectively.

By Lemma \ref{irrt}, $T$ is irreducible: hyperbolic elements $g,h$ with  $[g,h]$  hyperbolic exist in $T$ since they exist in $T_1$.  
 We want to prove that $T$ is a common refinement of $T_1$ and $T_2$. In fact, we show that $T$ is the
standard refinement $T_s$ mentioned earlier (which is unique by Theorem \ref{thm_compat_length}). 
The proof is similar to that of Theorem \ref{thm_compat_length}, but we first need a few   lemmas. 

\begin{lem}\label{lem_semigroup}[{\cite[lemme 1.3]{Gui_coeur}}] 
 Let $S\subset G$ be a finitely generated semigroup   
such that no point or line in $T$ is   invariant under the subgroup $\langle S\rangle$ generated by $S$.  
Let
$I$ be an arc contained in the axis of a hyperbolic element $h\in S$.

Then there exists a finitely generated semigroup $S'\subset S$ with $\langle S'\rangle=\langle
S\rangle$ such that
 every element $g\in S'\setminus\{1\}$ is hyperbolic in $T$, its axis contains $I$,
and $g$ translates in the same direction as $h$ on $I$.
\qed
\end{lem}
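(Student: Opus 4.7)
The plan is to realize $S'$ as the semigroup $\Sigma^+$ of nonempty positive words in a carefully chosen finite set $\Sigma=\{h,\sigma_1,\dots,\sigma_n\}\subset S$, each $\sigma_i$ being itself a positive word in the generators of $S$ that is hyperbolic in $T$, with axis containing $I$ and translating in the same direction as $h$ on $I$. My first step is to verify that these three properties are closed under binary products. If $g,g'$ are hyperbolic with $I\subset A(g)\cap A(g')$ and both translate positively on $I$ (orient $A:=A(h)$ so $h$ translates positively), then Lemma~\ref{calcul}(2) gives $\ell(gg')=\ell(g)+\ell(g')$, and a direct path computation---tracking $p\mapsto g'p\mapsto gg'p$ along the overlap $A(g)\cap A(g')$, possibly via a side branch past its positive end---yields $d(p,gg'p)=\ell(gg')$ for every $p\in I$. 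Attaining $\ell(gg')$ forces $p\in A(gg')$, so $I\subset A(gg')$ with the same positive orientation, and by induction every element of $\Sigma^+\setminus\{1\}$ has the required properties.

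Next I build $\Sigma$. Let $s_1,\dots,s_n$ generate $S$ as a semigroup. The setwise stabilizer of $A$ in $G$ is a subgroup; if every $s_i$ lay in it, so would $\langle S\rangle$, which would then either fix a point of $A$ or preserve $A$ as an invariant line---both excluded by hypothesis. So some $s_{j_0}$ satisfies $s_{j_0}A\ne A$. For each $i$ with $s_iA=A$ I replace $s_i$ by $s'_i:=s_{j_0}s_i\in S$, which satisfies $s'_iA=s_{j_0}A\ne A$; otherwise $s'_i:=s_i$. Every $s'_i$ now moves $A$ off itself, and $\langle s'_1,\dots,s'_n,s_{j_0}\rangle=\langle S\rangle$. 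Set $\sigma_i:=h^{M_i}s'_ih^{N_i}\in S$ for integers $M_i,N_i\ge1$ to be chosen, and $K_i:=M_i+N_i$; the identity $\sigma_i=h^{-N_i}(h^{K_i}s'_i)h^{N_i}$ reduces the axis analysis of $\sigma_i$ to that of $g_K:=h^Ks'_i$ via $A(\sigma_i)=h^{-N_i}A(g_{K_i})$, with matching translation direction.

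The crux, which I expect to be the hard step, is analyzing $g_K$ for $K$ large. Projection $\pi_A$ onto $A$ commutes with $h$, so $\pi_A(g_Kp)=h^K\pi_A(s'_ip)$ for all $p\in T$; and as $p$ ranges over $A$, $\pi_A(s'_ip)$ stays in a bounded region of $A$ depending only on $s'_i$, since $s'_iA\ne A$ forces $s'_iA$ and $A$ to diverge so that most of $s'_iA$ projects to $s'_iA\cap A$. Hence $d(p,g_Kp)\ge K\ell(h)-C_i$ for $p\in A$, giving $\ell(g_K)\to\infty$, so $g_K$ is hyperbolic. Iterating yields $\pi_A(g_K^np)\to\xi^+$ (the positive end of $A$), so $\xi^+$ is the attracting end of $A(g_K)$; the repelling end is an end of $(s'_i)^{-1}A$, distinct from $\xi^+$ precisely because $s'_iA\ne A$. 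Two lines in an $\bbR$-tree sharing an end coincide on a subray, so $A(g_K)\cap A=[a_K,\xi^+)$ for some $a_K\in A$, on which $g_K$ translates positively. The remaining quantitative task is to show that $a_K$ drifts in the $\xi^-$ direction (essentially linearly in $K$, with a constant depending on $s'_i$) so that for $K_i$ large one can choose $N_i\in[1,K_i-1]$ for which $h^{-N_i}[a_{K_i},\xi^+)\supset I$.

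With that, taking $\Sigma:=\{h,\sigma_1,\dots,\sigma_n\}$ and $S':=\Sigma^+\subset S$, the closure argument of the first paragraph ensures every element of $S'\setminus\{1\}$ is hyperbolic with axis containing $I$ and translating positively; and the identities $s'_i=h^{-M_i}\sigma_ih^{-N_i}$ together with $s_i\in\{s'_i,s_{j_0}^{-1}s'_i\}$ recover every original generator of $\langle S\rangle$ from $\Sigma$, giving $\langle S'\rangle=\langle\Sigma\rangle=\langle S\rangle$. The main obstacle is therefore the quantitative north--south analysis of $g_K=h^Ks'_i$ in the third paragraph: while its dynamics at $\xi^+$ is standard, controlling $a_K$ precisely enough to slide $I$ into position via conjugation by $h^{-N_i}$ is the nontrivial geometric content.
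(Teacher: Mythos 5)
The paper gives no proof of this lemma --- it is quoted from \cite{Gui_coeur} with a \textit{qed} --- so I can only judge your argument on its own terms. Your first, second and fourth paragraphs are sound: closure of the ``adapted'' elements under products is the standard consequence of Lemma~\ref{calcul}(2), and the bookkeeping showing $\langle S'\rangle=\langle S\rangle$ is correct. The gap is in the third paragraph, and it is worse than the ``quantitative control of $a_K$'' that you defer: the element $g_K=h^{K}s'_i$ need not be hyperbolic for \emph{any} $K$, so no choice of exponents can make $\sigma_i=h^{M_i}s'_ih^{N_i}$ work.

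The false step is the claim that $s'_iA\neq A$ forces $\pi_A(s'_ip)$ to stay in a bounded region as $p$ ranges over $A$. Two distinct lines in a tree may share a ray: $s'_iA\neq A$ only says that $s'_i$ does not fix the \emph{pair} of ends $\{\xi^+,\xi^-\}$ of $A$, and it is compatible with $s'_i\xi^+=\xi^-$, i.e.\ with $s'_i$ carrying a subray of $A$ pointing to $\xi^+$ isometrically onto a subray pointing to $\xi^-$. In that case, parametrizing $A$ so that $s'_i(x)=c-x$ for $x\geq c'$ and writing $\lambda=\ell(h)$, one computes $h^{p}s'_ih^{q}(x)=c+(p-q)\lambda-x$ for $x+q\lambda\geq c'$: the element acts as a reflection on a ray of $A$ and fixes the point $\tfrac12\bigl(c+(p-q)\lambda\bigr)$ as soon as $p+q$ is large. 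This configuration is consistent with all the hypotheses. For instance, let $T$ be the $3$-regular tree, $h$ a hyperbolic automorphism with axis $A$, and $\rho$ an order-$3$ elliptic automorphism fixing a vertex $v_0\in A$ and cyclically permuting the three branches at $v_0$ in such a way that the ray $[v_0,\xi^+)$ is sent onto $[v_0,\xi^-)$ and $[v_0,\xi^-)$ into the third branch (so $\rho A\neq A$); with $S$ the semigroup generated by $h$ and $\rho$, the group $\langle S\rangle$ fixes no point and preserves no line, yet every $h^{p}\rho h^{q}$ with $p,q\geq 0$ fixes a point of $A$. Your preprocessing only excludes the case $s'_iA=A$, not this one, and here $d(p,g_Kp)\geq K\ell(h)-C_i$ fails outright ($g_K$ fixes a point of $A$), there is no repelling end, and no word of the form $h^{\ast}\rho h^{\ast}$ is hyperbolic. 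Repairing this requires a genuinely different device for generators reversing the two ends of $A$ --- e.g.\ words in which such a generator occurs twice, or premultiplication by an element of $S$ sending $\xi^-$ off $\{\xi^+,\xi^-\}$, whose existence in the \emph{semigroup} $S$ itself needs an argument --- and not merely sharper estimates on $a_K$. This end-reversing case is in fact the delicate point of the lemma, and your proof does not address it.
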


\begin{lem}\label{lem_hyperbolic}  Let $T_1,T_2, T$ be arbitrary irreducible minimal  trees. 
  Given an  arc $I\subset T$, there exists $g\in G$ which is hyperbolic in $T_1$, $T_2$ and $T$,
and whose axis in $T$ contains $I$.
\end{lem}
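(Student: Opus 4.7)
The plan is to apply Lemma~\ref{lem_semigroup} three times in succession, to $T$, $T_1$, and $T_2$, producing a nested chain of finitely generated semigroups $S_0 \supset S_1 \supset S_2 \supset S_3$ of $G$, each generating $G$ as a group, whose non-identity elements are hyperbolic in progressively more of the three trees. Any non-identity element of the final semigroup $S_3$ will do.

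First I would set up the initial semigroup. Since $T$ is irreducible, Lemma~\ref{seg} produces an element $h_0 \in G$ hyperbolic in $T$ with $I \subset A(h_0)$. Choose any finite generating set $a_1,\dots,a_n$ of $G$ and let $S_0$ be the finite semigroup generated by $\{a_1^{\pm 1},\dots,a_n^{\pm 1},h_0^{\pm 1}\}$, so $\langle S_0\rangle =G$ and $h_0 \in S_0$. Irreducibility of $T$ prevents $G$ from fixing a point or line in $T$, so Lemma~\ref{lem_semigroup} applied to $T$, with $h=h_0$ and the prescribed arc $I$, yields a finitely generated semigroup $S_1 \subset S_0$ with $\langle S_1\rangle = G$ such that every non-identity element of $S_1$ is hyperbolic in $T$ with axis containing $I$.

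Next I would iterate on $T_1$ and $T_2$. Since $\langle S_1\rangle=G$ acts irreducibly on $T_1$, no point or line of $T_1$ is $\langle S_1\rangle$-invariant, and by Serre's lemma (applied to a finite semigroup generating set of $S_1$, which is also a finite group generating set of $G$) some product of length $\le 2$ of these generators is hyperbolic in $T_1$; as $S_1$ is a semigroup, this product still lies in $S_1$, providing an element $h_1\in S_1$ hyperbolic in $T_1$. Applying Lemma~\ref{lem_semigroup} to $T_1$, with the semigroup $S_1$, the element $h_1$, and any arc $I_1\subset A_1(h_1)$, yields $S_2\subset S_1$ with $\langle S_2\rangle = G$ and every non-identity element of $S_2$ hyperbolic in $T_1$; crucially, these elements are also hyperbolic in $T$ with axis containing $I$ because $S_2\subset S_1$. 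Repeating the same argument one more time with $T_2$ in place of $T_1$ produces $S_3\subset S_2$ whose non-identity elements are hyperbolic in $T_2$, hence in $T$, $T_1$, and $T_2$ simultaneously, with axis in $T$ still containing $I$. Any $g\in S_3\setminus\{1\}$ then satisfies the conclusion of the lemma.

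The main point to verify is that properties established in one iteration survive the next, which is immediate since we are passing to sub-semigroups. The only other thing to check at each step is that the current semigroup contains a hyperbolic element in the next target tree; this is guaranteed by Serre's lemma together with $\langle S_i\rangle = G$ and irreducibility of the target tree, so no genuine obstacle arises.
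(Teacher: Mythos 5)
Your proof is correct and follows essentially the same route as the paper: nested applications of Lemma~\ref{lem_semigroup} starting from a semigroup generating $G$, with Serre's lemma supplying, at each stage, an element of the current semigroup that is hyperbolic in the next tree. The only (harmless) difference is that the paper stops after the second application and takes for $g$ an element of $S''$ hyperbolic in $T_2$ given directly by Serre's lemma, so your third application of Lemma~\ref{lem_semigroup} is redundant (and your ``finite semigroup'' should read ``finitely generated semigroup'', which here is all of $G$).
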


\begin{proof} 
Apply Lemma \ref{lem_semigroup} with $S=G$ and any $h$   whose axis in $T$ contains $I$ (if $G$ is not finitely generated, one takes for $S$ the group generated by $s_1,t_1,s_2,t_2,h$, with $s_i, t_i, [s_i,t_i]$   hyperbolic in $T_i$).
Since $S'$   generates $S$ as a group, it must contain  an element  $h'$ which is hyperbolic
in
$T_1$: otherwise   $S$ would have a global fixed point in $T_1$ by Serre's lemma (see Subsection \ref{tre}). 
Applying Lemma
\ref{lem_semigroup} to the action of $S'$ on $T_1$, we get a semigroup
$S''\subset S'$ whose non-trivial elements are hyperbolic in $T_1$. Similarly, $S''$ contains an element
$g$ which is hyperbolic in $T_2$. This element $g$ satisfies the conclusions of the lemma.
\end{proof}

\begin{rem} 
More generally, one may require that $g$ be hyperbolic in finitely many trees $T_1,\dots, T _n$.
\end{rem}

We again assume that $T_1$, $T_2$, $T$ are as described at the beginning of the proof of Theorem \ref{prop_comp}.

\begin{lem}\label{lem_interaxe}
  Let $g,h$ be hyperbolic in $T_1$ and $T_2$ (and therefore in $T$).
\begin{itemize}
\item If their axes in $T$ meet, so do their axes in $T_i$. 
\item If their axes in $T$ do not meet, their axes in $T_i$ meet in at most one point. In particular, the
elements
$gh$ and $hg$ are hyperbolic in $T_i$. 
\end{itemize}
\end{lem}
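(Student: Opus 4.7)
The plan is to run a case analysis based on whether the axes of $g$ and $h$ meet in $T_1$ and in $T_2$, applying Lemma \ref{calcul} in each of $T_1$, $T_2$, and $T$, and exploiting the identity $\ell_T=\ell_1+\ell_2$.  Write $\ell_i$ for $\ell_{T_i}$, $d_i$ for the distance in $T_i$ between the axes when these are disjoint, and note that, since $\ell_1(g),\ell_2(g)>0$ and similarly for $h$, both $g$ and $h$ are hyperbolic in $T$.

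The three cases to consider are: (A) axes disjoint in both $T_1$ and $T_2$; (B) axes meet in one of the $T_i$ but not the other (say meet in $T_1$, disjoint in $T_2$); (C) axes meet in both $T_1$ and $T_2$.  In case (A) Lemma \ref{calcul}(1) applied in each $T_i$ gives $\ell_i(gh)=\ell_i(g^{-1}h)=\ell_i(g)+\ell_i(h)+2d_i$, so summing, $\ell(gh)=\ell(g^{-1}h)=\ell(g)+\ell(h)+2(d_1+d_2)>\ell(g)+\ell(h)$, and axes are disjoint in $T$.  In case (B), $\max(\ell_1(gh),\ell_1(g^{-1}h))=\ell_1(g)+\ell_1(h)$ while $\ell_2(gh)=\ell_2(g^{-1}h)=\ell_2(g)+\ell_2(h)+2d_2$, so after summing, $\max(\ell(gh),\ell(g^{-1}h))=\ell(g)+\ell(h)+2d_2>\ell(g)+\ell(h)$, and again axes are disjoint in $T$.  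In case (C), $\max(\ell_i(gh),\ell_i(g^{-1}h))=\ell_i(g)+\ell_i(h)$ for $i=1,2$, hence $\max(\ell(gh),\ell(g^{-1}h))\le\ell(g)+\ell(h)$; combined with the general inequality (Lemma \ref{calcul}) $\max(\ell(gh),\ell(g^{-1}h))\ge\ell(g)+\ell(h)$, we get equality, so axes meet in $T$.

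This case analysis yields the first assertion immediately: if axes meet in $T$, then we are in case (C), so axes meet in both $T_1$ and $T_2$.  For the second assertion, assume axes are disjoint in $T$.  Then we are in case (A) or (B).  In case (A) axes are disjoint in each $T_i$ (they meet in zero points).  In case (B), observe that disjointness in $T$ forces $\ell(gh)=\ell(g^{-1}h)$; since in case (B) we already have $\ell_2(gh)=\ell_2(g^{-1}h)$, this gives $\ell_1(gh)=\ell_1(g^{-1}h)$, which equals $\ell_1(g)+\ell_1(h)$ since it is the maximum.  By the equality clause of Lemma \ref{calcul}(2), axes in $T_1$ meet in a single point.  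Thus in all sub-cases, axes in each $T_i$ meet in at most one point.

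For the ``in particular'' clause, in each $T_i$ either the axes are disjoint, in which case $\ell_i(gh)=\ell_i(g)+\ell_i(h)+2d_i>0$, or they meet in a single point, in which case Lemma \ref{calcul}(2) gives $\ell_i(gh)=\ell_i(g)+\ell_i(h)>0$; so $gh$ is hyperbolic in $T_i$, and then $hg=g^{-1}(gh)g$ is conjugate to $gh$ hence also hyperbolic.  There is no serious obstacle here; the only slightly delicate point is recognising, in case (B), that the equality $\ell(gh)=\ell(g^{-1}h)$ forced by disjointness in $T$ cancels the contribution of $T_2$ and pins down the intersection in $T_1$ to a single point.
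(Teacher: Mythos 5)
Your proof is correct and uses essentially the same ingredients as the paper's: Lemma \ref{calcul} applied in $T_1$, $T_2$ and $T$ together with the additivity $\ell=\ell_1+\ell_2$; the paper organizes the very same computations as two contradiction arguments (axes meet in $T$ but not in $T_1$; axes disjoint in $T$ but meeting $T_1$ in a non-degenerate arc), whereas you run an exhaustive forward case analysis on the possible configurations in $T_1$ and $T_2$. As a small bonus you also spell out the ``in particular'' clause that $gh$ and $hg$ are hyperbolic in $T_i$, which the paper leaves implicit.
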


\begin{proof} Assume that $A(g)$ and $A(h)$ meet, but $A_1(g)$ and $A_1(h)$ do not.  Then  $\ell_1(gh)>\ell_1(g)+\ell_1(h)$. Since
$\ell (gh)\le\ell (g)+\ell (h)$, we get $\ell_2(gh)< \ell_2(g)+\ell_2(h)$. Similarly, $\ell_2(g\m h)<
\ell_2(g)+\ell_2(h)$. But these inequalities are incompatible by Lemma
\ref{calcul}.

Now assume that $A(g)$ and $A(h)$ do not meet, and $A_1(g),A_1(h)$   meet in a non-degenerate arc. We may assume 
$$ \ell_1(gh)<\ell_1(g\m h)=\ell_1(g)+\ell_1(h).$$ Since $$ \ell (gh)=\ell (g\m h)>\ell (g)+\ell (h),$$ we have 
$$ \ell_2(gh)>\ell_2(g\m h)>\ell_2(g)+\ell_2(h),$$   contradicting Lemma
\ref{calcul}. 
\end{proof}

We can now complete the proof of Theorem \ref{prop_comp}.
It suffices to define maps $f_i:T\ra T_i$ such that 
 $$d (x,y)=d_1(f_1(x),f_1(y))+d_2(f_2(x),f_2(y)).$$ Such maps are collapse maps (they are $1$-Lipschitz, and if three points 
satisfy a triangular equality in $T$,
 then their images under $f_i$ cannot satisfy a strict triangular inequality), so $T$ is the standard
common refinement $T_s$. 

The construction of $f_i$ is
the same as that of $f$ in the proof of Theorem \ref{thm_compat_length}. Given branch
points    $x,y\in T$,  we use Lemma
\ref{lem_hyperbolic} to get elements $g$ and $h$ hyperbolic in all three trees, 
and such that the bridge between $A(g)$ and $A(h)$ is $[x,y]$. 
Then Lemma \ref{lem_interaxe} guarantees that $A_i(g)\cap A_i({gh})\cap
A_i({hg})$ is a single point of $T_i$, which we define as $f_i(x)$; the only new phenomenon is that $A_i(g)$
and
$ A_i({h})$ may now intersect in  a single point. 

The relation between $d,d_1,d_2$ comes from the
equality
$\ell=\ell_1+\ell_2$, using the formula  $$d_i(f_i(x),
f_i(y)) =1/2(\ell_i(gh)-\ell_i(g)-\ell_i(h)).$$
Having defined $f_i$ on branch points,  we extend it by continuity to the closure
of the set of branch points of $T$ (it is $1$-Lipschitz) and then linearly to each   complementary
interval. The relation between $d,d_1,d_2$ still holds.
\end{proof}

\subsection{Common refinements}
\label{cr}
 
The following result is proved for almost-invariant sets in \cite[Theorem 5.16]{ScSw_regular+errata}.

\begin{prop}\label{prop_flag}
  Let $G$ be a finitely generated group, and  let $T_1,\dots,T_n$ be irreducible minimal $\bbR$-trees such that $T_i$ is compatible with $T_j$ for $i\neq j$. 
Then there exists  a common refinement\index{common refinement} $T$ of all $T_i$'s.
\end{prop}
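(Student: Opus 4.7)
The plan is to proceed by induction on $n$, with base case $n=2$ being precisely the ``if'' direction of Theorem \ref{prop_comp}. For the inductive step, suppose by induction that $T_2,\dots,T_n$ admit a common refinement $S$. Then $S$ is irreducible (it refines the irreducible tree $T_2$, and any collapse map sending $T_i\setminus\{\text{pt}\}$ to a segment preserves ellipticity, so $\ell_S\geq \ell_{T_2}$ and hence $S$ contains two hyperbolic elements with hyperbolic commutator by Lemma \ref{irrt}), and one has $\ell_S=\ell_2+\cdots+\ell_n$ because $S$ is the standard common refinement obtained by iterating the construction of Subsection \ref{sec_compatible_via_longueurs}. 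By Theorem \ref{prop_comp} applied to the pair $(T_1,S)$, the compatibility of $T_1$ with $S$ (which gives the common refinement we seek) is equivalent to showing that $\ell:=\ell_1+\ell_2+\cdots+\ell_n$ is a translation length function.

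To prove this, I would mimic the construction in the second half of the proof of Theorem \ref{prop_comp}. Using the remark following Lemma \ref{lem_hyperbolic}, for any prescribed arc in a hypothetical tree with length function $\ell$, one can find $g\in G$ hyperbolic simultaneously in all of $T_1,\dots,T_n$. This lets one define an $\bbR$-tree $T$ with length function $\ell$ directly: its branch points will be the intersections $A(g)\cap A(gh)\cap A(hg)$ for suitable simultaneously-hyperbolic pairs $g,h$, and the maps $f_i:T\to T_i$ send such a branch point to the analogous intersection in $T_i$, which is a single point by Lemma \ref{lem_interaxe}. The distance formula $d(x,y)=\sum_i d_i(f_i(x),f_i(y))$ combined with $\ell_i(gh)-\ell_i(g)-\ell_i(h)=2d_i(f_i(x),f_i(y))$ (for $g,h$ with disjoint axes in $T_i$) then forces $\ell_T=\ell$, as required.

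The main obstacle is showing that this definition of $T$ is consistent, i.e., produces a genuine $\bbR$-tree and not merely a pseudo-metric space. The issue is that the configuration of axes in the candidate $T$ must be compatible with those in each $T_i$: when $A_T(g),A_T(h)$ are disjoint, the axes $A_i(g),A_i(h)$ in each $T_i$ must meet in at most one point; when they intersect, so must the $A_i(g),A_i(h)$. This is precisely the content of Lemma \ref{lem_interaxe}, but applied now across all $n$ trees simultaneously. The key observation is that all such configuration statements involve only two trees at a time (the hypothetical $T$ and one $T_i$), and they follow from the pairwise compatibility: for each pair $(T_i,T_j)$ one has the standard refinement $T_{ij}$ with $\ell_{T_{ij}}=\ell_i+\ell_j$, from which one reads off the joint axis configuration.

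Alternatively, and perhaps more elegantly, one can invoke Guirardel's core construction from \cite{Gui_coeur}: form the core $\mathcal{C}=\mathcal{C}(T_1,\dots,T_n)\subset T_1\times\cdots\times T_n$, which is a $G$-invariant closed convex subspace of the product. The projections to each pair $T_i\times T_j$ recover $\mathcal{C}(T_i,T_j)$, which is one-dimensional by the pairwise compatibility assumption (together with Theorem \ref{prop_comp}). A dimension-count argument then shows that $\mathcal{C}$ itself has no cells of dimension $\geq 2$, so it is a tree, equipped with natural collapse maps to each $T_i$ given by the coordinate projections—hence the desired common refinement. The technical heart is showing that pairwise one-dimensionality propagates to full one-dimensionality of the $n$-fold core, which amounts to a local combinatorial statement about cubes in a product of trees.
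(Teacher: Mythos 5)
Your inductive framing matches the paper's (the paper also reduces to showing that the refinement of two of the trees is compatible with a third), but neither of your two routes actually supplies the key step, so there is a genuine gap. In your first route you propose to prove that $\ell_1+\cdots+\ell_n$ is a length function by ``mimicking the construction in the second half of the proof of Theorem \ref{prop_comp}''. That construction runs in the opposite direction: it \emph{assumes} that $\ell_1+\ell_2$ is the length function of some tree $T$ and then identifies $T$ with the standard refinement; in particular Lemma \ref{lem_interaxe}, which you invoke to make your branch-point definition consistent, is proved by comparing axes in the already-existing tree $T$ with axes in the $T_i$ --- without $T$ there is nothing to compare, so using it to build $T$ is circular. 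The assertion that the needed configuration statements ``follow from the pairwise compatibility'' because one can read them off the pairwise refinements $T_{ij}$ is exactly the content that must be proved: the comparisons required are between the hypothetical tree with length function $\sum_i\ell_i$ and each $T_i$, not between $T_i$ and $T_j$, and passing from pairwise data to this global consistency is precisely the difficulty of the proposition.

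Your second route is closer in spirit to what the paper actually does, but it again leaves the crucial step unproved: the core of \cite{Gui_coeur} is defined for a product of \emph{two} trees, the claim that the coordinate projections of an $n$-fold core recover the pairwise cores would itself need an argument, and the ``dimension-count/local statement about cubes'' by which pairwise one-dimensionality is supposed to propagate is exactly the missing propagation step. The paper avoids all of this by never forming a multi-tree core: for $n=3$ it takes the standard refinement $T_{12}$ of $T_1,T_2$, considers only the two-tree core $\calc(T_{12}\times T_3)$, and rules out a non-degenerate rectangle in it by pushing any such rectangle down to $T_1\times T_3$ (or $T_2\times T_3$) and lifting heaviness of quadrants through the collapse map $T_{12}\to T_1$ via the dedicated Lemma \ref{lem_qdt}; the pairwise compatibility hypothesis then contradicts the rectangle via \cite[Th\'eor\`eme 6.1]{Gui_coeur}. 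Some argument of this kind --- relating quadrants (or axis configurations) in the refined tree to those in its collapses --- is what your proposal is missing.
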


\begin{rem}
  This statement may be interpreted as the fact that the set of  projectivized trees satisfies the \emph{flag} condition
for a simplicial complex: whenever one sees the $1$-skeleton of an $n$-simplex, there is indeed an $n$-simplex.
Two compatible trees  $T_i$, $T_j$ define a $1$-simplex $t\ell_i+(1-t)\ell_j$ of length functions.
If there are segments joining any pair of length functions $\ell_i, \ell_j$,
the proposition says that there is an $(n-1)$-simplex $\sum t_i \ell_i$ of length functions.
\end{rem}

 To prove Proposition \ref{prop_flag}, we
  need some terminology from \cite{Gui_coeur}. 
A \emph{direction} in an \Rt\ $T$ is a connected component $\delta$ of $T\setminus\{x\}$
for some $x\in T$. A \emph{quadrant} in $T_1\times T_2$ is a product $Q=\delta_1\times\delta_2$ of a direction of   $T_1$
by a direction of $T_2$.
A quadrant $Q=\delta_1\times\delta_2$ is \emph{heavy} if there exists $h\in G$ hyperbolic in $T_1$ and $T_2$ such that 
$\delta_i$ contains a positive semi-axis of $h$ (equivalently, for all $x\in T_i$ one has  $h^n(x)\in \delta_i$ for $n$ large).
 We say that $h$ \emph{makes $Q$ heavy}.
The  \emph{core} $\calc(T_1\times T_2)\subset T_1\times T_2$ is the complement of the union of quadrants which are not heavy  (this is not the same core as in Subsection \ref{fpc}). \index{core in the product of two trees}

By \cite[Th\'eor\`eme 6.1]{Gui_coeur}, $T_1$ and $T_2$ are compatible if and only if $\calc(T_1\times T_2)$ contains no 
non-degenerate rectangle  (a product $I_1\times I_2$ where each $I_i$ is an arc not reduced to a point).

We first prove a technical lemma.
\begin{lem}\label{lem_qdt} Let $T_1,T_2$ be irreducible and minimal.
Let $f:T_1\ra T'_1$ be a collapse map, with $T'_1$ irreducible.
Let $\delta'_1\times \delta_2$ be a quadrant in $T'_1\times T_2$,
and $\delta_1=f\m(\delta'_1)$.
If the quadrant  $\delta_1\times \delta_2\subset T_1\times T_2$  is heavy,
then so is $\delta'_1\times \delta_2$.
\end{lem}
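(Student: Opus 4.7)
My plan is to use Lemma \ref{lem_semigroup} twice to produce a sub-semigroup $S\subset G$, generating $G$ as a group, every nontrivial element of which makes $\delta_1\times\delta_2$ heavy, and then to apply Serre's lemma to extract from $S$ an element which is hyperbolic in $T'_1$. Alignment preservation of $f$ will then force its positive semi-axis in $T'_1$ to lie in $\delta'_1$.

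In more detail, let $h\in G$ witness the heaviness of $\delta_1\times\delta_2$, with positive semi-axis $\rho_i\subset\delta_i$ in $T_i$, and fix nondegenerate subarcs $I_i\subset\rho_i$, oriented in the direction of translation of $h$. Let $S_0$ be the sub-semigroup of $G$ generated by $h$ together with a finite generating set of $G$, so that $\langle S_0\rangle=G$ and $h\in S_0$. Since $T_1$ is irreducible, $G$ has no invariant point or line in $T_1$, and Lemma \ref{lem_semigroup} applied to $(S_0,T_1,h,I_1)$ produces a finitely generated semigroup $S_1\subset S_0$ with $\langle S_1\rangle=G$ whose every nontrivial element $g$ is hyperbolic in $T_1$ with axis containing $I_1$ and translating in the same direction as $h$. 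Then $\bigcup_{n\geq 0}g^n(I_1)$ is a ray in the axis of $g$ going to the same end of $T_1$ as $\rho_1$, so a positive semi-axis of $g$ in $T_1$ lies in $\delta_1$. Since $h$ itself satisfies the conclusion of Lemma \ref{lem_semigroup} applied to $(S_0,T_1,h,I_1)$, we may enlarge $S_1$ to include $h$; applying Lemma \ref{lem_semigroup} a second time to $(S_1,T_2,h,I_2)$ yields a finitely generated $S_2\subset S_1$ with $\langle S_2\rangle=G$, every nontrivial element of which is hyperbolic in both $T_1$ and $T_2$ with positive semi-axes in $\delta_1$ and $\delta_2$; equivalently, every nontrivial $g\in S_2$ makes $\delta_1\times\delta_2$ heavy.

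Since $T'_1$ is irreducible, $G$ has no global fixed point in $T'_1$; picking a finite semigroup generating set $g_1,\dots,g_n$ of $S_2$, closure of $S_2$ under products combined with Serre's lemma forces some $k\in S_2\setminus\{1\}$ to be hyperbolic in $T'_1$. It remains to check that a positive semi-axis of $k$ in $T'_1$ lies in $\delta'_1$. Let $\rho\subset\delta_1$ be a positive semi-axis of $k$ in $T_1$; equivariance of $f$ gives $f(\rho)\subset f(\delta_1)\subset\delta'_1$, and for any $x\in\rho$ the sequence $k^nf(x)=f(k^nx)$ lies in $\delta'_1$ and escapes to infinity in $T'_1$ along $A_{T'_1}(k)$ in the direction of translation of $k$. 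Hence the positive semi-axis of $k$ in $T'_1$ eventually enters $\delta'_1$ and, since $\delta'_1$ is a connected unbounded component, stays there. The main delicate point in this last step is that $f$ need not restrict to an isometry on $A_{T_1}(k)$ and $f(A_{T_1}(k))$ may fold back on itself; but alignment preservation matches the positive end of $A_{T_1}(k)$ with the positive end of $A_{T'_1}(k)$, which therefore lies in $\delta'_1$ because $\delta'_1$ contains the unbounded set $f(\rho)$.
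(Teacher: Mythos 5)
Your strategy is genuinely different from the paper's: the paper first produces, via Lemma \ref{lem_hyperbolic}, a single auxiliary element $g$ hyperbolic in $T'_1$ and $T_2$ whose axes meet $A_1(h)$ and $A_2(h)$ compactly, and then checks that the conjugates $h^ngh^{-n}$, for $n$ large, make $\delta_1\times\delta_2$ heavy while being hyperbolic in $T'_1$ and $T_2$; you instead manufacture a whole semigroup of witnesses and extract one hyperbolic in $T'_1$ by Serre's lemma. Your steps using Serre's lemma and the transfer of heaviness through $f$ are fine. But there is a genuine gap in the first step. From ``$g$ is hyperbolic in $T_1$ with axis containing $I_1$ and translates in the same direction as $h$'' you deduce that the axis of $g$ ``goes to the same end of $T_1$ as $\rho_1$'', hence that a positive semi-axis of $g$ lies in $\delta_1$. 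That deduction is false: the axes of $g$ and $h$ are only forced to share $I_1$ and may diverge immediately beyond it, so the positive end of $A_1(g)$ need not be the end of $\rho_1$ (and $\bigcup_n g^n(I_1)$ need not even be connected). Worse, the conclusion itself can fail for a badly placed $I_1$: write $\delta_1$ as the component of $T_1\setminus\{y_1\}$ containing $\rho_1$, and let $q_1$ be the projection of $y_1$ onto $A_1(h)$. Nothing in your choices prevents $q_1$ from lying on $\rho_1$ beyond $I_1$, with $y_1$ hanging off the axis there; an element whose axis contains $I_1$, follows $A_1(h)$ up to $q_1$ and then passes through $y_1$ satisfies every conclusion of Lemma \ref{lem_semigroup}, yet its positive semi-axis leaves $\delta_1$. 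So the property you need of all nontrivial elements of $S_2$ --- that they make $\delta_1\times\delta_2$ heavy --- is not established, and it is exactly the element you extract by Serre's lemma that could be a bad one.

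The gap is local and fixable: choose $I_1$ (resp.\ $I_2$) far enough along $\rho_1$ (resp.\ $\rho_2$), namely strictly beyond the projection onto $A_1(h)$ (resp.\ $A_2(h)$) of the basepoint of $\delta_1$ (resp.\ $\delta_2$). Then for any $g$ hyperbolic with axis containing $I_i$ and translating in the $h$-direction, the positive ray of $A_i(g)$ issuing from the far endpoint of $I_i$ cannot contain the basepoint of $\delta_i$ (the segment from that endpoint to the basepoint starts backwards into $I_i$, whereas the ray goes the other way), so it stays in $\delta_i$ and every nontrivial element of $S_2$ does witness heaviness; with that modification your proof goes through. This is precisely the role played in the paper's proof by conjugating by high powers of $h$: it pushes the entire axis of $h^ngh^{-n}$ beyond any fixed point of $A_i(h)$, in particular beyond the attaching point of the basepoint of $\delta_i$. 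A smaller point: when you enlarge $S_1$ by $h$ before the second application of Lemma \ref{lem_semigroup}, keeping the property that all nontrivial elements are hyperbolic in $T_1$ with axis through $I_1$ requires the standard fact that a product of hyperbolic isometries whose axes share a nondegenerate arc and translate the same way on it is again hyperbolic with axis containing that arc; this is true, but it is the actual justification needed, not merely the fact that $h$ itself satisfies the conclusion.
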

 
  Note that    $\delta_1$ is a direction because $f$ preserves alignment.

\begin{proof}
  Consider an element $h$ making $\delta_1\times\delta_2$ heavy.
If $h$ is hyperbolic in  $T'_1$, then $h$ makes  $\delta'_1\times\delta_2$ heavy and we are done.
 If not, assume that we can find some $g\in G$, hyperbolic in  $T'_1$ and $T_2$ (hence in $T_1$), such that for $i=1,2$ 
the axis  $A_i(g)$ of $g$ in $T_i$  intersects 
  $A_i(h)$ in a compact set.
Then for $n>0$ large enough the element $h^ngh^{-n}$ makes $\delta_1\times \delta_2$ heavy.
Since this element is hyperbolic in $T_1'$ and $T_2$, it makes $\delta'_1\times \delta_2$ heavy.

We now prove the existence of $g$.
Consider a line $l$ in $T_2$, disjoint from $A_2(h)$, and  the bridge $[x,y]$ between $l$ and $A_2(h)$.
Let $I\subset l$ be an arc containing $x$ in its interior.
By  Lemma \ref{lem_hyperbolic}, there exists $g$ hyperbolic in $T'_1$ and $T_2$   whose axis in $T_2$ contains $I$,  hence is disjoint from $A_{2}(h)$.
Being hyperbolic in $T'_1$, the element $g$ is hyperbolic in $T_1$. Its axis intersects $A_{ 1}(h)$ in a compact set  because $A_{ 1}(h)$ is mapped to a single point in $T'_1$ (otherwise, $h$ would be hyperbolic in $T'_1$).
\end{proof}

\begin{proof}[Proof of Proposition \ref{prop_flag}]
We assume $n=3$, as the general case then follows by a straightforward induction.
Let $T_{12}$ be  the  standard common  refinement of $T_1,T_2$ (see Subsection \ref{sec_compatible_via_longueurs}).
Let $\calc$ be the core of $T_{12}\times T_3$. 
By \cite[Th\'eor\`eme 6.1]{Gui_coeur}, it is enough to prove that $\calc$ 
does not contain a product of non-degenerate arcs $[a_{12},b_{12}]\times[a_3,b_3]$.
Assume otherwise.
Denote by $a_1,b_1,a_2,b_2$ the images of $a_{12},b_{12}$ in $T_1,T_2$.
Since $a_{12}\neq b_{12}$, at least one inequality $a_1\neq b_1$ or $a_2\neq b_2$ holds.
Assume for instance $a_1\neq b_1$.

We claim that $[a_1,b_1]\times [a_3,b_3]$ is contained in the core of 
$T_1\times T_3$,  giving a contradiction.  
We have to show that any quadrant $Q=\delta_1\times\delta_3$ of $T_1\times T_3$ intersecting $[a_1,b_1]\times[a_3,b_3]$ is heavy. 
Denote by $f:T_{12}\ra T_1$ the  collapse map. 
The preimage $\Tilde Q=f\m(\delta_1)\times\delta_3$ of $Q$ in $T_{12}\times T_3$ is a quadrant intersecting
$[a_{12},b_{12}]\times [a_3,b_3]$.
Since this rectangle in contained in $\calc(T_{12}\times T_3)$, the quadrant $\Tilde Q$ is heavy, and so is $Q$ by Lemma \ref{lem_qdt}.
\end{proof}

\subsection{Arithmetic of trees}\label{sec_arith}

\newcommand{\Sirr}{\cals_{\mathrm{irr}}}

In this subsection, we work with simplicial trees. \index{0S@$\Sirr$: set of irreducible simplicial trees}
We let  $\Sirr$ be the set of simplicial trees $T$ which
are minimal, irreducible, with no redundant vertices and no inversion.  We also view such a $T$ as a metric tree, by
declaring each edge to be of length 1. This makes $\Sirr$ a subset of $\Tirr$.  
  By Theorem \ref{thm_compat_length}, a tree $T\in \Sirr$ is determined by its length function $\ell $.

\index{prime factors of a tree}
\begin{dfn}[Prime factors] The \emph{prime factors} of $T$ are the one-edge splittings $T_i$ obtained from $T$ by collapsing
edges in all orbits but one. Clearly $\ell=\sum_i\ell_i$, where $\ell_i $ is the length function of  $T_i$. 
\end{dfn} 

We may view a prime factor of
$T$  as an orbit of edges of $T$, or
as an edge of the quotient graph of groups $\Gamma =T/G$.
 Since    $G$ is assumed to be  finitely generated, there are  finitely many prime factors (by Proposition \ref{arbtf}, this remains true if $G$ is only   finitely generated relative   to a finite collection of elliptic subgroups).

\begin{lem} \label{po}
Let $T\in\Sirr$.
\begin{enumerate} 
 \item Any non-trivial tree $T'$ obtained from $T$ by collapses (in particular, its prime factors) belongs to
$\Sirr$. 
\item
The prime
factors of $T$  are distinct ($T$ is ``squarefree''). 
\end{enumerate}
\end{lem}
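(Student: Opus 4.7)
For Part (1), simplicial, minimal, and no-inversion pass immediately from $T$ to its collapse $T'$, and redundant valence-two vertices can be smoothed away without changing the $G$-tree up to equivariant isomorphism, so I may assume $T'$ has none. The only substantive point is irreducibility, which I plan to prove by contradiction. If $T'$ were non-trivial but not irreducible, Proposition \ref{cinq} forces it to be a line (translational or dihedral) or to have a unique fixed end. In each case I obtain a homomorphism $\chi: G \to \Z$ (after passing to the index-two orientation-preserving subgroup in the dihedral case) whose kernel $K$ acts trivially on the invariant line or fixes subrays toward the fixed end. Using irreducibility of $T$ via Lemma \ref{irrt} and a standard ping-pong argument on sufficiently high powers, I can find a rank-two free subgroup $F = \langle g, h\rangle$ acting freely on $T$ with $F \not\subset K$ (this can be ensured by picking $g$ hyperbolic in $T'$, which is automatically hyperbolic in $T$). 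Then $F/(F\cap K)$ is cyclic and nontrivial, while $[F,F] \subset K$.

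The contradiction comes as follows. For each $k \in [F,F]$, the element $k$ translates its axis $A_T(k) \subset T$ by freeness, but is elliptic with zero translation length in $T'$. Since the collapse map $f: T \to T'$ preserves alignment and $k$ fixes $f(A_T(k))$ pointwise while translating $A_T(k)$, the map $f$ must be constant on the line $A_T(k)$, so $f(A_T(k))$ reduces to a single point $p_k \in T'$. The minimal subtree $M_T([F,F])$ is the connected union of these axes, so its image $f(M_T([F,F]))$ is a connected union of singletons and hence equals a single point $p \in T'$. Normality of $[F,F]$ in $F$ makes $M_T([F,F])$ an $F$-invariant subtree, so $p$ must be $F$-fixed in $T'$; this contradicts the non-trivial action of the cyclic quotient $F/(F\cap K)$ on $T'$, which cannot fix any point of the line or of the end-structure.

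For Part (2), my plan is to distinguish the prime factors via their vertex stabilizers as specific subgroups of $G$. Fixing a spanning tree of $\Gamma = T/G$, the vertex stabilizer of the unique vertex orbit in $T_i/G$ identifies with $\pi_1(\Gamma \setminus \{e_i\})$, embedded in $G = \pi_1(\Gamma)$ by the inclusion determined by the spanning tree; different choices of $i$ yield embeddings of this subgroup into $G$ that differ by conjugations involving the stable letter of $e_i$ versus that of $e_j$, and the no-redundant-vertices hypothesis on $T$ prevents these subgroups from coinciding as subgroups of $G$, forcing $T_i \not\cong T_j$ as $G$-trees. I expect the main obstacle of the whole lemma to lie in Part (1), specifically in giving a uniform treatment of the three degenerate possibilities for $T'$ (line-translational, line-dihedral, fixed-end), and in verifying cleanly that the image $f(A_T(k))$ really does collapse to a point in each case using only the alignment-preserving property of $f$ together with ellipticity of $k$ in $T'$.
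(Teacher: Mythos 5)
Your Part (1) is correct in substance, but it takes a different route from the paper. The paper argues positively: pick an edge $e$ of $T$ that survives in $T'$; since $T$ has no redundant vertex and is irreducible, $e$ sits between two branch points $u,v$ (possibly after adjoining a translate $e'$), and Lemma \ref{seg} yields hyperbolic elements $g,h$ whose axes have $[u,v]$ as bridge and are not collapsed in $T'$; their axes in $T'$ are then disjoint, so $T'$ is irreducible, and the same pair $(g,h)$ is reused for Part (2). Your proof by contradiction (a reducible non-trivial $T'$ is a line or has a fixed end by Proposition \ref{cinq}; a suitable free subgroup $F$ has $[F,F]$ in the kernel, the axes of its elements are entirely collapsed, so $F$ fixes a point of $T'$, contradicting hyperbolicity of $g$ in $T'$) also works, modulo two repairs. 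In the dihedral case $G/K$ is infinite dihedral, not cyclic, so $[F,F]\subset K$ is not automatic; you must first force $F$ into the index-two subgroup acting by translations (e.g.\ replace $F$ by its intersection with that subgroup, which still contains $g$ and is free non-abelian). In the fixed-end case, an element $k\in K$ need not fix $f(A(k))$ pointwise, so your stated justification is not available; the correct (and simpler) reason is that $\ell_{T'}(k)=0$ forces every edge of $A(k)$ to be collapsed, since for a collapse map the translation length in $T'$ is the number of surviving edges in a fundamental domain of the axis. Finally, "smoothing away" redundant vertices changes the simplicial (hence metric) tree and its length function, which is not innocuous in this section; what is needed, and what the paper asserts, is that collapsing cannot create redundant vertices.

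Part (2) is a genuine gap: what you write is a plan resting on incorrect assertions, not an argument. A prime factor need not have a unique vertex orbit (if $e_i$ separates $T/G$ it is an amalgam with two), and the vertex groups of different prime factors are in general not one subgroup "embedded via different conjugations". More importantly, the logic is flawed even where the description is right: an equivariant isomorphism of $G$-trees only preserves stabilizers up to conjugacy, so showing that two vertex groups do not coincide as literal subgroups of $G$ proves nothing; you would have to show the stabilizer systems differ up to conjugacy, which you do not do and which is not the mechanism that makes the lemma true. Your appeal to the no-redundant-vertex hypothesis is unexplained, and irreducibility is never used, yet the statement fails without these hypotheses: for $G=\Z$ acting by translation on a line subdivided into two edge orbits, the two prime factors are equivariantly isomorphic. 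The paper's proof distinguishes prime factors by their length functions, reusing $g,h$ from Part (1): if the factor $T'$ keeps the orbit of $e\subset[u,v]$, then $g,h$ are hyperbolic in $T'$ with disjoint axes and $\ell_{T'}(gh)>\ell_{T'}(g)+\ell_{T'}(h)$ by Lemma \ref{calcul}, whereas in any prime factor $T''$ collapsing $e$ the whole bridge $[u,v]$ is collapsed and one checks $\ell_{T''}(gh)\le\ell_{T''}(g)+\ell_{T''}(h)$; hence $T'\neq T''$.
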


\begin{proof}  Let $e$ be any edge of $T$ which is not collapsed in $T'$. 
 Since $T$ has no redundant vertex and is not a line, either  the endpoints of $e$ are branch points $u,v$,
or there are branch points $u,v$ such that $[u,v]=e\cup e'$ with 
  $e'$ in the same orbit as $e$.
Using Lemma \ref{seg},  we can find elements $g,h$ hyperbolic in $T$, 
whose axes are not collapsed to points in $T'$, and such that $[u,v]$ is the bridge between their axes. 
Since $g,h$ are hyperbolic with disjoint axes in $T'$, the tree $T'$ is irreducible. 
It is easy to check that collapsing cannot create redundant vertices, so $T'\in\Sirr$. 
 
Now suppose that $e$, hence $[u,v]$, gets collapsed in some prime factor $T''$. Then $\ell'(gh)>\ell'(g)+\ell'(h)$ holds in $T'$ but not in $T''$, so $T'\ne T''$. 
\end{proof}

 Because of this lemma, a tree of $\Sirr$ is determined by its prime factors. In particular, $T$ refines
$T'$ if and only if every prime factor of $T'$ is also a prime factor of $T$.

\begin{cor}\label{coll} Assume that $T$ and $T'$ are compatible non-trivial trees. If $T$ is irreducible, so is $T'$. If $T$ is not irreducible, $T'$ belongs to the same deformation space as $T$.
\end{cor}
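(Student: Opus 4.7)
The strategy is to use a common refinement $\hat T$ of $T$ and $T'$, with collapse maps $p\colon\hat T\to T$ and $p'\colon\hat T\to T'$. Passing to the minimal $G$-invariant subtree, I may assume $\hat T$ is minimal; the restricted maps still surject onto $T$ and $T'$ by minimality of the latter, and still preserve alignment.

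For the first assertion, $T$ irreducible gives (via Lemma \ref{irrt}) elements $g, h\in G$ with $g, h, [g,h]$ hyperbolic in $T$. An element elliptic in $\hat T$ fixes a vertex whose image under $p$ is a fixed vertex of $T$, so $g, h, [g,h]$ are hyperbolic in $\hat T$ as well; hence $\hat T$ is irreducible by Lemma \ref{irrt}. The claim then reduces to the assertion recalled in Subsection \ref{comp} (immediate from Lemma \ref{po}(1) applied to $\hat T$ after un-subdividing valence-two vertices so that $\hat T\in\Sirr$): every non-trivial collapse of an irreducible tree is irreducible.

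For the second assertion, swapping the roles of $T$ and $T'$ in the first part shows $T'$ is also not irreducible, and the same reasoning shows $\hat T$ itself cannot be irreducible (otherwise $T$ would be). Being minimal, non-trivial, and not irreducible, $\hat T$ must by Proposition \ref{cinq} be a line (with translation or dihedral action) or a tree with a unique fixed end (cyclic quotient graph of groups). It suffices to verify that every non-trivial collapse of $\hat T$ lies in the same deformation space as $\hat T$; this then applies to both $T$ and $T'$, proving the claim.

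In the translation-line case, every non-trivial collapse is still a translation-line, with the same kernel $N=\ker\chi_{\hat T}$ equal to every vertex and edge stabilizer, hence the same elliptic subgroups. In the dihedral-line case, the quotient graph of groups is a single segment whose one edge orbit, if collapsed, merges the two vertex groups into $G$ and trivializes the action; the only non-trivial collapse is $\hat T$ itself. In the fixed-end case, collapsing a proper subset of edge orbits in the cyclic quotient graph yields a smaller cycle (still with fixed end) whose ascending chain of vertex stabilizers has the same union, and the kernel of the translation-toward-$\xi$ homomorphism $\chi\colon G\to\Z$ is preserved, so the elliptic subgroups are unchanged. The one slightly delicate point is this fixed-end verification, which reduces to a direct translation-length computation along the axes of hyperbolic elements in $\hat T$ versus in the collapse.
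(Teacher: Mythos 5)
Your skeleton is the same as the paper's: both $T$ and $T'$ are non-trivial collapses of a (minimal) common refinement $\hat T$, the first assertion follows from ``a collapse of an irreducible tree is irreducible or a point'', and the second reduces, via Proposition \ref{cinq}, to checking that a proper collapse of a non-irreducible minimal tree stays in its deformation space. The first assertion and the translation-line case are fine. The genuine gap is in the fixed-end case, which is the only substantial case. You argue that since $\ker\chi$ and the union of the ascending chain of vertex stabilizers are preserved, ``the elliptic subgroups are unchanged''. That inference is a non sequitur: in a tree with a fixed end the elliptic subgroups are the subgroups contained in some vertex stabilizer, not the subgroups of $\ker\chi$ --- an infinitely generated subgroup of $\ker\chi$ may fix only the end (this is exactly the element-versus-subgroup distinction the paper warns about when defining domination, and it is precisely in ascending situations that it bites). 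The danger you never exclude is that a component of the union of collapsed edges is an infinite subtree whose stabilizer fixed only the end of $\hat T$; after collapsing, that stabilizer becomes a vertex stabilizer, so the deformation space would grow even though $\ker\chi$ and the union of the chain are untouched. Your deferred ``translation-length computation'' only controls elements, so by itself it cannot close this; you must either feed it into Lemma \ref{cor_Zor}(1), which upgrades ``no hyperbolic element of the refinement becomes elliptic in the collapse'' to ``same deformation space'', or argue as the paper does: by Subsection \ref{tre} the quotient graph is a circle which can be coherently oriented so that each edge group equals the group of its initial vertex, hence collapsing one orbit at a time only crushes stars of diameter two around single vertices, whose stabilizers are already vertex stabilizers of $\hat T$, and this property persists after each collapse.

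A secondary slip: in the dihedral case you assert that the quotient graph of $\hat T$ is a single segment, so that the only non-trivial collapse is $\hat T$ itself. This presupposes that $\hat T$ has no redundant vertices, which a common refinement of $T$ and $T'$ need not satisfy (blowing up a vertex typically creates valence-two vertices fixed by no element), and a dihedral line with redundant vertices can have several edge orbits. The conclusion still holds there, and either of the two repairs above (Lemma \ref{cor_Zor}(1) after the element computation, or collapsing one orbit at a time using an endpoint where the edge group equals the vertex group) handles the dihedral and fixed-end cases uniformly; but as written your case analysis does not cover it.
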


\begin{proof}
 The lemma shows that performing a collapse on  an irreducible simplicial tree yields an irreducible tree (or a point). If $T$ is not irreducible, the quotient graph of groups is a circle and every edge $e$ has an endpoint $v$ such that the inclusion $G_e\to G_v$ is onto (see Subsection \ref{tre}).
This implies  that performing a collapse on  a non-irreducible tree   yields a  minimal non-irreducible tree belonging to the same deformation space (or a point). The lemma follows.
\end{proof}

If $T_1$ and $T_2$ are compatible, the standard refinement $T_s$   with $\ell_{T_s}=\ell_{T_1}+\ell_{T_2}$ 
constructed in Subsection  \ref{sec_compatible_via_longueurs} 
is a metric tree which should be viewed as
the  ``product''  of $T_1$ and $T_2$. We shall now define the lcm $T_1\vee
T_2$ of simplicial trees $T_1$ and
$T_2$. To understand the difference between the two, suppose $T_1=T_2$.  Then
$T_s$ is obtained from
$T_1$ by subdividing each edge, its length function is $2\ell_1$. On the other hand, $T_1\vee
T_1=T_1$.

\begin{dfn} [gcd] \label{gcd} \index{gcd of two trees}\index{0LT2@$\ell_1\wedge\ell_2$: gcd of two length functions}
\index{0Tv@$T_1\wedge T_2$: gcd of two trees}
Consider two trees $T_1,T_2\in\Sirr$, 
with length functions $\ell_1,\ell_2$. 

We define $\ell_1\wedge\ell_2$ as the sum of all length functions which appear as prime
factors in both
$T_1$ and $T_2$. It is the length function of a tree $T_1\wedge T_2$ (possibly a point) which is a
collapse of both $T_1$ and $T_2$. We call $T_1\wedge T_2$ the  \emph{gcd} of $T_1$ and $T_2$. 
\end{dfn} 

We define $\ell_1\vee\ell_2=\ell_1+\ell_2-\ell_1\wedge\ell_2$ as the sum of all length functions
which appear as prime factors in  
$T_1$ or $T_2$ (or both). 
\index{0LT3@$\ell_1\vee\ell_2$: lcm of two length functions}

\begin{lem} \label{ppcm}\index{0Tv2@$T_1\vee T_2$: lcm of compatible trees}
Let $T_1$ and $T_2$ be compatible trees in $\Sirr$. There is a  tree $T_1\vee T_2\in\Sirr$  whose
length function is $\ell_1\vee\ell_2$. It is a common refinement of $T_1$ and
$T_2$, and   no edge of $T_1\vee T_2$ is
collapsed  in both
$T_1$ and
$T_2$.    
\end{lem}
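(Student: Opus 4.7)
The plan is to construct $T_1 \vee T_2$ as a collapse of a suitable combinatorial common refinement.

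By compatibility, $T_1$ and $T_2$ admit a common refinement. I would take the standard common refinement $T_s$ from Subsection \ref{sec_compatible_via_longueurs}: its length function $\ell_{T_s} = \ell_1 + \ell_2$ is integer-valued and $T_s$ is irreducible (by Corollary \ref{coll} applied to the collapse $T_s \to T_1$), so Example \ref{arbsimp} makes $T_s$ simplicial. After restricting to the minimal $G$-invariant subtree and removing redundant vertices, I obtain a tree $\hat T \in \Sirr$ that is a common refinement of $T_1$ and $T_2$. By Lemma \ref{po}(2) the prime factors of $\hat T$ are pairwise distinct. Moreover, every prime factor of $T_1$ is a prime factor of $\hat T$: for each edge $e$ of $T_1/G$ the collapse map $\hat T \to T_1$ has a unique edge $\tilde e$ of $\hat T/G$ lying above $e$, and collapsing every orbit of $\hat T$ other than $\tilde e$ yields the same one-edge splitting as collapsing every orbit of $T_1$ other than $e$. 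The symmetric statement holds for $T_2$; let $\Pi_i$ denote the set of prime factors of $T_i$, identified with a subset of the prime factors of $\hat T$.

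I would then define $T_1 \vee T_2$ by collapsing in $\hat T$ every orbit of edges whose associated prime factor lies outside $\Pi_1 \cup \Pi_2$. By Lemma \ref{po}(1) this tree lies in $\Sirr$; by the same edge-to-prime-factor bijection applied to the collapse $\hat T \to T_1 \vee T_2$, its prime factors are exactly $\Pi_1 \cup \Pi_2$, so its length function is
$$\sum_{P \in \Pi_1 \cup \Pi_2} \ell_P \;=\; \ell_1 + \ell_2 - \sum_{P \in \Pi_1 \cap \Pi_2} \ell_P \;=\; \ell_1 \vee \ell_2,$$
as required. Each surviving edge of $T_1 \vee T_2$ represents a prime factor in $\Pi_1$ or in $\Pi_2$, so it is preserved by the collapse to the corresponding $T_i$; in particular no edge is collapsed in both. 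To verify that $T_1 \vee T_2$ refines $T_1$, I would further collapse the orbits whose prime factor lies in $\Pi_2 \setminus \Pi_1$: the result lies in $\Sirr$ with prime factors exactly $\Pi_1$, hence with length function $\ell_1$, and therefore coincides with $T_1$ by Theorem \ref{thm_compat_length}; the symmetric argument provides the collapse onto $T_2$.

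The main delicate point is the unambiguous identification of the prime factors of $T_1, T_2$ with subsets of the prime factors of $\hat T$, which relies crucially on the squarefree property (Lemma \ref{po}(2)): if $\hat T$ had two distinct orbits of edges defining the same one-edge splitting, then membership in $\Pi_i$ would not be well defined at the level of orbits of $\hat T$, and the collapse used to construct $T_1 \vee T_2$ would not be canonically specified. Squarefreeness both rules this out and ensures that the sets $\Pi_1, \Pi_2$ and their union translate directly into well-defined collections of edge orbits to be kept or collapsed, which is what drives the clean length-function computation above.
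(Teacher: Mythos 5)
Your proof is correct and follows essentially the same route as the paper: take a common refinement, collapse exactly the edge orbits that are collapsed in both $T_1$ and $T_2$ (equivalently, those whose prime factor lies outside $\Pi_1\cup\Pi_2$), and read off the length function from the prime factors, with squarefreeness (Lemma \ref{po}) preventing double counting. The differences are only packaging: the paper takes a simplicial common refinement directly (as supplied by the definition of compatibility) rather than passing through the metric standard refinement and Example \ref{arbsimp} — if you do argue that way, restrict to the minimal subtree before invoking Example \ref{arbsimp}, which is stated for minimal actions — and it obtains the collapses onto $T_1$ and $T_2$ by factoring the collapse maps rather than via Theorem \ref{thm_compat_length}.
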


\begin{proof}
Let $T$ be any common refinement. We modify it as follows. We collapse any edge which is collapsed  
in both $T_1$ and $T_2$. We then  restrict to the minimal subtree and remove redundant vertices. The
resulting  tree $T_1\vee T_2$ belongs to $\Sirr$ (it is irreducible because it refines $T_1$). It is a common refinement of
$T_1$ and
$T_2$, and   no edge   is collapsed  in both
$T_1$ and
$T_2$.     

We check that $T_1\vee T_2$ has the correct  length function  by finding its
prime factors.  Since no
edge   is collapsed  in both
$T_1$ and
$T_2$, a prime factor of $T_1\vee T_2$ is a   prime factor of   $T_1$ or $ T_2$. Conversely, a prime
factor of $T_i$ is associated to an orbit of edges of $T_i$, and this orbit lifts to $T_1\vee T_2$. 
\end{proof}

 \begin{rem} \label{redun}
 Unlike the standard refinement $T_s$, the tree $T_1\vee T_2$ does not have redundant vertices.
\end{rem}

 \begin{prop} \label{prop_lcm} 
Let $T_1,\dots, T_n$ be pairwise compatible trees of   $ \Sirr$. There exists a tree
$T_1\vee\dots\vee T_n$ in $\Sirr$ whose length function is the sum of all length functions
which appear as a prime factor  in  
some $T_i$. Moreover:
\begin{enumerate}
\item A tree $T\in \Sirr$ refines $T_1\vee\dots\vee T_n$ if and only if it refines each $T_i$.
\item A tree $T\in \Sirr$ is compatible with  $T_1\vee\dots\vee T_n$ if and only if it is compatible with
each $T_i$.
\item A subgroup $H$ is elliptic in  $T_1\vee\dots\vee T_n$ if and only if it is elliptic in 
each $T_i$. If $T_1$ dominates each $T_i$, then $T_1\vee\dots\vee T_n$ belongs to the deformation
space of $T_1$. 
\end{enumerate}
\end{prop}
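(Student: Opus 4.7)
The plan is to build $T_1\vee\cdots\vee T_n$ directly from a common refinement of the $T_i$, then verify each claim in turn. First I would invoke Proposition \ref{prop_flag} to produce a common refinement $\hat T$ of the pairwise compatible family $T_1,\dots,T_n$. Starting from $\hat T$, I collapse every $G$-orbit of edges that is collapsed in \emph{all} of the $T_i$, pass to the minimal $G$-invariant subtree, and delete redundant vertices; by Lemma \ref{po}(1) the resulting tree $T_1\vee\cdots\vee T_n$ lies in $\Sirr$ and refines each $T_i$. Any surviving $G$-orbit of edges survives in some $T_i$, hence represents a prime factor of that $T_i$, and conversely every prime factor of every $T_i$ lifts through the collapse map; Lemma \ref{po}(2) (square-freeness) ensures these prime factors are pairwise distinct, giving the stated length function. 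By the same construction, no edge of $T_1\vee\cdots\vee T_n$ is collapsed by every $f_i$, so the collapse maps $f_i:T_1\vee\cdots\vee T_n \to T_i$ collectively separate points.

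Properties (1) and (2) are then easy consequences of the prime-factor description. For (1), if $T\in\Sirr$ refines each $T_i$, every prime factor of every $T_i$ is already a prime factor of $T$, so $T$ refines $T_1\vee\cdots\vee T_n$; the converse is immediate since $T_1\vee\cdots\vee T_n$ itself refines every $T_i$. For (2), if $T$ is compatible with each $T_i$, the family $\{T,T_1,\dots,T_n\}$ is pairwise compatible, so Proposition \ref{prop_flag} supplies a common refinement $S$; by (1), $S$ refines $T_1\vee\cdots\vee T_n$, and since $S$ also refines $T$, the two share a common refinement.

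The hard part will be the nontrivial direction of (3): showing that a subgroup $H\le G$ elliptic in each $T_i$ is elliptic in $T_1\vee\cdots\vee T_n$, without any finite generation hypothesis that would let us invoke Serre's lemma. Setting $F_i=f_i^{-1}(\Fix_{T_i}(H))$, each $F_i$ is a nonempty closed $H$-invariant subtree of $T_1\vee\cdots\vee T_n$. For any pair $i,j$, if $F_i\cap F_j$ were empty, the bridge $[a,b]$ between them (with $a\in F_i$, $b\in F_j$) would be preserved setwise by $H$; but $H$-invariance of $F_i$ forbids any $h\in H$ from swapping $a$ and $b$ (else $h(a)=b$ would lie in $F_i\cap F_j$), so every $h$ fixes both endpoints, and equivariance of $f_j$ then places $a$ in $F_j$, contradicting the emptiness. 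Pairwise intersection then yields $\bigcap_i F_i\ne\emptyset$ by Helly's theorem for a finite family of subtrees. Any $x$ in this intersection satisfies $f_i(h\cdot x)=h\cdot f_i(x)=f_i(x)$ for every $i$ and $h$; separation of points by the $f_i$ forces $h\cdot x=x$, so $H$ fixes $x$. The forward direction of (3) is immediate from equivariance of the collapse maps, and the deformation space assertion then follows formally: $T_1\vee\cdots\vee T_n$ refines, hence dominates, $T_1$, while (3) combined with $T_1$ dominating each $T_i$ gives the reverse domination, so the two trees share a deformation space.
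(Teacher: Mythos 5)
Your argument is correct, but it follows a genuinely different route from the paper's. The paper handles $n=2$ first (Lemma \ref{ppcm}: in a common refinement, collapse the edges collapsed in both trees, pass to the minimal subtree, remove redundant vertices), proves (1)--(3) in that case --- assertion (3) via the observation, as in Proposition \ref{prop_refinement} and the lemma following Lemma \ref{lem_oneed}, that the preimage of a vertex fixed in $T_1$ is an $H$-invariant subtree which embeds into $T_2$ --- and then obtains general $n$ by induction, defining $T_1\vee\dots\vee T_n=(T_1\vee\dots\vee T_{n-1})\vee T_n$, which is legitimate thanks to assertion (2) at the previous step. You instead construct the $n$-fold lcm in one stroke from a simultaneous common refinement supplied by Proposition \ref{prop_flag}, and you prove the nontrivial direction of (3) for all $n$ at once: pairwise nonemptiness of the subtrees $F_i=f_i^{-1}(\Fix_{T_i}(H))$ via the bridge argument, Helly for finitely many subtrees of a tree, and the fact that the maps $f_i$ jointly separate points because no edge is collapsed by all of them. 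Both proofs avoid any finite-generation hypothesis on $H$; yours treats the $n$ trees symmetrically and isolates the point-separation property explicitly, while the paper's induction keeps all the combinatorics at the two-tree level, where the fiber-embedding argument is available off the shelf. Two cosmetic points: Lemma \ref{po}(1) is not quite the right citation for membership of your tree in $\cals_{\mathrm{irr}}$, since the common refinement produced by Proposition \ref{prop_flag} need not itself lie in $\cals_{\mathrm{irr}}$ (it is an $\R$-tree and may have redundant vertices); what is actually needed --- and what the paper says in Lemma \ref{ppcm} --- is that your tree is minimal and without redundant vertices by construction, and irreducible because it refines the irreducible $T_1$. Relatedly, to treat that common refinement as simplicial one should take the standard refinement with length function $\ell_1+\dots+\ell_n$, which is simplicial by Example \ref{arbsimp}; the paper is equally silent on this, so neither point is a genuine gap.
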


\begin{proof} First suppose $n=2$. We show that $T_1\vee T_2$ satisfies the additional conditions. 

If $T$ refines   $T_1$ and $T_2$, it refines $T_1\vee  T_2$ because
every prime factor of $T_1\vee  T_2$ is a prime factor of $T$. This proves Assertion (1).

If $T_1,T_2, T$ are pairwise compatible, they have a common refinement $\hat T$ by 
 Proposition \ref{prop_flag} or \cite[Theorem 5.16]{ScSw_regular+errata} (where one should exclude ascending HNN extensions).
This $\hat T$ refines $T_1\vee T_2$ by Assertion (1), so $T$ and $T_1\vee T_2$ are compatible. 

Assertion (3) follows from the fact that no edge of  $T_1\vee T_2$ is collapsed  in both   $T_1$ and $T_2$, 
as in the proof of Proposition  \ref 
{prop_refinement}: if $H$ fixes a point $v_1\in T_1$ and is elliptic in $T_2$, it fixes a
point in the preimage of $v_1$ in $T_1\vee T_2$.

The case $n>2$ now follows easily by induction. By Assertion (2), the tree $T_1\vee\dots\vee T_{n-1}$ is
compatible with $T_{n}$, so we can define $T_1\vee\dots\vee T_{n}=
(T_1\vee\dots\vee T_{n-1})\vee T_{n}$.
\end{proof}

 \begin{dfn} [lcm] \label{lcm}   We call $T_1\vee\dots\vee T_n$   the \emph{lcm} of the compatible trees $T_i$. \index{lcm of pairwise compatible trees}
\end{dfn}

 \subsection{Reading  actions on $\bbR$-trees}\label{sec_read}

 Rips theory gives a way to understand stable actions on $\bbR$-trees,  by relating 
 them to actions on simplicial trees. Therefore, they are closely related to JSJ decompositions.
We consider first the JSJ deformation space, then the compatibility JSJ tree.

For simplicity, we assume $\calh=\es$.
\subsubsection{Reading $\bbR$-trees from the JSJ deformation space}

 \begin{prop}  \label{sell}
 Let $G$ be finitely presented. 
 Let $T_J$ be a   JSJ tree  over the family of slender subgroups.\index{slender group} 
 If  $T$ is an $\bbR$-tree with a stable action of $G$ whose arc stabilizers are slender, 
then edge stabilizers of $T_J$ are elliptic in $T$.
 \end{prop}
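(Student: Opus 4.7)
The plan is to prove that each edge stabilizer $G_e$ of $T_J$ fixes a point in $T$ by approximating the $\bbR$-tree $T$ by simplicial $\cala$-trees (with $\cala$ the class of slender subgroups) and using the universal ellipticity of $G_e$. The key tool is the structure theorem for stable actions of finitely presented groups with slender arc stabilizers (Bestvina--Feighn and its refinements by Sela, Rips--Sela, Guirardel), which decomposes $T$ as a graph of actions whose vertex actions $G_v \actson Y_v$ are of three types: simplicial, axial (action on a line), or IET (dual to an arational measured foliation on a compact $2$-orbifold). The associated skeleton $S$ is a simplicial tree whose edge stabilizers are arc stabilizers of $T$, hence slender, so $S$ is itself an $\cala$-tree.

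Fix an edge stabilizer $G_e$ of $T_J$. Since $T_J$ is $(\cala,\es)$-universally elliptic, $G_e$ fixes some vertex $v$ of $S$, hence $G_e \subset G_v$ and acts on the piece $Y_v \subset T$. It suffices to show that $G_e$ is elliptic in $Y_v$. I would handle the three cases in turn. In the simplicial case, the action of $G_v$ on $Y_v$ can be assembled with $S$ into a simplicial $\cala$-tree refinement of $S$ (Lemma \ref{extens}), in which $G_e$ must be elliptic by universal ellipticity of $T_J$; this forces $G_e$ to fix a point of $Y_v$. In the axial case, the action of $G_v$ on the line $Y_v$ factors through a homomorphism to $\Isom(\bbR)$; approximating the translation lengths by rationals yields quotient actions of $G_v$ on simplicial lines with slender edge stabilizers, which extend via Lemma \ref{extens} to simplicial $\cala$-splittings of $G$. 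Universal ellipticity of $G_e$ in all such splittings then forces $G_e$ to act trivially on $Y_v$.

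The IET case is the main obstacle and is handled as in Subsection \ref{sec_QH_ell}. The action of $G_v$ on $Y_v$ factors through the fundamental group of a compact hyperbolic $2$-orbifold $\Sigma$, with fiber $F$ fixing every arc of $Y_v$ (hence slender). Each essential simple closed geodesic $\gamma$ on $\Sigma$ produces a splitting of $G_v$ over an extension of $F$ by a virtually cyclic group; this edge group lies in $\cala$ since $F$ is slender and the extension is slender. By Lemma \ref{extens}, this splitting extends to a simplicial $\cala$-splitting of $G$, in which $G_e$ must be elliptic. Because the set of essential simple closed geodesics fills $\Sigma$ (Corollary \ref{lem_fill}), the argument of Theorem \ref{qhe} (more precisely the key Lemma \ref{above} adapted to the present setting, showing that the ``centres'' of $G_e$ in distinct splittings must coincide) shows that the image of $G_e$ in $\pi_1(\Sigma)$ must be contained in an extended boundary subgroup, which fixes a point in $Y_v$.

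The principal difficulty in this plan is the IET case: one needs to verify that the filling-curves argument of Theorem \ref{qhe} applies even though the vertex group $G_v$ of the skeleton $S$ need not a priori be QH in any simplicial splitting of $G$, and that the edge groups of the splittings dual to simple geodesics are genuinely in $\cala$ (this uses slenderness of $F$, which in turn relies on stability together with the slender-arc-stabilizer hypothesis). Once these points are established, universal ellipticity of $T_J$ propagates from the approximating simplicial splittings to $T$ itself, yielding the conclusion.
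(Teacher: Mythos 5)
Your route is genuinely different from the paper's, and as written it has two real gaps. First, the structural input you invoke is stronger than what the hypotheses give you: for a stable action of a finitely presented group with slender arc stabilizers, the decomposition into a graph of actions does \emph{not} in general have only simplicial, axial and IET pieces. Exotic (thin, Levitt-type) minimal components can occur even with trivial arc stabilizers, so your trichotomy omits a case; the results that exclude exotic pieces (e.g.\ Theorem 5.1 of \cite{Gui_actions}) need extra hypotheses on tripod and unstable-arc stabilizers and on splittings of $G$ that are not available here. Second, in the IET case you appeal to Theorem \ref{qhe} and Lemma \ref{above}, but those require that $G$ does not split over infinite-index subgroups of the relevant fiber-by-virtually-cyclic groups $\hat J$ -- an assumption you have no right to here. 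What you actually need (and what is available, since an extension of a slender fiber by $\Z$ or $D_\infty$ is slender, hence the splittings dual to geodesics are $\cala$-splittings) is the much softer Proposition \ref{prop_QHUE}(1): a universally elliptic subgroup of a QH group is contained in an extended boundary subgroup, which is then elliptic in the dual tree of the arational foliation. So the IET step is repairable, but not with the tool you cite; the exotic case is a genuine hole.

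For comparison, the paper's proof avoids all of this: by \cite{Gui_approximation}, $T$ is a limit of simplicial trees $T_k$ with slender edge stabilizers; since $T_J$ is universally elliptic, each edge stabilizer $G_e$ is elliptic in every $T_k$; passing to the limit of length functions, every element of $G_e$ is elliptic in $T$, and since $G_e$ is slender, hence finitely generated, $G_e$ fixes a point of $T$. In other words, the approximation theorem plus the closedness of ellipticity under limits (via Serre's lemma for finitely generated groups) does in four lines what your structure-theorem case analysis attempts; your heavier machinery is only needed for the finer resolution statement (Proposition \ref{prop_read_mou}), which in the paper is deduced \emph{after} Proposition \ref{sell}, not used to prove it.
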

 
Recall \cite{BF_stable} that an arc $I\inc T$  is \emph{stable} if any $g\in G$ that   pointwise fixes some subarc  of $I$ also fixes $I$. The action of $G$ on $T$ is stable if every arc is stable. \index{stable action on an $\bbR$-tree}

 \begin{proof}
   By \cite{Gui_approximation}, $T$ is a limit of simplicial  trees $T_k$
 with slender edge stabilizers. 
 Since $T_J$ is universally elliptic,   each edge stabilizer $G_e$ of $T_J$ is elliptic in every $T_k$.
Passing to the limit, we deduce that each element of $G_e$ is elliptic in $T$.
Since $G_e$ is finitely generated, $G_e$ is elliptic in $T$. 
 \end{proof}

 \begin{rem} More generally, suppose that $G$ is finitely presented and  $\cala$   
 is stable under extension by finitely generated free abelian groups:
 if $H<G$ is such that $1\ra A\ra H\ra \bbZ^k\to 1$, with $A\in \cala$, then
 $H\in \cala$.  Let $T_J$ be a JSJ tree over $\cala$ with finitely generated edge stabilizers (this exists by Theorem \ref{thm_exist_mou}).
 If $T$ is a stable 
 $\bbR$-tree with arc stabilizers in $\cala$, then edge stabilizers of $T_J$ are elliptic in $T$.
 \end{rem}
 
  Recall 
  (Theorem \ref{thm_description_slender}) that, when $G$ is finitely presented, 
flexible vertices of the slender JSJ deformation space are either slender or QH with slender fiber.

 \begin{prop}\label{prop_read_mou} Let $G, T_J,T$ be as in the previous proposition.
 There exists an $\bbR$-tree $\Hat T$ obtained by blowing up each flexible vertex $v$ of $T_J$
 into
 \begin{enumerate}
 \item an action by isometries on a line if $G_v$ is slender,
 \item an action dual to a measured foliation on the  underlying $2$-orbifold of $G_v$ if $v$ is QH,
 \end{enumerate}
 which resolves (or dominates) $T$ in the following sense:
 there exists a $G$-equivariant map $f:\Hat T\ra T$ which is piecewise linear:  
every segment of $\hat T$ can be decomposed into finitely many subsegments, in restriction to which $f$ preserves alignment.
 \end{prop}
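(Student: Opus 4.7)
The plan is to construct $\hat T$ by refining $T_J$ at its vertices, mimicking the construction of standard refinements (Proposition \ref{prop_refinement}) but in the $\bbR$-tree setting. By Theorem \ref{thm_exist_mou} I would choose $T_J$ with finitely generated edge and vertex stabilizers. Proposition \ref{sell} ensures that every edge stabilizer of $T_J$ is elliptic in $T$. The first step is then to observe that every rigid vertex stabilizer $G_v$ is also elliptic in $T$: approximating $T$ by simplicial $(\cala,\calh)$-trees $T_k$ with slender edge stabilizers via \cite{Gui_approximation}, universal ellipticity makes $G_v$ elliptic in every $T_k$, and since $G_v$ is finitely generated, its elements are all elliptic in the limit $T$, so $G_v$ itself is elliptic in $T$ by Serre's lemma applied to $T$ (noting that finitely generated groups fixing a point in the metric completion of an $\bbR$-tree actually fix a point in the tree).

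For each flexible vertex $v$, let $T_v\subset T$ be the minimal $G_v$-invariant subtree (a fixed point if $G_v$ is elliptic in $T$). By Theorem \ref{thm_description_slender}, $G_v$ is either slender or QH with slender fiber. If $G_v$ is slender, it contains no $\F_2$, so its action on $T_v$ leaves a point or a line invariant; finite generation of $G_v$ rules out a fixed end (it would yield a nontrivial homomorphism to $\bbZ$ whose kernel is elliptic and fixes an invariant subtree, and minimality of $T_v$ then forces $T_v$ to be a line). Hence $T_v$ is a point or line on which $G_v$ acts by isometries. If $G_v$ is QH with fiber $F$, then $F$ is universally elliptic by Lemma \ref{uesle}, hence elliptic in $T$ by the approximation argument above. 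Normality of $F$ in $G_v$ and minimality of $T_v$ force $F$ to act trivially on $T_v$, so the action of $G_v$ on $T_v$ factors through $\pi_1(\Sigma)$. By Proposition \ref{bu}, every boundary component of $\Sigma$ is used, so every boundary subgroup is contained with finite index in the image of an incident edge group and is therefore elliptic in $T_v$. The arc stabilizers of $T_v$ are slender (since arc stabilizers of $T$ are) and, as subgroups of $\pi_1(\Sigma)$ with $\Sigma$ hyperbolic, virtually cyclic. By Skora's duality theorem (extended to orbifolds by passing to a finite-index orientable surface cover and averaging), this small, stable action of $\pi_1(\Sigma)$ on $T_v$ with elliptic boundary subgroups is dual to a measured foliation on $\Sigma$.

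Now I assemble $\hat T$: the blow-up is obtained by taking the disjoint union of the trees $T_v$ (one for each vertex $v$ of $T_J$, with rigid vertices blown up to a single fixed point in $T$) and gluing in the edges of $T_J$. For each edge $e=vw$ of $T_J$, its stabilizer $G_e$ is elliptic in $T$ and in particular fixes points both in $T_v$ and in $T_w$; we choose such points equivariantly and glue a copy of $e$ between them, exactly as in Proposition \ref{prop_refinement}. This yields an $\bbR$-tree $\hat T$ (after passing to the minimal $G$-invariant subtree) which refines $T_J$ and realizes each flexible vertex group as acting in the claimed way. The map $f\colon\hat T\to T$ is defined by sending each $T_v\subset\hat T$ isometrically onto $T_v\subset T$, and sending each lift of an edge of $T_J$ to the segment in $T$ joining the images of its endpoints (possibly a point). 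An arbitrary segment of $\hat T$ decomposes into finitely many subsegments alternating between the $T_v$-parts and the $T_J$-edge parts, and $f$ preserves alignment in restriction to each, which is exactly the piecewise linearity condition.

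The main obstacle is the application of Skora-type duality to the $\pi_1(\Sigma)$-action on $T_v$: verifying the hypotheses (smallness of arc stabilizers, ellipticity of boundary subgroups) is routine after Theorem \ref{thm_description_slender} and Proposition \ref{bu}, but handling non-orientable $2$-orbifolds with non-trivial isotropy requires a careful passage to a finite index surface subgroup together with an equivariance argument to descend the measured foliation back to $\Sigma$. Once the foliation is in hand, the gluing to form $\hat T$ and the construction of $f$ are essentially formal consequences of the fact that all incident edge stabilizers are elliptic both in $T_J$ and in $T$.
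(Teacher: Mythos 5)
Your proposal is correct and follows essentially the same route as the paper: use Proposition \ref{sell} to get ellipticity of $T_J$ with respect to $T$, run the standard-refinement construction of Proposition \ref{prop_refinement} with $Y_v$ a fixed point or the minimal subtree $\mu_T(G_v)$, identify that subtree as a line in the slender case and as dual to a measured foliation via Skora's theorem (through a finite covering surface $\Sigma_0$) in the QH case, and read off the resolving map from the refinement. The paper's proof is just a terse version of this; your extra verifications (ellipticity of rigid vertex stabilizers, triviality of the fiber's action, ellipticity of boundary subgroups via used boundary components) are details it leaves implicit, and are consistent with its argument.
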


 \begin{proof}   Using ellipticity of $T_J$ with respect to $T$, we argue as in the proof of Proposition \ref{prop_refinement}, with $T_1=T_J$ and $T_2=T$. If 
  $v\in V(T_J)$ and $G_v$ is elliptic in $T$, we let $Y_v\inc T$ be a fixed point. If $G_v$ is not elliptic in $T$, we let $Y_v$ be its minimal subtree. It is a line if  $G_v$ is slender. If $v$ is a QH vertex, then $Y_v$ is dual to a measured foliation 
  of the underlying orbifold by Skora's theorem \cite{Skora_splittings} (applied to a covering surface $\Sigma_0$).
  \end{proof}
  
    \begin{rem} 
The arguments given above may be applied in more general situations. 
For instance, assume that $G$ is finitely generated, that all  subgroups of $G$ not containing $\F_2$ are slender, 
and that $G$ has a JSJ tree $T_J$ over slender subgroups
 whose flexible subgroups are QH.
Let $T$ be an $\bbR$-tree with slender arc stabilizers such that $G$ does not split over a subgroup of the stabilizer of an unstable arc or of a tripod in $T$.
Then, applying \cite{Gui_actions} and the techniques of \cite{Gui_approximation}, we see that $T$ is a limit of slender trees,
so 
  Propositions \ref{sell} and \ref{prop_read_mou} apply. 
  \end{rem}

\subsubsection{Reading $\bbR$-trees from the compatibility JSJ tree} \label{readc}

In \cite{Gui_reading}, the first author explained how to obtain all small actions of a one-ended hyperbolic group $G$ on \Rt s from a JSJ tree. 
The proof was based on Bowditch's\index{Bowditch} construction of a JSJ tree from the topology of $\bo G$. 
Here we give a different, more general, approach, based on  Corollary \ref{cofer} (saying that compatibility is a closed condition)
and  results of  Subsection \ref{ac}
(describing the compatibility JSJ space).
Being universally compatible, 
$\Tco$ 
is compatible with any \Rt\ which is a   limit of simplicial $\cala$-trees. 
We illustrate this idea in a simple case.

  Let $G$ be a one-ended finitely presented  torsion-free CSA group.\index{CSA group}
    Assume that $G$ is not   abelian, 
and let $\Tco$ be its compatibility JSJ tree over the class $\cala$ of   abelian groups\index{abelian tree}  (see Definition \ref{deftco}  and Subsection \ref{ac}).
Let $G\actson T$ be an action on an $\bbR$-tree with trivial tripod stabilizers, and   abelian arc stabilizers.
By \cite{Gui_approximation}, $T$ is a limit of  simplicial $\cala$-trees. Since $\Tco$ is compatible with all $\cala$-trees,
it is compatible with $T$ by  Corollary  \ref{cofer}.
Let $\Hat T$ be the standard common refinement of $T$ and $\Tco$ with length function
$\ell_T+\ell_{\Tco}$ (see Subsection \ref{sec_compatible_via_longueurs}). 
Let $f_{co}:\Hat T\ra \Tco$ and $f:\Hat T\ra T$ be  maps preserving alignment such that $$d_{\Hat T}(x,y)=d_{\Tco}(f_{co}(x),f_{co}(y))+
d_{T}(f(x),f(y)).$$

To each vertex $v$ and each edge $e$ of $\Tco$  there correspond  closed subtrees  $\Hat T_v=f_{co}\m(v)$
and $\Hat T_e=\ol{f\m(\rond{e})}$ of $\Hat T$.
By minimality, $\Hat T_e$ is an arc of $\Hat T$ containing no branch point except maybe at its endpoints. 
The relation between $d_{\Hat T},d_{\Tco}$, and $d_T$ shows that the restriction of $f$ to $\Hat T_v$ is an isometric embedding.
In particular, $T$ can be obtained from  $\Hat T $ by changing the length of the arcs  $\Hat T_e$  (possibly making the length $0$).

 We shall now describe   the action of $G_v$ on $\Hat T_v$.
  Note that $G_v$ is infinite. Its action on $ \Hat T_v$ need not be minimal, but it is finitely supported, see \cite{Gui_actions}.
Given an edge $e$ of $\Tco$ containing $v$, we denote by $x_e$ the endpoint of $\Hat T_e$ belonging to $\Hat T_v$.  If  $\Hat T_v$ is not minimal,
  it is the convex hull of the set of points $x_e$ which are extremal (\ie $\hat T_v\setminus x_e$ is connected).

 First suppose that  $G_v$ fixes some $x\in\Hat T_v$ and that $\Hat T_v$ is not a point. Note that, if $x_e\ne x$ is extremal, the stabilizer of the arc $[x,x_e]$ contains $G_e$, so is infinite. We claim that, if $e$ and $f$ are edges of $\Tco$ containing $v$ with $x_e\ne x_f$ both extremal, then $[x,x_e]\cap [x,x_f]=\{x\}$. If not, the intersection is an arc $[x,y]$. The stabilizer of 
$[x,y]$ contains  $\grp{G_e,G_{f}}$ and is abelian. It follows that any point in the $G_{f}$-orbit of $x_e$ is fixed by $G_e$. 
Since tripod stabilizers are trivial, we deduce that $G_{f}$ fixes $x_e$, 
a contradiction.
We have thus proved that  $\Hat T_v$ is a cone on a finite number of orbits of points.

 If $G_v$ does not fix a point in $\Hat T_v$, then it is flexible. If it is   abelian, triviality of tripod stabilizers implies that 
 $\Hat T_v$ is a line.
 If $G_v$ is not abelian, it is a surface group by 
 Theorems  \ref{JSJ_CSA} and \ref{ouf3}.
Skora's theorem \cite{Skora_splittings} asserts that 
the minimal subtree   $\mu_{T_v}(G_v)$ is dual to a measured lamination on a compact surface.  By triviality of tripod stabilizers,     $T_v\setminus \mu_{T_v}(G_v)$ is a disjoint union of segments, and the pointwise  stabilizer of each such segment 
has index at most $2$ in a boundary subgroup of $G_v$.

It follows in particular  from this analysis that $\Hat T$ and $T$ are geometric (see \cite{LP}).

\bibliographystyle{alpha2}
\bibliography{published,unpublished}

{\small
\printindex
}

\bigskip

\begin{flushleft}
Vincent Guirardel\\
Institut de Recherche Math\'ematique de Rennes\\
Universit\'e de Rennes 1 et CNRS (UMR 6625)\\
263 avenue du G\'en\'eral Leclerc, CS 74205\\
F-35042  RENNES C\'edex\\
\emph{e-mail: }\texttt{vincent.guirardel@univ-rennes1.fr}\\[8mm]

Gilbert Levitt\\
Laboratoire de Math\'ematiques Nicolas Oresme (LMNO)\\
Universit\'e de Caen et CNRS (UMR 6139)\\
(Pour Shanghai : Normandie Univ, UNICAEN, CNRS, LMNO, 14000 Caen, France)\\
 \emph{e-mail: }\texttt{levitt@unicaen.fr}\\[8mm]

\end{flushleft}

\end{document}